\numberwithin{equation}{subsection}
\setlist[enumerate]{leftmargin=0.3in}
\setlist[itemize]{leftmargin=0.3in}
\definecolor{popblue}{RGB}{55,115,255}
\definecolor{lightbl}{RGB}{155,205,255}
\definecolor{depthbl}{RGB}{145,215,255}
\definecolor{fancyre}{RGB}{225,55,115}
\definecolor{lightgr}{RGB}{230,255,230}
\definecolor{darkgre}{RGB}{25,105,25}
\definecolor{darkblu}{RGB}{15,75,185}
\definecolor{mellowy}{RGB}{225,225,35}
\renewcommand{\tilde}[1]{\widetilde{#1}}
\renewcommand{\Bar}{\overline}
\newcommand{\R}{\mathbb{R}}
\newcommand{\N}{\mathbb{N}}
\newcommand{\Z}{\mathbb{Z}}
\newcommand{\C}{\mathbb{C}}
\newcommand{\m}{\mathrm}
\newcommand{\lv}{\lVert}
\newcommand{\rv}{\rVert}
\newcommand{\al}{\alpha}
\newcommand{\be}{\beta}
\newcommand{\es}{\varnothing}
\newcommand{\lra}{\;\Leftrightarrow\;}
\newcommand{\ep}{\varepsilon}
\newcommand{\f}{\frac}
\newcommand{\sig}{\sigma}
\newcommand{\gam}{\gamma}
\newcommand{\del}{\delta}
\newcommand{\pd}{\partial}
\newcommand{\grad}{\nabla}
\newcommand{\bpm}{\begin{pmatrix}}
\newcommand{\epm}{\end{pmatrix}}
\newcommand{\loc}{\m{loc}}
\renewcommand{\bar}{\overline}
\newcommand{\emb}{\hookrightarrow}
\newcommand{\res}{\restriction}
\renewcommand{\bigstar}{%
{
  \tikz[baseline=(center), scale=0.2] {
    \coordinate (center) at (0,-0.5);
    \filldraw (0,0.4) -- (0.1,0) -- (0,-0.4) -- (-0.1,0) -- cycle;
    \filldraw (0.4,0) -- (0,0.1) -- (-0.4,0) -- (0,-0.1) -- cycle;
  }
}
}
\renewcommand{\le}{\leqslant}
\renewcommand{\ge}{\geqslant}
\newcommand{\norm}[1]{\left\lv#1\right\rv}
\newcommand{\bnorm}[1]{\Big\lv#1\Big\rv}
\newcommand{\snorm}[1]{\big\lv#1\big\rv}
\newcommand{\tnorm}[1]{\lv#1\rv}
\newcommand{\bp}[1]{\Big(#1\Big)}
\renewcommand{\sp}[1]{\big(#1\big)}
\newcommand{\tp}[1]{(#1)}
\newcommand{\abs}[1]{\left|#1\right|}
\newcommand{\babs}[1]{\Big|#1\Big|}
\newcommand{\tabs}[1]{|#1|}
\newcommand{\bsb}[1]{\Big[{#1}\Big]}
\newcommand{\ssb}[1]{\big[{#1}\big]}
\newcommand{\tsb}[1]{[{#1}]}
\newcommand{\tcb}[1]{\{{#1}\}}
\providecommand{\tbr}[1]{\langle #1 \rangle}
\renewcommand{\bf}[1]{\mathbf{#1}}
\declaretheoremstyle[
    headfont=\bfseries\rmfamily\color{popblue}, bodyfont=\normalfont, 
    mdframed={
        linewidth=0.8pt,
        linecolor=lightbl, backgroundcolor=lightbl!5,
        skipabove=4pt,
    }
]{blueStyle}
\declaretheoremstyle[
    headfont=\bfseries\rmfamily\color{popblue}, 
    bodyfont=\itshape,
]{boringStyle}
\declaretheorem[style=boringStyle, numberwithin=section, name=Proposition]{propC}
\declaretheorem[style=boringStyle, sibling=propC, name=Lemma]{lemC}
\declaretheorem[style=boringStyle, sibling=propC, name=Theorem]{thmC}
\declaretheorem[style=boringStyle, sibling=propC, name=Corollary]{coroC}
\declaretheorem[style=boringStyle, sibling=propC, name=Remark]{rmkC}
\declaretheoremstyle[
    headfont=\bfseries\rmfamily\color{darkgre}, bodyfont=\normalfont, 
    mdframed={
        linewidth=0.8pt,
        linecolor=darkgre!35, backgroundcolor=lightgr!7,
        skipabove=4pt,
    }
]{greenStyle}
\declaretheoremstyle[
    headfont=\bfseries\rmfamily\color{fancyre}, bodyfont=\normalfont, 
    mdframed={
        linewidth=0.8pt,
        linecolor=fancyre!35, backgroundcolor=mellowy!6,
        skipabove=4pt,
    }
]{redStyle}
\declaretheorem[style=boringStyle, sibling=propC, name=Definition]{defnC}
\declaretheorem[style=boringStyle, name=Theorem]{introthm}
\declaretheorem[style=boringStyle, name=Definition, sibling=introthm]{introdefn}
\declaretheorem[style=boringStyle, name=Corollary, sibling=introthm]{introcoro}
\author{Noah Stevenson}
\address{
	Department of Mathematics\\
	Princeton University\\
	Princeton, NJ 08544, USA
}
\email[N. Stevenson]{stevenson@princeton.edu}
\thanks{N. Stevenson was supported by an NSF Graduate Research Fellowship}
\author{Ian Tice}
\address{
	Department of Mathematical Sciences\\
	Carnegie Mellon University\\
	Pittsburgh, PA 15213, USA
}
\email[I. Tice]{iantice@andrew.cmu.edu}
\thanks{I. Tice was supported by an NSF Grant (DMS \#2204912). }
\DeclareFontFamily{U}{cbgreek}{}
\DeclareFontShape{U}{cbgreek}{m}{n}{
  <-6>    grmn0500
  <6-7>   grmn0600
  <7-8>   grmn0700
  <8-9>   grmn0800
  <9-10>  grmn0900
  <10-12> grmn1000
  <12-17> grmn1200
  <17->   grmn1728
}{}
\DeclareFontShape{U}{cbgreek}{bx}{n}{
  <-6>    grxn0500
  <6-7>   grxn0600
  <7-8>   grxn0700
  <8-9>   grxn0800
  <9-10>  grxn0900
  <10-12> grxn1000
  <12-17> grxn1200
  <17->   grxn1728
}{}
\DeclareRobustCommand{\qoppa}{%
  \text{\usefont{U}{cbgreek}{\normalorbold}{n}\symbol{19}}%
}
\DeclareRobustCommand{\Sampi}{%
  \text{\usefont{U}{cbgreek}{\normalorbold}{n}\symbol{23}}%
}
\DeclareRobustCommand{\kkappa}{%
  \text{\usefont{U}{cbgreek}{\normalorbold}{n}\symbol{107}}%
}
\newcommand{\normalorbold}{%
  \ifnum\pdf@strcmp{\math@version}{bold}=\z@ bx\else m\fi
}
\title[Traveling bore waves]{Gravity driven traveling bore wave solutions to the free boundary incompressible Navier-Stokes equations}
\subjclass[2020]{Primary 35Q35, 35C07, 76D33; Secondary 35B40, 35J66, 76A20, 76L05, 34C37, 35A24}
\keywords{Bore waves, traveling fronts, free boundary Navier-Stokes, viscous surface waves, shallow water}
\begin{document}
\begin{abstract}    
    We give the first mathematical construction of two-dimensional traveling bore wave solutions to the free boundary incompressible Navier-Stokes equations for a single finite depth layer of constant density fluid. Our construction is based on a rigorous justification of the formal shallow water limit, which postulates that in a certain scaling regime the full free boundary traveling Navier-Stokes system of PDEs reduces to a governing system of ODEs. We find heteroclinic orbits solving these ODEs and, through a delicate fixed point argument employing the Stokes problem in thin domains and a nonautonomous orbital perturbation theory, use these ODE solutions as the germs from which we build bore PDE solutions for sufficiently shallow layers.
\end{abstract}
\maketitle
\section{Introduction}\label{section on introduction}

A fluid bore is a special type of traveling wave, observed on the surfaces of canals and rivers, that possesses distinct asymptotic heights in the upstream and downstream limits. It has been understood since at least the work of Rayleigh~\cite{rayleigh1914theory}, over a century ago, that a single layer of inviscid homogeneous incompressible fluid cannot sustain bores. Rayleigh's original argument, which has since been reconfirmed and generalized in modern proofs, is that inviscid conservation laws are incompatible with a bore-like change of states, and he deduces from this that including viscous effects is essential to understand the phenomenon. Therefore, in a mathematical study of the existence of bores it is reasonable to account for more robust fluid mechanical effects, such as viscosity and other sources of friction.

The mathematical search for bores then leads to the realm of traveling \emph{viscous} surface waves, a topic relatively unexplored by rigorous analysis,  in stark contrast with the classical, centuries-spanning study of traveling waves on the free surface of an \emph{inviscid} fluid. The vast majority of the latter consists of investigations of irrotational water waves, with important effects such as vorticity only recently receiving rigorous attention. The typical inviscid surface wave handled by this theory is a traveling profile sustained purely by a constant gravitational force. On the other hand, all known traveling waves with viscosity are generated by gravity acting in conjunction with a spatially varying force or surface stress (e.g. localized external pressure) that perturbs the equilibrium. Thus, a construction of viscous bore solutions would not only stand as the first mathematical verification that single layers can support a profile with differing asymptotic heights, but would also show for the first time that gravity alone can sustain nontrivial viscous surface waves.

In this paper we prove that within the realm of shallow fluids, modeled by the free boundary incompressible Navier-Stokes equations over an inclined plane, bores not only exist but are, in a sense, common steady states. Due to the shallow assumption, the dynamics are described by a viscous Saint-Venant system of ordinary differential equations, which we show admit special heteroclinic orbits that then give an effective approximation of our Navier-Stokes solutions. Moreover, we produce both small and large bores, as measured by the size of the jump in velocity across the bore's transition front.  Furthermore, our construction, which is general and modular, opens the door to further searches for other types of gravity driven viscous surface waves. 

\subsection{Traveling bore wave solutions to the free boundary Navier-Stokes system}\label{subsection on the free boundary navier stokes system}

In this paper we study two-dimensional finite-depth layers of incompressible, viscous fluid with free boundaries.  We posit that the fluid sits atop a flat rigid surface and meets an atmosphere of constant pressure along its free boundary. The dynamics of the fluid are governed by the incompressible Navier-Stokes equations, but we focus our attention on traveling wave solutions, which are a special type of solution that is time-independent when viewed in a coordinate frame moving at a constant speed parallel to the fluid bottom.  Within the traveling frame the fluid occupies a static, but unknown, set of the form
\begin{equation}\label{notation for the fluid domain}
    \Omega_\zeta = \tcb{\tp{x,z}\in\R\times\R\;:\;0<z<\zeta(x)}
\end{equation}
where $\zeta:\R\to \R^+$ is an unknown function, which we assume is continuous in order to ensure $\Omega_\zeta$ is open and connected.   The graph of $\zeta$, 
\begin{equation}\label{notation for the free surface set}
    \Sigma_\zeta = \tcb{\tp{x,z}\in\R\times\R\;:\;z = \zeta(x)},
\end{equation}
describes the free boundary along the upper part of the fluid, while the set $\Sigma_0 = \R\times\tcb{0}$ describes the rigid bottom boundary of the fluid.

We assume that the only bulk force acting on the fluid is gravity, which has both a standard vertical component as well as a horizontal component.  The latter can be thought of as corresponding to the fluid being situated on an inclined plane, after a coordinate transformation.  The fluid configuration is described by the aforementioned $\zeta$ in addition to the unknowns $(v,q) : \Omega_\zeta \to \R^2 \times \R$, the velocity field and pressure profiles in the traveling frame.  These must satisfy the traveling free boundary Navier-Stokes equations:
\begin{equation}\label{parent free boundary navier-stokes system}
    \begin{cases}
        \tp{v-\Bar{\gam} e_1}\cdot\grad v + \grad q - \mu\Delta v = \kappa e_1,\quad \grad\cdot v=0&\text{in }\Omega_\zeta,\\
        -\tp{q - \mu\mathbb{D}v}\mathcal{N}_{\zeta} + \tp{\Bar{g}\zeta - \Bar{\sig}\mathcal{H}\tp{\zeta}}\mathcal{N}_\zeta = 0,\quad \Bar{\gam} \partial_1 \zeta + v\cdot\mathcal{N}_\zeta = 0&\text{on }\Sigma_\zeta,\\
        v_2=0,\quad \mu\pd_2v_1 = \Bar{a}v_1&\text{on }\Sigma_0,
    \end{cases}
\end{equation}
where $\mathcal{N}_\zeta = (-\zeta',1)$ is a (non-unit) outward normal and $\mathcal{H}\tp{\zeta} = \zeta''/\tp{1+\tabs{\zeta'}^2 }^{3/2}$  is the mean curvature of the free surface.  The roles of the parameters are as follows: $\mu>0$ is the fluid viscosity, $\Bar{\gamma}>0$ is the speed of the traveling wave frame in the $e_1$ direction, $\Bar{g}\ge0$ is the vertical gravitational acceleration, $\kappa >0$ is the horizontal gravitational acceleration,  $\Bar{\sig}\ge 0$ is the surface tension coefficient,  and $\Bar{a}>0$ is the slip parameter (with units of speed so that $\mu/\Bar{a}$ is the slip length).  For the sake of brevity, we have chosen to state the system~\eqref{parent free boundary navier-stokes system} directly in traveling form rather than record its derivation from the time-dependent Navier-Stokes system; we refer to~\cite{MR4630597,MR4609068} for details on how to perform the derivation. The first equation in~\eqref{parent free boundary navier-stokes system} corresponds to the balance of momentum, the second to the conservation of mass, and the third to the balance of surface stresses on the free boundary, while the fourth is the kinematic equation for the free surface. The final two equations are known as the Navier slip boundary conditions, as they forbid fluid transport through the rigid interface but allow horizontal slip at the cost of an oppositional tangential stress.  First proposed by Navier in~\cite{Navier}, this boundary condition generalizes the usual no-slip condition (recovered formally by sending $\Bar{a} \to \infty$) to allow the modeling of slip along the boundary, a phenomenon that has since been experimentally observed: see~\cite{Neto} and references therein. 

For any continuously differentiable map $w : \Omega_\zeta \to \R^2$ satisfying $\nabla \cdot w =0$ in $\Omega_\zeta$ and $w_2=0$ on $\Sigma_0$ we may compute 
\begin{equation}
  \partial_1 \int_0^{\zeta(x)} w_1(x,z)\;\m{d}z =  w_1(x,\zeta(x)) \zeta'(x) - \int_0^{\zeta(x)} \partial_2 w_2(x,z)\;\m{d}z  
     =  -w(x,\zeta(x)) \cdot \mathcal{N}_\zeta(x).  
\end{equation}
From this we readily deduce that the kinematic boundary condition in~\eqref{parent free boundary navier-stokes system} is equivalent to 
\begin{equation}\label{intro_flux_cond}
    \Phi =  -\Bar{\gamma} \zeta(x)  + \int_0^{\zeta(x)} v_1(x,z)\;\m{d}z = \int_0^{\zeta(x)} (v_1(x,z) - \Bar{\gamma})\;\m{d}z  \text{ for } x \in \R 
\end{equation}
for a constant $\Phi \in \R$ measuring the relative flux across all vertical line segments $(0,\zeta(x))$.  We will use this reformulation of the kinematic boundary condition, as the flux constant will play a crucial role in our analysis.

The horizontal gravitational acceleration term $\kappa e_1$ in~\eqref{parent free boundary navier-stokes system} allows the system to support equilibrium shear flow solutions.  These are special solutions of constant height and pressure with velocities that depend only on the vertical variable:  
\begin{equation}\label{intro_shear_form}
\zeta(x) = h \in \R^+,\;q(x,z) = \Bar{g} h, \text{ and }  v(x,z) = s(z) e_1 \text{ with } s(z) = -\tp{\kappa/2\mu} z^2 + \tp{\kappa h/\mu} z + \tp{\kappa h/\Bar{a}}.
\end{equation}
Note that this constitutes a one-parameter family of solutions indexed by the constant height variable, $h \in \R^+$.  However, if we assume that a given fluid configuration is subject to the relative flux condition~\eqref{intro_flux_cond} for a fixed constant $\Phi\in \R$, then we can significantly reduce the parameter space.  Indeed, plugging the shear flows from~\eqref{intro_shear_form} into~\eqref{intro_flux_cond} yields the identity 
\begin{equation}\label{intro_bore_equation}
    \Phi = \tp{\kappa/3\mu}h^3 + \tp{\kappa/\Bar{a}} h^2 - \Bar{\gamma} h, 
\end{equation}
and we learn from trivial algebra that there are at most three possible values of $h \in \R^+$ compatible with the constraint~\eqref{intro_bore_equation}.  In fact, we can say quite a bit more.  First, if $\Bar{\gamma} <0$ then there exists a solution with $h \in \R^+$ if and only if $\Phi >0$, and in this case the solution is uniquely determined by the flux.  Second, if $\Bar{\gamma} > 0$ and $\Phi \ge 0$ then again there is a unique solution $h \in \R^+$.  Third, if $\Bar{\gamma} >0$ there exists $\Phi_{\m{min}} <0$ such that if $\Phi \in (\Phi_{\m{min}},0)$ then there exist two distinct solutions $0 < h_- < h_+$, which then give rise to two distinct shear flows $(\zeta_\pm,v_\pm,q_\pm)$ determined by $h_\pm$ as in~\eqref{intro_shear_form}; see Figure \ref{fig_two_shears_like_this} for a graphical depiction.

\begin{figure}[!h]
		\centering
		\begin{subfigure}{0.32\textwidth}
			\centering
			\includegraphics[width=0.9\textwidth]{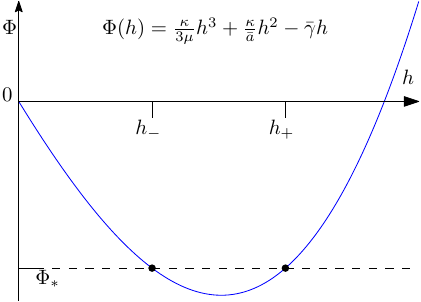}
		\end{subfigure}
		\begin{subfigure}{0.65\textwidth}
			\centering
			\includegraphics[width=0.9\textwidth]{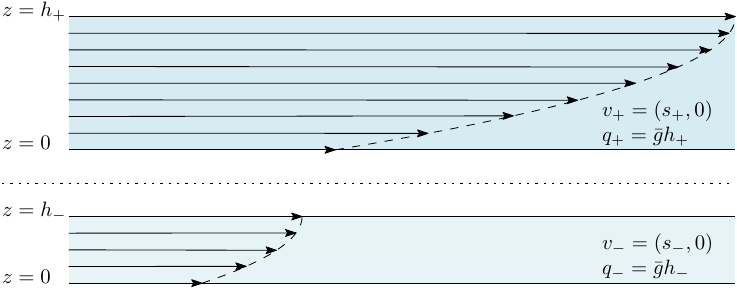}
		\end{subfigure}
		\caption{
	       On the left we have the plot of $\Phi(h)$ from~\eqref{intro_bore_equation} when $\Bar{\gamma} >0$, in which case choosing a negative flux constant, $\Phi_\ast  \in (\Phi_{\m{min}},0)$, allows for two positive solutions, $0 < h_- < h_+$, that give rise to distinct shear flow solutions $\zeta_\pm = h_\pm,$ $q_\pm = \Bar{g} h_\pm$, and $v_\pm = s_\pm e_1$.  On the right we have cartoons of these two shear flow solutions, with the $h_+$ solution above the dotted line and $h_-$ below.  The arrows plot the velocities with the dashed line indicating a plot of the shear profiles $s_\pm$.  Evidently, these have very different profiles but the same relative flux $\Phi_\ast$.   
		}
		\label{fig_two_shears_like_this}
\end{figure}

The above discussion raises an intriguing question: if the flux is chosen with $\Phi \in (\Phi_{\m{min}},0)$ do there exist solutions to~\eqref{parent free boundary navier-stokes system} that converge to one of the shear flow states $(\zeta_\pm,v_\pm,q_\pm)$ as $x \to -\infty$ and the other state as $x \to \infty$?  Our principal task in this paper is to answer this question in the affirmative.  We will define a solution of this form to be a  \emph{bore}, a term borrowed from the natural phenomenon of tidal bores, which are fluid configurations found in channels and rivers that behave in a similar manner, with different fluid heights upstream and downstream.  As we will see, bore solutions can also be thought of in more mathematical terms as traveling fronts or viscous shocks and rarefaction waves.  

A precise statement of our main result can be found in Section \ref{subsection on statement of the main theorems and discussion}, but an informal summary is as follows.  We show that bore solutions to~\eqref{parent free boundary navier-stokes system} exist, provided that we assume that the fluid is sufficiently shallow.  This shallow condition is quantified in terms of a dimensionless depth parameter $0 < \ep \ll 1$ and the various physical parameters appearing in~\eqref{parent free boundary navier-stokes system}, some of which we need to assume scale in particular ways with $\ep$.  Nevertheless, we prove that bores are ubiquitous within this shallow regime of physical parameter space. The solutions we construct fall into two broad qualitative categories, for which we continue to borrow tidal terminology: solutions that transition from $h_+$ at $-\infty$ to $h_-$ at $+\infty$, which we refer to as \emph{surging bores} (or viscous shocks); and solutions that transition from $h_-$ at $-\infty$ to $h_+$ at $+\infty$, which we refer to as \emph{ebbing bores} (or viscous rarefactions).  The eager reader may jump ahead to Figure~\ref{the 4 bores plots} to see numerical simulations of the surging and ebbing bores, though the plots will make more sense after some further notation is developed. All of the bores we construct are surging when the traveling wave speed $\Bar{\gamma}$ is sufficiently small and ebbing otherwise; see Corollary~\ref{coro on dimensional and eulerian bores} for a more precise statement. 

\subsection{Survey of previous work}\label{subsection on survey of previous work}

The mathematical study of traveling surface waves on free boundary fluids has spanned centuries, and as such, we cannot hope to give an extensive review here.  Instead, we content ourselves with considering only the results most closely related to those of this paper.  In particular, unless stated otherwise, we will restrict our attention to two-dimensional fluid configurations of a single layer.

The study of the inviscid, non-tilted analog of system~\eqref{parent free boundary navier-stokes system} (the traveling free boundary Euler equations) is also known as the traveling water wave problem. In this situation, the existence of various species of traveling waves was established long ago, starting with the work of Stokes~\cite{Stokes}. The first mathematical construction of two-dimensional periodic water waves is due to Nekrasov~\cite{nekrasov1921steady} and Levi-Civita~\cite{levi1924determinazione}. These are small amplitude and irrotational waves. The existence of large amplitude, irrotational, and periodic water waves of various types, including those that solve the Stokes conjecture, was later established by Krasovski\u{\i}~\cite{MR138284}, Keady and Norbury~\cite{MR502787}, Toland~\cite{MR513927}, Amick, Fraenkel, and Toland~\cite{MR666110}, Plotnikov~\cite{MR1883094}, McLeod~\cite{MR1446239}, and Plotnikov and Toland~\cite{MR2038344}.  The incorporation of the effects of vorticity into the periodic  water wave problem is a more recent development, beginning with the work of Constantin and Strauss~\cite{MR2027299}. Wahl\'en~\cite{MR2262949,MR2292728} added the effects of surface tension to the rotational problem, while Walsh~\cite{MR2529956,MR3177721,MR3177722} considered density stratification.

We now turn our attention to the case of solitary water waves. The first rigorous proof of small amplitude, solitary, and irrotational water waves is due to Lavrentiev~\cite{lavrentiev1947theory}, and this theory was then extended by Friedrichs and Hyers~\cite{MR65317},  Beale~\cite{MR445136}, Mielke~\cite{MR929221}, and Pego and Sun~\cite{MR3544325}.  Large amplitude solitary waves came with the work of Amick and Toland~\cite{MR647410,MR629699}. For generalizations with vorticity and other effects, we refer to the work of Hur~\cite{MR2385741}, Groves and Wahl\'en~\cite{MR2454604}, Wheeler~\cite{MR3106477}, Chen, Walsh, and Wheeler~\cite{MR3765551,MR3995048,chen2023globalbifurcationmonotonefronts}, Haziot and Wheeler~\cite{MR4565802}, and Ehrnstr\"om, Walsh, and Zeng~\cite{MR4577959}.  There have also been recent works that study the effects of wind or localized pressure sources in the classical water wave equations: see the papers of B\"uhler, Shatah, and Walsh~\cite{MR3073648}, B\"uhler, Shatah, Walsh, and Zeng~\cite{MR3544318}, and Wheeler~\cite{MR3375547}.

The proof of the \emph{nonexistence} of traveling bore wave solutions to the single-layer inviscid and irrotational water wave equations goes back to Rayleigh~\cite{rayleigh1914theory}. More modern proofs, which incorporate vorticity, can be found in the work of Wheeler~\cite{MR3375547} and Haziot and Wheeler~\cite{MR4565802}. Nevertheless, traveling inviscid internal fronts are possible within the context of a two-fluid system, as first proved in a small amplitude context by Amick and Turner~\cite{MR1011554}.  See also the work of  Mielke~\cite{MR1685879}, Makarenko~\cite{MR1229538}, and Chen, Walsh, and Wheeler~\cite{MR4399497}. The first construction of large bores for a two-fluid Euler systems is due to Chen, Walsh, and Wheeler~\cite{MR4196778}.

This review of the inviscid water wave problem has barely scratched the surface. We refer the reader looking for a deeper dive to the survey articles of Toland~\cite{Toland_1996}, Dias and Iooss~\cite{MR1984157}, Groves~\cite{Groves_2004}, Strauss~\cite{Strauss_2010}, and Haziot, Hur, Strauss, Toland, Wahl\'en, Walsh, and Wheeler~\cite{MR4406719}.

Juxtaposed with the extended history of inviscid water waves are the more recent developments on the traveling viscous surface wave problem. The objects of study in this realm are free boundary fluids in which the effects of viscosity are taken into account and are modeled by  Navier-Stokes systems similar to~\eqref{parent free boundary navier-stokes system}. Due to the dissipative nature of viscosity, one might expect that the generation of nontrivial traveling waves in viscous free boundary fluids necessitates a balancing source of power; to date, all proofs of the existence of such waves have required such a power source in the form of a spatially localized and moving source of force or stress acting on the fluid in addition to gravitational or capillary effects.  For example, this could take the form of a moving, localized pressure disturbance.  

Leoni and Tice~\cite{MR4630597} provided the first construction of traveling solutions to the free boundary incompressible Navier-Stokes equations.  This result was generalized by Stevenson and Tice~\cite{MR4337506,stevenson2023wellposedness} to incompressible fluids with multiple layers and to compressible free boundary flows, and by Koganemaru and Tice~\cite{MR4609068,MR4785303} to periodic or inclined fluids and flows obeying the Navier slip boundary condition. Nguyen and Tice~\cite{MR4690615} constructed traveling wave solutions to free boundary Darcy flow and, in the periodic case, performed a stability analysis. For technical reasons, these results all required a nontrivial traveling wave speed and did not address the stationary problem.  The existence of solitary stationary waves was then proved by Stevenson and Tice~\cite{MR4787851}, though only in three spatial dimensions.  All of the previously mentioned papers on stationary and traveling viscous surface waves produce small amplitude solutions.  Large amplitude traveling wave solutions have yet to be found for the free boundary Navier-Stokes equations; however, they were produced for Darcy flow by Nguyen~\cite{nguyen2023largetravelingcapillarygravitywaves} and Brownfield and Nguyen~\cite{MR4797733} and for the shallow water equations by Stevenson and Tice~\cite{stev_tice_big_waves}.

While the constructions mentioned in the previous paragraph are mathematical, solitary viscous traveling waves generated by moving, localized disturbances have also been observed in several experimental studies. We refer to the papers of Akylas, Cho, Diorio, and Duncan~\cite{CDAD_2011,DCDA_2011}, Duncan and Masnadi~\cite{MD_2017}, and Cho and Park~\cite{PC_2016,PC_2018} and the references therein for details. On the other hand, the experimental results of Russel~\cite{russell}, Cornish~\cite{cornish}, and Favre~\cite{favre} suggest that gravitational forces alone may suffice for sustaining viscous surface waves, which is more akin to the typical setup in the classical theory of inviscid surface waves.

To the best of the authors' knowledge, this work stands as the first construction of nontrivial traveling viscous surface waves driven solely by gravity as well as the first verification that viscous fluids can support single-layer traveling front solutions, which is in stark contrast with the inviscid case.

\subsection{Shallow nondimensionalization and flattening}\label{subsection on flattening, etc}

We now endeavor to quantify the shallowness of the fluid layer, nondimensionalize the fluid equations, and reformulate them in a form more convenient for analysis.  To begin, we introduce a small dimensionless parameter $0<\ep<1$ and make the following five ans\"atze, which are motivated by the theory of the shallow water equations and place~\eqref{parent free boundary navier-stokes system} into a certain $\ep$-shallow regime.  First, we posit that the height of the free surface $\zeta$ is proportional to $\ep$.  Second, the slip parameter is proportional to $\ep$: there exists $a>0$ for which $\Bar{a} = \ep a$.  Third, the surface tension is at most of order $\ep$: for some $\sig\ge 0$ we have $\Bar{\sig} = \ep\sig$.  Fourth, the normal component of gravity is of order at most $1/\ep$:  for some $g\ge 0$ we have $\Bar{g} = g/\ep$.  Fifth, we assume that the traveling frame speed decomposes according to $\Bar{\gam} = \gam + \ep^2\tilde{\gam}$ with $\gam>0$ and $\tilde{\gam}\in\R$;  here the parameter $\gam$ is arbitrary, whereas $\tilde{\gam}$ is a correction factor that will later be determined as a function of the other parameters.  To summarize the parameter scaling:
\begin{equation}
    \Bar{a} = \ep a,\; \Bar{\sig} = \ep \sigma,\; \Bar{g} = g/\ep,\;\text{and } \Bar{\gam} =\gam + \ep^2\tilde{\gam}.
\end{equation}

We now embark on the task of nondimensionalizing system~\eqref{parent free boundary navier-stokes system}. For reasons that will only become clear later, we select units with the effective normalization $\gam\mapsto 4$ and $\kappa/a\mapsto 4$.  Define the length and velocity scales $\m{L} = a\gam/\kappa$ and $\m{U} =\gam/4$; these determine the nondimensional free surface $\pmb{\zeta}:\R\to\R$, velocity $\pmb{v}:\Omega_{\ep\pmb{\zeta}}\to\R^2$, and pressure $\pmb{q}:\Omega_{\ep\pmb{\zeta}}\to\R$ via 
\begin{equation}\label{nondimensional unknowns}
    \zeta\tp{x} = \ep\m{L}\pmb{\zeta}\tp{x/\m{L}},\quad v(x,z) = \m{U}\pmb{v}\tp{x/\m{L},z/\m{L}}, \text{ and } q(x,z) = \m{U}^2\pmb{q}\tp{x/\m{L},z/\m{L}} \text{ for } \tp{x,z}\in\Omega_\zeta.
\end{equation}
We also introduce the following non-dimensional physical parameters:
\begin{equation}\label{nondimensional parameters}
    \upmu = 4\kappa\mu/a\gamma^2,\quad \m{a} = 4a/\gam,\quad \m{g} = 16ga/\kappa\gamma,\quad \upsigma = 16\sig/\gam^2, \text{ and } \Bar{\upgamma} =  4\tilde{\gam}/\gam.
\end{equation}
In terms of~\eqref{nondimensional unknowns} and~\eqref{nondimensional parameters}, the system~\eqref{parent free boundary navier-stokes system} is rewritten as
\begin{equation}\label{parent Navier-Stokes system nondimensionalized}
    \begin{cases}
        \tp{\pmb{v} - \tp{4 + \ep^2\Bar{\upgamma}} e_1}\cdot\grad\pmb{v} + \grad\pmb{q} - \upmu\Delta\pmb{v} = 4\m{a} e_1,\quad \grad\cdot\pmb{v} = 0&\text{in }\Omega_{\ep\pmb{\zeta}}\\
        -\tp{\pmb{q} - \upmu\mathbb{D}\pmb{v}}\mathcal{N}_{\ep\pmb{\zeta}} + \tp{\m{g}\pmb{\zeta} - \ep\upsigma\mathcal{H}\tp{\ep\pmb{\zeta}}}\mathcal{N}_{\ep\pmb{\zeta}} = 0,\quad \ep\tp{4 + \ep^2\Bar{\upgamma}}\pd_1\pmb{\zeta} + \pmb{v}\cdot\mathcal{N}_{\ep\pmb{\zeta}} = 0&\text{on }\Sigma_{\ep\pmb{\zeta}},\\
        \pmb{v}_2 = 0,\quad \upmu\pd_2\pmb{v}_1 = \ep\m{a}\pmb{v}_1&\text{on }\Sigma_0.
    \end{cases}
\end{equation}

The first difficulty encountered when analyzing~\eqref{parent Navier-Stokes system nondimensionalized} is that the domain $\Omega_{\ep\pmb{\zeta}}$ itself is one of the unknowns of the problem. This is inconvenient, and our task now is to reformulate system~\eqref{parent Navier-Stokes system nondimensionalized} in a fixed slab domain of height $\ep$. The tool that allows us to do so is the diffeomorphism $\mathfrak{F}_{\pmb{\zeta}}:\Bar{\Omega_{\ep}}\to\Bar{\Omega_{\ep\pmb{\zeta}}}$ defined  by $\mathfrak{F}_{\pmb{\zeta}}\tp{x,y} = \tp{x,y\pmb{\zeta}\tp{x}}$ for $\tp{x,y}\in\Bar{\Omega_\ep}$. Associated with $\mathfrak{F}_{\pmb{\zeta}}$ are the matrix quantities $\mathcal{A}_{\pmb{\zeta}},\mathcal{M}_{\pmb{\zeta}}:\Omega_\ep\to\R^{2\times 2}$ defined by
\begin{equation}\label{geometry matrices}
    \mathcal{A}_{\pmb{\zeta}}\tp{x,y} = \grad\mathfrak{F}_{\pmb{\zeta}}\tp{x,y}^{-\m{t}} = \bpm1&-y\pmb{\zeta}'\tp{x}/\pmb{\zeta}\tp{x}\\0&1/\pmb{\zeta}\tp{x}\epm\text{ and }\mathcal{M}_{\pmb{\zeta}}\tp{x,y} = \pmb{\zeta}\tp{x}\mathcal{A}_{\pmb{\zeta}}\tp{x,y}^{\m{t}} = \bpm\pmb{\zeta}\tp{x}&0\\-y\pmb{\zeta}'\tp{x}&1\epm,
\end{equation}
along with the flattened velocity $\pmb{u} = \pmb{v}\circ\mathfrak{F}_{\pmb{\zeta}}:\Omega_\ep\to\R^2$ and pressure $\pmb{p} = \pmb{q}\circ\mathfrak{F}_{\pmb{\zeta}}:\Omega_\ep\to\R$.

Next, we work out the analog of~\eqref{intro_flux_cond} in this flattened reformulation.  The flattened velocity $\pmb{u}$ satisfies $\grad\cdot\tp{\mathcal{M}_{\pmb{\zeta}}\pmb{u}}=0$, and upon integrating this  over  $y \in \tp{0,\ep}$ and substituting  the flattened kinematic boundary condition $\ep\tp{4 + \ep^2\Bar{\upgamma}}\pd_1\pmb{\zeta} + \pmb{u}\cdot\mathcal{M}_{\pmb{\zeta}}^{\m{t}}e_2=0$, we find that
\begin{equation}\label{this computation is kinda important}
    0 = \pd_1\bp{\pmb{\zeta}\f{1}{\ep}\int_0^{\ep}\pmb{u}_1\tp{\cdot,y}\;\m{d}y} + \f{1}{\ep}\mathcal{M}_{\pmb{\zeta}}\pmb{u}\cdot e_2 = \pd_1\bp{\pmb{\zeta}\bp{\f{1}{\ep}\int_0^{\ep}\pmb{u}_1\tp{\cdot,y}\;\m{d}y-\tp{4 + \ep^2\Bar{\upgamma}}}}.
\end{equation}
Therefore, we deduce the existence of a constant $\hat{\m{A}}\in\R$ such that the right hand argument of $\pd_1$ above is identically $-\hat{\m{A}}$, a sign convention more convenient for constructing bores.  We will think of $\hat{\m{A}}$ as another given physical parameter in the problem. We thus arrive at the equivalent flattened reformulation of system~\eqref{parent Navier-Stokes system nondimensionalized}:
\begin{equation}\label{flattened free boundary Navier-Stokes system}
    \begin{cases}
        \tp{\pmb{u} - \tp{4 + \ep^2\Bar{\upgamma}} e_1}\cdot\grad^{\mathcal{A}_{\pmb{\zeta}}}\pmb{u} + \grad^{\mathcal{A}_{\pmb{\zeta}}}\pmb{p} - \upmu\Delta^{\mathcal{A}_{\pmb{\zeta}}}\pmb{u} = 4\m{a} e_1,\quad \grad^{\mathcal{A}_{\pmb{\zeta}}}\cdot\pmb{u} = 0,&\text{in }\Omega_{\ep},\\
        -\tp{\pmb{p} - \upmu\mathbb{D}^{\mathcal{A}_{\pmb{\zeta}}}\pmb{u}}\mathcal{N}_{\ep\pmb{\zeta}} + \tp{\m{g}\pmb{\zeta} - \ep\upsigma\mathcal{H}\tp{\ep\pmb{\zeta}}}\mathcal{N}_{\ep\pmb{\zeta}} = 0&\text{on }\Sigma_{\ep},\\
        \pmb{u}_2=0,\quad \upmu\pd_2^{\mathcal{A}_{\pmb{\zeta}}}\pmb{u}_1 = \ep\m{a}\pmb{u}_1&\text{on }\Sigma_0,\\
        \sp{4 + \ep^2\Bar{\upgamma} - \f{1}{\ep}\int_0^\ep\pmb{u}_1\tp{\cdot,y}\;\m{d}y}\pmb{\zeta} = \hat{\m{A}}&\text{on }\Sigma_\ep,
    \end{cases}
\end{equation}
where $\grad^{\mathcal{A}_{\pmb{\zeta}}} = \mathcal{A}_{\pmb{\zeta}}\grad = \tp{\pd_1^{\mathcal{A}_{\pmb{\zeta}}},\pd_2^{\mathcal{A}_{\pmb{\zeta}}}}$, $\Delta^{\mathcal{A}_{\pmb{\zeta}}} = \grad^{\mathcal{A}_{\pmb{\zeta}}}\cdot\grad^{\mathcal{A}_{\pmb{\zeta}}}$, and $\mathbb{D}^{\mathcal{A}_{\pmb{\zeta}}}\pmb{u} = \grad\pmb{u}\mathcal{A}_{\pmb{\zeta}}^{\m{t}} + \mathcal{A}_{\pmb{\zeta}}\grad\pmb{u}^{\m{t}}$.  

\subsection{Bores, parameter tuning, and a final reformulation}\label{subsection on eq and bores}

As we have now reformulated the original equations~\eqref{parent free boundary navier-stokes system} into the form~\eqref{flattened free boundary Navier-Stokes system}, it is important to recompute the equilibrium shear solutions and give a mathematically precise definition of bores.  We do this now and then give one final technically convenient reformulation of the equations.

The shear solutions to ~\eqref{flattened free boundary Navier-Stokes system} are $\pmb{\zeta} = H \in\R^+$, $\pmb{p} = \m{g}H$,  and 
\begin{equation}
    \pmb{u}\tp{x,y} = b(y)e_1 \text{ for } b(y) = 4H + 4\tp{\m{a}/\upmu}H^2\tp{\ep y - y^2/2},
\end{equation}
and the relative flux identity~\eqref{intro_bore_equation} then becomes  
\begin{equation}\label{the end state consistency equation}
  \tp{4 + \ep^2\Bar{\upgamma}}H - 4H^2 - \ep^2\tp{4\m{a}/3\upmu}H^3 = \hat{\m{A}}.
\end{equation}
With these calculations in hand, we can now give a mathematically precise definition of bores as well as their surging and ebbing forms.

\begin{introdefn}[Bore waves]\label{defn of bore waves}
    Given $\ep\in\tp{0,1}$, $\upmu,\m{a}>0$, $\m{g},\upsigma\ge0$, $\Bar{\upgamma}\in\R$, and $\hat{\m{A}}>0$, we say that a tuple $\tp{\pmb{\zeta},\pmb{u},\pmb{p}}\in H^{5/2}_\loc\tp{\Sigma_\ep}\times H^2_\loc\tp{\Omega_\ep;\R^2}\times H^1_\loc\tp{\Omega_\ep}$ is a bore wave solution system~\eqref{flattened free boundary Navier-Stokes system} if the following are satisfied.
    \begin{enumerate}
        \item We have $\pmb{\zeta}(x)>0$ for all $x\in\R$ and $(\pmb{\zeta},\pmb{u},\pmb{p})$ is a strong solution to the free boundary Navier-Stokes system~\eqref{flattened free boundary Navier-Stokes system}.
        \item There exists $0 < \m{H}_- < \m{H}_+$ such that $\tp{4 + \ep^2\Bar{\upgamma}}\m{H}_\pm - 4\m{H}^2_\pm - \ep^2\f{4\m{a}}{3\upmu}\m{H}^3_\pm = \hat{\m{A}}$.
        \item There exists $\iota\in\tcb{-1,1}$ such that for every $y\in(0,\ep)$ we have that
        \begin{equation}\label{end states of the bore solutions}
            \lim_{x\to\pm\infty}\pmb{\zeta}(\iota x) = \m{H}_\pm,\quad\lim_{x\to\pm\infty}\pmb{u}_1\tp{\iota x,y} = 4\m{H}_\pm + \f{4\m{a}}{\upmu}\m{H}^2_\pm\bp{\ep y - \f12y^2},
            \text{ and } 
            \lim_{x\to\pm\infty}\pmb{u}_2\tp{\iota x,y}=0.
        \end{equation}
    \end{enumerate}
    If $\tp{\pmb{\zeta},\pmb{u},\pmb{p}}$ is a bore wave, then we say that it is surging if $\iota = -1$ and ebbing if $\iota = 1$.
\end{introdefn}

We show that both surging and ebbing bore solutions to system~\eqref{flattened free boundary Navier-Stokes system} exist.  The roots of the polynomial in~\eqref{the end state consistency equation} determine the asymptotic heights of the bore waves, and in general, these roots are nontrivially $\ep$-dependent.  However, for technical reasons in our construction it is crucial that the roots $\m{H}_\pm >0$ from the second item of the definition are actually independent of $\ep$; this can be achieved via the judicious selection of $\Bar{\upgamma}$ and $\hat{\m{A}}$, as we now show.  We first make the ansatz that $\hat{\m{A}} = \m{A} + \ep^2\Bar{\m{A}}$ with the parameter $\m{A}\in\tp{0,1}$ freely chosen but the parameters $\Bar{\m{A}},\Bar{\upgamma}\in\R$ determined in terms of the other physical parameters as follows.  When $\ep=0$, the cubic in~\eqref{the end state consistency equation} reduces to a quadratic equation with roots $\m{H}_{\pm} = \m{H}_{\pm}\tp{\m{A}}>0$ satisfying
\begin{equation}\label{sweq}
    \m{H}_{\pm} - \m{H}^2_{\pm} = \m{A}/4, \text{ and hence } \m{H}_{\pm} = \tp{1 \pm \tp{1 - \m{A}}^{1/2}}/2 .
\end{equation}
Observe that $0<\m{H}_-<1/2<\m{H}_+<1$ and that $\m{H}_+ + \m{H}_- = 1$. We then define $\Bar{\upgamma}$ and $\Bar{\m{A}}$ in such a way that both $\m{H}_+$ and $\m{H}_-$ are solutions to the cubic~\eqref{the end state consistency equation} for all $\ep>0$, which in light of~\eqref{sweq} is equivalent to  $\Bar{\upgamma}$ and $\Bar{\m{A}}$ solving 
\begin{equation}\label{the parameter tuning identities 440 hz}
    \Bar{\upgamma}\m{H}_\pm - \tp{4\m{a}/3\upmu}\m{H}_\pm^3 = \Bar{\m{A}}\text{ and hence }\Bar{\upgamma} = \tp{4\m{a}/3\upmu}\tp{\m{H}_+^2 + \m{H}_+\m{H}_- + \m{H}^2_-}\text{ and }\Bar{\m{A}} = \tp{4\m{a}/3\upmu}\tp{\m{H}_+^2\m{H}_- + \m{H}_+\m{H}_-^2}.
\end{equation}
We refer to the above as the parameter tuning identities; they play an essential role in our analysis. Notice that $\m{H}_{\pm}$ are only functions of the parameter $\m{A}\in\tp{0,1}$ (thanks to our choice of units in~\eqref{nondimensional unknowns}). As $\m{A}\to 0$, we have $\tp{\m{H}_-,\m{H}_+}\to\tp{0,1}$ whereas when $\m{A}\to 1$ we have $\tp{\m{H}_-,\m{H}_+}\to\tp{1/2,1/2}$. Therefore, $\m{A}$ is a measurement of the separation of the asymptotic bore heights, and we will occasionally refer to $\m{A}\sim 1$ as the small bore regime and $\m{A}\sim 0$ as the large bore regime.

We conclude this subsection by making one further reformulation of the Navier-Stokes system~\eqref{flattened free boundary Navier-Stokes system}. The current form of the final equation is somewhat unacceptable in that the  regularity count of the free surface function relative to that of the velocity is suboptimal. This can be fixed by employing the following trick. The final equation is written
\begin{equation}
    \pmb{d} = \m{A} + \ep^2\Bar{\m{A}} 
    \text{ for }
    \pmb{d} = \bp{4 + \ep^2\Bar{\upgamma} - \f{1}{\ep}\int_0^\ep\pmb{u}_1\tp{\cdot,y}\;\m{d}y}\pmb{\zeta}.
\end{equation}
In our functional framework the quantity $\pmb{d}$ will always be a tempered distribution (in fact, a bounded function), so the above equation is equivalent to $\pmb{d} - \pd_1^2\pmb{d} = \m{A} + \ep^2\Bar{\m{A}}$. A computation similar to~\eqref{this computation is kinda important} shows that 
\begin{equation}
    \pd_1\pmb{d} = \tp{4 + \ep^2\Bar{\upgamma}}\pd_1\pmb{\zeta} + \tp{\pmb{u}\cdot\mathcal{N}_{\ep\pmb{\zeta}}}/\ep,
\end{equation}
and hence $\pmb{d} - \pd_1^2\pmb{d} = \m{A} + \ep^2\Bar{\m{A}}$ is equivalent to the final identity in our next, and final, reformulation of the free boundary Navier-Stokes system:
\begin{equation}\label{FBINSE, param. tuned and funny flux}
    \begin{cases}
        \tp{\pmb{u} - \tp{4 + \ep^2\Bar{\upgamma}}e_1}\cdot\grad^{\mathcal{A}_{\pmb{\zeta}}}\pmb{u} + \grad^{\mathcal{A}_{\pmb{\zeta}}}\pmb{p} - \upmu\Delta^{\mathcal{A}_{\pmb{\zeta}}}\pmb{u} = 4\m{a} e_1,\quad \grad^{\mathcal{A}_{\pmb{\zeta}}}\cdot\pmb{u} = 0&\text{in }\Omega_{\ep},\\
        -\tp{\pmb{p} - \upmu\mathbb{D}^{\mathcal{A}_{\pmb{\zeta}}}\pmb{u}}\mathcal{N}_{\ep\pmb{\zeta}} + \tp{\m{g}\pmb{\zeta} - \ep\upsigma\mathcal{H}\tp{\ep\pmb{\zeta}}}\mathcal{N}_{\ep\pmb{\zeta}} = 0&\text{on }\Sigma_{\ep},\\
        \pmb{u}_2=0,\quad \upmu\pd_2^{\mathcal{A}_{\pmb{\zeta}}}\pmb{u}_1 = \ep\m{a}\pmb{u}_1&\text{on }\Sigma_0,\\
        \sp{4+ \ep^2\Bar{\upgamma} - \f{1}{\ep}\int_0^\ep\pmb{u}_1\tp{\cdot,y}\;\m{d}y}\pmb{\zeta} - \tp{4 + \ep^2\Bar{\upgamma}}\pd_1^2\pmb{\zeta} - \f{1}{\ep}\pd_1\tp{\pmb{u}\cdot\mathcal{N}_{\ep\pmb{\zeta}}} = \m{A} + \ep^2\Bar{\m{A}}&\text{on }\Sigma_\ep.
    \end{cases}
\end{equation}

\subsection{Statement of the main results and discussion}\label{subsection on statement of the main theorems and discussion}

In terms of the nondimensional parameters~\eqref{nondimensional parameters}, there are no restrictions on $\upmu,\m{a}>0$ or $\upsigma\ge0$ for our bore construction, but we must constrain the parameters $\m{g}\ge 0$ and $\m{A}\in(0,1)$ to belong to certain sets, which we need to describe now in order to precisely state our main results.  Define the function $\uprho_-:(0,1)\to\R$ via $\uprho_-\tp{\m{A}}=\log\m{H}_-\tp{\m{A}}$, where $\m{H}_-$ is given in~\eqref{sweq}. We then define the function $\uprho_\star:(0,1)\to\R$ via the condition that $\uprho_\star\tp{\m{A}}>\uprho_-\tp{\m{A}}$ is the unique number such that
\begin{equation}\label{equi-energy point}
    \int_{\uprho_-(\m{A})}^{\uprho_{\star}\tp{\m{A}}}\bp{1 - e^{x}-\f{\m{A}}{4}e^{-x}}\;\m{d}x=0.
\end{equation}
The rigorous construction of $\uprho_\star$ is handled in Lemma~\ref{lem on preliminary facts}. Using the functions $\uprho_-$ and $\uprho_\star$, we define the sets
\begin{equation}\label{regions of left and right handed bore waves}
    \mathfrak{C}_1 = \tcb{\tp{\m{g},\m{A}}\in[0,\infty)\times\tp{0,1}\;:\;\m{g}<\m{A}^2e^{-3\uprho_\star\tp{\m{A}}}} 
    \text{ and }
    \mathfrak{C}_{-1}=\tcb{\tp{\m{g},\m{A}}\in[0,\infty)\times\tp{0,1}\;:\;\m{g}>\m{A}^2e^{-3\uprho_{-}\tp{\m{A}}}},
\end{equation}
each of which is nonempty (see Lemma~\ref{lem  on sufficient conditions for a sign} for further details).   A graphical depiction may be found in Figure~\ref{the left and right handed bore regions}.

\begin{figure}[!h]
    \centering
    \scalebox{0.16}{\includegraphics{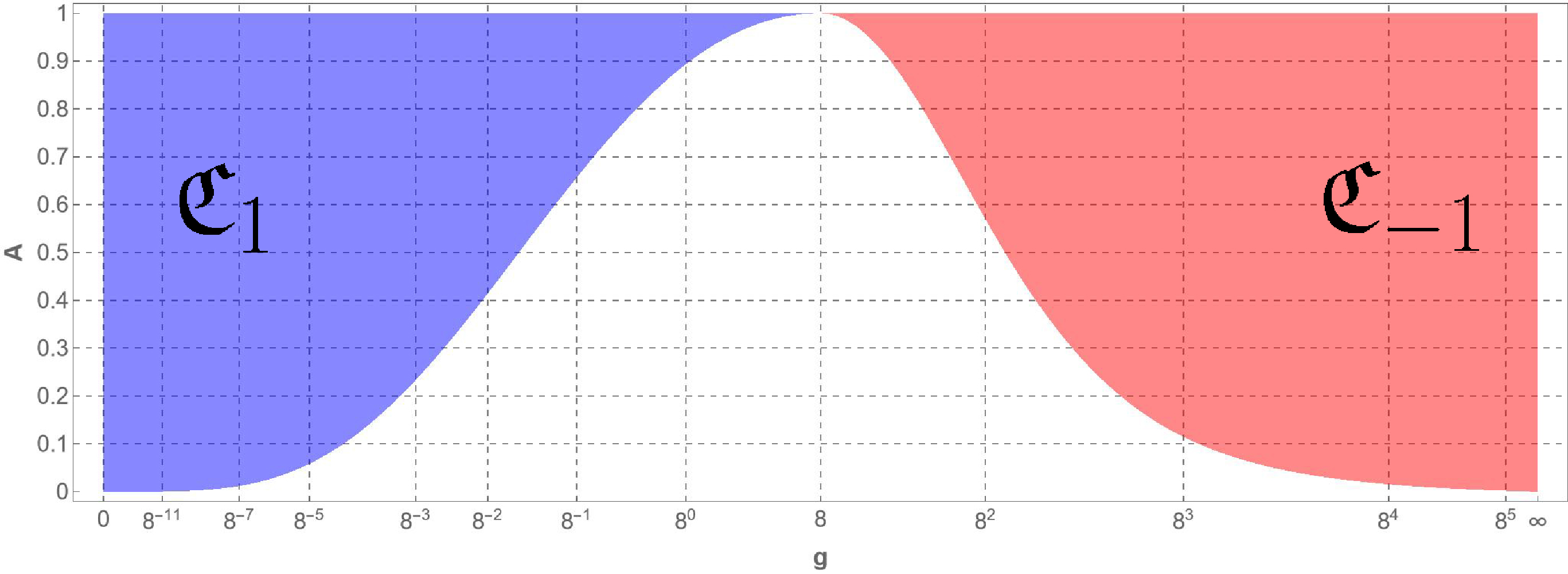}}
    \caption{
    The blue and red shaded regions in $\tp{\m{g},\m{A}}$-space depicted above, labeled $\mathfrak{C}_{\pm1}$ and defined in~\eqref{regions of left and right handed bore waves}, correspond to the parameter values for which we are able to produce traveling bore waves in Theorem~\ref{first main theorem}. The waves coming from the blue $\mathfrak{C}_1$ region are ebbing bores, whereas the waves coming from the red $\mathfrak{C}_{-1}$ region are surging bores.  The parameter $\m{A}\in\tp{0,1}$ on the vertical axis measures the separation of the equilibrium heights, with $\m{A}$ close to $1$ being the `small bore' regime and $\m{A}$ close to $0$ being the `large bore' regime. Notice that the only value of $\m{g}$ not covered by these regions is $\m{g} = 8$.  }
    \label{the left and right handed bore regions}
\end{figure}

We can now state our main theorem, the proof of which is found in Section~\ref{subsection on further properties}.  In what follows, the space $W^{\infty,\infty}$ consists of smooth functions with all derivatives bounded, while $H^\infty\subset W^{\infty,\infty}$ enforces that the function itself and all of its derivatives are square summable. See Section~\ref{subsection on notation conventions} for the precise conventions of notation.
\begin{introthm}[Proved in Section~\ref{subsection on further properties}]\label{first main theorem}
    Fix $\upmu,\m{a}>0$, $\iota\in\tcb{-1,1}$, $\tp{\m{g},\m{A}}\in\mathfrak{C}_\iota$, and $\upsigma\ge0$. There exist  $\mathsf{H}\in W^{\infty,\infty}\tp{\R}$, $C,\del\in\R^+$, and $\epsilon_\star\in\tp{0,1}$ with the property that for all $\ep\in\tp{0,\epsilon_\star}$ there exists $\Bar{\eta}\in H^{\infty}\tp{\Sigma_\ep}$, $\Bar{u}\in H^\infty\tp{\Omega_\ep;\R^2}$, and $\Bar{p}\in H^\infty\tp{\Omega_\ep}$ such that the following hold.
    \begin{enumerate}
        \item We have the estimate $\tnorm{\Bar{\eta}}_{H^{5/2}\tp{\Sigma_\ep}} + \tnorm{\Bar{u}}_{\tp{L^\infty\cap H^2}\tp{\Omega_\ep}} + \tnorm{\Bar{p}}_{H^1\tp{\Omega_\ep}}\le C\ep$.
        \item The function $\mathsf{H}$ is everywhere positive, solves the ODE
        \begin{equation}\label{shallow water just H version}
            4\upmu\m{A}\bp{\frac{\mathsf{H}'}{\mathsf{H}}}' = 4\m{a}\bp{1 - \mathsf{H} - \f{\m{A}}{4\mathsf{H}}} - \bp{\f{\m{A}^2}{\mathsf{H}^2} - \m{g}\mathsf{H}}\mathsf{H}' \text{ in }\R, 
        \end{equation}
        and obeys the limits
        \begin{equation}\label{the following limits are satisfied}
            \lim_{x\to\pm\infty}\mathsf{H}\tp{\iota x}  = \m{H}_{\pm},
        \end{equation}
        where $\m{H}_\pm$ are defined in~\eqref{sweq}.  Moreover, $\mathsf{H}' \in H^\infty(\R)$.
        \item Upon defining $\pmb{\zeta}\in  W^{\infty,\infty}\tp{\Sigma_\ep}$, $\pmb{u}\in W^{\infty,\infty}\tp{\Omega_\ep;\R^2}$, and $\pmb{p}\in W^{\infty,\infty}\tp{\Omega_\ep}$ via
        \begin{equation}\label{isolation of just the distinguished part of the solution}
            \pmb{\zeta} = \mathsf{H} + \Bar{\eta},\quad\pmb{u} = \bp{4\mathsf{H} + \f{4\m{a}}{\upmu}\mathsf{H}^2\bp{\ep y - \f12 y^2}}e_1 + \bp{4 - \f{\m{A}}{\mathsf{H}} - 4\mathsf{H}}e_1 + \Bar{u},
            \text{ and }
            \pmb{p} = \m{g}\mathsf{H} - 2\upmu\m{A}\f{\mathsf{H}'}{\mathsf{H}^2} + \Bar{p},
        \end{equation}
        the triple $\tp{\pmb{\zeta},\pmb{u},\pmb{p}}$ is a classical bore wave solution to system~\eqref{flattened free boundary Navier-Stokes system} in the sense of Definition~\ref{defn of bore waves}  with $\hat{\m{A}} = \m{A} + \ep^2\Bar{\m{A}}$ and $\Bar{\upgamma}$, $\Bar{\m{A}}$  determined by the parameter tuning identities~\eqref{the parameter tuning identities 440 hz}. Moreover, $\inf\tp{4 - \pmb{u}_1}\ge\del$.

        \item If $\iota = -1$ then $\tp{\pmb{\zeta},\pmb{u},\pmb{p}}$ is a surging bore, and if $\iota = 1$ then $\tp{\pmb{\zeta},\pmb{u},\pmb{p}}$ is an ebbing bore.
    \end{enumerate}
\end{introthm}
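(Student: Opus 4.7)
The plan is to build the bore in two stages. First I would solve the shallow water ODE~\eqref{shallow water just H version} for a heteroclinic orbit $\mathsf{H}$ connecting $\m{H}_-$ to $\m{H}_+$, and then use $\mathsf{H}$ as the germ of a leading-order ansatz whose Navier--Stokes residual in~\eqref{FBINSE, param. tuned and funny flux} is small enough to be absorbed by a Banach fixed point argument built from a uniform-in-$\ep$ linear theory on the thin slab $\Omega_\ep$. The parameter tuning identities~\eqref{the parameter tuning identities 440 hz} are used here implicitly: they guarantee that the endpoints $\m{H}_\pm$ of the heteroclinic do not depend on $\ep$, which is essential for the nonautonomous background state in the fixed point argument to have well-defined $\ep$-independent asymptotics.

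For the ODE, writing $\rho = \log\mathsf{H}$ converts~\eqref{shallow water just H version} into a damped second-order equation whose equilibria are exactly $\rho_\pm = \log \m{H}_\pm$. Multiplying by $\rho'$ and integrating yields a conservation-type identity in which the obstruction to closing a heteroclinic is precisely the integral in~\eqref{equi-energy point}; this explains the definition of $\uprho_\star$. A phase-plane/shooting analysis, with the sets~\eqref{regions of left and right handed bore waves} playing the role of a Melnikov-type selection criterion that picks the admissible direction (surging versus ebbing), produces a smooth positive $\mathsf{H}$ satisfying~\eqref{the following limits are satisfied}. Since both equilibria are hyperbolic, $\mathsf{H} - \m{H}_\pm$ and all of its derivatives decay exponentially at $\pm\infty$, so $\mathsf{H}' \in H^\infty(\R)$.

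Next I would plug the ansatz~\eqref{isolation of just the distinguished part of the solution} into~\eqref{FBINSE, param. tuned and funny flux} and compute the residual. The proposed velocity is the locally-in-$x$ shear profile at height $\mathsf{H}(x)$ corrected by a horizontal constant so the flux identity is realized with $\hat{\m{A}} = \m{A} + \ep^2\Bar{\m{A}}$, and the proposed pressure is the hydrostatic profile plus a viscous correction. A direct expansion shows that the $\ep^0$ and $\ep^1$ contributions to the residual cancel precisely when $\mathsf{H}$ obeys~\eqref{shallow water just H version}; the leftover residual is $O(\ep^2)$ in the natural norms and inherits exponential decay at $\pm\infty$ from $\mathsf{H}'$. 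This is the shallow water limit carried out quantitatively in the traveling frame.

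The heart of the argument is the fixed point for the perturbation $(\Bar\eta,\Bar u,\Bar p)$. Linearizing~\eqref{FBINSE, param. tuned and funny flux} at the ansatz gives a nonautonomous Stokes-type system on $\Omega_\ep$ coupled to the $(1-\pd_1^2)$-regularized flux/free-surface boundary condition of the same form as the fourth equation in~\eqref{FBINSE, param. tuned and funny flux}, with coefficients depending on $x$ only through $\mathsf{H}$. I would establish two supporting results: (i) a thin-slab Stokes estimate with Navier slip, in norms scaled so that thinness does not degrade constants, producing $\ep$-uniform bounds; and (ii) a nonautonomous orbital perturbation theory transferring the asymptotic-constant invertibility at $x = \pm\infty$ (around the two shear flows associated with $\m{H}_\pm$) to the fully nonautonomous operator, modulo the one-dimensional kernel generated by translation of the heteroclinic, which is killed by imposing a transversality condition on $\Bar\eta$. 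Combined, (i) and (ii) yield uniform-in-$\ep$ invertibility in the $H^{5/2}\times H^2\times H^1$ framework of item (1). The nonlinear terms are at least quadratic in $(\Bar\eta,\Bar u,\Bar p)$ with coefficients controlled in $\ep$, so a contraction on a ball of radius $C\ep$ produces the unique fixed point of item (1); bootstrapping through the linear theory upgrades to $H^\infty$.

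The remaining verifications are quick: item (3)'s limits follow from~\eqref{the following limits are satisfied} together with the decay of $(\Bar\eta,\Bar u,\Bar p)$ at $\pm\infty$ built into the functional framework, and the sign of $\iota$ classifies surging versus ebbing per Definition~\ref{defn of bore waves}. The lower bound $\inf(4 - \pmb{u}_1)\ge\del$ follows from~\eqref{isolation of just the distinguished part of the solution}: the leading horizontal velocity collapses to $4 - \m{A}/\mathsf{H} + O(\ep)$ since $\ep y - y^2/2 = O(\ep^2)$ for $y\in(0,\ep)$, and $\mathsf{H}\le \m{H}_+ < 1$ uniformly forces $\m{A}/\mathsf{H}\ge \m{A}/\m{H}_+ > 0$ with room to spare for small $\ep$. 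The main obstacle I anticipate is step (iii): establishing the uniform-in-$\ep$ invertibility of the coupled linearized Stokes/free-surface/flux operator on the thin, unbounded slab around a nonautonomous base state whose endpoints are two distinct shear flows. Reconciling the thin-domain scaling, the free-surface regularity, and the exponential relaxation of $\mathsf{H}$ toward $\m{H}_\pm$ into a single operator-theoretic framework is where the bulk of the paper's technical machinery presumably lives.
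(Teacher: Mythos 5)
Your proposal is close in spirit to the paper's strategy at a high level (shallow water heteroclinic $\to$ ansatz $\to$ thin-domain Stokes theory $\to$ fixed point), and the ODE part of the argument is essentially correct, including the role of $\uprho_\star$ and the energy identity. However, there is a genuine gap in the fixed-point step, and you have misidentified the role of the nonautonomous perturbation theory.

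The claim that ``the nonlinear terms are at least quadratic in $(\Bar\eta,\Bar u,\Bar p)$ with coefficients controlled in $\ep$, so a contraction on a ball of radius $C\ep$ produces the unique fixed point'' does not hold for the system that actually arises. Upon substituting the ansatz and isolating the residual Stokes operator $\mathscr{L}^\ep$, the remaining part is \emph{not} purely quadratic-plus-$O(\ep)$; it contains the adversarial linear operator (cf.~\eqref{adversarial linear operator}) --- the terms $(\ep\mathfrak{L}\eta,\m{g}\eta)$ in the residual dynamic boundary condition, together with $u\cdot\grad^{\mathcal{A}_{\pmb{\zeta}}}X_1$ in the residual momentum equation --- which is an $O(1)$, genuinely linear driver of the residual. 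This term prevents the map $x \mapsto -[\mathscr{L}^\ep]^{-1}(\mathscr{A}^\ep x + \cdots)$ from being contractive, regardless of $\ep$-smallness, because $\mathscr{A}^\ep$ does not come with a small prefactor. The paper's resolution is not to linearize the full coupled PDE operator and invert it modulo a translation kernel; instead, it couples the ODE and PDE by introducing the correction terms $\mathfrak{r}_1,\mathfrak{r}_3,\mathfrak{r}_4$ (see~\eqref{the specified coupling equation}), chosen so that the ODE side absorbs a mollified approximation $-\mathscr{C}^\ep\approx \mathscr{A}^\ep$ of the adversarial operator. This is precisely where the nonautonomous perturbation theory for heteroclinics (Section~\ref{subsection on perturbation theory for heteroclinic orbits}) is deployed: not to ``kill a one-dimensional translation kernel of the linearized PDE'', but to construct a bore map $\pmb{B}(\lambda)$ solving a \emph{small, $x$-dependent perturbation of the shallow water ODE} parametrized by the residuals. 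The translation degeneracy (Remark~\ref{rmk on translation degeneracy}) is handled simply by fixing an arbitrary representative of the heteroclinic; the hyperbolic/attractor structure of the endpoints gives the needed structural stability without a transversality condition. Without the coupling mechanism, the naive contraction sketched in your proposal would fail.

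A smaller inaccuracy: your residual is claimed to be $O(\ep^2)$ after the shallow water cancellation; in the paper the hierarchy is more delicate, because the ansatz also carries intermediate-order profiles ($U_1,U_2,P_1,P_2$ in~\eqref{ansatz_2}--\eqref{ansatz_3}) chosen to make additional ODE identities hold, and the residual forcing terms satisfy only the specific scaling built into the adapted norms~\eqref{norm on Y1} and~\eqref{epsilon dependent norms for the free surface residual}. The uniform-in-$\ep$ Stokes theory (your step (i)) matches the paper's Section~\ref{section on pdes in thin domains}, including the thin-domain Korn inequality.
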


Several remarks about the theorem are in order.  First, we refer to Figure~\ref{the 4 bores plots} for numerical simulations of the bore wave solutions produced by our main result. These highlight the striking differences between surging and ebbing bores and also demonstrate that each appears to come in both monotone and oscillatory forms. Second, while the set $\mathfrak{C}_{1}$ is bounded, the set $\mathfrak{C}_{-1}$ is unbounded and of infinite measure, which gives a sense in which surging bores are more common than their ebbing counterparts. Third, we are only able to guarantee that bore waves exist with $\tp{\m{g},\m{A}} \in \mathfrak{C}_{-1} \cup \mathfrak{C}_{1}$, and we make no claim as  to whether or not bores, or other types of traveling waves for that matter, exist for parameter values outside this set; our results simply do not address this region (the white part of Figure \ref{the left and right handed bore regions}). Fourth, the entirety of the result applies in the case $\upsigma =0$, the regime in which surface tension is entirely neglected. In particular, this confirms that, indeed, gravitational forces alone can generate bores.  

\begin{figure}[!h]
    \centering
    \scalebox{0.157}{\includegraphics{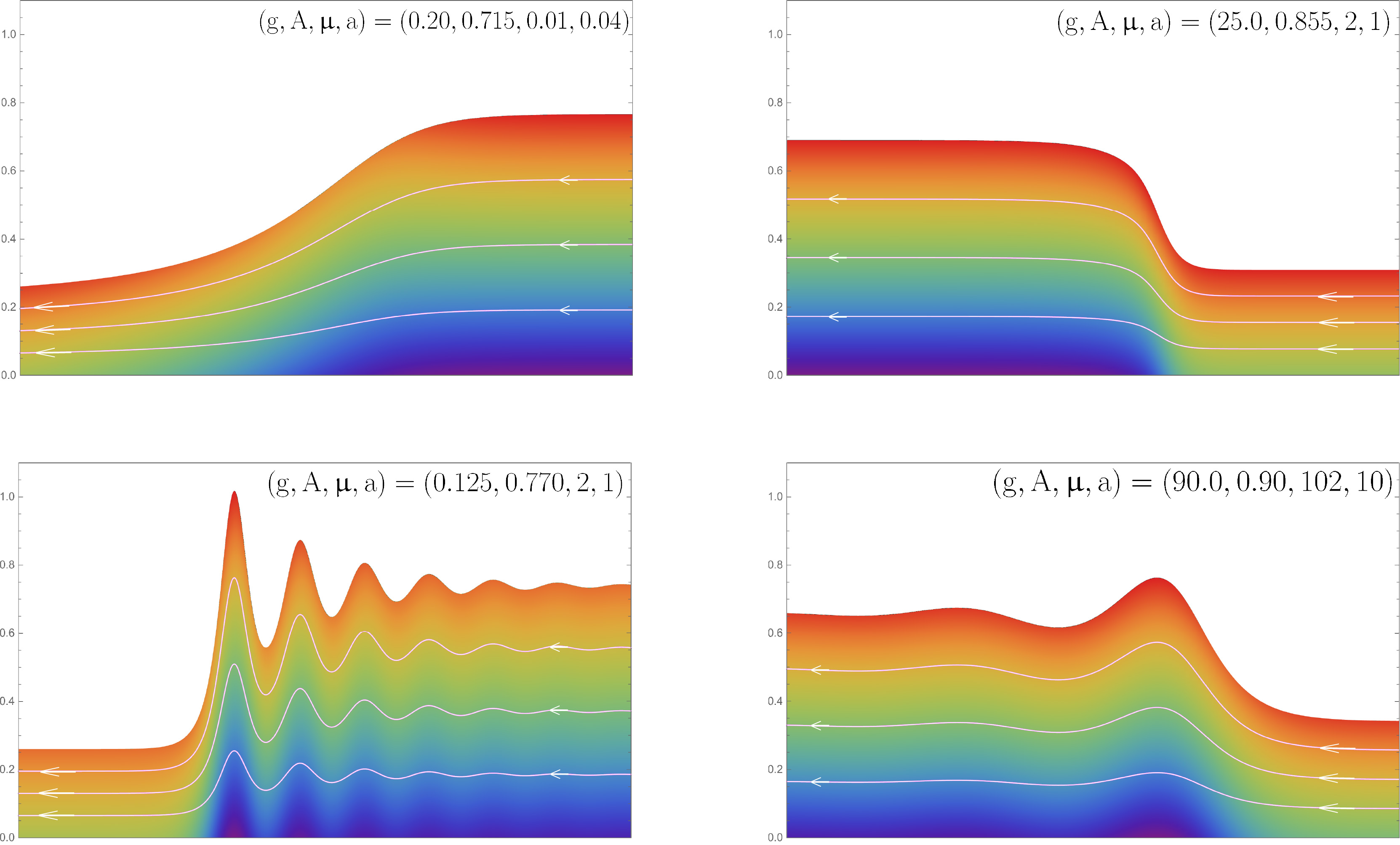}}
    \caption{Here we present four numerical simulations of the graphs of $\mathsf{H}$, the leading order parts of the free surface components corresponding to our rightward traveling bores constructed by Theorem~\ref{first main theorem} (with $\upsigma = 0$). Aspects of the fluid velocity are depicted in the subgraph. The streamlines shown are that of the flow seen by an observer \emph{moving with the wave}. The color indicates the variation in the curl, which is negative throughout, with violet and red indicating the regions of largest and smallest magnitudes, respectively. The top row has solutions that appear to be monotone, while the bottom row has solutions that appear oscillatory. On the left we show ebbing bores, corresponding to $\tp{\m{g},\m{A}}\in\mathfrak{C}_1$, whereas on the right we show surging bores, corresponding to $\tp{\m{g},\m{A}}\in\mathfrak{C}_{-1}$. The sets $\mathfrak{C}_{\pm 1}$ are defined in~\eqref{regions of left and right handed bore waves} and sketched in Figure~\ref{the left and right handed bore regions}.}
    \label{the 4 bores plots}
\end{figure}

The following  corollaries unpack specific consequences of our main result that are worth emphasizing.  To contextualize the first we recall that the parameter tuning~\eqref{the parameter tuning identities 440 hz} forces the $\ep$-independent end state heights $\m{H}_{\pm}$ (given by~\eqref{sweq}) of any bore wave to satisfy $0<\m{H}_-<\m{H}_+<1$, but these states are very close for large portions of the parameter space $\mathfrak{C}_{-1} \cup \mathfrak{C}_{1}$.  Our first corollary underlines the fact that Theorem \ref{first main theorem} yields bores for which $\m{H}_+-\m{H}_-$ is arbitrarily close to $1$ and the jumps in the velocity and pressure are also of unit order.
\begin{introcoro}[Existence of large shallow bore waves]\label{coro on the existence of large bores}
     Let $0<\al<1$, $\iota\in\tcb{-1,1}$, $\upmu,\m{a}>0$, and $\upsigma\ge0$. There exists $\tp{\m{g},\m{A}}\in\mathfrak{C}_\iota$ and a smooth traveling bore wave solution $\tp{\pmb{\zeta},\pmb{u},\pmb{\eta}}$ to~\eqref{flattened free boundary Navier-Stokes system} produced by Theorem~\ref{first main theorem} with end state heights satisfying $\lim_{x\to\pm\infty}\pmb{\zeta}\tp{\iota x} = \m{H}_\pm$ with $\m{H}_+-\m{H}_{-}>1-\al$, and end state velocity and pressure satisfying  
        \begin{equation}\label{the dimensionless hydraulic jumps}
            \iota\cdot\lim_{x\to\infty}\tsb{\pmb{u}(x,y) - \pmb{u}(-x,y)} = 4\tp{\m{H}_+ - \m{H}_-}\bp{1 + \f{\m{a}}{\upmu}\bp{\ep y - \f12y^2}}e_1 
            \text{ and }
            \iota\cdot\lim_{x\to\infty}\tsb{\pmb{p}\tp{x,y} - \pmb{p}\tp{-x,y}} = \m{g}\tp{\m{H}_+ - \m{H}_-},
        \end{equation}
        for $0<y<\ep$.
\end{introcoro}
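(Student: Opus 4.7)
The plan is to deduce the corollary directly from Theorem~\ref{first main theorem} via a judicious selection of the parameters $(\m{g},\m{A})\in\mathfrak{C}_\iota$. The key observation is that the asymptotic heights $\m{H}_\pm$ produced by the theorem depend only on $\m{A}$ through~\eqref{sweq}: $\m{H}_+ - \m{H}_- = \tp{1-\m{A}}^{1/2}$, so the condition $\m{H}_+-\m{H}_- > 1-\al$ is equivalent to $\m{A} < 2\al - \al^2$. Fixing any such $\m{A}\in\tp{0,2\al-\al^2}$, it remains to find $\m{g}\ge 0$ with $(\m{g},\m{A})\in\mathfrak{C}_\iota$, and then to read off the jumps in~\eqref{the dimensionless hydraulic jumps} from the bore decomposition~\eqref{isolation of just the distinguished part of the solution} and the asymptotics in Definition~\ref{defn of bore waves}.

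For $\iota = -1$, the defining condition $\m{g} > \m{A}^2 e^{-3\uprho_-(\m{A})}$ from~\eqref{regions of left and right handed bore waves} holds for all sufficiently large $\m{g}$. For $\iota = 1$, the condition $\m{g} < \m{A}^2 e^{-3\uprho_\star(\m{A})}$ is satisfied by e.g.~$\m{g}=0$, since Lemma~\ref{lem on preliminary facts} guarantees $\uprho_\star\tp{\m{A}}\in\R$ and hence $\m{A}^2 e^{-3\uprho_\star(\m{A})}>0$. Applying Theorem~\ref{first main theorem} to the resulting admissible $(\m{g},\m{A})\in\mathfrak{C}_\iota$ produces a smooth bore wave $\tp{\pmb{\zeta},\pmb{u},\pmb{p}}$ whose free surface satisfies $\lim_{x\to\pm\infty}\pmb{\zeta}\tp{\iota x} = \m{H}_\pm$, yielding the required height gap $\m{H}_+ - \m{H}_- > 1-\al$.

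For the velocity jump, the asymptotics in~\eqref{end states of the bore solutions} give
\begin{equation*}
    \iota\cdot\lim_{x\to\infty}\tsb{\pmb{u}\tp{x,y}-\pmb{u}\tp{-x,y}} = 4\tp{\m{H}_+ - \m{H}_-}e_1 + \f{4\m{a}}{\upmu}\tp{\m{H}_+^2 - \m{H}_-^2}\bp{\ep y - \f12 y^2}e_1,
\end{equation*}
and the identity $\m{H}_+ + \m{H}_- = 1$ (noted below~\eqref{sweq}) collapses the second term to $\tp{4\m{a}/\upmu}\tp{\m{H}_+ - \m{H}_-}\bp{\ep y - \tfrac12 y^2}e_1$, producing the velocity formula in~\eqref{the dimensionless hydraulic jumps}. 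For the pressure, the decomposition $\pmb{p} = \m{g}\mathsf{H} - 2\upmu\m{A}\mathsf{H}'/\mathsf{H}^2 + \Bar{p}$ from~\eqref{isolation of just the distinguished part of the solution}, combined with $\mathsf{H}'\in H^\infty\tp{\R}$, $\Bar{p}\in H^\infty\tp{\Omega_\ep}$, and standard Sobolev embeddings implying decay at horizontal infinity, yields $\lim_{x\to\pm\infty}\pmb{p}\tp{\iota x, y} = \m{g}\m{H}_\pm$ and thus the pressure formula in~\eqref{the dimensionless hydraulic jumps}.

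The derivation is essentially a direct unpacking of Theorem~\ref{first main theorem}, so I do not anticipate a serious obstacle. The one thing worth verifying is that $\mathfrak{C}_\iota$ contains points with $\m{A}$ arbitrarily close to $0$; this is trivial for $\iota=-1$ and reduces to finiteness of $\uprho_\star$ on $(0,1)$ for $\iota = 1$, the latter being supplied by Lemma~\ref{lem on preliminary facts}.
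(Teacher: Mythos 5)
Your proposal is correct and takes essentially the same route as the paper's own proof: choose $\m{A}$ small enough that $\m{H}_+-\m{H}_- = (1-\m{A})^{1/2} > 1-\al$, invoke Lemma~\ref{lem  on sufficient conditions for a sign} to place $(\m{g},\m{A})$ in $\mathfrak{C}_\iota$, apply Theorem~\ref{first main theorem}, and compute the jumps from~\eqref{isolation of just the distinguished part of the solution} together with~\eqref{sweq} and the identity $\m{H}_++\m{H}_-=1$. You actually fill in the jump computations (including the $\m{H}_+^2-\m{H}_-^2$ collapse and the $H^\infty$ decay argument for the pressure) that the paper leaves as a "readily compute" step, and you also correctly identify that the height-gap condition requires $\m{A}$ near $0$ — the paper's proof contains a typo writing $\lim_{\m{A}\to 1}$ where it should read $\lim_{\m{A}\to 0}$.
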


Our second corollary emphasizes that the bore waves produced by Theorem~\ref{first main theorem} become increasingly well-approximated by solutions to one-dimensional shallow water equations as $\ep \to 0$.

\begin{introcoro}[The shallow water limit for bore waves]\label{coro on justification of the shallow water limit}
    Under the hypotheses of Theorem~\ref{first main theorem}, let $0<\ep<\epsilon_\star$ and 
    \begin{equation}
        \mathsf{H}\in W^{\infty,\infty}\tp{\R},\quad\pmb{\zeta}\in W^{\infty,\infty}\tp{\Sigma_\ep},
        \text{ and }
        \pmb{u}\in W^{\infty,\infty}\tp{\Omega_\ep;\R^2}
    \end{equation}
    be components of any bore wave solution to system~\eqref{flattened free boundary Navier-Stokes system} produced by Theorem~\ref{first main theorem}. Define $\mathsf{U} = 4 - \m{A}/\mathsf{H}\in W^{\infty,\infty}\tp{\R}$. Then we have the estimates
    \begin{equation}\label{justification of the shallow water limit}
        \tnorm{\pmb{\zeta} - \mathsf{H}}_{L^\infty\tp{\Sigma_\ep}}+\tnorm{\pmb{u} - \mathsf{U}e_1}_{L^\infty\tp{\Omega_\ep}}\lesssim \ep
    \end{equation}
    for an implicit constant independent of $\ep$. Moreover, $\mathsf{H}$ and $\mathsf{U}$ are a traveling wave solution to the one-dimensional inclined viscous shallow water equations with laminar drag:
    \begin{equation}\label{the one dimensional shallow water equations UH form}
        \tp{\tp{4 - \mathsf{U}}\mathsf{H}}' = 0,\quad \mathsf{H}\tp{\mathsf{U} - 4}\mathsf{U}' + \m{a}\mathsf{U} - 4\upmu\tp{\mathsf{H}\mathsf{U}'}' + \m{g}\mathsf{H}\mathsf{H}' = 4\m{a}\mathsf{H}.
    \end{equation}
\end{introcoro}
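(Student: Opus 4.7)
The approach is to unpack the decomposition~\eqref{isolation of just the distinguished part of the solution} provided by Theorem~\ref{first main theorem} and then verify the two components of the shallow water system~\eqref{the one dimensional shallow water equations UH form} by direct algebraic manipulation against the ODE~\eqref{shallow water just H version}. There is no substantive analytical obstacle: all the difficult work has already been done in Theorem~\ref{first main theorem}, which supplies both the decomposition and the governing profile ODE for $\mathsf{H}$.

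For the $L^\infty$ estimate~\eqref{justification of the shallow water limit}, I would first read off from~\eqref{isolation of just the distinguished part of the solution}, using the identity $\mathsf{U} = 4 - \m{A}/\mathsf{H}$, that
\begin{equation}
\pmb{\zeta} - \mathsf{H} = \Bar{\eta}, \qquad \pmb{u} - \mathsf{U} e_1 = \f{4\m{a}}{\upmu}\mathsf{H}^2\bp{\ep y - \f{1}{2}y^2} e_1 + \Bar{u}.
\end{equation}
The residual fields $\Bar{\eta}$ and $\Bar{u}$ are controlled by $C\ep$ in $L^\infty$ as a consequence of Theorem~\ref{first main theorem}(1): directly from $\tnorm{\Bar{u}}_{L^\infty(\Omega_\ep)} \le C\ep$, and via the Sobolev embedding $H^{5/2}(\Sigma_\ep) \hookrightarrow L^\infty(\Sigma_\ep)$ on the one-dimensional interface for $\Bar{\eta}$. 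Meanwhile, on $\Omega_\ep$ one has $\abs{\ep y - y^2/2} \le \ep^2$, and $\mathsf{H}$ is bounded by virtue of $\mathsf{H}\in W^{\infty,\infty}(\R)$, so the remaining explicit profile term contributes $O(\ep^2)$, which is absorbed into the $O(\ep)$ bound.

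The first equation of~\eqref{the one dimensional shallow water equations UH form} is then immediate: by definition $(4-\mathsf{U})\mathsf{H} = \m{A}$ is a positive constant, so $((4-\mathsf{U})\mathsf{H})' \equiv 0$. For the momentum equation, I would substitute $\mathsf{U} = 4 - \m{A}/\mathsf{H}$ throughout, using
\begin{equation}
\mathsf{U}' = \m{A}\f{\mathsf{H}'}{\mathsf{H}^2}, \qquad \mathsf{H}(\mathsf{U}-4)\mathsf{U}' = -\m{A}^2\f{\mathsf{H}'}{\mathsf{H}^2}, \qquad \mathsf{H}\mathsf{U}' = \m{A}\f{\mathsf{H}'}{\mathsf{H}},
\end{equation}
and then show that the momentum equation rearranges into exactly~\eqref{shallow water just H version} once the advective, drag, viscous, and pressure terms are collected and the factor of $4\m{a}\mathsf{H}$ is moved to the left. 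This is a short calculation with no analytic content.

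If anywhere a subtlety arises, it is in confirming the Sobolev embedding constant for $\Sigma_\ep$ is $\ep$-independent; however, since $\Sigma_\ep$ is isometric to $\R$ regardless of $\ep$ (only the ambient dimension changes), the embedding $H^{5/2}(\R)\emb L^\infty(\R)$ gives a uniform constant. All remaining steps are direct verification, and the corollary is essentially the restatement that the distinguished profile $(\mathsf{H},\mathsf{U})$ extracted by Theorem~\ref{first main theorem} is precisely a traveling wave solution of the viscous Saint-Venant system.
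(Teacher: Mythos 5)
Your proposal is correct and follows essentially the same route as the paper: use the decomposition~\eqref{isolation of just the distinguished part of the solution} together with the first-item estimates of Theorem~\ref{first main theorem} to control the differences, and run the algebraic reduction $\mathsf{U} = 4 - \m{A}/\mathsf{H}$ in reverse to translate~\eqref{shallow water just H version} into~\eqref{the one dimensional shallow water equations UH form}. The paper's proof is terse and merely cites the computation at the start of Section~\ref{subsection on distinguished shallow water bore solutions}, whereas you spell out the substitutions $\mathsf{U}' = \m{A}\mathsf{H}'/\mathsf{H}^2$, etc., explicitly; this is the same argument with the details filled in, and your observations about the $O(\ep^2)$ profile term and the $\ep$-independence of the $H^{5/2}(\R)\hookrightarrow L^\infty(\R)$ embedding constant are exactly right.
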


The final corollary demonstrates that we can undo the flattening and nondimensionalization procedure of Sections~\ref{subsection on flattening, etc} and~\ref{subsection on eq and bores} to obtain bore wave solutions to the Eulerian traveling Navier-Stokes system~\eqref{parent free boundary navier-stokes system}.

\begin{introcoro}[Dimensional, Eulerian bores]\label{coro on dimensional and eulerian bores}
    Fix the following dimensional parameters: viscosity $\mu>0$, horizontal gravitational acceleration $\kappa>0$, slip parameter $a>0$,  vertical gravitational acceleration $g\ge0$, and surface tension $\sigma\ge0$.  Let $0<\gam<\infty$ be a traveling frame speed  such that $\gam\neq2ga/\kappa$, and let $\iota\in\tcb{-1,1}$ be such that $\iota\tp{\gam - 2ga/\kappa}>0$.

    The set of $\m{A}\in\tp{0,1}$ such that $\tp{16ga/\kappa\gam,\m{A}}\in\mathfrak{C}_{\iota}$ is nonempty; for each such $\m{A}$ there exists $\epsilon_\star\in\tp{0,1}$ with the property that for all $\ep\in\tp{0,\epsilon_\star}$ there exists $\zeta\in W^{\infty,\infty}\tp{\R}$ with $\zeta>0$, $v\in W^{\infty,\infty}\tp{\Omega_\zeta;\R^2}$, and $q\in W^{\infty,\infty}\tp{\Omega_\zeta}$ such that upon setting
        \begin{equation}\label{dimensional parameters}
            \Bar{\gamma} = \gam + \ep^2\f{a^2\gam^2}{3\kappa\mu}\tp{\m{H}_+\tp{\m{A}}^2 + \m{H}_+\tp{\m{A}}\m{H}_-\tp{\m{A}} + \m{H}_-\tp{\m{A}}^2},\quad \Bar{g} = g/\ep,\quad \Bar{\sig} = \ep\sig, \text{ and } \Bar{a} = \ep a
        \end{equation}
        we have that $\tp{\zeta,v,q}$ is a smooth bore wave solution to the free boundary Navier-Stokes system~\eqref{parent free boundary navier-stokes system}  obeying the limits
        \begin{equation}\label{the dimensional limits}
            \lim_{x\to\pm\infty}\zeta\tp{\iota x} = \ep\f{a\gam}{\kappa}\m{H}_\pm\tp{\m{A}}=\zeta_\pm,\quad\lim_{\substack{x\to\pm\infty\\0<z<\ep\tp{a\gam/\kappa}\m{H}_\pm\tp{\m{A}}}}q\tp{\iota x,z} = \f{ga\gam}{\kappa}\m{H}_\pm\tp{\m{A}} = \Bar{g}\zeta_{\pm},
        \end{equation}
        and
        \begin{equation}\label{the dimensional limits 2}
            \lim_{\substack{x\to\pm\infty\\0<z<\ep \tp{a\gam/\kappa}\m{H}_\pm\tp{\m{A}}}}v(\iota x,z) = \bp{\gam\m{H}_\pm\tp{\m{A}} + \ep\f{a\gam}{\mu}\m{H}_{\pm}\tp{\m{A}}z - \f{\kappa}{2\mu}z^2}e_1 = \bp{\f{\kappa\zeta_\pm}{\Bar{a}} + \f{\kappa\zeta_{\pm}}{\mu}z - \f{\kappa}{2\mu}z^2}e_1.
        \end{equation}
        In particular, if  $\iota=-1$ then $0<\gam<2ga/\kappa$ and the solution is a surging bore, while if $\iota=1$ then $\gam>2ga/\kappa$ and the solution is an ebbing bore.    Moreover, the relative velocity flux $\Phi<0$ (see~\eqref{intro_flux_cond} and~\eqref{intro_bore_equation}) of these solutions takes the form
        \begin{equation}\label{full relative velocity flux of the bore wave solutions}
            \Phi = -\ep\f{a\gam^2}{4\kappa}\m{A} - \ep^3\f{a^3\gam^3}{3\kappa^2\mu}\tp{\m{H}_+\tp{\m{A}}^2\m{H}_-\tp{\m{A}} + \m{H}_+\tp{\m{A}}\m{H}_{-}\tp{\m{A}}^2}.
        \end{equation}
\end{introcoro}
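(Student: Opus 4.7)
The plan is to invert the flattening and nondimensionalization procedures of Subsections~\ref{subsection on flattening, etc} and~\ref{subsection on eq and bores} so as to convert a bore wave solution of~\eqref{FBINSE, param. tuned and funny flux}, as produced by Theorem~\ref{first main theorem}, into a dimensional Eulerian bore solution of~\eqref{parent free boundary navier-stokes system}. The first step is parameter compatibility. By~\eqref{nondimensional parameters} the nondimensional vertical gravity is $\m{g} = 16ga/\tp{\kappa\gam}$, so $\gam - 2ga/\kappa$ and $8 - \m{g}$ have the same sign, and the hypothesis $\iota\tp{\gam - 2ga/\kappa}>0$ becomes $\iota\tp{8 - \m{g}}>0$. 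To exhibit a valid $\m{A}$, I would analyze the thresholds $\m{A}^2 e^{-3\uprho_-\tp{\m{A}}}$ and $\m{A}^2 e^{-3\uprho_\star\tp{\m{A}}}$ as $\m{A}$ sweeps across $\tp{0,1}$, using the explicit formula $e^{\uprho_-\tp{\m{A}}} = \m{H}_-\tp{\m{A}} = \tp{1 - \tp{1-\m{A}}^{1/2}}/2$ from~\eqref{sweq} and the equi-energy relation~\eqref{equi-energy point} characterizing $\uprho_\star$; the endpoint asymptotics $\m{H}_\pm\tp{1}=1/2$ and $\m{H}_-\tp{\m{A}}\sim\m{A}/4$ as $\m{A}\to 0^+$ yield, via the intermediate value theorem, that the $\iota=-1$ threshold covers all of $\tp{8,\infty)$, and a parallel argument gives that the $\iota=1$ threshold covers all of $\tsb{0,8)$. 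This shows the admissible set of $\m{A}$ is nonempty.

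Fix such an $\m{A}$ and apply Theorem~\ref{first main theorem} with the nondimensional parameters $\upmu = 4\kappa\mu/\tp{a\gam^2}$, $\m{a} = 4a/\gam$, $\upsigma = 16\sig/\gam^2$, $\bar{\upgamma}$, and $\bar{\m{A}}$ determined by~\eqref{the parameter tuning identities 440 hz}. This yields $\epsilon_\star \in \tp{0,1}$ and, for each $\ep \in \tp{0, \epsilon_\star}$, a classical bore wave solution $\tp{\pmb{\zeta}, \pmb{u}, \pmb{p}}$ to~\eqref{flattened free boundary Navier-Stokes system} in the sense of Definition~\ref{defn of bore waves}. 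I then invert the flattening via $\mathfrak{F}_{\pmb{\zeta}}$ from Subsection~\ref{subsection on flattening, etc}, setting $\pmb{v} = \pmb{u} \circ \mathfrak{F}_{\pmb{\zeta}}^{-1}$ and $\pmb{q} = \pmb{p} \circ \mathfrak{F}_{\pmb{\zeta}}^{-1}$ on $\Omega_{\ep\pmb{\zeta}}$, which by the chain rule identities implicit in~\eqref{geometry matrices} produces a solution to the Eulerian nondimensional system~\eqref{parent Navier-Stokes system nondimensionalized}. Inverting the scaling~\eqref{nondimensional unknowns} by declaring
\begin{equation}
    \zeta(x) = \ep\m{L}\pmb{\zeta}\tp{x/\m{L}}, \quad v(x,z) = \m{U}\pmb{v}\tp{x/\m{L}, z/\m{L}}, \quad q(x,z) = \m{U}^2\pmb{q}\tp{x/\m{L}, z/\m{L}}
\end{equation}
with $\m{L} = a\gam/\kappa$ and $\m{U} = \gam/4$, a routine change of variables that exactly reverses the derivation of~\eqref{parent Navier-Stokes system nondimensionalized} from~\eqref{parent free boundary navier-stokes system} shows that $\tp{\zeta, v, q}$ solves~\eqref{parent free boundary navier-stokes system} with the dimensional parameters $\bar{\gam}, \bar{g}, \bar{\sig}, \bar{a}$ of~\eqref{dimensional parameters}. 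Smoothness and positivity of $\zeta$ are inherited from $\pmb{\zeta}$.

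For the asymptotic limits, I take $x \to \pm\infty$ in the rescaling. The third item of Definition~\ref{defn of bore waves} yields $\pmb{u}_1\tp{\iota x, y} \to 4\m{H}_\pm + \tp{4\m{a}/\upmu}\m{H}_\pm^2\tp{\ep y - y^2/2}$. Composing with $\mathfrak{F}_{\pmb{\zeta}}^{-1}$ substitutes $y = \tp{z/\m{L}}/\m{H}_\pm$ in the limit, and after multiplying by $\m{U}$ and simplifying with the relations $\upmu = 4\kappa\mu/\tp{a\gam^2}$, $\m{a} = 4a/\gam$, and $\m{L} = a\gam/\kappa$, one arrives exactly at the dimensional shear profile appearing on the right of~\eqref{the dimensional limits 2}; the limits for $\zeta$ and $q$ in~\eqref{the dimensional limits} are similar but easier. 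The classification as surging or ebbing transfers verbatim from the nondimensional setting. Finally, for the flux identity~\eqref{full relative velocity flux of the bore wave solutions}, I substitute $h = \zeta_\pm = \ep\tp{a\gam/\kappa}\m{H}_\pm$ into the cubic~\eqref{intro_bore_equation}, use the parameter tuning formula for $\bar{\gam}$ in~\eqref{dimensional parameters}, and collapse the resulting expression via the identity $\m{H}_\pm - \m{H}_\pm^2 = \m{A}/4$ from~\eqref{sweq}; after cancellation, the cubic terms combine to $-\tp{\m{H}_+^2\m{H}_- + \m{H}_+\m{H}_-^2}$ regardless of the sign $\pm$, confirming that $\Phi$ is conserved and yielding~\eqref{full relative velocity flux of the bore wave solutions}.

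The main obstacle is the first step, namely justifying that the sign condition $\iota\tp{\gam - 2ga/\kappa} > 0$ always leads to a nonempty admissible slice of $\mathfrak{C}_\iota$ in the variable $\m{A}$. Characterizing the ranges of the two thresholds and showing they cover $\tsb{0,8) \cup \tp{8,\infty)$ requires careful monotonicity and limit analysis of $\uprho_\star$ via~\eqref{equi-energy point}, together with the exclusion of the exceptional value $\m{g} = 8$ (equivalent to $\gam = 2ga/\kappa$) explicitly assumed away in the statement. All remaining steps are bookkeeping: the reformulations of Subsections~\ref{subsection on flattening, etc} and~\ref{subsection on eq and bores} were designed to be bijective at the classical-solution level, so the main care required is to track powers of $\ep$ through the scalings and to verify compatibility with the parameter tuning identities~\eqref{the parameter tuning identities 440 hz}.
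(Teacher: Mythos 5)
Your proposal is correct and follows essentially the same route as the paper: use the parameter dictionary~\eqref{nondimensional parameters} to identify $\m{g} = 16ga/(\kappa\gam)$, exhibit an admissible $\m{A}$, apply Theorem~\ref{first main theorem}, invert the flattening diffeomorphism, invert the scaling~\eqref{nondimensional unknowns}, and read off the limits and the flux. The one place where you diverge — and where you identify your ``main obstacle'' — is the nonemptiness of the admissible slice of $\m{A}$, which you propose to establish from scratch by IVT and threshold asymptotics. This is not actually a gap: that fact is precisely the third item of Lemma~\ref{lem  on sufficient conditions for a sign}, which the paper simply cites, and whose proof needs only the limit $\lim_{\m{A}\to 1}\m{A}^2 e^{-3\uprho_\pm(\m{A})} = \lim_{\m{A}\to 1}\m{A}^2 e^{-3\uprho_\star(\m{A})} = 8$ (your $\m{A}\to 0^+$ asymptotics and monotonicity considerations are not required). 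Citing the lemma rather than re-deriving it would tighten the argument and make clear that the step you flag as the hardest is already available. Everything else — tracking powers of $\ep$ through the rescaling, substituting into the cubic~\eqref{intro_bore_equation} and simplifying with~\eqref{sweq} and~\eqref{the parameter tuning identities 440 hz} — is the same bookkeeping the paper performs.
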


We emphasize that Corollary~\ref{coro on dimensional and eulerian bores} establishes our previous ubiquity claim for shallow bores.  Indeed,  for any given set of fixed physical parameters $\mu,\kappa,a>0$ and $g,\sig\ge0$ the system~\eqref{parent free boundary navier-stokes system} is guaranteed to admit traveling bore wave solutions for every leading order choice of wave speed $\gam>0$, excepting the sole choice $\gam=2ga/\kappa$, and every $\ep>0$ sufficiently small. An important consequence of the corollary is that we produce surging bores if and only if $\gamma \kappa / 2g a < 1$, and this dimensionless quantity can be understood as a sort of averaged Froude number.  Indeed, if we average the end state heights and bottom speeds, we get $\zeta_{\m{avg}} = \Bar{a}\gam/2\kappa$ and $v_{\m{avg}} = \gam/2$, so if we define a Froude number, $\m{Fr}$, from these we obtain
\begin{equation}\label{Sigmund Freud}
    \m{Fr}^2 = \f{v_{\m{avg}}^2}{\Bar{g}\zeta_{\m{avg}}} = \f{\gam\kappa}{2ga} = \f{8}{\m{g}}.
\end{equation}
Thus, our surging bores all satisfy the subcritical Froude number condition $\m{Fr} <1$, while the ebbing bores satisfy the supercritical condition $\m{Fr} >1$, and we fail to construct any bores at the critical value $\m{Fr}=1$.

Corollary~\ref{coro on dimensional and eulerian bores} also shows that, while the bore waves we construct are small amplitude in physical units (as seen by the first limit in~\eqref{the dimensional limits}), the pressure and velocity differentials are made large in physical units when the parameter $\m{A}\in(0,1)$ is near zero.  In the dimensional form of Corollary~\ref{coro on dimensional and eulerian bores} it is also easier than in Theorem \ref{first main theorem} to single out the most important physical parameters (and thus effects).  Viscosity, slip, and horizontal gravity are essential to our result (i.e. $\mu,a,\kappa >0$), as it is the `competition' between these three that leads to bore generation.  On the other hand, capillarity plays no essential role and may be neglected $(\sigma =0)$, and while vertical gravity can also be neglected $(g=0)$ this results only in ebbing bores.

Before turning to the discussion of the difficulties and techniques used to prove Theorem \ref{first main theorem}, we pause to highlight several related lines of inquiry that are beyond the scope of this work but that we believe are important and interesting.  First, is it necessary to have $\kappa >0$ in order to produce bores, or can they somehow be sustained purely with vertical gravity?  Second, do bore-like solutions exist with $\kappa>0$ but $\Bar{\gamma} < 0$,  i.e. waves traveling upstream?  Third, what happens in other scaling regimes when capillarity is no longer perturbative?

We now turn to a discussion of our techniques for constructing bore waves, which will also serve as an outline of the paper's structure. As is clear in the statement of Theorem~\ref{first main theorem}, our strategy hinges on the assumption that the fluid is shallow, as this allows us to make connections to the viscous shallow water equations (sometimes called the viscous Saint-Venant equations). This is a system of simplified equations and variables that emerge as the effective limit of the Navier-Stokes system~\eqref{parent free boundary navier-stokes system} (and its dynamic form, as well) in both the vanishing depth and long wavelength limit. Taking this limit necessitates introducing the parameter $\ep >0$ and making a number of assumptions about the $\ep$-scaling of various parameters and unknowns. Our selection of assumptions in this paper is made as in~\eqref{parent Navier-Stokes system nondimensionalized} to ensure that viscous, slip, and horizontal gravitational effects survive in the limit, but there are, in fact, many other possibilities.  Indeed, the literature on the shallow water and related equations is varied and vast; we refer, for instance, to the  works~\cite{MR1324142,RevModPhys.69.931,MR1867089,MR1821555,MR1975092,MR2118849,MR2397996,MR2281291,MR2562163,mascia_2010,MR4105349,stevenson2023shallow,MR4873829} and the references therein for further exploration.

The vanishing depth limit of~\eqref{parent free boundary navier-stokes system} is known (see, for instance, Appendix B in Stevenson and Tice~\cite{stev_tice_big_waves}) to be described by two simple unknowns,  $(\mathsf{H}, \mathsf{U}) : \R \to \R^+ \times \R$, the effective fluid height and horizontal velocity, respectively. These satisfy the (nondimensionalized) traveling wave viscous shallow water system ~\eqref{the one dimensional shallow water equations UH form}
in which the Navier-Stokes forcing terms manifest as $4\upmu\tp{\mathsf{H}\mathsf{U}'}'$ for shear viscosity, $\m{a}\mathsf{U}$ for slip,  $\m{g}\mathsf{H}\mathsf{H}'$ for vertical gravity, and $4\m{a}\mathsf{H}$ for horizontal gravity.   Mirroring what we did for Navier-Stokes, we integrate the first equation in~\eqref{the one dimensional shallow water equations UH form} and introduce a shallow water flux parameter: $\tp{4 - \mathsf{U}}\mathsf{H} = \m{A} \in \R$.  This allows us to solve for $\mathsf{U}$ in terms of $\mathsf{H}$ and thereby reduce to a second order ODE  for $\mathsf{H}$ alone, namely~\eqref{shallow water just H version}.  The appearance of the quantity $\tp{\mathsf{H}'/\mathsf{H}}'$ in~\eqref{shallow water just H version} and the importance of $\mathsf{H}>0$ both suggest the swap to the new unknown $\rho:\R\to\R$ determined via $e^\rho = \mathsf{H}$.  This yields a reformulation of~\eqref{shallow water just H version} in terms of $\rho$ as the Li\'enard-type equation
\begin{equation}\label{The shallow water ODEs v3}
    \rho'' = F(\rho) - G(\rho)\rho',
\end{equation}
where $F,G:\R\to\R$ are the smooth functions given by
\begin{equation}\label{the hamiltonian and dissipation}
    F(x) = \f{\m{a}}{\upmu\m{A}}\bp{1 - e^{x} - \f{\m{A}}{4}e^{-x}} 
    \text{ and }
    G(x) = \f{1}{4\upmu}\bp{\m{A}e^{-x} - \f{\m{g}}{\m{A}}e^{2x}}.
\end{equation}

There are two equilibrium states for~\eqref{The shallow water ODEs v3}, corresponding to the roots of $F$ and given by $\uprho_\pm = \log\m{H}_\pm$ with $\m{H}_\pm\in\tp{0,1}$ as defined in~\eqref{sweq}.  This fact reveals a key point about how we have selected the parameter tuning in~\eqref{the parameter tuning identities 440 hz}: this has been done specifically to make the positive roots of the cubic relative flux equation~\eqref{the end state consistency equation} match up with the equilibrium states of $F$ as above, and to allow the parameter $\m{A}$ to serve as a relative flux measurement in both the Navier-Stokes and shallow water systems.  In turn, this provides a tantalizing first indication of the existence of bore waves since the ODE~\eqref{The shallow water ODEs v3} can in principle support heteroclinic orbits between $\uprho_\pm$, which upon swapping back to the $\mathsf{H}$ and $ \mathsf{U}$ variables would yield a shallow water approximation of a bore wave.  

To pursue this line of reasoning, we need more refined information about the equation~\eqref{The shallow water ODEs v3}, and in particular the functions $F$ and $G$.  The function $G$ is strictly decreasing and converges to $\mp \infty$ as $x \to \pm \infty$, while  $F$ is strictly concave and satisfies $F(x)>0$ if and only if $\uprho_-<x<\uprho_+$.  The structure of~\eqref{The shallow water ODEs v3} suggests introducing the potential for $F$, the smooth function $V : \R \to \R$,  which we will  zero at the $\uprho_-$ state:
\begin{equation}\label{the potential energy of the system}
  V(x) = -\int_{\uprho_-}^xF(s)\;\m{d}s.
\end{equation}
Then $V$ has exactly two critical points, namely $\uprho_\pm$, but there is a third important state, $\uprho_\star>\uprho_+$, which we have already seen appear in~\eqref{equi-energy point} and is selected such that $V(\uprho_\star) = 0$. This means that $\uprho_-$ and $\uprho_\star$ are equipotential states bookending the lower energy equilibrium $\uprho_+$:  see Figure \ref{the potential energy plot} for a graphical depiction.

\begin{figure}[!h]
    \centering
    \scalebox{0.4}{\includegraphics{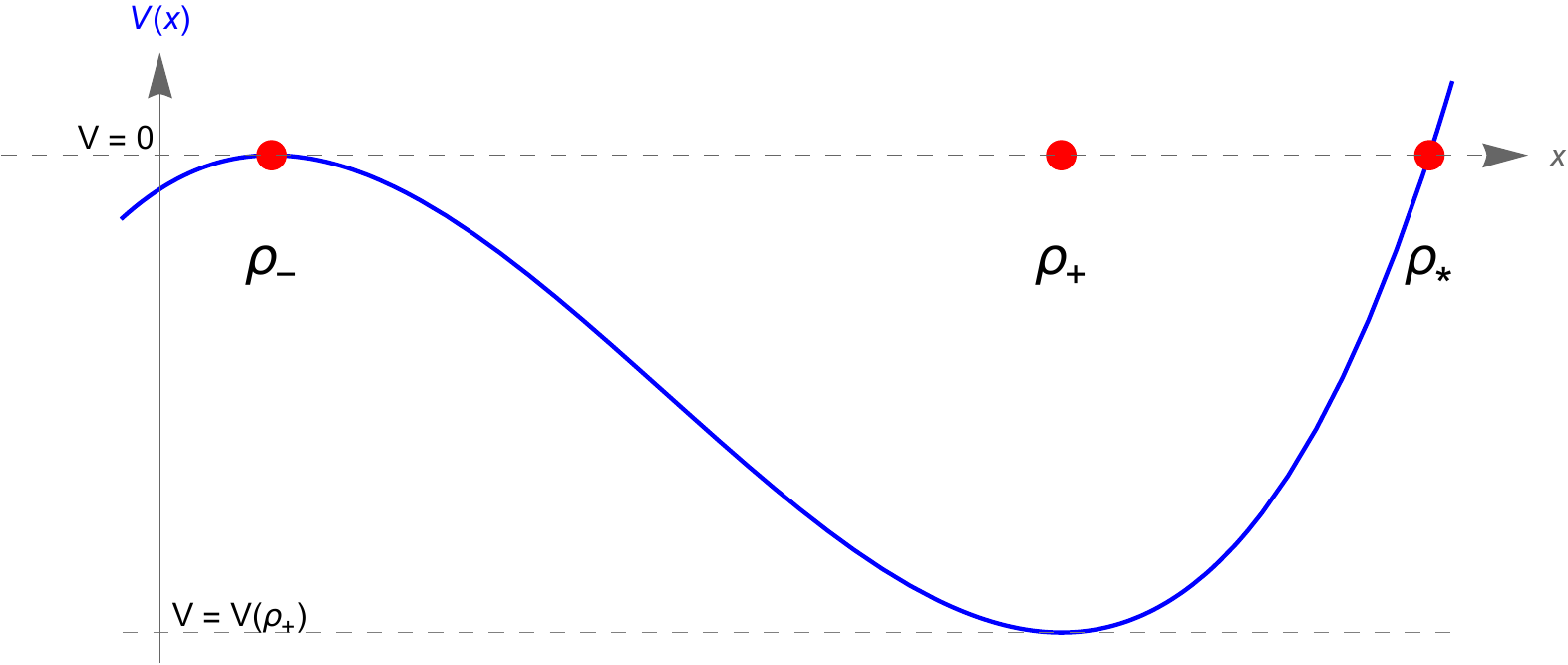}}
    \caption{We plot the graph of the potential energy $V$~\eqref{the potential energy of the system} of the Li\'enard system~\eqref{The shallow water ODEs v3}. The states $\uprho_-$ and $\uprho_+$ are the critical points of $V$. The top dashed line indicates $V=0$ and intersects the equipotential states $\uprho_-$ and $\uprho_\star$. The lower dashed line shows $V = V\tp{\uprho_+}<0$. See also Figure~\ref{comparison of the trapping regions} for associated phase space portraits.}
    \label{the potential energy plot}
\end{figure}

This conjunction of special properties of $V$ and $G$ then provides a mechanism for constructing heteroclinic orbits  between $\uprho_\pm$.  Indeed, as we prove in  Section~\ref{subsection on distinguished shallow water bore solutions}, when $G$ has a sign between the equipotential states, meaning $|G(x)|>0$ for all $x\in[\uprho_-,\uprho_\star]$, we can combine energy bounds and  topological arguments to prove that the ODE~\eqref{The shallow water ODEs v3} admits a global solution over $\R$ whose far field limits are the equilibria $\uprho_-$ and $\uprho_+$.  Since $G$ is decreasing there are exactly two ways it can have a sign in $[\uprho_-,\uprho_\star]$: $G(\uprho_\star)>0$, in which case the $G$ term in \ref{subsection on distinguished shallow water bore solutions} acts dissipatively, driving the orbit from $\uprho_-$ to the lower potential state $\uprho_+$ (an ebbing bore); and $G(\uprho_-)<0$, in which case $G$ acts as an amplifier, driving the orbit up the potential from $\uprho_+$ to $\uprho_-$ (a surging bore).  The sets $\mathfrak{C}_{\pm1}$, which are defined in~\eqref{regions of left and right handed bore waves} and depicted in Figure~\ref{the left and right handed bore regions}, precisely encode the values of the parameters $\tp{\m{g},\m{A}}$ for which these sign conditions hold, with $\mathfrak{C}_{1}$ corresponding to $G>0$ and $\mathfrak{C}_{-1}$ corresponding to $G<0$.  A crucial feature of this construction is that the state $\uprho_-$ is always hyperbolic, while $\uprho_+$ is a sink when $G>0$ and a source when $G <0$.  This means that while the actual solutions to~\eqref{The shallow water ODEs v3} are not unique due to translation invariance, the orbit itself (viewed as a curve in phase space) is uniquely determined.

With the heteroclinic orbit solutions to~\eqref{The shallow water ODEs v3} in hand, the aim is to return to the Navier-Stokes system and attempt to construct bore wave solutions as perturbations of the shallow water bores.  It turns out that there is not a unique way to attempt this, but our strategy (and most other plausible ones) for doing so involves introducing a model Stokes problem in the $\ep$-thin domain $\Omega_\ep$ with non-constant coefficients determined by a given function $h: \R \to \R^+$: see~\eqref{linear stokes in a thin domain} for the precise statement.  As we will discuss in greater detail later, the goal is to show that this $\ep$-Stokes problem induces an isomorphism between certain Banach spaces and build the bore waves via a fixed point argument that crucially employs this isomorphism. The $\ep$-Stokes system is elliptic (in the sense of Agmon, Douglis, and Nirenberg~\cite{adn2}), so it is perhaps unsurprising that it induces an isomorphism, but the key point here is that in order for this to be useful in a fixed point argument we need to understand how the operator norms of this map and its inverse depend on $\ep$ and the function $h$.  This, it turns out, is a nontrivial task.     

The principal technical tool for obtaining estimates to Stokes problems with applied stress conditions such as~\eqref{linear stokes in a thin domain} is the Korn inequality. Thus, in some sense, the main task then reduces to having a good understanding Korn's inequality in $\ep$-thin domains, and here fortune favors us somewhat. Indeed, Chapter 3 of Lewicka~\cite{MR4592573} provides a detailed treatment of the Korn inequality, including an analysis in thin domains based on earlier work of Lewicka and M\"uller~\cite{MR2795715}. Unfortunately, the exact result we require does not appear in either of these works. Nevertheless, the ideas and techniques developed there are adaptable, and we use them as the foundation on which we build our specialized thin domain Korn inequality for the $\ep$-Stokes problem~\eqref{linear stokes in a thin domain}. This, other $\ep$-thin functional inequalities, and the $\ep$-Stokes problem are our focus of study in Section~\ref{section on pdes in thin domains}.

Finally, we turn to a discussion of the principal part of our construction. This is quite technical if presented in full detail, so our aim here is to give a schematic, abstract overview and emphasize the connections to the aforementioned tools.  The precise details are recorded in Sections~\ref{section on the shallow water and residual equations} and~\ref{section on analysis of bore waves}.  In broad strokes, the idea is to mimic the derivation of the shallow water system from Navier-Stokes by positing that the full solution is decomposed into a leading order part that is close to the shallow water bore in some sense, and a small residual term that can be constructed as a fixed point, at least when $\ep$ is sufficiently small.  However, actually executing this strategy turns out to be significantly more delicate than what is suggested by the shallow water derivation.

In the following discussion, the solution to the Navier-Stokes system~\eqref{FBINSE, param. tuned and funny flux} we seek will be denoted $\pmb{X} = \tp{\pmb{\zeta},\pmb{u},\pmb{p}}$.  The first task is to specify the structural relationship between $\pmb{X}$ and our shallow water heteroclinic orbit, $\tp{\mathsf{H},\mathsf{U}}$, which we do by making the schematic ansatz 
\begin{equation}\label{schematic ansatz}
    \pmb{X} = \mathscr{P}\tp{X,\ep^2x} + \ep^2x.
\end{equation}
In the above $X = (H,U,\dotsc)$ is a tuple of smooth, but not necessarily square summable, scalar functions over $\R$ referred to as the \emph{ODE unknowns}; these should be thought of as approximations $H\approx\mathsf{H}$ and $U\approx\mathsf{U}$, with the other terms in the tuple playing auxiliary roles. The tuple $x = \tp{\eta,u,p}$ constitutes the \emph{residual unknowns}; these encode the difference between the Navier-Stokes and shallow water solutions, and we will assume these belong to $L^2$-based Sobolev spaces. The \emph{ansatz prefix} $\mathscr{P}(\cdot,\cdot)$ dictates the leading order contribution to the Navier-Stokes solution $\pmb{X}$ as a function of the ODE unknowns and a scaled down version of the residual unknowns.  The precise form of $\mathscr{P}$ is chosen to satisfy three mandates:  first, it should be close to the shallow water bore solutions; second, it should yield the correct shear flow equilibria in the far field; and third, the dependence on $\ep^2 x$ should act to minimize the complexity of the resulting equations for $X$ and $x$.

Once we select the prefix operator $\mathscr{P}$,  we plug the ansatz into the Navier-Stokes system~\eqref{FBINSE, param. tuned and funny flux}.  Due to the increased number of unknown terms in the ansatz, this obviously results in an underdetermined system for $x$ and $X$, so we must impose enough auxiliary equations to make the system determined.  In our case, all of the new equations will be ODEs, which explains the naming convention for the new $X$ variable.  A good guiding principle for finding these ODEs is to sort the terms in the system according to their size relative to $\ep$ and impose equations to make the highest order terms vanish.  We emphasize that there is no unique way to do this and that it can be advantageous to shift terms in the sorting hierarchy to make analysis easier. 
 
The most obvious choice of auxiliary ODEs decouples the $X$ unknown from $x$ and yields a nonlinear autonomous system
\begin{equation}\label{first instance of NODE}
    \mathscr{N}_{\m{ODE}}\tp{X} = 0,
\end{equation}
which is precisely solved by our distinguished shallow water solution $X = \mathsf{X} = \tp{\mathsf{H},\mathsf{U},\dots}$.  Plugging $X = \mathsf{X}$ into the ansatz then provides a nonlinear PDE system that must be satisfied by the residual unknown $x$ in order for $\pmb{X}$ to solve ~\eqref{FBINSE, param. tuned and funny flux}; we write this schematically as 
\begin{equation}\label{first instance of NPDE}
    0= \mathscr{N}_{\m{PDE}}^\ep\tp{\mathsf{X},x} = \mathscr{L}^\ep\tp{\mathsf{X},\ep^2x}x + \mathscr{A}^\ep\tp{\mathsf{X}}x + \mathscr{F}^\ep\tp{\mathsf{X},\ep x} ,
\end{equation}
with the nonlinear operator $\mathscr{N}^\ep_{\m{PDE}}$ decomposed into three distinct pieces. The first and most important of these, $\mathscr{L}^\ep\tp{\mathsf{X},\ep^2x}x$, is based on a version of the $\ep$-Stokes operator discussed above that is quasilinear because its coefficients depend on both $\mathsf{X}$ and $\ep^2 x$. The second term consists of the \emph{adversarial linear operator} $\mathscr{A}^\ep\tp{\mathsf{X}}$  applied to $x$; the moniker adversarial is to emphasize that this map generically spoils the invertibility of the sum of linear maps $\mathscr{L}^\ep\tp{\mathsf{X},\ep^2 x} + \mathscr{A}^\ep\tp{\mathsf{X}}$ when we know the first summand is invertible. The third term is the \emph{residual source term}, which encompasses a large number of nonlinear and remainder terms. Crucially, its structure allows us to guarantee it is contractive in $x$ for $\ep$ small.  To execute our fixed point strategy for solving~\eqref{first instance of NPDE} we would first establish the invertibility of $\mathscr{L}^\ep\tp{\mathsf{X},\ep^2 x}$ for all $x$ in some complete metric space and then study the map
\begin{equation}\label{the operator which does not work}
    x\mapsto -\tsb{\mathscr{L}^\ep\tp{\mathsf{X},\ep^2x}}^{-1}\tp{\mathscr{A}^\ep\tp{\mathsf{X}}x + \mathscr{F}^\ep\tp{\mathsf{X},\ep x}}.
\end{equation}
Unfortunately, $\mathscr{A}^\ep\tp{\mathsf{X}}$ remains adversarial in this role as well, as it prevents this map from being contractive.  We thus arrive at the conclusion that the obvious choice of auxiliary equations~\eqref{first instance of NODE} does not work due to the obstructions caused by the adversarial linear operator.

The decoupling approach above is an obvious choice only insofar as it presents minimal technical complexity. If we are to salvage our construction, we must employ a more intricate approach that embraces a coupling of $X$ and $x$ in the ODEs for the sake of weakening the adversarial linear operator.   To understand this approach schematically, we introduce the notion of a \emph{coupling operator} $\mathscr{C}^\ep\tp{\cdot,\cdot}$ as well as the \emph{coupling response operator} $F[\mathscr{C}^\ep]\tp{\cdot,\cdot}$, which together encode how the unknowns $X$ and $x$ couple in the systems of PDEs and ODEs:
\begin{equation}\label{NODE and NPDE, second version}
    \mathscr{N}^\ep_{\m{PDE}}\tp{X,x} + \mathscr{C}^\ep\tp{X,x} = 0\text{ and }\mathscr{N}_{\m{ODE}}\tp{X} + F[\mathscr{C}^\ep]\tp{X,\ep^2 x} = 0.
\end{equation}
These operators can thought of as an abstract expression of the sorting hierarchy resulting from plugging the ansatz~\eqref{schematic ansatz} into~\eqref{FBINSE, param. tuned and funny flux}.    The ansatz~\eqref{schematic ansatz} is chosen to make the residual contribution to $\pmb{X}$ negligible relative to that of the ODE unknown, and this scale difference is inherited by $\mathscr{C}^\ep$ and $F[\mathscr{C}^\ep]$, as can be seen through size comparisons: if $|\mathscr{C}^\ep\tp{X,x}|\lesssim 1$ then $|F[\mathscr{C}^\ep]\tp{X,\ep^2x}|\lesssim\ep^2$. The upshot is that we have the freedom to select $\mathscr{C}^\ep$ to influence the PDEs at unit scale, paying only the price of a small perturbation to the ODEs.

The above observations suggest a mechanism to pacify the adversarial operator: we choose $\mathscr{C}^\ep(X,x) \approx -\mathscr{A}^\ep\tp{X}x$ to cancel the harmful contribution in~\eqref{the operator which does not work}.  We cannot take equality here due to the constraint that $\mathscr{C}^\ep\tp{X,x}$ must be smooth and independent of the vertical coordinate.  Nevertheless, we identify a choice that, in a sense, converges to the adversarial operator in the limit of vanishing $\ep$ (see the latter half of Section~\ref{subsection on properties of residual forcing and nonlinearities} for the precise estimates on this convergence).   

At first glance, it might seem that introducing the coupling response term to the ODE in~\eqref{NODE and NPDE, second version} is a disastrous choice, as it breaks the autonomous structure, which plays a key role in the construction of the heteroclinic orbits.  However, the same structure in $\mathscr{N}_{\m{ODE}}$ that we exploit to show the existence of heteroclinic orbits offers a resilient defense to this threat: the configuration of its equilibria renders the system stable under small nonautonomous perturbations.  In Section~\ref{subsection on perturbation theory for heteroclinic orbits} we isolate this stability mechanism and prove a general result showing the persistence of heteroclinic orbits under small and nonautonomous perturbations, akin to an implicit function theorem for such orbits.  This  grants us the existence of a perturbation operator $\tau^\ep(\cdot)$, which can be thought of as the implicit function for solutions near $\mathsf{X}$,  such that for all $x$ in an appropriate space 
\begin{equation}\label{parametrizing the solutions to the ODEs}
    \mathscr{N}_{\m{ODE}}\tp{\mathsf{X} + \tau^\ep\tp{\ep^2 x}} + F[\mathscr{C}^\ep]\tp{\mathsf{X} + \tau^\ep\tp{\ep^2 x},\ep^2 x} = 0.
\end{equation}

Parameterizing solutions to the ODE in~\eqref{NODE and NPDE, second version} as in~\eqref{parametrizing the solutions to the ODEs} provides the principal tool to defeat the adversarial linear operator.  Indeed, we solve for the ODE unknown in terms of $\mathsf{X}$ and $x$ via $X = \mathsf{X} + \tau^\ep\tp{\ep^2 x}$, check that $\mathscr{L}^\ep\tp{\mathsf{X} + \tau^\ep\tp{\ep^2 x},\ep^2x}$ is invertible for all $x$ in an appropriate complete metric space, and reformulate the PDE in~\eqref{NODE and NPDE, second version} as the fixed point problem
\begin{equation}\label{the real fixed point identity}
    x = - \tsb{\mathscr{L}^\ep\tp{\mathsf{X} + \tau^\ep\tp{\ep^2 x},\ep^2x}}^{-1}\tp{\mathscr{A}^\ep\tp{\mathsf{X} + \tau^\ep\tp{\ep^2 x}}x + \mathscr{C}^\ep\tp{\mathsf{X} + \tau^\ep\tp{\ep^2 x},x} +  \mathscr{F}^\ep\tp{\mathsf{X} + \tau^\ep\tp{\ep^2 x},\ep x}}.
\end{equation}
In contrast with what we saw for~\eqref{the operator which does not work}, the operator on the right hand side of~\eqref{the real fixed point identity} is contractive for small enough $\ep>0$, precisely because we can take $\mathscr{C}^\ep$ to approximate $-\mathscr{A}^\ep$. The contraction mapping theorem then applies, and we obtain the existence of a residual $x$ that solves~\eqref{the real fixed point identity}, which in turn defines the ODE solution $X = \mathsf{X} + \tau^\ep\tp{\ep^2 x}$.  Together these identities imply that $\pmb{X}$ given by~\eqref{schematic ansatz} solves the free boundary Navier-Stokes system~\eqref{FBINSE, param. tuned and funny flux}.  Our results guarantee that $x$ and $\tau^\ep\tp{\ep^2 x}$ vanish at infinity and that $\ep^2 x$ is small, so the solution $\pmb{X} = \mathscr{P}\tp{X,\ep^2x} + \ep^2x$ is close to the shallow water heteroclinic orbit $\tp{\mathsf{H},\mathsf{U}}$ and has far field limits of distinct shear flow equilibria.  We have therefore found the desired bore wave solutions.  The final component of our argument is an a posteriori elliptic regularity promotion to show that the bores are, in fact, smooth.

\subsection{Notation conventions}\label{subsection on notation conventions}

The naturals are the set $\N = \tcb{0,1,2,\dots}$ while $\N^+ = \N\setminus\tcb{0}$. The positive real numbers are $\R^+ = \tp{0,\infty}$. For $d\in\N$ the bracket $\tbr{\cdot}:\C^d\to\R^+$ is the function with action defined by $\tbr{x} = \tp{1 + \tabs{x_1}^2+\cdots+\tabs{x_d}^2}^{1/2}$.
The characteristic function of a a set $E_0\subseteq E_1$ is denoted by $\mathds{1}_{E_0}:E_1\to\tcb{0,1}$. The notation $\ell_0\lesssim\ell_1$ will mean that there exists a constant $C\in\R^+$, depending only on the parameters mentioned in context, such that $\ell_0\le C\ell_1$; we add emphasis to the dependence on $C$ to one or more particular parameters $p_0,\dots,p_1$ by writing $\ell_0\lesssim_{p_0,\dots,p_1}\ell_1$. If $\tcb{X_r}_{r=1}^\ell$ are normed spaces with product space $X$ endowed with any product norm $\tnorm{\cdot}_X$, then we write $\tnorm{x_1,\dots,x_\ell}_{X} = \tnorm{\tp{x_1,\dots,x_\ell}}_{X}$. If $E$ is a subset of any topological space, we denote its closure by $\Bar{E}$ and its interior by $\m{int}E$.

The Fourier transform and its inverse on the space of tempered distributions $\mathscr{S}^\ast\tp{\R^d;\C}$, which are normalized to be unitary on $L^2\tp{\R^d;\C}$, are denoted by $\mathscr{F}$ and $\mathscr{F}^{-1}$, respectively. For measurable functions $\chi:\R^d\to\C$ we write $\chi(D)$ to be the Fourier multiplication operator with symbol $\xi\mapsto\chi(\xi)$.

The $\R^d$ gradient is denoted by $\grad = \tp{\pd_1,\dots,\pd_d}$ and the divergence of a vector field $\Psi:\R^d\to\R^d$ is $\grad\cdot\Psi=\pd_1\Psi_1+\cdots\pd_d\Psi_d$.    If $E\subseteq\R^d$ is any open subset, $k,\ell\in\N$, $t\in[1,\infty]$, we write $W^{k,t}\tp{E;\R^\ell}$ to denote the usual Sobolev space of $\R^\ell$-valued functions with weak derivatives of order at most $k$ in $L^t\tp{E;\R^\ell}$; these are endowed with a standard choice of norm $\tnorm{f}_{W^{k,t}\tp{E}} = \tp{\tnorm{f}_{L^t\tp{E}}^2 + \cdots + \tnorm{\grad^r f}_{L^t\tp{E}}^2}^{1/2}$. When $t=2$ we write $H^k\tp{E;\R^\ell}$ in place of $W^{k,t}\tp{E;\R^\ell}$ to emphasize that the space is Hilbert. We also denote $W^{\infty,t}\tp{E;\R^\ell} = \bigcap_{k=0}^\infty W^{k,t}\tp{E;\R^\ell}$. The set of $k$-times continuously differentiable $\R^\ell$-valued 
(compactly supported) functions on $E$ is $C^k\tp{E;\R^\ell}$ (resp. $C^k_c\tp{E;\R^\ell}$). Finally, we denote $C^\infty\tp{E;\R^\ell}=\bigcap_{k=0}^\infty C^k\tp{E;\R^\ell}$ and $C^\infty_c\tp{E;\R^\ell} = \bigcap_{k=0}^\infty C^k_c\tp{E;\R^\ell}$.

\section{Analysis and perturbations of shallow water heteroclinic orbits}\label{section on analysis and perturbations of shallow water heteroclinic orbits}

The method by which we produce bore wave solutions to system~\eqref{FBINSE, param. tuned and funny flux} passes through an analysis of the shallow water equations. In a sense to be made explicit later in Sections~\ref{section on the shallow water and residual equations} and~\ref{section on analysis of bore waves}, solutions to the $\ep$-shallow free boundary Navier-Stokes equations~\eqref{FBINSE, param. tuned and funny flux} are, up to a remainder of order $\ep$, approximated by solutions to a corresponding system of ODEs that emerge in the limit as $\ep\to0$ (see also Corollary~\ref{coro on justification of the shallow water limit}). These are what we refer to as the distinguished shallow water equations. In Section~\ref{subsection on distinguished shallow water bore solutions}, via the combination of energy estimates and a topological argument, we prove the existence of heteroclinic orbit solutions to these shallow water equations.

As was discussed in Section~\ref{subsection on statement of the main theorems and discussion}, the overarching strategy is to find a Navier-Stokes solution `nearby' these shallow water solutions; however doing so turns out to be rather subtle and we end up requiring persistence and stability of our shallow water heteroclinic orbits under a general class of nonautonomous perturbations. This perturbation theory is developed in a rather general setting in Section~\ref{subsection on perturbation theory for heteroclinic orbits}.

\subsection{Distinguished shallow water bore solutions}\label{subsection on distinguished shallow water bore solutions}

The one-dimensional viscous shallow water equations with gravity, laminar drag, and prescribed relative velocity flux, which in a certain sense are the limit of~\eqref{FBINSE, param. tuned and funny flux} as $\ep\to0$, are a coupled system of ODEs in the unknown height and (depth-averaged) velocity $\mathsf{H},\mathsf{U}:\R\to\R$ given by
\begin{equation}\label{The shallow water ODEs}
    \tp{4 - \mathsf{U}}\mathsf{H} = \m{A},\quad \mathsf{H}\tp{\mathsf{U} - 4}\mathsf{U}' + \m{a}\mathsf{U} - 4\upmu\tp{\mathsf{H}\mathsf{U}'}' + \m{g}\mathsf{H}\mathsf{H}' = 4\m{a}\mathsf{H}.
\end{equation}
The parameters are taken to satisfy: $\upmu,\m{a}>0$, $\m{g}\ge0$, and $\m{A}\in\tp{0,1}$. We are only interested in solutions to~\eqref{The shallow water ODEs} that satisfy $\mathsf{H}>0$ and $\mathsf{U}<4$. Under these assumptions, we can use the first identity in~\eqref{The shallow water ODEs} to solve for $\mathsf{U}$ as a function of $\mathsf{H}$, namely $\mathsf{U} = 4 - \m{A}/\mathsf{H}$. Then this expression can be substituted into the second identity of~\eqref{The shallow water ODEs} to derive the closed second order ordinary differential equation for the surface height $\mathsf{H}$ given by~\eqref{shallow water just H version} from the second item of Theorem~\ref{first main theorem}.

We make two more reformulations of these shallow water equations. Since we require that $\mathsf{H}>0$, it is convenient to swap unknowns to the function $\rho:\R\to\R$ such that $e^\rho =\mathsf{H}$. Then we calculate that $\rho$ obeys the Li\'enard equation~\eqref{The shallow water ODEs v3} for $F$ and $G$ defined in~\eqref{the hamiltonian and dissipation}.  Finally, our methods in this subsection rely on phase space analysis and thus~\eqref{The shallow water ODEs v3} is written as the first order system of ODEs in the unknown vector $X:\R\to\R^2$ with vector field $\Phi : \R^2 \to \R^2$, 
\begin{equation}\label{The shallow water ODEs v4}
    X' = \Phi(X) \text{ with } \quad\Phi(x) = \tp{x_2,F(x_1) - x_2G(x_1)}.
\end{equation}
Note the correspondence between~\eqref{The shallow water ODEs v3} and~\eqref{The shallow water ODEs v4} is given by $X = \tp{\rho,\rho'}$.

The equilibrium solutions to~\eqref{The shallow water ODEs v4} correspond to the zeros of $F$ which, in turn correspond to the states $\m{H}_{\pm}$ of~\eqref{sweq}. We use the notation
\begin{equation}\label{equilibria for the rho dude}
    \uprho_{\pm} = \log\m{H}_{\pm}\text{ so that }\Phi(\uprho_\pm,0) = 0.
\end{equation}
The points $\uprho_{\pm}$ are functions of $\m{A}$ alone and we will find it occasionally useful to write $\uprho_{\pm}\tp{\m{A}}$ to emphasize this dependence.

The following lemma builds a special region related to the potential energy~\eqref{the potential energy of the system} associated with $F$ from~\eqref{the hamiltonian and dissipation} and the equilibria~\eqref{equilibria for the rho dude}, which we dub the trapping region.
\begin{lemC}[Preliminary facts]\label{lem on preliminary facts}
    The following hold.
    \begin{enumerate}
        \item $F>0$ on $(\uprho_-,\uprho_+)$ with $\pm F'(\uprho_\pm)>0$.
        \item There exists a unique $\uprho_\star>\uprho_+$ such that $\int_{\uprho_-}^{\uprho_\star}F(s)\;\m{d}s = 0$.
        \item The function $v:[\uprho_-,\uprho_\star]\to[0,\infty)$ given by
        \begin{equation}\label{the maximal velocity function}
             v(x_1) = \bp{2\int_{\uprho_-}^{x_1}F(s)\;\m{d}s }^{1/2} = \tp{-2V(x_1)}^{1/2}
        \end{equation}
        is well-defined and continuously differentiable and satisfies $v(x_1)>0$ if $\uprho_-<x_1<\uprho_\star$ and $v(\uprho_-) = v(\uprho_\star) = 0$.
        \item The region $R = \tcb{x\in\R^2\;:\;\uprho_-\le x_1\le\uprho_\star,\quad|x_2|\le v(x_1)}$ is nonempty and compact.
        \item The map $G:\R\to\R$ is strictly decreasing and, if $\m{g}>0$, there exists a unique $\uprho_0\in\R$, such that $G(\uprho_0)=0$.
    \end{enumerate}
\end{lemC}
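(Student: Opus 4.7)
The plan for each of the five items is a direct computation from the explicit formulas~\eqref{the hamiltonian and dissipation} for $F$ and $G$, leveraging the algebraic structure of the end-state relation~\eqref{sweq}.

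For item (1), I would substitute $y = e^x$ and factor the bracketed expression in $F$ as $1 - e^x - \f{\m{A}}{4}e^{-x} = -e^{-x}(e^x - \m{H}_+)(e^x - \m{H}_-)$, using the Vieta identities $\m{H}_+ + \m{H}_- = 1$ and $\m{H}_+\m{H}_- = \m{A}/4$ from~\eqref{sweq}. This immediately reads off the sign of $F$ on $(\uprho_-,\uprho_+)$. Differentiation combined with the companion identity $\f{\m{A}}{4}e^{-\uprho_\pm} = \m{H}_\mp$ then yields $F'(\uprho_\pm) = \f{\m{a}}{\upmu \m{A}}(\m{H}_\mp - \m{H}_\pm)$, which is nonzero with the sign pattern consistent with $F > 0$ between the equilibria.

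For items (2)--(4), I would work with the antiderivative $W = -V = \int_{\uprho_-}^{\cdot} F(s)\,\m{d}s$. Item (1) says $W$ strictly increases on $(\uprho_-,\uprho_+)$ starting from $W(\uprho_-) = 0$, so $W(\uprho_+) > 0$; on $(\uprho_+,\infty)$, $W$ strictly decreases because $F < 0$ there, and the exponential blow-up $F(x) \sim -\f{\m{a}}{\upmu \m{A}}e^x$ drives $W(x) \to -\infty$. The intermediate value theorem together with strict monotonicity then produces the unique $\uprho_\star \in (\uprho_+,\infty)$ with $W(\uprho_\star) = 0$. The function $v = \sqrt{2W}$ is then well-defined and nonnegative on $[\uprho_-,\uprho_\star]$, strictly positive on the interior, and $C^1$ on the open interval through $v' = F/v$; extension to $\uprho_-$ follows from the Taylor expansion $W(x) = \f{F'(\uprho_-)}{2}(x-\uprho_-)^2 + O((x-\uprho_-)^3)$, which gives $v(x) \sim \sqrt{F'(\uprho_-)}(x-\uprho_-)$ near $\uprho_-$. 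The region $R$ is bounded (trapped in the strip $\uprho_- \le x_1 \le \uprho_\star$ with $|x_2|$ controlled by the continuous function $v$) and closed (as an intersection of subepigraph-type closed sets), hence compact, and clearly nonempty.

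For item (5), direct differentiation gives $G'(x) = -\f{1}{4\upmu}\bp{\m{A} e^{-x} + \f{2\m{g}}{\m{A}}e^{2x}}$, which is strictly negative for every $x \in \R$ since $\m{A} > 0$; when $\m{g} > 0$, the limits $G(x) \to +\infty$ as $x \to -\infty$ and $G(x) \to -\infty$ as $x \to +\infty$ combine with strict monotonicity to produce the unique zero $\uprho_0$. The only subtle point in the whole argument is the endpoint behavior of $v$ at $\uprho_\star$: since $W$ vanishes only to first order there (with slope $F(\uprho_\star) < 0$), the square-root profile of $v$ has a divergent one-sided derivative, so the phrase ``continuously differentiable'' in item (3) should be interpreted as $C^1$ on the open interval with continuous extension to the closed interval, which is the only form in which $v$ is needed subsequently.
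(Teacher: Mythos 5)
Your proof is correct and follows essentially the same route as the paper, with a few small cosmetic differences. For item (1) the paper argues via strict concavity of $F$ (it tends to $-\infty$ at both ends and has exactly the two zeros $\uprho_\pm$), while you read off the sign directly from the factorization $1 - e^x - \tfrac{\m{A}}{4}e^{-x} = -e^{-x}(e^x - \m{H}_+)(e^x - \m{H}_-)$; both are sound and your explicit formula $F'(\uprho_\pm) = \tfrac{\m{a}}{\upmu\m{A}}(\m{H}_\mp - \m{H}_\pm)$ is a clean bonus. For item (5) the paper solves $G(\uprho_0)=0$ explicitly to get $\uprho_0 = \tfrac13\log(\m{A}^2/\m{g})$, while you invoke IVT; again equivalent. (Incidentally, the paper's displayed formula for $G'$ is missing a factor of $2$ on the $e^{2x}$ term; your computation $G'(x) = -\tfrac{1}{4\upmu}\bigl(\m{A}e^{-x} + \tfrac{2\m{g}}{\m{A}}e^{2x}\bigr)$ is the correct one, and the sign conclusion is of course unaffected.)

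The one genuinely valuable observation you add concerns item (3): since $F(\uprho_\star)<0$, the potential $W = -V$ vanishes only to first order at $\uprho_\star$, so $v = \sqrt{2W}$ has a one-sided derivative that diverges at the right endpoint. Thus $v$ is $C^1$ on $[\uprho_-,\uprho_\star)$ (with a finite one-sided derivative $v'(\uprho_-) = \sqrt{F'(\uprho_-)}$ at the hyperbolic end, where $W$ vanishes to second order) but not at $\uprho_\star$. The paper states ``continuously differentiable'' without qualification and dispatches items (3)--(4) as ``simple calculations,'' so your reading — $C^1$ on the interior, continuous up to the closed interval — is the correct interpretation. This is harmless downstream: the later arguments (e.g., the fifth item of Lemma~\ref{lem on linear character of equilibira} and the trapping estimates in Proposition~\ref{prop on trapping region}) only use $v'$ at $\uprho_-$, never at $\uprho_\star$.
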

\begin{proof}
    The function $F$ from~\eqref{the hamiltonian and dissipation} is strictly concave, $F(x_1)\to-\infty$ as $|x_1|\to\infty$, and $F(x_1)=0$ if and only if $x_1=\uprho_\pm$; therefore, the first item must hold.     The existence and uniqueness of the second item follows by the intermediate value theorem and the fact that $x_1\mapsto \int_{\uprho_+}^{x_1}F(s)\;\m{d}s$ is strictly deceasing to $-\infty$ as $x_1\to\infty$.    The third and fourth items now follow from simple calculations.     The fact that $G$ is strictly deceasing following from the observation that $G'(x_1) = -\tp{\m{A}e^{-x_1} + \tp{\m{g}/\m{A}}e^{2x_1}}/4\upmu <0$. If $\m{g}>0$ we compute directly that $\uprho_0 = \log\tp{\m{A}^2/\m{g}}/3$. This gives the fifth item.
\end{proof}

\begin{figure}[!h]
    \centering
    \scalebox{0.145}{\includegraphics{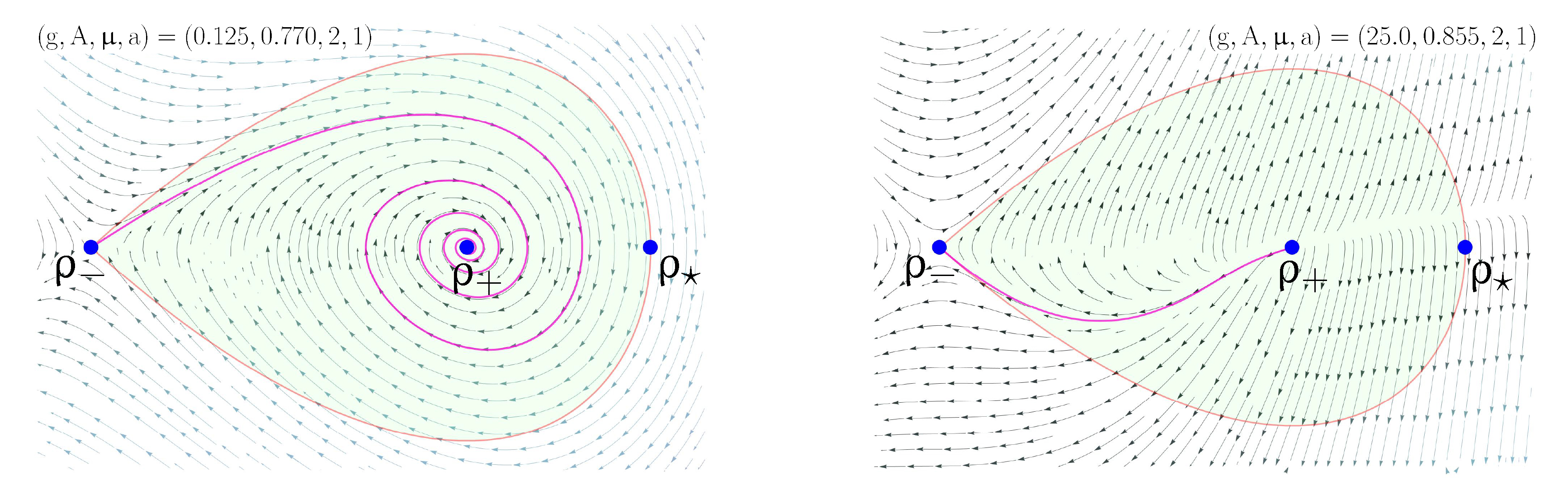}}
    \caption{Shown here are a streamline plots of the vector field $\Phi$ from~\eqref{The shallow water ODEs v4} superimposed on the trapping region $R$ (from the fourth item of Lemma~\ref{lem on preliminary facts}) in green, with the boundary of $R$ shown in red. Both plots obey $\tp{\upmu,\m{a}} = (2,1)$. The plot on the left corresponds to parameter values $\tp{\m{g},\m{A}} = (0.125,0.770)\in\mathfrak{C}_1$ whereas the plot on the right has $(\m{g},\m{A}) = (25.0,0.855)\in\mathfrak{C}_{-1}$. The sets $\mathfrak{C}_{\pm 1}$ are defined in~\eqref{the sets of different chirality}. The curves in pink are the global unstable/stable manifolds for the left equilibrium intersected with $R$; we construct these trajectories in the second and fourth items of Proposition~\ref{prop on trapping region}.}
    \label{comparison of the trapping regions}
\end{figure}

We note that like the points $\uprho_{\pm}$, the point $\uprho_\star$ is a function of $\m{A}$ alone and so we will also write $\uprho_\star\tp{\m{A}}$, when necessary. The next lemma studies the sign of the function $G$ on the interval $[\uprho_-,\uprho_\star]$. Notice that from the fifth item of Lemma~\ref{lem on preliminary facts}, we learn that $G$ is signed on $[\uprho_-,\uprho_\star]$ if and only if $G\tp{\uprho_\star}>0$ or $G(\uprho_-)<0$. This is what motivates the forthcoming definition~\eqref{the sets of different chirality}.
\begin{lemC}[Conditions for a sign]\label{lem  on sufficient conditions for a sign}
    Let
    \begin{equation}\label{the sets of different chirality}
        \mathfrak{C}_1 = \tcb{\tp{\m{g},\m{A}}\in[0,\infty)\times(0,1)\;:\;G(\uprho_\star\tp{\m{A}})>0}
        \text{ and }
        \mathfrak{C}_{-1} = \tcb{\tp{\m{g},\m{A}}\in[0,\infty)\times(0,1)\;:\;G(\uprho_-\tp{\m{A}})<0}.
    \end{equation} 
    The following hold for $\tp{\m{g},\m{A}}\in[0,\infty)\times\tp{0,1}$.
    \begin{enumerate}
        \item We have $(\m{g},\m{A})\in\mathfrak{C}_1$ if and only if $\m{g}<\m{A}^2e^{-3\uprho_\star\tp{\m{A}}}$ and $(\m{g},\m{A})\in\mathfrak{C}_{-1}$ of and only if $\m{g}>\m{A}^2e^{-3\uprho_-\tp{\m{A}}}$. In particular, for $\iota\in\tcb{-1,1}$ we have that
        \begin{equation}\label{non-degenerate I}
            \tcb{\tp{\tilde{g},\m{A}}\;:\;\tilde{g}\in[0,\infty)}  \cap\mathfrak{C}_\iota \neq\es      .
        \end{equation}
        \item For $\iota\in\tcb{-1,1}$, we have the inclusion $\mathfrak{C}_{\iota}\subset\tcb{\tp{\tilde{g},\tilde{A}}\in\R^2\;:\; \iota\tp{\tilde{g}-8}<0}$.
        \item If $\m{g}<8$, there exists $\tilde{\m{A}}\in(0,1)$ such that $\tp{\m{g},\tilde{\m{A}}}\in\mathfrak{C}_1$. If $\m{g}>8$, there exists $\tilde{\m{A}}\in(0,1)$ such that $\tp{\m{g},\tilde{\m{A}}}\in\mathfrak{C}_{-1}$. If $\m{g} = 8$, then no matter the choice of $\tilde{\m{A}}\in\tp{0,1}$, we have $\tp{\m{g},\tilde{\m{A}}}\not\in\tp{\mathfrak{C}_{-1}\cup\mathfrak{C}_1}$.
    \end{enumerate}
\end{lemC}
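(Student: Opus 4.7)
The plan is to dispatch the three items in order, exploiting the explicit formula for $G$ in~\eqref{the hamiltonian and dissipation}, the monotonicity of $G$ from the fifth item of Lemma~\ref{lem on preliminary facts}, and the algebraic identity $\m{A}/4 = \m{H}_-\m{H}_+ = \m{H}_-\tp{1-\m{H}_-}$ that is built into~\eqref{sweq}. Item 1 is essentially a restatement: the equivalences $G\tp{\uprho_\star}>0\Leftrightarrow \m{g}<\m{A}^2e^{-3\uprho_\star}$ and $G\tp{\uprho_-}<0\Leftrightarrow \m{g}>\m{A}^2e^{-3\uprho_-}$ follow by unpacking the formula for $G$ together with $e^{\uprho_-}=\m{H}_-$. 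For nonemptiness in~\eqref{non-degenerate I}, I would observe that $\m{g}=0$ makes $G$ positive everywhere, so $\tp{0,\m{A}}\in\mathfrak{C}_1$, while choosing $\m{g}>\m{A}^2/\m{H}_-\tp{\m{A}}^3$ places $\tp{\m{g},\m{A}}$ in $\mathfrak{C}_{-1}$.

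The key maneuver for item 2 is to bypass the implicitly defined $\uprho_\star$ via monotonicity.  Since $\uprho_\star>\uprho_+$ and $G$ is strictly decreasing, $G\tp{\uprho_\star}>0$ forces $G\tp{\uprho_+}>0$, which upon using $e^{\uprho_+}=\m{H}_+$ and $\m{A}^2 = 16\m{H}_-^2\m{H}_+^2 = 16\tp{1-\m{H}_+}^2\m{H}_+^2$ rewrites as $\m{g}<16\tp{1-\m{H}_+}^2/\m{H}_+$.  The elementary inequality $16\tp{1-x}^2/x\le 8 \Leftrightarrow 2x^2-5x+2\le 0$ has roots $1/2$ and $2$, so the strict version gives $\m{g}<8$ when $\m{H}_+\in\tp{1/2,1}$.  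The $\mathfrak{C}_{-1}$ inclusion is symmetric: $G\tp{\uprho_-}<0$ yields $\m{g}>16\tp{1-\m{H}_-}^2/\m{H}_-$, and the same quadratic analysis on $\m{H}_-\in\tp{0,1/2}$ delivers $\m{g}>8$.

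Item 3 then reduces to understanding how the two threshold functions behave as $\m{A}\to 1^-$.  The map $\m{A}\mapsto \m{A}^2 e^{-3\uprho_-\tp{\m{A}}} = 16\tp{1-\m{H}_-}^2/\m{H}_-$ is entirely explicit and, by direct differentiation, strictly decreases on $\m{H}_-\in\tp{0,1/2}$ with value $8$ at $\m{H}_-=1/2$, hence surjects onto $\tp{8,\infty}$; this realizes every $\m{g}>8$.  For the other threshold I would establish $\uprho_\star\tp{\m{A}}\to -\log 2$ as $\m{A}\to 1^-$ by a compactness argument: any subsequential limit $L$ of $\uprho_\star\tp{\m{A}}$ must satisfy $L\ge -\log 2$ because $\uprho_\star>\uprho_+\to -\log 2$, while passing to the limit in $\int_{\uprho_-\tp{\m{A}}}^{\uprho_\star\tp{\m{A}}} F\tp{s}\;\m{d}s=0$ and using the factorization of $F$ at $\m{A}=1$, namely $F\tp{s}=-\tp{\m{a}/\upmu}e^{-s}\tp{e^s - 1/2}^2$ (nonpositive with unique zero at $s=-\log 2$), forces $L=-\log 2$.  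Combined with the strict upper bound from item 2, this yields that $\m{A}^2 e^{-3\uprho_\star\tp{\m{A}}}$ increases to $8$ from below, so every $\m{g}<8$ is realized.  The borderline case $\m{g}=8$ belongs to neither region as a consequence of the strict inequalities in item 2.

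The principal technical obstacle is the proof that $\uprho_\star\tp{\m{A}}\to -\log 2$ as $\m{A}\to 1^-$, since $\uprho_\star$ is only defined implicitly through the vanishing of the potential-energy integral; everything else is bookkeeping atop the monotonicity of $G$ and the polynomial structure of $\m{H}_\pm$.
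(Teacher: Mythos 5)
Your proposal is correct and follows the same overall template as the paper (item~1 is essentially identical in both), but your treatments of items~2 and~3 take a genuinely different, and in places tighter, route. For item~2 the paper reasons through the zero $\uprho_0$ of $G$, showing $\m{g}\ge 8\Rightarrow\uprho_0<\log\tp{1/2}<\uprho_\star\Rightarrow G\tp{\uprho_\star}<0$; you instead deduce $G\tp{\uprho_\star}>0\Rightarrow G\tp{\uprho_+}>0$ from the monotonicity of $G$ together with $\uprho_\star>\uprho_+$, thereby replacing the implicitly-defined $\uprho_\star$ by the explicit $\uprho_+=\log\m{H}_+$, and then reducing everything to the quadratic inequality $16\tp{1-x}^2/x<8$ on $\tp{1/2,1}$, a route that is visibly symmetric with your $\mathfrak{C}_{-1}$ argument. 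For item~3 you observe that the $\m{g}=8$ exclusion is an immediate corollary of item~2 (the paper re-derives it by a fresh $\uprho_0$ calculation), and you sketch a proof of the limit $\lim_{\m{A}\to1^-}\m{A}^2 e^{-3\uprho_\star\tp{\m{A}}}=8$ that the paper asserts without justification, exploiting the clean factorization $F\tp{s}\to-\tp{\m{a}/\upmu}e^{-s}\tp{e^s-\tfrac12}^2$ at $\m{A}=1$. The one step you should spell out is the a priori boundedness of $\uprho_\star\tp{\m{A}}$ underlying your compactness argument. This is cheap: from $\int_{\uprho_-}^{\uprho_\star}F=0$ one gets $\int_{\uprho_+}^{\uprho_\star}\tp{-F}=\int_{\uprho_-}^{\uprho_+}F$, and the right side tends to $0$ as $\m{A}\to1^-$ since the interval $[\uprho_-,\uprho_+]$ shrinks to $\{-\log 2\}$ with $F$ bounded there, while $-F\ge\tp{\m{a}/\upmu}\tp{e-1}>0$ on $[1,2]$ uniformly in $\m{A}\in\tp{0,1}$; hence $\uprho_\star<2$ once $\m{A}$ is close enough to $1$.
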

\begin{proof}
    If $\uprho\in\tcb{\uprho_-,\uprho_+}$ then a direct computation from the definition of $G$ shows that $\pm G(\uprho)>0$ if and only if $0 < \pm\tp{\m{A}^2e^{-3\uprho} - \m{g}}$. The asserted equivalence of the first item now follows. The assertion~\eqref{non-degenerate I} follows by taking $\m{g}$ sufficiently small in the case $\iota = 1$ and by taking $\m{g}$ sufficiently large in the case $\iota = -1$.

    For the second item, consider first the case that $\iota=1$. If $\m{g}\ge8$, then $\uprho_0\le\f13\log\tp{\m{A}^2/8}<\log(1/2)<\uprho_\star$ and hence $G(\uprho_\star)<0$ and so $\tp{\m{g},\m{A}}\not\in\mathfrak{C}_1$. On the other hand, if $\iota = -1$ and $\tp{\m{g},\m{A}}\in\mathfrak{C}_{-1}$, then
    \begin{equation}
        \m{g}>\m{A}^2e^{-3\uprho_-\tp{\m{A}}} = 8\tp{1 + \tp{1 - \m{A}}^{1/2}}^3/\m{A}\ge 8.
    \end{equation}

    The third item's positive inclusions follow by taking $\m{A}\in(0,1)$ sufficiently close to $1$ and using that 
    \begin{equation}
        \lim_{\m{A}\to1}\m{A}^2e^{-3\uprho_\star\tp{\m{A}}} = \lim_{\m{A}\to1}\m{A}^2e^{-3\uprho_-\tp{\m{A}}} = 8.
    \end{equation}
    The claimed negative inclusion follows from $\m{g}=8$ implying that $\uprho_0 = \log\tp{\tilde{\m{A}}^{2/3}/2}$ (from the fifth item of Lemma~\ref{lem on preliminary facts}). Then $\uprho_+\tp{\tilde{\m{A}}}=\log\m{H}_+\tp{\tilde{\m{A}}}>\log\tp{1/2}\ge\uprho_0$ which, by $\uprho_\star>\uprho_+$ , implies that $G(\uprho_\star)<0$. Conversely, we have $\uprho_-\tp{\tilde{\m{A}}} = \log\m{H}_-\tp{\tilde{\m{A}}}<\log\tp{\tilde{\m{A}}/2}\le\uprho_0$ which implies that $G(\uprho_-)>0$.
\end{proof}

See Figure~\ref{the left and right handed bore regions} for a plot of the regions~\eqref{the sets of different chirality}. Next, we study the zeros $(\uprho_\pm,0)$ of the map $\Phi$ from~\eqref{The shallow water ODEs v4}.

\begin{lemC}[Linear character of equilibria]\label{lem on linear character of equilibira}
Define the maps $\lambda_{\pm}:\R\to\C$ via
\begin{equation}
    \lambda_{\pm}\tp{x_1} = \tp{-G(x_1) \pm \sqrt{\tp{G(x_1)}^2 + 4F'(x_1)}}/2,
\end{equation}
where $F$ and $G$ are as in~\eqref{the hamiltonian and dissipation}. The following hold.
\begin{enumerate}
    \item We have that $\m{spec}\tp{D\Phi\tp{\uprho_{\pm},0}} = \tcb{\lambda_-\tp{\uprho_\pm},\lambda_+\tp{\uprho_\pm}}\subset\C$.
    \item If $G(\uprho_+)>0$, then $\tp{\uprho_+,0}$ is an asymptotically stable equilibrium for $\Phi$ forward in time (a sink). On the other hand, if $G(\uprho_+)<0$, then $\tp{\uprho_+,0}$ is an asymptotically stable equilibrium for $\Phi$ backward in time (a source).
    \item We have $\lambda_{\pm}(\uprho_-)\in\R$ and $\lambda_{-}\tp{\uprho_-}<0<\lambda_{+}\tp{\uprho_+}$. In particular, $\tp{\uprho_-,0}$ is a hyperbolic equilibrium for $\Phi$.
    \item The stable manifold at $\tp{\uprho_-,0}$ is tangent to $\tp{1,\lambda_-\tp{\uprho_-}}$, while the unstable manifold at $\tp{\uprho_-,0}$ is tangent to $\tp{1,\lambda_+\tp{\uprho_-}}$.
    \item If $G(\uprho_-)>0$ we have the bounds
    \begin{equation}\label{important_bound_1}
        \lambda_-\tp{\uprho_-}<-v'(\uprho_-)\text{ and }\lambda_+\tp{\uprho_-}<v'(\uprho_-),
    \end{equation}
    while if $G(\uprho_-)<0$, we have the bounds
    \begin{equation}\label{important_bound_2}
        \lambda_-(\uprho_-)>-v'(\uprho_-)\text{ and }\lambda_+\tp{\uprho_-}>v'(\uprho_-).
    \end{equation}
    
\end{enumerate}
\end{lemC}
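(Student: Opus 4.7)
The plan is to compute the Jacobian $D\Phi$ explicitly, extract its characteristic polynomial, and then combine the sign information about $F'$ from Lemma~\ref{lem on preliminary facts} with a Taylor analysis of $v$ near $\uprho_-$. Differentiating $\Phi$ yields
\[
D\Phi(x) = \bpm 0 & 1 \\ F'(x_1) - x_2 G'(x_1) & -G(x_1) \epm,
\]
so at the equilibria $(\uprho_\pm,0)$ the characteristic polynomial is $\lambda^2 + G(\uprho_\pm)\lambda - F'(\uprho_\pm) = 0$, whose roots are exactly the numbers $\lambda_\pm(\uprho_\pm)$ defined in the statement. This proves item 1.

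For item 2, I would use $F'(\uprho_+) < 0$ from Lemma~\ref{lem on preliminary facts}, so the product of eigenvalues $\lambda_-(\uprho_+)\lambda_+(\uprho_+) = -F'(\uprho_+)$ is strictly positive while their sum is $-G(\uprho_+)$. Whether the discriminant is positive (real eigenvalues sharing a sign) or negative (a complex conjugate pair), the sign of the real part is the sign of $-G(\uprho_+)$, yielding the sink/source dichotomy. For item 3, the opposite sign $F'(\uprho_-) > 0$ forces the discriminant $G(\uprho_-)^2 + 4F'(\uprho_-)$ to strictly exceed $G(\uprho_-)^2$, so $\sqrt{G(\uprho_-)^2 + 4F'(\uprho_-)} > |G(\uprho_-)|$ and hence $\lambda_-(\uprho_-) < 0 < \lambda_+(\uprho_-)$; hyperbolicity is then immediate. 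Item 4 follows from the stable manifold theorem: the eigenvector equation $D\Phi(\uprho_-,0)(a,b)^{\m{t}} = \lambda(a,b)^{\m{t}}$ forces $b = \lambda a$, so $(1,\lambda_\pm(\uprho_-))$ are the respective eigenvectors, and these are tangent to the unstable and stable manifolds.

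The main obstacle is item 5, the quantitative comparison with $v'(\uprho_-)$. I would first differentiate $v^2 = 2\int_{\uprho_-}^\cdot F$ to obtain $v'(x_1) = F(x_1)/v(x_1)$, which is indeterminate at $\uprho_-$; a Taylor expansion using $F(\uprho_-) = 0$ and $F'(\uprho_-) > 0$ gives $v(x_1)\sim \sqrt{F'(\uprho_-)}(x_1 - \uprho_-)$ as $x_1\to\uprho_-^+$, whence $v'(\uprho_-) = \sqrt{F'(\uprho_-)}$. Both inequalities in~\eqref{important_bound_1} and~\eqref{important_bound_2} then take the form
\[
\pm\sqrt{G(\uprho_-)^2 + 4F'(\uprho_-)} \lessgtr G(\uprho_-) \pm 2\sqrt{F'(\uprho_-)}.
\]
I would rearrange each inequality so that the side to be squared is nonnegative (handling separately, and trivially, the degenerate subcases in which the other side is already negative), square, and simplify; in every case the resulting inequality reduces to the sign of the single quantity $G(\uprho_-)\sqrt{F'(\uprho_-)}$. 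Thus the case $G(\uprho_-) > 0$ delivers~\eqref{important_bound_1} and the case $G(\uprho_-) < 0$ delivers~\eqref{important_bound_2}. The only real subtlety is the careful bookkeeping of signs when deciding which side may be squared, but the underlying algebra is elementary.
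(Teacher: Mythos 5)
Your proposal is correct and follows essentially the same route as the paper: compute the Jacobian, read off the characteristic polynomial, use the signs of $F'(\uprho_\pm)$ from Lemma~\ref{lem on preliminary facts} together with $v'(\uprho_-)=\sqrt{F'(\uprho_-)}$, and for item 5 square to reduce each inequality to the sign of $G(\uprho_-)\sqrt{F'(\uprho_-)}$. The paper presents item 5 slightly more compactly by starting from the expansion $\tp{-G(\uprho_-)+2\sqrt{F'(\uprho_-)}}^2=G(\uprho_-)^2+4F'(\uprho_-)-4G(\uprho_-)\sqrt{F'(\uprho_-)}$, which sidesteps the sign-of-the-squared-side bookkeeping, but the underlying algebra is identical.
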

\begin{proof}
    The first item is a standard calculation. The second and third items follow from $F'(\uprho_-)>0$ and $F'(\uprho_+)<0$ (see first item of Lemma~\ref{lem on preliminary facts}). The fourth item is now a standard calculation and application of the (un)stable manifold theorem (see, for instance, Section 2.7 in Perko~\cite{MR1801796}).
    
    For the fifth item consider first the equality
    \begin{equation}\label{start here}
        \tp{-G(\uprho_-) + 2\sqrt{F'(\uprho_-)}}^2  = \tp{G(\uprho_-)}^2 + 4F'(\uprho_-) - 4G(\uprho_-)\sqrt{F'(\uprho_-)}.
    \end{equation}
    If $G(\uprho_-)>0$, then the final term above is negative and so we deduce that
    \begin{equation}
        -G(\uprho_-) + 2\sqrt{F'(\uprho_-)}<\sqrt{\tp{G(\uprho_-)}^2 + 4F'(\uprho_-)}.
    \end{equation}
    Since $v'(\uprho_-) = \sqrt{F'(\uprho_-)}$ we deduce from the above that $-v'(\uprho_-)>\lambda_-\tp{\uprho_-}$.
    
    On the other hand, if we suppose that $G(\uprho_-)<0$, then the last term in~\eqref{start here} is positive and so we find 
    \begin{equation}
        -G(\uprho_-) + 2\sqrt{F'(\uprho_-)}>\sqrt{\tp{G(\uprho_-)}^2 + 4F'(\uprho_-)}
    \end{equation}
    which implies that $\lambda_-\tp{\uprho_-}>-v'(\uprho_-)$. Similar arguments will verify the right hand inequalities in~\eqref{important_bound_1} and~\eqref{important_bound_2}.
\end{proof}

Recall that the region $R$ is defined in the fourth item of Lemma~\ref{lem on preliminary facts} and the map $\Phi$ is defined in~\eqref{The shallow water ODEs v4}. The following result says, in particular, that if $G>0$ on the region $R$, then this region is invariant forward in time for the flow of $\Phi$ whereas if $G<0$ on $R$ then the region is backwards in time invariant. See Figure~\ref{comparison of the trapping regions} for a depiction of the flow of $\Phi$ over the region $R$ in either of these two cases.
\begin{propC}[Trapping region]\label{prop on trapping region}
    The following hold.
    \begin{enumerate}
        \item Assume $G(\uprho_\star)>0$. For all initial data $X_0\in R$ there exists a unique smooth $X:[0,\infty)\to\R^2$ solving the initial value problem $X(0) = X_0$ and $X'(t) = \Phi(X(t))$ for all $t\in[0,\infty)$. Additionally, $X(t)\in R$ for all $t\in[0,\infty)$ and, if $X(0)\neq\tp{\uprho_-,0}$, then $X(t)\to\tp{\uprho_+,0}$ as $t\to\infty$. Moreover, $X$ obeys the bounds
        \begin{equation}\label{positive bounds}
            \sqrt{G(\uprho_\star)}\tnorm{e_2\cdot X}_{L^2\tp{(0,\infty)}} + \tnorm{e_2\cdot X}_{L^\infty\tp{(0,\infty)}}\le 2\max_{[\uprho_-,\uprho_\star]}v.
        \end{equation}
        \item Assume $G(\uprho_\star)>0$. The global stable manifold at $\tp{\uprho_-,0}$ intersects $R$ precisely at $\tp{\uprho_-,0}$, and the global unstable manifold at $\tp{\uprho_-,0}$ intersected with $R$ consists of a heteroclinic orbit from $\tp{\uprho_-,0}$ to $\tp{\uprho_+,0}$.
        \item Assume $G(\uprho_-)<0$. For all terminal data $X_0\in R$ there exists a unique smooth $X:(-\infty,0]\to\R^2$ solving the terminal value problem $X(0) = X_0$ and $X'(t) = \Phi(X(t))$ for all $t\in(-\infty,0]$. Additionally, $X(t)\in R$ for all $t\in(-\infty,0]$ and, if $X(0)\neq\tp{\uprho_-,0}$, then $X(t)\to\tp{\uprho_{+},0}$ as $t\to-\infty$. Moreover, $X$ obeys the bounds
        \begin{equation}\label{negative bounds}
            \sqrt{-G(\uprho_-)}\tnorm{e_2\cdot X}_{L^2\tp{(-\infty,0)}} + \tnorm{e_2\cdot X}_{L^\infty\tp{(-\infty,0)}}\le 2\max_{[\uprho_-,\uprho_\star]}v.
        \end{equation}
        \item Assume $G(\uprho_-)<0$. The global unstable manifold at $\tp{\uprho_-,0}$ intersects $R$ precisely at $\tp{\uprho_-,0}$, and the global stable manifold at $\tp{\uprho_-,0}$ intersected with $R$ consists of a heteroclinic orbit from $\tp{\uprho_+,0}$ to $\tp{\uprho_-,0}$.
    \end{enumerate}
\end{propC}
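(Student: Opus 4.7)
The plan is to run an energy/LaSalle argument built around the mechanical energy $E\colon \R^2 \to \R$ defined by $E(x) = x_2^2/2 + V(x_1)$, where $V$ is as in~\eqref{the potential energy of the system}. Direct differentiation along any $C^1$ solution of $X' = \Phi(X)$ gives the dissipation identity
\begin{equation}\label{energy dissipation}
    \f{d}{dt} E(X(t)) = -G(x_1(t))\,x_2(t)^2.
\end{equation}
Using $v(x_1)^2 = -2V(x_1)$ from~\eqref{the maximal velocity function} together with $V(\uprho_-) = V(\uprho_\star) = 0$ from Lemma~\ref{lem on preliminary facts}, the trapping region admits the sublevel-set description $R = \tcb{x \in \R^2 : \uprho_- \le x_1 \le \uprho_\star,\ E(x) \le 0}$. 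Since $G$ is strictly decreasing (Lemma~\ref{lem on preliminary facts}, fifth item), the hypothesis $G(\uprho_\star) > 0$ of items 1--2 is equivalent to $G > 0$ on $[\uprho_-, \uprho_\star]$, while $G(\uprho_-) < 0$ of items 3--4 is equivalent to $G < 0$ there.

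For items 1 and 2 I assume $G(\uprho_\star) > 0$. Local forward existence and uniqueness are standard since $\Phi$ is smooth, and by~\eqref{energy dissipation} the energy $E$ is nonincreasing along the trajectory. Hence $X(t)$ lies in the connected component of $\tcb{E \le E(X_0)}$ containing $X_0$; an elementary analysis of the shape of $V$ (unimodal on $[\uprho_-, \uprho_\star]$ with minimum $V(\uprho_+) < 0$, positive for $x_1 > \uprho_\star$, and tending to $-\infty$ as $x_1 \to -\infty$), together with the uniqueness-based fact that a non-equilibrium trajectory cannot reach $(\uprho_-, 0)$ in finite time, confirms that this component lies in $R$. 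The one delicate boundary point is the cusp $X_0 = (\uprho_\star, 0)$, where both $\Phi$ and $\partial R$ have vertical tangents; a third-order Taylor expansion produces $E(X(t)) = -\tfrac{1}{3} F(\uprho_\star)^2 G(\uprho_\star) t^3 + O(t^4) < 0$ for small $t > 0$, ensuring immediate entry into the interior of $R$. The trajectory is therefore trapped in the compact set $R$ and extends globally. Convergence to $(\uprho_+, 0)$ then follows from LaSalle's invariance principle: the nonempty, connected $\omega$-limit set is contained in the largest invariant subset of $\tcb{x \in R : x_2 = 0} = [\uprho_-, \uprho_\star] \times \tcb{0}$, which is $\tcb{(\uprho_-,0),(\uprho_+,0)}$, and connectedness forces $\omega(X_0)$ to equal one of these singletons. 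The estimate~\eqref{positive bounds} falls out of integrating~\eqref{energy dissipation}: on $R$ one has $G \ge G(\uprho_\star)$ and $V \ge V(\uprho_+) = -v(\uprho_+)^2/2$, so $G(\uprho_\star)\tnorm{x_2}_{L^2}^2 \le -V(\uprho_+)$, and pointwise $x_2^2/2 = E - V \le -V(\uprho_+)$.

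The (un)stable manifold structure in item 2, and the exclusion of $\omega(X_0) = \tcb{(\uprho_-,0)}$ for $X_0 \ne (\uprho_-,0)$ in item 1, are driven by the fourth and fifth items of Lemma~\ref{lem on linear character of equilibira}. Since $G(\uprho_-) \ge G(\uprho_\star) > 0$, the eigenvalues satisfy $\lambda_-(\uprho_-) < -v'(\uprho_-) < 0 < \lambda_+(\uprho_-) < v'(\uprho_-)$. The local unstable direction $(1, \lambda_+(\uprho_-))$ points strictly into $R$, so the corresponding branch of the unstable manifold enters $R$ and, by item 1, its forward orbit converges to $(\uprho_+, 0)$, producing the asserted heteroclinic orbit; the other branch has $x_1 < \uprho_-$ and immediately leaves the strip. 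The local stable direction $(1, \lambda_-(\uprho_-))$ lies strictly below the boundary curve $x_2 = -v(x_1)$ for $x_1 > \uprho_-$, placing the local stable manifold outside $R$ except at the equilibrium. Running the stable manifold backward in time, identity~\eqref{energy dissipation} and the positivity of $G$ force $E$ to strictly increase off the equilibrium, so the global stable manifold remains in $\tcb{E > 0}$ and never re-enters $R \subset \tcb{E \le 0}$.

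Items 3 and 4 follow by the time-reversal symmetry $Y(t) = X(-t)$, which satisfies $Y' = -\Phi(Y)$ and $\f{d}{dt} E(Y(t)) = +G(y_1)\, y_2^2$; under the hypothesis $G(\uprho_-) < 0$ we have $G < 0$ on $[\uprho_-, \uprho_\star]$, so $E$ is again nonincreasing along $Y$, and the arguments above transfer verbatim with the roles of the stable and unstable manifolds interchanged. I expect the most delicate step to be the cusp analysis at $(\uprho_\star, 0)$, where the vector field is tangent to a boundary with vertical tangent and forward entry into $R$ is visible only at cubic order in $t$; the rest of the proof is standard energy/LaSalle/invariant-manifold bookkeeping leveraging the sharp eigenvalue inequalities of Lemma~\ref{lem on linear character of equilibira}.
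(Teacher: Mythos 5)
Your proposal is built around the same energy mechanism as the paper's proof --- the potential $V$, the quantity $E(x) = x_2^2/2 + V(x_1)$, the dissipation identity, the characterization of $R$ as the sublevel set $\tcb{E\le 0}\cap\tcb{\uprho_-\le x_1\le\uprho_\star}$, trapping plus a limit-set argument, and the eigenvalue inequalities from Lemma~\ref{lem on linear character of equilibira} for the manifold structure (the LaSalle versus Poincar\'e--Bendixson distinction is essentially cosmetic in this planar autonomous setting). Where the two diverge is in bookkeeping. The paper integrates the scalar Li\'enard equation to produce the inequality~\eqref{energy estimates} on a uniform time horizon $T_R$, which has the pleasant side effect of handling the cusp $(\uprho_\star,0)$ automatically: with $\rho(0)=\uprho_\star$ and $\rho'(0)=0$ the bracketed term in~\eqref{energy estimates} vanishes, so $\rho(t)\le\uprho_\star$ falls out immediately without any Taylor expansion. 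Your direct sublevel-set formulation, by contrast, forces you to peer at the cusp to cubic order because the vector field and the boundary of $R$ are both vertically tangent there; the expansion you wrote down is correct, but it is extra work the integrated inequality sidesteps. You also need to be a bit more careful than you are about the component argument when $E(X_0)=0$: the set $\tcb{E\le 0}$ is actually connected through the pinch point $(\uprho_-,0)$, so the trajectory is confined to $R$ not because the sublevel-set component lies in $R$, but because the trajectory cannot cross the line $x_1=\uprho_-$ (where $E\ge 0$ with equality only at the equilibrium) without hitting the equilibrium, which uniqueness forbids. You gesture at this, but the phrasing ``this component lies in $R$'' is not literally true at the zero level.

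There is one genuine gap worth fixing. In your argument that the stable manifold meets $R$ only at $(\uprho_-,0)$, you claim that running the stable manifold backward in time forces $E$ to strictly increase, so it never re-enters $\tcb{E\le 0}$. But the dissipation identity gives $\dot E = -G(x_1)x_2^2$, and $G$ is positive only for $x_1 < \uprho_0 = \tfrac13\log(\m{A}^2/\m{g})$ (when $\m{g}>0$). The hypothesis $G(\uprho_\star)>0$ guarantees $\uprho_0 > \uprho_\star$, but it does not confine the stable manifold to the strip $[\uprho_-,\uprho_\star]$: tracing the right branch backward in time pushes $x_1$ to larger values, and once $x_1$ exceeds $\uprho_0$ the sign of $G$ flips and $E$ can decrease backward in time. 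Nothing in your argument prevents the stable manifold from exiting the strip, losing energy while outside, and re-entering $R$. The clean fix --- which is exactly what the paper does --- is to invoke item~1: if a point of the stable manifold other than $(\uprho_-,0)$ lay in $R$, its forward orbit would converge to $(\uprho_+,0)$ by item~1, contradicting that it converges to $(\uprho_-,0)$. You already have item~1 proved at this stage, so the repair is immediate, but as written the backward-energy argument is not sound.
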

\begin{proof}
    We will only prove the first two items, working forward in time.  The third and fourth items follow from similar arguments, working backward in time.  Assume, then, that $G(\uprho_\star)>0$ in what follows.

    We begin with some simple observations. Let $\tau_{\pm} = \tp{1,\lambda_{\pm}\tp{\uprho_-}}$ for $\lambda_{\pm}\tp{\uprho_-}$ as in Lemma~\ref{lem on linear character of equilibira}. The fourth item of the lemma tells us that $\tau_\pm$ are tangent to the unstable $(+)$ and stable $(-)$ manifolds at $\tp{\uprho_-,0}$ while the fifth item of the lemma tells us that the line generated by $\tau_-$ is contained in a closed cone that intersects $R$ precisely at $\tp{\uprho_-,0}$ and that $r\tau_+\in R^\circ$ for $0<r$ small. 
    
    The set $R$ is compact, so we may choose a time horizon $T_R>0$ such that for any data $X_0\in R$ the ODE $X'(t) = \Phi(X(t))$, $X(0) = X_0$ admits a unique solution on the interval $t\in[0,T_R]$.  Recall from Lemma~\ref{lem on preliminary facts} that the function $G$ is decreasing and that when $\m{g}>0$ it has a unique zero at $\uprho_0 > \uprho_\star$.  In light of this strict separation and the compactness of $R$, we may assume without loss of generality that $T_R>0$ is chosen so that 
    \begin{equation}\label{this is not a useful label}
            G(e_1 \cdot X(t)) \ge  G(\uprho_\star)/2 \text{ for all } t \in [0,T_R] \text{ and } X_0 \in R. 
    \end{equation}

    Fix arbitrary data $X_0 \in R$; we will show that its solution trajectory satisfies $X(t) \in R$ for $t \in [0,T_R]$ by way of an energy argument.  To this end, define the map $V:[\uprho_-,\infty)\to\R$ via~\eqref{the potential energy of the system} and note that $v$ and $V$ (where the former is defined in~\eqref{the maximal velocity function}) are related via $v(x_1) = \sqrt{-2 V(x_1)}$ for $x_1\in[\uprho_-,\uprho_\star]$. Moreover, $V\tp{\uprho_\star} = 0$ and $V(x_1)>0$ for all $x_1>\uprho_\star$.

    Now let $\rho:[0,T_R]\to\R$ be defined via $\rho = e_1\cdot X$. Then, by the definition of $\Phi$ (see~\eqref{The shallow water ODEs v4}), $\rho$ is a solution to the second order differential equation
    \begin{equation}
        \rho''(t) = F(\rho(t)) - G(\rho(t))\rho'(t)\text{ for }t\in[0,T_R].
    \end{equation}
    We multiply the above equation by $2\rho'(t)$ and use the definitions of $v$ and $V$ together with~\eqref{this is not a useful label} to deduce that
    \begin{equation}
        [\tp{\rho'}^2]'= -2[V(\rho)]' - 2G(\rho)\tp{\rho'}^2,
    \end{equation}
    and hence 
    \begin{equation}\label{energy estimates}
    \tp{\rho'(t)}^2 +  G\tp{\uprho_\star}\int_0^t\tp{\rho'(s)}^2\;\m{d}s\le -2V\tp{\rho(t)} + \tsb{\tp{\rho'(0)}^2 - v\tp{\rho(0)}^2} \text{ for } t\in[0,T_R].  
    \end{equation}
    As $X_0 = \tp{\rho(0),\rho'(0)}\in R$ the final term in square brackets above is nonpositive and hence we deduce that
    \begin{equation}\label{It's a trap}
        \tp{\rho'(t)}^2\le -2V(\rho(t))\text{ for all }t\in[0,T_R].
    \end{equation}
    Since $-V(x_1)<0$ for $x_1>\uprho_\star$ we deduce from this that $\rho(t)\le\uprho_\star$ for all $t\in[0,T_R]$. On the other hand, if there exists an $s\in[0,T_R]$ for which $\rho(s) = \uprho_-$ then $V(\uprho_-) = 0$ and~\eqref{It's a trap} implies that $\rho'(s) = 0$ as well and thus $X(s) = (\uprho_-,0)$ is the left equilibrium point, which is only possible if $X$ is constant. If there were $t\in(0,T_R]$ for which $\rho(t)<\uprho_-$, then the intermediate value theorem implies the existence of $s\in(0,t)$ for which $\rho(s) = \uprho_-$, but we have just argued that this is inconsistent with $\rho(t)<\uprho_-$. Upon combining these observations, we deduce that
    \begin{equation}\label{things are a bit different}
        \uprho_-\le\rho(t)\le\uprho_\star\text{ for all }t\in[0,T_R].
    \end{equation}
    With this in hand, we may return to~\eqref{It's a trap} and recall that $v(x)^2 = 2V(x)$ to conclude that
    \begin{equation}
        X(t)\in\tp{\rho(t),\rho'(t)}\in R\text{ for all }t\in[0,T_R].
    \end{equation}
    Since the time horizon $T_R>0$ is valid for all initial data in $R$, we may now iterate this argument to deduce that the solution exists and remains in $R$ for $[0,kT_R]$ for arbitrary $k\in\N$. The estimate~\eqref{positive bounds} is now immediate from~\eqref{energy estimates}, which continues to hold for all $t\in[0,\infty)$.
    
    Since $\rho$ is smooth, the bound~\eqref{positive bounds} implies that $\rho'(t) \to 0$ as $t \to \infty$.  Thus, if we write $\omega_+(X_{0})$ for the forward limit set, then 
    \begin{equation}
        \omega_+(X_{0}) \subseteq R \cap \{(x,y)\in\R^2 \;:\; y=0 \}.
    \end{equation}
    On the other hand, the form of the vector field $\Phi$ prevents the set on the right from containing cycles or limit orbits from $(\uprho_-,0)$ to $(\uprho_+,0)$, so the Poincar\'e-Bendixson theorem (see, for instance, Section 3.7 in Perko~\cite{MR1801796}) requires that either $\omega_+(X_{0}) = \{(\uprho_-,0)\}$ or $\omega_+(X_{0}) = \{(\uprho_+,0)\}$.  Since $R$ is compact, we deduce from this that $X(t) \to (\uprho_\pm,0)$ as $t \to \infty.$    If $X_0 = (\uprho_-,0)$, then obviously $\omega_+(X_0) = (\uprho_-,0)$.  Assume, then, that $X_0 \neq (\uprho_-,0)$ and  $X(t) \to (\uprho_-,0)$ as $t \to \infty$.  The stable manifold theorem shows that the trajectory must be tangent to $\tau_-$ at $(\uprho_-,0)$, but we saw above that $\tau_-$ is contained in a cone outside $R$, so this is impossible. Hence, if $X_0 \neq (\uprho_-,0)$ then $X(t) \to (\uprho_+,0)$ as $t \to \infty$.  This completes the proof of the first item.

    We now turn to the proof of the second item.  The convergence result from the first item shows that the stable manifold for $(\uprho_-,0)$ intersects $R$ precisely at $(\uprho_-,0)$.  On the other hand, the same convergence result and the initial observation about $\tau_+$ shows that the intersection of $R$ with the unstable manifold to $(\uprho_-,0)$ consists of a heteroclinic orbit from $(\uprho_-,0)$ to $(\uprho_+,0)$.
\end{proof}

We now say more about the heteroclinic orbits produced by the previous result.
\begin{thmC}[Distinguished heteroclinic orbits]\label{thm on distinguished heteroclinic orbits}
    The following hold. 
    \begin{enumerate}
        \item If $G(\uprho_\star)>0$, there exists a smooth $\rho:\R\to[\uprho_-,\uprho_\star]$ solving the ODE~\eqref{The shallow water ODEs v3} and satisfying $\lim_{t\to\pm\infty}\rho(t) = \uprho_{\pm}$. Moreover, there exist constants $\al,C_m\in\R^+$ for $m\in\N$ such that 
        \begin{equation}\label{who said your life's a bore}
            \sup_{t\le 0}e^{\al|t|}\tabs{\rho(t) - \uprho_-} + \sup_{t\ge 0 }e^{\al|t|}\tabs{\rho(t) - \uprho_+}\le C_0 \text{ and } \sup_{t\in\R}e^{\al|t|}\tabs{\rho^{(m)}(t)}\le C_m \text{ for }m \ge 1.
        \end{equation}
        \item If $G(\uprho_-)<0$, there exists a smooth $\rho:\R\to[\uprho_-,\uprho_\star]$ solving the ODE~\eqref{The shallow water ODEs v3} and satisfying $\lim_{t\to\pm\infty}\rho(t) = \uprho_{\mp}$. Moreover, there exist constants $\al,C_m\in\R^+$ for $m\in\N$ such that 
        \begin{equation}\label{who said your life's a bore variant}
            \sup_{t\le 0}e^{\al|t|}\tabs{\rho(t) - \uprho_+} + \sup_{t\ge 0 }e^{\al|t|}\tabs{\rho(t) - \uprho_-}\le C_0 \text{ and } \sup_{t\in\R}e^{\al|t|}\tabs{\rho^{(m)}(t)}\le C_m \text{ for } m \ge 1.
        \end{equation}
    \end{enumerate}
\end{thmC}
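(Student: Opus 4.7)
The plan is to parametrize each heteroclinic orbit produced by Proposition~\ref{prop on trapping region} as a trajectory $X(t) = (\rho(t), \rho'(t))$ of the first-order system~\eqref{The shallow water ODEs v4} and then invoke invariant manifold theory at each endpoint equilibrium to extract the exponential decay bounds. In the case $G(\uprho_\star) > 0$, the second item of Proposition~\ref{prop on trapping region} identifies the orbit as the intersection with $R$ of the one-dimensional unstable manifold at the hyperbolic equilibrium $(\uprho_-, 0)$. Picking any $X_0$ on this orbit distinct from $(\uprho_-, 0)$, the trapping region result extends the trajectory forward in time, while the fact that the orbit lies on the unstable manifold and converges back to $(\uprho_-, 0)$ as $t \to -\infty$ furnishes the backward extension; setting $\rho := e_1 \cdot X$ then produces a global smooth solution to~\eqref{The shallow water ODEs v3} with $\lim_{t \to \pm\infty} \rho(t) = \uprho_\pm$. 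The case $G(\uprho_-) < 0$ is entirely symmetric, using instead the global stable manifold at $(\uprho_-, 0)$ from the fourth item of Proposition~\ref{prop on trapping region}.

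For exponential decay at the hyperbolic endpoint $(\uprho_-, 0)$, the (un)stable manifold theorem applied to $\Phi$ immediately yields $|X(t) - (\uprho_-, 0)| \lesssim e^{-\alpha|t|}$ for any $0 < \alpha < |\lambda_\pm(\uprho_-)|$ as $t \to \mp\infty$; the relevant eigenvalues are real and nonzero by items 1 and 3 of Lemma~\ref{lem on linear character of equilibira}. For decay at the other endpoint $(\uprho_+, 0)$, I would first read off its spectral type: the product of the eigenvalues is $-F'(\uprho_+) > 0$ while their sum is $-G(\uprho_+)$, so they share common real part equal to $-G(\uprho_+)/2$, making $(\uprho_+, 0)$ a sink when $G(\uprho_+) > 0$ and a source when $G(\uprho_+) < 0$. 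In the first case, strict monotonicity of $G$ together with $\uprho_+ < \uprho_\star$ gives $G(\uprho_+) > G(\uprho_\star) > 0$, so the local stable manifold at the sink fills a full neighborhood of $(\uprho_+, 0)$ and yields exponential convergence as $t \to +\infty$ once the trajectory enters this neighborhood (guaranteed by the trapping argument). In the second case, $G(\uprho_+) < G(\uprho_-) < 0$ makes $(\uprho_+, 0)$ a source, and the time-reversed argument produces exponential approach as $t \to -\infty$.

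Finally, once exponential decay of $\rho - \uprho_\pm$ and $\rho'$ is in hand at each end, I would bootstrap to the higher-derivative bounds in~\eqref{who said your life's a bore} and~\eqref{who said your life's a bore variant}. Taylor's theorem applied to $F$ at the relevant equilibrium, together with the ODE~\eqref{The shallow water ODEs v3}, yields $|\rho''(t)| \lesssim |\rho(t) - \uprho_\pm| + |\rho'(t)|$; induction on $m$, by differentiating the ODE and using smoothness of $F$ and $G$, expresses $\rho^{(m+2)}$ as a polynomial in $\rho - \uprho_\pm$ and $\rho', \dots, \rho^{(m+1)}$ with coefficients that remain bounded along the eventual trajectory, so a common exponential rate propagates to every derivative. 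The main subtlety of the argument is the spectral classification of $(\uprho_+, 0)$: propagating the sign condition on $G$ from the equipotential states $\uprho_-$ or $\uprho_\star$ to the interior equilibrium $\uprho_+$ is exactly what the strict monotonicity of $G$ established in the fifth item of Lemma~\ref{lem on preliminary facts} delivers.
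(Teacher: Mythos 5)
Your proposal is correct and follows essentially the same route as the paper: parametrize the orbit from Proposition~\ref{prop on trapping region}, extract exponential decay at each endpoint from invariant manifold theory, and bootstrap to higher derivatives by differentiating the ODE. The paper implements the decay estimate at the hyperbolic endpoint by passing to the time-one map and invoking Irwin's quantitative Theorems 6.9 and 6.17, whereas you appeal to the stable/unstable manifold theorem for the flow directly; this is the same mechanism, though to get the explicit exponential rate one does need a version of the theorem that records the rate, which is what Irwin provides.

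One small inaccuracy worth flagging: the eigenvalues $\lambda_\pm(\uprho_+)$ do not in general ``share common real part $-G(\uprho_+)/2$.'' That holds only when they are complex conjugates, i.e.\ when $G(\uprho_+)^2 + 4F'(\uprho_+) < 0$; when they are real and distinct, their real parts differ. What is always true, and is all your argument needs, is that they share a common \emph{sign}: the product $\lambda_-\lambda_+ = -F'(\uprho_+) > 0$ forces them to lie on the same side of the imaginary axis, and the sum $-G(\uprho_+)$ fixes which side. This yields the sink/source dichotomy, which is exactly the second item of Lemma~\ref{lem on linear character of equilibira}, so the conclusion stands; only the stated reasoning should be tightened.
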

\begin{proof}
    We only prove the first item, as the second follows by mirrored arguments.  Suppose then that $G(\uprho_\star)>0$.

    We know from the second item of Proposition~\ref{prop on trapping region} that the global unstable manifold for the flow of $\Phi$ intersected with $R$ consists of a heteroclinic orbit from $(\uprho_-,0)$ to $(\uprho_+,0)$.  Pick a point $X_0 \in \m{int}R \setminus\tcb{(\uprho_+,0)}$ on this orbit. Then the proposition provides a unique global smooth solution $X : \R \to R$ to the initial value problem $X'(t) = \Phi(X(t))$ for $t\in\R$ with $X(0) = X_0$, and the solution satisfies $X(t) \to (\uprho_\pm,0)$ as $t \to \pm \infty$. Write $\Gamma = \Bar{X(\R)} = X(\R) \cup \{(\uprho_-,0), (\uprho_+,0)\}$, and define $\rho : \R \to [\uprho_-,\uprho_\star]$ via $\rho = e_1\cdot X$.
    
    Let $r>0$ be such that $R \subset B(0,r)$ and pick $\varphi \in C^\infty_c(\R^2)$ such that $\varphi =1$ in $B(0,2r)$.  We then define $\Psi \in C^\infty_c(\R^2;\R^2)$ via $\Psi = \varphi \Phi$, and let $\psi : \R \times \R^2 \to \R^2$ denote its flow map, i.e. $\partial_t \psi(t,x) = \Psi(\psi(t,x))$ and $\psi(0,x) = x$ for $t\in\R$ and $x\in\R^2$. Standard ODE theory guarantees that $\psi(t,\cdot) : \R^2 \to \R^2$ is a smooth diffeomorphism for $t \in \R$, and that $\psi(t, \psi(s,x)) = \psi(t+s,x)$ for $s,t \in \R$ and $x \in \R^2$.  Since $\Phi$ and $\Psi$ agree in $B(0,2r)$, we have that $X(t) = \psi(t,X_0)$.  Additionally, since $\Psi$ is Lipschitz, we can choose a constant $L >0$ such that 
\begin{equation}\label{he_said_my_lifes_a_bore_1}
    \abs{\psi(t,x) - \psi(t,y)} \le e^{\abs{t}L} \abs{x-y} \text{ for all } t \in \R \text{ and }x,y \in \R^2.
\end{equation}

Next, we define the smooth diffeomorphism $f=\psi(-1,\cdot) : \R^2 \to \R^2$.  Theorems 6.9 and 6.17 in Irwin~\cite{MR1867353} then provide $\chi>0$ and constants $A_0,A_1>0$ such that 
\begin{equation}\label{he_said_my_lifes_a_bore_2}
    \abs{f^n(x) - f^n(y)} \le A_0 e^{-A_1 n} \abs{x-y} \text{ for all } x,y \in \Gamma \cap \Bar{B((\uprho_-,0),\chi)} \text{ and } n \in \N,
\end{equation}
where $f^n$ denotes the $n-$fold iteration of $f$.  Pick $T >0$ large enough such that $X(t) \in \Bar{B((\uprho_-,0),\chi)}$ for $t \le -T$.  Now let $t \le -T$ and write $t =-T -n - \tau$ for $n \in \N$ and $\tau \in [0,1)$.   Then for $y = (\uprho_-,0)$ we can combine~\eqref{he_said_my_lifes_a_bore_1} and~\eqref{he_said_my_lifes_a_bore_2} to bound
\begin{multline}
    \abs{X(t) - y} = \abs{\psi(t,X_0) - \psi(t,y)} = \abs{\psi(-n,\psi(-T-\tau,X_0)) - \psi(-n,\psi(-T-\tau,y))}  \\
    = \abs{f^n(\psi(-T-\tau,X_0) - f^n(\psi(-T-\tau,y))} 
    \le A_0 e^{-A_1 n} \abs{\psi(-T-\tau,X_0) - \psi(-T-\tau,y)} \\
    \le A_0 e^{-A_1 n} e^{L(T+\tau)} \abs{X_0-y} = A_0  e^{A_1 t} e^{(L-A_1)(T+\tau)} \abs{X_0-y} \le A_0 e^{(L-A_1)(T+1)} e^{A_1 t}  \abs{X_0-y}.
\end{multline}
From this we deduce that there exists $M_0>0$ such that 
\begin{equation}\label{he_said_my_lifes_a_bore_3}
    \sup_{t \le 0} e^{-A_1 t} \abs{X(t)-(\uprho_-,0) } = M_0 < \infty.
\end{equation}

On the other hand, we know from Lemma~\ref{lem on linear character of equilibira} that the point $(\uprho_+,0)$ is an asymptotically stable equilibrium point for $\Phi$.  Consequently, there exists $M_1,A_2>0$ such that 
\begin{equation}\label{he_said_my_lifes_a_bore_4}
    \sup_{t \ge 0} e^{A_2 t} \abs{X(t)-(\uprho_+,0) } = M_1 < \infty.
\end{equation}
The left hand bound in~\eqref{who said your life's a bore} now follows from~\eqref{he_said_my_lifes_a_bore_3} and~\eqref{he_said_my_lifes_a_bore_4} upon setting $\alpha = \min\{A_1,A_2\} >0$.  

We now prove the right hand bound in~\eqref{who said your life's a bore}.  Since 
\begin{equation}
    X' = \Phi(X) = \Phi(X) - \Phi(\uprho_\pm,0)
\end{equation}
and $\Phi$ is Lipschitz in $\Bar{B(0,r)}$, the bounds~\eqref{he_said_my_lifes_a_bore_3} and~\eqref{he_said_my_lifes_a_bore_4} imply that $\sup_{t \in \R} e^{\alpha \abs{t}} \tabs{X'(t)} < \infty$. In turn, we may differentiate the ODE $X' = \Phi(X)$  $m$ times and apply the Fa\'a di Bruno formula to bound 
\begin{equation}
\tabs{X^{(m+1)}(t)} \lesssim_\Phi  \sum_{k=1}^m \sum_{n_1 + \cdots + n_k = m}  \prod_{j=1}^k \tabs{X^{(n_j)}(t)}.   
\end{equation}
This and a simple induction argument reveal that $\sup_{t \in \R} e^{\alpha \abs{t}} \tabs{X^{(m)}(t)} < \infty$ for all $1\le m\in\N$, which then yields the desired result.
\end{proof}

\begin{rmkC}[Nonuniqueness of heteroclinic orbit solutions]\label{rmk on translation degeneracy}
    The maps $\rho:\R\to[\uprho_-,\uprho_\star]$ produced by Theorem~\ref{thm on distinguished heteroclinic orbits} are not uniquely determined by the properties: 1) $\rho$ is a solution to the Li\'enard equation~\eqref{The shallow water ODEs v3} and 2) $\rho$ has the limits $\uprho_{\pm}$ at $\pm\infty$. There is, in fact, a translation invariance and we find that for all $t_0\in\R$ the function $\rho(\cdot - t_0)$ also satisfies these two properties.
\end{rmkC}

\subsection{Perturbation theory for heteroclinic orbits}\label{subsection on perturbation theory for heteroclinic orbits}

In contrast with the analysis of the specific ODEs of Section~\ref{subsection on distinguished shallow water bore solutions}, our aim here is to consider in abstract what happens when we have heteroclinic orbit solutions to a certain autonomous ODE and then make a small nonautonomous perturbation to the equations.  We will show that if the original heteroclinic connects a hyperbolic point to an asymptotically stable point, then under a general class of perturbations the nonautonomous equation admits a nearby global solution; the dependence of the family of solutions on the perturbation is also studied. Of course, the dynamical systems literature is ancient and vast and may contain a version of our main result here, Theorem~\ref{thm on perturbations of heteroclinic orbits}; as such, we make no claim that this material is entirely novel. We refer, for instance, to Barreira and Valls~\cite{MR2368551} for related results on nonautonomous perturbation theory near hyperbolic equilibria. However, we are unaware of a reference with the precise form of the result we need.

\begin{defnC}[Hypotheses for heteroclinic orbit perturbation theory]\label{defn of hypotheses for heteroclininc perturbation theory}
    We define the following for use in this subsection.
    \begin{enumerate}
        \item Let $\Lambda$ be a complete bounded metric space with metric $\m{dist}_{\Lambda}$; suppose that $\m{diam}\Lambda\le 2$.
        \item Let $\Psi:\Lambda\times\R\times\R^d\to\R^d$ be a continuous function admitting the following decomposition
            \begin{equation}\label{this is a sparse equation}
                \Psi(\lambda,t,x) = \Psi_0\tp{x} + \Psi_1\tp{\lambda,t,x}
            \end{equation}
        where $\Psi_0:\R^d\to\R^d$ is smooth and for each $\lambda\in\Lambda$ we have that $\Psi_1(\lambda,\cdot,\cdot):\R\times\R^d\to\R^d$ is smooth for all $k_1,k_2\in\N$ the partial derivative $D_{\R}^{k_1}D_{\R^d}^{k_2}\Psi_1:\Lambda\times\R\times\R^d\to\R^d$ is continuous. Here we are using $D_{\R}$ and $D_{\R^d}$ to indicate differentiation with respect to the $\R$ and $\R^d$ factors in the domain of $\Psi_1$.
        
        \item We suppose that $\Psi_0^{-1}\tcb{0} \supseteq \tcb{\m{x}_{\m{h}},\m{x}_{\m{a}}}$ with $\m{x}_{\m{h}}\neq\m{x}_{\m{a}}$ and denote
        \begin{equation}
            M_{\m{h}} = D_{\R^d}\Psi_0(\m{x}_{\m{h}})\text{ and }M_{\m{a}} = D_{\R^d}\Psi_0\tp{\m{x}_{\m{a}}}.
        \end{equation}
        We posit further
        \begin{enumerate}
            \item The matrix $M_{\m{a}}$ is asymptotically stable in the sense that its spectrum has only negative real part.
            \item The matrix $M_{\m{h}}$ is hyperbolic in the sense that there exists a direct sum decomposition of nontrivial $M_{\m{h}}$-invariant subspaces $\R^d = E^{\m{s}}\oplus E^{\m{u}}$ with $M_{\m{h}}E^{\m{s}}\subseteq E^{\m{s}}$, $M_{\m{h}}E^{\m{u}}\subseteq E^{\m{u}}$ such that $M_{\m{h}}\res E^{\m{s}}$ has only spectrum with negative real part and $M_{\m{h}}\res E^{\m{u}}$ as only spectrum of positive real part. Associated to this direct sum decomposition, we have the projection operators $\Pi_{\m{s}}:\R^d\to E^{\m{s}}$ and $\Pi_{\m{u}}:\R^d\to E^{\m{u}}$.
        \end{enumerate}
        These spectral hypotheses on the matrices $M_{\m{h}}$ and $M_{\m{a}}$ provide  constants $K,\al\in\R^+$ such that for all $x\in\R^d$ and all $t\in\R$ we have the following semigroup estimates
        \begin{equation}\label{_the semigroup bounds_}
            |e^{tM_{\m{a}}}x|\le Ke^{-\al t}|x|,\quad|e^{tM_{\m{h}}}\Pi_{\m{s}}x|\le Ke^{-\al t}|\Pi_{\m{s}}x|,
            \text{ and }
            |e^{tM_{\m{h}}}\Pi_{\m{u}}x|\le Ke^{\al t}|\Pi_{\m{u}}x|.
        \end{equation}

        \item We give quantitative measures of the functions $\Psi_0$ and $\Psi_1$ appearing in the decomposition of $\Psi$~\eqref{this is a sparse equation}. For fixed $R\in\R^+$ large enough so that $\m{x}_{\m{h}},\m{x}_{\m{a}}\in\Bar{B(0,R/2)}$, define, for $n\in\N$, $\lambda,\tilde{\lambda}\in\Lambda$ (with $\lambda\neq\tilde{\lambda}$), and $t\in\R$
        \begin{equation}\label{important_quantities_1}
            \Sampi_n = \sup_{x\in\Bar{B(0,2R)}}\sum_{k=0}^n|D_{\R^d}^k\Psi_0(x)|,\quad \kkappa_n\tp{\lambda,t} = \sup_{x\in\Bar{B(0,2R)}}\sum_{k_1+k_2\le n}|D_{\R}^{k_1}D_{\R^d}^{k_2}\Psi_1\tp{\lambda,t,x}|,
        \end{equation}
        and
        \begin{equation}\label{important_quantities_2}
            \qoppa_n(\lambda,\tilde{\lambda},t) =  \tp{\m{dist}_\Lambda\tp{\lambda,\tilde{\lambda}}}^{-1}\sup_{x\in\Bar{B(0,2R)}}\sum_{k_1+k_2\le n}\tabs{D_{\R}^{k_1}D_{\R^d}^{k_2}\Psi_1\tp{\lambda,t,x} - D_{\R}^{k_1}D_{\R^d}^{k_2}\Psi_1\tp{\tilde{\lambda},t,x}}.
        \end{equation}
        We also define the values
        \begin{equation}\label{important_quantities_3}
            \Bar{\kkappa}_n = \sup_{\lambda\in\Lambda}\tnorm{\kkappa_n\tp{\lambda,\cdot}}_{\tp{L^\infty\cap L^2}\tp{\R}},\quad\Bar{\qoppa}_n = \sup_{\lambda,\tilde{\lambda}\in\Lambda,\;\lambda\neq\tilde{\lambda}}\tnorm{\qoppa_n\tp{\lambda,\tilde{\lambda},\cdot}}_{\tp{L^\infty\cap L^2}\tp{\R}}
        \end{equation}
        and suppose that $\Bar{\kkappa}_n,\Bar{\qoppa}_n<\infty$ for each $n\in\N$. 
    \end{enumerate}
\end{defnC}

Our interest lies in the study of smooth solutions $X:I\to\R^d$ to the family of ODEs
\begin{equation}\label{The general peturbed ODE IVP}
    X'(t) = \Psi\tp{\lambda,t,X(t)},\quad X(T) = X_{\m{i}}
\end{equation}
with $T\in I$, $I\subseteq\R$ an interval, $X_{\m{i}}\in\R^d$, and $\Psi$ as in the second item of Definition~\ref{defn of hypotheses for heteroclininc perturbation theory}. Since $\Psi$ is smooth and hence locally Lipschitz, standard ODE theory implies that solutions to~\eqref{The general peturbed ODE IVP} are always unique and defined in a maximal open interval containing $T$, which we denote by  $\mathcal{J}_{\m{max}}^{\lambda,T}(X_{\m{i}}) \subseteq \R$ in order to indicate the dependence on the parameter $\lambda \in \Lambda$ and the data $X_{\m{i}} \in \R^d$.  This allows us to uniquely define the flow $\psi_\lambda$ associated with~\eqref{The general peturbed ODE IVP} as the map $\psi_\lambda\tp{\cdot,T,X_{\m{i}}}: \mathcal{J}_{\m{max}}^{\lambda,T}(X_{\m{i}}) \to \R^d $ via $\psi_\lambda\tp{t,T,X_{\m{i}}} = X(t)$. We will also use the notation
\begin{equation}\label{pin it down}
    \Psi_{\m{q}}(\lambda,t,x) = \Psi_0\tp{\m{x}_{\m{q}} + x} - M_{\m{q}}x + \Psi_1\tp{\lambda,t,\m{x}_{\m{q}} + x}
\end{equation}
for $\m{q}\in\tcb{\m{h},\m{a}}$, $\lambda\in\Lambda$, $t\in\R$, and $x\in\R^d$.

We are now ready to state our first result, which concerns the behavior of the hyperbolic equilibrium $\m{x}_{\m{h}}$. This is a generalization of the classical unstable manifold theorem for dynamical systems. A somewhat related general theory containing results on nonautonomous stable manifold theory can be found in the work of Barreira and Valls~\cite{MR2368551}.

\begin{propC}[Unstable manifolds with parameter and nonautonomy]\label{prop on unstable manifolds with parameter and nonautonomy}
    Assume Definition~\ref{defn of hypotheses for heteroclininc perturbation theory}. There exist $0<\del_{1},\theta,\sig\le1$, depending on $R$, $\Sampi_2$, $K$, and $\al$, such that if $\Bar{\kkappa}_1\le\del_1$ then for each $T\le 0$ there exists a Lipschitz continuous map
    \begin{equation}\label{v is for very good manifold}
        \pmb{V}_T:\Lambda\times \tp{E^{\m{u}}\cap\Bar{B(0,\sig\theta R)}}\to E^{\m{s}}\cap\Bar{B(0,\theta R)}
    \end{equation}
    and the following hold for $\lambda\in\Lambda$, $T\in(-\infty,0]$.
    \begin{enumerate}
        \item The set of terminal conditions with bounded and square summable backward trajectories for~\eqref{The general peturbed ODE IVP}, namely 
        \begin{multline}\label{the unstable set}
            \mathcal{I}\tp{\lambda,T} = \{X_{\m{i}}\in\Bar{B(\m{x}_{\m{h}},\theta R)}\;:\;|\Pi_{\m{u}}\tp{X_{\m{i}} - \m{x}_{\m{h}}}|\le\sig\theta R, \; (-\infty,T] \subseteq \mathcal{J}_{\m{max}}^{\lambda,T}(X_{\m{i}}), 
         \\   \text{ and } \tnorm{\psi_\lambda\tp{\cdot,T,X_{\m{i}}} - \m{x}_{\m{h}}}_{\tp{L^\infty\cap L^2}\tp{(-\infty,T)}}\le\theta R\},
        \end{multline}
        is nonempty. In fact, we have the equality
        \begin{equation}
            \mathcal{I}\tp{\lambda,T} = \tcb{\m{x}_{\m{h}} + x + \pmb{V}_T\tp{\lambda,x}\;:\;x\in E^{\m{u}}\cap\Bar{B(0,\sig\theta R)}}
        \end{equation}
        \item The induced solution map
        \begin{equation}\label{the _induced_ solution map}
            \Lambda\times\tp{E^{\m{u}}\cap\Bar{B(0,\sig\theta R)}}\ni\tp{\lambda,x}\mapsto\psi_\lambda\tp{\cdot,T,\m{x}_{\m{h}} + x + \pmb{V}_T\tp{\lambda,x}} - \m{x}_{\m{h}}\in\tp{C^1\cap W^{1,\infty}\cap H^1}\tp{(-\infty,T];\R^d}
        \end{equation}
        is Lipschitz continuous with constant depending only on $R$, $\Sampi_2$, $K$, $\al$, and $\Bar{\qoppa}_0$.
        \item There exists a constant $C\in\R^+$, depending only on $R$, $\Sampi_2$, $K$, and $\al$, such that we have the estimate
        \begin{equation}
            |\pmb{V}_T\tp{\lambda,x} - \pmb{V}_T\tp{\tilde{\lambda},x}|\le C\Bar{\qoppa}_0\m{dist}_{\Lambda}\tp{\lambda,\tilde{\lambda}}
        \end{equation}
        for all $\lambda,\tilde{\lambda}\in\Lambda$ and $x\in E^{\m{u}}\cap\Bar{B(0,\sig\theta R)}$.
    \end{enumerate}
\end{propC}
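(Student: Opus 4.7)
The plan is to execute the Lyapunov--Perron method, adapted for the nonautonomous and parameter-dependent perturbation $\Psi_1$. First I would translate via $Y(t) = X(t) - \m{x}_{\m{h}}$, so that in light of~\eqref{pin it down} the problem~\eqref{The general peturbed ODE IVP} becomes $Y'(t) = M_{\m{h}} Y(t) + \Psi_{\m{h}}(\lambda, t, Y(t))$, and rewrite backward-bounded trajectories with prescribed unstable datum $\Pi_{\m{u}} Y(T) = x \in E^{\m{u}}$ as fixed points of the operator
\begin{equation}
\mathcal{T}_\lambda^x[Y](t) = e^{(t-T)M_{\m{h}}} x - \int_t^T e^{(t-s) M_{\m{h}}} \Pi_{\m{u}} \Psi_{\m{h}}(\lambda, s, Y(s))\;\m{d}s + \int_{-\infty}^t e^{(t-s) M_{\m{h}}} \Pi_{\m{s}} \Psi_{\m{h}}(\lambda, s, Y(s))\;\m{d}s.
\end{equation}
The convergence of the $\Pi_{\m{s}}$ integral at $-\infty$ is supplied by the bound $|e^{(t-s)M_{\m{h}}}\Pi_{\m{s}}| \le K e^{-\al(t-s)}$ for $s \le t$ from~\eqref{_the semigroup bounds_}, and this integral equation is equivalent to the ODE together with the constraint $\Pi_{\m{u}} Y(T) = x$.

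Next I would work in the Banach space $\mathcal{E}_T = (L^\infty \cap L^2)((-\infty, T]; \R^d)$ and show that, for $\theta, \sig, \del_1 \in (0,1]$ sufficiently small (depending only on $R$, $\Sampi_2$, $K$, $\al$), the closed ball of radius $\theta R$ in $\mathcal{E}_T$ is invariant under $\mathcal{T}_\lambda^x$ and $\mathcal{T}_\lambda^x$ contracts there with constant at most $1/2$, uniformly in $T \le 0$, $\lambda \in \Lambda$, $|x| \le \sig\theta R$, and $\Bar{\kkappa}_1 \le \del_1$. The key ingredients are: the Taylor bound $|\Psi_0(\m{x}_{\m{h}} + y) - M_{\m{h}} y| \lesssim_{\Sampi_2} |y|^2$ on $\Bar{B(0,\theta R)}$, which contributes a contraction constant proportional to $\theta R$; Young-type estimates on convolutions against the exponentially decaying semigroup kernels from~\eqref{_the semigroup bounds_}, which yield $T$-uniform $L^\infty$ and $L^2$ bounds; and the control of $\Psi_1$ and its $y$-derivative by $\Bar{\kkappa}_1$ via~\eqref{important_quantities_1} and~\eqref{important_quantities_3}. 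Letting $Y_\lambda^x$ denote the fixed point, setting
\begin{equation}
\pmb{V}_T(\lambda, x) = \int_{-\infty}^T e^{(T-s) M_{\m{h}}} \Pi_{\m{s}} \Psi_{\m{h}}(\lambda, s, Y_\lambda^x(s))\;\m{d}s \in E^{\m{s}}
\end{equation}
identifies~\eqref{the unstable set} with the graph $\{\m{x}_{\m{h}} + x + \pmb{V}_T(\lambda, x) : x \in E^{\m{u}}\cap\Bar{B(0, \sig\theta R)}\}$ by uniqueness of the fixed point, and after possibly shrinking constants $\pmb{V}_T$ takes values in $E^{\m{s}}\cap\Bar{B(0,\theta R)}$.

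For the Lipschitz statements, I would subtract the integral equations associated with two pairs $(\lambda, x)$ and $(\tilde\lambda, \tilde x)$. The difference in forcing splits into a term bounded by $(\Sampi_2 \theta R + \Bar{\kkappa}_1)\tnorm{Y_\lambda^x - Y_{\tilde\lambda}^{\tilde x}}_{\mathcal{E}_T}$, absorbed into the contraction, and a genuine source of size $\lesssim \Bar{\qoppa}_0 \, \m{dist}_\Lambda(\lambda,\tilde\lambda)$ by~\eqref{important_quantities_2} and~\eqref{important_quantities_3}. This yields
\begin{equation}
\tnorm{Y_\lambda^x - Y_{\tilde\lambda}^{\tilde x}}_{\mathcal{E}_T} \lesssim |x - \tilde x| + \Bar{\qoppa}_0 \, \m{dist}_\Lambda(\lambda,\tilde\lambda),
\end{equation}
whose evaluation at $t=T$ gives the Lipschitz bound of the third item. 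To promote to the $C^1 \cap W^{1,\infty} \cap H^1$ Lipschitz continuity of the second item, I would feed this $\mathcal{E}_T$-bound back into the ODE $Y' = M_{\m{h}} Y + \Psi_{\m{h}}(\lambda,\cdot, Y)$: the Taylor remainder $\Psi_0(\m{x}_{\m{h}} + Y) - M_{\m{h}} Y$ lies in $L^\infty \cap L^2$ via the pointwise quadratic bound combined with $\tnorm{|Y|^2}_{L^2} \le \tnorm{Y}_{L^\infty}\tnorm{Y}_{L^2}$, while $\Psi_1(\lambda,\cdot, \m{x}_{\m{h}} + Y)$ lies in $L^\infty \cap L^2$ by $\Bar{\kkappa}_0 < \infty$, giving $Y' \in L^\infty \cap L^2$; the same bookkeeping for differences yields the required Lipschitz-into-$W^{1,\infty}\cap H^1$ estimate, and $t$-continuity of $Y'$ follows from continuity of $\Psi_{\m{h}}(\lambda,\cdot, Y(\cdot))$.

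The main obstacle I anticipate is ensuring all constants are uniform in $T \le 0$, which is essential for subsequent use of the result with varying $T$. This relies on the translation-invariance of the semigroup kernels and on $\Bar{\kkappa}_n$, $\Bar{\qoppa}_n$ being defined as global $L^\infty \cap L^2$ norms over $\R$, so that restricting to $(-\infty, T]$ only tightens the relevant estimates. A secondary subtlety is guaranteeing that the quadratic remainder lies in $L^2$ with a constant proportional to $\theta R$; the inequality $\tnorm{|Y|^2}_{L^2} \le \tnorm{Y}_{L^\infty}\tnorm{Y}_{L^2}$ is what lets $\theta$ simultaneously pin the fixed point inside the correct ball and drive the contraction.
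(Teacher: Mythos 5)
Your proposal is correct and follows essentially the same route as the paper's proof: the Lyapunov--Perron method, with the same integral operator, the same contraction argument in $(L^\infty \cap L^2)((-\infty, T])$, the same definition of $\pmb{V}_T$ as the stable projection of the fixed point at time $T$, and the same mechanism for the Lipschitz estimates. The only cosmetic difference is that you translate to $Y = X - \m{x}_{\m{h}}$ at the outset, whereas the paper carries the translation implicitly through $\Psi_{\m{h}}$; the key technical points you flag — $T$-uniformity via the global $L^\infty \cap L^2$ definition of $\Bar{\kkappa}_n$, $\Bar{\qoppa}_n$ and translation-invariance of the semigroup kernels, the $\tnorm{|Y|^2}_{L^2} \le \tnorm{Y}_{L^\infty}\tnorm{Y}_{L^2}$ trick for the quadratic remainder, and the feedback into the ODE for the $W^{1,\infty}\cap H^1$ Lipschitz bound — are exactly the ones the paper relies on.
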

\begin{proof}
    We divide the proof into several steps.

    \emph{Step 1 -- Auxiliary linear problem:}      We first embark on the  study of the auxiliary linear inhomogeneous equation
    \begin{equation}\label{_the inhomogeneous linear equation_}
        X'(t) = M_{\m{h}}X(t) + f(t) \text{ for } t\in(-\infty,T],
    \end{equation}
    where $T\in(-\infty,0]$ and  $f:(-\infty,T]\to\R^d$ is a given bounded and continuous function, seeking bounded and continuously differentiable solutions $X:(-\infty,T]\to\R^d$. Note that standard linear ODE theory implies that any solution to~\eqref{_the inhomogeneous linear equation_} is necessarily global in time (but not necessarily globally bounded).  We claim the following equivalence: for such $f$ fixed, $X:(-\infty,T]\to\R^d$ is a bounded and continuously differentiable solution to~\eqref{_the inhomogeneous linear equation_} if and only if there exists $x_{\m{u}} \in E^{\m{u}}$ such that 
    \begin{equation}\label{_representation formula for bounded solutions_}
        X(t) = e^{\tp{t - T}M_{\m{h}}}x_{\m{u}} - \int_{t}^T e^{\tp{t - \tau}M_{\m{h}}}\Pi_{\m{u}}f(\tau)\;\m{d}\tau + \int_{-\infty}^{t}e^{\tp{t - \tau}M_{\m{h}}}\Pi_{\m{s}}f(\tau)\;\m{d}\tau \text{ for } t \le T.
    \end{equation}
    Note that this equivalence determines the terminal data $X(T)$ in terms of $x_{\m{u}}$ and $f$ via 
    \begin{equation}
        X(T) = \Pi_{\m{u}}X\tp{T} + \Pi_{\m{s}}X\tp{T} \text{ with }\Pi_{\m{u}}X\tp{T}=x_{\m{u}} \text{ and } \Pi_{\m{s}}X\tp{T}= \int_{-\infty}^{T}e^{\tp{T - \tau}M_{\m{h}}}\Pi_{\m{s}}f(\tau)\;\m{d}\tau.
    \end{equation}

    One direction of the claim is easy: due to~\eqref{_the semigroup bounds_} we readily check that the formula~\eqref{_representation formula for bounded solutions_} defines a bounded and continuously differentiable solution to~\eqref{_the inhomogeneous linear equation_}. It remains to establish the converse.  The Duhamel formula ensures  that for any choice of $r,t\le T$ and bounded solution $X$ to~\eqref{_the inhomogeneous linear equation_} we have
    \begin{equation}\label{_Duhamels principle_}
        X(t) = e^{\tp{t - r}M_{\m{h}}}X(r) + \int_{r}^{t}e^{\tp{t - \tau}M_{\m{h}}}f(\tau)\;\m{d}\tau.
    \end{equation}
    We apply  $\Pi_{\m{s}}$ to~\eqref{_Duhamels principle_} and send $r\to-\infty$;  since $X$ is bounded and we have the semigroup bounds~\eqref{_the semigroup bounds_},  the first term on the right hand side of~\eqref{_Duhamels principle_} vanishes, and we are left with the formula for the stable part of the solution:
    \begin{equation}\label{_the unstable part of X_}
        \Pi_{\m{s}}X(t) = \int_{-\infty}^t e^{\tp{t - \tau}M_{\m{h}}}\Pi_{\m{s}}f(\tau)\;\m{d}\tau.
    \end{equation}
    Similarly, we can take $r=T$ in~\eqref{_Duhamels principle_} and apply the operator $\Pi_{\m{u}}$ to deduce that the unstable part of $X$ is given by
    \begin{equation}\label{_the stable part of X_}
        \Pi_{\m{u}}X(t) = e^{(t-T)M_{\m{h}}}\Pi_{\m{u}}X(T) - \int_{t}^Te^{\tp{t - \tau}M_{\m{h}}}\Pi_{\m{u}}f(\tau)\;\m{d}\tau.
    \end{equation}
    We sum~\eqref{_the unstable part of X_} and~\eqref{_the stable part of X_} to deduce formula~\eqref{_representation formula for bounded solutions_}, which completes the proof of the converse and of the claimed equivalence.

   \emph{Step 2 -- Solution operator and fixed point formulation:}   By using~\eqref{_the semigroup bounds_} with the previous equivalence and formula~\eqref{_representation formula for bounded solutions_}, we deduce the following estimate on any bounded solution to~\eqref{_the inhomogeneous linear equation_}:
    \begin{equation}\label{_bounds on the linear inhomogeneous problem_}
        \sup_{t\le T}\tabs{X(t)}\le K\tabs{\Pi_{\m{u}}X(T)} + \tp{2K/\al}\sup_{t\le T}\tabs{f(t)}.
    \end{equation}
    If we suppose further that $f$ is square summable, then any bounded solution to~\eqref{_the inhomogeneous linear equation_} is also square summable and from~\eqref{_representation formula for bounded solutions_} and Young's inequality we may also deduce the estimate
    \begin{equation}\label{_more bounds on the linear inhomogeneous problem_}
        \bp{\int_{-\infty}^{T}|X(t)|^2\;\m{d}t}^{1/2}\le\f{K}{\tp{2\al}^{1/2}}\tabs{\Pi_{\m{u}}X(T)} + \f{2K}{\al}\bp{\int_{-\infty}^T|f(t)|^2\;\m{d}t}^{1/2}.
    \end{equation}
    The estimates~\eqref{_bounds on the linear inhomogeneous problem_} and~\eqref{_more bounds on the linear inhomogeneous problem_} allow us to define the bounded linear operator
    \begin{equation}
        \pmb{L}_T:E^{\m{u}}\times\tp{C^0\cap L^\infty\cap L^2}\tp{(-\infty,T];\R^d}\to\tp{C^0\cap L^\infty\cap L^2}\tp{(-\infty,T];\R^d} \text{ via } \pmb{L}_T\tp{x_{\m{u}},f} = X
    \end{equation}
    where $X$ is given by~\eqref{_representation formula for bounded solutions_}.

    Next, we consider the differential equation in the left of~\eqref{The general peturbed ODE IVP}. Suppose that we are given a bounded, square summable, and continuously differentiable function $X:(-\infty,T]\to\R^d$ such that $X + \m{x}_{\m{h}}$ is a solution to these equations for some value of $\lambda\in\Lambda$. Then we also have a bounded solution to the linear inhomogeneous equation~\eqref{_the inhomogeneous linear equation_} with 
    \begin{equation}\label{_f is for feedback_}
        f\tp{t} = \Psi_{\m{h}}\tp{\lambda,t,X(t)},\quad t\in(-\infty,T],
    \end{equation}
    where $\Psi_{\m{h}}$ is as in~\eqref{pin it down}. The finiteness of~\eqref{important_quantities_1} and~\eqref{important_quantities_3} ensure that the boundedness and square summability of $X$ implies the boundedness and square summability of $f$ in~\eqref{_f is for feedback_}. In fact, we calculate directly that
    \begin{equation}\label{u is for ukelele}
        \tnorm{f}_{L^\infty\cap L^2} \le \Sampi_2\tnorm{X}^2_{L^\infty\cap L^2} + \Bar{\kkappa}_0
    \end{equation}
    Hence the previous equivalence~\eqref{_representation formula for bounded solutions_} forces $X$ to obey the fixed point identity
    \begin{equation}\label{_the fixed point identity_}
        X = \pmb{L}_T\tp{\Pi_{\m{u}}X(T),\Psi_{\m{h}}\tp{\lambda,\cdot,X}}.
    \end{equation}

    \emph{Step 3 -- Fixed point argument:}  With the above considerations in mind, we will now set up a fixed point argument that produces solutions to~\eqref{_the fixed point identity_} (and hence, by adding $\m{x}_{\m{h}}$, solutions to~\eqref{The general peturbed ODE IVP}). Given $0<\theta,\sig\le 1$, define the operator
    \begin{equation}
        \pmb{N}_T:\Lambda\times \tp{\Bar{B(0,\sig\theta R)}\cap E^{\m{u}}}\times\tp{\Bar{B(0,\theta R)}\cap \tp{C^0\cap L^\infty\cap L^2}\tp{(-\infty,T];\R^d}}\to\tp{C^0\cap L^\infty\cap L^2}\tp{(-\infty,T];\R^d}
    \end{equation}
    via $\pmb{N}_T\tp{\lambda,x_{\m{u}},X} = \pmb{L}_T\tp{x_{\m{u}},\Psi_{\m{h}}\tp{\lambda,\cdot,X}}$.

    Due to the bounds on $\pmb{L}_T$ from~\eqref{_bounds on the linear inhomogeneous problem_} and~\eqref{_more bounds on the linear inhomogeneous problem_}, along with~\eqref{u is for ukelele}, we deduce that
    \begin{equation}
        \tnorm{\pmb{N}_T\tp{\lambda,x_{\m{u}},X}}_{L^\infty\cap L^2}\le K\tp{1 + \tp{2\al}^{-1/2}}\sig\theta R + \tp{2K/\al}\tp{\Sampi_2\tp{\theta R}^2 + \Bar{\kkappa}_0}
    \end{equation}
    and
    \begin{equation}
        \tnorm{\pmb{N}_T\tp{\lambda,x_{\m{u}},X} - \pmb{N}_T\tp{\lambda,x_{\m{u}},Y}}_{L^\infty\cap L^2}\le\tp{2K/\al}\tp{\Sampi_2\theta R + \Bar{\kkappa}_1}\tnorm{X - Y}_{L^\infty\cap L^2}.
    \end{equation}
    Therefore, if we take
    \begin{equation}\label{___values___}
        \theta=\min\tcb{1,\al\tp{6R\Sampi_2 K}^{-1}},\quad\sig=\min\tcb{1,\tp{3K\tp{1 + \tp{2\al}^{-1/2}}}^{-1}},\quad\Bar{\kkappa}_1\le\al\min\tcb{1,\theta R}\tp{6K}^{-1}
    \end{equation}
    then  $\pmb{N}_T\tp{\lambda,x_{\m{u}},\cdot}$ maps $\Bar{B(0,\theta R)}$ into itself with a Lipschitz constant at most $2/3$.   We also check that $\pmb{N}_T$ is Lipschitz in its first and second arguments as well, as it obeys 
    \begin{equation}\label{it turns out this estimate is somewhat important}
        \tnorm{\pmb{N}_T\tp{\lambda,x_{\m{u}},X} - \pmb{N}_T\tp{\tilde{\lambda},y_{\m{u}},X}}_{L^\infty\cap L^2}\le K\tp{1 + \tp{2\al}^{-1/2}}|x_{\m{u}} - y_{\m{u}}| + \tp{2K/\al}\Bar{\qoppa}_0\m{dist}_{\Lambda}\tp{\lambda,\tilde{\lambda}}
    \end{equation}
    for all $\lambda,\tilde{\lambda}\in\Lambda$, $x_{\m{u}},y_{\m{u}}\in E^{\m{u}}\cap\Bar{B(0,\sig\theta R)}$, and $X\in\Bar{B(0,\theta R)}\cap\tp{C^0\cap L^\infty\cap L^2}$. The Banach contraction mapping principle with parameter (see, for example, Theorem C.7 in Irwin~\cite{MR1867353}) thus applies, and we acquire a Lipschitz continuous map
    \begin{equation}\label{fixed point are more fun than you realize}
        \pmb{F}_T:\Lambda\times\tp{\Bar{B(0,\sig\theta R)}\cap E^{\m{u}}}\to\Bar{B(0,\theta R)}\cap\tp{C^0\cap L^\infty\cap L^2}\tp{(-\infty,T];\R^d}
    \end{equation}
    with the property that $X = \pmb{F}_T\tp{\lambda,x_{\m{u}}}$ is the unique (in the ball $\Bar{B(0,\theta R)}\subset\tp{C^0\cap L^\infty\cap L^2}\tp{(-\infty,T];\R^d}$) solution to $X = \pmb{N}_T\tp{\lambda,x_{\m{u}},X}$ and hence, after adding $\m{x}_{\m{h}}$, is the unique bounded solution to~\eqref{The general peturbed ODE IVP} with $\Pi_{\m{u}}\tp{X_{\m{i}} - \m{x}_{\m{h}}} = x_{\m{u}}$.

    \emph{Step 4 -- Conclusion:}     We may now see that the set~\eqref{the unstable set} is indeed nonempty, since for every choice of $\lambda\in\Lambda$ and $x_{\m{u}}\in E^{\m{u}}$ we have the inclusion $\m{x}_{\m{h}} + [\pmb{F}_T\tp{\lambda,x_{\m{u}}}]\tp{T}\in\mathcal{I}\tp{\lambda,T}$. On the other hand, for any $X_{\m{i}}\in\mathcal{I}\tp{\lambda,T}$ we have, by definition, an associated bounded and square summable function $X = \psi_\lambda(\cdot,T,X_{\m{i}}) - \m{x}_{\m{h}}$ such that $X+\m{x}_{\m{h}}$ solves~\eqref{The general peturbed ODE IVP}. By the previous calculations, the fixed point identity~\eqref{_the fixed point identity_} holds, so the uniqueness assertion of the contraction mapping principle implies that $X = \pmb{F}_T\tp{\lambda,\Pi_{\m{u}}X(T)}$.
    
    We have just argued that
    \begin{equation}
        \mathcal{I}\tp{\lambda,T} = \tcb{\m{x}_{\m{h}} + [\pmb{F}_T\tp{\lambda,x_{\m{u}}}](T)\;:\;x_{\m{u}}\in E^{\m{u}}\cap\Bar{B(0,\sig\theta R)}}.
    \end{equation}
    From~\eqref{_the fixed point identity_} we also see that $\Pi_{\m{u}}[\pmb{F}_T\tp{\lambda,x_{\m{u}}}](T) = x_{\m{u}}$. Hence we may define the map~\eqref{v is for very good manifold} via 
    \begin{equation}
        \pmb{V}_T(\lambda,x_{\m{u}}) = \Pi_{\m{s}}[\pmb{F}_T\tp{\lambda,x_{\m{u}}}](T)
    \end{equation}
    for $\lambda\in\Lambda$ and $x_{\m{u}}\in E^{\m{u}}\cap\Bar{B(0,\sig\theta R)}$, and conclude that the first item holds.   The third item is a simple calculation that follows from the Lipschitz estimate~\eqref{it turns out this estimate is somewhat important} and the fact that $\pmb{N}_T$ is $2/3$-contractive in its final argument. In fact, we may take $C = 6K/\al$.

    It remains to prove the second item. The map in~\eqref{the _induced_ solution map} is none other than $\pmb{F}_T$, and we therefore know it to be Lipschitz in the target space~\eqref{fixed point are more fun than you realize} of one fewer derivative. For any $\lambda$ and $x_{\m{u}}$ belonging to the domain of $\pmb{F}_T$ we have established that $\pmb{F}_T\tp{\lambda,x_{\m{u}}}$ solves~\eqref{_the fixed point identity_} and is therefore continuously differentiable. We then calculate that its derivative obeys the identity
    \begin{equation}
        \tsb{\pmb{F}_T\tp{\lambda,x_{\m{u}}}}' = M_{\m{h}}\pmb{F}_T\tp{\lambda,x_{\m{u}}} + \Psi_{\m{h}}\tp{\lambda,\cdot,\pmb{F}_{\m{T}}\tp{\lambda,x_{\m{u}}}},
    \end{equation}
    so it is then a simple matter to use the above to promote the Lipschitz continuity of the map~\eqref{fixed point are more fun than you realize} to the claimed~\eqref{the _induced_ solution map}.
\end{proof}

Our second result studies behavior near the asymptotically stable equilibrium $\m{x}_{\m{a}}$ and gives sufficient conditions for the existence of a family of forward in time attracting trajectories for~\eqref{The general peturbed ODE IVP}. This is a generalization of the classical result on the stability of a sink equilibrium (see, for instance, Theorem 1 in Section 2.9 of Perko~\cite{MR1801796}).

\begin{propC}[Generalized attractor stability]\label{prop on attractor stability}
    Assume Definition~\ref{defn of hypotheses for heteroclininc perturbation theory}. Let $T\in(-\infty,0]$ and suppose that $\mathsf{X}:[T,\infty)\to\R^d$ is a given continuously differentiable solution to the equation $\mathsf{X}'(t) = \Psi_0\tp{\mathsf{X}\tp{t}}$ for $t\ge T$, satisfying $\lim_{t\to\infty}\mathsf{X}(t) = \m{x}_{\m{a}}$, $\mathsf{X}(T) = \mathsf{X}_{\m{i}}\in\Bar{B(0,R)}$, and $\tnorm{\mathsf{X} - \m{x}_{\m{a}}}_{L^\infty\cap L^2}\le R/2$.

    There exist $0<\del_2,\theta,\sig\le1$, depending only on $R$, $\Sampi_2$, $K$, $\al$, $T$, $\Psi_0$, and $\mathsf{X}$, such that if $\Bar{\kkappa}_1\le\del_2$ then there is a Lipschitz continuous map
    \begin{equation}\label{the rather attractive solution operator}
        \pmb{U}_T:\Lambda\times\Bar{B\tp{0,\sig\theta R}} \to \tp{C^1\cap W^{1,\infty}\cap H^1}\tp{[T,\infty);\R^d}
    \end{equation}
    such that the following hold.
    \begin{enumerate}
        \item For all $\tp{\lambda,x}$ in the domain of~\eqref{the rather attractive solution operator} we have that $X = \mathsf{X} + \pmb{U}_T\tp{\lambda,x}$ is a solution to the ODE~\eqref{The general peturbed ODE IVP} in the time interval $[T,\infty)$ with $X_{\m{i}} = \mathsf{X}_{\m{i}} + x$.
        \item For all $(\lambda,x)$ in the domain of~\eqref{the rather attractive solution operator} we have $\pmb{U}_T\tp{\lambda,x}\in\Bar{B(0,\theta R)}\subset\tp{C^0\cap L^\infty\cap L^2}\tp{[T,\infty);\R^d}$.
        \item The Lipschitz constant of $\pmb{U}_T$ depends only on $R$, $\Sampi_2$, $K$, $\al$, $\Bar{\qoppa}_0$, $T$, $\Psi_0$, and $\mathsf{X}$.
    \end{enumerate}
\end{propC}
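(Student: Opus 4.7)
The strategy mirrors the Duhamel/fixed-point construction from Proposition~\ref{prop on unstable manifolds with parameter and nonautonomy}, but anchored at the attractor $\m{x}_{\m{a}}$ and performed forward in time on $[T, \infty)$, relying on the semigroup estimate $|e^{t M_{\m{a}}} v| \le K e^{-\al t}|v|$ from~\eqref{_the semigroup bounds_}. Introduce the difference variable $Z = X - \mathsf{X}$, so that $Z(T) = x$ and subtracting the ODE for $\mathsf{X}$ from that for $X$ yields
\begin{equation*}
    Z'(t) = M_{\m{a}} Z(t) + f_\lambda(t, Z(t))
\end{equation*}
where $f_\lambda(t, Z) = \tsb{\Psi_0\tp{\mathsf{X}(t) + Z} - \Psi_0\tp{\mathsf{X}(t)} - M_{\m{a}} Z} + \Psi_1\tp{\lambda, t, \mathsf{X}(t) + Z}$. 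Writing $\mathsf{Y} = \mathsf{X} - \m{x}_{\m{a}}$, a Taylor expansion of $\Psi_0$ about $\m{x}_{\m{a}}$ together with $D\Psi_0(\m{x}_{\m{a}}) = M_{\m{a}}$ delivers the pointwise bound
\begin{equation*}
    \tabs{\Psi_0\tp{\mathsf{X}(t) + Z} - \Psi_0\tp{\mathsf{X}(t)} - M_{\m{a}} Z} \lesssim \Sampi_2 \tp{|\mathsf{Y}(t)| + |Z|}|Z|,
\end{equation*}
while the last summand is bounded by $\kkappa_0(\lambda, t)$. The hypothesis $\tnorm{\mathsf{Y}}_{L^\infty \cap L^2} \le R/2$ and the inequality $\tnorm{\kkappa_0(\lambda, \cdot)}_{L^\infty \cap L^2} \le \Bar{\kkappa}_0 \le \Bar{\kkappa}_1$ then control $f_\lambda(\cdot, Z)$ in $\tp{L^\infty \cap L^2}\tp{[T, \infty)}$ whenever $Z$ itself lies in that space with small enough norm.

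Duhamel's formula recasts the problem as the fixed point identity
\begin{equation*}
    Z(t) = e^{(t-T) M_{\m{a}}} x + \int_T^t e^{(t-s) M_{\m{a}}} f_\lambda(s, Z(s))\, \m{d}s,
\end{equation*}
and Young's inequality applied to the convolution against the exponentially decaying kernel $s \mapsto K e^{-\al s} \mathds{1}_{[0,\infty)}(s)$ bounds the Duhamel integral in both $L^\infty$ and $L^2$ by $\tp{2K/\al}\tnorm{f_\lambda(\cdot, Z)}_{L^\infty \cap L^2}$. As in Step 3 of the proof of Proposition~\ref{prop on unstable manifolds with parameter and nonautonomy}, I would define the solution operator $\pmb{N}_T(\lambda, x, Z)$ by the right hand side on the space of $Z \in \Bar{B(0, \theta R)} \subset \tp{C^0 \cap L^\infty \cap L^2}\tp{[T, \infty); \R^d}$ with $x \in \Bar{B(0, \sig\theta R)} \subset \R^d$. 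Choosing $\theta, \sig$ small enough (so that $\Sampi_2 (R + \theta R) \theta R$ is absorbed by $\al\theta R/2K$ when combined with $\tnorm{\mathsf{Y}}_{L^\infty \cap L^2} \le R/2$) and $\del_2$ small enough that $\tp{2K/\al}\del_2$ is absorbed, one makes $\pmb{N}_T(\lambda, x, \cdot)$ a $2/3$-contraction of $\Bar{B(0, \theta R)}$ into itself. The parametrized Banach contraction principle then produces a Lipschitz continuous map $\pmb{U}_T$ into this ball whose image solves the ODE pointwise with $\pmb{U}_T(\lambda, x)(T) = x$, securing the first and second items.

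For the Lipschitz dependence in $\lambda$, the difference $f_{\lambda}(t, Z) - f_{\tilde\lambda}(t, Z) = \Psi_1\tp{\lambda, t, \mathsf{X} + Z} - \Psi_1\tp{\tilde\lambda, t, \mathsf{X} + Z}$ is pointwise bounded by $\qoppa_0(\lambda, \tilde\lambda, t)\m{dist}_{\Lambda}(\lambda, \tilde\lambda)$, hence by $\Bar{\qoppa}_0 \m{dist}_{\Lambda}(\lambda, \tilde\lambda)$ in $L^\infty \cap L^2$, and the contraction principle transfers this into Lipschitz dependence on $\lambda$ in $C^0 \cap L^\infty \cap L^2$. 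Promotion of the target space to $C^1 \cap W^{1,\infty} \cap H^1$ is then a posteriori: both $M_{\m{a}} \pmb{U}_T(\lambda, x)$ and $f_\lambda\tp{\cdot, \pmb{U}_T(\lambda, x)}$ lie in $L^\infty \cap L^2$, so the ODE forces $\tsb{\pmb{U}_T(\lambda, x)}'$ into the same space, and the Lipschitz bound upgrades correspondingly. The main delicacy of the argument is that, unlike in Proposition~\ref{prop on unstable manifolds with parameter and nonautonomy} where one linearizes at the equilibrium itself, the reference profile $\mathsf{X}$ only converges to $\m{x}_{\m{a}}$: the effective linearization $D\Psi_0(\mathsf{X}(t))$ coincides with $M_{\m{a}}$ only asymptotically, so one must carry the mismatch $D\Psi_0(\mathsf{X}(t)) - M_{\m{a}}$ into $f_\lambda$ and absorb it using the quantitative bound $\tnorm{\mathsf{Y}}_{L^\infty \cap L^2} \le R/2$. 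This is precisely what forces the smallness thresholds $\theta, \sig, \del_2$ to depend on $T, \Psi_0$, and $\mathsf{X}$ in addition to $R, \Sampi_2, K, \al$.
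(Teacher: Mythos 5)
There is a genuine gap in your contraction argument. Your remainder $f_\lambda(t,Z) = \Psi_0(\mathsf{X}+Z) - \Psi_0(\mathsf{X}) - M_{\m{a}}Z + \Psi_1(\lambda,t,\mathsf{X}+Z)$ contains, after Taylor expansion, the linear-in-$Z$ piece $\tsb{D\Psi_0(\mathsf{X}(t)) - M_{\m{a}}}Z$. Your pointwise estimate bounds its coefficient by $\Sampi_2|\mathsf{Y}(t)|$ with $\tnorm{\mathsf{Y}}_{L^\infty\cap L^2}\le R/2$, but this is a bound of size $R$, not a small quantity. After convolving with the Duhamel kernel, this term contributes to the Lipschitz constant of $Z\mapsto\pmb{N}_T(\lambda,x,Z)$ something on the order of $(K/\al)\Sampi_2 R$, which is a fixed quantity independent of $\theta$, $\sig$, and $\del_2$, and need not be smaller than $1$. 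Shrinking $\theta$ kills the genuinely quadratic $\Sampi_2|Z|^2$ contribution, and shrinking $\del_2$ kills the $\Psi_1$ contribution, but neither touches this linear mismatch. So the map is not a contraction and the Banach fixed point theorem does not apply as written.

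This is exactly why the paper does not absorb the mismatch into the nonlinear remainder. Instead it isolates the linear operator
\begin{equation*}
    \pmb{P}(Y)(t) = \int_T^t e^{(t-\tau)M_{\m{a}}}\tsb{D_{\R^d}\Psi_0(\mathsf{X}(\tau)) - M_{\m{a}}}Y(\tau)\,\m{d}\tau,
\end{equation*}
proves $\pmb{P}$ is \emph{compact} on $(C^0\cap L^\infty\cap L^2)([T,\infty);\R^d)$ (using Rellich--Kondrachov, Arzel\`a--Ascoli, and the decay $D\Psi_0(\mathsf{X}(t))\to M_{\m{a}}$), establishes that $I - \pmb{P}$ is Fredholm of index zero with trivial kernel by a Gr\"onwall argument, and hence invertible. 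Only then does it set up a contraction for $Y = \pmb{Q}Y_{\m{i}} + (I-\pmb{P})^{-1}\circ\pmb{R}_\lambda(Y)$, where $\pmb{R}_\lambda$ contains only the honestly quadratic remainder $\Psi_0(\mathsf{X}+Y) - \Psi_0(\mathsf{X}) - D\Psi_0(\mathsf{X})Y$ and the $\Psi_1$ perturbation, both of which are small. You correctly identified the linear mismatch as the main delicacy, but the bound $\tnorm{\mathsf{Y}}_{L^\infty\cap L^2}\le R/2$ is not small enough to make it negligible; some invertibility argument for $I - \pmb{P}$ (or an equivalent device such as splitting $[T,\infty)$ into a finite piece and a tail where $\mathsf{Y}$ is genuinely small) is indispensable.
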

\begin{proof}
    We begin by noting that any bounded and continuously differentiable solution $X:[T,\infty)\to\R^d$ to equation~\eqref{The general peturbed ODE IVP} satisfies 
    \begin{equation}\label{occasionally useful}
        X(t) = e^{\tp{t - T}M_{\m{a}}}X_{\m{i}} + \int_{T}^te^{\tp{t -\tau}M_{\m{a}}}\tp{\Psi\tp{\lambda,\tau,X(\tau)} - M_{\m{a}}X(\tau)}\;\m{d}\tau \text{ for } t \ge T.
    \end{equation}
    A similar identity is satisfied by $\mathsf{X}$ if we replace $\Psi$ above with $\Psi_0$ and $X_{\m{i}}$ by $\mathsf{X}_{\m{i}}$. By subtracting the latter from the former and writing $X = \mathsf{X} + Y$ and $X_{\m{i}} = \mathsf{X}_{\m{i}} + Y_{\m{i}}$, we deduce that $Y:[T,\infty)\to\R^d$ obeys the equation
    \begin{equation}\label{Y fixed point version 1}
        Y(t) = e^{\tp{t - T}M_{\m{a}}}Y_{\m{i}} + \pmb{P}(Y)(t) + \pmb{R}_\lambda\tp{Y}\tp{t} \text{ for } t\ge T,
    \end{equation}
    where the linear map $\pmb{P}$ is defined by
    \begin{equation}\label{the compact perturbation}
        \pmb{P}(Y)(t) = \int_{T}^te^{\tp{t - \tau}M_{\m{a}}}\tp{D_{\R^d}\Psi_0\tp{\mathsf{X}\tp{\tau}} - M_{\m{a}}}Y\tp{\tau}\;\m{d}\tau
    \end{equation}
    and the nonlinear map $\pmb{R}_\lambda$ is given by
    \begin{equation}
        \pmb{R}_\lambda\tp{Y}(t) = \int_T^te^{\tp{t - \tau}M_{\m{a}}}\tsb{\Psi_0\tp{\mathsf{X} + Y} - \Psi_0\tp{\mathsf{X}} - D_{\R^d}\Psi_0\tp{\mathsf{X}}Y + \Psi_1\tp{\lambda,\cdot,\mathsf{X} + Y}}\tp{\tau}\;\m{d}\tau. 
    \end{equation}

    We now aim to establish that $\pmb{P}$ is a bounded and compact operator on the space $\tp{C^0\cap L^\infty\cap L^2}\tp{[T,\infty);\R^d}$.  In light of~\eqref{_the semigroup bounds_}, it's clear that $\pmb{P}$ is actually a bounded linear map with values in the space $\tp{C^1\cap W^{1,\infty}\cap H^1}\tp{[T,\infty);\R^d}$, with operator norm depending only on on $K$, $\al$, and $\Sampi_2$.
    The theorems of Rellich–Kondrachov and Arzel\`a-Ascoli establish the compactness of the embedding 
        \begin{equation}
        \tp{C^1\cap W^{1,\infty}\cap H^1}\tp{[T,\tilde{T}];\R^d}\emb\tp{C^0\cap L^{\infty}\cap L^2}\tp{[T,\tilde{T}];\R^d}
    \end{equation}
    for every $\tilde{T}\ge T$.   The compactness of $\pmb{P}$ now follows from this by way of a standard argument that stitches together this compactness on bounded intervals with the uniform decay of the tails granted by the fact that $\lim_{t\to\infty}\tp{D_{\R^d}\Psi_0\tp{\mathsf{X}\tp{t}} - M_{\m{a}}} = 0$.

    With the compactness of $\pmb{P}$ in hand, we now know that the operator $I - \pmb{P} \in\mathcal{L}\tp{\tp{C^0\cap L^\infty\cap L^2}\tp{[T,\infty);\R^d}}$ is Fredholm with index zero. If $Z\in\m{ker}\tp{I - \pmb{P}}$ then a simple application of the Gr\"onwall inequality implies that $Z = 0$. Therefore, the operator $I - \pmb{P}$ is actually invertible and so we may apply $\tp{I - \pmb{P}}^{-1}$ to identity~\eqref{Y fixed point version 1} to see that the perturbation $Y$ obeys the fixed point identity
    \begin{equation}\label{Y fixed point version 2}
        Y = \pmb{Q}Y_{\m{i}} + \tp{I - \pmb{P}}^{-1}\circ\pmb{R}_{\lambda}\tp{Y}.
    \end{equation}
    where $\pmb{Q}Y_{\m{i}}$ denotes $\tp{I - \pmb{P}}^{-1}$ applied to the function $[T,\infty)\ni t\mapsto e^{\tp{t - T}M_{\m{a}}}Y_{\m{i}}\in\R^d$.

    Next, we set up a contraction mapping argument that produces solutions to~\eqref{Y fixed point version 2}. Given $0<\theta,\sig<1$ we define the map
    \begin{equation}
        \pmb{N}_T:\Lambda\times\Bar{B(0,\sig\theta R)}\times\tp{\Bar{B(0,\theta R)}\cap\tp{C^0\cap L^\infty\cap L^2}\tp{[T,\infty);\R^d}} \to \tp{C^0\cap L^\infty\cap L^2}\tp{[T,\infty);\R^d}
    \end{equation}
    via $\pmb{N}_T\tp{\lambda,Y_{\m{i}},Y} = \pmb{Q}Y_{\m{i}} + \tp{I - \pmb{P}}^{-1}\circ\pmb{R}_\lambda\tp{Y}$. We directly estimate that
    \begin{equation}
        \tnorm{\pmb{N}_T\tp{\lambda,Y_{\m{i}},Y}}_{L^\infty\cap L^2}\le\tnorm{\pmb{Q}}\sig\theta R + \tnorm{\tp{I - \pmb{P}}^{-1}}\tp{K/\al}\tp{\Sampi_2\tp{\theta R}^2 + \Bar{\kkappa}_0}.
    \end{equation}
    Therefore, if we take
    \begin{equation}
        \theta = \min\tcb{1,\al\tp{3R\Sampi_2K\tnorm{\tp{I - \pmb{P}}^{-1}}}^{-1}},\quad\sig = \min\tcb{1,\tp{3\tnorm{\pmb{Q}}}^{-1}},\quad\Bar{\kkappa}_1\le\al\min\tcb{1,\theta R}\tp{3K\tnorm{(I - \pmb{P})^{-1}}}^{-1},
    \end{equation}
    then  $\pmb{N}_T\tp{\lambda,Y_{\m{i}},\cdot}$ maps $\Bar{B(0,\theta R)}$ into itself and
    \begin{equation}
        \tnorm{\pmb{N}_T\tp{\lambda,Y_{\m{i}},Y} - \pmb{N}_T\tp{\lambda,Y_{\m{i}},Z}}_{L^\infty\cap L^2}\le\tnorm{\tp{I - \pmb{P}}^{-1}}\tp{K/\al}\tp{\theta R + \Bar{\kkappa}_1}\tnorm{Y - Z}_{L^\infty\cap L^2}\le\tp{2/3}\tnorm{Y - Z}_{L^\infty\cap L^2}
    \end{equation}
    The latter estimate shows that $\pmb{N}_T$ is a contraction in its final argument, but we can also check that $\pmb{N}_T$ is Lipschitz in its first and second arguments as well: 
    \begin{equation}
    \tnorm{\pmb{N}_T\tp{\lambda,Y_{\m{i}},Y} - \pmb{N}_{T}\tp{\tilde{\lambda},Z_{\m{i}},Y}}_{L^\infty\cap L^2}\le\tnorm{Q}\tabs{Y_{\m{i}} - Z_{\m{i}}} + \tp{K/\al}\Bar{\qoppa}_0\m{dist}_{\Lambda}\tp{\lambda,\tilde{\lambda}}.
    \end{equation}
    The Banach contraction mapping principle with parameter, see e.g. Theorem C.7 in Irwin~\cite{MR1867353}, then provides a Lipschitz continuous map
    \begin{equation}\label{wow another contraction}
        \pmb{U}_T:\Lambda\times\Bar{B(0,\sig\theta R)}\to\tp{C^0\cap L^\infty\cap L^2}\tp{[T,\infty);\R^d}
    \end{equation}
    with the property that $Y = \pmb{U}_T\tp{\lambda,Y_{\m{i}}}$ is the unique (in the ball $\Bar{B(0,\theta R)}$) solution to $Y = \pmb{N}_T\tp{\lambda,Y_{\m{i}},Y}$. We can undo the sequence of reductions made at the beginning of the proof to deduce that the function $X = \mathsf{X} + Y$ solves~\eqref{The general peturbed ODE IVP} with $X_{\m{i}} = \mathsf{X}_{\m{i}} + Y_{\m{i}}$.

    We have now established the first and second items, and it remains only to show that $\pmb{U}_T$ is a Lipschitz map into the target space given in~\eqref{the rather attractive solution operator}. This is achieved via a similar strategy to that used at the end of the proof of Proposition~\ref{prop on unstable manifolds with parameter and nonautonomy}.  Since $\mathsf{X} + \pmb{U}_T\tp{\lambda,x}$ is a solution to~\eqref{occasionally useful} and $\mathsf{X}$ is continuously differentiable, we deduce that $\pmb{U}_T\tp{\lambda,x}$ is continuously differentiable as well. Hence, its derivative obeys the identity
    \begin{equation}
        [\pmb{U}_T\tp{\lambda,x}]'\tp{t} =\Psi_0\tp{\tp{\mathsf{X} + \pmb{U}_T\tp{\lambda,x}}\tp{t}} - \Psi_0\tp{\mathsf{X}\tp{t}} +  \Psi_1\tp{\lambda,t,\tp{\mathsf{X} + \pmb{U}_T\tp{\lambda,x}}(t)}
    \end{equation}
    for $t\ge T$, and it is then a simple matter to use the above to promote the Lipschitz continuity of the map~\eqref{wow another contraction} to the claimed~\eqref{the rather attractive solution operator}.
\end{proof}

Our third result combines the analysis of Propositions~\ref{prop on unstable manifolds with parameter and nonautonomy} and~\ref{prop on attractor stability} to yield a certain type of stability for heteroclinic orbits.

\begin{thmC}[Perturbations of heteroclinic orbits]\label{thm on perturbations of heteroclinic orbits}
    Assume Definition~\ref{defn of hypotheses for heteroclininc perturbation theory}. Suppose $\lambda_0\in\Lambda$ is such that $\Psi_1\tp{\lambda_0,t,x}=0$ for all $t\in\R$ and $x\in\R^d$.  Further suppose that $\mathsf{X}:\R\to\R^d$ is a given continuously differentiable solution to the equation $\mathsf{X}'\tp{t} = \Psi_0\tp{\mathsf{X}\tp{t}}$ for $t\in\R$, satisfying $\lim_{t\to\infty}\mathsf{X}\tp{t} = \m{x}_{\m{a}}$, $\lim_{t\to-\infty}\mathsf{X}\tp{t} = \m{x}_{\m{h}}$, and $\max\tcb{\tnorm{\mathsf{X} - \m{x}_{\m{h}}}_{\tp{L^\infty\cap L^2}\tp{(-\infty,0)}},\tnorm{\mathsf{X} - \m{x}_{\m{a}}}_{\tp{L^\infty\cap L^2}\tp{(0,\infty)}}}\le R/2$. 
    
    There exists $0<\updelta\le1$, depending only on $R$, $\Sampi_2$, $K$, $\al$, $\Psi_0$, and $\mathsf{X}$,  with the property that if $\max\tcb{\Bar{\kkappa}_1,\Bar{\qoppa}_0}\le\updelta$ then there exists a Lipschitz continuous map
    \begin{equation}\label{B is for bore map}
        \pmb{B}:\Lambda\to\tp{C^1\cap W^{1,\infty}\cap H^1}\tp{\R;\R^d}
    \end{equation}
    possessing the following properties.
    \begin{enumerate}
        \item We have $\pmb{B}\tp{\lambda_0} = 0$, and for all $\lambda\in\Lambda$ the function $X = \mathsf{X} + \pmb{B}\tp{\lambda}$ is a solution to the ODE  $X'(t) = \Psi\tp{\lambda,t,X(t)}$ for $t \in \R$.
        \item The Lipschitz constant of $\pmb{B}$ as in~\eqref{B is for bore map} depends only on $R$, $\Sampi_2$, $K$, $\al$, $\Psi_0$, and $\mathsf{X}$.
    \end{enumerate}
\end{thmC}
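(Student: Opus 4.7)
The plan is to construct $\pmb{B}(\lambda)$ by pasting together a backward trajectory supplied by Proposition \ref{prop on unstable manifolds with parameter and nonautonomy} with a forward trajectory supplied by Proposition \ref{prop on attractor stability} at a single matching time $T \ll 0$, exploiting the fact that at $\lambda = \lambda_0$ the equation is autonomous and $\mathsf{X}$ itself realizes both pieces. First I fix $R>0$ and $T \le 0$ with $|T|$ large, both depending on $\Psi_0$ and $\mathsf{X}$, so that (i) the point $x_{\m{u}}^0 := \Pi_{\m{u}}(\mathsf{X}(T) - \m{x}_{\m{h}})$ lies in the interior of $\Bar{B(0,\sigma\theta R)} \cap E^{\m{u}}$, where $\theta,\sigma$ are as in Proposition \ref{prop on unstable manifolds with parameter and nonautonomy}, and (ii) the restriction $\mathsf{X}\res[T,\infty)$ meets the hypotheses of Proposition \ref{prop on attractor stability}, which in particular requires absorbing the $L^2$ contribution over $[T,0)$ by enlarging $R$. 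Since $\mathsf{X}\res(-\infty, T]$ is a bounded, square-summable solution of the unperturbed equation passing through $\mathsf{X}(T)$, the uniqueness assertion in Proposition \ref{prop on unstable manifolds with parameter and nonautonomy} forces $\pmb{V}_T(\lambda_0, x_{\m{u}}^0) = \Pi_{\m{s}}(\mathsf{X}(T) - \m{x}_{\m{h}})$, and hence $\m{x}_{\m{h}} + x_{\m{u}}^0 + \pmb{V}_T(\lambda_0, x_{\m{u}}^0) = \mathsf{X}(T)$.

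For each $\lambda \in \Lambda$ I then let $X_-(\lambda) : (-\infty, T] \to \R^d$ be the bounded backward trajectory of the perturbed equation with terminal datum $\m{x}_{\m{h}} + x_{\m{u}}^0 + \pmb{V}_T(\lambda, x_{\m{u}}^0)$, supplied by the map~\eqref{the _induced_ solution map}, and I define the matching discrepancy
\begin{equation}
    Y_{\m{i}}(\lambda) := X_-(\lambda)(T) - \mathsf{X}(T) = \pmb{V}_T(\lambda, x_{\m{u}}^0) - \pmb{V}_T(\lambda_0, x_{\m{u}}^0).
\end{equation}
The third item of Proposition \ref{prop on unstable manifolds with parameter and nonautonomy} gives $|Y_{\m{i}}(\lambda)| \le C\Bar{\qoppa}_0\m{dist}_\Lambda(\lambda, \lambda_0) \le 2C\updelta$, which lies inside the domain radius of $\pmb{U}_T$ from Proposition \ref{prop on attractor stability} once $\updelta$ is taken sufficiently small. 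Setting $X_+(\lambda) := \mathsf{X} + \pmb{U}_T(\lambda, Y_{\m{i}}(\lambda))$ on $[T, \infty)$, the equality $X_-(\lambda)(T) = \mathsf{X}(T) + Y_{\m{i}}(\lambda) = X_+(\lambda)(T)$ holds by construction, so the concatenation $X(\lambda) : \R \to \R^d$ is a global $C^1$ (hence smooth, by iterated differentiation of the ODE) solution of $X' = \Psi(\lambda, t, X)$. Defining $\pmb{B}(\lambda) := X(\lambda) - \mathsf{X}$, the identity $\pmb{B}(\lambda_0) = 0$ follows from $Y_{\m{i}}(\lambda_0) = 0$, from $\pmb{U}_T(\lambda_0, 0) = 0$ by the fixed-point uniqueness in Proposition \ref{prop on attractor stability}, and from $X_-(\lambda_0) = \mathsf{X}\res(-\infty, T]$ by ODE uniqueness.

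The Lipschitz continuity of $\pmb{B}$ as a map into $(C^1 \cap W^{1,\infty} \cap H^1)(\R; \R^d)$ follows by assembling the Lipschitz estimates for~\eqref{the _induced_ solution map} and~\eqref{the rather attractive solution operator} with the displayed Lipschitz bound on $Y_{\m{i}}$; the constraint $\Bar{\qoppa}_0 \le \updelta \le 1$ absorbs the implicit $\Bar{\qoppa}_0$-dependence of these Lipschitz constants into the allowed parameter list $R, \Sampi_2, K, \alpha, \Psi_0, \mathsf{X}$. The main technical point will be the joint selection of $R$ and $T$: $T$ must be pushed far enough into the past to bring $\mathsf{X}(T)$ inside the local unstable manifold near $\m{x}_{\m{h}}$, yet this forces the $L^2$ norm of $\mathsf{X} - \m{x}_{\m{a}}$ over $[T,0)$ to grow like $|T|^{1/2}$, which must in turn be absorbed by enlarging $R$ before Proposition \ref{prop on attractor stability} can be invoked. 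Once $R$ and $T$ are pinned down in terms of $\mathsf{X}$ and $\Psi_0$, the remainder of the argument is a matter of chasing the constants through the two propositions.
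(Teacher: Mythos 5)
Your proposal is correct and follows essentially the same route as the paper's own proof: choose $T\ll 0$, build the backward piece from Proposition~\ref{prop on unstable manifolds with parameter and nonautonomy}, feed the matching discrepancy at $T$ into Proposition~\ref{prop on attractor stability} as initial data, and concatenate, using $\Psi_1(\lambda_0,\cdot,\cdot)=0$ to recover $\mathsf{X}$ and the third item of Proposition~\ref{prop on unstable manifolds with parameter and nonautonomy} to control $Y_{\m{i}}(\lambda)$ so it lands in $\pmb{U}_T$'s domain after shrinking $\updelta$. One minor difference: you are explicit about enlarging $R$ to accommodate the $L^2(T,\infty)$ hypothesis of Proposition~\ref{prop on attractor stability}, whereas the paper absorbs this silently into the allowed $\mathsf{X}$- and $T$-dependence of its constants; your version is slightly more careful bookkeeping but not a different argument.
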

\begin{proof}
    Let $\updelta = \max\tcb{\Bar{\kkappa}_1,\Bar{\qoppa}_0}$. The hypotheses on $\mathsf{X}$ ensure that for each $0<\ep\le R$, there exists a $T_\ep\in(-\infty,0]$ with the property that
    \begin{equation}\label{is this using the axiom of choice}
        \sup_{t\le T_\ep}\tabs{\mathsf{X}(t) -  \m{x}_{\m{h}}} + \bp{\int_{-\infty}^{T_\ep}\tabs{\mathsf{X}\tp{\tau} - \m{x}_{\m{h}}}^2\;\m{d}\tau}^{1/2}\le\ep.
    \end{equation}
    Assume that  $0<\updelta\le\del_1$, where $\del_1$ is the parameter from Proposition~\ref{prop on unstable manifolds with parameter and nonautonomy}.  The hypotheses of the proposition are thus satisfied, so its third item provides  $0<\ep_1\le R$ and, for every $T\le0$, a Lipschitz continuous map
    \begin{equation}\label{hike the appalachian trail}
        \pmb{A}_T:\Lambda\times\tp{\Bar{B(0,2\ep_1)}\cap E^{\m{u}}}\to\tp{C^1\cap W^{1,\infty}\cap H^1}\tp{(-\infty,T];\R^d}
    \end{equation}
    such that for all $\lambda$ and $x_{\m{u}}$ belonging to the domain of~\eqref{hike the appalachian trail} the function $X = \m{x}_{\m{h}} + \pmb{A}_T\tp{\lambda,x_{\m{u}}}$ is a solution to~\eqref{The general peturbed ODE IVP} on the interval $(-\infty,T]$ with $X(T)\in\mathcal{I}\tp{\lambda,T}$ the unique element with $\Pi_{\m{u}}\tp{X(T) - \m{x}_{\m{h}}} = x_{\m{u}}$.

    The definition~\eqref{is this using the axiom of choice} of $T_{\ep_1}$ ensures that $\mathsf{X}\tp{T_{\ep_1}}\in\mathcal{I}\tp{\lambda,T}$. Let us define $\Bar{x}_{\m{u}} = \Pi_{\m{u}}\tp{\mathsf{X}\tp{T_{\ep_1}} - \m{x}_{\m{h}}}$. Since $\Psi_1\tp{\lambda_0,\cdot,\cdot}=0$ we have that on $(-\infty,T_{\ep_1}]$
    \begin{equation}\label{uniqueness is kinda cool I guess}
        \m{x}_{\m{h}} + \pmb{A}_{T_{\ep_1}}\tp{\lambda_0,\Bar{x}_{\m{u}}} = \psi_{\lambda_0}\tp{\cdot,T_{\ep_1},\mathsf{X}\tp{T_{\ep_1}}} = \mathsf{X}.
    \end{equation}
    We are now in a position to define the map
    \begin{equation}\label{left half definition}
        \pmb{B}^-:\Lambda\to\tp{C^1\cap W^{1,\infty}\cap H^1}\tp{(-\infty,T_{\ep_1}];\R^d} \text{ via } 
        \pmb{B}^{-}\tp{\lambda} = \m{x}_{\m{h}} - \mathsf{X} + \pmb{A}_{T_{\ep_1}}\tp{\lambda,\Bar{x}_{\m{u}}}.
    \end{equation}
    Since $\mathsf{X} - \m{x}_{\m{h}}\in\tp{C^1\cap W^{1,\infty}\cap H^1}\tp{(-\infty,T_{\ep_1}];\R^d}$, $\pmb{B}^-$ is well-defined.  Moreover, this map inherits the Lipschitz continuity of $\pmb{A}_{T_{\ep_1}}$, and the calculation~\eqref{uniqueness is kinda cool I guess} shows that $\pmb{B}^-\tp{\lambda_0} = 0$. Finally, since  $X^- = \mathsf{X} + \pmb{B}^-\tp{\lambda} = \m{x}_{\m{h}} + \pmb{A}_{T_{\ep_1}}\tp{\lambda,\Bar{x}_{\m{u}}}$, we have that   $(X^-)'(t) = \Psi\tp{\lambda,t,X^-(t)}$ for $t \le T_{\ep_1}$ for every $\lambda\in\Lambda$.

    The goal now is to use Proposition~\ref{prop on attractor stability} to extend the definition of $\pmb{B}^-$ to all of $\R$.  To achieve this we assume in addition that $\updelta\le\del_2$, the parameter from the proposition.  This then  grants us $0<\ep_2\le R$ and a Lipschitz continuous map
    \begin{equation}\label{Tchaikovsky is hard to spell}
        \pmb{U}_{T_{\ep_1}}:\Lambda\times\Bar{B(0,\ep_2)}\to\tp{C^1\cap W^{1,\infty}\cap H^1}\tp{[T_{\ep_1},\infty);\R^d}
    \end{equation}
    such the for all $\lambda$, $x$ in the domain of~\eqref{Tchaikovsky is hard to spell} we have that $X = \mathsf{X} + \pmb{U}_{T_{\ep_1}}\tp{\lambda,x}$ is a solution to~\eqref{The general peturbed ODE IVP} on $[T_{\ep_1},\infty)$ with $X_{\m{i}} = \mathsf{X}\tp{T_{\ep_1}} + x$.

    Further decreasing $\updelta$ if necessary, the third item of Proposition~\ref{prop on unstable manifolds with parameter and nonautonomy} combined with the facts that $\m{diam}\Lambda\le 2$ and $\pmb{B}^-\tp{\lambda_0} = 0$ allows us to arrange for $\tsb{\pmb{B}^-\tp{\lambda}}(T_{\ep_1})\in\Bar{B(0,\ep_2)}$ for all $\lambda\in\Lambda$. In turn, we may define the map
    \begin{equation}
        \pmb{B}^+:\Lambda\to\tp{C^1\cap W^{1,\infty}\cap H^1}\tp{[T_{\ep_1},\infty);\R^d} \text{ via }
        \pmb{B}^+\tp{\lambda} = \pmb{U}_{T_{\ep_1}}\tp{\lambda,\tsb{\pmb{B}^-\tp{\lambda}}\tp{T_{\ep_1}}}.
    \end{equation}
    $\pmb{B}^+$ is a composition of Lipschitz maps and hence Lipschitz as well. For all $\lambda\in\Lambda$, the map $X = \mathsf{X} + \pmb{B}^+\tp{\lambda}$ solves~\eqref{The general peturbed ODE IVP} on $[T_{\ep_1},\infty)$ with $X_{\m{i}} = \mathsf{X}\tp{T_{\ep_{1}}} + \pmb{B}^-\tp{\lambda}\tp{T_{\ep_1}}$. By uniqueness and the identity $\pmb{B}^-\tp{\lambda_0} = 0$, we find that $X = \mathsf{X}$     when $\lambda = \lambda_0$, which in turn implies that that $\pmb{B}^+\tp{\lambda_0} = 0$.

    To complete the proof and construct the map of~\eqref{B is for bore map}, we concatenate $\pmb{B}^-$ and $\pmb{B}^+$; indeed, for $\lambda\in\Lambda$ we define 
    \begin{equation}\label{Patch Adams}
        \tsb{\pmb{B}\tp{\lambda}}\tp{t} = \begin{cases}
            \tsb{\pmb{B}^-\tp{\lambda}}\tp{t}&\text{if }t\le T_{\ep_1},\\
            \tsb{\pmb{B}^+\tp{\lambda}}\tp{t}&\text{if }t>T_{\ep_1}.
        \end{cases}
    \end{equation}
    It is then a simple matter to check that
    \begin{equation}\label{it checks the left and right hand limits on its paper}
        \lim_{t\to T_{\ep_1},\;t<T_{\ep_1}}\tsb{\pmb{B}\tp{\lambda}}\tp{t} =  \m{x}_{\m{h}} - \mathsf{X}\tp{T_{\ep_1}} + \pmb{A}_{T_{\ep_1}}\tp{\lambda,\Bar{x}_u} = \lim_{t\to T_{\ep_1},\;t>T_{\ep_1}}\tsb{\pmb{B}\tp{\lambda}}\tp{t},
    \end{equation}
    and hence $\pmb{B}(\lambda)$ as defined in~\eqref{Patch Adams} is continuous on $\R$.  By design, we also know that $X=\mathsf{X} + \pmb{B}\tp{\lambda}$ solves $X'(t) = \Psi\tp{\lambda,t,X(t)}$ for $t$ in each of the intervals $(-\infty,T_{\ep_1})$ and $\tp{T_{\ep_1},\infty}$. Therefore, \eqref{it checks the left and right hand limits on its paper} implies that $\pmb{B}\tp{\lambda}$ is continuously differentiable in all of $\R$.  Finally, the Lipschitz continuity of $\pmb{B}$ as a map between the spaces of ~\eqref{B is for bore map} is inherited from the Lipschitz continuity of $\pmb{B}^-$ and $\pmb{B}^+$.
\end{proof}

Our final result regarding perturbations of heteroclinic orbits studies higher regularity of the solutions to the perturbed problem.

\begin{coroC}[Improved regularity]\label{coro on improved regularity}
    For each $\N\ni n\ge 2$ the map $\pmb{B}$ of~\eqref{B is for bore map} produced by Theorem~\ref{thm on perturbations of heteroclinic orbits} is actually a Lipschitz continuous between the spaces
    \begin{equation}\label{more derivatives on the bore map makes me more money}
        \pmb{B}:\Lambda\to\tp{C^n\cap W^{n,\infty}\cap H^n}\tp{\R;\R^d}.
    \end{equation}
    Moreover, the Lipschitz constant of~\eqref{more derivatives on the bore map makes me more money} depends only on $R$, $\Sampi_{n+1}$, $\Bar{\kkappa}_{n}$, $\Bar{\qoppa}_{n-1}$, $K$, $\al$, $\Psi_0$, and $\mathsf{X}$.
\end{coroC}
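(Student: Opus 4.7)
The plan is to argue by induction on $n$, using the ODE itself to transfer regularity from derivative order $n-1$ to order $n$. The base case $n=1$ is the content of Theorem~\ref{thm on perturbations of heteroclinic orbits}. For the inductive step, we fix $n \ge 2$, assume $\pmb{B}:\Lambda\to(C^{n-1}\cap W^{n-1,\infty}\cap H^{n-1})(\R;\R^d)$ is Lipschitz, and exploit that $X = \mathsf{X} + \pmb{B}(\lambda)$ solves $X'(t) = \Psi(\lambda,t,X(t))$. Subtracting the shallow water ODE $\mathsf{X}' = \Psi_0(\mathsf{X})$ yields the pointwise identity
\begin{equation}\label{cor_plan_eq1}
    \tsb{\pmb{B}(\lambda)}'(t) = \tsb{\Psi_0(\mathsf{X}(t)+\pmb{B}(\lambda)(t)) - \Psi_0(\mathsf{X}(t))} + \Psi_1(\lambda,t,\mathsf{X}(t) + \pmb{B}(\lambda)(t)).
\end{equation}
Differentiating~\eqref{cor_plan_eq1} $n-1$ additional times and expanding via the Fa\`a di Bruno formula expresses $\tsb{\pmb{B}(\lambda)}^{(n)}$ as a finite sum whose summands are products of derivatives of $\Psi_0$ or $\Psi_1$ evaluated along the curve, multiplied by products of derivatives of $\mathsf{X}$ and $\pmb{B}(\lambda)$ of orders at most $n-1$.

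The $W^{n,\infty}$ bound is then immediate: each such summand is a bounded product since $D^k\Psi_0$ is bounded on $\Bar{B(0,2R)}$ by $\Sampi_{n+1}$, each $D^{k_1}_\R D^{k_2}_{\R^d}\Psi_1(\lambda,\cdot,\cdot)$ is bounded by $\Bar{\kkappa}_n$, the derivatives $\mathsf{X}^{(j)}$ are bounded (for $j \le n$, by iterating the ODE $\mathsf{X}' = \Psi_0(\mathsf{X})$), and the derivatives $\tsb{\pmb{B}(\lambda)}^{(j)}$ for $j \le n-1$ are controlled by the inductive hypothesis. For the $H^n$ bound, which I expect to be the main technical obstacle, the goal is to isolate at least one $L^2$ factor in every term of the Fa\`a di Bruno expansion. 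The key decomposition is
\begin{equation}\label{cor_plan_eq2}
    \Psi_0^{(k)}(\mathsf{X}+\pmb{B})\prod_i(\mathsf{X}^{(j_i)}+\pmb{B}^{(j_i)}) - \Psi_0^{(k)}(\mathsf{X})\prod_i\mathsf{X}^{(j_i)} = \tsb{\Psi_0^{(k)}(\mathsf{X}+\pmb{B}) - \Psi_0^{(k)}(\mathsf{X})}\prod_i\mathsf{X}^{(j_i)} + \Psi_0^{(k)}(\mathsf{X}+\pmb{B})\bp{\prod_i(\mathsf{X}^{(j_i)}+\pmb{B}^{(j_i)}) - \prod_i\mathsf{X}^{(j_i)}},
\end{equation}
in which the first bracketed factor is pointwise bounded by $\Sampi_{k+1}\tabs{\pmb{B}(\lambda)}$ and hence lies in $L^2$, while the second parenthetical, upon expansion, produces only summands containing at least one factor $\pmb{B}^{(j_i)}\in L^2$ with $j_i \le n-1$. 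The other factors $\mathsf{X}^{(j_i)}$ are uniformly bounded; note that $\mathsf{X}^{(j)} \in L^\infty$ suffices here since we already have an $L^2$ factor. For the $\Psi_1$-contributions to $\tsb{\pmb{B}(\lambda)}^{(n)}$, every term carries a factor of some $D^{k_1}_\R D^{k_2}_{\R^d}\Psi_1(\lambda,\cdot,\ldots)$ with $k_1 + k_2 \le n$, which by assumption belongs to $L^\infty \cap L^2$ uniformly in $\lambda$ with bound $\Bar{\kkappa}_n$, again providing the required $L^2$ factor.

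For the Lipschitz continuity of~\eqref{more derivatives on the bore map makes me more money}, the same Fa\`a di Bruno expansion is applied to the difference $\tsb{\pmb{B}(\lambda) - \pmb{B}(\tilde\lambda)}^{(n)}$, where the differences sprinkle across three types of factors: (i) factors $\Psi_0^{(k)}(\mathsf{X}+\pmb{B}(\lambda)) - \Psi_0^{(k)}(\mathsf{X}+\pmb{B}(\tilde\lambda))$, controlled by $\Sampi_{k+1}\tabs{\pmb{B}(\lambda) - \pmb{B}(\tilde\lambda)}$; (ii) factors $\pmb{B}(\lambda)^{(j)} - \pmb{B}(\tilde\lambda)^{(j)}$, controlled directly by the inductive Lipschitz constant in $W^{n-1,\infty}\cap H^{n-1}$; and (iii) factors $D^{k_1}_\R D^{k_2}_{\R^d}\Psi_1(\lambda,\cdot,\ldots) - D^{k_1}_\R D^{k_2}_{\R^d}\Psi_1(\tilde\lambda,\cdot,\ldots)$, controlled in $L^\infty\cap L^2$ by $\Bar{\qoppa}_n\,\m{dist}_\Lambda(\lambda,\tilde\lambda)$. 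Since we only differentiate the ODE $n-1$ times we actually require only $\Bar{\qoppa}_{n-1}$ rather than $\Bar{\qoppa}_n$. Collecting all contributions yields the asserted Lipschitz bound with constant depending only on $R$, $\Sampi_{n+1}$, $\Bar{\kkappa}_n$, $\Bar{\qoppa}_{n-1}$, $K$, $\al$, $\Psi_0$, and $\mathsf{X}$, and completes the inductive step.
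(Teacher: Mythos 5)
Your proposal is correct and follows the exact strategy the paper invokes: the paper's own proof records only the identity $\tsb{\pmb{B}(\lambda)}' = \Psi_0(\mathsf{X}+\pmb{B}(\lambda)) - \Psi_0(\mathsf{X}) + \Psi_1(\lambda,\cdot,\mathsf{X}+\pmb{B}(\lambda))$ and then asserts that a ``standard inductive regularity promotion argument'' finishes the job, omitting all further details for brevity. Your Fa\`a di Bruno expansion and the decomposition~\eqref{cor_plan_eq2} are exactly the omitted details, and your accounting of which quantities ($\Sampi_{n+1}$, $\Bar{\kkappa}_n$, $\Bar{\qoppa}_{n-1}$) control the Lipschitz constant matches the statement.
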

\begin{proof}
    For $\lambda\in\Lambda$ we calculate 
    \begin{equation}
        \tsb{\pmb{B}\tp{\lambda}}'\tp{t} = \Psi_0\tp{\tp{\mathsf{X} + \pmb{B}\tp{\lambda}}\tp{t}} - \Psi_0\tp{\mathsf{X}\tp{t}} + \Psi_1\tp{\lambda,t,\tp{\mathsf{X}\tp{t} + \pmb{B}\tp{\lambda}}\tp{t}} \text{ for } t \in \R.
    \end{equation}
    This identity then allows for a standard inductive regularity promotion argument, which establishes the Lipschitz continuity of~\eqref{more derivatives on the bore map makes me more money}.  We omit further details for the sake of brevity.
\end{proof}


\section{PDEs in thin domains}\label{section on pdes in thin domains}

In Section~\ref{section on analysis and perturbations of shallow water heteroclinic orbits} we have shown the existence of heteroclinic orbit solutions to the one-dimensional shallow water equations~\eqref{The shallow water ODEs}.  We expect the shallow water solutions to describe our desired Navier-Stokes solutions to leading order. The task for this section is for us to develop tools that can be used for the quantitative connection between PDE theory in two-dimensional thin domains and the one-dimensional shallow water ODE theory. These will then be essential throughout Sections~\ref{section on the shallow water and residual equations} and~\ref{section on analysis of bore waves} when we set up and execute a fixed point argument.

\subsection{Thin domain toolbox}\label{subsection on thin domain toolbox}

The purpose of this subsection is to pay special attention to the constants arising in certain Sobolev-theoretic inequalities and constructions in $\ep$-thin domains for $0<\ep\ll1$.

We embark with the study of extension operators and right inverses to the divergence.
\begin{lemC}[Right inverses to the trace and the divergence]\label{lem on anti-trace and anti-divergence}
   Given $\ep,\del\in(0,1)$ the following hold for a constant $C\in\R^+$ depending only on $\del$.
    \begin{enumerate}
        \item There exists a bounded linear operator $\mathcal{E}:H^{1/2}\tp{\Sigma_0} \to H^1\tp{\Omega_\ep}$ such that for all $\phi\in H^{1/2}\tp{\Sigma_0}$ we have 
        \begin{equation}\label{the extension operator properties}
            \m{Tr}_{\Sigma_0}\mathcal{E}\phi = \phi,\quad\sup_{0\le y\le\ep}\tabs{\mathscr{F}\tsb{\mathcal{E}\phi}(\cdot,y)}\le\tabs{\mathscr{F}[\phi]},
            \text{ and }
            \tnorm{\mathcal{E}\phi}_{H^1\tp{\Omega_\ep}}\le C\tnorm{\phi}_{H^{1/2}\tp{\Sigma_0}}.
        \end{equation}
        \item Suppose that $h\in W^{1,\infty}\tp{\R}$ satisfies $\tnorm{h}_{W^{1,\infty}}\le\del^{-1}$ and $\del\le h\le\del^{-1}$. There exists a bounded linear operator $\Uppi_h:L^2\tp{\Omega_\ep}\to H^1\tp{\Omega_\ep;\R^2}$ such that for all $f\in L^2\tp{\Omega_\ep}$ 
        \begin{equation}\label{the anti-divergence properties}
            \grad^{\mathcal{A}_h}\cdot\Uppi_hf = f,\quad\m{Tr}_{\Sigma_0}\Uppi_h f = 0,
            \text{ and }
            \tnorm{\Uppi_h f}_{H^1\tp{\Omega_\ep}}\le C\tnorm{f}_{L^2\tp{\Omega_\ep}},
        \end{equation}
        where $\mathcal{A}_h$ is the geometry matrix as defined in~\eqref{geometry matrices}.
    \end{enumerate}
\end{lemC}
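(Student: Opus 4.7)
For the first item, the approach is to construct $\mathcal{E}$ explicitly as the Poisson-type horizontal Fourier multiplier
\begin{equation*}
    \mathcal{E}\phi(x,y) = \mathscr{F}^{-1}\bigl[e^{-y\tbr{\xi}}\mathscr{F}[\phi](\xi)\bigr](x)\quad\text{for }(x,y)\in\Omega_\ep.
\end{equation*}
The trace identity $\m{Tr}_{\Sigma_0}\mathcal{E}\phi = \phi$ is immediate from $e^{0}=1$, and the pointwise bound $\sup_{0\le y\le\ep}\tabs{\mathscr{F}[\mathcal{E}\phi](\cdot,y)} \le \tabs{\mathscr{F}[\phi]}$ follows since $|e^{-y\tbr{\xi}}|\le 1$ for $y\ge 0$. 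For the $H^1$ estimate I would apply Plancherel in $x$, using $\partial_y e^{-y\tbr{\xi}} = -\tbr{\xi} e^{-y\tbr{\xi}}$, to bound the integrand by a multiple of $\tbr{\xi}^2 e^{-2y\tbr{\xi}}\tabs{\hat\phi}^2$ and compute $\int_0^\ep \tbr{\xi}^2 e^{-2y\tbr{\xi}}\,\m{d}y\le \tfrac12\tbr{\xi}$. Integrating over $\xi$ then yields $\tnorm{\mathcal{E}\phi}_{H^1(\Omega_\ep)}^2 \lesssim \tnorm{\phi}_{H^{1/2}(\Sigma_0)}^2$ with a \emph{universal} constant (depending on neither $\ep$ nor $\del$), which is strictly stronger than the claim.

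For the second item my plan is a two-stage argument: reduce to the flat case via a change of coordinates, then solve the flat problem by Fourier analysis in the horizontal variable. For the reduction I would exploit the flattening diffeomorphism $\mathfrak{F}_h$ from Section~\ref{subsection on flattening, etc}. The computation in~\eqref{this computation is kinda important} shows that the natural volume-preserving pullback intertwines $\nabla^{\mathcal{A}_h}\cdot$ on $\Omega_\ep$ with the flat $\nabla\cdot$ on the curved strip $\tilde\Omega_h = \{(x,z):0<z<\ep h(x)\}$ (or, after a further flattening, on $\Omega_\ep$ itself with $h\equiv 1$). Under the hypotheses $\del\le h\le\del^{-1}$ and $\tnorm{h}_{W^{1,\infty}}\le\del^{-1}$ this transformation is bi-Lipschitz and preserves the $L^2$ and $H^1$ norms up to multiplicative constants depending only on $\del$; moreover the boundary condition $u|_{\Sigma_0}=0$ is preserved by the pullback.

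For the flat-strip antidivergence I would work in horizontal Fourier, reducing to the family of $1$D problems $i\xi\hat u_1 + \partial_y\hat u_2 = \hat f$ on $y\in(0,\ep)$ with $\hat u_j(\xi,0)=0$, parameterized by the horizontal frequency $\xi$. The idea is to split the forcing between the two components through a multiplier built from kernels of the form $e^{-(y-s)\tbr{\xi}}$: for example, taking
\begin{equation*}
\hat u_1(\xi,y) = \frac{-i\xi}{\tbr{\xi}^2}\int_0^y e^{-(y-s)\tbr{\xi}}\hat f(\xi,s)\,\m{d}s,
\end{equation*}
and defining $\hat u_2$ so that the residual $\hat f - i\xi\hat u_1$ is integrated in $y$. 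Plancherel plus Young-in-$y$ bounds for the convolution kernel, together with $\xi^2/\tbr{\xi}^2\le 1$, should yield the desired uniform $H^1$ estimate, with pullback to $\Omega_\ep$ producing $\Uppi_h$.

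The main obstacle will be precisely the $\ep$-uniformity of the constant in the flat-strip construction. The na\"ive choice $u = (0,\int_0^y f\,\m{d}s)$ satisfies the divergence identity and the trace condition with excellent $L^2$ and $\partial_y$ bounds, but $\partial_1 u_2$ requires control of $\partial_1 f$, which is not available from $f \in L^2$ alone. Direct isotropic rescaling $y=\ep\tilde y$ is also doomed, since the divergence scales anisotropically and unavoidably introduces factors of $\ep^{-1}$. The resolution is to distribute the divergence between $\hat u_1$ and $\hat u_2$ via the frequency-dependent kernel above: high frequencies are absorbed into the $\hat u_1$ component (where the $\tbr{\xi}^{-2}$ prefactor damps the $\xi$-cost of differentiating in $x$), and the exponential kernel provides $\ep$-uniform $L^1_y$-mass that, combined with Plancherel, closes the estimate without any $\ep$-blowup.
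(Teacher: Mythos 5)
Your first item is the paper's argument: the Poisson extension together with Plancherel (the paper uses $e^{-y|\xi|}$, you use $e^{-y\tbr{\xi}}$, an inessential variant) gives everything, and the constant is indeed $\ep$- and $\del$-free, as you note.

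For the second item, the reduction to the flat case is fine --- from $\mathcal{A}_h = (\grad\mathfrak{F}_h)^{-\m{t}}$ one checks directly that $\grad^{\mathcal{A}_h}\cdot(v\circ\mathfrak{F}_h) = (\grad\cdot v)\circ\mathfrak{F}_h$, and composition with $\mathfrak{F}_h$ preserves the $\Sigma_0$-trace --- but your explicit Fourier antidivergence is wrong, and it fails precisely on the $\ep$-uniformity point you raise. Your $\hat u_1$ does \emph{not} absorb the high-frequency forcing: the kernel $e^{-(y-s)\tbr{\xi}}$ has $L^1$-mass $\lesssim\tbr{\xi}^{-1}$, so by Young's inequality in $y$
\begin{equation*}
    \tnorm{i\xi\hat u_1(\xi,\cdot)}_{L^2_y}\lesssim\f{\xi^2}{\tbr{\xi}^3}\tnorm{\hat f(\xi,\cdot)}_{L^2_y}\to 0\quad\text{as }|\xi|\to\infty,
\end{equation*}
i.e.\ $\hat u_1$ captures a vanishing share of $\hat f$ at high $|\xi|$ --- the opposite of what you intend. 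Hence $\hat u_2(\xi,y)\approx\int_0^y\hat f(\xi,s)\,\m{d}s$ there, and $\pd_1 u_2$ is uncontrolled: taking $f(x,y)=g(x)$ with $\hat g$ concentrated near frequency $\xi_0$ gives $\tnorm{\pd_1 u_2}_{L^2(\Omega_\ep)}/\tnorm{f}_{L^2(\Omega_\ep)}\sim\ep\tabs{\xi_0}$, unbounded over $\xi_0$, so the operator you build is not even bounded $L^2\to H^1$, let alone with a $\del$-only constant. The paper sidesteps the issue by never solving the thin-strip problem directly: it extends $(F^\ep_h)^{-1}f$ by zero from $\Omega_{\ep h}$ to the $\ep$-independent strip $\Omega_{1/\del}$ (admissible since $\ep h\le 1/\del$), applies a known $L^2\to H^1$ right inverse to $\grad\cdot$ there with vanishing $\Sigma_0$-trace (Proposition 2.1 of Leoni--Tice~\cite{MR4630597}; see also Lemma 2.2 of Stevenson--Tice~\cite{MR4787851}, with operator norm depending only on $\del$), restricts back to $\Omega_{\ep h}$, and conjugates by $F^\ep_h$. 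Restriction is nonexpansive on $H^1$, preserves the divergence identity on $\Omega_{\ep h}$, and preserves the $\Sigma_0$-trace, so the $\del$-only constant is automatic. A genuinely explicit thin-strip construction would need a frequency split at $|\xi|\sim\ep^{-1}$ and a different mechanism from the kernel $e^{-(y-s)\tbr{\xi}}$ at high frequency; extension-by-zero to a fixed domain is the cleaner route.
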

\begin{proof}
    The extension operator $\mathcal{E}$ of the first item can be taken to be the standard Poisson extension: in terms of the horizontal Fourier transform, we define
    \begin{equation}
        \mathscr{F}[\mathcal{E}\phi]\tp{\xi,y} = e^{-y|\xi|}\mathscr{F}[\phi]\tp{\xi} 
        \text{ for }
        \tp{\xi,y}\in\R\times\tp{0,\ep} \text{ and } \phi\in H^{1/2}\tp{\Sigma_0}.
    \end{equation}
    Then~\eqref{the extension operator properties} follow from Plancherel's theorem and elementary computations.

    For the second item we first note that thanks to Proposition 2.1 in Leoni and Tice~\cite{MR4630597} (see also Lemma 2.2 in Stevenson and Tice~\cite{MR4787851}) there exists a bounded linear right inverse to the divergence operator $\tilde{\Uppi}:L^2\tp{\Omega_{1/\del}}\to H^1\tp{\Omega_{1/\del};\R^2}$ satisfying
    \begin{equation}
        \grad\cdot\tilde{\Uppi} f = f
        \text{ and }
        \m{Tr}_{\Sigma_0}\tilde{\Uppi}f = 0 \text{ for all }
        f\in L^2\tp{\Omega_{1/\del}}
    \end{equation}
    and whose operator norm depends only on $\del$. On the other hand, for the $h$-flattening map
    \begin{equation}\label{this is the flattening map}
        F_h^\ep:\begin{cases}
            L^2\tp{\Omega_{\ep h}}\to L^2\tp{\Omega_\ep},\\
            H^1\tp{\Omega_{\ep h}}\to H^1\tp{\Omega_\ep},
        \end{cases}
        \text{ defined by } F^\ep_h f =  f\circ\mathfrak{F}_h,
    \end{equation}
    where $\mathfrak{F}_h:\Omega_\ep\to\Omega_{\ep h}$ is as in Section~\ref{subsection on flattening, etc}, it is straightforward to check that $F^\ep_h$ is invertible and satisfies
    \begin{equation}\label{estimates on these flattening and unflattening maps}
        \sum_{n=0}^1\tp{\tnorm{F^\ep_h}_{\mathcal{L}\tp{H^n\tp{\Omega_{\ep h}};H^n\tp{\Omega_\ep}}} + \tnorm{\tp{F^\ep_h}^{-1}}_{\mathcal{L}\tp{H^n\tp{\Omega_{\ep }};H^n\tp{\Omega_{\ep h}}}} }\le C
    \end{equation}
    for a constant $C$ depending only on $\del$.  We now define $\Uppi_h:L^2\tp{\Omega_\ep}\to H^1\tp{\Omega_\ep;\R^2}$ via
    \begin{equation}
        \Uppi_h f = F^\ep_h\circ \rho_{\ep h}\circ\tilde{\Uppi} \circ \iota_{\ep h}\circ \tp{F^\ep_h}^{-1}f, 
    \end{equation}
    where $\iota_{\ep h}:L^2\tp{\Omega_{\ep h}}\to L^2\tp{\Omega_{\ep/\del}}$ is the (nonexpansive) trivial extension operator and $\rho_{\ep h}: H^{1}\tp{\Omega_{\ep/\del}}\to H^1\tp{\Omega_{\ep h}}$ is the (nonexpansive) restriction operator. Our construction ensures that the operator norm of $\Uppi_h$ depends only on $\del$.  In turn, elementary computations yield the first two identities of~\eqref{the anti-divergence properties}. 
\end{proof}

We next study traces. This necessitates the introduction of boundary norms adapted to thin domains.

\begin{lemC}[Adapted boundary norms]\label{lem on adapted boundary norms}
    There exists $C\in\tp{1,\infty}$ such that for all $\ep\in\tp{0,1}$ and all $\phi\in H^{1/2}\tp{\R}$ the norms
    \begin{equation}\label{the small and large trace norms}
        \tnorm{\phi}^{\tp{\ep,-}}_{H^{1/2}\tp{\R}}= \inf_{\phi_0 + \phi_1 = \phi}\tp{\tnorm{\phi_0}_{H^{1/2}\tp{\R}} + \ep^{1/2}\tnorm{\phi_1}_{H^1\tp{\R}}} 
        \text{ and }
        \tnorm{\phi}^{\tp{\ep,+}}_{H^{1/2}\tp{\R}} = \tnorm{\phi}_{H^{1/2}\tp{\R}} + \ep^{-1/2}\tnorm{\phi}_{L^2\tp{\R}}
    \end{equation}
    satisfy the following.
    \begin{enumerate}
        \item We have
        \begin{equation}
            C^{-1}\ep^{1/2}\tnorm{\phi}_{H^{1/2}\tp{\R}}\le\tnorm{\phi}^{\tp{\ep,-}}_{H^{1/2}\tp{\R}}\le\tnorm{\phi}_{H^{1/2}\tp{\R}}\le\tnorm{\phi}^{\tp{\ep,+}}_{H^{1/2}\tp{\R}}\le\ep^{-1/2}C\tnorm{\phi}_{H^{1/2}\tp{\R}}.
        \end{equation}
        \item We have
        \begin{equation}\label{small trace norm equivalence}
            C^{-1}\tnorm{\phi}^{\tp{\ep,-}}_{H^{1/2}\tp{\R}}\le\bp{\ep\int_{(-1/\ep,1/\ep)}\tbr{\xi}^2\tabs{\mathscr{F}[\phi]\tp{\xi}}^2\;\m{d}\xi + \int_{\R\setminus\tp{-1/\ep,1/\ep}}\tbr{\xi}\tabs{\mathscr{F}[\phi]\tp{\xi}}^2\;\m{d}\xi}^{1/2}\le C\tnorm{\phi}^{\tp{\ep,-}}_{H^{1/2}\tp{\R}}.
        \end{equation}
        \item We have
        \begin{equation}\label{large trace norm equivalence}
            C^{-1}\tnorm{\phi}^{\tp{\ep,+}}_{H^{1/2}\tp{\R}}\le\bp{\ep^{-1}\int_{\tp{-1/\ep,1/\ep}}\tabs{\mathscr{F}[\phi]\tp{\xi}}^2\;\m{d}\xi + \int_{\R\setminus\tp{-1/\ep,1/\ep}}\tbr{\xi}\tabs{\mathscr{F}[\phi]\tp{\xi}}^2\;\m{d}\xi}^{1/2}\le C\tnorm{\phi}^{\tp{\ep,+}}_{H^{1/2}\tp{\R}}.
        \end{equation}
    \end{enumerate}
\end{lemC}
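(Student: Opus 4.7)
The plan is to prove each item by separating the Fourier support into the two regimes $|\xi|<1/\ep$ and $|\xi|\ge 1/\ep$ and comparing the three weights $\langle\xi\rangle$, $\ep\langle\xi\rangle^2$, and $\ep^{-1}$ on each piece. The main elementary observations, which drive everything, are:
\begin{itemize}
    \item on $\{|\xi|<1/\ep\}$ we have $1\le\langle\xi\rangle\le\sqrt{2}\,\ep^{-1}$, hence $\ep\langle\xi\rangle^2\le\sqrt 2\langle\xi\rangle$ and $\langle\xi\rangle\le\sqrt 2\,\ep^{-1}$;
    \item on $\{|\xi|\ge 1/\ep\}$ we have $\langle\xi\rangle\ge 1/\ep$, hence $\ep\langle\xi\rangle^2\ge\langle\xi\rangle$ and $\ep^{-1}\le\langle\xi\rangle$.
\end{itemize}

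For the first item, each of the four comparisons is a direct calculation. The bound $\tnorm{\phi}^{\tp{\ep,-}}_{H^{1/2}}\le\tnorm{\phi}_{H^{1/2}}$ follows by selecting the trivial decomposition $(\phi_0,\phi_1)=(\phi,0)$; while for $\ep^{1/2}\tnorm{\phi}_{H^{1/2}}\lesssim\tnorm{\phi}^{\tp{\ep,-}}_{H^{1/2}}$, use that any decomposition satisfies $\tnorm{\phi}_{H^{1/2}}\le\tnorm{\phi_0}_{H^{1/2}}+\tnorm{\phi_1}_{H^{1/2}}\le\tnorm{\phi_0}_{H^{1/2}}+\tnorm{\phi_1}_{H^1}$ and then multiply through by $\ep^{1/2}\le 1$. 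The bounds on $\tnorm{\cdot}^{\tp{\ep,+}}_{H^{1/2}}$ follow from $\tnorm{\phi}_{L^2}\le\tnorm{\phi}_{H^{1/2}}$ and $\ep<1$.

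For the third item (treated before the second, as it is simpler), Plancherel's theorem gives
\begin{equation}
    \p{\tnorm{\phi}_{H^{1/2}}^{\tp{\ep,+}}}^2\simeq\int_\R\langle\xi\rangle\tabs{\mathscr{F}[\phi]}^2\,\m{d}\xi+\ep^{-1}\int_\R\tabs{\mathscr{F}[\phi]}^2\,\m{d}\xi,
\end{equation}
and splitting the integrals at $|\xi|=1/\ep$ and applying the two bullet-point comparisons above (with $\ep^{-1}+\langle\xi\rangle\asymp\ep^{-1}$ on the low frequencies and $\ep^{-1}+\langle\xi\rangle\asymp\langle\xi\rangle$ on the high frequencies) yields~\eqref{large trace norm equivalence}.

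The second item is the heart of the matter and will be the main obstacle, because of the infimum. The plan is to prove the two halves of the equivalence separately. For the upper bound on $\tnorm{\phi}^{\tp{\ep,-}}_{H^{1/2}}$ in terms of the frequency-side quantity, I take the explicit decomposition $\mathscr{F}[\phi_0]=\mathbf{1}_{\{|\xi|\ge 1/\ep\}}\mathscr{F}[\phi]$ and $\mathscr{F}[\phi_1]=\mathbf{1}_{\{|\xi|<1/\ep\}}\mathscr{F}[\phi]$, so that
\begin{equation}
    \tnorm{\phi_0}_{H^{1/2}}^2=\int_{|\xi|\ge 1/\ep}\langle\xi\rangle\tabs{\mathscr{F}[\phi]}^2\,\m{d}\xi,\qquad \ep\tnorm{\phi_1}_{H^1}^2=\ep\int_{|\xi|<1/\ep}\langle\xi\rangle^2\tabs{\mathscr{F}[\phi]}^2\,\m{d}\xi,
\end{equation}
and the bound $(a+b)^2\le 2(a^2+b^2)$ controls $(\tnorm{\phi}^{\tp{\ep,-}}_{H^{1/2}})^2$. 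Conversely, for the lower bound, I take an arbitrary decomposition $\phi=\phi_0+\phi_1$ and use $|\mathscr{F}[\phi]|^2\le 2(|\mathscr{F}[\phi_0]|^2+|\mathscr{F}[\phi_1]|^2)$ pointwise in $\xi$, then on $\{|\xi|\ge 1/\ep\}$ bound $\langle\xi\rangle|\mathscr{F}[\phi_1]|^2\le\ep\langle\xi\rangle^2|\mathscr{F}[\phi_1]|^2$ (since $\ep\langle\xi\rangle\ge 1$ there), and on $\{|\xi|<1/\ep\}$ bound $\ep\langle\xi\rangle^2|\mathscr{F}[\phi_0]|^2\le\sqrt 2\langle\xi\rangle|\mathscr{F}[\phi_0]|^2$ (by the first bullet point). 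Combining and integrating yields a bound by $C(\tnorm{\phi_0}_{H^{1/2}}^2+\ep\tnorm{\phi_1}_{H^1}^2)\le C(\tnorm{\phi_0}_{H^{1/2}}+\ep^{1/2}\tnorm{\phi_1}_{H^1})^2$; taking infimum over decompositions closes~\eqref{small trace norm equivalence}.
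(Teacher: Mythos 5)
Your proof is correct and follows essentially the same route as the paper: split frequency space at $|\xi| = 1/\ep$, use Plancherel, and exploit the elementary comparisons $\ep\langle\xi\rangle^2 \lesssim \langle\xi\rangle$ on the low band and $\langle\xi\rangle \lesssim \ep\langle\xi\rangle^2$ on the high band (and correspondingly for $\ep^{-1}$ vs.\ $\langle\xi\rangle$). The small cosmetic differences are that you prove the first item directly while the paper deduces it from the other two, and the paper packages the high-band/low-band inequalities as operator estimates on the frequency projections $\mathds{1}_{(-1/\ep,1/\ep)}(D)$ and $\mathds{1}_{\R\setminus(-1/\ep,1/\ep)}(D)$ rather than pointwise on the Fourier transform, but the content is identical.

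One thing worth flagging: your choice $\mathscr{F}[\phi_0] = \mathbf{1}_{\{|\xi|\ge 1/\ep\}}\mathscr{F}[\phi]$, $\mathscr{F}[\phi_1] = \mathbf{1}_{\{|\xi|<1/\ep\}}\mathscr{F}[\phi]$ for the upper bound on $\tnorm{\phi}^{(\ep,-)}_{H^{1/2}}$ is the correct one, since then $\tnorm{\phi_0}_{H^{1/2}}^2$ and $\ep\tnorm{\phi_1}_{H^1}^2$ are literally the two integrals in~\eqref{small trace norm equivalence}. The paper's proof states the assignment with $\phi_0$ and $\phi_1$ the other way around; as you can check, that assignment does not yield the left inequality (the low band would have $\langle\xi\rangle$ compared against the smaller $\ep\langle\xi\rangle^2$, and the high band the reverse), so the paper appears to contain a transposition that you have implicitly corrected.
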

\begin{proof}
    The second and third items readily imply the first, and the third item is an elementary consequence of the Plancherel theorem.  To prove the second item we split a general $\phi\in H^{1/2}\tp{\R}$ according to $\phi = \phi_0 + \phi_1$ with $\phi_0 = \mathds{1}_{(-1/\ep,1/\ep)}(D)\phi$ and $\phi_1 = \mathds{1}_{\R\setminus(-1/\ep,1/\ep)}(D)$ in order to see that the left hand inequality in~\eqref{small trace norm equivalence} holds. To prove the right hand inequality we first note  that
    \begin{multline}\label{Tariffs}
        \ep^{1/2}\tnorm{\mathds{1}_{\tp{-1/\ep,1/\ep}}\tp{D}\psi}_{H^1\tp{\R}}\le C\tnorm{\mathds{1}_{\tp{-1/\ep,1/\ep}}\tp{D}\psi}_{H^{1/2}\tp{\R}},\\\tnorm{\mathds{1}_{\R\setminus\tp{-1/\ep,1/\ep}}\tp{D}\psi}_{H^{1/2}}\le C\ep^{1/2}\tnorm{\mathds{1}_{\R\setminus\tp{-1/\ep,1/\ep}}\tp{D}\psi}_{H^1\tp{\R}}
    \end{multline}
    and then apply~\eqref{Tariffs} to the pieces of a general decomposition of $\phi = \phi_0 + \phi_1$.
\end{proof}

The boundary norms of Lemma~\ref{lem on adapted boundary norms} play an important role our next result. We refer to the work of Fern\'andez Bonder, Mart\'inez, and Rossi~\cite{MR2037752} for more general results on thin domain trace inequalities.

\begin{lemC}[Traces]\label{lem on traces in thin domains}
    There exists $C\in\R^+$ such that the following  hold for all $\ep\in(0,1)$, $f\in H^1\tp{\Omega_\ep}$, and $\phi\in H^{1/2}\tp{\R}$.
    \begin{enumerate}
        \item We have the bound $\tnorm{\m{Tr}_{\Sigma_\ep}f - \m{Tr}_{\Sigma_0}f}_{H^{1/2}\tp{\R}}^{\tp{\ep,+}}\le C\tnorm{f}_{H^{1}\tp{\Omega_\ep}}$.

        \item We have the bound $\tnorm{\m{Tr}_{\Sigma_\ep}f}^{\tp{\ep,-}}_{H^{1/2}\tp{\R}} + \tnorm{\m{Tr}_{\Sigma_0}f}^{\tp{\ep,-}}_{H^{1/2}\tp{\R}}\le C\tnorm{f}_{H^{1}\tp{\Omega_\ep}}$.
        
        \item For $\Sigma\in\tcb{\Sigma_0,\Sigma_\ep}$ we have the bound
        \begin{equation}
            \babs{\int_{\Sigma}\phi f} + \babs{\int_{\Sigma}\tbr{D}^{1/2}\phi\cdot\tbr{D}^{-1/2}\pd_1 f}\le C\tnorm{\phi}^{\tp{\ep,+}}_{H^{1/2}\tp{\R}}\tnorm{f}_{H^{1}\tp{\Omega_\ep}}.
        \end{equation}
        \item If we assume additionally that $\m{Tr}_{\Sigma_0}f = 0$, then
        \begin{equation}
            \babs{\int_{\Sigma_\ep}\phi f} + \babs{\int_{\Sigma_\ep}\tbr{D}^{1/2}\phi\cdot\tbr{D}^{-1/2}\pd_1f}\le C\tnorm {\phi}^{\tp{\ep,-}}_{H^{1/2}\tp{\R}}\tnorm{f}_{H^1\tp{\Omega_\ep}}.
        \end{equation}
    \end{enumerate}
\end{lemC}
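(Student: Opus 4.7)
The unifying thread for all four items is to work slice-by-slice in the horizontal Fourier variable, denoting by $\hat f(\xi,y)$ the partial Fourier transform of $f\in H^1(\Omega_\ep)$ in $x$, and to exploit the Plancherel-type characterizations~\eqref{small trace norm equivalence} and~\eqref{large trace norm equivalence} of the $(\ep,\pm)$-norms in terms of the frequency weights $\sig_-(\xi) = \ep\tbr{\xi}^2\mathds{1}_{|\xi|\le1/\ep}+\tbr{\xi}\mathds{1}_{|\xi|>1/\ep}$ and $\sig_+(\xi) = \ep^{-1}\mathds{1}_{|\xi|\le1/\ep}+\tbr{\xi}\mathds{1}_{|\xi|>1/\ep}$. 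The master pointwise trace estimate that will drive everything is obtained by applying the fundamental theorem of calculus with the linear test function $\varphi_\ep(y) = 1 - y/\ep$ and absorbing the cross term via Cauchy--Schwarz with parameter $\tbr{\xi}$, yielding
\begin{equation*}
    |\hat f(\xi,0)|^2 + |\hat f(\xi,\ep)|^2 \le C\bp{\tp{\ep^{-1}+\tbr{\xi}}\int_0^\ep |\hat f(\xi,y)|^2\,\m{d}y + \tbr{\xi}^{-1}\int_0^\ep |\pd_y\hat f(\xi,y)|^2\,\m{d}y}.
\end{equation*}
A similar FTC argument gives $|\hat f(\xi,\ep) - \hat f(\xi,0)|^2 \le \ep\int_0^\ep|\pd_y\hat f(\xi,y)|^2\,\m{d}y$, and when $\m{Tr}_{\Sigma_0}f=0$ this upgrades to the key sharp bound $|\hat f(\xi,\ep)|^2 \le \ep\int_0^\ep|\pd_y\hat f(\xi,y)|^2\,\m{d}y$.

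For item 1, the FTC estimate yields the $L^2$ bound $\tnorm{\m{Tr}_{\Sigma_\ep}f - \m{Tr}_{\Sigma_0}f}_{L^2(\R)}^2 \le \ep\tnorm{\pd_2 f}_{L^2(\Omega_\ep)}^2$, while the standard trace theorem controls $\tnorm{\m{Tr}_{\Sigma_\ep}f-\m{Tr}_{\Sigma_0}f}_{H^{1/2}(\R)} \lesssim \tnorm{f}_{H^1(\Omega_\ep)}$; combining these exactly matches the definition of the $(\ep,+)$-norm, absorbing the factor $\ep^{-1/2}$ against the $L^2$ gain $\ep^{1/2}$. For item 2, multiply the master pointwise estimate by the weight $\sig_-(\xi)$ and integrate over $\R$; in the low-frequency regime $|\xi|\le1/\ep$ the factor $\ep\tbr{\xi}^2(\ep^{-1}+\tbr{\xi})$ is comparable to $\tbr{\xi}^2$ (since $\ep\tbr{\xi}\le\sqrt{2}$) and $\ep\tbr{\xi}^2\cdot\tbr{\xi}^{-1}=\ep\tbr{\xi}\lesssim 1$, while in the high-frequency regime $|\xi|>1/\ep$ we have $\tbr{\xi}(\ep^{-1}+\tbr{\xi})\lesssim\tbr{\xi}^2$, so both pieces collapse to $\tnorm{f}_{H^1(\Omega_\ep)}^2$.

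For item 3, I use Plancherel to write the two pairings in Fourier as $\int\hat\phi\,\overline{\hat f(\cdot,y_0)}\,d\xi$ and $-i\int\xi\hat\phi\,\overline{\hat f(\cdot,y_0)}\,d\xi$, then split into $|\xi|\le 1/\ep$ and $|\xi|>1/\ep$ and apply Cauchy--Schwarz with the frequency weights from $\sig_+$: in each regime the weight on $\hat\phi$ is $\sig_+(\xi)$ by construction, and the reciprocal weight on $\hat f(\cdot,y_0)$ (namely $\ep$ or $\tbr{\xi}^{-1}$ for the first pairing, and $\ep\xi^2/\tbr{\xi}^2\lesssim\ep$ or $\xi^2/\tbr{\xi}^2\lesssim1$ for the second) is absorbed using the master trace estimate into $\tnorm{f}_{H^1(\Omega_\ep)}^2$ via the same comparisons as in item 2. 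For item 4, the same duality-in-Fourier split is performed, but now in the low-frequency regime we invoke the sharp bound $|\hat f(\xi,\ep)|^2\le\ep\int|\pd_y\hat f|^2\,\m{d}y$ (valid because $f\res_{\Sigma_0}=0$), whose built-in factor of $\ep$ is precisely what is needed to absorb the $\ep^{-1}\tbr{\xi}^{-2}$ coming from the reciprocal of $\sig_-$; in the high-frequency regime the stronger bound is not needed and the master estimate suffices, since $\tbr{\xi}^{-1}(\ep^{-1}+\tbr{\xi})\lesssim 1$ once $\tbr{\xi}\ge 1/\ep$.

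The main delicate point will be keeping the constants uniform across the frequency crossover $|\xi|\sim1/\ep$, where neither regime's natural bound is sharp; fortunately the inequalities $\ep\tbr{\xi}\le\sqrt{2}$ on one side and $\ep\tbr{\xi}\ge 1$ on the other are exactly what make both weight comparisons work out with constants depending on none of the parameters. Items 1 and 2 are essentially reorganized versions of standard thin-domain trace theory and should be straightforward, while items 3 and 4 really do rely on the precise match between the $(\ep,\pm)$-weights and the trace weights extracted from the master estimate, with item 4 being the most delicate because of the need to exploit the boundary vanishing asymmetrically in the two frequency regimes.
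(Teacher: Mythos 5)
Your overall approach — work on the horizontal Fourier side, derive a pointwise "master" trace estimate by the fundamental theorem of calculus plus weighted Young/Cauchy--Schwarz in $\tbr{\xi}$, and then match the frequency-split weights built into the $(\ep,\pm)$-norms — is precisely how the paper handles items 2--4, just organized a bit differently. Where the paper proves item 2 by a per-item Fourier manipulation and then derives items 3 and 4 by dualizing against items 2 and 1 respectively (bounding the pairings by $\tnorm{\phi}^{(\ep,+)}\tnorm{\m{Tr}f}^{(\ep,-)}$ and $\tnorm{\phi}^{(\ep,-)}\tnorm{\m{Tr}_{\Sigma_\ep}f}^{(\ep,+)}$), you do items 3 and 4 directly from the master estimate with reciprocal weights; both routes are valid and cost about the same. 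Your observation that item 4's low-frequency regime is rescued by the sharp $|\hat f(\xi,\ep)|^2\le\ep\int_0^\ep|\pd_y\hat f|^2\,\m{d}y$ (using $\m{Tr}_{\Sigma_0}f=0$) is exactly the mechanism that lets the $\ep^{-1}\tbr{\xi}^{-2}$ from $\sig_-^{-1}$ be absorbed, and is the same point the paper makes by routing through item 1 instead of item 2.

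The one place that needs repair is item 1. You assert that ``the standard trace theorem controls $\tnorm{\m{Tr}_{\Sigma_\ep}f-\m{Tr}_{\Sigma_0}f}_{H^{1/2}(\R)}\lesssim\tnorm{f}_{H^1(\Omega_\ep)}$,'' but this is not a ready-made fact: the trace operator $H^1(\Omega_\ep)\to H^{1/2}(\Sigma_\ep)$ by itself has operator norm that degenerates like $\ep^{-1/2}$ as $\ep\to 0$ (take $f(x,y)=\chi(x)$), so the $\ep$-uniformity of the constant is a property specifically of the \emph{difference} of traces and has to be argued. The paper does this with a scaling reduction: setting $f_\ep(x,y)=f(\ep x,\ep y)$ gives $\tnorm{\grad f_\ep}_{L^2(\Omega_1)}=\tnorm{\grad f}_{L^2(\Omega_\ep)}$, $\tnorm{g_\ep}_{\dot H^{1/2}}=\tnorm{g}_{\dot H^{1/2}}$, and $\tnorm{g_\ep}_{L^2}=\ep^{-1/2}\tnorm{g}_{L^2}$, whence the $\Omega_1$ trace theorem for the difference immediately yields the required bound with a uniform constant. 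Alternatively, your own master estimate, together with the low/high frequency split and the difference bound $|\hat g(\xi)|^2\le\ep\int_0^\ep|\pd_y\hat f|^2$ (which beats the master estimate in the low-frequency regime, where $\ep|\xi|\lesssim 1$), delivers the $\dot H^{1/2}$ control directly; you have this tool in hand but did not deploy it here. Either repair closes the gap and makes the proof complete.
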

\begin{proof}
    For the first item it suffices to prove
    \begin{equation}\label{difference of traces estimates}
        \ep^{-1/2}\tnorm{\m{Tr}_{\Sigma_\ep}f - \m{Tr}_{\Sigma_0}f}_{L^2\tp{\R}} + \tnorm{\m{Tr}_{\Sigma_\ep}f - \m{Tr}_{\Sigma_0}f}_{\dot{H}^{1/2}\tp{\R}}\le C\tnorm{\grad f}_{L^2\tp{\Omega_\ep}}.
    \end{equation}
    To prove this we will reduce to the case for the domain $\Omega_1$ via a scaling argument. The validity of~\eqref{difference of traces estimates} for the domain $\Omega_1$ is a consequence of standard trace theory. Then for  $\ep\in\tp{0,1}$ and $f\in H^1\tp{\Omega_\ep}$  the function defined by $f_\ep\tp{x,y} = f(\ep x,\ep y)$ belongs to $H^1\tp{\Omega_1}$, so we may invoke the special case of~\eqref{difference of traces estimates} in $\Omega_1$.  We get the claimed estimate in $\Omega_\ep$ by then noting that
    \begin{equation}
        \tnorm{\grad f_\ep}_{L^2\tp{\Omega_1}} = \tnorm{\grad f}_{L^2\tp{\Omega_\ep}},\;\tnorm{g_\ep}_{\dot{H}^{1/2}\tp{\R}} = \tnorm{g}_{\dot{H}^{1/2}\tp{\R}},
        \text{ and }
        \tnorm{g_\ep}_{L^2\tp{\R}} = \ep^{-1/2}\tnorm{g}_{L^2\tp{\R}},
    \end{equation}
    where $g_\ep = \tp{\m{Tr}_{\Sigma_1} - \m{Tr}_{\Sigma_0}}f_\ep$ and $g = \tp{\m{Tr}_{\Sigma_1} - \m{Tr}_{\Sigma_0}}f$.

    For the second item we only prove the trace estimate onto $\Sigma_0$, as the case $\Sigma_\ep$ is similar. The restrictions to the strip $\Omega_\ep$ of functions in $C^\infty_c\tp{\R^2}$ is a dense subspace of $H^1\tp{\Omega_\ep}$, so we only need to establish the bound for $f\in H^1\tp{\Omega_\ep}$ belonging to this subspace.  For any $y\in(0,\ep)$ and $\xi\in\R$ we may then apply the fundamental theorem of calculus to the difference $\tabs{\mathscr{F}[f]\tp{\xi,y}}^2 - \tabs{\mathscr{F}\tp{\xi,0}}^2$ and then use Cauchy-Schwarz to bound
    \begin{equation}\label{there looks like wiggle room, but there is not}
        \tbr{\xi}\tabs{\mathscr{F}[f]\tp{\xi,0}}^2\le\tbr{\xi}\tabs{\mathscr{F}[f]\tp{\xi,y}}^2 + 2\bp{\int_0^\ep\tbr{\xi}^2\tabs{\mathscr{F}[f]\tp{\xi,z}}^2\;\m{d}z}^{1/2}\bp{\int_0^\ep\tabs{\pd_2\mathscr{F}[f]\tp{\xi,z}}^2\;\m{d}z}^{1/2}.
    \end{equation}
    Next we employ the second item of Lemma~\ref{lem on adapted boundary norms}, which permits us to estimate separately the high and low mode norms.  For the low mode piece we multiply~\eqref{there looks like wiggle room, but there is not} by $\tbr{\xi}$, integrate over $\tp{\xi,y}\in\tp{-1/\ep,1/\ep}\times\tp{0,\ep}$, apply the inequalities of Young and Cauchy-Schwarz, use that $\tbr{\xi}\le C\ep^{-1}$, and invoke Plancherel's theorem to arrive at the bound
    \begin{equation}\label{ineq_a}
        \ep^{1/2}\tnorm{\mathds{1}_{\tp{-1/\ep,1/\ep}}\tp{D}\m{Tr}_{\Sigma_0}f}_{H^1\tp{\R}}\le\tnorm{f}_{H^1\tp{\Omega_\ep}}.
    \end{equation}
    On the other hand, to get the high mode estimate, we  integrate~\eqref{there looks like wiggle room, but there is not} over $\tp{\xi,y}\in\tp{\R\setminus\tp{-1/\ep,1/\ep}}\times\tp{0,\ep}$ and use $\tbr{\xi}^{-1}\lesssim\ep$ along with the aforementioned tricks to obtain the bound
    \begin{equation}\label{ineq_b}
        \tnorm{\mathds{1}_{\R\setminus\tp{-1/\ep,1/\ep}}\tp{D}\m{Tr}_{\Sigma_0}f}_{H^{1/2}\tp{\R}}\le\tnorm{f}_{H^1\tp{\Omega_\ep}}.
    \end{equation}
    Summing~\eqref{ineq_a} and~\eqref{ineq_b} gives the claimed estimate of the second item.

    We now turn our attention to the third item.  We employ the equivalence second item of Lemma~\ref{lem on adapted boundary norms} together with Cauchy-Schwarz and  Plancherel to see that     
    \begin{multline}
        \babs{\int_{\Sigma}\phi f} + \babs{\int_{\Sigma}\tbr{D}^{1/2}\phi\cdot\tbr{D}^{-1/2}\pd_1 f} 
        \lesssim
        \int_{(-1/\ep,1/\ep)}\tsb{\ep^{-1/2}\tabs{\mathscr{F}\tsb{\phi}\tp{\xi}}}\cdot\tsb{\ep^{1/2}\tbr{\xi}\tabs{\mathscr{F}\tsb{\m{Tr}f}\tp{\m{\xi}}}}\;\m{d}\xi \\
        + \int_{\R\setminus\tp{-1/\ep,1/\ep}}\tsb{\tbr{\xi}^{1/2}\tabs{\mathscr{F}\tsb{\phi}\tp{\xi}}}\cdot\tsb{\tbr{\xi}^{1/2}\tabs{\mathscr{F}\tsb{\m{Tr}f}\tp{\m{\xi}}}}\;\m{d}\xi \lesssim\tnorm{\phi}_{H^{1/2}\tp{\R}}^{\tp{\ep,+}}\tnorm{\m{Tr}f}_{H^{1/2}\tp{\R}}^{\tp{\ep,-}}.
    \end{multline}
    The third item now follows by combining this with the trace estimate from the second item.      The proof of the fourth item is nearly identical, except that now since  $\m{Tr}_{\Sigma_0}f = 0$ we can use the first item to bound $\tnorm{\m{Tr}_{\Sigma_\ep}f}_{H^{1/2}\tp{\R}}^{\tp{\ep,+}}$,  which permits $\phi$ to be estimated in the weaker norm $\tnorm{\phi}^{\tp{\ep,-}}_{H^{1/2}\tp{\R}}$.
\end{proof}

We continue by considering Poincar\'e and Sobolev type inequalities. Similar results in three spatial dimensions have been obtained in Section 2 of Temam and Ziane~\cite{MR1401403}. We also  refer to the work of Maru\v si\'c~\cite{MR1930860}.

\begin{lemC}[Poincar\'e and Sobolev]\label{lem on poincare and sobolev in thin domains}
    For each $q\in[2,\infty]$ there exists $C_q\in\R^+$ such that  the following hold for all $\ep\in\tp{0,1}$.
    \begin{enumerate}
        \item If $q < \infty$ and  $f\in H^1\tp{\Omega_\ep}$, then
        \begin{equation}\label{deviation estimates}
            \tnorm{f - \m{Tr}_{\Sigma_\ep}f}_{L^q\tp{\Omega_\ep}} + \tnorm{f - \m{Tr}_{\Sigma_0}f}_{L^q\tp{\Omega_\ep}} + \bnorm{f - \f{1}{\ep}\int_0^\ep f\tp{\cdot,y}\;\m{d}y}_{L^q\tp{\Omega_\ep}}\le\ep^{2/q} C_q\tnorm{\grad f}_{L^2\tp{\Omega_\ep}},
        \end{equation}
        where for $\chi\in\tcb{0,\ep}$ the symbol $f - \m{Tr}_{\Sigma_\chi}f$ denotes the function $\Omega_\ep\ni\tp{x,y}\mapsto f(x,y) - \tp{\m{Tr}_{\Sigma_\chi}f}\tp{x}\in\R$.
        \item If $q < \infty$ and  $f\in H^1\tp{\Omega_\ep}$, then 
        \begin{equation}\label{Lq embedding}
            \ep^{1/2 - 1/q}\tnorm{f}_{L^q\tp{\Omega_\ep}}\le C_q\tnorm{f}_{H^1\tp{\Omega_\ep}}.
        \end{equation}
        \item For all $f\in H^2\tp{\Omega_\ep}$ we have the estimate
        \begin{equation}\label{Linfty embedding}
            \ep^{1/2}\tnorm{f}_{L^\infty\tp{\Omega_\ep}}\le C_\infty \tnorm{f}_{H^2\tp{\Omega_\ep}}.
        \end{equation}
    \end{enumerate}
\end{lemC}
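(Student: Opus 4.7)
The three items represent progressively refined exploitations of the $\ep$-thinness of $\Omega_\ep$. I would dispatch items 2 and 3 by vertical rescaling to $\Omega_1$. Introduce $F(x, y) = f(x, \ep y)$, for which a change of variables yields $\|F\|_{L^q(\Omega_1)} = \ep^{-1/q}\|f\|_{L^q(\Omega_\ep)}$, $\|F\|_{L^\infty(\Omega_1)} = \|f\|_{L^\infty(\Omega_\ep)}$, and $\|\partial_1^a\partial_2^b F\|_{L^2(\Omega_1)} = \ep^{b - 1/2}\|\partial_1^a\partial_2^b f\|_{L^2(\Omega_\ep)}$. Combined with the hypothesis $\ep \le 1$, these assemble to $\|F\|_{H^k(\Omega_1)} \le \ep^{-1/2}\|f\|_{H^k(\Omega_\ep)}$ for $k \in \{1, 2\}$. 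Invoking the classical 2D Sobolev embeddings $H^1(\Omega_1) \hookrightarrow L^q(\Omega_1)$ for $q \in [2, \infty)$ and $H^2(\Omega_1) \hookrightarrow L^\infty(\Omega_1)$ on the unbounded strip $\Omega_1$, and undoing the scaling, produces~\eqref{Lq embedding} and~\eqref{Linfty embedding}.

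For item 1 at $q = 2$, the fundamental theorem of calculus and the Cauchy--Schwarz inequality yield the pointwise estimate $|f(x, y) - f(x, y_0)| \le |y - y_0|^{1/2}\|\partial_2 f(x, \cdot)\|_{L^2(0, \ep)}$ for any $y_0 \in \{0, \ep\}$; squaring, integrating over $\Omega_\ep$, and averaging over the basepoint in the case of the mean deviation produces~\eqref{deviation estimates} at $q = 2$ with a factor of $\ep = \ep^{2/q}$. For $q \in (2, \infty)$, the plan is to combine this Poincar\'e-type bound with a Gagliardo--Nirenberg inequality on $\Omega_\ep$. For the mean deviation $G = f - \tfrac{1}{\ep}\int_0^\ep f(\cdot, z)\;\m{d}z$, vertical averaging is an orthogonal $L^2$-projection, hence $\nabla G \in L^2(\Omega_\ep)$ with $\|\nabla G\|_{L^2} \le 2\|\nabla f\|_{L^2}$, and the Gagliardo--Nirenberg inequality
$$\|G\|_{L^q(\Omega_\ep)} \le C_q\|G\|_{L^2(\Omega_\ep)}^{2/q}\|\nabla G\|_{L^2(\Omega_\ep)}^{1 - 2/q}$$
can be established by reflecting $G$ across $\Sigma_0$ and $\Sigma_\ep$, multiplying by a smooth vertical cutoff of width $\sim \ep$, applying Gagliardo--Nirenberg on $\R^2$, and absorbing the cutoff-gradient contribution $\ep^{-1}\|G\|_{L^2(\Omega_\ep)}$ into $\|\partial_2 G\|_{L^2(\Omega_\ep)}$ via the $q = 2$ Poincar\'e bound. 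Inserting $\|G\|_{L^2} \le C\ep\|\nabla f\|_{L^2}$ then produces the target $\ep^{2/q}$.

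The trace deviations $f - \m{Tr}_{\Sigma_\chi} f$ for $q > 2$ constitute the main technical obstacle: because $\partial_1\m{Tr}_{\Sigma_\chi}f$ lies only in $H^{-1/2}(\R)$ for $f \in H^1(\Omega_\ep)$, the horizontal gradient of the trace deviation need not lie in $L^2(\Omega_\ep)$, precluding a direct application of the Gagliardo--Nirenberg argument. I would overcome this via the decomposition
$$f - \m{Tr}_{\Sigma_\chi}f = G + \bp{\tfrac{1}{\ep}\int_0^\ep f(\cdot, z)\;\m{d}z - \m{Tr}_{\Sigma_\chi}f},$$
controlling the first summand by the mean case. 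The second summand is a function of $x$ alone whose $L^q(\Omega_\ep)$ norm reduces to $\ep^{1/q}$ times its $L^q(\R)$ norm; a Fourier-analytic argument applied to its explicit representation as a weighted $y$-integral of $\partial_2 f$, combined with the critical 1D Sobolev embedding $H^{1/2}(\R) \hookrightarrow L^q(\R)$ and a careful interpolation that leverages both horizontal and vertical components of $\nabla f$, then produces the desired bound with the correct $\ep$-scaling and completes the proof.
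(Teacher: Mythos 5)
Your proposal is correct but takes a genuinely different route from the paper. For items 2 and 3 you use a vertical-only rescaling $F(x,y)=f(x,\ep y)$ to $\Omega_1$, absorb $\ep^{-1/2}$ into the $H^k$ norm, and invoke the standard embeddings on the unit strip; the paper instead splits $f$ by the Poisson extension of the bottom trace (item 2, via Hausdorff--Young and H\"older, using item 1 for the remainder) and runs a Plancherel/FTC argument (item 3). Your rescaling is shorter. For item 1 the paper rescales isotropically to $\Omega_1$, covers by overlapping unit rectangles $Q_\ell = (\ell-1,\ell+1)\times(0,1)$, applies the local Poincar\'e--Sobolev estimate, and sums via Minkowski (legitimate since $q\ge2$); you instead prove a Gagliardo--Nirenberg inequality directly on $\Omega_\ep$ by reflection and $\ep$-scale cutoff, and then decompose the trace deviation as mean deviation plus a one-variable correction $h = \f1\ep\int_0^\ep f(\cdot,z)\,\m{d}z - \m{Tr}_{\Sigma_\chi}f$, attacked by Fourier analysis. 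The GN step is right (the cutoff-gradient contribution $\ep^{-1}\|G\|_{L^2}$ is absorbed exactly as you say). The Fourier step for $h$ is the delicate one, and the ``careful interpolation'' carries more work than the sketch admits: writing $\hat h(\xi)$ as a weighted $w$-integral of $\widehat{\partial_2 f}(\xi,w)$ over $(0,\ep)$ gives only $|\hat h(\xi)|^2\lesssim\ep\|\widehat{\partial_2 f}(\xi,\cdot)\|_{L^2(0,\ep)}^2$, which has no $\xi$-decay and by itself cannot yield a finite $\dot H^{1/2}$ or $\dot H^{1/2-1/q}$ bound. What saves the plan is the observation that $h = -\m{Tr}_{\Sigma_\chi}G$ with $\hat G(\xi,\cdot)$ of zero vertical mean, so the 1D Agmon inequality gives the refined bound $|\hat h(\xi)|^2\lesssim\|\hat f(\xi,\cdot)\|_{L^2(0,\ep)}\,\|\widehat{\partial_2 f}(\xi,\cdot)\|_{L^2(0,\ep)} = |\xi|^{-1}\|\widehat{\partial_1 f}(\xi,\cdot)\|\,\|\widehat{\partial_2 f}(\xi,\cdot)\|$. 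A frequency split at $|\xi|\sim\ep^{-1}$ --- naive bound for low modes, refined bound for high modes, where $|\xi|^{-1}\le\ep$ trades the missing vertical gain for a horizontal one --- then delivers $\|h\|_{L^2(\R)}\lesssim\ep^{1/2}\|\nabla f\|_{L^2(\Omega_\ep)}$ and $\|h\|_{\dot H^{1/2}(\R)}\lesssim\|\nabla f\|_{L^2(\Omega_\ep)}$. Finally, you should invoke the interpolation $\|h\|_{L^q(\R)}\lesssim\|h\|_{L^2}^{2/q}\|h\|_{\dot H^{1/2}}^{1-2/q}$ rather than the inhomogeneous embedding $H^{1/2}(\R)\hookrightarrow L^q(\R)$ you cite, since the latter would treat the $L^2$ and $\dot H^{1/2}$ parts with equal weight and discard the $\ep^{1/q}$ gain. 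With these precisions the plan closes; the paper's unit-square covering buys the same result with considerably less analytic subtlety.
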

\begin{proof}
    To prove the first item, we begin by reducing to the case $\ep = 1$. Assume that~\eqref{deviation estimates} holds for $\Omega_1$ and let $\ep\in(0,1)$ and $f\in H^1\tp{\Omega_\ep}$. Then the function $f_\ep(x,y) = f(\ep x,\ep y)$ belongs to $H^1\tp{\Omega_1}$ and so we may invoke the special case of~\eqref{deviation estimates} in $\Omega_1$. We get the claimed estimate in $\Omega_\ep$ by noting that
    \begin{equation}
        \tnorm{\grad f_\ep}_{L^2\tp{\Omega_1}} = \tnorm{\grad f}_{L^2\tp{\Omega_\ep}} 
        \text{ and }
        \tnorm{f_\ep - Af_\ep}_{L^q\tp{\Omega_1}} = \ep^{-2/q}\tnorm{f - Af}_{L^q\tp{\Omega_\ep}}
    \end{equation}
    where $Af$ and $Af_\ep$ refer to one of the trace or averaging operations in~\eqref{deviation estimates}.  This completes the proof of the reduction, so we now consider the case $\ep =1$.
     For any $\ell\in\Z$ we let $Q_\ell = (\ell - 1, \ell + 1)\times(0,1)$. Standard Sobolev space arguments yield the bound
    \begin{equation}\label{the case in a unit cube}
        \tnorm{f - \m{Tr}_{\Sigma_1}f}_{L^q\tp{Q_\ell}} + \tnorm{f - \m{Tr}_{\Sigma_0}f}_{L^q\tp{Q_\ell}} + \bnorm{f - \int_0^1f(\cdot,y)\;\m{d}y}_{L^q\tp{Q_\ell}}\le C_q\tnorm{\grad f}_{L^2\tp{Q_\ell}}
    \end{equation}
    for a constant $C_q\in\R^+$ depending only $q$. Raising expression~\eqref{the case in a unit cube} to the $q^{\m{th}}$ power, summing over $\ell\in\Z$, taking the $q^{\m{th}}$-root, and using $q/2\ge 1$ to invoke Minkowski's inequality then proves~\eqref{deviation estimates} in the $\Omega_1$ case. This completes the proof of the first item.

    We turn our attention to the second item. Given $f\in H^1\tp{\Omega_\ep}$ we decompose $f = f^0 + f^1$, where $f^0 = f - \mathcal{E}\m{Tr}_{\Sigma_0}f$ and $f^1 = \mathcal{E}\m{Tr}_{\Sigma_0}f$, with $\mathcal{E}$  the extension operator from Lemma~\ref{lem on anti-trace and anti-divergence}.  Then, thanks to~\eqref{deviation estimates}, the fact that $\m{Tr}_{\Sigma_0}f^0 = 0$, and~\eqref{the extension operator properties}, we initially have
    \begin{equation}\label{__a__}
        \tnorm{f}_{L^q}\le C_q\ep^{2/q}\tp{\tnorm{f}_{H^1\tp{\Omega_\ep}} + \tnorm{\m{Tr}_{\Sigma_0}f}_{H^{1/2}}} + \tnorm{f_1}_{L^q}.
    \end{equation}
    On the other hand, the Hausdorff-Young inequality along with~\eqref{the extension operator properties} gives
    \begin{equation}\label{__b__}
        \tnorm{f_1}_{L^q}\le\bp{\int_0^\ep\bp{\int_{\R}\tabs{\mathscr{F}[f_1]\tp{\xi,y}}^{q/(q-1)}\;\m{d}\xi}^{q - 1}\;\m{d}y}^{1/q}\le\ep^{1/q}\tnorm{\mathscr{F}\tsb{\m{Tr}_{\Sigma_0}f}}_{L^{q/(q-1)}}.
    \end{equation}
    H\"older's inequality with the product of $\tbr{\cdot}^{-1/2}$ and $\tbr{\cdot}^{1/2}\m{Tr}_{\Sigma_0}f$ allows us to estimate further
    \begin{equation}\label{__c__}
        \tnorm{\mathscr{F}\tsb{\m{Tr}_{\Sigma_0}f}}_{L^{q/(q-1)}}\le C_q\tnorm{\m{Tr}_{\Sigma_0}f}_{H^{1/2}}.
    \end{equation}
    Synthesizing~\eqref{__a__}, \eqref{__b__}, and~\eqref{__c__} with the second item of Lemma~\ref{lem on traces in thin domains} and the first item of Lemma~\ref{lem on adapted boundary norms} gives~\eqref{Lq embedding}.

    We now prove the third item. The strategy is similar to the proof of the second item of Lemma~\ref{lem on traces in thin domains}. Let $f\in H^2\tp{\Omega_\ep}$ be the restriction to $\Omega_\ep$ of a function in $C^\infty_c\tp{\R^2}$. By arguing as in~\eqref{there looks like wiggle room, but there is not}, we find that for any $y,z\in(0,\ep)$ and $\xi\in\R$ it holds
    \begin{equation}
        \tbr{\xi}^2\tabs{\mathscr{F}[f]\tp{\xi,y}}^2\le\tbr{\xi}^2\tabs{\mathscr{F}[f]\tp{\xi,z}}^2 + 2\bp{\int_0^\ep\tbr{\xi}^2\tabs{\mathscr{F}[f]\tp{\xi,s}}^2\;\m{d}s}^{1/2}\bp{\int_0^\ep\tbr{\xi}^2\tabs{\pd_2\mathscr{F}[f]\tp{\xi,s}}^2\;\m{d}s}^{1/2}.
    \end{equation}
    Integrating over $\tp{\xi,z}\in\Omega_\ep$, taking the square root, and invoking the Cauchy-Schwarz and Young's inequalities, we then find that
    \begin{equation}
        \ep^{1/2}\tnorm{\m{Tr}_{\Sigma_y}f}_{H^1\tp{\R}}\le  C\tnorm{f}_{H^2\tp{\Omega_\ep}}.
    \end{equation}
    Inequality~\eqref{Linfty embedding} now follows from $H^1\tp{\R}\emb L^\infty\tp{\R}$ and taking the supremum over $y\in(0,\ep)$.
\end{proof}

We now consider the Korn inequality in thin domains. We have the following preliminary result.
\begin{lemC}[Korn, I]\label{lem on korn in a star-shaped domain}
    Let $\ep,\del\in(0,1)$ and $h\in W^{1,\infty}\tp{\R}$ satisfy $\tnorm{h}_{W^{1,\infty}}\le\del^{-1}$ and $\del\le h\le\del^{-1}$. There exists a constant $C\in\R^+$, depending only on $\del$, such that for all $x_0\in\R$ and $\varphi\in H^1\tp{\Omega_{\ep h}\cap Q^\ep\tp{x_0};\R^2}$ we have the estimate
    \begin{equation}\label{lem on korn in a star-shaped domain est}
        \bp{\int_{\Omega_{\ep h}\cap Q^\ep\tp{x_0}}\babs{\grad\varphi - \f{1}{|\Omega_{\ep h}\cap Q^\ep\tp{x_0}|}\int_{\Omega_{\ep h}\cap Q^\ep\tp{x_0}}\f12\mathbb{A}\psi}^2}^{1/2}\le C\tnorm{\mathbb{D}\varphi}_{L^2\tp{\Omega_{\ep h}\cap Q^\ep\tp{x_0}}}
    \end{equation}
    where $Q^\ep\tp{x_0} = (x_0 - \ep\del^2/4,x_0 + \ep\del^2/4)\times\tp{0,\ep/\del}$, $\mathbb{A}\varphi = \grad\varphi - \grad\varphi^{\m{t}}$, and $|\Omega^\ep\cap Q^\ep\tp{x_0}|$ is the planar Lebesgue measure of the set $\Omega_{\ep h}\cap Q^\ep\tp{x_0}$
\end{lemC}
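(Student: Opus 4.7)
The approach is to rescale the problem onto a unit-order Lipschitz domain where a classical Korn--Poincar\'e inequality applies with constants depending only on the Lipschitz character of the boundary. Introduce the dilation $\Phi(\xi,\eta) = \tp{x_0 + \ep\xi,\,\ep\eta}$, which maps the rescaled set
\begin{equation}
    \tilde{D} = \tcb{(\xi,\eta)\in\R^2 \;:\; |\xi|<\del^2/4,\; 0<\eta<\tilde{h}(\xi)},\quad\tilde{h}(\xi) = h(x_0+\ep\xi),
\end{equation}
bijectively onto $D := \Omega_{\ep h}\cap Q^\ep(x_0)$, and set $\tilde{\varphi} = \varphi\circ\Phi$. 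The chain rule yields $\grad\tilde{\varphi} = \ep\,(\grad\varphi)\circ\Phi$, and the Jacobian satisfies $\det D\Phi = \ep^2$. A direct computation using the change of variables formula then produces the identities $\tnorm{\grad\varphi}_{L^2(D)} = \tnorm{\grad\tilde\varphi}_{L^2(\tilde D)}$ and $\tnorm{\mathbb{D}\varphi}_{L^2(D)} = \tnorm{\mathbb{D}\tilde\varphi}_{L^2(\tilde D)}$, together with the scaling relation
\begin{equation}
    \f{1}{|\tilde D|}\int_{\tilde D}\f12\mathbb{A}\tilde\varphi = \ep\,\f{1}{|D|}\int_D \f12\mathbb{A}\varphi,
\end{equation}
from which one checks that the target estimate~\eqref{lem on korn in a star-shaped domain est} is exactly equivalent to the unrescaled Korn--Poincar\'e inequality
\begin{equation}\label{korn_on_rescaled_proposal}
    \bnorm{\grad\tilde\varphi - \f{1}{|\tilde D|}\int_{\tilde D}\f12\mathbb{A}\tilde\varphi}_{L^2(\tilde D)} \le C\tnorm{\mathbb{D}\tilde\varphi}_{L^2(\tilde D)}
\end{equation}
with the same constant $C$.

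Next I would verify that the family $\tcb{\tilde D}$, indexed by admissible data $\tp{\ep,x_0,h}$, has uniformly bounded Lipschitz character. The hypotheses give $\del\le\tilde h\le\del^{-1}$, while the chain rule bound $\tnorm{\tilde h'}_{L^\infty} = \ep\tnorm{h'\circ(x_0+\ep\,\cdot\,)}_{L^\infty}\le\ep\del^{-1}\le\del^{-1}$ shows that the top boundary of $\tilde D$ is the graph of a function whose $W^{1,\infty}$-norm is controlled by $\del$. In particular $\tilde D$ is a Lipschitz domain (indeed star-shaped with respect to a ball of radius comparable to $\del^2$) whose inradius, diameter, and boundary Lipschitz constant are all controlled purely in terms of $\del$. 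On any such class of uniformly Lipschitz planar domains, the standard Korn--Poincar\'e inequality~\eqref{korn_on_rescaled_proposal} holds with a constant $C$ depending only on the Lipschitz character; the relevant techniques are developed in Chapter~3 of Lewicka~\cite{MR4592573}. The mean skew matrix $\f{1}{|\tilde D|}\int_{\tilde D}\f12\mathbb{A}\tilde\varphi$ appears naturally here as the unique skew-symmetric minimizer of the functional $S\mapsto\tnorm{\grad\tilde\varphi - S}_{L^2(\tilde D)}^2$ over antisymmetric $2\times 2$ matrices, which is precisely the quantity controlled by Korn--Poincar\'e, and substituting this minimizer into~\eqref{korn_on_rescaled_proposal} together with the reduction of the first paragraph yields~\eqref{lem on korn in a star-shaped domain est}.

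The main obstacle is guaranteeing the $\del$-uniformity of the Korn constant across the entire family $\tcb{\tilde D}$, since the top graph $\tilde h$ varies with the parameters $(\ep,x_0,h)$. If a direct citation proves inadequate for this precise scale-invariant formulation, the fallback is a compactness argument. Negating the uniform inequality would furnish parameters $(\ep_n,x_{0,n},h_n)$ and fields $\tilde\varphi_n\in H^1(\tilde D_n;\R^2)$ with $\tnorm{\grad\tilde\varphi_n - S_n}_{L^2}=1$ and $\tnorm{\mathbb{D}\tilde\varphi_n}_{L^2}\to 0$, where $S_n$ is the associated skew mean. Since $\tcb{\tilde h_n}$ is bounded in $W^{1,\infty}$, Arzel\`a--Ascoli provides a subsequence along which $\tilde h_n\to \tilde h_\infty$ uniformly; extending each $\tilde\varphi_n$ trivially to a fixed ambient rectangle containing every $\tilde D_n$ and passing to a weak $H^1$ limit then produces a nontrivial rigid motion on the limit domain $\tilde D_\infty$ whose gradient is a constant skew matrix, contradicting the normalization $\tnorm{\grad\tilde\varphi_\infty - S_\infty}_{L^2}=1$. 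This closes the argument.
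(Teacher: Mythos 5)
Your proposal is correct and takes essentially the same route as the paper: both reduce to the classical Korn--Poincar\'e inequality on star-shaped domains from Lewicka~\cite{MR4592573}, and both key observations are that the truncated domain is star-shaped with respect to a ball of inradius comparable to (a power of) $\del$ times the diameter. The only cosmetic difference is that you first pull back by an explicit dilation of factor $\ep$ before invoking the unit-scale inequality, whereas the paper cites Corollary~3.20 directly in its scale-invariant form and verifies the star-shape-with-respect-to-a-ball hypothesis by exhibiting the domain as a union of convex cones all containing a common ball $B\tp{\tp{x_0,\ep\del/4},\ep\del^2/8}$; your proposed compactness fallback is unnecessary given that citation, and as stated it would need some care (the trivial extension to a fixed rectangle does not control the $H^1$ norm of $\tilde\varphi_n$ itself, only of $\grad\tilde\varphi_n - S_n$, so one must first subtract a rigid motion adapted to each $\tilde D_n$), but this is moot since the direct route succeeds.
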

\begin{proof}
    We aim to directly invoke Corollary 3.20 from Lewicka~\cite{MR4592573}.  To do so we only need to check that the domain $\Omega_{\ep h}\cap Q^\ep\tp{x_0}$ is star-shaped with respect to an interior ball and the inverse ratio of this interior ball's diameter to the diameter of $\Omega_{\ep h}\cap Q^\ep\tp{x_0}$ is bounded above by a function of $\del$ alone.

    To prove that $\Omega_{\ep h}\cap Q^\ep\tp{x_0}$ is star-shaped with respect to a ball, we consider the open sets indexed by $z\in(x_0 - \ep\del^2/4, x_0 + \ep\del^2/4)$ and defined via
    \begin{equation}\label{convexobro}
        C^\ep\tp{z} = \tcb{(x,y)\in Q^\ep(x_0)\;:\;y/\ep < h(z) - \del^{-1}|x - z|}.
    \end{equation}
    The set $C^\ep\tp{z}$ is convex, as it is the intersection of a cone with a rectangle. The bound $\tnorm{h}_{W^{1,\infty}}\le\del^{-1}$ then implies that $C^\ep(z)\subseteq \Omega_{\ep h}\cap Q^\ep(x_0)$; indeed, if $(x,y)\in C^\ep(z)$, then $|h(x) - h(z)|\le\del^{-1}|x - z|$ and so the defining inequality in~\eqref{convexobro} forces $0<y/\ep<h(x)$ and hence $(x,y)\in\Omega_{\ep h}$.  In fact, since $(x,y)\in C^\ep(x)$ for all pairs $(x,y)\in\Omega_{\ep h}\cap Q^\ep\tp{x_0}$, we have the identity 
    \begin{equation}\label{yep that's a union}
        \bigcup_{|z - x_0|<\ep\del^2/4}C^\ep\tp{z} = \Omega_{\ep h}\cap Q^\ep\tp{x_0}.
    \end{equation}
    Thus, if we could find an open ball 
    \begin{equation}\label{the condition on the ball B}
          \es \neq B\subseteq\bigcap_{|z - x_0|<\ep\del^2/4} C^\ep(z)
    \end{equation}    
    then the convexity of each $C^\ep\tp{z}$ and~\eqref{yep that's a union} would imply that $\Omega_{\ep h}\cap Q^\ep\tp{x_0}$ is star-shaped with respect to $B$.  We will prove this.

     We claim that the choice $B = B((x_0,\ep\del/4),\ep\del^2/8)$ satisfies~\eqref{the condition on the ball B}.  We have $B\subset(x_0 - \ep\del^2/4,x_0 + \ep\del^2/4)\times(0,\ep\del/2)$, so it suffices to prove that the set on the right is a subset of $C^\ep(z)$. Supposing that $|x - x_0|<\ep\del^2/4$ and $0<y<\ep\del/2$, we use $h(z)\ge\del$ to check that
    \begin{equation}
        h(z) - \del^{-1}|x - z|\ge\del - \ep\del/2\ge\del/2>y/\ep,
    \end{equation}
    and hence $(x,y)\in C^\ep\tp{z}$.  This completes the proof that $\Omega_{\ep h}\cap Q^\ep\tp{x_0}$ is star-shaped with respect to $B$. We can thus conclude that~\eqref{lem on korn in a star-shaped domain est} holds with the constant $C$ depending on $\m{diam}\tp{\Omega_{\ep h}\cap Q^\ep\tp{x_0}}/\m{diam}B\le 2\ep\del^{-1}/\tp{\ep\del^2/8} = 16\del^{-3}$.
\end{proof}

Next, we present a version of the  Korn inequality, closely related to and inspired by the theory developed by Lewicka and M\"uller~\cite{MR2795715}. The key differences between what they prove and what we do are: 1) we are concerned with the unbounded thin domains $\Omega_{\ep h}$; and 2) we are permitting the $H^1\tp{\Omega_{\ep h};\R^2}$ norm to not only be controlled by the symmetric gradient, but also a small trace norm.  We remark that a version of a thin domain Korn inequality in three spatial dimensions for a flat slab appears in Section 3.1.2 of Bresch and Noble~\cite{MR2975338}.

\begin{propC}[Korn, II]\label{prop on Korn in thin domains}
    Let $\ep,\del\in\tp{0,1}$ and $h\in W^{1,\infty}\tp{\R}$ satisfy $\tnorm{h}_{W^{1,\infty}}\le \del^{-1}$ and $\del\le h\le \del^{-1}$. There exists a constant $C\in\R^+$, depending only on $\del$, such that for all $\varphi\in H^1\tp{\Omega_\ep;\R^2}$ satisfying $\m{Tr}_{\Sigma_0}\varphi_2 =0$ we have the estimate
    \begin{equation}\label{the thin domain Korn inequality}
        \tnorm{\varphi}_{H^1\tp{\Omega_\ep}}\le C\tp{\ep^{1/2}\tnorm{\varphi_1}_{L^2\tp{\Sigma_0}} + \tnorm{\mathbb{D}^{\mathcal{A}_h}\varphi}_{L^2\tp{\Omega_\ep}}}.
    \end{equation}
\end{propC}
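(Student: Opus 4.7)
The strategy, inspired by the thin-domain Korn theory of Lewicka and M\"uller, proceeds in three main steps. First, I would transfer the problem to the Eulerian domain by setting $u = \varphi\circ\mathfrak{F}_h^{-1}:\Omega_{\ep h}\to\R^2$. The bounds on $h$ make the change-of-variable maps between $\Omega_\ep$ and $\Omega_{\ep h}$ into $\del$-quantitative isomorphisms in $H^1$ and $L^2$ (cf.~\eqref{estimates on these flattening and unflattening maps}); combined with $\tp{\mathbb{D} u}\circ\mathfrak{F}_h = \mathbb{D}^{\mathcal{A}_h}\varphi$ and the preservation of $\m{Tr}_{\Sigma_0}u_2 = 0$, this reduces the proposition to the analogous estimate with $\varphi$ replaced by $u$ and $\Omega_\ep$ by $\Omega_{\ep h}$.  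The $L^2$ norm of $u$ is then cheap: $\m{Tr}_{\Sigma_0}u_2=0$ and $\tp{\mathbb{D} u}_{22}=2\pd_2 u_2$ give $\tnorm{u_2}_{L^2\tp{\Omega_{\ep h}}}\le\tp{\ep/2}\tnorm{\mathbb{D} u}_{L^2}$ by vertical integration, while writing $u_1\tp{x,y}=u_1\tp{x,0}+\int_0^y\pd_2 u_1\tp{x,s}\,\m{d}s$ and applying Cauchy--Schwarz yields $\tnorm{u_1}_{L^2\tp{\Omega_{\ep h}}}\lesssim_\del \ep^{1/2}\tnorm{u_1}_{L^2\tp{\Sigma_0}}+\ep\tnorm{\pd_2 u_1}_{L^2}$. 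This is the \emph{only} place the boundary trace enters, and the task reduces to proving the pure gradient bound $\tnorm{\grad u}_{L^2\tp{\Omega_{\ep h}}}\lesssim_\del \tnorm{\mathbb{D} u}_{L^2\tp{\Omega_{\ep h}}}$.

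Second, I would cover $\R$ by overlapping windows $Q_k=Q^\ep\tp{x_k}$ with $x_k=k\ep\del^2/8$, where $Q^\ep\tp{\cdot}$ is the rectangle from Lemma~\ref{lem on korn in a star-shaped domain}, arranged so that the sets $\Omega_{\ep h}\cap Q_k$ cover $\Omega_{\ep h}$ with multiplicity depending only on $\del$. For each $k\in\Z$, Lemma~\ref{lem on korn in a star-shaped domain} provides a scalar $\omega_k\in\R$ (the averaged half-vorticity of $u$ on $\Omega_{\ep h}\cap Q_k$) such that the skew matrix $N_k$ with $\tp{N_k}_{12}=-\tp{N_k}_{21}=-\omega_k$ and zero diagonal satisfies $\tnorm{\grad u-N_k}_{L^2\tp{\Omega_{\ep h}\cap Q_k}}\lesssim_\del\tnorm{\mathbb{D} u}_{L^2\tp{\Omega_{\ep h}\cap Q_k}}$.

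Third, and this is the heart of the proof, I would anchor each $\omega_k$ using $\m{Tr}_{\Sigma_0}u_2=0$. Pick a cutoff $\chi_k\tp{x}=\ep^{-1}\chi_0\tp{\tp{x-x_k}/\ep}$ with $\chi_0\in C^\infty_c\tp{\R}$ and $\int\chi_0=1$ supported in the $x$-projection of $Q_k$, and test $2\omega=\pd_1 u_2-\pd_2 u_1$ against $\chi_k$ over the rectangle $R_k=\m{supp}\tp{\chi_k}\times\tp{0,\ep\del}$, which lies in $\Omega_{\ep h}$ since $h\ge\del$. Integration by parts in $x$ against $\pd_1 u_2$ (no $x$-boundary term, as $\chi_k$ has compact support) and in $y$ against $\pd_2 u_1$, followed by substituting $\pd_2 u_1=-\omega_k+\tp{\grad u-N_k}_{12}$ in the top and bottom $u_1$-trace terms (which cancels one of the two $\omega_k$-contributions originating from the left-hand side) yields the identity
\begin{equation*}
    \omega_k\ep\del = -\int_\R\chi_k'\tp{x}\int_0^{\ep\del}u_2\tp{x,y}\,\m{d}y\,\m{d}x - \int_{R_k}2\chi_k\tp{\omega-\omega_k}\,\m{d}x\,\m{d}y + \int_\R\chi_k\tp{x}\int_0^{\ep\del}\tp{\grad u-N_k}_{12}\,\m{d}y\,\m{d}x.
\end{equation*}
Using $\tnorm{u_2}_{L^2\tp{R_k}}\lesssim\ep\tnorm{\mathbb{D} u}_{L^2\tp{Q_k}}$ (from $\m{Tr}_{\Sigma_0}u_2=0$) together with the scalings $\tnorm{\chi_k}_{L^2\tp{\R}}\sim\ep^{-1/2}$ and $\tnorm{\chi_k'}_{L^2\tp{\R}}\sim\ep^{-3/2}$, Cauchy--Schwarz bounds each of the three right-hand terms by $C\tp{\del}\del^{1/2}\tnorm{\mathbb{D} u}_{L^2\tp{Q_k}}$, yielding $|\omega_k|\ep\lesssim_\del\tnorm{\mathbb{D} u}_{L^2\tp{Q_k}}$. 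Consequently $\tnorm{N_k}^2_{L^2\tp{\Omega_{\ep h}\cap Q_k}}\sim|\omega_k|^2\ep^2\lesssim_\del\tnorm{\mathbb{D} u}^2_{L^2\tp{Q_k}}$, and summing $\tnorm{\grad u}^2_{L^2\tp{Q_k}}\le 2\tnorm{\grad u-N_k}^2+2\tnorm{N_k}^2$ over $k\in\Z$ using the finite multiplicity of the cover closes the argument.

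The main obstacle is the algebraic cancellation highlighted in Step 3: naively bounding the individual $u_1$-traces on the top and bottom of $R_k$ via Cauchy--Schwarz produces a term of order $\ep^{-1/2}\tnorm{u_1}_{L^2\tp{\Sigma_0}}$ that blows up as $\ep\to 0$. Their difference, however, is precisely $\int_0^{\ep\del}\pd_2 u_1\,\m{d}s$, and the substitution $\pd_2 u_1=-\omega_k+\tp{\grad u-N_k}_{12}$ splits this into a useful $\omega_k\ep\del$ contribution that halves the $2\omega_k\ep\del$ coefficient originally on the left, plus a symmetric-gradient-controlled remainder. This exact algebraic reduction (rather than an absorption argument) is what drives the proof and makes it crucial that the scalings of $\tnorm{\chi_k}_{L^2}$, $\tnorm{\chi_k'}_{L^2}$, and the vertical Poincar\'e constant $\ep$ conspire so that all three right-hand terms carry the correct $\del^{1/2}$ factor.
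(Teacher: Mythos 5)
Your argument is correct and leads to the stated estimate, but the mechanism you use to control the antisymmetric part of $\grad u$ is genuinely different from the paper's. Both proofs change variables to the Eulerian domain $\Omega_{\ep h}$, both invoke the local star-shaped Korn inequality (Lemma~\ref{lem on korn in a star-shaped domain}) to produce good skew-symmetric approximations of $\grad u$, and both ultimately anchor the vorticity by exploiting $\m{Tr}_{\Sigma_0}u_2=0$. The paper constructs a \emph{continuous} mollified replacement $R\psi$ (a convolution of $\tfrac12\mathbb{A}\psi$ against a kernel $\Theta$), splits $R\psi_{12}=R^1\psi-2R^2\psi$, and bounds $R^2\psi$ through the inner-product identity $\tnorm{R\psi_{12}}^2\le\tnorm{R^1\psi}^2+4|\langle R\psi_{12},R^2\psi\rangle|$, integrating by parts in $z_1$ over the curved domain $\Omega_{\ep h}$; this IBP produces a free-surface boundary term ($\mathbf{I}_3$) that must be controlled separately, and the whole chain is closed via Cauchy's inequality and an absorption that requires $\ep$ to be sufficiently small (hence the Step~1 reduction). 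You instead use a \emph{discrete} covering with per-cell constant skew matrices $N_k$, test the vorticity against a bump function over a sub-rectangle $R_k\subset\Omega_{\ep h}\cap Q_k$ whose top face lies strictly below the free surface (so no curved-boundary terms arise), and obtain the pointwise bound $|\omega_k|\ep\lesssim_\del\tnorm{\mathbb{D}u}_{L^2(\Omega_{\ep h}\cap Q_k)}$ directly, from which the gradient bound follows by summation with no absorption and no smallness of $\ep$. What you buy is a cleaner closure (no interpolation-and-absorb step, no $\Sigma_{\ep h}$ boundary term, no $\ep$-smallness reduction); what the paper's continuous mollification buys is a $W^{1,\infty}$ replacement $R\psi$ with a derivative estimate $\tnorm{(R\psi)'}_{L^2}\lesssim\ep^{-3/2}\tnorm{\mathbb{D}\psi}_{L^2}$ that is convenient for further manipulation, though in the end both routes prove exactly the estimate~\eqref{the thin domain Korn inequality}. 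Two cosmetic remarks: the sign in front of your last integral should be negative (immaterial for the estimates), and you should note explicitly that the cutoff $\chi_0$ must be chosen with $\m{supp}\,\chi_0\subset(-\del^2/4,\del^2/4)$ so that $\m{supp}\,\chi_k$ fits inside the $x$-projection of $Q_k$; this makes $\tnorm{\chi_0}_{L^2}$ and $\tnorm{\chi_0'}_{L^2}$ depend on $\del$, but that dependence is harmlessly absorbed into the constant $C(\del)$.
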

\begin{proof}
    We divide the proof into steps.

    \emph{Step 1 -- Reductions:}  To begin, we make a pair of reductions. First, we claim that for $\del$ fixed it suffices to prove~\eqref{the thin domain Korn inequality} is valid for all $0 < \ep \le \ep_0$ for some $\ep_0 \in (0,1)$ depending on $\delta$.  Indeed, once this is established, the remaining range $\ep \in (\ep_0,1)$ may be recovered via an elementary scaling argument.

    Second, we claim that it suffices to prove the existence of a constant $C$, depending only on $\del$, such that for all $\psi\in H^1\tp{\Omega_{\ep h};\R^2}$ satisfying $\m{Tr}_{\Sigma_0}\psi_2 = 0$ we have the estimate
    \begin{equation}\label{the reduced thin Korn inequality}
        \tnorm{\grad\psi}_{L^2\tp{\Omega_{\ep h}}}\le C\tp{\tnorm{\psi}_{L^2\tp{\Omega_{\ep h}}} + \tnorm{\mathbb{D}\psi}_{L^2\tp{\Omega_{\ep h}}}}.
    \end{equation}
    To prove the claim, let us suppose that~\eqref{the reduced thin Korn inequality} holds and let $\varphi\in H^1\tp{\Omega_\ep;\R^2}$ satisfy $\varphi_2 = 0$ on $\Sigma_0$. We use the flattening map $F_h^\ep$ constructed in~\eqref{this is the flattening map} from the proof of Lemma~\ref{lem on anti-trace and anti-divergence} and let $\psi = \tp{F_h^\ep}^{-1}\varphi\in H^1\tp{\Omega_{\ep h};\R^2}$, which satisfies $\m{Tr}_{\Sigma_0}\psi = 0$ along with the estimates
    \begin{equation}\label{some important flattening bounds}
        \tnorm{\varphi}_{H^1\tp{\Omega_\ep}}\le C\tnorm{\psi}_{H^1\tp{\Omega_{\ep h}}},\;\tnorm{\psi}_{L^2\tp{\Omega_{\ep h}}}\le C\tnorm{\varphi}_{L^2\tp{\Omega_\ep}},
        \text{ and }
        \tnorm{\mathbb{D}\psi}_{L^2\tp{\Omega_{\ep h}}}\le C\tnorm{\mathbb{D}^{\mathcal{A}_h}\varphi}_{L^2\tp{\Omega_\ep}}
    \end{equation}
    for $C$ depending only on $\del$. We can then invoke the assumed reduced estimate~\eqref{the reduced thin Korn inequality} for $\psi$ to find that
    \begin{equation}\label{another important intermediate estimate}
        \tnorm{\varphi}_{H^1\tp{\Omega_\ep}}\lesssim\tnorm{\psi}_{H^1\tp{\Omega_{\ep h}}}\lesssim\tnorm{\psi}_{L^2\tp{\Omega_{\ep h}}} + \tnorm{\mathbb{D}\psi}_{L^2\tp{\Omega_{\ep h}}}\lesssim\tnorm{\varphi}_{L^2\tp{\Omega_\ep}} + \tnorm{\mathbb{D}^{\mathcal{A}_h}\varphi}_{L^2\tp{\Omega_\ep}},
    \end{equation}
    again for implicit constants depending only on $\del$. To move from~\eqref{another important intermediate estimate} to~\eqref{the thin domain Korn inequality} we take $\ep\in(0,1)$ sufficiently small and employ the estimates from the first item of Lemma~\ref{lem on poincare and sobolev in thin domains} to bound
    \begin{equation}
        \tnorm{\varphi}_{L^2\tp{\Omega_\ep}}\le\ep^{1/2}\tnorm{\varphi_1}_{L^2\tp{\Sigma_0}} + \tnorm{\varphi_1 - \m{Tr}_{\Sigma_0}\varphi_1}_{L^2\tp{\Omega_\ep}} + \tnorm{\varphi_2}_{L^2\tp{\Omega_\ep}}\le\ep^{1/2}\tnorm{\varphi_1}_{L^2\tp{\Sigma_0}} + \ep\tnorm{\grad\varphi}_{L^2\tp{\Omega_\ep}}
    \end{equation}
    and then adsorb.  This proves the second claim.

    \emph{Step 2 -- The gradient replacement:}     We have now reduced to proving the estimate~\eqref{the reduced thin Korn inequality} for sufficiently small $\ep$.  The next step is to argue as in the proof of Theorem 5.1 in~\cite{MR2795715}:  we claim that for each $\psi\in H^1\tp{\Omega_{\ep h};\R^2}$ there exists $R\psi\in \tp{W^{1,\infty}\cap H^1}\tp{\Sigma_0;\R^{2\times 2}_{\m{asym}}}$ for which
    \begin{equation}\label{the estimates for the smooth curl dude}
        \tnorm{\grad\psi - R\psi}_{L^2\tp{\Omega_{\ep h}}}\le C\tnorm{\mathbb{D}\psi}_{L^2\tp{\Omega_{\ep h}}}
        \text{ and }
        \tnorm{\tp{R\psi}'}_{L^2\tp{\Sigma_0}}\le C\ep^{-3/2}\tnorm{\mathbb{D}\psi}_{L^2\tp{\Omega_{\ep h}}},
    \end{equation}
    with the constant $C$ depending only on $\del$.  
    
    In broad strokes, the map $R\psi$ will be taken to be a mollified version of $\f12\mathbb{A}\psi$ (where this operator is introduced in Lemma~\ref{lem on korn in a star-shaped domain}). More precisely, let us fix $\vartheta\in C^\infty_c\tp{\R}$ with $\m{supp}\vartheta \subset (-\del^2/4,\del^2/4)$, $\vartheta\ge0$, and $\int_{\R}\vartheta = 1$. Then define $\Theta:\R\times\Omega_{\ep h}\to\R$ via $\Theta(x,z) = \vartheta\tp{(z_1 - x)/\ep}/(\ep^2h(x))$ and $R\psi:\Sigma_0\to\R^{2\times 2}_{\m{asym}}$ via
    \begin{equation}\label{smoothed out curl dude}
        R\psi(x) = \f12\int_{\Omega_{\ep h}}\Theta\tp{x,z}\mathbb{A}\psi\tp{z}\;\m{d}z.
    \end{equation}
    The kernels $\Theta$ satisfy $\Theta(x,z)=0$ when $|x - z_1|\ge\ep\del^2/4$ and satisfy the additional conditions 
    \begin{equation}\label{The only estimates on the kernel youll need}
        \int_{\Omega_{\ep h}}\Theta(x,z)\;\m{d}z = 1,
        \quad
        \int_{\Omega_{\ep h}}D_1\Theta\tp{x,z}\;\m{d}z = 0,
        \quad 
        \tabs{\Theta(x,z)}\le C\ep^{-2},
        \text{ and }
        \tabs{D_1\Theta(x,z)}\le C\ep^{-3}
    \end{equation}
    with $C$ depending only on $\del$. Now, given any $x_0\in\R$, we employ the notation Lemma~\ref{lem on korn in a star-shaped domain} to write
    \begin{equation}
        A_{x_0}\psi = \f{1}{|\Omega_{\ep h}\cap Q^\ep\tp{x_0}|}\int_{\Omega_{\ep h}\cap Q^\ep\tp{x_0}}\f12\mathbb{A}\psi.
    \end{equation}
    By using~\eqref{The only estimates on the kernel youll need}, Cauchy-Schwarz, and Lemma~\ref{lem on korn in a star-shaped domain}, we then estimate the difference
    \begin{equation}\label{initial estimate for R}
        |R\psi\tp{x_0} - A_{x_0}\psi|\le\int_{\Omega_{\ep h}}\Theta(x_0,z)\babs{\f12\mathbb{A}\psi\tp{z} - A_{x_0}\psi}\;\m{d}z\le C\ep^{-1}\tnorm{\mathbb{D}\psi}_{L^2\tp{\Omega_{\ep h}\cap Q^{\ep}\tp{x_0}}}.
    \end{equation}
    In a similar fashion, we also have
    \begin{equation}\label{initial estimate for derivative of R}
        |\tp{R\psi}'\tp{x_0}|\le\int_{\Omega_{\ep h}}|D_1\Theta(x_0,z)|\babs{\f12\mathbb{A}\psi(z) - A_{x_0}\psi}\;\m{d}z\le C\ep^{-2}\tnorm{\mathbb{D}\psi}_{L^2\tp{\Omega_{\ep h}\cap Q^\ep\tp{x_0}}}.
    \end{equation}
    From~\eqref{initial estimate for derivative of R} we may also derive the estimate
    \begin{equation}\label{more advanced estimate on the derivative of R}
        \sup_{|x-x_0|<\ep\del^2/4}|\tp{R\psi}'\tp{x}|\le C\ep^{-2}\tnorm{\mathbb{D}\psi}_{L^2\tp{\Omega_{\ep h}\cap Q^{2\ep}\tp{x_0}}},
    \end{equation}
    which is a local Lipschitz estimate for $R\psi$ near $x_0$; hence, in particular we have that
    \begin{equation}\label{pull a supremum out of your hat}
        \sup_{|x - x_0|<\ep\del^2/4}\tabs{R\psi(x) - R\psi\tp{x_0}}\le C\ep^{-1}\tnorm{\mathbb{D}\psi}_{L^2\tp{\Omega_{\ep h}\cap Q^{2\ep}\tp{x_0}}}.
    \end{equation}
    We now let $\tp{x,y}\in\Omega_{\ep h}\cap Q^\ep\tp{x_0}$ and use~\eqref{initial estimate for R} and~\eqref{pull a supremum out of your hat} to write
    \begin{multline}
        \tabs{\grad\psi\tp{x,y} - R\psi\tp{x}}\le\tabs{\grad\psi(x,y) - A_{x_0}\psi} + \tabs{A_{x_0}\psi - R\psi\tp{x_0}} + \tabs{R\psi\tp{x_0} - R\psi(x)}\\
        \le C\tabs{\grad\psi\tp{x,y} - A_{x_0}\psi} + C\ep^{-1}\tnorm{\mathbb{D}\psi}_{L^2\tp{\Omega_{\ep h}\cap Q^{2\ep}\tp{x_0}}}.
    \end{multline}
    Now we take the norm in $L^2\tp{\Omega_{\ep h}\cap Q^\ep\tp{x_0}}$ of the above expression and once again invoke  Lemma~\ref{lem on korn in a star-shaped domain} to obtain the estimate
    \begin{equation}\label{almost half way there}
        \tnorm{\grad\psi - R\psi}_{L^2\tp{\Omega_{\ep h}\cap Q^\ep(x_0)}}\le C\tnorm{\mathbb{D}\psi}_{L^2\tp{\Omega_{\ep h}\cap Q^{2\ep}\tp{x_0}}}.
    \end{equation}
    By squaring~\eqref{almost half way there} and  summing over $x_0\in \ep\del^2\Z/4$ we obtain the left estimate of~\eqref{the estimates for the smooth curl dude}. The right hand estimate in~\eqref{the estimates for the smooth curl dude} can be obtained from~\eqref{more advanced estimate on the derivative of R} by noting that the latter implies
    \begin{equation}
        \tnorm{\tp{R\psi}'}_{L^2\tp{x_0 - \ep\del^2/4,x_0 + \ep\del^2/4}}\le C\ep^{-3/2}\tnorm{\mathbb{D}\psi}_{L^2\tp{\Omega_{\ep h}\cap Q^{2\ep}\tp{x_0}}};
    \end{equation}
    therefore, upon squaring and summing over the appropriate lattice again, we obtain the desired bound.  The proof of the claim is now complete.

    \emph{Step 3 -- Integration by parts tricks:}     At this point we have successfully established~\eqref{the estimates for the smooth curl dude}. This reduces proving~\eqref{the reduced thin Korn inequality} to obtaining further estimates on the map $R\psi$.  Assume now that $\psi\in H^1\tp{\Omega_{\ep h};\R^2}$ satisfies $\m{Tr}_{\Sigma_0}\psi_2 = 0$.  As $R\psi\in \R^{2\times 2}_{\m{asym}}$, we know that $R\psi_{11} = R\psi_{22} = 0$ and $R\psi_{12} = - R\psi_{21}$, so we only need to obtain estimates on $R\psi_{12}$.
    
    By using the expression~\eqref{smoothed out curl dude} along with the identity $\mathbb{A}\psi_{12} = \mathbb{D}\psi_{12} - 2\pd_1\psi_2$ we split  $R\psi_{12}  = R^1\psi - 2R^2\psi$ for 
    \begin{equation}
        R^1\psi(x) = \int_{\Omega_{\ep h}}\Theta\tp{x,z}\mathbb{D}\psi\tp{z}_{12}\;\m{d}z
        \text{ and }
        R^2\psi(x) = \int_{\Omega_{\ep h}}\Theta(x,z)\pd_1\psi_2\tp{z}\;\m{d}z.
    \end{equation}
    Thanks to~\eqref{The only estimates on the kernel youll need} and Cauchy-Schwarz, we have the pointwise estimate
    \begin{equation}
        |R^1\psi(x)|\le C\ep^{-1}\bp{\int_{\Omega_{\ep h}\cap Q^\ep\tp{x}}\tabs{\mathbb{D}\psi\tp{z}}^2\;\m{d}z}^{1/2},
    \end{equation}
    which, by Tonelli's theorem, yields the bound   
    \begin{equation}\label{the estimate on R1}
        \tnorm{R^1\psi}_{L^2\tp{\R}}\le C\ep^{-1/2}\tnorm{\mathbb{D}\psi}_{L^2\tp{\Omega_{\ep h}}}.
    \end{equation}

    Next, we turn to the $R^2\psi$ term.  In fact, the elementary inner-product inequality
    \begin{equation}\label{the hilbert space identity}
        \tnorm{R\psi_{12}}_{L^2\tp{\R}}^2\le \tnorm{R^1\psi}^2_{L^2\tp{\R}} + 4\tabs{\tbr{R\psi_{12},R^2\psi}_{L^2\tp{\R}}}
    \end{equation}
    allows us to focus only on the inner-product between $R\psi_{12}$ and $R^2\psi$.  The definition of $\Theta$ allows us to compute 
    \begin{equation}
      D_{2,1}\Theta\tp{x,z}  = - D_1\Theta\tp{x,z} - \tp{h'(x)/h(x)}\Theta(x,z),
    \end{equation}
    where $D_{2,1}$ denotes the derivative with respect to $z_1$.   From this identity, Fubini's theorem, and two integrations by parts, we then learn that 
    \begin{multline}\label{integration by parts in wild}
        \tbr{R\psi_{12},R^2\psi}_{L^2\tp{\R}} = \int_{\R}\int_{\Omega_{\ep h}}\tp{R\psi_{12}}'(x) \Theta(x,z)\psi_2\tp{z}\;\m{d}z\;\m{d}x + \int_{\R}\f{h'(x)}{h(x)}\int_{\Omega_{\ep h}}R\psi_{12}(x) \Theta\tp{x,z}\psi_2\tp{z}\;\m{d}z\;\m{d}x\\
          -\ep\int_{\R}\int_{\R}h'(w)R\psi_{12}\tp{x}\Theta(x,w,\ep h(w))\psi_2\tp{w,\ep h(w)}\;\m{d}w\;\m{d}x = \bf{I}_1 + \bf{I}_2 + \bf{I}_3.
    \end{multline}
    We will next estimate $\bf{I}_i$ for $i\in\tcb{1,2,3}$ individually.

    For $\bf{I}_1$ we use repeated applications of Cauchy-Schwarz, the estimates in~\eqref{The only estimates on the kernel youll need}, the $\tp{R\psi}'$ estimate of~\eqref{the estimates for the smooth curl dude}, and Fubini's theorem:
    \begin{equation}\label{I11}
        \bf{I}_1\le\f{C}{\ep}\int_{\R}\tabs{\tp{R\psi_{12}}'(x)}\bp{\int_{\Omega_{\ep h}\cap Q^\ep\tp{x}}\tabs{\psi_2\tp{z}}^2\;\m{d}z}^{1/2}\;\m{d}x\le C\ep^{-2}\tnorm{\mathbb{D}\psi}_{L^2\tp{\Omega_{\ep h}}}\tnorm{\psi_2}_{L^2\tp{\Omega_{\ep h}}}.
    \end{equation}
    To estimate the second term on the right here, we employ the flattening map from~\eqref{this is the flattening map} to change into $\Omega_\ep$; in light of the estimates~\eqref{estimates on these flattening and unflattening maps}, the vanishing trace $\m{Tr}_{\Sigma_0}F^\ep_h\psi_2 = 0$, and the first item of Lemma~\ref{lem on poincare and sobolev in thin domains}, we may bound
    \begin{equation}\label{I12}
        \tnorm{\psi_2}_{L^2\tp{\Omega_{\ep h}}}\lesssim \tnorm{F^\ep_h\psi_2}_{L^2\tp{\Omega_\ep}}\lesssim\ep\tnorm{F^\ep_h\psi}_{H^1\tp{\Omega_\ep}}\lesssim\ep\tnorm{\psi}_{H^1\tp{\Omega_{\ep h}}}.
    \end{equation}
    Combining~\eqref{I11} and~\eqref{I12} then grants the bound
    \begin{equation}\label{estimate on I1}
        \bf{I}_1\le C\ep^{-1}\tnorm{\mathbb{D}\psi}_{L^2\tp{\Omega_{\ep h}}}\tnorm{\psi}_{H^1\tp{\Omega_{\ep h}}}.
    \end{equation}
    
    For $\bf{I}_2$ we use similar strategy in order to get
    \begin{equation}
        \bf{I}_2\le \f{C}{\ep}\int_{\R}\tabs{R\psi_{12}\tp{x}}\bp{\int_{\Omega_{\ep h}\cap Q^\ep\tp{x}}\tabs{\psi_2\tp{z}}^2\;\m{d}z}^{1/2}\;\m{d}x\le C\ep^{-1/2}\tnorm{R\psi_{12}}_{L^2\tp{\R}}\tnorm{\psi_2}_{L^2\tp{\Omega_{\ep h}}}.
    \end{equation}
    Applying~\eqref{I12} again then yields
    \begin{equation}\label{estimate on I2}
        \bf{I}_2\le C\ep^{1/2}\tnorm{R\psi_{12}}_{L^2\tp{\R}}\tnorm{\psi}_{H^1\tp{\Omega_{\ep h}}}.
    \end{equation}
    
    For $\bf{I}_3$, we use~\eqref{The only estimates on the kernel youll need} and Fubini's theorem followed by the Cauchy-Schwarz and Young's inequalities to bound
    \begin{equation}
        \bf{I}_3\le C\int_{\R}\bp{\f{1}{\ep}\int_{w-\ep\del^2/4}^{w+\ep\del^2/4}\tabs{R\psi_{12}\tp{x}}\;\m{d}x}\tabs{\psi_2\tp{w,\ep h(w)}}\;\m{d}w\le C\tnorm{R\psi_{12}}_{L^2\tp{\R}}\tnorm{\m{Tr}_{\Sigma_\ep}F^\ep_h\psi_2}_{L^2\tp{\R}}.
    \end{equation}
    Again appealing to ~\eqref{estimates on these flattening and unflattening maps} and using the identity $\m{Tr}_{\Sigma_0}F^\ep_h\psi_2=0$ with  the first item of Lemma~\ref{lem on traces in thin domains}, we further estimate
    \begin{equation}\label{estimate on I3}
        \bf{I}_3\le C\ep^{1/2}\tnorm{R\psi_{12}}_{L^2\tp{\R}}\tnorm{\psi}_{H^1\tp{\Omega_{\ep h}}}.
    \end{equation}

    Synthesizing~\eqref{estimate on I1}, \eqref{estimate on I2}, and~\eqref{estimate on I3} with~\eqref{the estimate on R1}, \eqref{the hilbert space identity}, and~\eqref{integration by parts in wild} shows that
    \begin{equation}
        \tnorm{R\psi_{12}}^2_{L^2\tp{\R}}\lesssim \ep^{-1}\tnorm{\mathbb{D}\psi}^2_{L^2\tp{\Omega_{\ep h}}} + \ep^{-1}\tnorm{\mathbb{D}\psi}_{L^2\tp{\Omega_{\ep h}}}\tnorm{\psi}_{H^1\tp{\Omega_{\ep h}}} + \ep^{1/2}\tnorm{R\psi_{12}}_{L^2\tp{\R}}\tnorm{\psi}_{H^1\tp{\Omega_{\ep h}}}.
    \end{equation}
    Absorbing the $R\psi_{12}$ term on the left and using the trivial bound $\tabs{\mathbb{D}\psi} \le 2 \tabs{\nabla \psi}$, we then deduce from this that
    \begin{equation}\label{final estimate on Rpsi}
        \tnorm{R\psi_{12}}_{L^2\tp{\R}} \lesssim\ep^{-1/2}\tnorm{\mathbb{D}\psi}_{L^2\tp{\Omega_{\ep h}}}^{1/2}\tnorm{\psi}_{H^1\tp{\Omega_{\ep h}}}^{1/2} + \ep^{1/2}\tnorm{\psi}_{H^1\tp{\Omega_{\ep h}}}.
    \end{equation}

   \emph{Step 4 -- Conclusion:}      Finally, we are ready to conclude. For any $\psi\in H^1\tp{\Omega_{\ep h};\R^2}$ that satisfies $\m{Tr}_{\Sigma_0}\psi_2 = 0$ we use~\eqref{the estimates for the smooth curl dude} and~\eqref{final estimate on Rpsi} to estimate
\begin{multline}
        \tnorm{\grad\psi}_{L^2\tp{\Omega_{\ep h}}}\le \tnorm{\grad\psi - R\psi}_{L^2\tp{\Omega_{\ep h}}} + \tnorm{R\psi}_{L^2\tp{\Omega_{\ep h}}}
        \lesssim 
        \tnorm{\mathbb{D}\psi}_{L^2\tp{\Omega_{\ep h}}}
        + \ep^{1/2} \tnorm{R\psi_{12}}_{L^2\tp{\R}}
        \lesssim \tnorm{\mathbb{D}\psi}_{L^2\tp{\Omega_{\ep h}}}^{1/2}\tnorm{\psi}_{H^1\tp{\Omega_{\ep h}}}^{1/2} \\ + \ep\tnorm{\psi}_{H^1\tp{\Omega_{\ep h}}} 
        \lesssim 
        \tnorm{\mathbb{D}\psi}_{L^2\tp{\Omega_{\ep h}}}^{1/2}(\tnorm{\psi}_{L^2\tp{\Omega_{\ep h}}} + \tnorm{\nabla \psi}_{L^2\tp{\Omega_{\ep h}}} )^{1/2} + \ep (\tnorm{\psi}_{L^2\tp{\Omega_{\ep h}}} + \tnorm{\nabla \psi}_{L^2\tp{\Omega_{\ep h}}} )
        .
    \end{multline}
    By taking $\ep>0$ sufficiently small (depending on $\del$) and employing Cauchy's inequality,  we may adsorb the $\nabla \psi$ terms onto the left and arrive at the desired bound~\eqref{the reduced thin Korn inequality}.
    \end{proof}

\subsection{The Stokes equations with stress boundary conditions}\label{subsection on the stokes equations with stress boundary conditions}

We now turn our attention to a system of linear Stokes equations in an $\ep$-thin strip domain with the Navier slip boundary condition on the bottom and stress boundary conditions on the top. The system for the unknown velocity $u:\Omega_\ep\to\R^2$ and pressure $p:\Omega_\ep\to\R$ is given by
\begin{equation}\label{linear stokes in a thin domain}
    \begin{cases}
        \grad^{\mathcal{A}_h}p - \upmu\grad^{\mathcal{A}_h}\cdot\mathbb{D}^{\mathcal{A}_h}u = \varphi^1,\quad \grad^{\mathcal{A}_h}\cdot u = \varphi^2&\text{in }\Omega_\ep,\\
        -\tp{p - \upmu\mathbb{D}^{\mathcal{A}_h}u}\mathcal{N}_{\ep h} = \psi^1&\text{on }\Sigma_\ep,\\
        u_2=0,\quad\upmu\pd_2^{\mathcal{A}_h}u_1 - \ep\m{a}u_1 = \psi^2&\text{on }\Sigma_0.
    \end{cases}
\end{equation}
The given data are the functions $\varphi^1:\Omega_\ep\to\R^2$, $\varphi^2:\Omega_\ep\to\R$, $\psi^1:\Sigma_\ep\to\R^2$, and $\psi^2:\Sigma_0\to\R$. The equations~\eqref{linear stokes in a thin domain} come with a functional parameter $h:\R\to\R^+$ which determines the matrices $\mathcal{A}_h$ as in~\eqref{geometry matrices} and the vectors $\mathcal{N}_{\ep h}$, defined after~\eqref{parent free boundary navier-stokes system}. We think of the parameters $\upmu,\m{a}>0$ as fixed and track the dependence of the solution operator on the function $h$, with estimates uniform for $0<\ep\ll1$.

We begin with a series of definitions of function spaces and norms that we will employ in our analysis of~\eqref{linear stokes in a thin domain}. First, we discuss the role of the functional parameter $h$ appearing in system~\eqref{linear stokes in a thin domain}; its container function space is
\begin{equation}\label{the Z space}
    \bf{Z} = \tcb{h\in W^{2,3}_{\loc}\tp{\R}\;:\;h = h^0 + h^1,\;h^0\in W^{2,3}\tp{\R},\;h^1\in W^{2,\infty}\tp{\R}}.
\end{equation}
The norm on this space, which makes it Banach, is given by
\begin{equation}\label{the Z space, norm}
    \tnorm{h}_{\bf{Z}} = \inf_{h = h^0 + h^1}\tp{\tnorm{h^0}_{W^{2,3}} + \tnorm{h^1}_{W^{2,\infty}}}.
\end{equation}
Given $\del\in(0,1)$, we define the following closed, convex subsets of $\bf{Z}$:
\begin{equation}\label{in this set we have Korn}
    Z_\del = \tcb{h\in\bf{Z}\;:\;\tnorm{h}_{\bf{Z}}\le\del^{-1},\;\del\le h\le\del^{-1}}.
\end{equation}
We will assume that $h\in Z_\del$ and allow estimates for solutions to~\eqref{linear stokes in a thin domain} to depend on $\del$.  Two remarks about our choices here are in order. First, for every $h\in\bf{Z}$ with $\inf h>0$ there exists $\del>0$ such that $h\in Z_\del$, so assuming $h \in Z_\del$ in not a particularly strenuous constraint. Second, our specific choice of $\bf{Z}$ is primarily for convenience and simplicity in our analysis of~\eqref{linear stokes in a thin domain} for its intended application, and in principal many other choices for the $h$-parameter function space could work.

Next, we define the spaces for the velocity and pressure. For $n\in\tcb{0,1}$ the closed subspace of vector fields that are tangent to $\Sigma_0$ is denoted by
\begin{equation}\label{tangent to the bottom vector fields}
    H^{n+1}_{\m{tan}}\tp{\Omega_\ep;\R^2} = \tcb{u\in H^{n+1}\tp{\Omega_\ep;\R^2}\;:\;\m{Tr}_{\Sigma_0}u_2 = 0}.
\end{equation}
Using~\eqref{tangent to the bottom vector fields}, we define the domain space 
\begin{equation}\label{the original norms on X}
    \bf{X}^n_\ep = H^{n+1}_{\m{tan}}\tp{\Omega_\ep;\R^2}\times H^n\tp{\Omega_\ep}
     \text{ with norm }
    \tnorm{u,p}_{\bf{X}^n_\ep} = \tnorm{u}_{H^{n+1}} + \tnorm{p}_{H^n}.
\end{equation}
We  also need to equip $\bf{X}^1_\ep$ with a family of larger norms:
\begin{equation}\label{stronger, adapted norms}
    \tnorm{u,p}_{\bf{X}^1_\ep}^{(+)} = \tnorm{u,p}_{\bf{X}^1_\ep} + \tnorm{\pd_2 u_1}_{H^{1/2}\tp{\Sigma_0}}^{\tp{\ep,+}} + \tnorm{\pd_2 u_1}_{H^{1/2}\tp{\Sigma_\ep}}^{\tp{\ep,+}}.
\end{equation}
While each of the norms~\eqref{stronger, adapted norms} is equivalent to~\eqref{the original norms on X}, we see that from Lemma~\ref{lem on adapted boundary norms} that the constants witnessing this equivalence are degenerating as $\ep>0$ tends to zero.

The weak codomain space is
\begin{equation}
    \bf{Y}^0_\ep = \tp{H^1_{\m{tan}}\tp{\Omega_\ep;\R^2}}^\ast\times L^2\tp{\Omega_\ep}
    \text{ with norm }
    \tnorm{\varphi^1,\varphi^2}_{\bf{Y}^0_\ep} = \tnorm{\varphi^1}_{\tp{H^1_{\m{tan}}}^\ast} + \tnorm{\varphi^2}_{L^2}.
\end{equation}
The strong codomain space is
\begin{equation}\label{the definition of the strong codomain space}
    \bf{Y}^1_\ep = L^2\tp{\Omega_\ep;\R^2}\times H^1\tp{\Omega_\ep}\times H^{1/2}\tp{\Sigma_\ep;\R^2}\times H^{1/2}\tp{\Sigma_0}
\end{equation}
with norm  given by
\begin{equation}\label{norm on Y1}
    \tnorm{\varphi^1,\varphi^2,\psi^1,\psi^2}_{\bf{Y}^1_\ep} = \tnorm{\varphi^1}_{L^2\tp{\Omega_\ep}} + \tnorm{\varphi^2}_{H^1\tp{\Omega_\ep}} + \tnorm{\psi^1_1}_{H^{1/2}\tp{\Sigma_\ep}}^{(\ep,+)} + \tnorm{\psi^1_2}_{H^{1/2}\tp{\Sigma_\ep}}^{\tp{\ep,-}} + \tnorm{\psi^2}_{H^{1/2}\tp{\Sigma_0}}^{\tp{\ep,+}},
\end{equation}
where we recall that the special $H^{1/2}\tp{\R}$ norms used here are defined in Lemma~\ref{lem on adapted boundary norms}.

Our first result studies the weak formulation of system~\eqref{linear stokes in a thin domain}.

\begin{propC}[Theory of weak solutions]\label{prop on theory of weak solutions}
    Let $\ep,\del\in(0,1)$ and $h\in Z_\del$. There exists a constant $C\in\R^+$, depending only on $\del$, $\upmu$, and $\m{a}$ such the following hold.
    \begin{enumerate}
        \item The bilinear form $B^{\ep}_h:H^1_{\m{tan}}\tp{\Omega_\ep;\R^2}\times H^1_{\m{tan}}\tp{\Omega_\ep;\R^2}\to\R$ given by
        \begin{equation}\label{bilinear form}
            B^{\ep}_h(u,v) = \f{\upmu}{2}\int_{\Omega_\ep}h\mathbb{D}^{\mathcal{A}_h}u:\mathbb{D}^{\mathcal{A}_h}v + \ep\m{a}\int_{\Sigma_0}u_1v_1
        \end{equation}
        is well-defined and continuous. Moreover for all $u,v\in H^1_{\m{tan}}\tp{\Omega_\ep;\R^2}$ we have the estimates
        \begin{equation}\label{estimates on the bilinear form}
            \tabs{B^\ep_h\tp{u,v}}\le C\tnorm{u}_{H^1}\tnorm{v}_{H^1}
            \text{ and }
            \tnorm{u}_{H^1}^2\le C B^\ep_h(u,u).
        \end{equation}
        \item The linear map $I^\ep_h:\bf{X}^0_\ep\to\tp{H^1_{\m{tan}}\tp{\Omega_\ep;\R^2}}^\ast$ given by
        \begin{equation}\label{the weak map I}
            \tbr{I^\ep_h\tp{u,p},v}_{H^1_{\m{tan}},\tp{H^1_{\m{tan}}}^\ast} = B^\ep_h\tp{u,v} - \int_{\Omega_\ep} h p \grad^{\mathcal{A}_h}\cdot v
        \end{equation}
        is well-defined and continuous. Moreover $\tnorm{I^\ep_h\tp{u,p}}_{\tp{H^1_{\m{tan}}}^\ast}\le C\tnorm{u,p}_{\bf{X}^0_\ep}$ for all $(u,p)\in\bf{X}^0_\ep$.
        \item The linear map $J^\ep_h:\bf{X}^0_\ep\to\bf{Y}^0_\ep$ given by $J^\ep_h(u,p) = \tp{I^\ep_h\tp{u,p},\grad^{\mathcal{A}_h}\cdot u}$ is well-defined, continuous, and invertible. Moreover, for all $\tp{u,p}\in\bf{X}^0_\ep$ we have the estimates
        \begin{equation}\label{estimates for weak solutions}
            \tnorm{u,p}_{\bf{X}^0_\ep}\le C\tnorm{J^\ep_h\tp{u,p}}_{\bf{Y}^0_\ep}
            \text{ and }
            \tnorm{J^\ep_h\tp{u,p}}_{\bf{Y}^0_\ep}\le C\tnorm{u,p}_{\bf{X}^0_\ep}.
        \end{equation}
    \end{enumerate}
\end{propC}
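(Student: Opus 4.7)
The plan is to establish the three items sequentially, with the key technical inputs being the thin-domain Korn inequality from Proposition~\ref{prop on Korn in thin domains}, the right inverse to the divergence from Lemma~\ref{lem on anti-trace and anti-divergence}, and the boundary trace estimates of Lemma~\ref{lem on traces in thin domains}. The central challenge throughout is maintaining uniformity of constants in the thin-domain parameter $\ep$, and the crucial fact that makes this possible is that the $\ep^{1/2}$ factor appearing on the boundary term in Korn's inequality is precisely matched by the $\ep\m{a}$ prefactor on the Navier-slip boundary integral in $B^\ep_h$.

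For item one, continuity of $B^\ep_h$ on the bulk follows from Cauchy-Schwarz applied pointwise, using $h \le \del^{-1}$ and $\tnorm{\mathcal{A}_h}_{L^\infty} \lesssim_\del 1$; for the boundary term, we bound $\ep\m{a}\sabs{\int_{\Sigma_0}u_1v_1} \le \ep\m{a}\tnorm{u_1}_{L^2\tp{\Sigma_0}}\tnorm{v_1}_{L^2\tp{\Sigma_0}}$, and combining the second item of Lemma~\ref{lem on traces in thin domains} with the first item of Lemma~\ref{lem on adapted boundary norms} gives $\ep^{1/2}\tnorm{u_1}_{L^2\tp{\Sigma_0}} \lesssim \tnorm{u}_{H^1\tp{\Omega_\ep}}$, so after dividing $\ep$ into $\ep^{1/2}\cdot\ep^{1/2}$ the boundary contribution is bounded by $\tnorm{u}_{H^1}\tnorm{v}_{H^1}$ uniformly in $\ep$. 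Coercivity is obtained by first lower-bounding
\begin{equation}
    B^\ep_h(u,u) \ge (\upmu\del/2)\tnorm{\mathbb{D}^{\mathcal{A}_h}u}_{L^2\tp{\Omega_\ep}}^2 + \ep\m{a}\tnorm{u_1}_{L^2\tp{\Sigma_0}}^2,
\end{equation}
and then invoking Proposition~\ref{prop on Korn in thin domains}, which provides $\tnorm{u}_{H^1}^2 \le C\sp{\ep\tnorm{u_1}_{L^2\tp{\Sigma_0}}^2 + \tnorm{\mathbb{D}^{\mathcal{A}_h}u}_{L^2}^2}$; the two terms on the right are exactly controlled by the two terms in the lower bound, completing the coercivity estimate.

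For item two, expanding $\grad^{\mathcal{A}_h}\cdot v = \pd_1 v_1 - y(h'/h)\pd_2 v_1 + h^{-1}\pd_2 v_2$ and using $\tnorm{h^{-1}}_{L^\infty}, \tnorm{h'/h}_{L^\infty} \lesssim_\del 1$, together with $h \le \del^{-1}$, yields $\sabs{\int_{\Omega_\ep}hp\,\grad^{\mathcal{A}_h}\cdot v} \lesssim_\del \tnorm{p}_{L^2}\tnorm{v}_{H^1}$, which combined with item one produces the claimed bound on $I^\ep_h$.

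The heart of the argument is item three. Continuity of $J^\ep_h$ is immediate from item two and the divergence estimate. For invertibility, given data $(\varphi,g) \in \bf{Y}^0_\ep$, first apply the second item of Lemma~\ref{lem on anti-trace and anti-divergence} to obtain $w = \Uppi_h g \in H^1_{\m{tan}}\tp{\Omega_\ep;\R^2}$ with $\grad^{\mathcal{A}_h}\cdot w = g$ and $\tnorm{w}_{H^1} \lesssim_\del \tnorm{g}_{L^2}$. Set $V = \tcb{v \in H^1_{\m{tan}}\tp{\Omega_\ep;\R^2} : \grad^{\mathcal{A}_h}\cdot v = 0}$, a closed subspace. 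By Lax-Milgram applied via item one, there is a unique $u_0 \in V$ satisfying $B^\ep_h(u_0,v) = \tbr{\varphi,v} - B^\ep_h(w,v)$ for all $v \in V$, with $\tnorm{u_0}_{H^1} \lesssim_\del \tnorm{\varphi}_{(H^1_{\m{tan}})^\ast} + \tnorm{g}_{L^2}$. Setting $u = u_0 + w$, the linear functional $v \mapsto \tbr{\varphi,v} - B^\ep_h(u,v)$ vanishes on $V$. The pressure is then recovered through a De Rham-style argument: since $\Uppi_h$ is a bounded right inverse of the map $\grad^{\mathcal{A}_h}\cdot : H^1_{\m{tan}} \to L^2$ (uniformly in $\ep$), the map is surjective with closed range, so standard duality (Nečas/closed range theorem) furnishes a unique $p \in L^2\tp{\Omega_\ep}$ with $\int_{\Omega_\ep}hp\,\grad^{\mathcal{A}_h}\cdot v = \tbr{\varphi,v} - B^\ep_h(u,v)$ for all $v \in H^1_{\m{tan}}$, and $\tnorm{p}_{L^2} \lesssim_\del \tnorm{\varphi}_{(H^1_{\m{tan}})^\ast} + \tnorm{u}_{H^1}$. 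The pair $(u,p)$ is then the unique preimage of $(\varphi,g)$ under $J^\ep_h$, and the estimate~\eqref{estimates for weak solutions} follows by assembling the bounds. The main obstacle will be verifying that every constant arising in this chain---Lax-Milgram's, the pressure recovery's, and the right inverse's---depends only on $\del, \upmu, \m{a}$, and not on $\ep$; this hinges entirely on the matching between the Korn loss and the slip gain noted above, and on the fact that both $\Uppi_h$ and the flattening maps from the proof of Lemma~\ref{lem on anti-trace and anti-divergence} have operator norms bounded purely in terms of $\del$.
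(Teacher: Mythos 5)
Your proposal is correct and follows essentially the same route as the paper's proof: Korn's inequality matched against the $\ep\m{a}$ slip term for coercivity, the divergence right inverse $\Uppi_h$ to reduce to the solenoidal case, and Lax--Milgram on the solenoidal subspace. The only cosmetic divergence is in the pressure recovery: you invoke the closed range theorem to conclude that the functional vanishing on $V=\ker\tp{\grad^{\mathcal{A}_h}\cdot}$ lies in the range of the adjoint, whereas the paper constructs $p$ directly by defining the $L^2$ functional $\tbr{Q,q}=B^\ep_h(w,\Uppi_h q)-\tbr{\tilde\varphi^1,\Uppi_h q}$ and applying the Riesz representation theorem, then verifying the weak formulation by decomposing a generic test function as $v = (v-\Uppi_h\grad^{\mathcal{A}_h}\cdot v) + \Uppi_h\grad^{\mathcal{A}_h}\cdot v$. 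These are interchangeable; the paper's version is slightly more self-contained and makes the $\del$-uniformity of the pressure constant explicit, which is what you flag as the final item to verify.
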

\begin{proof}
    Throughout the proof we permit implicit constants to depend only on $\del$, $\upmu$, and $\m{a}$.
    
    For the first item, the boundedness estimate for $B^\ep_h$ in~\eqref{estimates on the bilinear form} follows from the fact that $\tnorm{h,\mathcal{A}_h}_{L^\infty}\lesssim1$, Cauchy-Schwarz, and the trace estimate of the second item of Lemma~\ref{lem on traces in thin domains}. On the other hand, the coercive estimate in~\eqref{estimates on the bilinear form} is a consequence of the thin domain Korn inequality, Proposition~\ref{prop on Korn in thin domains}.

    The second item follows directly from the first item and Cauchy-Schwarz.

    We now turn our attention to the third item. The map $J^\ep_h$ is well-defined, continuous, and satisfies the right hand bound in~\eqref{estimates for weak solutions} due to simple computations and the first two items. We now prove the left hand estimate in~\eqref{estimates for weak solutions} and the invertibility of this operator.
    
    Let $\tp{\varphi^1,\varphi^2}\in\bf{Y}^0_\ep$. For $\tp{u,p}\in\bf{X}^0_\ep$ we use the right inverse to the divergence $\Uppi_h:L^2\tp{\Omega_\ep}\to H^1_{\m{tan}}\tp{\Omega_{\ep};\R^2}$ from the second item of Lemma~\ref{lem on anti-trace and anti-divergence} to derive the following equivalence:
    \begin{equation}
        J^\ep_h\tp{u,p} = \tp{\varphi^1,\varphi^2}\lra \begin{cases}
            w = u - \Uppi_h\varphi^2\in H^1_{\m{tan}}\tp{\Omega_\ep;\R^2},\\
            \tilde{\varphi}^1 = \varphi^1 - I^\ep_h\tp{\Uppi_h\varphi^2,0}\in\tp{H^1_{\m{tan}}\tp{\Omega_\ep;\R^2}}^\ast,\\
            J^\ep_h\tp{w,p} = \tp{\tilde{\varphi}^1,0}.
        \end{cases}
    \end{equation}
    As $\tnorm{u}_{H^1}\lesssim\tnorm{w}_{H^1} + \tnorm{\varphi^2}_{L^2}$ and $\tnorm{\tilde{\varphi}^1}_{\tp{H^1_{\m{tan}}}^\ast}\lesssim\tnorm{\varphi^1,\varphi^2}_{\bf{Y}^0_\ep}$, we have reduced to proving that there exists a unique $\tp{w,p}\in\bf{X}^0_\ep$ satisfying $J^\ep_h\tp{w,p} = \tp{\tilde{\varphi}^1,0}$ and  $\tnorm{w,p}_{\bf{X}^0_\ep}\lesssim\tnorm{\tilde{\varphi}^1}_{\tp{H^1_{\m{tan}}}^\ast}$.

    We now prove the latter estimate, which will also grant us uniqueness. We test the equation $I^\ep_h\tp{w,p} = \tilde{\varphi}^1$  with $w$ and note that $\grad^{\mathcal{A}_h}\cdot w = 0$ implies that the $p$ contributions vanish; this yields
    \begin{equation}
        B^\ep_h\tp{w,w} = \tbr{\tilde{\varphi}^1,w}_{\tp{H^1_{\m{tan}}}^\ast,H^1_{\m{tan}}}.
    \end{equation}
    By the coercive estimate in~\eqref{estimates on the bilinear form} it follows that $\tnorm{w}_{H^1}\lesssim\tnorm{\tilde{\varphi}^1}_{\tp{H^1_{\m{tan}}}^\ast}$. Now, to obtain a bound on $p$, we test the equation $I^\ep_h\tp{w,p} = \tilde{\varphi}^1$ with $\Uppi_h p\in H^1_{\m{tan}}\tp{\Omega_\ep;\R^2}$ and use the identity $\grad^{\mathcal{A}_h}\cdot\Uppi_h p=p$ to equate
    \begin{equation}
        \int_{\Omega_\ep}h\tabs{p}^2 = B^\ep_h\tp{w,\Uppi_h p} - \tbr{\tilde{\varphi}^1,\Uppi_h p}_{\tp{H^1_{\m{tan}}}^\ast,H^1_{\m{tan}}}.
    \end{equation}
    Since $\tnorm{\Uppi_hp}_{H^1}\lesssim\tnorm{p}_{L^2}$ and $\del\le h$, we deduce that $\tnorm{p}_{L^2}\lesssim\tnorm{\tilde{\varphi}^1}_{\tp{H^1_{\m{tan}}}^\ast}$.

    It remains to establish the existence of solutions to the reduced equation. Consider the following Hilbert space that is a closed subspace of $H^1_{\m{tan}}\tp{\Omega_\ep;\R^2}$:
    \begin{equation}\label{the solinoidal, tangential subspace}
        {_{\m{sol}_h}}H^1_{\m{tan}}\tp{\Omega_\ep;\R^2} = \tcb{v\in H^1_{\m{tan}}\tp{\Omega_\ep;\R^2}\;:\;\grad^{\mathcal{A}_h}\cdot v = 0}.
    \end{equation}
    The estimates~\eqref{estimates on the bilinear form} from the first item show that the restriction of the bilinear form $B^\ep_h$ to the product of the space~\eqref{the solinoidal, tangential subspace} with itself satisfies the hypotheses of the Lax-Milgram theorem (see, for instance, Theorem 6 in Chapter 6 of Lax~\cite{MR1892228}). The restriction of $\tilde{\varphi}^1$ to ${_{\m{sol}_h}}H^1_{\m{tan}}\tp{\Omega_\ep;\R^2}$ defines an element in the dual space to~\eqref{the solinoidal, tangential subspace} and so, by Lax-Milgram, there exists $w\in{_{\m{sol}_h}}H^1_{\m{tan}}\tp{\Omega_\ep;\R^2}$ with the property that
    \begin{equation}\label{sol identity}
        B^\ep_h\tp{w,v} = \tbr{\tilde{\varphi}^1,v}_{\tp{H^1_{\m{tan}}}^\ast,H^1_{\m{tan}}}
        \text{ for all }
        v\in{_{\m{sol}_h}}H^1_{\m{tan}}\tp{\Omega_\ep;\R^2}.
    \end{equation}
    Next, we define the bounded linear functional $Q\in\tp{L^2\tp{\Omega_\ep}}^\ast$ via
    \begin{equation}
        \tbr{Q,q}_{\tp{L^2}^\ast,L^2} = B^\ep_h\tp{w,\Uppi_hq} - \tbr{\tilde{\varphi}^1,\Uppi_hq}_{\tp{H^1_{\m{tan}}}^\ast,H^1_{\m{tan}}}
    \end{equation}
    and employ the Riesz representation theorem  (see, for instance, Theorem II.4 in Reed and Simon~\cite{MR493419}) to produce a $p\in L^2\tp{\Omega_\ep}$ such that 
    \begin{equation}\label{riesz identity}
        \tbr{Q,q}_{\tp{L^2}^\ast,L^2} = \int_{\Omega_\ep} h pq \text{ for }q\in L^2\tp{\Omega_\ep}.
    \end{equation}
    To conclude, we use~\eqref{sol identity} and~\eqref{riesz identity} to prove that for any $v\in H^1_{\m{tan}}\tp{\Omega_\ep;\R^2}$ it holds that
    \begin{multline}
        \tbr{I^\ep_h\tp{w,p},v}_{\tp{H^1_{\m{tan}}}^\ast,H^1_{\m{tan}}} = \tbr{I^\ep_h\tp{w,p},v - \Uppi_h\grad^{\mathcal{A}_h}\cdot v}_{\tp{H^1_{\m{tan}}}^\ast,H^1_{\m{tan}}} + \tbr{I^\ep_h\tp{w,p},\Uppi_h\grad^{\mathcal{A}_h}\cdot v}_{_{\tp{H^1_{\m{tan}}}^\ast,H^1_{\m{tan}}}}\\
        = \tbr{\tilde{\varphi}^1, v - \Uppi_h\grad^{\mathcal{A}_h}\cdot v}_{_{\tp{H^1_{\m{tan}}}^\ast,H^1_{\m{tan}}}} + B^\ep_h\tp{w,\Uppi_h\grad^{\mathcal{A}_h}\cdot v}_{\tp{H^1_{\m{tan}}}^\ast,H^1_{\m{tan}}} - \tbr{Q,\grad^{\mathcal{A}_h}\cdot v}_{\tp{L^2}^\ast,L^2} = \tbr{\tilde{\varphi}^1,v}_{\tp{H^1_{\m{tan}}}^\ast,H^1_{\m{tan}}},
    \end{multline}
    and hence $J^\ep_h\tp{w,p} = \tp{\tilde{\varphi}^1,0}$.
\end{proof}

Our next result studies the strong formulation of~\eqref{linear stokes in a thin domain}. We associate to the system a family of partial differential operators depending on $h\in Z_\del$ and establish a priori estimates and Lipschitz continuity. The strategy for regularity in the following mirrors that which is typical for the Stokes equations, e.g. see Sections 6 and 7 of Chapter IV in Boyer and Fabrie~\cite{MR2986590} and references therein. However, we must enumerate the details precisely as to keep track of the thin domain parameter $\ep$ and the adapted norms.

\begin{propC}[Theory of strong solutions]\label{prop on theory of strong solutions}
    Let $\ep,\del\in\tp{0,1}$, $h\in Z_\del$. The following hold for a constant $C\in\R^+$ depending only $\del$, $\upmu$, and $\m{a}$.
    \begin{enumerate}
        \item For $h\in Z_\del$, the linear map $L^\ep_h:\bf{X}^1_\ep\to\bf{Y}^1_\ep$ given via
        \begin{equation}\label{definition of the linear differential operator}
            L^\ep_h\tp{u,p} = \tp{\grad^{\mathcal{A}_h}p - \upmu\grad^{\mathcal{A}_h}\cdot\mathbb{D}^{\mathcal{A}_h}u,\grad^{\mathcal{A}_h}\cdot u,-\tp{p - \upmu\mathbb{D}^{\mathcal{A}_h}u}\mathcal{N}_{\ep h},\upmu\pd_2^{\mathcal{A}_h}u_1 - \ep\m{a}u_1}
        \end{equation}
        is well-defined and continuous. Moreover, for all $(u,p)\in\bf{X}^1_\ep$ we have the estimates
        \begin{equation}\label{a priori estimates and bounds for strong solutions}
            \tnorm{u,p}_{\bf{X}^1_\ep}^{\tp{+}}\le C\tnorm{L^\ep_h\tp{u,p}}_{\bf{Y}^1_\ep}
            \text{ and }
            \tnorm{L^\ep_h\tp{u,p}}_{\bf{Y}^1_\ep}\le C\tnorm{u,p}_{\bf{X}^1_\ep}^{\tp{+}}.
        \end{equation}
        \item For $h,\tilde{h}\in Z_\del$ and $(u,p)\in\bf{X}^1_\ep$, we have the estimate
        \begin{equation}\label{forward Lipschitz for strong solutions}
            \tnorm{\tp{L^\ep_h - L^\ep_{\tilde{h}}}\tp{u,p}}_{\bf{Y}^1_\ep}\le C\tnorm{h - \tilde{h}}_{\bf{Z}}\tnorm{u,p}_{\bf{X}^1_\ep}^{\tp{+}}.
        \end{equation}
    \end{enumerate}
\end{propC}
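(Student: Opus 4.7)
The plan is to unfold the definition~\eqref{definition of the linear differential operator} of $L^\ep_h$ component-by-component and bound each piece. Since $h\in Z_\del$, the one-dimensional Sobolev embedding $W^{2,3}(\R)\emb W^{1,\infty}(\R)$ shows that every entry of $\mathcal{A}_h$, together with its first derivatives, is bounded in $L^\infty(\Omega_\ep)$ in terms of $\del$ alone. Consequently the $L^2(\Omega_\ep;\R^2)$-norm of $\grad^{\mathcal{A}_h}p-\upmu\grad^{\mathcal{A}_h}\cdot\mathbb{D}^{\mathcal{A}_h}u$ and the $H^1(\Omega_\ep)$-norm of $\grad^{\mathcal{A}_h}\cdot u$ are controlled by $\tnorm{u,p}_{\bf{X}^1_\ep}$. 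For the boundary components we compute using the explicit form $\mathcal{N}_{\ep h}=(-\ep h',1)$ and $\pd_2^{\mathcal{A}_h}=(1/h)\pd_2$: the horizontal stress $\psi^1_1$ contains the problematic factor $\tp{\upmu/h}\pd_2 u_1\rvert_{\Sigma_\ep}$, which is exactly what the additional summand $\tnorm{\pd_2u_1}^{\tp{\ep,+}}_{H^{1/2}\tp{\Sigma_\ep}}$ in the $\tp{+}$-norm controls, while the remaining contributions are handled by the trace estimate of Lemma~\ref{lem on traces in thin domains}. The vertical stress $\psi^1_2$ carries only $p$ and $\pd_2 u_2$ (the latter convertible to $\pd_1u_1$ via incompressibility at leading order), so the weaker $\tp{\ep,-}$ norm suffices by the second and fourth items of Lemma~\ref{lem on traces in thin domains}. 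The slip term $\upmu\pd_2^{\mathcal{A}_h}u_1-\ep\m{a}u_1\rvert_{\Sigma_0}$ is treated analogously to $\psi^1_1$.

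\textbf{A priori estimate -- the main obstacle.} This will be a two-stage bootstrap. For the first stage, starting from strong data $L^\ep_h\tp{u,p}=\tp{\varphi^1,\varphi^2,\psi^1,\psi^2}\in\bf{Y}^1_\ep$, I would test the momentum equation against $h\,v$ for $v\in H^1_{\m{tan}}\tp{\Omega_\ep;\R^2}$, integrate by parts in $\grad^{\mathcal{A}_h}$, and use that on $\Sigma_0$ the tangential component of $v$ pairs with $\psi^2$ while on $\Sigma_\ep$ the two components of $v$ pair with $\psi^1_1$ and $\psi^1_2$. Items 3 and 4 of Lemma~\ref{lem on traces in thin domains} are calibrated precisely so that pairings against $\psi^1_1,\psi^2$ use the strong $\tp{\ep,+}$ norms and pairings against $\psi^1_2$ use the weak $\tp{\ep,-}$ norm, yielding $\tnorm{J^\ep_h\tp{u,p}}_{\bf{Y}^0_\ep}\le C\tnorm{L^\ep_h\tp{u,p}}_{\bf{Y}^1_\ep}$. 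Proposition~\ref{prop on theory of weak solutions} then provides $\tnorm{u,p}_{\bf{X}^0_\ep}\le C\tnorm{L^\ep_h\tp{u,p}}_{\bf{Y}^1_\ep}$. For the second stage I would exploit the $x_1$-translation invariance of $\Omega_\ep$ via tangential difference quotients $\tau_\del u$: these satisfy the same Stokes system with a right-hand side involving commutators of $\tau_\del$ with the coefficients built from $\mathcal{A}_h$, controlled uniformly in $\del$ by $\tnorm{h}_{\bf{Z}}$ through $W^{2,3}\emb W^{1,\infty}$. Passing to the limit and applying the weak theory again yields $\pd_1 u\in H^1$ and $\pd_1 p\in L^2$; the remaining normal derivatives are recovered algebraically by solving for $\pd_2^2u_1,\pd_2^2u_2$ in the first momentum equation (whose coefficient $1/h^2$ is uniformly bounded below) and for $\pd_2 p$ in the second, promoting the estimate to the full $\bf{X}^1_\ep$ norm. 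The hardest and most delicate point is recovering the $\tp{\ep,+}$ boundary strengthenings: on $\Sigma_0$ the slip condition gives $\pd_2u_1\rvert_{\Sigma_0}=\tp{h/\upmu}\tp{\ep\m{a}u_1+\psi^2}$, and the $\tp{\ep,+}$-norm of the right-hand side is already controlled (the $\ep\m{a}u_1$ factor being small enough not to degrade the estimate thanks to the second item of Lemma~\ref{lem on traces in thin domains}); on $\Sigma_\ep$ I would use $\m{Tr}_{\Sigma_\ep}\tp{\pd_2u_1}=\m{Tr}_{\Sigma_0}\tp{\pd_2u_1}+\tp{\m{Tr}_{\Sigma_\ep}-\m{Tr}_{\Sigma_0}}\tp{\pd_2u_1}$ and invoke item 1 of Lemma~\ref{lem on traces in thin domains}, which controls the difference in the $\tp{\ep,+}$ norm by $\tnorm{\pd_2u_1}_{H^1\tp{\Omega_\ep}}\le\tnorm{u}_{H^2\tp{\Omega_\ep}}$.

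\textbf{Lipschitz dependence on $h$.} The difference $L^\ep_h-L^\ep_{\tilde h}$ is a polynomial combination of the differences $\mathcal{A}_h-\mathcal{A}_{\tilde h}$, $\mathcal{N}_{\ep h}-\mathcal{N}_{\ep\tilde h}$, and $h-\tilde h$ together with their first spatial derivatives. The bound $h,\tilde h\ge\del$ makes quotients such as $h'/h-\tilde h'/\tilde h$ pointwise Lipschitz in the pair $\tp{h,\tilde h}$ with constants depending only on $\del$, and the Sobolev embedding $W^{2,3}\tp{\R}\emb W^{1,\infty}\tp{\R}$ converts the pointwise bounds into $\tnorm{\cdot}_{\bf{Z}}$-Lipschitz control of the difference coefficients in $L^\infty$. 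Retracing the forward-bound argument of the first paragraph with these Lipschitz coefficients replacing the bounded coefficients, and using exactly the same trace and boundary calibrations, yields~\eqref{forward Lipschitz for strong solutions} without further complication.
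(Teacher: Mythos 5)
The proposal rests on a false coefficient estimate. You claim that since $h\in Z_\del$, "the one-dimensional Sobolev embedding $W^{2,3}(\R)\emb W^{1,\infty}(\R)$ shows that every entry of $\mathcal{A}_h$, together with its first derivatives, is bounded in $L^\infty(\Omega_\ep)$." This is wrong: that embedding controls only $h$ and $h'$ in $L^\infty$, while $\pd_1\mathcal{A}_h$ contains the \emph{second} derivative $h''$ through the entry $-y h''/h$. A function $h\in\bf{Z}$ decomposes as $h^0+h^1$ with $h^0\in W^{2,3}$, so $(h^0)''$ lies only in $L^3(\R)$ and need not be bounded. Consequently the product $h''\,\pd_2 u$ is not immediately in $L^2(\Omega_\ep)$ by holding the coefficient in $L^\infty$; instead one must exploit the factor $y\le\ep$ together with H\"older in $L^3\times L^6$ and the thin-domain embedding $\ep^{1/3}\tnorm{\pd_2 u}_{L^6(\Omega_\ep)}\lesssim\tnorm{u}_{H^2(\Omega_\ep)}$ (the second item of Lemma~\ref{lem on poincare and sobolev in thin domains}). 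This is precisely the reason the space $\bf{Z}=W^{2,3}+W^{2,\infty}$ is introduced and why the paper decomposes $\pd_1\mathcal{A}_h=A^1_h+A^2_h$, with $A^2_h$ carrying $h''$ and requiring the $L^3/L^6$ manipulation. Without this ingredient you do not obtain the claimed forward bound uniformly over $Z_\del$.

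The same gap then contaminates your a priori estimate. Your tangential difference-quotient commutators involve $\pd_1$ of coefficients built from $\mathcal{A}_h$, hence again $h''$, which you assert is "controlled uniformly... through $W^{2,3}\emb W^{1,\infty}$" — this is the same false step. The paper instead estimates the commutator $C^\ep_h(u,p)$ by interpolating $L^6(\Omega_\ep)$ between $L^2$ and $L^{12}$ and invoking the thin-domain Sobolev embedding (see~\eqref{another intermediate estimate}--\eqref{a nice C estimate}), which introduces the harmless factor $\ep^{2/3}$ that is then absorbed. The same issue recurs in your algebraic recovery of $\pd_2 p$ and $\pd_2^2 u$ from the momentum equation: both identities again produce a term $h''\,\pd_2 u$ (see~\eqref{solving for some other weirdo} and~\eqref{believe it or not, this equation does it referenced}) requiring the same treatment. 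So the overall architecture of your argument — test against $hw$, use the weak theory, gain tangential derivatives, recover normal derivatives, then promote to the $(+)$-norms via the boundary conditions — matches the paper's, but the step-by-step estimates break down whenever $h''$ appears, and this must be repaired with the $L^3/L^6$-splitting before the proof closes. The Lipschitz estimate has the identical defect, since $\pd_1(\mathcal{A}_h-\mathcal{A}_{\tilde h})$ carries $(h-\tilde h)''$.

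Two smaller remarks. First, your recovery of the $(\ep,+)$-strengthening for $\pd_2 u_1$ on $\Sigma_\ep$ via $\m{Tr}_{\Sigma_\ep}=\m{Tr}_{\Sigma_0}+(\m{Tr}_{\Sigma_\ep}-\m{Tr}_{\Sigma_0})$ and the first item of Lemma~\ref{lem on traces in thin domains} is sound, though the paper reads off $\pd_2 u_1\rvert_{\Sigma_\ep}$ directly from the $e_1$-stress boundary condition — either works. Second, you switch the order in the normal-derivative bootstrap (solving for $\pd_2^2 u$ first, then $\pd_2 p$); the paper does the reverse because the $\tilde{\mathcal{N}}_h$-component of the momentum equation eliminates $\pd_2^2 u$ and isolates $\pd_2 p$ cleanly, but your order can also be made to work once the $h''$ issue is fixed.
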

\begin{proof}
    Throughout the proof we permit implicit constants or the symbol $C$ to depend only on $\del$, $\upmu$, and $\m{a}$. We divide the proof into steps.
    
    \emph{Step 1 -- The $\mathcal{A}$-gradient:} We begin with the following auxiliary claim: for $n\in\tcb{0,1}$ and $h\in Z_\del$ the map
    \begin{equation}\label{geometry gradient}
        G^\ep_h:H^{n+1}\tp{\Omega_{\ep}} \to H^n\tp{\Omega_\ep;\R^2} 
        \text{ defined by }
        G^\ep_h\varphi = \grad^{\mathcal{A}_h}\varphi
    \end{equation}
    is well-defined, continuous, and obeys the bounds
    \begin{equation}\label{geometry gradient estimates}
        \tnorm{G^\ep_h\varphi}_{H^n\tp{\Omega_\ep}}\le C\tnorm{\varphi}_{H^{n+1}\tp{\Omega_\ep}}
        \text{ and }
        \tnorm{\tp{G^\ep_h - G^\ep_{\tilde{h}}}\varphi}_{H^n\tp{\Omega_\ep}}\le C\tnorm{h - \tilde{h}}_{\bf{Z}}\tnorm{\varphi}_{H^{n+1}\tp{\Omega_\ep}}
    \end{equation}
    for all $\varphi\in H^{n+1}\tp{\Omega_\ep}$ and $h,\tilde{h}\in Z_\del$. The case $n = 0$ is straightforward: the embedding $\bf{Z}\emb W^{1,\infty}\tp{\R}$ leads to the bounds
    \begin{equation}\label{bounds on the A matrix}
        \tnorm{\mathcal{A}_h}_{L^\infty}\le C 
        \text{ and }
        \tnorm{\mathcal{A}_h - \mathcal{A}_{\tilde{h}}}_{L^\infty}\le C\tnorm{h - \tilde{h}}_{\bf{Z}}
    \end{equation}
    so the case $n=0$ is handled with a direct estimate.

    For the case $n=1$, it suffices to study $\pd_j G^\ep_h\varphi$ mapping into $L^2\tp{\Omega_\ep;\R^{2}}$ for $j\in\tcb{1,2}$. We use the product rule to expand
    \begin{equation}
        \pd_j G^\ep_h\varphi = G^\ep_h\pd_j\varphi + \tp{\pd_j\mathcal{A}_h}\grad\varphi.
    \end{equation}
The already established $n=0$ mapping properties in~\eqref{geometry gradient estimates} permit us to ignore the first term on the right hand side above and focus on the second one.  

When $j=2$ we calculate that 
\begin{equation}
    \pd_2\mathcal{A}_h = -\bpm0&h'/h\\0&0\epm,
    \text{ and hence }
    \tnorm{\pd_2\mathcal{A}_h}_{L^\infty}\le C
    \text{ and }
    \tnorm{\pd_2\mathcal{A}_h - \pd_2\mathcal{A}_{\tilde{h}}}_{L^\infty}\le C\tnorm{h - \tilde{h}}_{\bf{Z}}.
\end{equation}
The desired bounds on $\tp{\pd_2\mathcal{A}_h}\grad\varphi$ now follow.

On the other hand, when $j =  1$, we need to be more careful. We now calculate that
\begin{equation}\label{the decomposition of partial 1 A}
    \pd_1\mathcal{A}_h = A^1_h + A^2_h,
    \text{ for }
     A_h^1 = \bpm0&y(h')^2/h^2\\0&-h'/h\epm
     \text{ and }
     A^2_h = \bpm0&-yh''/h\\0&0\epm.
\end{equation}
By the same strategy as before, we see that the $A^1_h$ contribution satisfies
\begin{equation}\label{the estimate on the good guy in the decomposition}
    \tnorm{A^1_h}_{L^\infty}\le C
    \text{ and }
    \tnorm{A^1_h  - A^1_{\tilde{h}}}_{L^\infty}\le C\tnorm{h - \tilde{h}}_{\bf{Z}},
\end{equation}
which means we only need  to consider the remaining $A^2_h$ contribution. According to the definition of the space $\bf{Z}$ given in~\eqref{the Z space} and~\eqref{the Z space, norm}, we may decompose $h = h^0 + h^1$ with $\tnorm{h^0}_{W^{2,3}}\le 2\del^{-1}$ and  $\tnorm{h^1}_{W^{2,\infty}}\le 2\del^{-1}$. In turn, after using the bounds $y<\ep$ and $\del\le h$ we have that
\begin{equation}\label{this is the one which requires more attention}
    \tnorm{A^2_h\grad\varphi}_{L^2\tp{\Omega_\ep}}\lesssim\ep\tnorm{\tp{h^0}''\pd_2\varphi}_{L^2\tp{\Omega_\ep}} + \ep\tnorm{\tp{h^1}''\pd_2\varphi}_{L^2\tp{\Omega_\ep}}\lesssim\ep\tnorm{\varphi}_{H^1} + \ep\tnorm{h^0}_{W^{2,3}\tp{\R}}\tnorm{\pd_2\varphi}_{L^6\tp{\Omega_\ep}}.
\end{equation}
The second item of Lemma~\ref{lem on poincare and sobolev in thin domains} provides the estimate $\ep^{1/3}\tnorm{\pd_2\varphi}_{L^6\tp{\Omega_\ep}}\lesssim\tnorm{\varphi}_{H^2\tp{\Omega_\ep}}$. Combining these observations with~\eqref{this is the one which requires more attention} yields the estimate $\tnorm{A^2_h\grad\varphi}_{L^2\tp{\Omega_\ep}}\lesssim \ep^{2/3}\tnorm{\varphi}_{H^2\tp{\Omega_\ep}}$, but then similar arguments show that
\begin{equation}
    \tnorm{\tp{A^2_h - A^2_{\tilde{h}}}\grad\varphi}_{L^2\tp{\Omega_\ep}}\lesssim\ep^{2/3}\tnorm{h - \tilde{h}}_{\bf{Z}}\tnorm{\varphi}_{H^2\tp{\Omega_\ep}}.
\end{equation}

We combine the above bounds on $A^1_h$, $A^2_h$ to acquire the desired estimates on $\tp{\pd_1\mathcal{A}_h}\grad\varphi$, which completes the proof of the claims~\eqref{geometry gradient} and~\eqref{geometry gradient estimates} in the case $n=1$.

\emph{Step 2 -- Lipschitz bounds on the operator norm:} We are now in a position to prove the second estimate of~\eqref{a priori estimates and bounds for strong solutions} and the Lipschitz estimate of~\eqref{forward Lipschitz for strong solutions}. This will be achieved by splitting the operator $L^\ep_h$ into various pieces.

Consider first the stress tensor maps ${_1}S^\ep_h,{_2}S^\ep_h:\bf{X}^1_\ep\to H^1\tp{\Omega_\ep;\R^{2\times 2}}$ defined via
\begin{equation}\label{the stress tensor decomposition}
    {_1}S^\ep_h\tp{u,p} = pI - 2\upmu\bpm\pd_1^{\mathcal{A}_h}u_1&0\\0&\pd_2^{\mathcal{A}_h}u_2\epm
    \text{ and }
    {_2}S^\ep_h\tp{u,p} = -\upmu\bpm0&\pd^{\mathcal{A}_h}_2u_1 + \pd^{\mathcal{A}_h}_1u_2\\\pd^{\mathcal{A}_h}_2u_1 + \pd^{\mathcal{A}_h}_1u_2&0\epm.
\end{equation}
The estimates we have established  for the  maps $G^\ep_h$ from~\eqref{geometry gradient} allow us to read off the following bounds for $j\in\tcb{1,2}$:
\begin{equation}\label{the stress tensor bounds}
        \tnorm{{_j}S^\ep_h\tp{u,p}}_{H^1\tp{\Omega_\ep}}\le C\tnorm{u,p}_{\bf{X}^1_\ep}
        \text{ and }
        \tnorm{\tp{{_j}S^\ep_h - {_j}S^\ep_{\tilde{h}}}\tp{u,p}}_{H^1\tp{\Omega_\ep}}\le C\tnorm{h - \tilde{h}}_{\bf{Z}}\tnorm{u,p}_{\bf{X}^1_\ep}.
\end{equation}
Similarly, we have the following estimates for the $\grad^{\mathcal{A}_h}$-divergence of ${_j}S^\ep_h$:
\begin{equation}\label{the div stress tensor bounds}
    \tnorm{\grad^{\mathcal{A}_h}\cdot {_j}S^\ep_h\tp{u,p}}_{L^2\tp{\Omega_\ep}}\le C\tnorm{u,p}_{\bf{X}^1_\ep}
    \text{ and }
     \tnorm{\tp{\grad^{\mathcal{A}_h}\cdot {_j}S^\ep_h - \grad^{\mathcal{A}_{\tilde{h}}}\cdot {_j}S^\ep_{\tilde{h}}}\tp{u,p}}_{L^2\tp{\Omega_\ep}}\le C\tnorm{h - \tilde{h}}_{\bf{Z}}\tnorm{u,p}_{\bf{X}^1_\ep}.
\end{equation}

Summing ${_1}S^\ep_h + {_2}S^\ep_h$ and employing ~\eqref{the div stress tensor bounds} then yields bounds for the  momentum equation components of $L^\ep_h$:
\begin{equation}\label{momentum equation bounds, I}
    \tnorm{\grad^{\mathcal{A}_h}p - \upmu\grad^{\mathcal{A}_h}\cdot\mathbb{D}^{\mathcal{A}_h}u}_{L^2\tp{\Omega_\ep}}\le C\tnorm{u,p}_{\bf{X}^1_\ep}, \text{ and }
\end{equation}
\begin{equation}\label{momentum equation bounds, II}
    \tnorm{\tp{\grad^{\mathcal{A}_h} - \grad^{\mathcal{A}_{\tilde{h}}}}p - \upmu\tp{\grad^{\mathcal{A}_h}\cdot\mathbb{D}^{\mathcal{A}_h} - \grad^{\mathcal{A}_{\tilde{h}}}\cdot\mathbb{D}^{\mathcal{A}_{\tilde{h}}}}u}_{L^2\tp{\Omega_\ep}}\le C\tnorm{h - \tilde{h}}_{\bf{Z}}\tnorm{u,p}_{\bf{X}^1_\ep}.
\end{equation}
Using~\eqref{the stress tensor bounds} with $p=0$ and taking the trace of ${_1}S^\ep_h$, we also may read off the $L^\ep_h$ divergence equation bounds:
\begin{equation}\label{divergence equation bounds}
    \tnorm{\grad^{\mathcal{A}_h}\cdot u}_{H^1\tp{\Omega_\ep}}\le C\tnorm{u,p}_{\bf{X}^1_\ep}
    \text{ and }
    \tnorm{\tp{\grad^{\mathcal{A}_h} - \grad^{\mathcal{A}_{\tilde{h}}}}\cdot u}_{H^1\tp{\Omega_\ep}}\le C\tnorm{h - \tilde{h}}_{\bf{Z}}\tnorm{u,p}_{\bf{X}^1_\ep}.
\end{equation}
Note that~\eqref{momentum equation bounds, I}, \eqref{momentum equation bounds, II}, and~\eqref{divergence equation bounds} give stronger Lipschitz estimates than what is asserted in~\eqref{forward Lipschitz for strong solutions}, as we have not yet encountered the adapted norms~\eqref{stronger, adapted norms}.

The bulk components of $L^\ep_h$ are now handled, so we next turn our attention to the boundary components, for which case the adapted norms~\eqref{stronger, adapted norms} will now play a role.  We first consider the Navier slip component of $L^\ep_h$. Due to the embedding $\bf{Z}\emb W^{1,\infty}\tp{\R}$, the boundedness of the product map $H^{1/2}\tp{\R}\times W^{1,\infty}\tp{\R}\to H^{1/2}\tp{\R}$, and the estimates from the second item of Lemma~\ref{lem on traces in thin domains} and the first item of Lemma~\ref{lem on adapted boundary norms}, we find the estimates
\begin{equation}\label{Navier slip I}
    \tnorm{\pd_2^{\mathcal{A}_h}u_1 - \ep\m{a}u_1}_{H^{1/2}\tp{\Sigma_0}}^{\tp{\ep,+}}\lesssim\tnorm{\pd_2 u_1}_{H^{1/2}\tp{\Sigma_0}}^{\tp{\ep,+}} + \tnorm{u_1}_{H^1\tp{\Omega_\ep}}\lesssim\tnorm{u,p}^{\tp{+}}_{\bf{X}^1_\ep},
\end{equation}
and similarly
\begin{equation}\label{Navier slip II}
    \tnorm{\tp{\pd_2^{\mathcal{A}_h} - \pd_2^{\mathcal{A}_{\tilde{h}}}}u_1}^{\tp{\ep,+}}_{H^{1/2}\tp{\Sigma_0}}\lesssim \tnorm{h - \tilde{h}}_{\bf{Z}}\tnorm{u,p}^{\tp{+}}_{\bf{X}^1_\ep}.
\end{equation}

Now we consider the stress boundary condition components of $L^\ep_h$. According to~\eqref{the stress tensor decomposition},  on $\Sigma_\ep$ we have the equality
\begin{equation}\label{starting to look at the boundaries}
    \tp{p - \upmu\mathbb{D}^{\mathcal{A}_h}u}\mathcal{N}_{\ep h} = {_1}S^\ep_h\tp{u,p}\mathcal{N}_{\ep h} + {_2}S^\ep_h\tp{u,p}\mathcal{N}_{\ep h}.
\end{equation}
We will resolve the terms on the right into components, recalling that  $\mathcal{N}_{\ep h} = \tp{-\ep h',1}$.  By using the boundedness of the product map $\tp{W^{1,3} + W^{1,\infty}}\tp{\R}\times H^{1/2}\tp{\R}\to H^{1/2}\tp{\R}$, the second item of Lemma~\ref{lem on traces in thin domains}, the first item of Lemma~\ref{lem on adapted boundary norms}, and the stress tensor bounds~\eqref{the stress tensor bounds} we get
\begin{equation}\label{DBC I}
    \tnorm{{_1}S^\ep_h\tp{u,p}\mathcal{N}_{\ep h}\cdot e_1}^{\tp{\ep,+}}_{H^{1/2}\tp{\Sigma_\ep}} + \tnorm{{_2}S^\ep_h\tp{u,p}\mathcal{N}_{\ep h}\cdot e_2}_{H^{1/2}\tp{\Sigma_\ep}}^{\tp{\ep,-}}\lesssim\tnorm{{_1}S^\ep_h\tp{u,p}}_{H^1\tp{\Omega_\ep}} + \tnorm{{_2}S^\ep_h\tp{u,p}}_{H^1\tp{\Omega_\ep}}\lesssim\tnorm{u,p}_{\bf{X}^1_\ep}
\end{equation}
and, by similar arguments,
\begin{equation}\label{DBC II}
    \tnorm{\tp{{_1}S^\ep_h\tp{u,p}\mathcal{N}_{\ep h} - {_1}S^\ep_{\tilde{h}}\tp{u,p}\mathcal{N}_{\ep\tilde{h}}}\cdot e_1}^{(\ep,+)}_{H^{1/2}\tp{\Sigma_\ep}} + \tnorm{\tp{{_2}S^\ep_h\tp{u,p}\mathcal{N}_{\ep h} - {_2}S^\ep_{\tilde{h}}\tp{u,p}\mathcal{N}_{\ep\tilde{h}}}\cdot e_2}^{\tp{\ep,-}}_{H^{1/2}\tp{\Sigma_\ep}}
    \lesssim\tnorm{h - \tilde{h}}_{\bf{Z}}\tnorm{u,p}_{\bf{X}^1_\ep}.
\end{equation}

For the term ${_1}S^\ep_h\tp{u,p}\mathcal{N}_{\ep h}\cdot e_2 = p - 2\upmu\pd_2^{\mathcal{A}_h}u_2$ we again use the second item of Lemma~\ref{lem on traces in thin domains} to get
\begin{equation}\label{DBC VI}
    \tnorm{{_1}S^\ep_h\tp{u,p}\mathcal{N}_{\ep h}\cdot e_2}_{H^{1/2}\tp{\Sigma_\ep}}^{\tp{\ep,-}}\lesssim\tnorm{p}^{\tp{\ep,-}}_{H^{1/2}\tp{\Sigma_\ep}} + \tnorm{ 2\upmu\pd_2^{\mathcal{A}_h}u_2}^{\tp{\ep,-}}_{H^{1/2}\tp{\Sigma_\ep}}\lesssim\tnorm{u,p}_{\bf{X}^1_\ep},
\end{equation}
but estimating the Lipschitz norm is no different:
\begin{equation}\label{DBC III}
    \tnorm{\tp{{_1}S^\ep_h\tp{u,p}\mathcal{N}_{\ep h} - {_1}S^\ep_{\tilde{h}}\tp{u,p}\mathcal{N}_{\ep\tilde{h}}}\cdot e_2}^{\tp{\ep,-}}_{H^{1/2}\tp{\Sigma_\ep}} \lesssim \tnorm{\tp{1/h - 1/\tilde{h}}\pd_2u_2}^{\tp{\ep,-}}_{H^{1/2}\tp{\Sigma_\ep}}\lesssim\tnorm{h - \tilde{h}}_{W^{1,\infty}}\tnorm{u,p}_{\bf{X}^1_\ep}.
\end{equation}

The story is similar for ${_2}S^\ep_h\tp{u,p}\mathcal{N}_{\ep h}\cdot e_1 = \pd_2^{\mathcal{A}_h}u_1 + \pd_1^{\mathcal{A}_h}u_2 = (\pd_2 u_1 + h\pd_1 u_2 - \ep h'\pd_2 u_2)/h$. Via the aforementioned product estimates, the definition of the adapted norms~\eqref{stronger, adapted norms}, the first and second items of Lemma~\ref{lem on traces in thin domains}, the first item of Lemma~\ref{lem on adapted boundary norms}, and the fact that $\m{Tr}_{\Sigma_0}\pd_1u_2 = 0$, we find that
\begin{equation}\label{DBC IV}
    \tnorm{{_2}S^\ep_h\tp{u,p}\mathcal{N}_{\ep h}\cdot e_1}^{\tp{\ep,+}}_{H^{1/2}\tp{\Sigma_\ep}}\lesssim\tnorm{\pd_2 u_1}_{H^{1/2}\tp{\Sigma_\ep}}^{\tp{\ep,+}} + \tnorm{\pd_1u_2}^{\tp{\ep,+}}_{H^{1/2}\tp{\Sigma_\ep}} + \ep\tnorm{\pd_2u_2}^{\tp{\ep,+}}_{H^{1/2}\tp{\Sigma_\ep}}\lesssim\tnorm{u,p}^{\tp{+}}_{\bf{X}^1_\ep}.
\end{equation}
For the difference, the same tricks give the bound
\begin{equation}\label{DBC V}
    \tnorm{\tp{{_2}S^\ep_h\tp{u,p}\mathcal{N}_{\ep h} - {_2}S^\ep_{\tilde{h}}\tp{u,p}\mathcal{N}_{\ep\tilde{h}}}\cdot e_1}^{\tp{\ep,+}}_{H^{1/2}\tp{\Sigma_\ep}}\lesssim\tnorm{h - \tilde{h}}_{\bf{Z}}\tnorm{u,p}^{\tp{+}}_{\bf{X}^1_\ep}.
\end{equation}

Combining~\eqref{momentum equation bounds, I}, \eqref{divergence equation bounds}, \eqref{Navier slip I}, \eqref{starting to look at the boundaries}, \eqref{DBC I}, \eqref{DBC VI}, and~\eqref{DBC IV} grants us the operator bound $\tnorm{L^\ep_h\tp{u,p}}_{\bf{Y}^1_\ep}\lesssim\tnorm{u,p}_{\bf{X}^1_\ep}^{\tp{+}}$ of the first item. On the other hand, combining~\eqref{momentum equation bounds, II}, \eqref{divergence equation bounds}, \eqref{Navier slip II}, \eqref{starting to look at the boundaries}, \eqref{DBC II}, \eqref{DBC III}, and~\eqref{DBC V} grants us the claimed Lipschitz bound~\eqref{forward Lipschitz for strong solutions} of the second item.

\emph{Step 3 -- The a priori estimate:} It remains only to prove the a priori estimate $\tnorm{u,p}^{\tp{+}}_{\bf{X}^1_\ep}\lesssim\tnorm{L^\ep_h\tp{u,p}}_{\bf{Y}^1_\ep}$ from the first item.  Let $(u,p)\in\bf{X}^1_\ep$ and define $\upsilon = (\varphi^1,\varphi^2,\psi^1,\psi^2)\in\bf{Y}^1_\ep$ via $L^\ep_h\tp{u,p} = \upsilon$ so that system~\eqref{linear stokes in a thin domain} is satisfied.  

For any $w\in H^1_{\m{tan}}\tp{\Omega_\ep;\R^2}$ we test the fist equation in system~\eqref{linear stokes in a thin domain} with $hw$ and integrate by parts to derive the identity
\begin{equation}\label{the weak formulation identity}
    \tbr{I^\ep_h\tp{u,p},w}_{\tp{H^1_{\m{tan}}}^\ast, H^1_{\m{tan}}} = \tbr{T_{\upsilon},w}_{\tp{H^1_{\m{tan}}}^\ast, H^1_{\m{tan}}},
\end{equation}
where $I^\ep_h$ is the linear map considered in the second item of Proposition~\ref{prop on theory of weak solutions} and $T_{\upsilon}\in\tp{H^1_{\m{tan}}\tp{\Omega_\ep;\R^2}}^\ast$ is defined by 
\begin{equation}\label{the T_upsilon map}
    \tbr{T_{\upsilon},w}_{\tp{H^1_{\m{tan}}}^\ast, H^1_{\m{tan}}} = \int_{\Omega_\ep}h\varphi^1\cdot w + \int_{\Sigma_\ep}\psi^1\cdot w - \int_{\Sigma_0}\psi^2 w_1.
\end{equation}
In the language of the third item of Proposition~\ref{prop on theory of weak solutions}, we have that $J^\ep_h\tp{u,p} = \tp{T_{\upsilon},\varphi^2}$ and so, from the same result, we have the (low regularity) estimate
\begin{equation}
    \tnorm{u,p}_{\bf{X}^0_\ep}\lesssim\tnorm{T_{\upsilon},\varphi^2}_{\bf{Y}^0_\ep} = \tnorm{T_\upsilon}_{(H^1_{\m{tan}})^\ast} + \tnorm{\varphi^2}_{L^2\tp{\Omega_\ep}}.
\end{equation}
The definition of the norm on $\bf{Y}^1_\ep$~\eqref{norm on Y1} shows that $\tnorm{\varphi^2}_{L^2}\le\tnorm{\upsilon}_{\bf{Y}^1_\ep}$. On the other hand, Cauchy-Schwarz and the third and fourth items of Lemma~\ref{lem on traces in thin domains} show that
\begin{equation}
    \tabs{\tbr{T_\upsilon,w}_{\tp{H^1_{\m{tan}}}^\ast,H^1_{\m{tan}}}}\lesssim\tp{\tnorm{\varphi^1}_{L^2\tp{\Omega_\ep}} + \tnorm{\psi^1_1}^{\tp{\ep,+}}_{H^{1/2}\tp{\Sigma_\ep}} + \tnorm{\psi^1_2}^{\tp{\ep,-}}_{H^{1/2}\tp{\Sigma_\ep}} + \tnorm{\psi^2}^{\tp{\ep,+}}_{H^{1/2}\tp{\Sigma_0}} }\tnorm{w}_{H^1\tp{\Omega_\ep}},
\end{equation}
which implies that $\tnorm{T_{\upsilon}}_{\tp{H^1_{\m{tan}}}^\ast}\lesssim\tnorm{\upsilon}_{\bf{Y}^1_\ep}$. Hence, $\tnorm{u,p}_{\bf{X}^0_\ep}\lesssim\tnorm{\upsilon}_{\bf{Y}^1_\ep}$.

We need to estimate one more derivative of $u$ and $p$ in terms of the data $\upsilon$; in doing so we will break to cases, first considering the tangential ($e_1$ direction) derivatives and then the normal ($e_2$ direction) derivatives.  For the tangential derivative  we again employ the weak formulation, but this time the one satisfied by $\pd_1 u$ and $\pd_1 p$.  To compute this we let $w\in H^2_{\m{tan}}\tp{\Omega_\ep;\R^2}$, insert $-\pd_1 w\in H^1_{\m{tan}}\tp{\Omega_\ep;\R^2}$ into the weak formulation~\eqref{the weak formulation identity}, and integrate by parts to  arrive at the identity
\begin{equation}\label{der_weak_1}
    \tbr{I^\ep_h\tp{\pd_1u,\pd_1 p},w}_{\tp{H^1_{\m{tan}}}^\ast,H^1_{\m{tan}}} + \tbr{C^\ep_h\tp{u,p},w}_{\tp{H^1_{\m{tan}}}^\ast,H^1_{\m{tan}}} = \tbr{T^1_{\upsilon},w}_{\tp{H^1_{\m{tan}}},^\ast,H^1_{\m{tan}}}
\end{equation}
where
\begin{multline}\label{der_weak_2}
    \tbr{C^\ep_h\tp{u,p},w}_{\tp{H^1_{\m{tan}}}^\ast,H^1_{\m{tan}}} = \int_{\Omega_\ep}\pd_1h\bp{\f{\upmu}{2}\mathbb{D}^{\mathcal{A}_h}u:\mathbb{D}^{\mathcal{A}_h}w - p\grad^{\mathcal{A}_h}\cdot w}\\
    + \int_{\Omega_\ep}\f{\upmu h}{2}\tp{\mathbb{D}^{\pd_1\mathcal{A}_h}u:\mathbb{D}^{\mathcal{A}_h}w + \mathbb{D}^{\mathcal{A}_h}u:\mathbb{D}^{\pd_1\mathcal{A}_{h}}w} - hp\grad^{\pd_1\mathcal{A}_h}\cdot w,
\end{multline}
and
\begin{equation}\label{der_weak_3}
    \tbr{T^1_{\upsilon},w}_{\tp{H^1_{\m{tan}}}^\ast,H^1_{\m{tan}}} = -\int_{\Omega_\ep}h\varphi^1\cdot\pd_1 w - \int_{\Sigma_\ep}\tbr{D}^{1/2}\psi^1\cdot \tbr{D}^{-1/2}\pd_1 w + \int_{\Sigma_0}\tbr{D}^{1/2}\psi^2\cdot\tbr{D}^{-1/2}\pd_1w_1.
\end{equation}
By a simple density argument, we find that identities~\eqref{der_weak_1}, \eqref{der_weak_2}, and~\eqref{der_weak_3} continue to hold for all $w\in H^1_{\m{tan}}\tp{\Omega_\ep;\R^2}$. We also find that 
\begin{equation}
    \grad^{\mathcal{A}_h}\cdot\pd_1 u + \grad^{\pd_j\mathcal{A}_h}\cdot u = \pd_1\varphi^2,
\end{equation}
and so the third item of Proposition~\ref{prop on theory of weak solutions} provides the estimate
\begin{equation}\label{set up for the tangential regularity estimate}
    \tnorm{\pd_1u,\pd_1 p}_{\bf{X}^0_\ep}\lesssim \tnorm{T^1_\upsilon}_{\tp{H^1_{\m{tan}}}^\ast} + \tnorm{\pd_1\varphi^2}_{L^2\tp{\Omega_\ep}} + \tnorm{C^\ep_h\tp{u,p}}_{\tp{H^1_{\m{tan}}}^\ast} + \tnorm{\grad^{\pd_1\mathcal{A}_h}\cdot u}_{L^2\tp{\Omega_\ep}}.
\end{equation}

We now estimate each of the four contributions to the right hand side of~\eqref{set up for the tangential regularity estimate}. The first of these is handled via the second and third items of Lemma~\ref{lem on traces in thin domains} and the definition of the norm~\eqref{norm on Y1}: 
\begin{equation}
    \tabs{\tbr{T^1_\upsilon,w}_{\tp{H^1_{\m{tan}}}^\ast,H^1_{\m{tan}}}}\lesssim\tp{\tnorm{\varphi^1}_{L^2\tp{\Omega_\ep}} + \tnorm{\psi^1_1}^{\tp{\ep,+}}_{H^{1/2}\tp{\Sigma_\ep}} + \tnorm{\psi^1_2}^{\tp{\ep,-}}_{H^{1/2}\tp{\Sigma_\ep}} + \tnorm{\psi^2}^{\tp{\ep,+}}_{H^{1/2}\tp{\Sigma_0}}}\tnorm{w}_{H^1\tp{\Omega_\ep}},
\end{equation}
which implies that $\tnorm{T^1_\upsilon}_{\tp{H^1_{\m{tan}}}^\ast}\lesssim\tnorm{\upsilon}_{\bf{Y}^1_\ep}$.
The second term in~\eqref{set up for the tangential regularity estimate} is directly estimated with $\tnorm{\pd_1\varphi^2}_{L^2\tp{\Omega_\ep}}\le\tnorm{\upsilon}_{\bf{Y}^1_\ep}$.

The third term in~\eqref{set up for the tangential regularity estimate} is more subtle. We initially use the embedding $\bf{Z}\emb W^{1,\infty}\tp{\R}$, the left hand estimate of~\eqref{bounds on the A matrix}, and Cauchy-Schwarz to bound
\begin{equation}
    \tabs{\tbr{C^\ep_h\tp{u,p},w}_{\tp{H^1_{\m{tan}}}^\ast,H^1_{\m{tan}}}}\lesssim\bp{\int_{\Omega_\ep}\tp{1 + \tabs{\pd_1\mathcal{A}_h}}^2\tp{\tabs{p}^2 + \tabs{\grad u}^2}}^{1/2}\tnorm{w}_{H^1\tp{\Omega_\ep}}.
\end{equation}
We combine this with the already established bound $\tnorm{u,p}_{\bf{X}^0_\ep}\lesssim\tnorm{\upsilon}_{\bf{Y}^1_\ep}$, the decomposition $\pd_1\mathcal{A}_h = A_h^1 + A_h^2$ from~\eqref{the decomposition of partial 1 A}, and the left hand estimate of~\eqref{the estimate on the good guy in the decomposition}, to the see that
\begin{equation}\label{another intermediate estimate}
    \tnorm{C^\ep_h\tp{u,p}}_{\tp{H^1_{\m{tan}}}^\ast}\lesssim\tnorm{\upsilon}_{\bf{Y}^1_\ep} + \ep\bp{\int_{\Omega_\ep}\tabs{h''}^2\tp{\tabs{p}^2 + \tabs{\grad u}^2}}^{1/2}.
\end{equation}
Now, by the definition of the norm~\eqref{the Z space, norm} on the space $\bf{Z}$ we have $h = h^0 + h^1$ with $\tnorm{h^0}_{W^{2,3}}\le 2\del^{-1}$ and $\tnorm{h^1}_{W^{2,\infty}}\le 2\del^{-1}$. We split the $h$ appearing in~\eqref{another intermediate estimate} according to this decomposition. The $h^0$ contribution is then bounded in $L^3\tp{\R}$, whereas the $h^1$ contribution can be estimated in $L^\infty\tp{\R}$. For the $h^0$ contribution we then use H\"older and estimate the resulting $L^6\tp{\Omega_\ep}$-norm on $\grad u$ and $p$ by interpolating between the $L^2\tp{\Omega_\ep}$ and $L^{12}\tp{\Omega_\ep}$ norms. Executing this gives us the bound
\begin{equation}\label{a nice C estimate}
    \tnorm{C^\ep_h\tp{u,p}}_{\tp{H^1_{\m{tan}}}^\ast}\lesssim\tnorm{\upsilon}_{\bf{Y}^1_\ep} + \ep\tnorm{\upsilon}_{\bf{Y}^1_\ep}^{1/5}\tnorm{p,\grad u}^{4/5}_{L^{12}\tp{\Omega_\ep}}.
\end{equation}
Finally, we use the second item of Lemma~\ref{lem on poincare and sobolev in thin domains} to bound $\tnorm{p,\grad u}_{L^{12}\tp{\Omega_\ep}}\lesssim \ep^{-5/12}\tnorm{u,p}_{\bf{X}^1_\ep}$.  The same strategy also allows us to estimate the final contribution in~\eqref{set up for the tangential regularity estimate}. Synthesizing these bounds, we derive the tangential regularity estimate
\begin{equation}\label{tangential regularity estimate}
    \tnorm{\pd_1 u,\pd_1 p}_{\bf{X}^0_\ep}\lesssim\tnorm{\upsilon}_{\bf{Y}^1_\ep} + \ep^{2/3}\tnorm{\upsilon}_{\bf{Y}^1_\ep}^{1/5}\tnorm{u,p}^{4/5}_{\bf{X}^1_\ep}.
\end{equation}

With the tangential estimates~\eqref{tangential regularity estimate} in hand, we now aim to promote to normal derivative estimates.  In doing so, our first task is to solve for $\pd_2 p$ in terms of quantities that can be estimated by $\tnorm{\upsilon}_{\bf{Y}^1_\ep}$ or $\tnorm{\pd_1 u,\pd_1 p}_{\bf{X}^0_\ep}$, and we accomplish this by passing through an intermediate quantity derived from the divergence equation in~\eqref{linear stokes in a thin domain}.  Indeed, we apply $\pd_2$ to this equation and solve for $\pd_2^2 u\cdot\tilde{\mathcal{N}}_h$, where $\tilde{\mathcal{N}}_h = \tp{-yh',1} = \mathcal{M}_h^{\m{t}}e_2$:
\begin{equation}\label{solving for some weirdo}
    \pd_2^2 u\cdot\tilde{\mathcal{N}}_h = h\pd_2\varphi^2 + h'\pd_2 u_1 - h\pd_1\pd_2 u_1.
\end{equation}
With~\eqref{solving for some weirdo} in mind, we  rewrite the momentum equation in~\eqref{linear stokes in a thin domain} as
\begin{equation}\label{equivalent form of the momentum equation}
    \grad^{\mathcal{A}_h}p - \upmu\Delta^{\mathcal{A}_h}u = \varphi^1 +\upmu\grad^{\mathcal{A}_h}\varphi^2,
\end{equation}
take the inner product of~\eqref{equivalent form of the momentum equation} with the vector field $\tilde{\mathcal{N}}_h$, and then substitute~\eqref{solving for some weirdo} where appropriate to derive the identity
\begin{multline}\label{solving for some other weirdo}
    \tp{1 + (yh')^2}\pd_2 p/h = yh'\pd_1 p + \tp{\varphi^1 + \upmu\grad^{\mathcal{A}_h}\varphi^2 + \pd_1^2 u - 2yh'\pd_1\pd_2 u + y\tp{2(h'/h)^2 - h''/h}\pd_2u}\cdot\tilde{\mathcal{N}}_h\\
     + \tp{1 + \tp{yh'}^2}\tp{h\pd_2\varphi^2 + h'\pd_2u_1 - h\pd_1\pd_2 u_1}/h^2.
\end{multline}
We next divide both sides of~\eqref{solving for some other weirdo} by $\tp{1 + \tp{yh'}^2}/h$ and take the norm in $L^2\tp{\Omega_\ep}$. The bounds $\tnorm{h,1/h}_{W^{1,\infty}}\lesssim1$, along with~\eqref{tangential regularity estimate} can be applied to each term with a single exception, the product $h''\pd_2u$. This term instead requires the exact same strategy we have seen in~\eqref{a nice C estimate}. We therefore arrive at the estimate
\begin{equation}\label{yet another intermediate estiamte, wow this proof is getting way longer than I expected}
    \tnorm{\pd_2 p}_{L^2\tp{\Omega_\ep}}\lesssim\tnorm{\upsilon}_{\bf{Y}^1_\ep} + \ep^{2/3}\tnorm{\upsilon}_{\bf{Y}^1_\ep}^{1/5}\tnorm{u,p}^{4/5}_{\bf{X}^1_\ep} + \ep\tnorm{h''\pd_2 u}_{L^2\tp{\Omega_\ep}}\lesssim\tnorm{\upsilon}_{\bf{Y}^1_\ep} + \ep^{2/3}\tnorm{\upsilon}_{\bf{Y}^1_\ep}^{1/5}\tnorm{u,p}^{4/5}_{\bf{X}^1_\ep}.
\end{equation}

To get the normal regularity of the velocity, we solve for $\pd_2^2 u$ in identity~\eqref{equivalent form of the momentum equation}:
\begin{equation}\label{believe it or not, this equation does it referenced}
    \upmu\tp{1 + \tp{yh'}^2}\pd_2^2 u/h^2 = \grad^{\mathcal{A}_h}p - \varphi^1 - \upmu\grad^{\mathcal{A}_h}\varphi^2 - \upmu\tp{\pd_1^2 u - 2yh'\pd_1\pd_2u/h + y\tp{2\tp{h'/h}^2 - h''/h}\pd_2 u}.
\end{equation}
Taking the $L^2\tp{\Omega_\ep;\R^2}$-norm, employing~\eqref{tangential regularity estimate} and~\eqref{yet another intermediate estiamte, wow this proof is getting way longer than I expected}, and using the same care previously mentioned with the product $h''\pd_2 u$, we find
\begin{equation}\label{normal regularity for u}
    \tnorm{\pd_2^2 u}_{L^2\tp{\Omega_\ep}}\lesssim\tnorm{\upsilon}_{\bf{Y}^1_\ep} + \ep^{2/3}\tnorm{\upsilon}_{\bf{Y}^1_\ep}^{1/5}\tnorm{u,p}^{4/5}_{\bf{X}^1_\ep}.
\end{equation}

We now combine~\eqref{tangential regularity estimate}, \eqref{yet another intermediate estiamte, wow this proof is getting way longer than I expected}, and~\eqref{normal regularity for u} with the estimates on weak solutions and a simple adsorption argument to deduce that
\begin{equation}\label{ugh we are almost done with this really long proof}
    \tnorm{u,p}_{\bf{X}^1_\ep}\lesssim\tnorm{u,p}_{\bf{X}^0_\ep} + \tnorm{\pd_1 u,\pd_1 p}_{\bf{X}^0_\ep} + \tnorm{\pd_2^2u,\pd_2 p}_{L^2\tp{\Omega_\ep}}\lesssim \tnorm{\upsilon}_{\bf{Y}^1_\ep}.
\end{equation}
The estimate~\eqref{ugh we are almost done with this really long proof} is nearly the left bound in~\eqref{a priori estimates and bounds for strong solutions}, but we still need to promote to the adapted norms~\eqref{stronger, adapted norms}.  Fortunately, this is now simple and we need only solve for the special boundary quantities identified by the adapted norms in the equations~\eqref{linear stokes in a thin domain} and then measure their size.   If we look to the Navier slip boundary condition, we find that
\begin{equation}\label{Navier slip bonus estimate}
    \tnorm{\pd_2 u_1}_{H^{1/2}\tp{\Sigma_0}}^{\tp{\ep,+}}\lesssim\ep\tnorm{u_1}^{\tp{\ep,+}}_{H^{1/2}\tp{\Sigma_0}} + \tnorm{\psi^2}_{H^{1/2}\tp{\Sigma_0}}^{\tp{\ep,+}}\lesssim\tnorm{\upsilon}_{\bf{Y}^1_\ep}.
\end{equation}
On the other hand, in the $e_1$-stress boundary condition, we have
\begin{equation}\label{e1 DBC bonus estimate}
    \tnorm{\pd_2u_1}_{H^{1/2}\tp{\Sigma_\ep}}^{\tp{\ep,+}}\lesssim \ep\tnorm{p - 2\upmu\pd_1^{\mathcal{A}_h}u_1}^{\tp{\ep,+}}_{H^{1/2}\tp{\Sigma_\ep}} + \tnorm{\pd_1^{\mathcal{A}_h}u_2}^{\tp{\ep,+}}_{H^{1/2}\tp{\Sigma_\ep}} + \tnorm{\psi^1_1}_{H^{1/2}\tp{\Sigma_\ep}}^{\tp{\ep,+}}\lesssim\tnorm{\upsilon}_{\bf{Y}^1_\ep}.
\end{equation}
The left estimate of~\eqref{a priori estimates and bounds for strong solutions} then follows by  combining~\eqref{ugh we are almost done with this really long proof} with~\eqref{Navier slip bonus estimate} and~\eqref{e1 DBC bonus estimate}.
\end{proof}

Our next result synthesizes a method of continuity argument with the previous result.

\begin{thmC}[Synthesis of linear analysis]\label{thm on synthesis of linear analysis}
    Let $\ep,\del\in(0,1)$. For all $h\in Z_\del$, the linear map $L^\ep_h:\bf{X}^1_\ep\to\bf{Y}^1_\ep$ defined in~\eqref{definition of the linear differential operator} is invertible. Moreover, for a constant $C\in\R^+$ depending only on $\del$, $\upmu$, and $\m{a}$, we have the following estimates for all $\upsilon = \tp{\varphi^1,\varphi^2,\psi^1,\psi^2}\in\bf{Y}^1_\ep$ and $h,\tilde{h}\in Z_\del$:
    \begin{equation}\label{estimates on the inverse stokes operator}
        \tnorm{\tp{L^\ep_h}^{-1}\upsilon}_{\bf{X}^1_\ep}^{\tp{+}}\le C\tnorm{\upsilon}_{\bf{Y}^1_\ep}
         \text{ and }
        \tnorm{\tsb{\tp{L^\ep_h}^{-1} - \tp{L^\ep_{\tilde{h}}}^{-1}}\upsilon}_{\bf{X}^1_\ep}^{\tp{+}}\le C\tnorm{h - \tilde{h}}_{\bf{Z}}\tnorm{\upsilon}_{\bf{Y}^1_\ep}.
    \end{equation}
\end{thmC}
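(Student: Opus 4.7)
The plan is to establish invertibility via a method of continuity argument that connects $L^\ep_h$ to a flat reference operator, combined with the a priori estimates of Proposition~\ref{prop on theory of strong solutions}, and then to deduce the Lipschitz bound via a resolvent identity.

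For the method of continuity, I would set $h_t = (1-t) + t h$ for $t \in [0,1]$. Since $0 < \delta < 1$, one readily verifies the uniform bounds $\delta \le h_t \le \delta^{-1}$ and $\tnorm{h_t}_{\bf{Z}} \le 1 + \delta^{-1} \le 2 \delta^{-1}$, so that $\{h_t\}_{t \in [0,1]} \subset Z_{\delta/2}$. The first item of Proposition~\ref{prop on theory of strong solutions} then yields the a priori bound $\tnorm{(u,p)}^{(+)}_{\bf{X}^1_\ep} \le C \tnorm{L^\ep_{h_t}(u,p)}_{\bf{Y}^1_\ep}$ uniformly in $t$, while the second item provides Lipschitz continuity of $t \mapsto L^\ep_{h_t}$ in operator norm (as $t \mapsto h_t$ is Lipschitz into $\bf{Z}$). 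The standard method of continuity (see, e.g., Theorem 5.2 in Gilbarg–Trudinger) then reduces invertibility of $L^\ep_h = L^\ep_{h_1}$ to that of the flat reference operator $L^\ep_{h_0} = L^\ep_1$ (with $h_0 \equiv 1$ and $\mathcal{A}_{h_0} = I$).

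For this base case I would use the weak theory together with an elliptic regularity bootstrap. Given $\upsilon = (\varphi^1, \varphi^2, \psi^1, \psi^2) \in \bf{Y}^1_\ep$, the integration by parts procedure performed in Step 3 of the proof of Proposition~\ref{prop on theory of strong solutions} defines a functional $T_\upsilon \in \tp{H^1_{\m{tan}}\tp{\Omega_\ep;\R^2}}^\ast$ with $\tnorm{T_\upsilon}_{\tp{H^1_{\m{tan}}}^\ast} \lesssim \tnorm{\upsilon}_{\bf{Y}^1_\ep}$. The third item of Proposition~\ref{prop on theory of weak solutions} then produces a unique weak solution $(u,p) \in \bf{X}^0_\ep$ with $J^\ep_1(u,p) = (T_\upsilon, \varphi^2)$. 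This weak solution can be promoted to a strong solution $(u,p) \in \bf{X}^1_\ep$ satisfying $L^\ep_1(u,p) = \upsilon$ via a tangential difference-quotient argument in the translation-invariant direction $e_1$, which upgrades to $\pd_1 u, \pd_1 p \in \bf{X}^0_\ep$, followed by algebraically solving for $\pd_2 p$ and $\pd_2^2 u$ from the momentum equation exactly as in Step 3 of the proof of Proposition~\ref{prop on theory of strong solutions}. Testing against appropriate admissible functions and integrating by parts then recovers the stress and slip boundary conditions.

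Having established invertibility of $L^\ep_h$ for every $h \in Z_\delta$, the left inequality in~\eqref{estimates on the inverse stokes operator} is immediate from the uniform a priori bound of Proposition~\ref{prop on theory of strong solutions}. For the Lipschitz estimate, I would apply the resolvent identity
\begin{equation*}
    \tp{L^\ep_h}^{-1} - \tp{L^\ep_{\tilde h}}^{-1} = \tp{L^\ep_h}^{-1} \tp{L^\ep_{\tilde h} - L^\ep_h} \tp{L^\ep_{\tilde h}}^{-1}
\end{equation*}
to $\upsilon \in \bf{Y}^1_\ep$, and combine the inverse bound just derived (applied to both factors) with the forward Lipschitz estimate~\eqref{forward Lipschitz for strong solutions} to produce the right inequality. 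I expect the main technical obstacle to be the regularity bootstrap in the flat base case: although conceptually standard, one must carefully track the $\ep$-dependent adapted boundary norms entering $\bf{Y}^1_\ep$ and the $(+)$-enhancement of $\bf{X}^1_\ep$, and verify that the difference-quotient estimates are uniform in the thinness parameter $\ep$; fortunately, the decomposition of the stress tensor and the solve for $\pd_2 p$ carried out in Step 3 of the proof of Proposition~\ref{prop on theory of strong solutions} already isolate precisely the terms that require scale-sensitive handling, so the argument reduces to a careful reprise of those computations with $h \equiv 1$.
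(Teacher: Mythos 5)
Your proposal is correct and follows essentially the same route as the paper: method of continuity along the linear path to the flat operator $L^\ep_1$ (noting, as a minor simplification, that $Z_\delta$ is convex and contains $1$, so the path stays in $Z_\delta$ itself rather than requiring the larger $Z_{\delta/2}$), the flat base case resolved by weak theory plus tangential difference-quotients and normal solve, and the Lipschitz bound via the resolvent identity combined with the a priori and forward Lipschitz estimates of Proposition~\ref{prop on theory of strong solutions}. The paper also mentions ADN elliptic regularity as an alternative for the base case, but otherwise there are no substantive differences.
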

\begin{proof}
    We begin by proving that the map~\eqref{definition of the linear differential operator} is invertible. Once this is known, the left hand operator norm estimate in~\eqref{estimates on the inverse stokes operator} follows immediately from estimate~\eqref{a priori estimates and bounds for strong solutions} from Proposition~\ref{prop on theory of strong solutions}.      By the method of continuity (see, for instance, Theorem 5.2 in Gilbarg and Trudinger~\cite{MR1814364} or, for a more general version, Theorem 4.51 in Abramovich and Aliprantis~\cite{MR1921782}), the Lipschitz estimate from the second item of Proposition~\ref{prop on theory of strong solutions}, and the a priori estimate from the first item of the aforementioned result, we see that it suffices to prove that when $h = 1\in Z_\del$ the operator $L^\ep_1:\bf{X}^1_\ep\to\bf{Y}^1_\ep$ is an isomorphism.        
    
    Proposition~\ref{prop on theory of strong solutions} shows that $L^\ep_1:\bf{X}^1_\ep\to\bf{Y}^1_\ep$ is injective, so we need only establish surjectivity here.  Let $\upsilon = \tp{\varphi^1,\varphi^2,\psi^1,\psi^2}\in\bf{Y}^1_\ep$ and define $T_{\upsilon}\in\tp{H^1_{\m{tan}}\tp{\Omega;\R^2}}^\ast$ as in~\eqref{the T_upsilon map}.  By the theory of weak solutions (specifically, the third item of Proposition~\ref{prop on theory of weak solutions}) we are guaranteed the existence of $(u,p)\in\bf{X}^0_\ep$ such that $J^\ep_1\tp{u,p} = \tp{T_\upsilon,\varphi^2}$.  It suffices to check that $(u,p)$ belongs to the higher regularity space $\bf{X}^1_\ep$, as this permits an integration by parts to learn that $L^\ep_1\tp{u,p} = \upsilon$. There are many ways to achieve this regularity promotion for $(u,p)$.  One possibility is to argue by standard elliptic regularity theory for systems  (see, for instance, Agmon, Douglis, and Nirenberg~\cite{adn2}).  An alternate approach is to observe that when $h=1$ we have that  $\mathcal{A}_h = I$, and so the system~\eqref{linear stokes in a thin domain} commutes with horizontal translations.  This then allows for a tangential to normal regularity promotion argument of the same form used in the proof of Theorem 2.5 in Stevenson and Tice~\cite{MR4787851}.  Taking one of these two routes, we obtain the regularity promotion and thereby  complete the proof that $L^\ep_1$ is an isomorphism.

    It remains to establish the Lipschitz estimate of~\eqref{estimates on the inverse stokes operator}.  Let $\upsilon\in\bf{Y}^1_\ep$ and $h,\tilde{h}\in Z_\del$. Set $\chi = \tp{L^\ep_h}^{-1}\upsilon - \tp{L^\ep_{\tilde{h}}}^{-1}\upsilon\in\bf{X}^1_\ep$. Starting with the identity $L^\ep_h\chi = \tp{L^\ep_{\tilde{h}} - L^\ep_{h}}\tp{L^\ep_{\tilde{h}}}^{-1}\upsilon$ we use the estimates~\eqref{a priori estimates and bounds for strong solutions} and~\eqref{forward Lipschitz for strong solutions} to get
    \begin{equation}
        \tnorm{\chi}_{\bf{X}^1_\ep}^{\tp{+}}\lesssim\tnorm{h - \tilde{h}}_{\bf{Z}}\tnorm{\tp{L^\ep_{\tilde{h}}}^{-1}\upsilon}_{\bf{X}^1_\ep}^{\tp{+}}\lesssim\tnorm{h - \tilde{h}}_{\bf{Z}}\tnorm{\upsilon}_{\bf{Y}^1_\ep},
    \end{equation}
    where the implicit constants depend only on $\del$, $\upmu$, and $\m{a}$.     
    The right hand estimate~\eqref{estimates on the inverse stokes operator} follows.
\end{proof}


\section{The shallow water and residual Stokes equations}\label{section on the shallow water and residual equations}

In comparison to what we have explored so far, this section is heavily computational in nature. We aim to quantify a relationship between the free boundary Navier-Stokes system~\eqref{FBINSE, param. tuned and funny flux}, the shallow water equations~\eqref{The shallow water ODEs}, and the linear Stokes equations~\eqref{linear stokes in a thin domain}, specifically when we are working around the distinguished shallow water heteroclinic orbit solutions produced by Theorem~\ref{thm on distinguished heteroclinic orbits}. This is a lengthy, but essential task.

\subsection{Ansatz and derivation}\label{subsection on ansatz and derivation}

Recall that in Sections~\ref{subsection on statement of the main theorems and discussion} and~\ref{subsection on distinguished shallow water bore solutions} we made the claim that the shallow water ODEs~\eqref{The shallow water ODEs} are, in a sense, the limit of the free boundary Navier-Stokes system~\eqref{FBINSE, param. tuned and funny flux} as the parameter $\ep>0$ tends to zero. The goal of this subsection is to begin making this claim more precise by carrying out a plethora of important calculations. We produce an ansatz for solutions to the Navier-Stokes system~\eqref{FBINSE, param. tuned and funny flux} which is the sum of a main part that is intimately related to the shallow water ODEs~\eqref{The shallow water ODEs} and a small residual. We then calculate the Stokes-type equations satisfied by the residual, given that the main part actually solves a perturbation of the shallow water equations.  

\begin{thmC}[The shallow water and residual Stokes equations]\label{thm on the shallow water and residual equations}

     Let $\ep,\m{A}\in\tp{0,1}$, $\upmu,\m{a}>0$, $\m{g},\upsigma\ge0$, and let $\Bar{\upgamma},\Bar{\m{A}}\in\R$ be defined via~\eqref{the parameter tuning identities 440 hz}. Consider the following six assertions.
     \begin{enumerate}
         \item The functions
\begin{equation}\label{ansatz_0}
    H,U,U_1,U_2,P,P_1,P_2\in W^{\infty,\infty}\tp{\R},\quad\tp{\eta,u,p}\in H^{5/2}\tp{\Sigma_\ep}\times H^2\tp{\Omega_\ep;\R^2}\times H^1\tp{\Omega_\ep},
    \text{ and }
     \mathfrak{r}_1,\mathfrak{r}_3,\mathfrak{r}_4\in H^\infty\tp{\R},
\end{equation}
are given and satisfy the bound $\inf(H + \ep^2 \eta) >0$.

    \item  The triple $(\pmb{\zeta},\pmb{u},\pmb{p})\in H^{5/2}_\loc\tp{\R}\times H^2_\loc\tp{\Omega_\ep;\R^2}\times H^1_\loc\tp{\Omega_\ep}$ is defined via
\begin{equation}\label{ansatz_1}
    \pmb{\zeta} = H + \ep^2\eta,\quad \pmb{u} = X + \ep^2 u, \text{ and } \pmb{p} = P + \ep^2 Q + \ep^2 p,
\end{equation}
where $X,V,W\in H^{2}_\loc\tp{\Omega_\ep;\R^2}$ and $Q\in H^1_\loc\tp{\Omega_\ep}$ are determined via (writing $y$ for the vertical variable in $\Omega_\ep$)
\begin{equation}\label{ansatz_2}
    X = V + \ep^2 W,\quad V = Ue_1 - y\pmb{\zeta} U'e_2,\quad W = \bp{\f{y\pmb{\zeta}}{\ep}U_1 + \f12\bp{\f{y\pmb{\zeta}}{\ep}}^2U_2}e_1 - \ep\bp{\f{1}{2}\bp{\f{y\pmb{\zeta}}{\ep}}^2U_1' + \f16\bp{\f{y\pmb{\zeta}}{\ep}}^3U_2'}e_2,
\end{equation}
and
\begin{equation}\label{ansatz_3}
    Q = \f{y\pmb{\zeta}}{\ep}P_1 + \f12\bp{\f{y\pmb{\zeta}}{\ep}}^2P_2.
\end{equation}

\item   The adversarial contribution $\mathfrak{L}\eta\in H^{3/2}\tp{\Sigma_\ep}$ is defined via
        \begin{equation}\label{adversarial linear operator}
            \mathfrak{L}\eta = \tp{P - \m{g}H - 2\upmu U'}\eta' + \upmu\eta\tp{U_2 - U''} - \m{g}\eta H'.
        \end{equation}

\item The residual forcing terms $\mathfrak{f}_1,\mathfrak{f}_2\in L^2_\loc\tp{\Omega_\ep}$ and $\mathfrak{f}_0,\mathfrak{f}_3,\mathfrak{f}_4\in H^{1/2}_\loc\tp{\Sigma_\ep}$ are defined via
        \begin{equation}\label{residual forcing no 0}
            \mathfrak{f}_0 = \tp{1 - \pd_1^2}\tp{\Bar{\upgamma}H - \pmb{\zeta}^2U_1/2 - \pmb{\zeta}^3U_2/6 - \Bar{\m{A}}},
        \end{equation}
        \begin{equation}\label{residual forcing no 1}
            \mathfrak{f}_1 = -\Bar{\upgamma}U' + \tp{V - \tp{4 + \ep^2\Bar{\upgamma}}e_1}\cdot\grad^{\mathcal{A}_{\pmb{\zeta}}}W_1 + W\cdot\grad^{\mathcal{A}_{\pmb{\zeta}}}V_1 + \ep^2 W\cdot\grad^{\mathcal{A}_{\pmb{\zeta}}}W_1 + \ep^2u\cdot\grad^{\mathcal{A}_{\pmb{\zeta}}}u_1 + \pd_1^{\mathcal{A}_{\pmb{\zeta}}}Q - \upmu\tp{\pd_1^{\mathcal{A}_{\pmb{\zeta}}}}^2W_1,
        \end{equation}
        \begin{equation}\label{residual forcing no 2}
            \mathfrak{f}_2 = \Bar{\upgamma}y\pmb{\zeta}U'' + \tp{V - \tp{4 + \ep^2\Bar{\upgamma}}e_1}\cdot\grad^{\mathcal{A}_{\pmb{\zeta}}}W_2 + W\cdot\grad^{\mathcal{A}_{\pmb{\zeta}}}V_2 + \ep^2 W\cdot\grad^{\mathcal{A}_{\pmb{\zeta}}}W_2 +  \ep^2u\cdot\grad^{\mathcal{A}_{\pmb{\zeta}}}u_2 - \upmu\tp{\pd_1^{\mathcal{A}_{\pmb{\zeta}}}}^2W_2,
        \end{equation}
        \begin{equation}\label{residual forcing no 3}
            \mathfrak{f}_3 = \ep Q\pmb{\zeta}' - 2\ep\upmu\pd_1^{\mathcal{A}_{\pmb{\zeta}}}W_1\pmb{\zeta}' + \upmu\pd_1^{\mathcal{A}_{\pmb{\zeta}}}W_2 + \upsigma\mathcal{H}\tp{\ep\pmb{\zeta}}\pmb{\zeta}' - \ep^3\m{g}\eta\eta',
        \end{equation}
        and
        \begin{equation}\label{residual forcing no 4}
            \mathfrak{f}_4 = -Q + \upmu\pmb{\zeta}\pmb{\zeta}'U'' + \upmu\mathbb{D}^{\mathcal{A}_{\pmb{\zeta}}}W\mathcal{N}_{\ep\pmb{\zeta}}\cdot e_2 - \ep^{-1}\upsigma\mathcal{H}\tp{\ep\pmb{\zeta}}.
        \end{equation}

    \item The following system of ODEs is satisfied in $\R$:
        \begin{equation}\label{_SWE_0}
            \tp{4 - U}H - \m{A} = 0,\quad\tp{U - 4}U' + P' - \upmu U'' - \upmu U_2 - 4\m{a} + \ep^2\mathfrak{r}_1 = 0,
        \end{equation}
        \begin{equation}\label{_SWE_1}
            P_1 + \upmu U_1' = 0,\quad P_2 + \tp{4 - U}U'' + \tp{U'}^2 + \upmu U''' + \upmu U_2' = 0,\quad \upmu U_1 - \m{a}U = 0,
        \end{equation}
        \begin{equation}\label{_SWE_2}
            \tp{P - \m{g}H - 2\upmu U'}H' + \upmu U_1 + \upmu H\tp{U_2 - U''} + \ep\mathfrak{r}_3 = 0,\quad -P - 2\upmu U' + \m{g}H + \ep^2\mathfrak{r}_4 = 0.
        \end{equation}
    
    \item The following system of PDEs is satisfied:
        \begin{equation}\label{first form of the residual PDEs}
            \begin{cases}
                  \tp{X - \tp{4 + \ep^2\Bar{\upgamma}}e_1}\cdot\grad^{\mathcal{A}_{\pmb{\zeta}}}u + u\cdot\grad^{\mathcal{A}_{\pmb{\zeta}}}X + \grad^{\mathcal{A}_{\pmb{\zeta}}}p - \upmu\Delta^{\mathcal{A}_{\pmb{\zeta}}}u + \tp{\mathfrak{f}_1 - \mathfrak{r}_1,\mathfrak{f}_2} = 0,\quad \grad^{\mathcal{A}_{\pmb{\zeta}}}\cdot u = 0&\text{in }\Omega_\ep,\\
                 -\tp{p - \upmu\mathbb{D}^{\mathcal{A}_{\pmb{\zeta}}}u}\mathcal{N}_{\ep\pmb{\zeta}} + \tp{\ep\mathfrak{L}\eta,\m{g}\eta} + \tp{\mathfrak{f}_3 - \mathfrak{r}_3,\mathfrak{f}_4 - \mathfrak{r}_4} = 0&\text{on }\Sigma_\ep,\\
                 u_2 = 0,\quad \upmu\pd_2^{\mathcal{A}_{\pmb{\zeta}}}u_1 - \ep\m{a}u_1 = 0&\text{on }\Sigma_0,\\
                \tp{1 - \pd_1^2}\tp{\tp{\tp{4 + \ep^2\Bar{\upgamma}} - U}\eta} - \f{\pmb{\zeta}}{\ep}\int_0^\ep u_1\tp{\cdot,y}\;\m{d}y - \f{1}{\ep}\pd_1\tp{u\cdot\mathcal{N}_{\ep\pmb{\zeta}}} + \mathfrak{f}_0 = 0&\text{on }\Sigma_\ep.
            \end{cases}
        \end{equation}
     \end{enumerate}
If  all six assertions are satisfied, then the triple $\tp{\pmb{\zeta},\pmb{u},\pmb{p}}$ of~\eqref{ansatz_1} is a solution to the free boundary Navier-Stokes system~\eqref{FBINSE, param. tuned and funny flux}.  Conversely, if the first through fifth assertions are satisfied and the triple $\tp{\pmb{\zeta},\pmb{u},\pmb{p}}$ of~\eqref{ansatz_1} is a solution to~\eqref{FBINSE, param. tuned and funny flux}, then the  sixth item is satisfied by the residuals $\tp{\eta,u,p}$.
\end{thmC}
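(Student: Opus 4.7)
The claim is an algebraic identity: with the ansatz in items~(1)--(4) substituted into~\eqref{FBINSE, param. tuned and funny flux}, the resulting system decomposes into $\ep$-independent pieces absorbed by the ODEs~\eqref{_SWE_0}--\eqref{_SWE_2} and $\ep^2$-pieces that are exactly the residual PDEs~\eqref{first form of the residual PDEs}. Both directions of the biconditional follow from the same line-by-line reorganization read in opposite orders, and no analysis beyond the product rule and the explicit formulas for $V$, $W$, $Q$ is required.

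My plan is to substitute $\pmb{\zeta} = H + \ep^2\eta$, $\pmb{u} = V + \ep^2 W + \ep^2 u$, and $\pmb{p} = P + \ep^2 Q + \ep^2 p$ into each of the four rows of~\eqref{FBINSE, param. tuned and funny flux} in turn and compare coefficients of $\ep$. I would start with the two easy rows. First, a short computation with the explicit forms gives $\grad^{\mathcal{A}_{\pmb{\zeta}}}\cdot V = U' - U' = 0$, and the specific coefficients in $W$ are arranged so that $\grad^{\mathcal{A}_{\pmb{\zeta}}}\cdot W = 0$; hence the divergence equation reduces to $\grad^{\mathcal{A}_{\pmb{\zeta}}}\cdot u = 0$. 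Second, $V_2$ and $W_2$ vanish at $y=0$ automatically, so the $\Sigma_0$ kinematic piece is automatic, and the Navier slip reduces, after using $\upmu U_1 = \m{a} U$ from~\eqref{_SWE_1}, to the slip equation in~\eqref{first form of the residual PDEs}.

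For the momentum equation, I expand $(\pmb{u} - (4+\ep^2\Bar{\upgamma})e_1)\cdot\grad^{\mathcal{A}_{\pmb{\zeta}}}\pmb{u} + \grad^{\mathcal{A}_{\pmb{\zeta}}}\pmb{p} - \upmu\Delta^{\mathcal{A}_{\pmb{\zeta}}}\pmb{u}$ and sort each component by powers of $\ep$. The $e_1$-component at leading order assembles into $(U-4)U' + P' - \upmu U'' - \upmu U_2 - 4\m{a}$, which vanishes modulo $\ep^2\mathfrak{r}_1$ by the second identity in~\eqref{_SWE_0}; the $y$-dependent leading contributions coming from $W$ and $Q$ are killed by the first two identities in~\eqref{_SWE_1}, which are precisely the equations defining $P_1$ and $P_2$. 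What remains is $\ep^2$ times the momentum equation in~\eqref{first form of the residual PDEs}; the $e_2$-component reduces analogously, using the $P_2$ identity. For the stress boundary condition on $\Sigma_\ep$ I split into $e_1$ and $e_2$ scalars. The $e_2$ scalar at leading order yields $-P - 2\upmu U' + \m{g} H$, which vanishes modulo $\ep^2\mathfrak{r}_4$ by~\eqref{_SWE_2}, leaving $\ep^2\m{g}\eta$ and $\ep^2(\mathfrak{f}_4 - \mathfrak{r}_4)$. The $e_1$ scalar at leading order cancels via the first identity of~\eqref{_SWE_2}, and the $\eta$-linearizations of $P\pmb{\zeta}'$, $2\upmu U'\pmb{\zeta}'$, $\m{g}H\pmb{\zeta}'$, together with the corresponding pieces of $\upmu\mathbb{D}^{\mathcal{A}_{\pmb{\zeta}}}V\mathcal{N}_{\ep\pmb{\zeta}}\cdot e_1$, combine to give exactly $\ep \mathfrak{L}\eta$ from~\eqref{adversarial linear operator}, plus $\ep^2(\mathfrak{f}_3 - \mathfrak{r}_3)$.

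The most delicate step is the fourth row of~\eqref{FBINSE, param. tuned and funny flux}, the reformulated flux equation. Here the explicit formulas give $\int_0^\ep V_1\;\m{d}y = \ep U$ and $\int_0^\ep W_1\;\m{d}y = \ep\pmb{\zeta} U_1/2 + \ep\pmb{\zeta}^2 U_2/6$, and combining with $(4-U)H = \m{A}$ from~\eqref{_SWE_0} produces
\begin{equation*}
    \bp{4 + \ep^2\Bar{\upgamma} - \f{1}{\ep}\int_0^\ep \pmb{u}_1\;\m{d}y}\pmb{\zeta} = \m{A} + \ep^2\bp{(4+\ep^2\Bar{\upgamma} - U)\eta + \Bar{\upgamma}H - \f{\pmb{\zeta}^2 U_1}{2} - \f{\pmb{\zeta}^3 U_2}{6}} - \ep\pmb{\zeta}\int_0^\ep u_1\;\m{d}y.
\end{equation*}
Similarly expanding the $\pd_1^2\pmb{\zeta}$ and $\ep^{-1}\pd_1(\pmb{u}\cdot\mathcal{N}_{\ep\pmb{\zeta}})$ contributions, subtracting $\m{A} + \ep^2\Bar{\m{A}}$, and dividing by $\ep^2$ assembles the polynomial-in-$(U_1,U_2,\Bar{\upgamma}H,\Bar{\m{A}})$ contributions into $(1 - \pd_1^2)$ applied to $\Bar{\upgamma}H - \pmb{\zeta}^2 U_1/2 - \pmb{\zeta}^3 U_2/6 - \Bar{\m{A}}$, namely $-\mathfrak{f}_0$, and leaves exactly the fourth equation of~\eqref{first form of the residual PDEs}. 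The main obstacle throughout is purely bookkeeping: the definitions of $V$, $W$, $Q$ and the ODEs are engineered precisely so that each row of the Navier-Stokes system reduces to the matching row of the residual system with no further manipulation.
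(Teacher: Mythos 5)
Your proposal is correct and follows essentially the same approach as the paper: substitute the ansatz~\eqref{ansatz_1}--\eqref{ansatz_3} into each equation of~\eqref{FBINSE, param. tuned and funny flux}, expand in powers of $\ep$, and identify the $\ep^{-2}$, $\ep^{-1}$, and $\ep^0$ groupings with the shallow water ODEs, the adversarial linear term, and the residual forcing respectively. The paper organizes this same term-sorting computation into a sequence of propositions (Propositions~\ref{prop on conservation of flux residuals}--\ref{prop on navier slip residuals}), one per equation, but the underlying bookkeeping is identical to what you describe.
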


The proof of this theorem is given at the end of this subsection.  In essence, the proof consists of a large number of computations, which we have recorded below in more self-contained pieces.  We emphasize that the role of the functional parameters $\mathfrak{r}_1$, $\mathfrak{r}_3$, and $\mathfrak{r}_4$, which do not explicitly appear in~\eqref{ansatz_1}, \eqref{ansatz_2}, or~\eqref{ansatz_3}, is to specify a coupling between the shallow water ODE part and the Stokes PDE residual part.  Moreover, we do not need a functional parameter in the second momentum equation, which explains the lack of $\mathfrak{r}_2$ above.  The precise choice of these functions is not made until Section~\ref{subsection on the shallow water bore map}.

We now embark on the computation of the residuals, handling each component of the system of equations~\eqref{FBINSE, param. tuned and funny flux} separately. We begin with the conservation of relative velocity flux equation.

\begin{propC}[Conservation of relative velocity flux's residuals]\label{prop on conservation of flux residuals}
    Assuming the first and second hypotheses of Theorem~\ref{thm on the shallow water and residual equations}, the following are equivalent.
    \begin{enumerate}
        \item We have the following equality on $\Sigma_\ep$:
        \begin{equation}\label{conservation of flux equation}
            \bp{4+ \ep^2\Bar{\upgamma} - \f{1}{\ep}\int_0^\ep\pmb{u}_1\tp{\cdot,y}\;\m{d}y}\pmb{\zeta} - \tp{4 + \ep^2\Bar{\upgamma}}\pd_1^2\pmb{\zeta} - \f{1}{\ep}\pd_1\tp{\pmb{u}\cdot\mathcal{N}_{\ep\pmb{\zeta}}} = \m{A} + \ep^2\Bar{\m{A}}.
        \end{equation}
        \item We have the following equality on $\Sigma_\ep$:
        \begin{equation}\label{residual conservation of flux}
            \tp{1 - \pd_1^2}\tp{\tp{\tp{4 + \ep^2\Bar{\upgamma}} - U}\eta} - \f{\pmb{\zeta}}{\ep}\int_0^\ep u_1\tp{\cdot,y}\;\m{d}y - \f{1}{\ep}\pd_1\tp{u\cdot\mathcal{N}_{\ep\pmb{\zeta}}} + \mathfrak{f}_0 + \ep^{-2}\mathfrak{S}_0 = 0,
        \end{equation}
        where
        \begin{equation}\label{shallow water no 0}
            \mathfrak{S}_0 = \tp{1 - \pd_1^2}\tp{\tp{4 - U}H - \m{A}}
        \end{equation}
        and $\mathfrak{f}_0$ is given by~\eqref{residual forcing no 0}.
    \end{enumerate}
\end{propC}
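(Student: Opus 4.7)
The plan is to prove the equivalence by a direct algebraic computation establishing that $\text{LHS of \eqref{conservation of flux equation}} - (\m{A} + \ep^2\Bar{\m{A}}) = \ep^2 \cdot \text{LHS of \eqref{residual conservation of flux}}$; since $\ep > 0$, the two equations will be equivalent. I would begin by substituting $\pmb{\zeta} = H + \ep^2\eta$ and $\pmb{u} = V + \ep^2 W + \ep^2 u$ (per the ansatz in the second hypothesis) into~\eqref{conservation of flux equation} and simplifying piece by piece. The two quantities requiring direct computation are the vertical average $\f{1}{\ep}\int_0^\ep \pmb{u}_1(\cdot,y)\;\m{d}y$ and the boundary flux $\f{1}{\ep}\pmb{u}\cdot\mathcal{N}_{\ep\pmb{\zeta}}\rvert_{y=\ep}$.

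First I would compute $\int_0^\ep V_1 \;\m{d}y = \ep U$ and $\int_0^\ep W_1\;\m{d}y = \ep\pmb{\zeta} U_1/2 + \ep\pmb{\zeta}^2 U_2/6$, yielding
\begin{equation*}
    \f{1}{\ep}\int_0^\ep \pmb{u}_1(\cdot,y)\;\m{d}y = U + \ep^2\bp{\pmb{\zeta} U_1/2 + \pmb{\zeta}^2 U_2/6} + \ep\int_0^\ep u_1(\cdot,y)\;\m{d}y.
\end{equation*}
Next, plugging in $\mathcal{N}_{\ep\pmb{\zeta}} = (-\ep\pmb{\zeta}',1)$ and the expressions for $V\rvert_{y=\ep}$, $W\rvert_{y=\ep}$ from the ansatz, I would verify the telescoping identities
\begin{equation*}
    V\cdot\mathcal{N}_{\ep\pmb{\zeta}}\big\rvert_{y=\ep} = -\ep(\pmb{\zeta} U)' \text{ and } W\cdot\mathcal{N}_{\ep\pmb{\zeta}}\big\rvert_{y=\ep} = -\ep\bp{(\pmb{\zeta}^2 U_1/2)' + (\pmb{\zeta}^3 U_2/6)'},
\end{equation*}
which then yield a clean expression for $\f{1}{\ep}\pd_1(\pmb{u}\cdot\mathcal{N}_{\ep\pmb{\zeta}})$.

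The key structural observation, and the crux of the argument, is the algebraic identity $(4-U)\pmb{\zeta} - 4\pd_1^2\pmb{\zeta} + (\pmb{\zeta} U)'' = (1-\pd_1^2)((4-U)\pmb{\zeta})$, which follows from $(\pmb{\zeta} U)'' = -((4-U)\pmb{\zeta})'' + 4\pd_1^2\pmb{\zeta}$. An analogous manipulation groups the $\ep^2$-coefficient contributions $\Bar{\upgamma}\pmb{\zeta} - \pmb{\zeta}^2 U_1/2 - \pmb{\zeta}^3 U_2/6$ with their $\pd_1^2$ counterparts from $-(4 + \ep^2\Bar{\upgamma})\pd_1^2\pmb{\zeta}$ and $\ep^2((\pmb{\zeta}^2 U_1/2)'' + (\pmb{\zeta}^3 U_2/6)'')$, producing $\ep^2(1-\pd_1^2)(\Bar{\upgamma}\pmb{\zeta} - \pmb{\zeta}^2 U_1/2 - \pmb{\zeta}^3 U_2/6 - \Bar{\m{A}})$. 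Splitting this last piece via $\Bar{\upgamma}\pmb{\zeta} = \Bar{\upgamma} H + \ep^2\Bar{\upgamma}\eta$ isolates $\ep^2\mathfrak{f}_0$ along with an extra term $\ep^4\Bar{\upgamma}(1-\pd_1^2)\eta$, which combines with the $\ep^2(1-\pd_1^2)((4-U)\eta)$ piece from the previous step to yield exactly $\ep^2(1-\pd_1^2)((4+\ep^2\Bar{\upgamma}-U)\eta)$.

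Assembling all of the above, I would conclude that
\begin{equation*}
    \text{LHS}_{\eqref{conservation of flux equation}} - (\m{A} + \ep^2\Bar{\m{A}}) = \mathfrak{S}_0 + \ep^2\mathfrak{f}_0 + \ep^2(1-\pd_1^2)\bp{(4+\ep^2\Bar{\upgamma}-U)\eta} - \pmb{\zeta}\ep\int_0^\ep u_1\;\m{d}y - \ep\pd_1(u\cdot\mathcal{N}_{\ep\pmb{\zeta}}),
\end{equation*}
which is precisely $\ep^2 \cdot \text{LHS}_{\eqref{residual conservation of flux}}$, closing the argument. No step poses a genuine obstacle; the only care required is organizational, keeping track of the $\ep$-powers coming from the expansions $\pmb{\zeta} = H + \ep^2\eta$ and $\pmb{u} = V + \ep^2 W + \ep^2 u$, and correctly recognizing the $(1-\pd_1^2)$-structure hidden in the combinations of bulk terms and second derivatives. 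Everything else is bookkeeping driven by the polynomial-in-$y$ structure of the ansatz, which was designed precisely so that vertical integrals and boundary evaluations collapse into total $x$-derivatives of simple expressions in $\pmb{\zeta}$ and $U,U_1,U_2$.
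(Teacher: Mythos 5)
Your proposal is correct and takes essentially the same approach as the paper: substitute the ansatz into the flux equation, compute the vertical average $\frac{1}{\ep}\int_0^\ep\pmb{u}_1$ and the boundary term $\frac{1}{\ep}\pd_1(\pmb{u}\cdot\mathcal{N}_{\ep\pmb{\zeta}})$ using the polynomial-in-$y$ structure of $V$, $W$, and then sort the resulting terms according to their $\ep$-weight while recognizing the $(1-\pd_1^2)$ groupings that produce $\mathfrak{S}_0$, $\mathfrak{f}_0$, and the residual flux operator. The paper divides by $\ep^2$ first and sorts afterward, whereas you defer the division to the end, but the underlying algebra and the telescoping identities for $V\cdot\mathcal{N}_{\ep\pmb{\zeta}}$ and $W\cdot\mathcal{N}_{\ep\pmb{\zeta}}$ are identical.
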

\begin{proof}
    The strategy is simply to plug the ansatz~\eqref{ansatz_1} into the final equation of~\eqref{FBINSE, param. tuned and funny flux}, divide by $\ep^2$, and sort the resulting terms.   We first compute the residual vertical average of $\pmb{u}_1$ as
    \begin{equation}\label{__0}
        \ep^{-3}\int_0^\ep\pmb{u}_1\tp{\cdot,y}\;\m{d}y = \ep^{-2}U + \f12\pmb{\zeta}U_1 + \f16\pmb{\zeta}^2U_2 + \ep^{-1}\int_0^\ep u_1\tp{\cdot,y}\;\m{d}y.
    \end{equation}
    In turn, we find that
    \begin{multline}\label{__1}
        \ep^{-2}\bp{\bp{4+ \ep^2\Bar{\upgamma} - \f{1}{\ep}\int_0^\ep\pmb{u}_1\tp{\cdot,y}\;\m{d}y}\pmb{\zeta} - \tp{\m{A} + \ep^2\Bar{\m{A}}}} = \ep^{-2}\tp{\tp{4 - U}H - \m{A}} \\+ \bp{\Bar{\upgamma}H - \f12\pmb{\zeta}^2U_1 - \f{1}{6}\pmb{\zeta}^3U_2 - \Bar{\m{A}}} + \tp{4 + \ep^2\Bar{\upgamma} - U}\eta - \f{\pmb{\zeta}}{\ep}\int_0^\ep u_1\tp{\cdot,y}\;\m{d}y.
    \end{multline}
    On the other hand, we compute
    \begin{equation}\label{__2}
        -\ep^{-3}\pd_1\tp{\pmb{u}\cdot\mathcal{N}_{\ep\pmb{\zeta}}} =\ep^{-2}\tp{UH}'' + \bp{\f12\pmb{\zeta}^2U_1 + \f16\pmb{\zeta}^3U_2}'' + \tp{U\eta}'' - \ep^{-1}\pd_1\tp{u\cdot\mathcal{N}_{\ep\pmb{\zeta}}}
    \end{equation}
    and
    \begin{equation}\label{__3}
        -\ep^{-2}\tp{4 + \ep^2\Bar{\upgamma}}\pd_1^2\pmb{\zeta} = -\ep^{-2}4H'' - \Bar{\upgamma}H'' - \tp{4 + \ep^2\Bar{\upgamma}}\eta''.
    \end{equation}

    The shallow water contribution~\eqref{shallow water no 0} is the sum of the first terms of the right hand sides of~\eqref{__1}, \eqref{__2}, and~\eqref{__3}. The residual contribution~\eqref{residual forcing no 0} is the sum of the second grouping of terms. The remaining terms then belong to the residual conservation of relative momentum flux~\eqref{residual conservation of flux}.
\end{proof}

Our next two results handle the momentum equation's components.

\begin{propC}[First momentum equation's residuals]\label{prop on first momentum residuals}
    Assuming the first and second hypotheses of Theorem~\ref{thm on the shallow water and residual equations}, the following are equivalent.
    \begin{enumerate}
        \item We have the following equality in $\Omega_\ep$:
        \begin{equation}\label{first momentum equation}
            \tp{\pmb{u} - \tp{4 + \ep^2\Bar{\upgamma}}e_1}\cdot\grad^{\mathcal{A}_{\pmb{\zeta}}}\pmb{u}_1 + \pd_1^{\mathcal{A}_{\pmb{\zeta}}}\pmb{p} - \upmu\Delta^{\mathcal{A}_{\pmb{\zeta}}}\pmb{u}_1 = 4\m{a}.
        \end{equation}
        \item We have the following equality in $\Omega_\ep$:
        \begin{equation}\label{first residual momentum equation}
            \tp{X - \tp{4 + \ep^2\Bar{\upgamma}}e_1}\cdot\grad^{\mathcal{A}_{\pmb{\zeta}}}u_1 + u\cdot\grad^{\mathcal{A}_{\pmb{\zeta}}}X_1 + \pd_1^{\mathcal{A}_{\pmb{\zeta}}}p - \upmu\Delta^{\mathcal{A}_{\pmb{\zeta}}}u_1 + \mathfrak{f}_1 - \mathfrak{r}_1 + \ep^{-2}\mathfrak{S}_1 = 0
        \end{equation}
        where
        \begin{equation}\label{shallow water no 1}
            \mathfrak{S}_1 =\tp{U - 4}U' + P' -\upmu U'' - \upmu U_2 - 4\m{a} + \ep^2\mathfrak{r}_1
        \end{equation}
        and $\mathfrak{f}_1$ is given by~\eqref{residual forcing no 1}.
    \end{enumerate}
\end{propC}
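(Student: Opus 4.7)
The plan is to follow the template used in the proof of Proposition~\ref{prop on conservation of flux residuals}: substitute the ansatz~\eqref{ansatz_1}--\eqref{ansatz_3} into~\eqref{first momentum equation}, multiply through by $\ep^{-2}$, and sort the resulting terms by powers of $\ep$ so that the $\ep^{-2}$ terms assemble into $\mathfrak{S}_1$ from~\eqref{shallow water no 1}, and the $\ep^0$ terms assemble into the linear residual Stokes operator applied to $\tp{u,p}$ together with $\mathfrak{f}_1 - \mathfrak{r}_1$, with $\mathfrak{f}_1$ given by~\eqref{residual forcing no 1}.

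First I would record the $\mathcal{A}_{\pmb{\zeta}}$-derivatives of the individual pieces of the ansatz.  Using $\pd_1^{\mathcal{A}_{\pmb{\zeta}}}f = \pd_1 f - \tp{y\pmb{\zeta}'/\pmb{\zeta}}\pd_2 f$ and $\pd_2^{\mathcal{A}_{\pmb{\zeta}}}f = \pd_2 f/\pmb{\zeta}$, together with the fact that $V_1 = U$ and $P$ depend only on $x$, one gets $\pd_2^{\mathcal{A}_{\pmb{\zeta}}}V_1 = 0$, $\pd_1^{\mathcal{A}_{\pmb{\zeta}}}V_1 = U'$, $\Delta^{\mathcal{A}_{\pmb{\zeta}}}V_1 = U''$, and $\pd_1^{\mathcal{A}_{\pmb{\zeta}}}P = P'$.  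The crucial identity is that the polynomial structure of $W_1 = \tp{y\pmb{\zeta}/\ep}U_1 + \tp{1/2}\tp{y\pmb{\zeta}/\ep}^2 U_2$ is engineered so that $\tp{\pd_2^{\mathcal{A}_{\pmb{\zeta}}}}^2 W_1 = U_2/\ep^2$; the inverse power of $\ep$ here cancels against the $\ep^2$ multiplier sitting in front of $W$ in the ansatz and feeds precisely $-\upmu U_2$ into the shallow water expression $\mathfrak{S}_1$.

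Next I would assemble the three parts of $\ep^{-2}$ times~\eqref{first momentum equation}.  Expanding $\pmb{u} = V + \ep^2 W + \ep^2 u$ and $\pmb{u}_1 = V_1 + \ep^2\tp{W_1 + u_1}$, the convective term splits, via $\tp{V - 4e_1}\cdot\grad^{\mathcal{A}_{\pmb{\zeta}}}V_1 = \tp{U-4}U'$, into: an $\ep^{-2}$ contribution $\ep^{-2}\tp{U - 4}U'$; a Stokes-linear contribution $\tp{X - \tp{4+\ep^2\Bar{\upgamma}}e_1}\cdot\grad^{\mathcal{A}_{\pmb{\zeta}}}u_1 + u\cdot\grad^{\mathcal{A}_{\pmb{\zeta}}}X_1$; and leftover pieces
\begin{equation*}
-\Bar{\upgamma}U' + \tp{V - \tp{4+\ep^2\Bar{\upgamma}}e_1}\cdot\grad^{\mathcal{A}_{\pmb{\zeta}}}W_1 + W\cdot\grad^{\mathcal{A}_{\pmb{\zeta}}}V_1 + \ep^2 W\cdot\grad^{\mathcal{A}_{\pmb{\zeta}}}W_1 + \ep^2 u\cdot\grad^{\mathcal{A}_{\pmb{\zeta}}}u_1.
\end{equation*}
The pressure gradient splits cleanly as $\ep^{-2}P' + \pd_1^{\mathcal{A}_{\pmb{\zeta}}}Q + \pd_1^{\mathcal{A}_{\pmb{\zeta}}}p$; the viscous term as $-\ep^{-2}\upmu U'' - \ep^{-2}\upmu U_2 - \upmu\tp{\pd_1^{\mathcal{A}_{\pmb{\zeta}}}}^2 W_1 - \upmu\Delta^{\mathcal{A}_{\pmb{\zeta}}}u_1$ (the second $\ep^{-2}$ term coming from the key identity above); and the body force contributes only $-\ep^{-2}\cdot 4\m{a}$.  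Adding and subtracting $\mathfrak{r}_1$ inserts the coupling term into $\mathfrak{S}_1$ while spawning the $-\mathfrak{r}_1$ seen in~\eqref{first residual momentum equation}.

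Collecting: the $\ep^{-2}$ coefficients are exactly $\tp{U - 4}U' + P' - \upmu U'' - \upmu U_2 - 4\m{a} + \ep^2\mathfrak{r}_1 = \mathfrak{S}_1$, and the remaining $\ep^0$ terms assemble into the left hand side of~\eqref{first residual momentum equation} minus $\ep^{-2}\mathfrak{S}_1$, proving the advertised equivalence.  The main obstacle is purely bookkeeping: confirming that the leftover pieces from the convective, pressure, and viscous contributions group together with $\pd_1^{\mathcal{A}_{\pmb{\zeta}}}Q$ and $-\upmu\tp{\pd_1^{\mathcal{A}_{\pmb{\zeta}}}}^2 W_1$ into exactly the seven summands defining $\mathfrak{f}_1$ in~\eqref{residual forcing no 1}, with no term left unaccounted for.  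This is a mechanical verification and requires no new analytic ideas beyond those used in the proof of Proposition~\ref{prop on conservation of flux residuals}.
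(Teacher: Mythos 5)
Your proposal is correct and follows essentially the same approach as the paper's proof: substitute the ansatz, divide by $\ep^2$, and sort terms by the powers of $\ep$, with the convective, pressure, and viscous terms each expanded separately. The explicit identities you record (notably $\tp{\pd_2^{\mathcal{A}_{\pmb{\zeta}}}}^2 W_1 = U_2/\ep^2$, which produces the $-\upmu U_2$ piece of $\mathfrak{S}_1$, and the splitting of the convective term into shallow-water, Stokes-linear, and $\mathfrak{f}_1$-forcing pieces) coincide with the paper's computation; the only difference is that you spell out a few intermediate derivative formulas that the paper leaves implicit.
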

\begin{proof}
    Again, the strategy is to plug the ansatz~\eqref{ansatz_1} into the first equation of system~\eqref{FBINSE, param. tuned and funny flux}, divide by $\ep^2$, and sort the resulting terms.  We begin with the advective derivative, calculating that
    \begin{multline}\label{advective derivative 1}
        \ep^{-2}\tp{\pmb{u} - \tp{4 + \ep^2\Bar{\upgamma}}e_1}\cdot\grad^{\mathcal{A}_{\pmb{\zeta}}}\pmb{u}_1 = \ep^{-2}\tp{X - \tp{4 + \ep^2\Bar{\upgamma}}e_1}\cdot\grad^{\mathcal{A}_{\pmb{\zeta}}}X_1\\
        + \tp{X - \tp{4 + \ep^2\Bar{\upgamma}}e_1}\cdot\grad^{\mathcal{A}_{\pmb{\zeta}}}u_1 + u\cdot\grad^{\mathcal{A}_{\pmb{\zeta}}}X_1 + \ep^2 u\cdot\grad^{\mathcal{A}_{\pmb{\zeta}}}u_1.
    \end{multline}
    The second and third terms are the residual advective derivative in~\eqref{first residual momentum equation} while the final term is sorted to~\eqref{residual forcing no 1}. The first term requires further dissection: 
    \begin{multline}
        \ep^{-2}\tp{X - \tp{4 + \ep^2\Bar{\upgamma}}e_1}\cdot\grad^{\mathcal{A}_{\pmb{\zeta}}}X_1 = \ep^{-2}\tp{U - 4}U' -\Bar{\upgamma}U'\\+ \tp{V - \tp{4 + \ep^2\Bar{\upgamma}}e_1}\cdot\grad^{\mathcal{A}_{\pmb{\zeta}}}W_1 + W\cdot\grad^{\mathcal{A}_{\pmb{\zeta}}}V_1 + \ep^2 W\cdot\grad^{\mathcal{A}_{\pmb{\zeta}}}W_1.
    \end{multline}
    The first term is then a shallow water~\eqref{shallow water no 1} contribution, while what remains are residual forcing~\eqref{residual forcing no 1}.

    For the pressure's contribution we calculate
    \begin{equation}\label{pressure contribution 1}
        \ep^{-2}\pd_1^{\mathcal{A}_{\pmb{\zeta}}}\pmb{p} = \ep^{-2}P' + \pd_1^{\mathcal{A}_{\pmb{\zeta}}}p + \pd_1^{\mathcal{A}_{\pmb{\zeta}}}Q.
    \end{equation}
    The first grouping of terms belongs to the category~\eqref{shallow water no 1}. The second term is the residual pressure in~\eqref{first residual momentum equation}. The final term is sorted to~\eqref{residual forcing no 1}.

    For the dissipation's contribution, we compute
    \begin{equation}\label{dissipation contribution 1}
        \ep^{-2}\Delta^{\mathcal{A}_{\pmb{\zeta}}}\pmb{u}_1 = \ep^{-2}\tp{U'' + U_2} + \Delta^{\mathcal{A}_{\pmb{\zeta}}}u_1 + \tp{\pd_1^{\mathcal{A}_{\pmb{\zeta}}}}^2W_1.
    \end{equation}
    The first grouping of terms belongs to the category~\eqref{shallow water no 1}. The second term is the residual dissipation in~\eqref{first residual momentum equation}. The final term is sorted to~\eqref{residual forcing no 1}.
\end{proof}

Next, we handle the second component.

\begin{propC}[Second momentum equation's residuals]\label{prop on second momentum residuals}
    Assuming the first and second hypotheses of Theorem~\ref{thm on the shallow water and residual equations}, the following are equivalent.
    \begin{enumerate}
        \item We have the following  equality in $\Omega_\ep$:
        \begin{equation}\label{second momentum equation}
            \tp{\pmb{u} - \tp{4 + \ep^2\Bar{\upgamma}}e_1}\cdot\grad^{\mathcal{A}_{\pmb{\zeta}}}\pmb{u}_2 + \pd_2^{\mathcal{A}_{\pmb{\zeta}}}\pmb{p} - \upmu\Delta^{\mathcal{A}_{\pmb{\zeta}}}\pmb{u}_2 = 0.
        \end{equation}
        \item We have the following equality in $\Omega_\ep$:
        \begin{equation}\label{second residual momentum equation}
            \tp{X - \tp{4 + \ep^2\Bar{\upgamma}}e_1}\cdot\grad^{\mathcal{A}_{\pmb{\zeta}}}u_2 + u\cdot\grad^{\mathcal{A}_{\pmb{\zeta}}}X_2 + \pd_2^{\mathcal{A}_{\pmb{\zeta}}}p - \upmu\Delta^{\mathcal{A}_{\pmb{\zeta}}}u_2 + \mathfrak{f}_2 + \ep^{-2}\mathfrak{S}_2 = 0,
        \end{equation}
        where
        \begin{equation}\label{shallow water no 2}
            \mathfrak{S}_2 = \ep\tp{P_1 + \upmu U_1'} + y\pmb{\zeta}\tp{P_2 + \tp{4 - U}U'' + \tp{U'}^2 + \upmu U''' + \upmu U_2'}
        \end{equation}
        and $\mathfrak{f}_2$ is given by~\eqref{residual forcing no 2}.
    \end{enumerate}
\end{propC}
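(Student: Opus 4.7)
The plan is to mimic exactly the proof of Proposition~\ref{prop on first momentum residuals}: substitute the ansatz~\eqref{ansatz_1}--\eqref{ansatz_3} into the second momentum equation~\eqref{second momentum equation}, divide through by $\ep^2$, and sort the resulting terms into three categories according to their behavior as $\ep \to 0$: the singular $\ep^{-2}$ and $\ep^{-1}$ pieces that depend only on the ODE unknowns $H,U,U_1,U_2,P,P_1,P_2$ (these must add up to $\mathfrak{S}_2/\ep^2$), the $O(1)$ linear pieces involving the residual unknowns $u,p$ (these form the residual Stokes operator in~\eqref{second residual momentum equation}), and everything else (which becomes the forcing $\mathfrak{f}_2$ defined by~\eqref{residual forcing no 2}). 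The equivalence of the two items then follows immediately from uniqueness of this sorting.

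The computation proceeds term by term on the three pieces of~\eqref{second momentum equation}. For the advective derivative, using $V = U e_1 - y\pmb{\zeta} U' e_2$ and the identities $\pd_2^{\mathcal{A}_{\pmb{\zeta}}} V_2 = -U'$ and $\pd_1^{\mathcal{A}_{\pmb{\zeta}}} V_2 = -y\pmb{\zeta} U''$ (where the $\pmb{\zeta}'$-terms cancel between $\pd_1$ and the $-y\pmb{\zeta}'/\pmb{\zeta}$ correction in $\mathcal{A}_{\pmb{\zeta}}$), the leading contribution $(V - 4e_1)\cdot\grad^{\mathcal{A}_{\pmb{\zeta}}} V_2$ collapses to $y\pmb{\zeta}\bigl[(4-U)U'' + (U')^2\bigr]$, which is exactly the $y\pmb{\zeta}$-coefficient of the second grouping in~\eqref{shallow water no 2}. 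Everything remaining from the advective derivative, including the $\Bar{\upgamma}$-coupling $\Bar{\upgamma} y\pmb{\zeta} U''$, the cross terms $(V - (4+\ep^2\Bar{\upgamma})e_1)\cdot\grad^{\mathcal{A}_{\pmb{\zeta}}} W_2 + W\cdot\grad^{\mathcal{A}_{\pmb{\zeta}}} V_2$, and the purely higher-order pieces $\ep^2 W\cdot\grad^{\mathcal{A}_{\pmb{\zeta}}} W_2 + \ep^2 u\cdot\grad^{\mathcal{A}_{\pmb{\zeta}}} u_2$, is sorted to $\mathfrak{f}_2$, while the two mixed linear-in-$u$ contributions $(X - (4+\ep^2\Bar{\upgamma})e_1)\cdot\grad^{\mathcal{A}_{\pmb{\zeta}}} u_2 + u\cdot\grad^{\mathcal{A}_{\pmb{\zeta}}} X_2$ form part of the residual PDE in~\eqref{second residual momentum equation}.

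For the pressure term, since $P$ is independent of $y$, $\pd_2^{\mathcal{A}_{\pmb{\zeta}}}P = 0$, while a direct computation from~\eqref{ansatz_3} gives $\pd_2^{\mathcal{A}_{\pmb{\zeta}}} Q = P_1/\ep + y\pmb{\zeta} P_2/\ep^2$, so after the $\ep^{-2}$ scaling we recover precisely the $P_1$ and $P_2$ terms contributing to~\eqref{shallow water no 2}, plus the genuine residual pressure $\pd_2^{\mathcal{A}_{\pmb{\zeta}}} p$ appearing in~\eqref{second residual momentum equation}. The dissipation $-\upmu\Delta^{\mathcal{A}_{\pmb{\zeta}}} \pmb{u}_2$ is the most laborious step: we split $\Delta^{\mathcal{A}_{\pmb{\zeta}}} = (\pd_1^{\mathcal{A}_{\pmb{\zeta}}})^2 + (\pd_2^{\mathcal{A}_{\pmb{\zeta}}})^2$ and apply it separately to $V_2$, $W_2$, and $u_2$; the $(\pd_2^{\mathcal{A}_{\pmb{\zeta}}})^2 V_2$ contribution vanishes and the $(\pd_1^{\mathcal{A}_{\pmb{\zeta}}})^2 V_2$ contribution produces $-\upmu y\pmb{\zeta} U'''$ at the $\ep^{-2}$ scale (contributing to $\mathfrak{S}_2$), while the analogous $W_2$ terms generate both the $\ep\upmu U_1'$ and the $y\pmb{\zeta}\upmu U_2'$ pieces of~\eqref{shallow water no 2} together with the $\upmu(\pd_1^{\mathcal{A}_{\pmb{\zeta}}})^2 W_2$ residue that is absorbed into $\mathfrak{f}_2$. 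The remaining $-\upmu\Delta^{\mathcal{A}_{\pmb{\zeta}}} u_2$ joins~\eqref{second residual momentum equation}.

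The main nuisance, and the only real obstacle, will be the book-keeping: unlike in Proposition~\ref{prop on first momentum residuals}, where $V_1 = U$ is independent of $y$, here $V_2$ and $W_2$ both carry explicit $y$-dependence, so every $\mathcal{A}_{\pmb{\zeta}}$-derivative produces cross-terms involving $\pmb{\zeta}'$ that must be shown to cancel in precisely the right combinations to leave only the $y$-constant and $y$-linear residues anticipated by the structure of $\mathfrak{S}_2$ in~\eqref{shallow water no 2}. Once these cancellations are verified, the identification of $\mathfrak{f}_2$ with~\eqref{residual forcing no 2} and of $\mathfrak{S}_2$ with~\eqref{shallow water no 2} is tautological, and the equivalence between the two items follows.
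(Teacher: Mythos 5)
Your proposal is correct and follows the same term-by-term sort-and-cancel strategy as the paper's proof: the advective, pressure, and dissipation contributions are expanded identically, the $\pmb{\zeta}'$-cancellations you verify (e.g.\ $\pd_1^{\mathcal{A}_{\pmb{\zeta}}}V_2 = -y\pmb{\zeta}U''$) are exactly the ones that make the leading-order piece $y\pmb{\zeta}\bigl[(4-U)U'' + (U')^2\bigr]$ come out clean, and your sorting of the remaining terms into $\mathfrak{S}_2$, $\mathfrak{f}_2$, and the residual Stokes operator matches the paper exactly.
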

\begin{proof}
    The strategy is the same as that of Proposition~\ref{prop on first momentum residuals}. We begin by considering the advective derivative.
    \begin{multline}
        \ep^{-2}\tp{\pmb{u} - \tp{4 + \ep^2\Bar{\upgamma}}e_1}\cdot\grad^{\mathcal{A}_{\pmb{\zeta}}}\pmb{u}_2 = \ep^{-2}\tp{X - (4 + \ep^2\Bar{\upgamma})e_1}\cdot\grad^{\mathcal{A}_{\pmb{\zeta}}}X_2\\
        + \tp{X - \tp{4 + \ep^2\Bar{\upgamma}}e_1}\cdot\grad^{\mathcal{A}_{\pmb{\zeta}}}u_2 + u\cdot\grad^{\mathcal{A}_{\pmb{\zeta}}}X_2 + \ep^2 u\cdot\grad^{\mathcal{A}_{\pmb{\zeta}}}u_2.
    \end{multline}
    The right hand side's second and third terms are sorted to~\eqref{second residual momentum equation} and the final term belongs to~\eqref{residual forcing no 2}. The first term we now expand further
    \begin{multline}
        \ep^{-2}\tp{X - (4 + \ep^2\Bar{\upgamma})e_1}\cdot\grad^{\mathcal{A}_{\pmb{\zeta}}}X_2 = \ep^{-2}y\pmb{\zeta}\tp{\tp{4 + \ep^2\Bar{\upgamma} - U}U'' + \tp{U'}^2} \\+ \tp{V - \tp{4 + \ep^2\Bar{\upgamma}}e_1}\cdot\grad^{\mathcal{A}_{\pmb{\zeta}}}W_2
        + W\cdot\grad^{\mathcal{A}_{\pmb{\zeta}}}V_2 + \ep^2 W\cdot\grad^{\mathcal{A}_{\pmb{\zeta}}}W_2.
    \end{multline}
    The term $\Bar{\upgamma}y\pmb{\zeta}U''$ along with the final three terms are residual forcing~\eqref{residual forcing no 2} while the rest of the first term belongs to~\eqref{shallow water no 2}.

    For the pressure's contribution, we calculate
    \begin{equation}
        \ep^{-2}\pd_2^{\mathcal{A}_{\pmb{\zeta}}}\pmb{p} =\ep^{-2}\tp{\ep P_1 + y\pmb{\zeta}P_2} + \pd_2^{\mathcal{A}_{\pmb{\zeta}}}p
    \end{equation}
    and sort the first term to~\eqref{shallow water no 2} and the second term to~\eqref{second residual momentum equation}.

    Finally, we consider the dissipation's contribution by computing that
    \begin{equation}
        \ep^{-2}\Delta^{\mathcal{A}_{\pmb{\zeta}}}\pmb{u}_2 = -\ep^{-2}\tp{\ep U_1' + y\pmb{\zeta}\tp{U''' + U_2'}} + \tp{\pd_1^{\mathcal{A}_{\pmb{\zeta}}}}^2W_2 + \Delta^{\mathcal{A}_{\pmb{\zeta}}} u_2.
    \end{equation}
    We sort the first term to~\eqref{shallow water no 2}, the second to~\eqref{residual forcing no 2}, and the third to~\eqref{second residual momentum equation}.
\end{proof}

The structure of the ansatz leads to a pleasantly simple divergence equation on remainders.

\begin{propC}[Incompressibility constraint's residuals]\label{prop on incompressibility residuals}
    Assuming the first and second hypotheses of Theorem~\ref{thm on the shallow water and residual equations}, the following are equivalent
    \begin{enumerate}
        \item We have the following equality in $\Omega_\ep$:
        \begin{equation}\label{divergence equation}
            \grad^{\mathcal{A}_{\pmb{\zeta}}}\cdot\pmb{u} = 0.
        \end{equation}
        \item We have the following equality in $\Omega_\ep$:
        \begin{equation}\label{residual divergence equation}
            \grad^{\mathcal{A}_{\pmb{\zeta}}}\cdot u = 0.
        \end{equation}
    \end{enumerate}
\end{propC}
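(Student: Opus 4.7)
The plan is to exploit the algebraic structure of the ansatz and show the stronger identity $\grad^{\mathcal{A}_{\pmb{\zeta}}}\cdot X = 0$. Once this is in hand, the bilinearity of the $\mathcal{A}_{\pmb{\zeta}}$-divergence together with the decomposition $\pmb{u} = X + \ep^2 u$ from~\eqref{ansatz_1} immediately gives $\grad^{\mathcal{A}_{\pmb{\zeta}}}\cdot\pmb{u} = \ep^2\grad^{\mathcal{A}_{\pmb{\zeta}}}\cdot u$, from which the equivalence of~\eqref{divergence equation} and~\eqref{residual divergence equation} is immediate upon dividing by $\ep^2$. No analysis is needed; the entire task reduces to a pair of direct computations using the explicit form of $\mathcal{A}_{\pmb{\zeta}}$ from~\eqref{geometry matrices}, namely $\pd_1^{\mathcal{A}_{\pmb{\zeta}}}f = \pd_1 f - (y\pmb{\zeta}'/\pmb{\zeta})\pd_2 f$ and $\pd_2^{\mathcal{A}_{\pmb{\zeta}}}f = (1/\pmb{\zeta})\pd_2 f$.

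First I would verify that $\grad^{\mathcal{A}_{\pmb{\zeta}}}\cdot V = 0$: since $V_1 = U$ depends only on $x$, we have $\pd_1^{\mathcal{A}_{\pmb{\zeta}}}V_1 = U'$, while $V_2 = -y\pmb{\zeta}U'$ gives $\pd_2^{\mathcal{A}_{\pmb{\zeta}}}V_2 = (1/\pmb{\zeta})(-\pmb{\zeta}U') = -U'$, and these cancel.

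Next, the same verification for $W$. Writing $s = y\pmb{\zeta}/\ep$ as shorthand, $W_1 = sU_1 + s^2U_2/2$ and $W_2 = -\ep(s^2U_1'/2 + s^3U_2'/6)$. One computes $\pd_2^{\mathcal{A}_{\pmb{\zeta}}}W_2 = (1/\pmb{\zeta})\pd_2 W_2 = -(sU_1' + s^2 U_2'/2)$. For the other term, $\pd_1 W_1$ contains the contributions $(y\pmb{\zeta}'/\ep)U_1 + sU_1' + s(y\pmb{\zeta}'/\ep)U_2 + s^2 U_2'/2$, while $\pd_2 W_1 = (\pmb{\zeta}/\ep)U_1 + s(\pmb{\zeta}/\ep)U_2$, so the correction $-(y\pmb{\zeta}'/\pmb{\zeta})\pd_2 W_1 = -(y\pmb{\zeta}'/\ep)U_1 - s(y\pmb{\zeta}'/\ep)U_2$ cancels precisely the first and third terms of $\pd_1 W_1$, leaving $\pd_1^{\mathcal{A}_{\pmb{\zeta}}}W_1 = sU_1' + s^2 U_2'/2$, which cancels $\pd_2^{\mathcal{A}_{\pmb{\zeta}}}W_2$. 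Thus $\grad^{\mathcal{A}_{\pmb{\zeta}}}\cdot W = 0$, and combining gives $\grad^{\mathcal{A}_{\pmb{\zeta}}}\cdot X = \grad^{\mathcal{A}_{\pmb{\zeta}}}\cdot V + \ep^2 \grad^{\mathcal{A}_{\pmb{\zeta}}}\cdot W = 0$.

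There is no real obstacle here; the only thing to be careful about is that the ansatz pieces $V$ and $W$ have been engineered so that the geometric corrections in $\pd_1^{\mathcal{A}_{\pmb{\zeta}}}$ coming from the $y\pmb{\zeta}'/\pmb{\zeta}$ term exactly match the vertical derivatives arising from the product $y\pmb{\zeta}$, which is what produces the cancellations above. This is consistent with the philosophy used in the derivation of the shallow water system, where the depth-averaged velocity is promoted to a divergence-free vector field in $\Omega_{\ep\pmb{\zeta}}$ by appending lower-order polynomial-in-$y$ corrections dictated precisely by this divergence-free requirement.
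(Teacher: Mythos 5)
Your proof is correct and takes the same approach as the paper's: the paper's proof simply asserts that $\grad^{\mathcal{A}_{\pmb{\zeta}}}\cdot X = 0$ ``by construction'' and invokes $\pmb{u} = X + \ep^2 u$, while you supply the explicit verification of that claim, which checks out. The two cancellations $\grad^{\mathcal{A}_{\pmb{\zeta}}}\cdot V = 0$ and $\grad^{\mathcal{A}_{\pmb{\zeta}}}\cdot W = 0$ are computed correctly.
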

\begin{proof}
    By construction $\grad^{\mathcal{A}_{\pmb{\zeta}}}\cdot X = 0$ and $\pmb{u} = X + \ep^2 u$.
\end{proof}

Next, we take a look at the $e_1$-dynamic boundary condition in~\eqref{FBINSE, param. tuned and funny flux}. In contrast with the previous computations, we now encounter a new object category for the residual equation. This is the adversarial linear operator which appears in equation~\eqref{adversarial linear operator}.

\begin{propC}[First dynamic boundary condition's residuals]\label{prop on first DBC residual}
    Assuming the first and second hypotheses of Theorem~\ref{thm on the shallow water and residual equations}, the following are equivalent.
    \begin{enumerate}
        \item We have the following equality on $\Sigma_\ep$:
        \begin{equation}\label{first DBC}
            -\tp{\pmb{p} - \upmu\mathbb{D}^{\mathcal{A}_{\pmb{\zeta}}}\pmb{u}}\mathcal{N}_{\ep\pmb{\zeta}}\cdot e_1 + \tp{\m{g}\pmb{\zeta} - \ep\upsigma\mathcal{H}\tp{\ep\pmb{\zeta}}}\mathcal{N}_{\ep\pmb{\zeta}}\cdot e_1 = 0.
        \end{equation}
        \item We have the following equality on $\Sigma_\ep$:
        \begin{equation}\label{first residual DBC}
            -\tp{p - \upmu\mathbb{D}^{\mathcal{A}_{\pmb{\zeta}}}u}\mathcal{N}_{\ep\pmb{\zeta}}\cdot e_1 + \ep\mathfrak{L}\eta + \mathfrak{f}_3 - \mathfrak{r}_3 + \ep^{-1}\mathfrak{S}_3 = 0,
        \end{equation}
        where
        \begin{equation}\label{shallow water no 3}
            \mathfrak{S}_3 =\tp{P - \m{g}H - 2\upmu U'}H' + \upmu U_1 + \upmu H\tp{U_2 - U''} + \ep\mathfrak{r}_3,
        \end{equation}
        $\mathfrak{L}\eta$ is given by~\eqref{adversarial linear operator}, and $\mathfrak{f}_3$ is given by~\eqref{residual forcing no 3}.
    \end{enumerate}
\end{propC}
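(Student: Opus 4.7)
The proof follows the same template as Propositions~\ref{prop on conservation of flux residuals}, \ref{prop on first momentum residuals}, \ref{prop on second momentum residuals}, and \ref{prop on incompressibility residuals}: plug the ansatz~\eqref{ansatz_1}--\eqref{ansatz_3} into~\eqref{first DBC}, rescale by the appropriate power of $\ep$, and sort the resulting terms into the four categories appearing on the left hand side of~\eqref{first residual DBC}. The novelty relative to those earlier propositions is the emergence of the adversarial linear operator $\mathfrak{L}\eta$, which collects precisely those contributions that are linear in $\eta$ paired with background data built from $\tp{H,U,P}$.

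First I would expand the $e_1$-component of the stress boundary condition using $\mathcal{N}_{\ep\pmb{\zeta}} = \tp{-\ep\pmb{\zeta}',1}$, rewriting the left side of~\eqref{first DBC} as
\begin{equation*}
    \ep\pmb{\zeta}'\tp{\pmb{p} - \m{g}\pmb{\zeta}} + \upmu\tp{\pd_2^{\mathcal{A}_{\pmb{\zeta}}}\pmb{u}_1 + \pd_1^{\mathcal{A}_{\pmb{\zeta}}}\pmb{u}_2} - 2\ep\upmu\pmb{\zeta}'\pd_1^{\mathcal{A}_{\pmb{\zeta}}}\pmb{u}_1 + \ep^2\upsigma\mathcal{H}\tp{\ep\pmb{\zeta}}\pmb{\zeta}',
\end{equation*}
and then multiply through by $\ep^{-1}$ so that the leading $O\tp{\ep^{-1}}$ terms are comparable to $\ep^{-1}\mathfrak{S}_3$. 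I would then substitute $\pmb{\zeta} = H + \ep^2\eta$, $\pmb{p} = P + \ep^2 Q + \ep^2 p$, and $\pmb{u} = V + \ep^2 W + \ep^2 u$, and evaluate at $y = \ep$, where $y\pmb{\zeta}/\ep = \pmb{\zeta}$, so that the ansatz collapses to explicit polynomials in $\pmb{\zeta}$ times $\tp{U,U_1,U_2,P_1,P_2}$.

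The bookkeeping splits naturally along three axes. Terms proportional to the Stokes residual $\tp{u,p}$ with no $\eta$-dependence coalesce directly into $-\tp{p - \upmu\mathbb{D}^{\mathcal{A}_{\pmb{\zeta}}}u}\mathcal{N}_{\ep\pmb{\zeta}}\cdot e_1$ by undoing the stress tensor computation. The $O\tp{\ep^{-1}}$ terms depending only on the shallow water unknowns assemble into $\mathfrak{S}_3/\ep$: the contribution $\upmu\pd_2^{\mathcal{A}_{\pmb{\zeta}}}V_1/\ep$ together with the $W_1$ piece produces $\upmu U_1$, the term $\upmu\pd_1^{\mathcal{A}_{\pmb{\zeta}}}V_2/\ep$ yields $-\upmu H U''$, and combining with $\ep\pmb{\zeta}'\tp{\pmb{p} - \m{g}\pmb{\zeta}}/\ep$ gives $\tp{P - \m{g}H - 2\upmu U'}H'$ together with $\upmu H U_2$ coming from the top-layer $W$-derivative. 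The linear-in-$\eta$ pieces arise from $\pmb{\zeta}' = H' + \ep^2\eta'$ inside the pressure--gravity term and from $\pmb{\zeta} = H + \ep^2\eta$ inside the viscous terms; they reassemble precisely into $\ep\mathfrak{L}\eta$ as defined in~\eqref{adversarial linear operator}. Everything else --- products like $\ep Q\pmb{\zeta}'$, the $W$-derivative remainder $-2\ep\upmu\pmb{\zeta}'\pd_1^{\mathcal{A}_{\pmb{\zeta}}}W_1$, the vertical piece $\upmu\pd_1^{\mathcal{A}_{\pmb{\zeta}}}W_2$, the mean curvature contribution $\upsigma\mathcal{H}\tp{\ep\pmb{\zeta}}\pmb{\zeta}'$, and the purely residual gravity $-\ep^3\m{g}\eta\eta'$ --- matches $\mathfrak{f}_3$ of~\eqref{residual forcing no 3} term by term.

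The main obstacle is combinatorial rather than conceptual: the vertical dependence of $W$, together with the $1/\pmb{\zeta}$ factors hidden in $\mathcal{A}_{\pmb{\zeta}}$, produce many terms that must be carefully Taylor expanded against $\pmb{\zeta} = H + \ep^2\eta$ in order to separate the linear-in-$\eta$ content from the $\eta$-free content. I would organize the calculation as four disjoint subcomputations --- one for each of the four brackets displayed above --- and then re-partition the resulting monomials into the four categories of~\eqref{first residual DBC}. This is strictly parallel to what was carried out in the preceding propositions, so no fundamentally new technique is required beyond patience in tracking signs and orders of $\ep$.
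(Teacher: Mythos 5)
Your plan follows the paper's proof almost exactly: expand the $e_1$-stress using $\mathcal{N}_{\ep\pmb{\zeta}}=(-\ep\pmb{\zeta}',1)$, plug in the ansatz~\eqref{ansatz_1}--\eqref{ansatz_3}, rescale, and sort into the four brackets of~\eqref{first residual DBC}. Your initial expansion of the left side of~\eqref{first DBC} is correct, and your identification of where $\mathfrak{S}_3$, $\mathfrak{L}\eta$, and $\mathfrak{f}_3$ come from matches the paper's term-by-term accounting (the paper computes pressure, viscous stress, and gravity--capillary pieces separately and only expands $\pmb{\zeta}=H+\ep^2\eta$ after collecting the $O(\ep^{-1})$ contributions, but this is a cosmetic difference).

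There is one systematic slip you should correct before carrying out the details: the normalization should be $\ep^{-2}$, not $\ep^{-1}$. Every leading-order term in your display carries an explicit factor of $\ep$ (e.g.\ $\ep\pmb{\zeta}'P$, and $\pd_2^{\mathcal{A}_{\pmb{\zeta}}}\pmb{u}_1|_{y=\ep}=\ep(U_1+\pmb{\zeta}U_2)+O(\ep^2)$), so after multiplying by $\ep^{-1}$ they are $O(1)$, not the $O(\ep^{-1})$ needed to match $\ep^{-1}\mathfrak{S}_3$. Likewise the residual pressure appears in the raw expansion as $\ep\cdot\ep^2 p\pmb{\zeta}'=\ep^3\pmb{\zeta}'p$, and the target $-p\mathcal{N}_{\ep\pmb{\zeta}}\cdot e_1=\ep\pmb{\zeta}'p$ demands an $\ep^{-2}$ factor. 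This error propagates through your description of the shallow-water contributions (e.g.\ ``$\upmu\pd_1^{\mathcal{A}_{\pmb{\zeta}}}V_2/\ep$'' should be $/\ep^2$; and $\pd_2^{\mathcal{A}_{\pmb{\zeta}}}V_1\equiv 0$ since $V_1=U$ is $y$-independent, so that particular ``$V_1$'' contribution is vacuous --- the $U_1$ term really enters via $\pd_2^{\mathcal{A}_{\pmb{\zeta}}}(\ep^2 W_1)$). Once the $\ep^{-2}$ normalization is used consistently, the rest of your sorting is sound.
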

\begin{proof}
Again the strategy is similar to that of Proposition~\ref{prop on first momentum residuals}. First we compute that the pressure satisfies
    \begin{equation}\label{initial pressure}
        -\ep^{-2}\pmb{p}\mathcal{N}_{\ep\pmb{\zeta}}\cdot e_1 = \ep^{-1}P\pmb{\zeta}' - \tp{Q + p}\mathcal{N}_{\ep\pmb{\zeta}}\cdot e_1.
    \end{equation}
The $Q$ term above belongs to~\eqref{residual forcing no 3}, while the $p$ term is seen in~\eqref{first residual DBC}.

We also compute that the viscous stress satisfies
\begin{multline}\label{initial viscous stress}
    \ep^{-2}\mathbb{D}^{\mathcal{A}_{\pmb{\zeta}}}\pmb{u}\mathcal{N}_{\ep\pmb{\zeta}}\cdot e_1 = -\ep^{-1}2\pd_1^{\mathcal{A}_{\pmb{\zeta}}}X_1\pmb{\zeta}' + \ep^{-2}\tp{\pd_2^{\mathcal{A}_{\pmb{\zeta}}}X_1 + \pd_1^{\mathcal{A}_{\pmb{\zeta}}}X_2} + \mathbb{D}^{\mathcal{A}_{\pmb{\zeta}}}u\mathcal{N}_{\ep\pmb{\zeta}}\cdot e_1\\
     =\ep^{-1}\tp{-2U'\pmb{\zeta}' + U_1 + \pmb{\zeta}U_2 - \pmb{\zeta}U''} - 2\ep\pd_1^{\mathcal{A}_{\pmb{\zeta}}}W_1\pmb{\zeta}' + \pd_1^{\mathcal{A}_{\pmb{\zeta}}}W_2 + \mathbb{D}^{\mathcal{A}_{\pmb{\zeta}}}u\mathcal{N}_{\ep\pmb{\zeta}}\cdot e_1.
\end{multline}
The right hand side's second and third terms are sorted to~\eqref{residual forcing no 3}, and the final term is the residual viscous stress in~\eqref{first residual DBC}.

Now we consider the gravity-capillary operator:
\begin{equation}\label{initial gravity capillary}
    \ep^{-2}\tp{\m{g}\pmb{\zeta} - \ep\upsigma\mathcal{H}\tp{\ep\pmb{\zeta}}}\mathcal{N}_{\ep\pmb{\zeta}}\cdot e_1 = -\ep^{-1}\m{g}\pmb{\zeta}\pmb{\zeta}' + \upsigma\mathcal{H}\tp{\ep\pmb{\zeta}}\pmb{\zeta}'.
\end{equation}
The final term above is also a residual forcing, and so is given to~\eqref{residual forcing no 3}.

It remains to examine the right hand side contributions in~\eqref{initial pressure}, \eqref{initial viscous stress}, and~\eqref{initial gravity capillary} which have a leading coefficient of $\ep^{-1}$. We sum these and expand $\pmb{\zeta} = H + \ep^2\eta$ to get
\begin{multline}
    \ep^{-1}\tp{\tp{P - \m{g}\pmb{\zeta} - 2\upmu U'}\pmb{\zeta}' + \upmu U_1 + \upmu\pmb{\zeta}\tp{U_2 - U''}} = \ep^{-1}\tp{\tp{P - \m{g}H - 2\upmu U'}H' + \upmu U_1 + \upmu H\tp{U_2 - U''}}\\
    + \ep\tp{\tp{P - 2\upmu U'}\eta' + \upmu\eta\tp{U_2 - U''} - \m{g}H\eta' - \m{g}\eta H' - \ep^2\m{g}\eta\eta'}.
\end{multline}
The first grouping of terms is the shallow water contribution in~\eqref{shallow water no 3}. The next group is the sum of the adversarial linear operator~\eqref{adversarial linear operator} with a forcing term belonging to~\eqref{residual forcing no 3}.
\end{proof}

Now we consider the $e_2$-dynamic boundary condition.

\begin{propC}[Second dynamic boundary condition's residuals]\label{prop on second dynamic boundary condition residual}
    Assuming the first and second hypotheses of Theorem~\ref{thm on the shallow water and residual equations}, the following are equivalent.
    \begin{enumerate}
        \item We have the following equality on $\Sigma_\ep$:
        \begin{equation}\label{second DBC}
            -\tp{\pmb{p} - \upmu\mathbb{D}^{\mathcal{A}_{\pmb{\zeta}}}\pmb{u}}\mathcal{N}_{\ep\pmb{\zeta}}\cdot e_2 + \tp{\m{g}\pmb{\zeta} - \ep\upsigma\mathcal{H}\tp{\ep\pmb{\zeta}}}\mathcal{N}_{\ep\pmb{\zeta}}\cdot e_2 = 0.
        \end{equation}
        \item We have the following equality on $\Sigma_\ep$:
        \begin{equation}\label{second residual DBC}
            -\tp{p - \upmu\mathbb{D}^{\mathcal{A}_{\pmb{\zeta}}}u}\mathcal{N}_{\ep\pmb{\zeta}}\cdot e_2 + \m{g}\eta + \mathfrak{f}_4 - \mathfrak{r}_4 + \ep^{-2}\mathfrak{S}_4 = 0,
        \end{equation}
        where
        \begin{equation}\label{shallow water no 4}
            \mathfrak{S}_4 = -P - 2\upmu U' + \m{g}H + \ep^2\mathfrak{r}_4
        \end{equation}
        and $\mathfrak{f}_4$ is given by~\eqref{residual forcing no 4}.
    \end{enumerate}
\end{propC}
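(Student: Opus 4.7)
The plan is to follow the routine established in Propositions~\ref{prop on first momentum residuals} through~\ref{prop on first DBC residual}: substitute the ansatz~\eqref{ansatz_1}--\eqref{ansatz_3} into~\eqref{second DBC}, multiply through by $\ep^{-2}$, and sort the resulting terms into the shallow water contribution $\ep^{-2}\mathfrak{S}_4$, the residual forcing $\mathfrak{f}_4$, and the residual equation~\eqref{second residual DBC}. A minor simplification over Proposition~\ref{prop on first DBC residual} is that $\mathcal{N}_{\ep\pmb{\zeta}}\cdot e_2 = 1$, so no $\pmb{\zeta}'$-factor rides on the scalar projection.

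First I would expand the pressure as $-\ep^{-2}\pmb{p}\mathcal{N}_{\ep\pmb{\zeta}}\cdot e_2 = -\ep^{-2}P - Q - p$, placing the $P$-piece into $\ep^{-2}\mathfrak{S}_4$, the $-Q$-piece into $\mathfrak{f}_4$, and the $-p$-piece into~\eqref{second residual DBC}. Next I would expand the gravity-capillary component as $\ep^{-2}\tp{\m{g}H + \ep^2\m{g}\eta} - \ep^{-1}\upsigma\mathcal{H}\tp{\ep\pmb{\zeta}}$, with the $\m{g}H$-piece joining $\ep^{-2}\mathfrak{S}_4$, the $\m{g}\eta$-piece appearing in~\eqref{second residual DBC}, and the $-\ep^{-1}\upsigma\mathcal{H}\tp{\ep\pmb{\zeta}}$-piece joining $\mathfrak{f}_4$.

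The only nontrivial computation is the viscous stress. Splitting $\pmb{u} = V + \ep^2 W + \ep^2 u$ gives three contributions. The $\ep^2 u$-contribution, after the $\ep^{-2}$ factor cancels and multiplying by $\upmu$, is precisely the $\upmu\mathbb{D}^{\mathcal{A}_{\pmb{\zeta}}}u\mathcal{N}_{\ep\pmb{\zeta}}\cdot e_2$ term of~\eqref{second residual DBC}, while the $\ep^2 W$-contribution similarly yields $\upmu\mathbb{D}^{\mathcal{A}_{\pmb{\zeta}}}W\mathcal{N}_{\ep\pmb{\zeta}}\cdot e_2$, matching the corresponding summand in $\mathfrak{f}_4$. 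For the leading-order $V$-contribution, I would use the elementary identities $\pd_2^{\mathcal{A}_{\pmb{\zeta}}}V_1 = 0$, $\pd_1^{\mathcal{A}_{\pmb{\zeta}}}V_2 = -y\pmb{\zeta}U''$, and $\pd_2^{\mathcal{A}_{\pmb{\zeta}}}V_2 = -U'$ to reduce $\mathbb{D}^{\mathcal{A}_{\pmb{\zeta}}}V\mathcal{N}_{\ep\pmb{\zeta}}\cdot e_2$ to $\ep y\pmb{\zeta}\pmb{\zeta}'U'' - 2U'$. Evaluated at $y = \ep$ and multiplied by $\ep^{-2}\upmu$, this yields $\upmu\pmb{\zeta}\pmb{\zeta}'U'' - 2\upmu U'/\ep^2$: the first summand joins $\mathfrak{f}_4$ and the second slots into $\ep^{-2}\mathfrak{S}_4$ by combining with the $-P$- and $\m{g}H$-pieces previously sorted there.

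Adding all contributions together reproduces exactly~\eqref{second residual DBC}, and conversely every step is reversible (dividing by $\ep^2$, substituting the ansatz, collecting terms), so the equivalence follows. There is no real obstacle here, as the statement is a bookkeeping identity rather than an analytic result; the only item demanding care is the cancellation inside $\pd_1^{\mathcal{A}_{\pmb{\zeta}}}V_2 = -y\pmb{\zeta}U''$, where the cross term $-y\pmb{\zeta}'U'$ produced by $\pd_1 V_2$ is killed against the cross term coming from $-\tp{y\pmb{\zeta}'/\pmb{\zeta}}\pd_2 V_2$, since precisely this cancellation prevents any spurious $\ep^{-2}\pmb{\zeta}'U'$-type term from polluting the sort.
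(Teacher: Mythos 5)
Your proof is correct and follows the same term-by-term sorting strategy as the paper's proof; you simply make the viscous-stress bookkeeping more explicit by computing $\pd_2^{\mathcal{A}_{\pmb\zeta}}V_1$, $\pd_1^{\mathcal{A}_{\pmb\zeta}}V_2$, and $\pd_2^{\mathcal{A}_{\pmb\zeta}}V_2$ individually, including the cancellation of the $-y\pmb{\zeta}'U'$ cross terms, whereas the paper just records the resulting compact identity $\ep^{-2}\mathbb{D}^{\mathcal{A}_{\pmb\zeta}}\pmb{u}\mathcal{N}_{\ep\pmb\zeta}\cdot e_2 = -\ep^{-2}2U' + \pmb{\zeta}\pmb{\zeta}'U'' + \mathbb{D}^{\mathcal{A}_{\pmb\zeta}}W\mathcal{N}_{\ep\pmb\zeta}\cdot e_2 + \mathbb{D}^{\mathcal{A}_{\pmb\zeta}}u\mathcal{N}_{\ep\pmb\zeta}\cdot e_2$. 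The result and the sorting into $\mathfrak{S}_4$, $\mathfrak{f}_4$, and~\eqref{second residual DBC} agree with the paper.
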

\begin{proof}
    For the pressure contribution, we compute that
    \begin{equation}
        -\ep^{-2}\pmb{p}\mathcal{N}_{\ep\pmb{\zeta}}\cdot e_2 = -\ep^{-2}P - Q - p
    \end{equation}
    and sort the first, second, and third terms into~\eqref{shallow water no 4}, \eqref{residual forcing no 4}, and~\eqref{second residual DBC}, respectively.

    For the viscous stress contribution, we compute that
    \begin{equation}
        \ep^{-2}\mathbb{D}^{\mathcal{A}_{\pmb{\zeta}}}\pmb{u}\mathcal{N}_{\ep\pmb{\zeta}}\cdot e_2 = -\ep^{-2}2U' + \pmb{\zeta}\pmb{\zeta}'U'' + \mathbb{D}^{\mathcal{A}_{\pmb{\zeta}}}W\mathcal{N}_{\ep\pmb{\zeta}}\cdot e_2 + \mathbb{D}^{\mathcal{A}_{\pmb{\zeta}}}u\mathcal{N}_{\ep\pmb{\zeta}}\cdot e_2.
    \end{equation}
    The first term is of the shallow water category~\eqref{shallow water no 4}. The second and third terms are residual forcing~\eqref{residual forcing no 4}. The final term is the residual viscous stress appearing in~\eqref{second residual DBC}.

    Finally, for the gravity-capillary operator we find that
    \begin{equation}
        \ep^{-2}\tp{\m{g}\pmb{\zeta} - \ep\upsigma\mathcal{H}\tp{\ep\pmb{\zeta}}}\mathcal{N}_{\ep\pmb{\zeta}}\cdot e_2 = \ep^{-2}\m{g}H + \m{g}\eta - \ep^{-1}\upsigma\mathcal{H}\tp{\ep\pmb{\zeta}}
    \end{equation}
    and sort the right hand side's first term to~\eqref{shallow water no 4}, second term to~\eqref{second residual DBC}, and final term to~\eqref{residual forcing no 4}.
\end{proof}

Finally we handle the residuals for the Navier slip boundary condition.
\begin{propC}[Navier slip residuals]\label{prop on navier slip residuals}
    Assuming the first and second hypotheses of Theorem~\ref{thm on the shallow water and residual equations}, the following are equivalent.
    \begin{enumerate}
        \item We have the following equalities on $\Sigma_0$:
        \begin{equation}\label{navier slip}
            \pmb{u}_2 = 0
            \text{ and }
            \upmu\pd_2^{\mathcal{A}_{\pmb{\zeta}}}\pmb{u}_1 - \ep\m{a}\pmb{u}_1 = 0.
        \end{equation}
        \item We have the following equalities on $\Sigma_0$:
        \begin{equation}\label{residual navier slip}
            u_2 = 0 \text{ and } \upmu\pd_2^{\mathcal{A}_{\pmb{\zeta}}}u_1 - \ep\m{a}u_1 + \ep^{-1}\mathfrak{S}_5 = 0
        \end{equation}
        where
        \begin{equation}\label{shallow water no 5}
            \mathfrak{S}_5 = \upmu U_1 - \m{a}U.
        \end{equation}
    \end{enumerate}
\end{propC}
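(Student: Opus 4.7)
The plan is to mirror the strategy used in Propositions \ref{prop on conservation of flux residuals} through \ref{prop on second dynamic boundary condition residual}: substitute the ansatz (\ref{ansatz_1})--(\ref{ansatz_3}) into the Navier slip conditions (\ref{navier slip}), evaluate on $\Sigma_0 = \R \times \{0\}$, and sort terms according to their order in $\ep$. The present computation is substantially simpler than its five predecessors because the rigid bottom $\{y=0\}$ kills most of the ansatz.

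First I would observe that the higher-order corrector $W$ defined in (\ref{ansatz_2}) is built entirely from positive powers of $y\pmb{\zeta}/\ep$, and hence $W|_{y=0} = 0$. Similarly, the vertical component of the leading profile carries a factor of $y$, so $V_2|_{y=0} = 0$, while $V_1|_{y=0} = U$. Therefore on $\Sigma_0$ we have $X|_{y=0} = U e_1$ and $\pmb{u}|_{y=0} = U e_1 + \ep^2 u|_{y=0}$. The tangentiality condition $\pmb{u}_2 = 0$ on $\Sigma_0$ thus reduces exactly to $u_2 = 0$ on $\Sigma_0$, which is the first identity of (\ref{residual navier slip}).

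Next I would address the tangential stress component. Using the formula $\pd_2^{\mathcal{A}_{\pmb{\zeta}}} = \pd_2/\pmb{\zeta}$ inherited from (\ref{geometry matrices}) together with the explicit expression
\[
X_1 = U + \ep y \pmb{\zeta} U_1 + \tfrac{1}{2}y^2\pmb{\zeta}^2 U_2
\]
obtained from (\ref{ansatz_2}), one computes $\pd_2 X_1|_{y=0} = \ep\pmb{\zeta} U_1$, and hence $\pd_2^{\mathcal{A}_{\pmb{\zeta}}} \pmb{u}_1|_{y=0} = \ep U_1 + \ep^2 \pd_2^{\mathcal{A}_{\pmb{\zeta}}} u_1|_{y=0}$. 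Substituting this identity and $\pmb{u}_1|_{y=0} = U + \ep^2 u_1|_{y=0}$ into the second equation of (\ref{navier slip}) and collecting powers of $\ep$ yields
\[
\ep\tp{\upmu U_1 - \m{a} U} + \ep^2\bp{\upmu \pd_2^{\mathcal{A}_{\pmb{\zeta}}} u_1 - \ep\m{a} u_1} = 0 \text{ on } \Sigma_0.
\]
Dividing by $\ep^2$ then isolates the shallow water contribution $\mathfrak{S}_5 = \upmu U_1 - \m{a} U$ at order $\ep^{-1}$, leaving precisely the residual Navier slip condition stated in (\ref{residual navier slip}). Since every manipulation above is reversible, the two items are equivalent. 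There is no real obstacle in this argument — unlike the bulk and free-boundary computations, no nonlinear or higher-order residual forcing appears, because $W$ and $V_2$ both vanish on the rigid bottom.
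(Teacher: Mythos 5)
Your proposal is correct and follows the same approach as the paper: substitute the ansatz, evaluate on $\Sigma_0$, and sort by powers of $\ep$. You supply slightly more detail than the paper does, explicitly computing $X_1|_{y=0} = U$ and $\pd_2^{\mathcal{A}_{\pmb{\zeta}}}X_1|_{y=0} = \ep U_1$ rather than just stating the final sorted identity, but the substance and structure of the argument are identical.
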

\begin{proof}
    Since $\pmb{u} = X + \ep^2 u$ and by construction $X\cdot e_2 = 0$ on $\Sigma_0$, we find that $\pmb{u}_2=0$ if and only if $u_2=0$ on $\Sigma_0$. Next, we simply compute that
    \begin{equation}
        \ep^{-2}\tp{\upmu\pd_2^{\mathcal{A}_{\pmb{\zeta}}}\pmb{u}_1 - \ep\m{a}\pmb{u}_1} = \ep^{-1}\tp{\upmu U_1 - \m{a}U} + \upmu\pd_2^{\mathcal{A}_{\pmb{\zeta}}}u_1 - \ep\m{a}u_1
    \end{equation}
    and sort the first grouping of terms to~\eqref{shallow water no 5} and the remaining terms to~\eqref{residual navier slip}.
\end{proof}

We now synthesize the calculations of Propositions~\ref{prop on conservation of flux residuals}--\ref{prop on navier slip residuals} in order to prove Theorem~\ref{thm on the shallow water and residual equations}.

\begin{proof}[Proof of Theorem~\ref{thm on the shallow water and residual equations}]
    If the ODEs identities~\eqref{_SWE_0}, \eqref{_SWE_1}, and~\eqref{_SWE_2} hold, then we have $\mathfrak{S}_0 = \mathfrak{S}_1 = \dots = \mathfrak{S}_5 = 0$, where these are the shallow water contributions defined in~\eqref{shallow water no 0}, \eqref{shallow water no 1}, \eqref{shallow water no 2}, \eqref{shallow water no 3}, \eqref{shallow water no 4}, and~\eqref{shallow water no 5}. Thus, the satisfaction of $\pmb{\zeta}>0$ and~\eqref{first form of the residual PDEs} along with the sufficient directions of the previous family of propositions yield the first result, which establishes that $\tp{\pmb{\zeta},\pmb{u},\pmb{\eta}}$ solve~\eqref{FBINSE, param. tuned and funny flux}.  On the other hand, the necessary directions of these proposition yields the converse statement.
\end{proof}

\subsection{The shallow water bore map}\label{subsection on the shallow water bore map}

The free functional parameters $\mathfrak{r}_1$, $\mathfrak{r}_3$, and $\mathfrak{r}_4$ of Theorem~\ref{thm on the shallow water and residual equations} determine how the ODEs~\eqref{_SWE_0}, \eqref{_SWE_1}, and~\eqref{_SWE_2} are coupled to the PDEs~\eqref{first form of the residual PDEs}. The simple choice $\mathfrak{r}_1 = \mathfrak{r}_3 = \mathfrak{r}_4 = 0$, which corresponds to a complete decoupling, would allow us to first solve a closed system for $H$, $U$, $U_1$, $U_2$, $P$, $P_1$, and $P_2$ and then use these as coefficients and data terms in the residual PDEs for $\eta$, $u$, and $p$.  Unfortunately, with the complete decoupling strategy the solvability of the resulting PDE for the residuals is far from clear. The issue is the presence of adversarial linear terms in~\eqref{first form of the residual PDEs}, namely the terms $\tp{\ep\mathfrak{L}\eta,\m{g}\eta}$ appearing in the residual dynamic boundary condition and $u\cdot\grad^{\mathcal{A}_{\pmb{\zeta}}}X_1$ appearing in the $e_1$-residual momentum equation, which perturb us away from the Stokes system studied in Section~\ref{subsection on the stokes equations with stress boundary conditions}. Our strategy, therefore, is to judiciously select a mechanism of coupling $\mathfrak{r}_1$, $\mathfrak{r}_3$, and $\mathfrak{r}_4$ which pacifies these linear enemies.

The choice of coupling made here adds small perturbations (having coefficient $\ep$) to the ODEs~\eqref{_SWE_0}, \eqref{_SWE_1}, and~\eqref{_SWE_2} that are functions of the residual variables $\eta$, $u$, and $p$ as well as the shallow water variables $H$, $U$, $U_1$, $U_2$, $P$, $P_1$, and $P_2$. When $\ep = 0$ things are again decoupled and the ODEs reduce to the one-dimensional viscous shallow water equations with laminar drag, for which bore wave solutions are found in Section~\ref{subsection on distinguished shallow water bore solutions}. Then, when $\ep>0$, we can solve the perturbed shallow water ODEs with the theory of Section~\ref{subsection on perturbation theory for heteroclinic orbits} and produce a \emph{bore map}, which solves the family of perturbed equations, producing bore profiles as a function of the residual variables.

Our first result specifies the form of the perturbations and derives an equivalent form of the ODEs.   In what follows we encounter, for $\Xi\in\tp{1,\infty}$, the Fourier multiplication operator $\uppi_\Xi = \upchi\tp{D/\Xi}$ with $\upchi\in C^\infty_c\tp{B(0,2)}$ a fixed, even cut-off satisfying $0\le\upchi\le1$ and $\upchi = 1$ on $B(0,1)$. We also encounter a new variable $w\in H^1\tp{\R}$  that is meant to be a proxy for the vertical average of the residual $u_1$.

\begin{propC}[Coupling specification]\label{prop on the specification of the coupling}
    Let $\upmu,\m{a}>0$, $\m{g}\ge0$, and $\m{A}\in\tp{0,1}$, and define the set $D = \tcb{z\in\R^6\;:\; z_1 + e^{z_5} > 0}$.  Then there exists a smooth function
    \begin{equation}\label{dont write out the formula for this function}
        \aleph: D \to \R^2 \text{ for which }\aleph(0,0,0,0,\cdot,\cdot) = 0
    \end{equation}
    such that the following holds.     If $\ep,\del\in(0,1)$, $\Xi\in(1,\infty)$, $H, U, U_1, U_2, P, P_1, P_2\in W^{\infty,\infty}\tp{\R}$, and $\eta,w\in H^1\tp{\R}$ satisfy $\del\le H$, $|\ep^2\uppi_\Xi\eta|\le\del/2$, and $U\le 4 - \del$, and the functions $\mathfrak{r}_1,\mathfrak{r}_3,\mathfrak{r}_4\in H^\infty\tp{\R}$ are defined via
    \begin{equation}\label{the specified coupling equation}
        \mathfrak{r}_1 = \tp{\tp{U - 4}\uppi_{\Xi}w}',\quad \mathfrak{r}_3 = \ep\mathfrak{L}\uppi_\Xi\eta,
        \text{ and }
        \mathfrak{r}_4 = \m{g}\uppi_\Xi\eta,
    \end{equation}
    where $\mathfrak{L}$ is the linear map defined in~\eqref{adversarial linear operator}, then the following are equivalent.
    \begin{enumerate}
        \item The system of ODEs~\eqref{_SWE_0}, \eqref{_SWE_1}, and~\eqref{_SWE_2} from Theorem~\ref{thm on the shallow water and residual equations} is satisfied.
        
        \item There exists $\rho\in W^{\infty,\infty}\tp{\R}$ such that $H = e^\rho$, the smooth vector field $Y = \tp{\rho,\rho'}:\R\to\R^2$ is a solution to the differential equation
        \begin{equation}\label{peturbed shallow water equation}
            Y'(t) = \Phi(Y(t)) + \Phi_1(t,Y(t))
        \end{equation}
        where $\Phi:\R^2\to\R^2$ is the smooth vector field defined in~\eqref{The shallow water ODEs v4} and $\Phi_1:\R\times[\log\del,\infty)\times\R\to\R^2$ is determined by~\eqref{dont write out the formula for this function}  by
        \begin{equation}\label{peturbed shallow water residual}
            \Phi_1\tp{t,x,y} = \aleph\tp{\ep^2\uppi_\Xi\eta(t),\ep^2\uppi_\Xi\eta'(t),\ep^2\uppi_\Xi w(t), \ep^2\uppi_\Xi w'(t),x,y},
        \end{equation}
        and the functions $U$,  $P$, $U_1$,  $U_2$, $P_1$, and $P_2$  are determined by $H$ and $\eta$ via 
        \begin{equation}\label{SWE___0}
            U = 4 - \m{A}/H,\quad P = \m{g}H - 2\upmu U' + \ep^2\m{g}\uppi_{\Xi}\eta, 
        \end{equation}
        \begin{equation}\label{SWE___1}
            U_1 = \f{\m{a}}{\upmu}U,\quad U_2 = U'' - \f{\upmu U_1 + \tp{P - \m{g}H - 2\upmu U'}\tp{H' + \ep^2\uppi_\Xi\eta'} - \m{g}H'\ep^2\uppi_\Xi\eta}{\upmu\tp{H + \ep^2\uppi_\Xi\eta}},
        \end{equation}
        \begin{equation}\label{SWE___2}
            P_1 = -\upmu U_1',\quad P_2 = \tp{U - 4}U'' - (U')^2 - \upmu\tp{U''' + U_2'}.
        \end{equation}
    \end{enumerate}
\end{propC}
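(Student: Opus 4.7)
The plan is to treat the Proposition as an exercise in systematically reducing the ODE system~\eqref{_SWE_0}--\eqref{_SWE_2} to a single second-order equation for $\rho = \log H$, exploiting the fact that the specified couplings~\eqref{the specified coupling equation} have been engineered to produce a structurally simple reduction. First, I would observe that five of the seven relations in~\eqref{_SWE_0}, \eqref{_SWE_1}, \eqref{_SWE_2} are purely algebraic once the couplings are substituted: the first equation in~\eqref{_SWE_0} forces $U = 4 - \m{A}/H$; the final equation in~\eqref{_SWE_1} forces $U_1 = (\m{a}/\upmu)U$; the final equation in~\eqref{_SWE_2}, with $\mathfrak{r}_4 = \m{g}\uppi_\Xi\eta$, forces $P = \m{g}H - 2\upmu U' + \ep^2\m{g}\uppi_\Xi\eta$; the first equation in~\eqref{_SWE_1} forces $P_1 = -\upmu U_1'$; and the middle equation in~\eqref{_SWE_1} solves explicitly for $P_2$. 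These recover formulas~\eqref{SWE___0}, \eqref{SWE___1}, \eqref{SWE___2} for $U, P, U_1, P_1, P_2$.

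Next, I would insert $\mathfrak{r}_3 = \ep\mathfrak{L}\uppi_\Xi\eta$ into the first equation of~\eqref{_SWE_2}. Using the explicit form~\eqref{adversarial linear operator} of $\mathfrak{L}$ and grouping terms by the structural replacements $H'\mapsto H' + \ep^2(\uppi_\Xi\eta)'$ and $H\mapsto H + \ep^2\uppi_\Xi\eta$, the identity rearranges as
\begin{equation*}
(P-\m{g}H-2\upmu U')\bigl(H' + \ep^2(\uppi_\Xi\eta)'\bigr) + \upmu\bigl(H + \ep^2\uppi_\Xi\eta\bigr)(U_2 - U'') + \upmu U_1 - \m{g}\ep^2 H' \uppi_\Xi\eta = 0.
\end{equation*}
Because the hypotheses give $H + \ep^2\uppi_\Xi\eta \ge \del - \del/2 = \del/2 > 0$, I can solve for $U_2 - U''$ and hence for $U_2$; the resulting expression is exactly~\eqref{SWE___1}, and the map $(\ep^2\uppi_\Xi\eta, \ep^2(\uppi_\Xi\eta)', x, y)\mapsto U_2$ is smooth on the region where $z_1 + e^{z_5} > 0$.

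Only the second equation in~\eqref{_SWE_0} remains. Substituting $\mathfrak{r}_1 = ((U-4)\uppi_\Xi w)'$ together with every algebraic relation derived above, the equation becomes a second-order expression in $H$ whose coefficients depend smoothly on $H, H'$, and the four coupling quantities $\ep^2\uppi_\Xi\eta$, $\ep^2(\uppi_\Xi\eta)'$, $\ep^2\uppi_\Xi w$, $\ep^2(\uppi_\Xi w)'$. Switching to $\rho = \log H$, routine algebra of the kind executed in Section~\ref{subsection on distinguished shallow water bore solutions} recasts this as $\rho'' = F(\rho) - G(\rho)\rho'$ plus a perturbation. This identifies the unperturbed part with the vector field $\Phi$ of~\eqref{The shallow water ODEs v4} and produces the perturbation $\Phi_1$ in the claimed form~\eqref{peturbed shallow water residual}, with $\aleph$ defined by the explicit formulas emerging from the reduction. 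Setting all four coupling quantities to zero recovers the unperturbed Li\'enard equation, which gives the vanishing property $\aleph(0,0,0,0,\cdot,\cdot) = 0$, and smoothness of $\aleph$ on $D$ follows from tracking the sole nontrivial division, which happens over $\upmu(H + \ep^2\uppi_\Xi\eta) = \upmu(e^{z_5} + z_1)$.

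The converse direction is immediate: if the reduced ODE~\eqref{peturbed shallow water equation} for $\rho$ holds and $U, P, U_1, U_2, P_1, P_2$ are defined by~\eqref{SWE___0}, \eqref{SWE___1}, \eqref{SWE___2}, then all seven identities of~\eqref{_SWE_0}, \eqref{_SWE_1}, \eqref{_SWE_2} are satisfied by construction. There is no real analytical difficulty; the entire proposition is a computational reformulation, and the main obstacle is simply careful bookkeeping to verify that the nonlinearities introduced by $\ep^2\mathfrak{r}_3$ combine cleanly into the displacement $H\mapsto H + \ep^2\uppi_\Xi\eta$ that underlies formula~\eqref{SWE___1} and into the smooth function $\aleph$ witnessing the perturbation.
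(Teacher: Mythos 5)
Your proposal is correct and follows essentially the same route as the paper: algebraically eliminate $U, U_1, P, P_1, P_2$ from five of the seven ODEs, solve the first equation of~\eqref{_SWE_2} (with $\mathfrak{r}_3$ inserted) for $U_2$, substitute everything into the remaining momentum ODE and switch to $\rho=\log H$ to produce the perturbed Li\'enard equation, identifying $\aleph$ and its vanishing and smoothness properties along the way. The paper's proof is just a slightly more explicit bookkeeping of the same reduction, passing through the intermediate displays~\eqref{SWE___3}, \eqref{SWE___4}, and~\eqref{another intermediate shallow water equation} before arriving at~\eqref{yet another intermediate shallow water equation}.
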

\begin{proof}
    Throughout the proof we will use the shorthand $\lambda_1 = \ep^2\uppi_\Xi\eta$ and $\lambda_2 = \ep^2\uppi_\Xi w$.   Since $\del\le H$ and $|\lambda_1|\le\del/2$, the ODEs~\eqref{_SWE_0} are equivalent to the first identity in~\eqref{SWE___0} along with the equation
    \begin{equation}\label{SWE___3}
        \tp{H + \lambda_1}\tp{\tp{U - 4}U' + P' - \upmu U'' - \upmu U_2 - 4\m{a} + \tp{\tp{U - 4}\lambda_2}'} = 0.
    \end{equation}
    Furthermore~\eqref{_SWE_1} is clearly equivalent to $U_1 = \m{a}U/\upmu$ and identities~\eqref{SWE___2}. The second identity in~\eqref{_SWE_2} is equivalent to the second identity in~\eqref{SWE___0}. Finally, after some algebraic manipulation, we find that the first identity of~\eqref{_SWE_2} is equivalent to the second identity in~\eqref{SWE___1}.

    Now we can simplify the second identity in~\eqref{SWE___1} further to get that
    \begin{equation}\label{SWE___4}
        -\upmu\tp{H + \lambda_1}\tp{U'' + U_2} = \m{a}U -2\upmu\tp{H + \lambda_1}U'' - 4\upmu\tp{H' + \lambda_1'}U' + \m{g}\lambda_1\lambda_1'.
    \end{equation}
    Then we substitute~\eqref{SWE___4} along with the expression for $P$ from~\eqref{SWE___0} into~\eqref{SWE___3} to arrive at the identity
    \begin{equation}
        \tp{H + \lambda_1}\tp{U - 4}U' + \m{a}U - 4\upmu\tp{\tp{H + \lambda_1}U'}' + \tp{H + \lambda_1}\tp{\m{g}\tp{H' + \lambda_1'} - 4\m{a}}
        + \tp{H + \lambda_1}\tp{\tp{U - 4}\lambda_2}' + \m{g}\lambda_1\lambda_1' = 0.
    \end{equation}
    After some computation, the above can be arranged to have the form
    \begin{equation}\label{another intermediate shallow water equation}
        4\upmu\tp{HU'}' = H\tp{U - 4}U' + \m{a}U + \m{g}HH' - 4\m{a}H + \widehat{\aleph}\tp{\lambda_1,\lambda_1',\lambda_2,\lambda_2',H,H',U,U'}
    \end{equation}
    where $\widehat{D} = \tcb{z\in\R^8\;:\;z_1 + z_5>0}$ and $\widehat{\aleph}:\widehat{D} \to\R$ is a smooth function, depending only on the fixed values of $\upmu$, $\m{a}$, $\m{g}$, and $\m{A}$, that satisfies $\widehat{\aleph}\tp{0,0,0,0,\cdot,\cdot,\cdot,\cdot} = 0$.

    Notice that~\eqref{another intermediate shallow water equation} is a small, nonautonomous perturbation of the shallow water equations~\eqref{The shallow water ODEs}.  We now proceed as in Section~\ref{subsection on distinguished shallow water bore solutions} by writing $U = 4 - \m{A}/H$ and $H = e^\rho$ to transform~\eqref{another intermediate shallow water equation} into a single second order ODE for $\rho$:
    \begin{equation}\label{yet another intermediate shallow water equation}
        \rho'' = F\tp{\rho} - G\tp{\rho}\rho' + \tp{4\upmu\m{A}}^{-1}\widehat{\aleph}\tp{\lambda_1,\lambda_1',\lambda_2,\lambda_2',e^\rho,e^\rho\rho',4 - \m{A}e^{-\rho},\m{A}e^{-\rho}\rho'},
    \end{equation}
    with $F$ and $G$ defined in~\eqref{the hamiltonian and dissipation}.   To complete the proof  we set $Y = \tp{\rho,\rho'}:\R\to\R^2$ and define the $\aleph$ of~\eqref{dont write out the formula for this function} via
    \begin{equation}
        \aleph(z) = \tp{0,\tp{4\upmu\m{A}}^{-1}\widehat{\aleph}\tp{z_1,z_2,z_3,z_4,e^{z_5},e^{z_5}z_6,4 - \m{A}e^{-z_5},\m{A}e^{-z_5}z_6}},
    \end{equation}
    in order deduce that~\eqref{peturbed shallow water equation} and~\eqref{peturbed shallow water residual} hold.
\end{proof}

We now come to the main result of this subsection, which produces a solution operator to the system of ODEs~\eqref{_SWE_0}, \eqref{_SWE_1}, and~\eqref{_SWE_2} when the coupling $\mathfrak{r}_1$, $\mathfrak{r}_3$, and $\mathfrak{r}_4$ is determined by~\eqref{the specified coupling equation} of Proposition~\ref{prop on the specification of the coupling}. Recall that the regions $\mathfrak{C}_{\iota}$ are defined in Lemma~\ref{lem on sufficient conditions for a sign} and $\m{H}_\pm = \m{H}_\pm\tp{\m{A}}$ are defined in~\eqref{sweq}

\begin{propC}[The bore map, I]\label{prop on the bore map}
    Let $\upmu,\m{a}>0$, $\iota\in\tcb{-1,1}$, and $\tp{\m{g},\m{A}}\in\mathfrak{C}_\iota$. There exist
    \begin{itemize}
        \item numbers $\al,\be\in\tp{0,1}$, $M\in\tp{3/\m{H}_-\tp{\m{A}},\infty}$, and a closed ball $\Bar{B(0,\be)}\subset\tsb{\tp{C^{10}\cap W^{10,\infty}\cap H^{10}}\tp{\R}}^2$,
        \item smooth functions $\mathsf{H}, \mathsf{U}, \mathsf{U}_1, \mathsf{U}_2, \mathsf{P}\in W^{\infty,\infty}\tp{\R}$ and $\mathsf{P}_1,\mathsf{P}_2\in H^\infty\tp{\R}$,
        \item Lipschitz continuous maps
        \begin{equation}\label{Siviglia_Dos}
        \begin{array}{ll}
            \mathfrak{h},\mathfrak{u},\mathfrak{u}_1:\Bar{B(0,\be)} \to \tp{C^{10} \cap W^{10,\infty} \cap H^{10}} \tp{\R}, & 
            \mathfrak{u}_2:\Bar{B(0,\be)} \to \tp{C^{8} \cap W^{8,\infty} \cap H^{8}} \tp{\R}, \\
            \mathfrak{p},\mathfrak{p}_1:\Bar{B(0,\be)} \to \tp{C^{9} \cap W^{9,\infty} \cap H^{9}} \tp{\R}, &
            \mathfrak{p}_2:\Bar{B(0,\be)} \to \tp{C^{7} \cap W^{7,\infty} \cap H^{7}} \tp{\R},
        \end{array}
    \end{equation}
    \end{itemize}
    such that the following hold for all $\tp{\lambda_1,\lambda_2},\tp{\tilde{\lambda}_1,\tilde{\lambda}_2}\in\Bar{B(0,\be)}$.
    \begin{enumerate}
        \item The pair $\tp{\mathsf{H},\mathsf{U}}$ is a solution to the shallow water ODEs~\eqref{The shallow water ODEs}; moreover we have the limits
        \begin{multline}\label{useful_limit_2_Dos}
            \lim_{t\to\pm\infty}\mathsf{H}\tp{\iota t} = \m{H}_\pm,\quad\lim_{t\to\pm\infty}\mathsf{U}\tp{\iota t} = 4 - \f{\m{A}}{\m{H}_\pm},\quad\lim_{t\to\pm\infty}\mathsf{U}_1\tp{\iota t} = \f{\m{a}}{\upmu}\bp{4 - \f{\m{A}}{\m{H}_\pm}},\\\lim_{t\to\pm\infty}\mathsf{U}_2\tp{\iota t} = -\f{\m{a}}{\upmu}\bp{\f{4}{\m{H}_{\pm}} - \f{\m{A}}{\m{H}^2_\pm}},\quad\lim_{t\to\pm\infty}\mathsf{P}\tp{\iota t} = \m{g}\m{H}_{\pm},\quad\lim_{t\to\pm\infty}\mathsf{P}_1\tp{\iota t} = 0,
            \text{ and }
            \lim_{t\to\pm\infty}\mathsf{P}_2\tp{\iota t} = 0.
        \end{multline}
        \item For all $\mathsf{Z}\in\tcb{\mathsf{H},\mathsf{U},\mathsf{U}_1,\mathsf{U}_2,\mathsf{P},\mathsf{P}_1,\mathsf{P}_2}$ we have the bounds
        \begin{multline}\label{useful_limit_3_Dos}
            \tnorm{\mathsf{Z}}_{W^{10,\infty}}\le M, 
             \quad 
            \sup_{t\le 0}e^{\al|t|}\tabs{\mathsf{Z}\tp{t} - \lim_{\tau\to-\infty}\mathsf{Z}\tp{\tau}} + \sup_{t\ge 0}e^{\al|t|}\tabs{\mathsf{Z}\tp{t} - \lim_{\tau\to\infty}\mathsf{Z}\tp{\tau}}\le M, \\
             \text{ and }
           \sup_{1 \le m \le 10} \sup_{t\in\R}e^{\al|t|}\tabs{\mathsf{Z}^{\tp{m}}\tp{t}}\le M.
        \end{multline}
        \item For all $\mathfrak{Z}\in\tcb{\mathfrak{h},\mathfrak{u},\mathfrak{u}_1,\mathfrak{u}_2,\mathfrak{p},\mathfrak{p}_1,\mathfrak{p}_2}$ we have that $\mathfrak{Z}\tp{0,0} = 0$, $\tnorm{\mathfrak{Z}\tp{\lambda_1,\lambda_2}}_{W^{7,\infty}\cap H^7}\le M$,
        \begin{equation}\label{rocky mountain spotted fever_Dos}
            \tnorm{\mathfrak{Z}\tp{\lambda_1,\lambda_2} - \mathfrak{Z}\tp{\tilde{\lambda}_1,\tilde{\lambda}_2}}_{W^{7,\infty}\cap H^7}\le M\tnorm{\tp{\lambda_1,\lambda_2} - \tp{\tilde{\lambda}_1,\tilde{\lambda}_2}}_{W^{10,\infty}\cap H^{10}},
        \end{equation}
        and if $\tp{\lambda_1,\lambda_2}\in\tsb{H^\infty\tp{\R}}^2$ then $\mathfrak{Z}\tp{\lambda_1,\lambda_2}\in H^\infty\tp{\R}$.

        \item We have the bounds 
       \begin{equation}
        \m{H}_-\le\inf\mathsf{H}, 
        \; 
        \tnorm{\mathfrak{h}\tp{\lambda_1,\lambda_2}}_{L^\infty\tp{\R}}\le\m{H}_-/3, 
        \;
        \tp{2/3}\m{H}_-\le\inf\tp{\mathsf{H} + \mathfrak{h}\tp{\lambda_1,\lambda_2}}, 
        \text{ and }
        \mathsf{U} + \mathfrak{u}\tp{\lambda_1,\lambda_2}\le 4-1/M.   
       \end{equation} 
        
        \item If for some $\ep\in\tp{0,1}$, $\Xi\in\tp{1,\infty}$, and $\tp{\eta,w}\in\tsb{H^1\tp{\R}}^2$ we have $\lambda_1 = \ep^2\uppi_\Xi\eta$ and $\lambda_2 = \ep^2\uppi_\Xi w$ (where $\uppi_\Xi$ is the mollification operator defined prior to Proposition~\ref{prop on the specification of the coupling}), then the functions
                \begin{multline}\label{you wouldnt really want to type this mess more than once_Dos}
            H = \mathsf{H} + \mathfrak{h}\tp{\lambda_1,\lambda_2},\quad U = \mathsf{U} + \mathfrak{u}\tp{\lambda_1,\lambda_2},\quad U_1 = \mathsf{U}_1 + \mathfrak{u}_1\tp{\lambda_1,\lambda_2},\quad U_2 = \mathsf{U}_2 + \mathfrak{u}_2\tp{\lambda_1,\lambda_2},\\
            P = \mathsf{P} + \mathfrak{p}\tp{\lambda_1,\lambda_2},\quad P_1 = \mathsf{P}_1 + \mathfrak{p}_1\tp{\lambda_1,\lambda_2},
            \text{ and }
            P_2 = \mathsf{P}_2 + \mathfrak{p}_2\tp{\lambda_1,\lambda_2}
        \end{multline}
        satisfy the system of ODEs~\eqref{_SWE_0}, \eqref{_SWE_1}, and~\eqref{_SWE_2} from Theorem~\ref{thm on the shallow water and residual equations}, under the coupling specification~\eqref{the specified coupling equation}.
    \end{enumerate}
\end{propC}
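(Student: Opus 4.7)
The plan is to produce the distinguished solution $(\mathsf{H}, \mathsf{U}, \ldots, \mathsf{P}_2)$ via Theorem~\ref{thm on distinguished heteroclinic orbits} and the algebraic closure~\eqref{SWE___0}--\eqref{SWE___2}, and then obtain the perturbation maps $\mathfrak{h}, \ldots, \mathfrak{p}_2$ by applying the abstract perturbation theory of Theorem~\ref{thm on perturbations of heteroclinic orbits} and Corollary~\ref{coro on improved regularity} to the nonautonomous Li\'enard system~\eqref{peturbed shallow water equation} furnished by Proposition~\ref{prop on the specification of the coupling}. For the unperturbed piece, when $\iota = 1$ the first item of Theorem~\ref{thm on distinguished heteroclinic orbits} produces a smooth $\rho : \R \to [\uprho_-, \uprho_\star]$ with $\lim_{t \to \pm \infty} \rho(t) = \uprho_\pm$ and exponential decay of derivatives as in~\eqref{who said your life's a bore}; the surging case $\iota = -1$ uses its second item instead. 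Setting $\mathsf{H} = e^\rho$ and computing $\mathsf{U}, \mathsf{U}_1, \mathsf{U}_2, \mathsf{P}, \mathsf{P}_1, \mathsf{P}_2$ by specializing~\eqref{SWE___0}--\eqref{SWE___2} to $\lambda_1 = \lambda_2 = 0$ then yields items~(1), (2), and the $\mathsf{H}$-lower bound in item~(4) by direct computation, using that the algebraic formulas preserve $W^{\infty, \infty}$ regularity and propagate the exponential decay rates of~\eqref{who said your life's a bore} via Fa\`a di Bruno.

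For the perturbed phase I take $\Lambda = \Bar{B(0, \be)} \subset \tsb{\tp{W^{10, \infty} \cap H^{10}}\tp{\R}}^2$ with the ambient metric and reference point $\lambda_0 = (0, 0)$, and invoke Proposition~\ref{prop on the specification of the coupling} to reformulate~\eqref{_SWE_0}--\eqref{_SWE_2} (under the coupling~\eqref{the specified coupling equation}) as the first-order nonautonomous system $Y' = \Phi(Y) + \Phi_1(t, Y)$ with $Y = (\rho, \rho')$. This fits Definition~\ref{defn of hypotheses for heteroclininc perturbation theory} with $\Psi_0 = \Phi$ (smooth, autonomous) and $\Psi_1(\lambda, t, Y) = \aleph(\lambda_1(t), \lambda_1'(t), \lambda_2(t), \lambda_2'(t), Y_1, Y_2)$, which vanishes at $\lambda_0$. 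The bounds $\Bar{\kkappa}_n$ and $\Bar{\qoppa}_n$ are finite for all $n \le 9$, since a $k$-th $t$-derivative of $\Psi_1$ demands $\lambda_j \in W^{k+1, \infty} \cap H^{k+1}$, which the parameter space provides through $k = 9$. In the ebbing case the hyperbolic point is $\m{x}_{\m{h}} = (\uprho_-, 0)$ and the sink is $\m{x}_{\m{a}} = (\uprho_+, 0)$ by Lemma~\ref{lem on linear character of equilibira} (using that $G(\uprho_\star) > 0$ and $G$ decreasing force $G(\uprho_+) > 0$). In the surging case the orbit from Theorem~\ref{thm on distinguished heteroclinic orbits}(2) runs from a source at $\uprho_+$ to the saddle at $\uprho_-$, so I pass to the time-reversed variable $\tilde\rho(t) = \rho(-t)$, which satisfies $\tilde\rho'' = F(\tilde\rho) + G(\tilde\rho)\tilde\rho'$; the linearization of this system at $(\uprho_+, 0)$ has trace $G(\uprho_+) < 0$ and determinant $-F'(\uprho_+) > 0$, so both eigenvalues have strictly negative real parts and $(\uprho_+, 0)$ is a sink, while $(\uprho_-, 0)$ remains a saddle. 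The perturbation machinery then applies with the correspondingly reversed forcing.

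Corollary~\ref{coro on improved regularity} applied with $n = 9$ delivers a Lipschitz map $\pmb{B} : \Lambda \to \tp{C^9 \cap W^{9, \infty} \cap H^9}\tp{\R; \R^2}$ such that $(\rho, \rho') + \pmb{B}(\lambda)$ solves the perturbed first-order system on $\R$. The first-order structure forces $\pmb{B}_2 = \pmb{B}_1'$, so writing $b(\lambda) := \pmb{B}_1(\lambda)$ and bootstrapping through the second-order equation $b'' = [F(\rho + b) - F(\rho)] - [G(\rho + b)(\rho + b)' - G(\rho)\rho'] + \tilde h$, whose right-hand side has regularity $W^{9, \infty} \cap H^9$ (with $\tilde h$ the forcing image of $\aleph$), promotes $b$ into $C^{10} \cap W^{10, \infty} \cap H^{10}$ with matching Lipschitz dependence on $\lambda$ (and further bootstrapping promotes to $H^\infty$ when $\lambda \in [H^\infty]^2$). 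I then define $\mathfrak{h}(\lambda) = e^{\rho + b(\lambda)} - e^\rho$ and obtain $\mathfrak{u}, \mathfrak{u}_1, \mathfrak{u}_2, \mathfrak{p}, \mathfrak{p}_1, \mathfrak{p}_2$ by substituting $H = \mathsf{H} + \mathfrak{h}(\lambda)$ together with $\lambda_1, \lambda_2$ into~\eqref{SWE___0}--\eqref{SWE___2} and subtracting the unperturbed values; the derivative-loss pattern in~\eqref{Siviglia_Dos} (one for $\mathfrak{p}, \mathfrak{p}_1$, two for $\mathfrak{u}_2$, three for $\mathfrak{p}_2$) matches exactly the orders of differentiation appearing in those formulas. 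Items~(3) and~(5) then follow from the Lipschitz estimates of the perturbation theorem together with smoothness of the exponential and the algebraic operations, while the remainder of item~(4) is arranged by shrinking $\be$ so that $\tnorm{\mathfrak{h}(\lambda)}_{L^\infty} \le \m{H}_- / 3$ and $\mathsf{U} + \mathfrak{u}(\lambda) \le 4 - 1/M$, exploiting $\mathfrak{h}(0) = \mathfrak{u}(0) = 0$. The principal technical obstacle is the careful bookkeeping of derivatives: the space $[W^{10, \infty} \cap H^{10}]^2$ is chosen precisely to absorb both the regularity lost through the chain rule on $\aleph$ and the additional losses incurred in~\eqref{SWE___1}--\eqref{SWE___2}; a secondary obstacle is verifying the spectral hypotheses of the perturbation theorem for the surging case via the time-reversal trick.
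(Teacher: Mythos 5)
Your proposal follows essentially the same approach as the paper: construct the distinguished profile from the heteroclinic orbit of Theorem~\ref{thm on distinguished heteroclinic orbits}, then obtain the perturbation maps by feeding the reformulation of Proposition~\ref{prop on the specification of the coupling} into Theorem~\ref{thm on perturbations of heteroclinic orbits} and Corollary~\ref{coro on improved regularity}, closing the gap from $9$ to $10$ derivatives via the first-order identity $\pmb{B}_2 = \pmb{B}_1'$, and finally reading off the remaining constituents from the algebraic formulas~\eqref{SWE___0}--\eqref{SWE___2}. Your explicit verification of the spectral hypotheses for the surging case $\iota = -1$ (time-reversal turns the source at $\uprho_+$ into a sink, $\uprho_-$ remains a saddle) is actually more detailed than the paper, which merely waves at this step.

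The one genuine gap is in the application of the perturbation machinery. You set $\Psi_1(\lambda, t, Y) = \aleph(\lambda_1(t), \lambda_1'(t), \lambda_2(t), \lambda_2'(t), Y_1, Y_2)$, but $\aleph$ is only defined on the open set $D = \{z \in \R^6 : z_1 + e^{z_5} > 0\}$, whereas Definition~\ref{defn of hypotheses for heteroclininc perturbation theory} requires $\Psi_1 : \Lambda \times \R \times \R^d \to \R^d$ to be globally defined and smooth in the last two slots (and the fixed-point arguments in Propositions~\ref{prop on unstable manifolds with parameter and nonautonomy} and~\ref{prop on attractor stability} evaluate $\Psi_1$ at arbitrary points of the ambient ball before any confinement is established). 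As written, $\Psi_1$ is undefined when $Y_1$ is sufficiently negative relative to $\lambda_1(t)$. The paper handles this by multiplying by a cutoff $\theta(e^{x_1}/\m{H}_-)$ supported where $e^{x_1} > \m{H}_-/2$, and then choosing $\be < \m{H}_-/6$ so that the cutoff is identically one along the orbits actually produced by the perturbation theorem; without that device (or an equivalent extension of $\aleph$ to all of $\R^6$), the hypotheses of Theorem~\ref{thm on perturbations of heteroclinic orbits} are not verified and the argument does not go through as stated.
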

\begin{proof}
    
    To begin we define the function $\mathsf{H} \in W^{\infty,\infty}\tp{\R}$ via $\mathsf{H} = e^{\varrho}$, where  $\varrho:\R\to[\uprho_-,\uprho_\star]$ is a smooth map provided by  Theorem~\ref{thm on distinguished heteroclinic orbits}, using the first item when $\iota =1$ and $G(\uprho_\star) >0$, and the second item when $\iota=-1$ and $G(\uprho_-) < 0$.   We emphasize that the theorem does not provide a unique $\varrho$ (see Remark~\ref{rmk on translation degeneracy}), so we choose arbitrarily here.   Regardless of the choice, when $\iota =1$ the solution is a heteroclinic orbit from the hyperbolic equilibrium $\uprho_-$ to the sink $\uprho_+$ that obeys the estimates~\eqref{who said your life's a bore}, and when $\iota =-1$ the solution is a heteroclinic orbit from the source $\uprho_+$  to the hyperbolic equilibrium $\uprho_-$ that obeys the estimates~\eqref{who said your life's a bore variant}.  These estimates translate to those stated in the second item for $\mathsf{Z} = \mathsf{H}$, provided that $M$ is large enough.  

    For the sake of brevity, in the remainder of the proof we will only consider the case $\iota =1$, as the perturbation theory from Section~\ref{subsection on perturbation theory for heteroclinic orbits} is adapted to the case in which the background solution connects the unstable manifold on the left to the sink on the right.  The case $\iota=-1$ may be handled by first reversing the background solution $\mathsf{H}$, arguing as in the case $\iota=1$, and then reversing the perturbed solutions again.
    
    We now endeavor to construct the $\mathfrak{h}$-map in~\eqref{Siviglia_Dos} when $\iota =1$.  Proposition~\ref{prop on the specification of the coupling} suggests we should employ the theory of Section~\ref{subsection on perturbation theory for heteroclinic orbits} to attack the equation~\eqref{peturbed shallow water equation}.     
    To this end, define the distinguished heteroclinic orbit $\mathsf{Y} = \tp{\varrho,\varrho'}$, which is a solution to $\mathsf{Y}' = \Phi\tp{\mathsf{Y}}$ satisfying $\lim_{t\to\pm\infty}\mathsf{Y}\tp{t} = \tp{\uprho_\pm,0}$, where $\Phi:\R^2\to\R^2$ is the smooth vector field defined in~\eqref{The shallow water ODEs v4}.
    
    Given any $0<\be<\m{H}_-/6$, which we will shrink throughout the proof as needed, we define the complete metric space 
    \begin{equation}\label{the complete metric space}
        \Lambda = \Bar{B(0,\be)}\subset\tsb{\tp{C^{10}\cap W^{10,\infty}\cap H^{10}}\tp{\R}}^2,\quad \tnorm{\cdot}_{\Lambda} = \tnorm{\cdot}_{W^{10,\infty}\cap H^{10}}.
    \end{equation}
    We then take the functions $\Psi_0:\R^2\to\R^2$ and $\Psi_1:\Lambda\times\R\times\R^2\to\R^2$ of the decomposition $\Psi = \Psi_0 + \Psi_1$ from~\eqref{this is a sparse equation} to be
    \begin{equation}\label{why is this proof in particular so difficult to write?}  
        \Psi_0(x) = \Phi(x)  
        \text{ and }
        \Psi_1(\lambda,t,x) = \aleph\tp{\lambda_1(t),\lambda_1'\tp{t},\lambda_2(t),\lambda_2'\tp{t},x}\theta(e^{x_1}/\m{H}_-)
    \end{equation}
    where  $\aleph$ is the map granted by Proposition~\ref{prop on the specification of the coupling}, and $\theta:\R\to\R$ is a smooth function that satisfies $0\le\theta\le 1$ along with $\theta(z) = 0$ for $z\le 1/2$ and $\theta(z)=1$ for $z\ge 2/3$.  The purpose of the cut-off $\theta$ is to allow $\Psi_1$ to be globally defined, since $\aleph$ is only defined on $D$.   Note that $\Psi_0^{-1}\tcb{0}=\tcb{\tp{\uprho_-,0},\tp{\uprho_+,0}}$ and that $\tp{\uprho_-,0}$ and $\tp{\uprho_+,0}$ are hyperbolic and asymptotically stable equilibrium points for $\Psi_0$, respectively.   
    
    Next, we  introduce the parameter $1\le R\in\R^+$ to determine the quantities~\eqref{important_quantities_1}, \eqref{important_quantities_2}, and~\eqref{important_quantities_3}, and assume it is large enough  that
    \begin{equation}
        |\uprho_\star| + |\uprho_-| + \tnorm{\mathsf{Y} - \tp{\uprho_+,0}}_{L^\infty\cap L^2\tp{(0,\infty)}} + \tnorm{\mathsf{Y} - \tp{\uprho_-,0}}_{L^\infty\cap L^2\tp{(-\infty,0)}}\le R/10.
    \end{equation}
    For $n\in\tcb{0,1,\dots,9}$ the quantities $\Bar{\kkappa}_n$ and $\Bar{\qoppa}_n$ are finite. By using~\eqref{dont write out the formula for this function}, it is not hard to see that
    \begin{equation}
        \Bar{\kkappa}_n\lesssim\be \text{ and }    \Bar{\qoppa}_n\lesssim\be
    \end{equation}
    for implicit constants depending only on $R$, $\upmu$, $\m{a}$, $\m{g}$, and $\m{A}$.  Taking $\be$ to be sufficiently small, depending only on $\mathsf{Y}$ and the aforementioned parameters, we may guarantee that the hypotheses of Theorem~\ref{thm on perturbations of heteroclinic orbits} and Corollary~\ref{coro on improved regularity} are satisfied.  These then grant us a Lipschitz map
    \begin{equation}\label{_IM_RUNNING_OUT_OF_LABELS_}
        \mathfrak{Y}:\Bar{B(0,\beta)}\to\tp{C^9\cap W^{9,\infty}\cap H^9}\tp{\R;\R^2}
    \end{equation}
    such that $\mathfrak{Y}(0) = 0$ and for all $\lambda\in\Bar{B(0,\be)}$ the function $Y = \mathsf{Y} + \mathfrak{Y}\tp{\lambda}$ is a solution to the ODE $Y' = \Psi_0\tp{Y} + \Psi_1\tp{\lambda(\cdot),\cdot,Y}$. Further shrinking $\be>0$  if necessary, we may impose the extra condition that the cut-off function $\theta$ in~\eqref{why is this proof in particular so difficult to write?} only returns unity when evaluated at the $Y$  produced via the map~\eqref{_IM_RUNNING_OUT_OF_LABELS_}.

    We are now ready to build the $\mathfrak{h}$-map in~\eqref{Siviglia_Dos} by setting
    \begin{equation}
        \mathfrak{h}(\lambda) = \mathsf{H} \tp{e^{\mathfrak{Y}_1\tp{\lambda}} - 1}.
    \end{equation}
    At first glance, \eqref{_IM_RUNNING_OUT_OF_LABELS_} seems to suggest that $\mathfrak{h}$ maps into the wrong codomain, with $9$ derivatives instead of the stated $10$, but the promotion to $10$ indeed holds thanks to the identity $\mathfrak{Y}_1' = \mathfrak{Y}_2$.
     
     The properties $\mathfrak{h}\tp{0,0} = 0$ and $\tnorm{\mathfrak{h}\tp{\lambda_1,\lambda_2}}_{L^\infty\tp{\R}}\le\m{H}_-/3$ follow as long as we take $\be$ to be small enough, depending on $\upmu$, $\m{a}$, $\m{g}$, $\m{A}$, and $\mathsf{H}$.  That $\mathfrak{h}\tp{\lambda}\in H^\infty\tp{\R}$ when  $\lambda\in\Bar{B(0,\be)}\cap\tsb{H^\infty\tp{\R}}^2$, follows from an argument like that used in Corollary~\ref{coro on improved regularity}. The Lipschitz constant of $\mathfrak{h}$ may be taken as a function of only $\upmu$, $\m{a}$, $\m{g}$, $\m{A}$, and $\mathsf{H}$.  
     
     Thanks to the properties of the map~\eqref{_IM_RUNNING_OUT_OF_LABELS_} and the identity $\log\tp{\mathsf{H} + \mathfrak{h}\tp{\lambda}} = \mathsf{Y}_1 + \mathfrak{Y}\tp{\lambda}_1$ we find that if for some $\ep\in(0,1)$, $\Xi\in\tp{1,\infty}$, and $\tp{\eta,w}\in\tsb{H^1\tp{\R}}^2$ we have $\lambda_1=\ep^2\uppi_\Xi\eta$ and $\lambda_2 = \ep^2\uppi_\Xi w$, the function $\rho = \log\tp{\mathsf{H} + \mathfrak{h}\tp{\lambda_1,\lambda_2}}\in W^{\infty,\infty}\tp{\R}$ is a solution to the ODEs~\eqref{peturbed shallow water equation} and~\eqref{peturbed shallow water residual}.

    With $\mathsf{H}$ and $\mathfrak{h}$ in hand, we now turn to the construction of the functions $\mathsf{U}$, $\mathsf{U}_1$, $\mathsf{U}_2$, $\mathsf{P}$, $\mathsf{P}_1$, and $\mathsf{P}_2$ along with the remaining maps in~\eqref{Siviglia_Dos}. Once this is done, the claimed properties from the first, second, third, and fourth items follow from straightforward computations that we omit for brevity.      We define
    \begin{multline}\label{hello there distinguished gentlemen}
        \mathsf{U} = 4 - \f{\m{A}}{\mathsf{H}},\quad \mathsf{P} = \m{g}\mathsf{H} - 2\upmu\mathsf{U}',\quad \mathsf{U}_1 = \f{\m{a}}{\upmu}\mathsf{U},\quad\mathsf{U}_2 =\mathsf{U}'' -  \f{\upmu\mathsf{U}_1 + \tp{\mathsf{P} - \m{g}\mathsf{H} - 2\upmu\mathsf{U}'}\mathsf{H}'}{\upmu\mathsf{H}},\\
        \mathsf{P}_1 = - \upmu\mathsf{U}_1', 
        \text{ and }
        \mathsf{P}_2 = \tp{\mathsf{U} - 4}\mathsf{U}'' - \tp{\mathsf{U}'}^2 - \upmu\tp{\mathsf{U}''' + \mathsf{U}_2'}.
    \end{multline}
    Then, for $\tp{\lambda_1,\lambda_2}\in\Bar{B(0,\be)}$ we define
    \begin{multline}
        \mathfrak{u}(\lambda_1,\lambda_2) = \f{\m{A}}{\mathsf{H}} - \f{\m{A}}{\mathsf{H} + \mathfrak{h}\tp{\lambda_1,\lambda_2}},\quad\mathfrak{p}\tp{\lambda_1,\lambda_2} = \m{g}\mathfrak{h}\tp{\lambda_1,\lambda_2} - 2\upmu\tsb{\mathfrak{u}\tp{\lambda_1,\lambda_2}}' + \m{g}\lambda_1,\\\mathfrak{p}_1\tp{\lambda_1,\lambda_2} = -\upmu\tsb{\mathfrak{u}_1\tp{\lambda_1,\lambda_2}}',\quad
        \mathfrak{u}_1\tp{\lambda_1,\lambda_2} = \f{\m{a}}{\upmu}\mathfrak{u}\tp{\lambda_1,\lambda_2},\quad \mathfrak{u}_2\tp{\lambda_1,\lambda_2} = - \mathsf{U}_2 + \tp{\mathsf{U} + \mathfrak{u}\tp{\lambda_1,\lambda_2}}'' \\- \f{\upmu\tp{\mathsf{U}_1 + \mathfrak{u}_1\tp{\lambda_1,\lambda_2}} + \tp{-4\upmu\tp{\mathsf{U} + \mathfrak{u}\tp{\lambda_1,\lambda_2}}' + \m{g}\lambda_1}\tp{\mathsf{H} + \mathfrak{h}\tp{\lambda_1,\lambda_2} + \lambda_1}' - \m{g}\tp{\mathsf{H} + \mathfrak{h}\tp{\lambda_1,\lambda_2}}'\lambda_1}{\upmu\tp{\mathsf{H} + \mathfrak{h}\tp{\lambda_1,\lambda_2} + \lambda_1}}, \\
        \text{ and }
        \mathfrak{p}_2\tp{\lambda_1,\lambda_2} = -\mathsf{P}_2 + \tp{\mathsf{U} + \mathfrak{u}\tp{\lambda_1,\lambda_2} - 4}\tp{\mathsf{U} + \mathfrak{u}\tp{\lambda_1,\lambda_2}}'' - \tp{\tsb{\mathsf{U} + \mathfrak{u}\tp{\lambda_1,\lambda_2}}'}^2 \\- \upmu\tp{\tp{\mathsf{U} + \mathfrak{u}\tp{\lambda_1,\lambda_2}}''' - \tp{\mathsf{U}_2 + \mathfrak{u}_2\tp{\lambda_1,\lambda_2}}'}.
    \end{multline}

     To complete the proof, we now justify the fifth item by invoking the equivalence of Proposition~\ref{prop on the specification of the coupling}: our construction exactly ensures that $Y = \tp{\rho,\rho'}$ solves~\eqref{peturbed shallow water equation} and~\eqref{peturbed shallow water residual}, while $U$, $U_1$, $U_2$, $P$, $P_1$, and $P_2$ are exactly determined by $H = e^\rho$ and $\eta$ as in~\eqref{SWE___0}, \eqref{SWE___1}, and~\eqref{SWE___2}. 
\end{proof}

\begin{rmkC}[On the domain space for the bore map]\label{rmk on the domain space for the bore map}
    The choice of the number of derivatives to keep track of in the domains of the maps~\eqref{Siviglia_Dos} being equal to ten is somewhat arbitrary. The only mandate is that we feed the bore maps something with enough regularity so that the outputs have enough derivatives to comfortably handle the computations of the next section.
\end{rmkC}

Proposition~\ref{prop on the bore map} provides precise information about the various pieces that contribute to the bore map.  For our purposes later it will be convenient to trade some of this precision for uniformity in terms of spaces and derivative counts.  The following result records this consolidated information about the bore map.

\begin{coroC}[The bore map, II]\label{coro on bore map III}
Assume the hypotheses and notation of Proposition~\ref{prop on the bore map}.  For all $Z\in\tcb{H,U,U_1,U_2,P,P_1,P_2}$ the proposition allows us to view the sums
    \begin{equation}\label{generic bound}
        H = \mathsf{H} + \mathfrak{h},\; U = \mathsf{U} + \mathfrak{u},U_1 = \mathsf{U}_1 + \mathfrak{u}_1,\; U_2 = \mathsf{U}_2 + \mathfrak{u}_2,\; P = \mathsf{P} + \mathfrak{p},\; P_1 =  \mathsf{P}_1 + \mathfrak{p}_1,\; P_2 = \mathsf{P}_2 + \mathfrak{p}_2
    \end{equation}
    as maps $Z:\Bar{B(0,\be)}\to\tp{C^7\cap W^{7,\infty}}\tp{\R}$ for $\Bar{B(0,\be)}\subset\tsb{\tp{C^{10}\cap W^{10,\infty}\cap H^{10}}\tp{\R}}^2$.  For all $\tp{\lambda_1,\lambda_2},\tp{\tilde{\lambda}_1,\tilde{\lambda}_2}\in\Bar{B(0,\be)}$ we have the estimates
    \begin{equation}\label{yall got any more of them derivatives}
        \tnorm{Z(\lambda_1,\lambda_2)}_{W^{7,\infty}} + \tnorm{\tsb{Z\tp{\lambda_1,\lambda_2}}'}_{H^6}\le2M
    \end{equation}
    and
    \begin{equation}\label{generic lipschitz bound}
        \tnorm{Z(\lambda_1,\lambda_2) - Z(\tilde{\lambda}_1,\tilde{\lambda}_2)}_{W^{7,\infty}\cap H^7}\le M\tnorm{\tp{\lambda_1,\lambda_2} - \tp{\tilde{\lambda}_1,\tilde{\lambda}_2}}_{W^{10,\infty}\cap H^{10}}.
    \end{equation}
\end{coroC}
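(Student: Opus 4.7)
The claim is essentially a bookkeeping consolidation of what Proposition~\ref{prop on the bore map} already yields, trading a bit of differentiability for uniformity across all seven unknowns. My plan is to dispatch each of the three conclusions in turn.

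First I would fix the common target space by noting that the smallest derivative count produced by Proposition~\ref{prop on the bore map} among the maps in~\eqref{Siviglia_Dos} is seven (achieved by $\mathfrak{p}_2$); by the inclusions $W^{k,\infty}\subseteq W^{7,\infty}$ and $H^k\subseteq H^7$ for $k\ge 7$, each of $\mathfrak{h},\mathfrak{u},\mathfrak{u}_1,\mathfrak{u}_2,\mathfrak{p},\mathfrak{p}_1,\mathfrak{p}_2$ may be viewed as a Lipschitz map from $\Bar{B(0,\be)}\subset [\tp{C^{10}\cap W^{10,\infty}\cap H^{10}}(\R)]^2$ into $\tp{C^7\cap W^{7,\infty}\cap H^7}(\R)$ with Lipschitz constant at most the original (stated) one. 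Thus $Z=\mathsf{Z}+\mathfrak{z}$ is well-defined as a map $\Bar{B(0,\be)}\to\tp{C^7\cap W^{7,\infty}}(\R)$ for each of the seven pairs.

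For the pointwise estimate~\eqref{yall got any more of them derivatives}, I would split $Z = \mathsf{Z}+\mathfrak{z}$ and treat the two summands separately. The background $\mathsf{Z}$ is bounded by $M$ in $W^{10,\infty}\supseteq W^{7,\infty}$ by the first bound in~\eqref{useful_limit_3_Dos}, and the third bound in~\eqref{useful_limit_3_Dos} (exponential decay of $\mathsf{Z}^{(m)}$ for $1\le m\le 10$) yields $\tnorm{\mathsf{Z}^{(m)}}_{L^2(\R)}^2 \le M^2\int_\R e^{-2\al|t|}\,\m{d}t = M^2/\al$ for each such $m$, so $\mathsf{Z}'\in H^9(\R)\hookrightarrow H^6(\R)$ with $\tnorm{\mathsf{Z}'}_{H^6}\le C(\al) M$. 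Enlarging $M$ once and for all to absorb the harmless $\al$-dependent factor (which depends only on the parameters already controlling $M$), we get $\tnorm{\mathsf{Z}}_{W^{7,\infty}}+\tnorm{\mathsf{Z}'}_{H^6}\le M$. For the perturbation, item~3 of Proposition~\ref{prop on the bore map} directly gives $\tnorm{\mathfrak{z}(\lambda_1,\lambda_2)}_{W^{7,\infty}\cap H^7}\le M$, and since $\tnorm{[\mathfrak{z}(\lambda_1,\lambda_2)]'}_{H^6}\le \tnorm{\mathfrak{z}(\lambda_1,\lambda_2)}_{H^7}$, a triangle inequality completes~\eqref{yall got any more of them derivatives}.

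The Lipschitz bound~\eqref{generic lipschitz bound} is immediate: since $\mathsf{Z}$ is independent of the parameter, $Z(\lambda_1,\lambda_2)-Z(\tilde\lambda_1,\tilde\lambda_2) = \mathfrak{z}(\lambda_1,\lambda_2)-\mathfrak{z}(\tilde\lambda_1,\tilde\lambda_2)$, and~\eqref{rocky mountain spotted fever_Dos} applies verbatim. The only mildly non-routine ingredient in the whole argument is the conversion of exponential decay into an $H^6$ bound on $\mathsf{Z}'$, which is why I would phrase the one-time enlargement of $M$ as an explicit preliminary normalization rather than leave it implicit; everything else is pure reorganization of Proposition~\ref{prop on the bore map}.
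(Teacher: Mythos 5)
Your proof is correct and spells out exactly the reasoning the paper compresses into the single sentence ``These are trivial consequences of Proposition~\ref{prop on the bore map}.'' You also correctly isolate the only step that isn't pure bookkeeping, namely converting the pointwise exponential decay $\sup_{t}e^{\al|t|}|\mathsf{Z}^{(m)}(t)|\le M$ into the $H^6$ bound on $\mathsf{Z}'$ at the cost of a factor $\sqrt{7/\al}$, and you are right that the resulting bound exceeds $2M$ unless $M$ is retroactively enlarged inside Proposition~\ref{prop on the bore map} --- an enlargement that is harmless precisely because $M$ there is only constrained below, and which the paper leaves implicit.
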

\begin{proof}
    These are trivial consequences of Proposition~\ref{prop on the bore map}.
\end{proof}


\subsection{Properties of residual forcing and driving terms}\label{subsection on properties of residual forcing and nonlinearities}

With the specification of $\mathfrak{r}_1$, $\mathfrak{r}_3$, and $\mathfrak{r}_4$ and the bore map theory of Section~\ref{subsection on the shallow water bore map} in hand, we are now in a position to study the residual forcing terms $\mathfrak{f}_0$, $\mathfrak{f}_1$, $\mathfrak{f}_2$, $\mathfrak{f}_3$, and $\mathfrak{f}_4$ appearing in system~\eqref{first form of the residual PDEs}.  In particular, we are interested in viewing these terms as maps built from the various subcomponents we constructed in the previous subsection and understanding their resulting mapping properties.  To accomplish this we first need to introduce some notation.  

To measure the free surface residual $\eta$, for $\ep\in\tp{0,1}$ we employ the following $\ep$-dependent norm on $H^{5/2}\tp{\Sigma_\ep}$:
\begin{equation}\label{epsilon dependent norms for the free surface residual}
    \tnorm{\eta}_{H^{5/2}\tp{\Sigma_\ep}}^{\tp{\ep}} = \ep^{1/2}\tnorm{\eta}_{H^2\tp{\Sigma_\ep}} + \ep\tnorm{\eta}_{H^{5/2}\tp{\Sigma_\ep}}.
\end{equation}
Next, we introduce the function $\ep_0 :(1,\infty) \to (0,1)$ defined by $\ep_0\tp{R} = \sqrt{\m{H}_-/\tp{ 6C  R}}$ where $1\le C<\infty$ is a constant from the one-dimensional Morrey inequality: $\norm{g}_{L^\infty(\R)} \le C \norm{g}_{H^1\tp{\R}}$.  The main use of this function is in the following observation:  if $f,g : \R \to \R$ satisfy $f \ge 2\m{H}_-/3$ and $\norm{g}_{H^1\tp{\R}} \le R$, then  for $0 < \ep < \ep_0(R)$ we have that $f + \ep^2 g \ge 2\m{H}_-/3 - \ep^2 \norm{g}_{L^\infty\tp{\R}} \ge 2\m{H}_-/3 - \ep^2 C R  \ge \m{H}_-/2$.  

The purpose of the following definition is to fix our choice of parameters and specify the functional dependence of the terms appearing in Theorem~\ref{thm on the shallow water and residual equations} on the bore map theory of Section~\ref{subsection on the shallow water bore map}.

\begin{defnC}[Ansatz mapping terms and residual forcing terms]\label{defn big steppah}
Let $\upmu,\m{a}>0$, $\iota\in\tcb{-1,1}$, $\tp{\m{g},\m{A}}\in\mathfrak{C}_\iota$, and $\upsigma\ge0$. Thanks to Proposition~\ref{prop on the bore map} and Corollary~\ref{coro on bore map III} there exists $\be\in\tp{0,1}$ and bore map constituents that are Lipschitz continuous mappings
\begin{equation}\label{ansatz, but now precise}
    H,U,U_1,U_2,P,P_1,P_2:\Bar{B(0,\be)}\subset\tsb{\tp{C^{10}\cap W^{10,\infty}\cap H^{10}}\tp{\R}}^2\to\tp{C^7\cap W^{7,\infty}}\tp{\R}
\end{equation}
defining a solution operator to the system of ODEs~\eqref{_SWE_0}, \eqref{_SWE_1}, and~\eqref{_SWE_2} in the sense made precise by the fifth item of Proposition~\ref{prop on the bore map}. Note that $H$  satisfies  $H\ge\tp{2/3}\m{H}_-$.   Given $\ep\in\tp{0,1}$, $\lambda\in\Bar{B(0,\be)}$, and $\eta\in H^{5/2}\tp{\Sigma_\ep}$, we define the following functions
\begin{equation}\label{any suggestions for how to make this stuff nicer}
    \begin{split}
        \pmb{\zeta}(\ep,\lambda,\eta) & = H(\lambda) + \ep^2\eta : \R \to \R, \\
        W_1(\ep,\lambda,\eta) &= \f{y\pmb{\zeta}\tp{\ep,\lambda,\eta}}{\ep}U_1\tp{\lambda} + \f12\bp{\f{y\pmb{\zeta}\tp{\ep,\lambda,\eta}}{\ep}}^2U_2\tp{\lambda} : \R^2 \to \R, \\
        W_2\tp{\ep,\lambda,\eta} &=  - \ep\bp{\f12\bp{\f{y\pmb{\zeta}\tp{\ep,\lambda,\eta}}{\ep}}^2U_1\tp{\lambda}' + \f16\bp{\f{y\pmb{\zeta}\tp{\ep,\lambda,\eta}}{\ep}}^3U_2\tp{\lambda}'} : \R^2 \to \R, \\
        W(\ep,\lambda,\eta) &= W_1\tp{\ep,\lambda,\eta}e_1 + W_2\tp{\ep,\lambda,\eta}e_2 : \R^2 \to \R^2, \\
        V\tp{\ep,\lambda,\eta} &= U\tp{\lambda}e_1 - y\pmb{\zeta}\tp{\ep,\lambda,\eta}U(\lambda)'e_2 : \R^2 \to \R^2, \\
        X(\ep,\lambda,\eta) &= V(\ep,\lambda,\eta) + \ep^2 W\tp{\ep,\lambda,\eta} : \R^2 \to \R^2, \\
         Q(\ep,\lambda,\eta) &= \f{y\pmb{\zeta}\tp{\ep,\lambda,\eta}}{\ep}P_1\tp{\lambda} + \f12\bp{\f{y\pmb{\zeta}\tp{\ep,\lambda,\eta}}{\ep}}^2P_2\tp{\lambda} : \R^2 \to \R.
    \end{split} 
\end{equation}

Next, let $R \in (1,\infty)$ and $\ep_0:\tp{1,\infty}\to\tp{0,1}$ be as above and suppose that $0<\ep<\ep_0\tp{R}$.  We then define 
the sets
\begin{multline}\label{theses sets prevent illegal curry}
    E^R_\ep=\tcb{\tp{\lambda,\eta}\in\Bar{B(0,\be)}\times H^{5/2}\tp{\Sigma_\ep}\;:\;\tnorm{\eta}^{\tp{\ep}}_{H^{5/2}\tp{\Sigma_\ep}}\le R}\text{ and }\\
    F^R_\ep=\tcb{\tp{\lambda,\eta,u}\in\Bar{B(0,\be)}\times H^{5/2}\tp{\Sigma_\ep}\times H^2_{\m{tan}}\tp{\Omega_\ep;\R^2}:\;\max\tcb{\tnorm{\eta}^{\tp{\ep}}_{H^{5/2}\tp{\Sigma_\ep}},\tnorm{u}_{H^2\tp{\Omega_\ep}}}\le R}.
\end{multline}
Note that if $(\lambda,\eta) \in E^R_\ep$ then the choice of $\ep_0$ implies that $\pmb{\zeta}(\ep,\lambda,\eta) \ge \m{H}_- /2$, which in turn means that the geometry matrix $\mathcal{A}_{\pmb{\zeta}\tp{\ep,\lambda,\eta}}$ from~\eqref{geometry matrices} is well-defined. In turn, we define the following maps, noting that each is well-defined on the given domain but that the appropriateness of the codomain will be verified in the subsequent result:
\begin{equation}\label{no more illegal curry}
    \mathfrak{f}_0\tp{R,\ep;\cdot},\;\mathfrak{f}_3\tp{R,\ep;\cdot},\;\mathfrak{f}_4\tp{R,\ep;\cdot}:E^R_\ep\to H^{1/2}\tp{\Sigma_\ep}\text{ and }\mathfrak{f}_1\tp{R,\ep;\cdot},\;\mathfrak{f}_2\tp{R,\ep;\cdot}:F^R_\ep\to L^2\tp{\Omega_\ep}
\end{equation}
are defined via
    \begin{equation}\label{really bro needs a label no cap}
        \mathfrak{f}_0\tp{R,\ep;\lambda,\eta} = \tp{1 - \pd_1^2}\tp{\Bar{\upgamma}H(\lambda) - \tp{H(\lambda) + \ep^2\eta}^2U_1(\lambda)/2 - \tp{H(\lambda) + \ep^2\eta}^3U_2\tp{\lambda}/6 - \Bar{\m{A}}},
    \end{equation}
    \begin{multline}\label{I am sorry for how ugly this is}
        \mathfrak{f}_1\tp{R,\ep;\lambda,\eta,u} = -\Bar{\upgamma}U(\lambda)' + \tp{V(\ep,\lambda,\eta) - \tp{4 + \ep^2\Bar{\upgamma}}e_1}\cdot\grad^{\mathcal{A}_{\pmb{\zeta}\tp{\ep,\lambda,\eta}}}W_1\tp{\ep,\lambda,\eta} \\
        + W\tp{\ep,\lambda,\eta}\cdot\grad^{\mathcal{A}_{\pmb{\zeta}\tp{\ep,\lambda,\eta}}}X_1\tp{\ep,\lambda,\eta} 
        +\pd_1^{\mathcal{A}_{\pmb{\zeta}\tp{\ep,\lambda,\eta}}}Q\tp{\ep,\lambda,\eta} - \upmu\tp{\pd_1^{\mathcal{A}_{\pmb{\zeta}\tp{\ep,\lambda,\eta}}}}^2W_1\tp{\ep,\lambda,\eta} + \ep^2u\cdot\grad^{\mathcal{A}_{\pmb{\zeta}\tp{\ep,\lambda,\eta}}}u_1,
    \end{multline}
    \begin{multline}\label{ugly expression of f2}
        \mathfrak{f}_2\tp{R,\ep;\lambda,\eta,u} = \Bar{\upgamma}y\pmb{\zeta}\tp{\ep,\lambda,\eta}U\tp{\lambda}'' + \tp{V\tp{\lambda} - \tp{4 + \ep^2\Bar{\upgamma}}e_1}\cdot\grad^{\mathcal{A}_{\pmb{\zeta}\tp{\ep,\lambda,\eta}}}W_2\tp{\ep,\lambda,\eta} \\+ W\tp{\ep,\lambda,\eta}\cdot\grad^{\mathcal{A}_{\pmb{\zeta}\tp{\ep,\lambda,\eta}}}X_2\tp{\ep,\lambda,\eta}-\upmu\tp{\pd_1^{\mathcal{A}_{\pmb{\zeta}\tp{\ep,\lambda,\eta}}}}^2W_2\tp{\ep,\lambda,\eta} + \ep^2 u\cdot\grad^{\mathcal{A}_{\pmb{\zeta}\tp{\ep,\lambda,\eta}}}u_2,
    \end{multline}
     \begin{multline}\label{country roads take me home}
        \mathfrak{f}_3\tp{R,\ep;\lambda,\eta} = \ep Q(\ep,\lambda,\eta)\pmb{\zeta}\tp{\ep,\lambda,\eta}' - 2\ep\upmu\pd_1^{\mathcal{A}_{\pmb{\zeta}\tp{\ep,\lambda,\eta}}}W_1\tp{\ep,\lambda,\eta}\pmb{\zeta}\tp{\ep,\lambda,\eta}' + \upmu\pd_1^{\mathcal{A}_{\pmb{\zeta}\tp{\ep,\lambda,\eta}}}W_2\tp{\ep,\lambda,\eta} \\+ \upsigma\mathcal{H}\tp{\ep\pmb{\zeta}\tp{\ep,\lambda,\eta}}\pmb{\zeta}\tp{\ep,\lambda,\eta}' - \ep^3\m{g}\eta\eta',
    \end{multline}
    and
    \begin{equation}\label{please sir may I have a label}
        \mathfrak{f}_4\tp{R,\ep;\lambda,\eta} = -Q\tp{\ep,\lambda,\eta} + \upmu\pmb{\zeta}\tp{\ep,\lambda,\eta}\pmb{\zeta}\tp{\ep,\lambda,\eta}'U\tp{\lambda}'' + \upmu\mathbb{D}^{\mathcal{A}_{\pmb{\zeta}\tp{\ep,\lambda,\eta}}}W\tp{\ep,\lambda,\eta}\mathcal{N}_{\ep\pmb{\zeta}\tp{\ep,\lambda,\eta}}\cdot e_2 - \ep^{-1}\upsigma\mathcal{H}\tp{\ep\pmb{\zeta}\tp{\ep,\lambda,\eta}}.
    \end{equation}
\end{defnC}

The codomains of the various maps~\eqref{no more illegal curry} in the previous definition are determined by the data space (\eqref{the definition of the strong codomain space} and~\eqref{norm on Y1}) associated with the well-posedness theory for the thin domain Stokes equations with stress boundary conditions, which we develop in Section~\ref{subsection on the stokes equations with stress boundary conditions}. Our next result studies the mapping properties of the residual forcing terms introduced in the previous definition in these spaces. We recall the adapted boundary norms introduced in Lemma~\ref{lem on adapted boundary norms}. We also emphasize that the $\mathfrak{f}_0$, given by~\eqref{really bro needs a label no cap}, estimate in the following is the key place where the parameter tuning~\eqref{the parameter tuning identities 440 hz} is required; it is exploited in order to force $\mathfrak{f}_0$ to vanish at infinity and be square summable. 

\begin{propC}[Mapping properties of the residual forcing terms]\label{prop on the mapping propertieso of the residual forcing}
Assume the hypotheses of Definition~\ref{defn big steppah}. There exists functions $\ep_1:\tp{1,\infty}\to\tp{0,1}$ and $C_1:\tp{1,\infty}\to\tp{1,\infty}$ with $\ep_1\le\ep_0$ such that the following hold. First, we have the lower bound
\begin{equation}\label{this bound is here for today then its gone}
    \inf_{ \substack{ 1<R<\infty \\ 0<\ep<\ep_0\tp{R}} }\inf_{\tp{\lambda,\eta}\in E^R_\ep}\inf_{\R}\tp{H(\lambda) + \ep^2\eta}\ge\m{H}_-/2.
\end{equation}
Second, we have the uniform range estimates:
\begin{multline}\label{residual forcing bounds}
    \sup_{\substack{1<R<\infty \\ 0<\ep<\ep_1\tp{R}} } \sup_{\tp{\lambda,\eta}\in E^R_\ep}\ssb{\tnorm{\mathfrak{f}_0\tp{R,\ep;\lambda,\eta}}_{H^{1/2}\tp{\Sigma_\ep}} + \tnorm{\mathfrak{f}_3\tp{R,\ep;\lambda,\eta}}^{(\ep,+)}_{H^{1/2}\tp{\Sigma_\ep}} + \tnorm{\mathfrak{f}_4\tp{R,\ep;\lambda,\eta}}^{(\ep,-)}_{H^{1/2}\tp{\Sigma_\ep}}}<\infty\text{ and }\\
    \sup_{\substack{1<R<\infty\\0<\ep<\ep_1\tp{R}}}\sup_{\tp{\lambda,\eta,u}\in F^R_\ep}\ssb{\tnorm{\mathfrak{f}_1\tp{R,\ep;\lambda,\eta,u}}_{L^2\tp{\Omega_\ep}} + \tnorm{\mathfrak{f}_2\tp{R,\ep;\lambda,\eta,u}}_{L^2\tp{\Omega_\ep}}}<\infty.
\end{multline}
Third, we have the uniform Lipschitz estimates:
\begin{equation}\label{lipschitz 0__}
    \sup_{\substack{1<R<\infty\\0<\ep<\ep_1\tp{R}}}\sup_{\tp{\lambda,\eta}\neq\tp{\tilde{\lambda},\tilde{\eta}}}\f{\tnorm{\mathfrak{f}_0\tp{R,\ep;\lambda,\eta} - \mathfrak{f}_0\tp{R,\ep;\tilde{\lambda},\tilde{\eta}}}_{H^{1/2}\tp{\Sigma_\ep}}}{\tnorm{\lambda - \tilde{\lambda}}_{\Lambda} + C_1\tp{R}\ep\tnorm{\eta - \tilde{\eta}}^{\tp{\ep}}_{H^{5/2}\tp{\Sigma_\ep}}}<\infty;
\end{equation}
writing $s_3 = +$ and $s_4 = -$,
\begin{equation}\label{lipschitz 034__}
    \sup_{\substack{1<R<\infty\\0<\ep<\ep_1\tp{R}}}\sup_{\tp{\lambda,\eta}\neq\tp{\tilde{\lambda},\tilde{\eta}}}  
    \f{\tnorm{\mathfrak{f}_j\tp{R,\ep;\lambda,\eta} - \mathfrak{f}_j\tp{R,\ep;\tilde{\lambda},\tilde{\eta}}}^{\tp{\ep,s_j}}_{H^{1/2}\tp{\Sigma_\ep}} }{\tnorm{\lambda - \tilde{\lambda}}_{\Lambda} + C_1\tp{R}\ep\tnorm{\eta - \tilde{\eta}}^{\tp{\ep}}_{H^{5/2}\tp{\Sigma_\ep}}}<\infty \text{ for } j \in \{3,4\};
\end{equation}
and for $j \in \{1,2\}$,   
\begin{equation}\label{lipscthiz 12__}
    \sup_{\substack{1<R<\infty\\0<\ep<\ep_1\tp{R}}}\sup_{\tp{\lambda,\eta,u}\neq\tp{\tilde{\lambda},\tilde{\eta},\tilde{u}}}\f{\tnorm{\mathfrak{f}_j\tp{R,\ep;\lambda,\eta,u} - \mathfrak{f}_j\tp{R,\ep;\tilde{\lambda},\tilde{\eta},\tilde{u}}}_{L^2\tp{\Omega_\ep}}} 
    {\tnorm{\lambda - \tilde{\lambda}}_{\Lambda} + C_1\tp{R}\ep^2\tnorm{\eta - \tilde{\eta}}_{W^{1,\infty}\tp{\Sigma_\ep}} + C_1\tp{R}\ep^{3/2}\tnorm{u - \tilde{\eta}}_{H^{2}\tp{\Omega_\ep}}}<\infty,
\end{equation}
where $\tnorm{\cdot}_{\Lambda}$ is as in~\eqref{the complete metric space}.
\end{propC}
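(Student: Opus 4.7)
The plan is to systematically decompose each residual forcing term into: (a) contributions from the distinguished shallow-water components $\mathsf{H},\mathsf{U},\mathsf{U}_1,\mathsf{U}_2,\mathsf{P},\mathsf{P}_1,\mathsf{P}_2$, which by Proposition~\ref{prop on the bore map} and Corollary~\ref{coro on bore map III} lie in $W^{7,\infty}\tp{\R}$ with derivatives in $H^\infty$ and with prescribed limits at $\pm\infty$; (b) the bore-map perturbations $\mathfrak{h}(\lambda),\mathfrak{u}(\lambda),\dots$, which are Lipschitz in $\lambda$ and bounded in $W^{7,\infty}\cap H^7$ by the Lipschitz statement~\eqref{generic lipschitz bound}; and (c) polynomial-in-$y\pmb{\zeta}/\ep$ expressions, of degree at most three, built from the bore-map constituents and containing the residual unknowns $\eta$ and $u$ at finite orders.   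For the lower bound~\eqref{this bound is here for today then its gone} I first apply the one-dimensional Morrey embedding to get $\tnorm{\eta}_{L^\infty\tp{\Sigma_\ep}}\lesssim\tnorm{\eta}_{H^1\tp{\Sigma_\ep}}\lesssim\ep^{-1/2}\tnorm{\eta}^{\tp{\ep}}_{H^{5/2}\tp{\Sigma_\ep}}$ by~\eqref{epsilon dependent norms for the free surface residual}, and combine with the bound $H\ge\tp{2/3}\m{H}_-$ from the fourth item of Proposition~\ref{prop on the bore map}, further shrinking $\ep_0$ to some $\ep_1\tp{R}$ that absorbs the extra $\ep^{-1/2}$ factor.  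The resulting bound $\pmb{\zeta}\ge\m{H}_-/2$ makes the matrices $\mathcal{A}_{\pmb{\zeta}}$, their derivatives, and all divisions appearing in~\eqref{any suggestions for how to make this stuff nicer} uniformly controlled.

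The analysis of $\mathfrak{f}_0$ is the delicate step, since $\tp{1-\pd_1^2}$ demands an argument in $H^{5/2}\tp{\R}$ before yielding $H^{1/2}$.  Writing $h=H\tp{\lambda}+\ep^2\eta$ and expanding, I split
\begin{equation}
    \Bar{\upgamma}H - \f{1}{2}h^2 U_1 - \f{1}{6}h^3 U_2 - \Bar{\m{A}} = \tsb{\Bar{\upgamma}H - \f{1}{2}H^2 U_1 - \f{1}{6}H^3 U_2 - \Bar{\m{A}}} - \ep^2\eta\cdot R_1 - \ep^4\eta^2\cdot R_2 - \f{\ep^6}{6}\eta^3 U_2,
\end{equation}
isolating the bracketed autonomous piece. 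Its limits at $\iota x\to\pm\infty$, computed from Proposition~\ref{prop on the bore map}, give $\mathsf{H}\to\m{H}_\pm$, $\mathsf{U}_1\to\tp{\m{a}/\upmu}\tp{4-\m{A}/\m{H}_\pm}$ and $\mathsf{U}_2\to-\tp{\m{a}/\upmu}\tp{4-\m{A}/\m{H}_\pm}/\m{H}_\pm$, and a direct algebraic reduction using $\m{H}_\pm-\m{H}_\pm^2=\m{A}/4$ yields $\f{1}{2}\m{H}_\pm^2\mathsf{U}_1\tp{\pm\infty}+\f{1}{6}\m{H}_\pm^3\mathsf{U}_2\tp{\pm\infty}=\f{4\m{a}}{3\upmu}\m{H}_\pm^3$.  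The parameter tuning~\eqref{the parameter tuning identities 440 hz} then forces the bracketed term to vanish at $\pm\infty$, and the exponential decay estimates~\eqref{useful_limit_3_Dos} together with the $H^7$ regularity of $\mathfrak{h},\mathfrak{u}_1,\mathfrak{u}_2$ promote this to membership in $H^{5/2}\tp{\R}$.  The lower-order $\ep$-corrections are estimated using $\eta\in H^{5/2}\tp{\Sigma_\ep}$ directly via product estimates, with factors of $\ep^2$ to spare.

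The terms $\mathfrak{f}_1,\mathfrak{f}_2$ are handled by repeated applications of the thin-domain product bounds: each summand is a product of a uniformly $L^\infty\tp{\Omega_\ep}$-bounded bore-map polynomial coefficient with a factor in $L^2\tp{\Omega_\ep}$, and the vertical integration $\int_0^\ep$ supplies an $\ep^{1/2}$ that tames the negative $\ep$ powers present in the ansatz pieces in~\eqref{I am sorry for how ugly this is}--\eqref{ugly expression of f2}.  The quadratic $\ep^2u\cdot\grad^{\mathcal{A}_{\pmb{\zeta}}}u$ term is handled via $H^2\tp{\Omega_\ep}\cdot H^1\tp{\Omega_\ep}\emb L^2\tp{\Omega_\ep}$ on the thin domain using Lemma~\ref{lem on poincare and sobolev in thin domains}, gaining an $\ep^{1/2}$ that controls the $R$-dependence.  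For $\mathfrak{f}_3,\mathfrak{f}_4$ I rewrite the capillary contributions using
\begin{equation}
    \upsigma\mathcal{H}\tp{\ep\pmb{\zeta}}\pmb{\zeta}' = \f{\ep\upsigma\pmb{\zeta}''\pmb{\zeta}'}{\tp{1+\ep^2\tabs{\pmb{\zeta}'}^2}^{3/2}} \text{ and } -\ep^{-1}\upsigma\mathcal{H}\tp{\ep\pmb{\zeta}} = \f{-\upsigma\pmb{\zeta}''}{\tp{1+\ep^2\tabs{\pmb{\zeta}'}^2}^{3/2}},
\end{equation}
exposing an explicit factor of $\ep$ in $\mathfrak{f}_3$ that compensates the demanding $\tp{\ep,+}$ norm from~\eqref{norm on Y1}, and producing a clean second-order surface expression in $\mathfrak{f}_4$ controllable by the weaker $\tp{\ep,-}$ norm.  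The other pieces in $\mathfrak{f}_3,\mathfrak{f}_4$ are either $\pmb{\zeta}$-surface traces of bore-map polynomials multiplied by $\pmb{\zeta}'$, or vertical-average remainders, all estimated with Lemma~\ref{lem on traces in thin domains} and Lemma~\ref{lem on adapted boundary norms}.

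The Lipschitz bounds~\eqref{lipschitz 0__}, \eqref{lipschitz 034__}, \eqref{lipscthiz 12__} are obtained by subtracting matching ansatz pieces, invoking~\eqref{generic lipschitz bound} for the bore-map differences, and exploiting multilinearity in $\tp{\eta,u}$.  The constant $C_1\tp{R}$ arises from the quadratic-in-$u$ term $\ep^2 u\cdot\grad^{\mathcal{A}_{\pmb{\zeta}}}u_1$ in $\mathfrak{f}_1$ and from the quadratic-in-$\eta$ term $\ep^3\m{g}\eta\eta'$ in $\mathfrak{f}_3$: at a basepoint bounded by $R$, the secant bound in $\tp{\tilde{\eta},\tilde{u}}$ contributes a multiplicative factor proportional to $R$, hence the $C_1\tp{R}$ weight in the denominators of~\eqref{lipschitz 034__}--\eqref{lipscthiz 12__}.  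The main obstacle in executing the plan is the simultaneous juggling of the $H^{5/2}$-before-$(1-\pd_1^2)$ structure for $\mathfrak{f}_0$ and the precisely calibrated $\tp{\ep,\pm}$ adapted norms for $\mathfrak{f}_3,\mathfrak{f}_4$: it is essential to avoid spurious $\ep^{-1}$ powers in the capillary and $Q$ contributions, and to use the parameter-tuning cancellation of leading-order endpoint values exactly where needed, for otherwise the  $L^2\tp{\R}$-integrability of $\mathfrak{f}_0$ would fail.
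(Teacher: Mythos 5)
Your proposal is correct and follows essentially the same route as the paper's proof: the same Lipschitz-then-reduce strategy for $\mathfrak{f}_0$, the same algebraic identity (via the parameter tuning $\Bar{\upgamma}\m{H}_\pm - \tp{4\m{a}/3\upmu}\m{H}_\pm^3 - \Bar{\m{A}} = 0$ and $\m{H}_\pm - \m{H}_\pm^2 = \m{A}/4$) to secure $L^2$-integrability of the polynomial-in-$\mathsf{H}$ piece, the same expansion of $\mathcal{H}(\ep\pmb{\zeta})$ for the capillary contributions, and the same thin-domain Sobolev embedding for the quadratic nonlinearities. One small imprecision: for $\mathfrak{f}_1,\mathfrak{f}_2$ you attribute the taming of the $1/\ep$ factors to the ``vertical integration $\int_0^\ep$ supplying an $\ep^{1/2}$,'' but the actual mechanism in the paper is algebraic cancellation at the pointwise level — the $1/\ep$ from each $\pd_2^{\mathcal{A}}$ acting on the $y/\ep$-polynomial is always paired with a factor of $V_2$, $W_2$, or an explicit prefactor carrying a compensating $y$ or $\ep$, so the integrand is $O(1)$ before integration (the vertical integral's $\ep^{1/2}$ is only load-bearing in the quadratic $\ep^2 u\cdot\grad^{\mathcal{A}}u$ term via Lemma~\ref{lem on poincare and sobolev in thin domains}, which you handle correctly).
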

\begin{proof}
    For the most part the stated estimates follow from straightforward computations and estimates, so for the sake of brevity we will only highlight the pieces that require some delicate care and omit details for the rest. We begin by noting that estimate~\eqref{this bound is here for today then its gone} is a direct consequence of the discussion preceding Definition~\ref{defn big steppah}.
    
    We now handle each term in order.
    
    \emph{Step 0 -- The term $\mathfrak{f}_0$:} Estimate of~\eqref{lipschitz 0__} is a straightforward consequence of estimate~\eqref{generic lipschitz bound} from Corollary~\eqref{coro on bore map III} along with the fact that $H^{5/2}\tp{\R}$ is an algebra.  In view of this Lipschitz estimate and the assumed bounds on $\lambda$ and $\eta$, it then suffices to establish the $\mathfrak{f}_0$-bound of~\eqref{residual forcing bounds} when $\lambda = 0$ and $\eta = 0$, which is equivalent to establishing the inclusion
    \begin{equation}\label{sure would like to finish this paper any day now}
        \tp{1 - \pd_1^2}^{-1}\mathfrak{f}_0\tp{\ep,0,0} = \Bar{\upgamma}\mathsf{H} - \mathsf{H}^2\mathsf{U}_1/2 - \mathsf{H}^3\mathsf{U}_2/6 - \m{A}\in H^{5/2}\tp{\Sigma_\ep},
    \end{equation}
    where $\mathsf{H} = H(0)$ and $\mathsf{U}_1$, and $\mathsf{U}_2$ are defined in~\eqref{hello there distinguished gentlemen}. We invoke the identities
    \begin{equation}
        \mathsf{U} = 4 - \m{A}/\mathsf{H},\quad\mathsf{U}_1 = \tp{\m{a}/\upmu}\mathsf{U}, 
        \text{ and }
        \mathsf{U}_2 =-\m{a}\mathsf{U}/\upmu\mathsf{H} + \mathsf{U}'' - 4\mathsf{H}'\mathsf{U}'/\mathsf{H}
    \end{equation}
    to rewrite the function in~\eqref{sure would like to finish this paper any day now} as
    \begin{equation}
        \tp{1 - \pd_1^2}^{-1}\mathfrak{f}_0(\ep,0,0) =\Bar{\upgamma}\mathsf{H} - \Bar{\m{A}} - \tp{4\m{a}/3\upmu}\mathsf{H}^3 - \tp{\m{a}/3\upmu}\mathsf{H}\tp{4\mathsf{H} - 4\mathsf{H}^2 - \m{A}} - \mathsf{H}^3\tp{\mathsf{U}'' - 4\mathsf{H}'\mathsf{U}'/\mathsf{H}}/6.
    \end{equation}
    Due to the exponential decay of derivatives provided by the second item of Proposition~\ref{prop on the bore map},  the third term  together with the derivatives of the first two terms  belong to $H^{\infty}\tp{\R}$ and, in particular
    \begin{equation}\label{cat food tastes good}
        \tnorm{\tp{\Bar{\upgamma}\mathsf{H} - \Bar{\m{A}} - \tp{4\m{a}/3\upmu}\mathsf{H}^3}'  }_{H^{3/2}} + \tnorm{(4\mathsf{H} - 4\mathsf{H}^2 - \m{A} )'}_{H^{3/2}} +         \tnorm{ \mathsf{H}^3\tp{\mathsf{U}'' - 4\mathsf{H}'\mathsf{U}'/\mathsf{H}}/6}_{H^{5/2}}  \lesssim 1.
    \end{equation}
    Thus, 
    \begin{equation}\label{almost at the end of this proof yay}
        \tnorm{\tp{1 - \pd_1^2}^{-1}\mathfrak{f}_0\tp{\ep,0,0}}_{H^{5/2}}\lesssim \tnorm{\Bar{\upgamma}\mathsf{H} - \Bar{\m{A}} - \tp{4\m{a}/3\upmu}\mathsf{H}^3}_{L^2} + \tnorm{4\mathsf{H} - 4\mathsf{H}^2 - \m{A}}_{L^2} + 1.
    \end{equation}
    In~\eqref{cat food tastes good} and~\eqref{almost at the end of this proof yay} the implicit constants depend only on the physical parameters initialized by Definition~\ref{defn big steppah} and the bore map~\eqref{ansatz, but now precise}. So it only remains to verify that each of the above polynomials-in-$\mathsf{H}$ belongs to $L^{2}\tp{\Sigma_\ep}$. While $\mathsf{H}$ itself does not belong to $L^2\tp{\Sigma_\ep}$, we do know that its limits at $\pm \infty$, namely $\m{H}_{\pm}$, satisfy the relations~\eqref{sweq} and~\eqref{the parameter tuning identities 440 hz}.  Hence the polynomials-in-$\mathsf{H}$ from~\eqref{almost at the end of this proof yay} vanish at $\pm\infty$, and we may invoke the fundamental theorem of calculus and the aforementioned exponential decay of derivatives to deduce that these functions vanish exponentially quickly at $\pm\infty$, and thus belong to $L^2\tp{\Sigma_\ep}$.

    \emph{Step 1 -- The term $\mathfrak{f}_1$:} The $\mathfrak{f}_1$-parts of the bounds~\eqref{residual forcing bounds} and~\eqref{lipscthiz 12__} are mostly straightforward, so we only point out the terms requiring some subtlety.
    
    Firstly, we observe that $V$, $W$, and $Q$ of~\eqref{any suggestions for how to make this stuff nicer} can be thought of as variable coefficient polynomials in $y/\ep$. Since $y\in\tp{0,\ep}$, this ratio $y/\ep$ is bounded independently of $\ep$. However, this structure shows that the application of $\pd_2^{\mathcal{A}_{\pmb{\zeta}}}$ to any one of these introduces a factor of $1/\ep$. This is not a problem because each of these $e_2$-type derivatives is paired with either $V_2$ or $W_2$, which have an additional factor of either $y$ or $\ep$ to compensate. On the other hand, the derivatives $\pd_1^{\mathcal{A}_{\pmb{\zeta}}}$ are essentially harmless, as the following computation elucidates:
    \begin{equation}
        V\tp{\ep,\lambda,\eta}\cdot\grad^{\mathcal{A}_{\pmb{\zeta}\tp{\ep,\lambda,\eta}}}W_1\tp{\ep,\lambda,\eta} = U\tp{\lambda}\pd_1^{\mathcal{A}_{\pmb{\zeta}\tp{\ep,\lambda,\eta}}} W_1\tp{\ep,\lambda,\eta} -\tp{y/\ep}\pmb{\zeta}\tp{\ep,\lambda,\eta}U\tp{\lambda}'\tp{U_1\tp{\lambda} + \tp{y/\ep}\pmb{\zeta}\tp{\ep,\lambda,\eta}U_2\tp{\lambda}}.
    \end{equation}
    
    Secondly, we note that $U$, $V_1$, $W_1$, and $X_1$ do not belong to square summable Sobolev spaces, but after application of $\pd_1^{\mathcal{A}_{\pmb{\zeta}}}$ they do become square summable, as a consequence of estimate~\eqref{yall got any more of them derivatives} from Corollary~\ref{coro on bore map III}.  On the other hand the second components $V_2$, $W_2$, $X_2$ do belong to square summable spaces.  As a consequence of these two facts, we deduce that each of the advective style derivatives in~\eqref{I am sorry for how ugly this is}, i.e. the first, second, and third grouping of terms, must map into square summable spaces. 
    
    Next, except possibly for the penultimate term in~\eqref{I am sorry for how ugly this is}, no term involves more than one derivative of $\pmb{\zeta}\tp{\ep,\lambda,\eta}$, which means that control on $\eta$ in $W^{1,\infty}\tp{\Sigma_\ep}$ is sufficient for these.  In fact, this is also true for the penultimate term, as can be see by writing it out:
    \begin{equation}
    \tp{\pd_1^{\mathcal{A}_{\pmb{\zeta}\tp{\ep,\lambda,\eta}}}}^2W_1\tp{\ep,\lambda,\eta} = \tp{y/\ep}\pmb{\zeta}\tp{\ep,\lambda,\eta}U_1\tp{\lambda}'' + \tp{\tp{y/\ep}\pmb{\zeta}\tp{\ep,\lambda,\eta}}^2U_2\tp{\lambda}''/2.
    \end{equation}

    Finally, the coefficient $\ep^{3/2}$ present in the velocity contributions on the right hand side of~\eqref{lipscthiz 12__} can be explained through the thin domain Sobolev embedding of the third item of Lemma~\ref{lem on poincare and sobolev in thin domains}. For example, we have the estimate
    \begin{equation}
        \tnorm{\ep^2u\cdot\grad^{\mathcal{A}_{{\pmb{\zeta}\tp{\ep,\lambda,\eta}}}}u_1}_{L^2\tp{\Omega_\ep}}\lesssim \ep^2\tnorm{u}_{L^\infty\tp{\Omega_\ep}}\tnorm{u}_{H^{1}\tp{\Omega_\ep}}\lesssim \ep^{3/2}\tnorm{u}_{H^1\tp{\Omega_\ep}},
    \end{equation}
    where the implicit constants depend only the physical parameters initialized by Definition~\ref{defn big steppah}, the bore map~\eqref{ansatz, but now precise}, and $R$.

    \emph{Step 2 -- The term $\mathfrak{f}_2$:} The same arguments used to handle $\mathfrak{f}_1$ in the previous step work here as well, so we omit further detail.

    \emph{Step 3 -- The term $\mathfrak{f}_3$:} Again we only highlight the key observations.  The norm $\tnorm{\cdot}^{\tp{\ep,+}}_{H^{1/2}\tp{\Sigma_\ep}}$ is the sum of the $H^{1/2}\tp{\Sigma_\ep}$ norm with $\ep^{-1/2}$ times the $L^2\tp{\Sigma_\ep}$ norm,     so we may consider each of these separately.  The first three terms in~\eqref{country roads take me home} are no trouble to bound thanks to Corollary~\ref{coro on bore map III}, each term having an additional factor of $\ep$ to offset the $\ep^{-1/2}$ built into the norm (for see $W_2$ in~\eqref{any suggestions for how to make this stuff nicer}), and the fact that $Q$, $\pd_1^{\mathcal{A}_{\pmb{\zeta}}}W_1$, and $W_2$ are all square summable.

    To handle the capillary contribution in~\eqref{country roads take me home} we expand
    \begin{equation}\label{curvature expansion}
        \ep^{-1}\mathcal{H}\tp{\ep\pmb{\zeta}\tp{\ep,\lambda,\eta}} = \pmb{\zeta}\tp{\ep,\lambda,\eta}''\cdot\tp{1 + \ep^2\tabs{\pmb{\zeta}\tp{\ep,\lambda,\eta}'}^2}^{-3/2}.
    \end{equation}
    The bound of the norm in $L^2\tp{\Sigma_\ep}$ is then straightforward, whereas for the $H^{1/2}\tp{\Sigma_\ep}$-norm we use the continuity of the product map $H^{1/2}\tp{\R\Sigma_\ep}\times H^1\tp{\Sigma_\ep}\to H^{1/2}\tp{\Sigma_\ep}$ and put the second factor in~\eqref{curvature expansion} in $H^1\tp{\Sigma_\ep}$.   For the final term in~\eqref{country roads take me home} we write $2\eta\eta'=\tp{\eta^2}'$ and use the fact that $H^1\tp{\Sigma_\ep}$ and $H^{3/2}\tp{\Sigma_\ep}$ are algebras.

    \emph{Step 4 -- The term $\mathfrak{f}_4$:} The norm employed for $\mathfrak{f}_4$ satisfies $\tnorm{\cdot}^{\tp{\ep,-}}_{H^{1/2}\tp{\Sigma_\ep}}\le\tnorm{\cdot}_{H^{1/2}\tp{\Sigma_\ep}}$, so it suffices to prove the $\mathfrak{f}_4$-part of the claimed estimates~\eqref{residual forcing bounds} and~\eqref{lipschitz 034__} for the stronger $H^{1/2}\tp{\Sigma_\ep}$ norm, and it turns out there is no need to take advantage of any gains given by the smaller norm.

    As in the previous step, handling the first and second terms of~\eqref{please sir may I have a label} is straightforward and the final capillary term may also be felled with inspection of~\eqref{curvature expansion}.   To see that the third term also does not require any new ideas, we can write it out fully as
    \begin{equation}\label{jajambo}
        \mathbb{D}^{\mathcal{A}_{\pmb{\zeta}\tp{\ep,\lambda,\eta}}}W\tp{\ep,\lambda,\eta}\mathcal{N}_{\ep\pmb{\zeta}\tp{\ep,\lambda,\eta}}\cdot e_2 = -\ep\pmb{\zeta}\tp{\ep,\lambda,\eta}'\tp{\pd_1^{\mathcal{A}_{\pmb{\zeta}\tp{\ep,\lambda,\eta}}}W_2\tp{\ep,\lambda,\eta} + \pd_2^{\mathcal{A}_{\pmb{\zeta}\tp{\ep,\lambda,\eta}}}W_1\tp{\ep,\lambda,\eta}} + 2\pd_2^{\mathcal{A}_{\pmb{\zeta}\tp{\ep,\lambda,\eta}}}W_2\tp{\ep,\lambda,\eta}.
    \end{equation}
    Firstly, since $W_2$ and $\pmb{\zeta}'$ are square summable the entire expression inherits this property.  As we discussed in Step 1 above, the components of $\pd_2^{\mathcal{A}_{\pmb{\zeta}}}W$ come with a factor of $\ep^{-1}$ (from the differentiation) but the first grouping of terms in~\eqref{jajambo} has an explicit extra $\ep$ and the final term with $W_2$ has an implicit one. On balance~\eqref{jajambo} comes with a net coefficient of $\ep^0$ or smaller and so it can be harmlessly bounded.
\end{proof}

At this point we have analyzed the forcing type residuals $\mathfrak{f}_0$, $\mathfrak{f}_1$, $\mathfrak{f}_2$, $\mathfrak{f}_3$, and $\mathfrak{f}_4$ appearing in system~\eqref{first form of the residual PDEs}. However, there are a few more discrepancies between this system and the linear Stokes system~\eqref{linear stokes in a thin domain} that have yet to be analyzed.  These are terms of linear driving type, which we define now.

\begin{defnC}[Residual driving terms]\label{defn underground methods}
    Assume the hypotheses of Definition~\ref{defn big steppah}. We further build upon the maps~\eqref{ansatz, but now precise} and~\eqref{any suggestions for how to make this stuff nicer} and the sets~\eqref{theses sets prevent illegal curry} by defining the following functions for $\Xi,R\in\tp{1,\infty}$ and $0<\ep<\ep_0\tp{R}$:
    \begin{equation}
        \mathfrak{d}^\Xi_1\tp{R,\ep;\cdot},\;\mathfrak{d}^\Xi_2\tp{R,\ep;\cdot}:F^R_\ep\to L^2\tp{\Omega_\ep}\text{ and }\mathfrak{d}_3^\Xi\tp{R,\ep;\cdot},\;\mathfrak{d}_4^\Xi\tp{R,\ep;\cdot}:E^R_\ep\to H^{1/2}\tp{\Sigma_\ep}
    \end{equation}
    given by
        \begin{multline}\label{first driver full expression}
        \mathfrak{d}^\Xi_1\tp{R,\ep;\lambda,\eta,u} = \tp{X\tp{\ep,\lambda,\eta} - \tp{4 + \ep^2\Bar{\upgamma}}e_1}\cdot\grad^{\mathcal{A}_{\pmb{\zeta}\tp{\ep,\lambda,\eta}}}u_1 \\+ u\cdot\grad^{\mathcal{A}_{\pmb{\zeta}\tp{\ep,\lambda,\eta}}}X_1\tp{\ep,\lambda,\eta} - \bp{\tp{U(\lambda) - 4}\uppi_\Xi\bp{\f{1}{\ep}\int_0^\ep u_1\tp{\cdot,y}\;\m{d}y}}',
    \end{multline}
        \begin{equation}\label{d2 expression}
        \mathfrak{d}^\Xi_2\tp{R,\ep;\lambda,\eta,u} = \tp{X\tp{\ep,\lambda,\eta} - \tp{4 + \ep^2\Bar{\upgamma}}}\cdot\grad^{\mathcal{A}_{\pmb{\zeta}\tp{\ep,\lambda,\eta}}}u_2 + u\cdot\grad^{\mathcal{A}_{\pmb{\zeta}\tp{\ep,\lambda,\eta}}}X_2\tp{\ep,\lambda,\eta},
    \end{equation}
    \begin{equation}\label{d3 expression}
        \mathfrak{d}^\Xi_3\tp{R,\ep;\lambda,\eta} = \ep\tp{P(\lambda) - \m{g}H\tp{\lambda} - 2\upmu U'\tp{\lambda}}\tp{\eta - \uppi_\Xi\eta}' + \ep\upmu\tp{U_2\tp{\lambda} - U\tp{\lambda}''}\tp{\eta - \uppi_\Xi\eta} - \ep\m{g}H\tp{\lambda}'\tp{\eta - \uppi_\Xi\eta},
    \end{equation}
    and
    \begin{equation}\label{d4 expression}
        \mathfrak{d}_4^\Xi\tp{R,\ep;\lambda,\eta} = \m{g}\tp{\eta - \uppi_\Xi\eta}.
    \end{equation}
\end{defnC}

Our next result studies the mapping properties of the residual driving terms from the previous definition.  Here the key point is that the $\Xi$ parameter allows us to make certain constants appearing in the estimates small, even when the parameter $R$ is large.  This will play an essential role later in a fixed point argument.

\begin{propC}[Mapping properties of the residual driving terms]\label{prop on mapping properties of the residual driving terms}
    Assume the hypotheses of Definition~\ref{defn big steppah}. There exists functions $\ep_2:\tp{1,\infty}\to\tp{0,1}$ and $C_2:\tp{1,\infty}\to\tp{1,\infty}$, with $\ep_2\le\ep_0$, such that the following hold. First, we again have the positivity~\eqref{this bound is here for today then its gone}. Second, we have the uniform range estimates:
    \begin{multline}\label{driver bounds}
        \sup_{\substack{1<R<\infty\\0<\ep<\ep_2\tp{R}}}\sup_{1<\Xi<\infty}\sup_{\tp{\lambda,\eta,u}\in F^R_\ep}\tp{1+R/\Xi}^{-1}\ssb{\tnorm{\mathfrak{d}_1^\Xi\tp{R,\ep;\lambda,\eta,u}}_{L^2\tp{\Omega_\ep}} + \tnorm{\mathfrak{d}^\Xi_2\tp{R,\ep;\lambda,\eta,u}}_{L^2\tp{\Omega_\ep}}}<\infty,\\
        \sup_{\substack{1<R<\infty\\0<\ep<\ep_2\tp{R}}}\sup_{1<\Xi<\infty}\sup_{\tp{\lambda,\eta}\in E^R_\ep}\tp{1 + R/\Xi}^{-1}\ssb{\tnorm{\mathfrak{d}_3^\Xi\tp{R,\ep;\lambda,\eta}}^{\tp{\ep,+}}_{H^{1/2}\tp{\Sigma_\ep}} + \tnorm{\mathfrak{d}_4^\Xi\tp{R,\ep;\lambda,\eta}}^{\tp{\ep,-}}_{H^{1/2}\tp{\Sigma_\ep}}}<\infty.
    \end{multline}
    Third, we have the uniform Lipschitz estimates for $j\in\tcb{1,2}$
    \begin{equation}\label{driver lippy 12}
        \sup_{\substack{1<R<\infty\\0<\ep<\ep_2\tp{R}}}\sup_{1<\Xi<\infty}\sup_{\tp{\lambda,\eta,u}\neq\tp{\tilde{\lambda},\tilde{\eta},\tilde{u}}}\f{\tnorm{\mathfrak{d}_j^\Xi\tp{R,\ep;\lambda,\eta,u} - \mathfrak{d}_j^\Xi\tp{R,\ep;\tilde{\lambda},\tilde{\eta},\tilde{u}}}_{L^2\tp{\Omega_\ep}}}{\tnorm{\lambda - \tilde{\lambda}}_{\Lambda} + C_2\tp{R}\ep^2\tnorm{\eta - \tilde{\eta}}_{W^{1,\infty}\tp{\Sigma_\ep}} + \tp{C_2\tp{R}\ep + 1/\Xi}\tnorm{u - \tilde{u}}_{H^2\tp{\Omega_\ep}}}<\infty
    \end{equation}
    and for $j\in\tcb{3,4}$, with the understanding that $s_3 = +$ and $s_4 = -$,
    \begin{equation}\label{driver lippy 34}
        \sup_{\substack{1<R<\infty\\0<\ep<\ep_2\tp{R}}}\sup_{1<\Xi<\infty}\sup_{\tp{\lambda,\eta}\neq\tp{\tilde{\lambda},\tilde{\eta}}}\f{\tnorm{\mathfrak{d}^\Xi_j\tp{R,\ep;\lambda,\eta} - \mathfrak{d}^\Xi_j\tp{R,\ep;\tilde{\lambda},\tilde{\eta}}}^{\tp{\ep,s_j}}_{H^{1/2}\tp{\Sigma_\ep}}}{\tnorm{\lambda - \tilde{\lambda}}_{\Lambda} + \tp{1/\Xi}\tnorm{\eta - \tilde{\eta}}^{\tp{\ep}}_{H^{5/2}\tp{\Sigma_\ep}}}<\infty,
    \end{equation}
    where $\tnorm{\cdot}_{\Lambda}$ is as in~\eqref{the complete metric space}.
\end{propC}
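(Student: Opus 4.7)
The plan is to decompose each driving term into a piece that is small in $\ep$ and a piece that is small in $1/\Xi$, then estimate each piece against the appropriate norm. The main technical tools are: (i) the Bernstein-type bound for the multiplier $1-\uppi_\Xi$, which for each $s>0$ gives $\|(1-\uppi_\Xi)\phi\|_{H^{r}}\le \Xi^{-s}\|\phi\|_{H^{r+s}}$; (ii) the Poincaré estimate $\|u_1-\bar{u}_1\|_{L^2(\Omega_\ep)}\lesssim \ep\|\pd_2 u_1\|_{L^2}$ (from Lemma~\ref{lem on poincare and sobolev in thin domains}); and (iii) the Lipschitz bounds~\eqref{yall got any more of them derivatives} and~\eqref{generic lipschitz bound} on the bore map constituents, which allow all products of bore map factors with the unknowns to be controlled uniformly.

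For $\mathfrak{d}_4^\Xi=\m{g}(\eta-\uppi_\Xi\eta)$ I will use the equivalent Fourier description of the adapted norms from Lemma~\ref{lem on adapted boundary norms}. A direct frequency-splitting comparison shows that $\|(1-\uppi_\Xi)\eta\|^{(\ep,-)}_{H^{1/2}(\Sigma_\ep)}\lesssim \Xi^{-1}\|\eta\|^{(\ep)}_{H^{5/2}(\Sigma_\ep)}$ uniformly in $\ep$; this is immediate on low modes $\Xi<|\xi|<1/\ep$ where the ratio of weights is $|\xi|^{-2}\le \Xi^{-2}$, and on high modes $|\xi|>\max(\Xi,1/\ep)$ one checks that the ratio $1/(\ep|\xi|^2)$ is also dominated by $\Xi^{-1}$ after examining the two regimes $\Xi\gtrless 1/\ep$. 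For $\mathfrak{d}_3^\Xi$ the explicit prefactor $\ep$ and the uniform $W^{1,\infty}$ control on the bore map coefficients (via $U,U_2,P,H$) reduce the estimate to multiplicative bounds on $(1-\uppi_\Xi)\eta$ and its derivative in the $(\ep,+)$ adapted norm, which I handle in the same way, also using that $H^{3/2}(\R)$ is a multiplication algebra on the bore map side.

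The subtle terms are $\mathfrak{d}_1^\Xi$ and $\mathfrak{d}_2^\Xi$. For $\mathfrak{d}_1^\Xi$ the correction $-((U-4)\uppi_\Xi\bar{u}_1)'$ is designed precisely to cancel the leading-order advective contribution. Writing $X=V+\ep^2W$ and $\mathcal{A}_{\pmb{\zeta}}=I+O(\ep)$ (with the refined control from Proposition~\ref{prop on theory of strong solutions}, Step 1), one computes
\begin{equation}
(V-4e_1)\cdot\grad^{\mathcal{A}_{\pmb{\zeta}}}u_1+u\cdot\grad^{\mathcal{A}_{\pmb{\zeta}}}V_1 = (U-4)\pd_1 u_1+U'u_1-yU'\pd_2 u_1+O(\ep)\cdot(\text{bore}\cdot u),
\end{equation}
while $((U-4)\bar{u}_1)'=(U-4)\pd_1\bar{u}_1+U'\bar{u}_1$. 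Subtracting then introduces only the combinations $\pd_1(u_1-\bar{u}_1)$, $u_1-\bar{u}_1$, and $yU'\pd_2 u_1$, each of which is of order $\ep$ in $L^2(\Omega_\ep)$ by the Poincaré estimate, together with $(1-\uppi_\Xi)$ applied to $\bar{u}_1$ or $\pd_1\bar{u}_1$, which costs $\Xi^{-1}$. All remaining remainders are $O(\ep^2)$ from the $W$ and $\mathcal{A}_{\pmb{\zeta}}-I$ corrections and are dominated using the thin-domain embedding $\ep^{1/2}\|u\|_{L^\infty}\lesssim\|u\|_{H^2}$ from Lemma~\ref{lem on poincare and sobolev in thin domains}. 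The same recipe, minus the explicit cancellation structure, treats $\mathfrak{d}_2^\Xi$, where no correction is needed because $V_2=-y\pmb{\zeta}U'$ already vanishes at $y=0$ and $u$ does too, so the leading advective term $V_2\pd_2^{\mathcal{A}_{\pmb{\zeta}}}u_2=-yU'\pd_2 u_2$ is directly $O(\ep)$ in $L^2(\Omega_\ep)$.

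The Lipschitz bounds~\eqref{driver lippy 12} and~\eqref{driver lippy 34} then follow from the same decompositions combined with the multilinear estimates: differences of bore map factors are handled by~\eqref{generic lipschitz bound}, differences of $\eta$ in the driving role appear multiplied by $\ep$ (giving the $\ep^2$ prefactor after one additional $\ep$ from the forcing structure) or through $(1-\uppi_\Xi)$ (giving $\Xi^{-1}$), and differences of $u$ through the advective operators produce the $\ep$ factor via Poincaré and the $1/\Xi$ factor via the correction term. I expect the main obstacle to be bookkeeping in the $\mathfrak{d}_1^\Xi$ Lipschitz estimate, where several competing small parameters ($\ep$, $1/\Xi$, and the $\ep^{3/2}$ from $L^\infty$ embedding) must be tracked through the cancellation argument without spoiling the clean form $(C_2(R)\ep+1/\Xi)\|u-\tilde{u}\|_{H^2}$ asserted in~\eqref{driver lippy 12}.
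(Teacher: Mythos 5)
Your proposal is correct and follows essentially the same route as the paper: the same decomposition of each $\mathfrak{d}_j^\Xi$ into an $\ep$-small piece and a $\Xi^{-1}$-small piece, with the $\ep$-smallness coming from the average-deviation/vanishing-trace Poincar\'e estimates of Lemma~\ref{lem on poincare and sobolev in thin domains} and explicit $y$- or $\ep$-prefactors, and the $\Xi^{-1}$-smallness coming from the Bernstein bound for $1-\uppi_\Xi$. Two small cosmetic remarks: for $\mathfrak{d}_4$ the paper simply passes through the bound $\tnorm{\cdot}^{(\ep,-)}_{H^{1/2}}\le\ep^{1/2}\tnorm{\cdot}_{H^1}$ rather than comparing Fourier weights directly (both are fine), and for $\mathfrak{d}_2$ your phrase identifying $V_2\pd_2^{\mathcal{A}_{\pmb{\zeta}}}u_2$ as ``the leading advective term'' is a bit misleading --- the $(U-4)\pd_1 u_2$ and $-U'u_2$ terms are equally at leading order and carry no $y$-prefactor, so their smallness rests entirely on $\m{Tr}_{\Sigma_0}u_2=\m{Tr}_{\Sigma_0}\pd_1 u_2=0$, which you allude to (``and $u$ does too'') but should make explicit.
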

\begin{proof}
We follow the same strategy used in the proof of Proposition~\ref{prop on the mapping propertieso of the residual forcing} and elect to only highlight the parts requiring some more delicate care and suppress the remaining details. Again, we handle the terms in order.

\emph{Step 1 -- The term $\mathfrak{d}_1$:} We expand~\eqref{first driver full expression} into the sum of two expressions, one that is made small by taking $\ep$ sufficiently small and the other that is made small by taking $\Xi$ sufficiently large; more precisely, we split
    \begin{equation}\label{this is a decomposition into an epsilon piece and a Xi piece}
        \mathfrak{d}^\Xi_1\tp{R,\ep;\lambda,\eta,u} = {_1}\mathfrak{d}_1\tp{R,\ep;\lambda,\eta,u} + {_2}\mathfrak{d}^\Xi_1\tp{R,\ep;\lambda,u},
    \end{equation}
    where
    \begin{multline}\label{d11}
        {_1}\mathfrak{d}_1\tp{R,\ep;\lambda,\eta,u} = \bp{\tp{U(\lambda) - 4}\bp{u_1 - \f{1}{\ep}\int_0^\ep u_1\tp{\cdot,y}\;\m{d}y}}' \\ - y\tp{\tp{U\tp{\lambda} - 4}\pmb{\zeta}\tp{\ep,\lambda,\eta}}'\pd_2^{\mathcal{A}_{\pmb{\zeta}\tp{\ep,\lambda,\eta}}}u_1 + u_1\bp{\ep y\pmb{\zeta}\tp{\ep,\lambda,\eta}U_1\tp{\lambda}' + \f12\tp{y\pmb{\zeta}\tp{\ep,\lambda,\eta}}^2U_2\tp{\lambda}'}\\
         + u_2\tp{\ep U_1\tp{\lambda} + y\pmb{\zeta}\tp{\ep,\lambda,\eta}U_2\tp{\lambda}} + \ep^2\tp{W\tp{\ep,\lambda,\eta} - \Bar{\upgamma}e_1}\cdot\grad^{\mathcal{A}_{\pmb{\zeta}\tp{\ep,\lambda,\eta}}}u_1
    \end{multline}
    and
    \begin{equation}\label{d12}
        {_2}\mathfrak{d}^\Xi_1\tp{R,\ep;\lambda,u} = \bp{\tp{U(\lambda) - 4}\bp{\f{1}{\ep}\int_0^\ep u_1\tp{\cdot,y}\;\m{d}y - \uppi_\Xi\bp{\f{1}{\ep}\int_0^\ep u_1\tp{\cdot,y}\;\m{d}y}}}'.
    \end{equation}

    To establish the  $\mathfrak{d}_1$-parts of the claimed estimates~\eqref{driver bounds} and~\eqref{driver lippy 12}, it is sufficient to prove that the maps in the decomposition~\eqref{this is a decomposition into an epsilon piece and a Xi piece} satisfy the following. There exists $N\in\tp{1,\infty}$ with the property that for $R\in\tp{1,\infty}$ there exists $0<\ep_R\le\ep_1(R)$ and there exists $C_R\ge C_1\tp{R}$ such that for all $0<\ep<\ep_R$, for all $\Xi\in\tp{1,\infty}$, and for all $\tp{\lambda,u,\eta},\tp{\tilde{\lambda},\tilde{u},\tilde{\eta}}\in F^R_\ep$ we have the bounds
    \begin{equation}\label{decomp 1 2 bounds}
        \tnorm{{_1}\mathfrak{d}_1\tp{R,\ep;\lambda,\eta,u}}_{L^2\tp{\Omega_\ep}}\le N,\quad\tnorm{{_2}\mathfrak{d}^\Xi_1\tp{\ep,R;\lambda,u}}_{L^2\tp{\Omega_\ep}}\le N\tp{R/\Xi}, 
    \end{equation}
    \begin{equation}\label{decomp 1 Lip}
        \tnorm{{_1}\mathfrak{d}_1\tp{R,\ep;\lambda,\eta,u} - {_1}\mathfrak{d}_1\tp{R,\ep;\tilde{\lambda},\tilde{\eta},\tilde{u}}}_{L^2\tp{\Omega_\ep}}\le N\tp{\tnorm{\lambda - \tilde{\lambda}}_{\Lambda} + C_R\ep^2\tnorm{\eta - \tilde{\eta}}_{W^{1,\infty}} + C_R\ep\tnorm{u - \tilde{u}}_{H^2\tp{\Omega_\ep}}},
    \end{equation}
    and
    \begin{equation}\label{decomp 2 Lip}
        \tnorm{{_2}\mathfrak{d}^\Xi_1\tp{R,\ep;\lambda,u} - {_2}\mathfrak{d}^\Xi_1\tp{R,\ep;\tilde{\lambda},\tilde{u}}}_{L^2\tp{\Omega_\ep}}\le N\tp{\tnorm{\lambda - \tilde{\lambda}}_{\Lambda} + \tp{1/\Xi}\tnorm{u - \tilde{u}}_{H^2\tp{\Omega_\ep}}}.
    \end{equation}

    Now, the estimates~\eqref{decomp 1 2 bounds}, \eqref{decomp 1 Lip}, and~\eqref{decomp 2 Lip} are mostly direct and only require minor commentary.  For the first term in~\eqref{d11} the key is to expand with the product rule and apply the average-deviation estimate from the first item of Lemma~\ref{lem on poincare and sobolev in thin domains} (with $q=2$)  to both $u_1$ and $\pd_1u_1$.  The remaining terms in ${_1}\mathfrak{d}_1$ are small due to the presence of either an $\ep$ or a $y$.  On the other hand, the key for ${_2}\mathfrak{d}_1^\Xi$ is to take advantage of the low-frequency vanishing through the bounds
    \begin{equation}
        \tnorm{f - \uppi_\Xi f}_{L^2\tp{\Omega_\ep}}\le \tp{10/\Xi}\tnorm{f}_{H^1\tp{\Omega_\ep}} \text{ and }
        \tnorm{\uppi_\Xi f}_{L^2\tp{\Omega_\ep}}\le\tnorm{f}_{L^2\tp{\Omega_\ep}}
        \text{ for }
        f\in H^1\tp{\Omega_\ep},
    \end{equation}
    which we apply to $f \in \tcb{ \f1\ep\int_0^\ep u_1\tp{\cdot,y}\;\m{d}y , \f1\ep\int_0^\ep\pd_1u_1\tp{\cdot,y}\;\m{d}y}$.

\emph{Step 2 -- The term $\mathfrak{d}_2$:} As in the proof of the previous step, we begin by expanding the expression~\eqref{d2 expression} more carefully:
    \begin{multline}\label{expansion of second driving term}
        \mathfrak{d}_2\tp{\ep,\lambda,\eta,u} =-U'\tp{\lambda}u_2 + \tp{U\tp{\lambda} - 4}\pd_1u_2 - y\tp{\tp{U\tp{\lambda - 4}\pmb{\zeta}\tp{\ep,\lambda,\eta}}'\pd^{\mathcal{A}_{\pmb{\zeta}\tp{\ep,\lambda,\eta}}}_2u_2 + \pmb{\zeta}\tp{\ep,\lambda,\eta}U(\lambda)''u_1} \\ + \ep^2\tp{\tp{W\tp{\ep,\lambda,\eta} - \Bar{\upgamma}e_1}\cdot\grad^{\mathcal{A}_{\pmb{\zeta}\tp{\ep,\lambda,\eta}}}u_2 + u\cdot\grad^{\mathcal{A}_{\pmb{\zeta}\tp{\ep,\lambda,\eta}}}W_2\tp{\ep,\lambda,\eta}}.
    \end{multline}
    Due to the inclusion $u\in H^2_{\m{tan}}\tp{\Omega_\ep;\R^2}$, we have that $\m{Tr}_{\Sigma_0}u_2 = \m{Tr}_{\Sigma_0}\pd_1u_2 = 0$; consequently, the first two terms in~\eqref{expansion of second driving term} are $\ep$-small according to the first item of Lemma~\ref{lem on poincare and sobolev in thin domains}.  The third grouping of terms is similarly small due to the $y$-coefficient. Finally, the last grouping of terms is small due to the coefficient $\ep^2$ and the observation that $\pd_2^{\mathcal{A}_{\pmb{\zeta}}}W_2$ is an order one contribution (see~\eqref{any suggestions for how to make this stuff nicer}). These comments and  straightforward computations then yield the desired bounds.

\emph{Step 3 -- The term $\mathfrak{d}_3$:} Recall that from Lemma~\ref{lem on adapted boundary norms} we have $\tnorm{\cdot}^{\tp{\ep,+}}_{H^{1/2}\tp{\Sigma_\ep}} = \tnorm{\cdot}_{H^{1/2}\tp{\Sigma_\ep}} + \ep^{-1/2}\tnorm{\cdot}_{L^2\tp{\Sigma_\ep}}$. The strategy here is the same as the one used on the ${_2}\mathfrak{d}_1^\Xi$-term in the proof of the first step:  low-frequency vanishing provides the bounds
   \begin{equation}
        \ep^{-1/2}\tnorm{\ep\tp{\eta - \uppi_\Xi\eta}}_{H^1\tp{\Sigma_\ep}}\le\tp{10/\Xi}\tp{\ep^{1/2}\tnorm{\eta}_{H^2\tp{\Sigma_\ep}}} 
        \text{ and }
        \tnorm{\ep\tp{\eta - \uppi_\Xi\eta}}_{H^{3/2}\tp{\Sigma_\ep}}\le\tp{10/\Xi}\tp{\ep\norm{\eta}_{H^{5/2}\tp{\Sigma_\ep}}}
    \end{equation}
    which leads to
    \begin{equation}
        \tnorm{\ep\tp{\eta - \uppi_\Xi\eta}}_{H^{1/2}\tp{\Sigma_\ep}}^{\tp{\ep,+}} + \tnorm{\ep\tp{\eta - \uppi_\Xi\eta}'}_{H^{1/2}\tp{\Sigma_\ep}}^{\tp{\ep,+}}\le\tp{20/\Xi}\tnorm{\eta}_{H^{5/2}\tp{\Sigma_\ep}}^{\tp{\ep}}.
    \end{equation}
    The $\mathfrak{d}_3$-parts of the claimed estimates~\eqref{driver bounds} and~\eqref{driver lippy 34} now follow by routine arguments.

\emph{Step 4 -- The term $\mathfrak{d}_4$:} By the definition of the boundary norm $\tnorm{\cdot}^{\tp{\ep,-}}_{H^{1/2}\tp{\Sigma_\ep}}$ from Lemma~\ref{lem on adapted boundary norms} we read off the bound $\tnorm{\cdot}^{\tp{\ep,-}}_{H^{1/2}\tp{\Sigma_\ep}}\le\ep^{1/2}\tnorm{\cdot}_{H^1\tp{\Sigma_\ep}}$. In turn, it suffices to prove the $\mathfrak{d}_4$ part of estimates~\eqref{driver bounds} and~\eqref{driver lippy 34} in this stronger norm. The result now follows by vanishing  low mode estimate
    \begin{equation}
        \ep^{1/2}\tnorm{\eta - \uppi_\Xi \eta}_{H^1\tp{\Sigma_\ep}}\le\tp{10/\Xi}\tnorm{\eta}^{\tp{\ep}}_{H^{5/2}\tp{\Sigma_\ep}}.
    \end{equation}
\end{proof}

\section{Analysis of bore waves}\label{section on analysis of bore waves}

\subsection{Existence}\label{subsection on existence}

In this subsection we prove the existence of bore wave solutions to the free boundary Navier-Stokes system~\eqref{FBINSE, param. tuned and funny flux}. The general strategy is to go though the solution ansatz introduced in Theorem~\ref{thm on the shallow water and residual equations}, which reformulates the problem in terms of solving a system of ODEs, whose solutions describe the leading order behavior, coupled to a system of PDEs for the residuals.

The careful estimates of Sections \ref{subsection on the stokes equations with stress boundary conditions}, \ref{subsection on the shallow water bore map}, and~\ref{subsection on properties of residual forcing and nonlinearities} get us well along the way to finding the desired solutions via a contraction mapping argument. This is the ultimate goal of this subsection, but we first require a preliminary study of the interaction between the linear part of the residual PDEs~\eqref{first form of the residual PDEs} and the forcing and driving terms already studied in Section~\ref{subsection on properties of residual forcing and nonlinearities}.

As is done in Section~\ref{subsection on properties of residual forcing and nonlinearities}, we will take the physical parameters and functionalization of the ansatz constituents, residual forcing terms, and residual driving terms to be instantiated by Definitions~\ref{defn big steppah} and~\ref{defn underground methods}. In particular, $\be\in\tp{0,1}$ is the size of the domain ball in~\eqref{ansatz, but now precise}, and $\ep_0:\tp{1,\infty}\to\tp{0,1}$ is the initial smallness threshold defined prior to Definition~\ref{defn big steppah}.

Our first result considers all but the final equation in~\eqref{first form of the residual PDEs}. We use the thin domain Stokes solution operator constructed in Section~\ref{subsection on the stokes equations with stress boundary conditions} to reformulate these equations as a fixed point identity for the velocity.  We then record estimates associated with this new formulation. This is one of the key places where the extra half of a derivative gain coming from the reformulation of the conservation of relative velocity flux equation is used (compare the final equations in systems~\eqref{flattened free boundary Navier-Stokes system} and~\eqref{FBINSE, param. tuned and funny flux}). Recall the definition of the norms~\eqref{epsilon dependent norms for the free surface residual} and the space~\eqref{tangent to the bottom vector fields}.

\begin{propC}[Velocity-pressure fixed point reformulation maps]\label{prop on velocity - pressure fixed point reformulation}
    Assume the hypotheses of Definition~\ref{defn big steppah}. For $R,\Xi\in\tp{1,\infty}$ and $0<\ep<\ep_0\tp{R}$ there exist functions $\mathfrak{R}^\Xi_1\tp{R,\ep;\cdot}:F^R_\ep\to H^2_{\m{tan}}\tp{\Omega_\ep;\R^2}$, $\mathfrak{R}^\Xi_2\tp{R,\ep;\cdot}:F^R_\ep\to H^1\tp{\Omega_\ep}$, $\ep_3:\tp{1,\infty}\to\tp{0,1}$, and $C_3:\tp{1,\infty}\to\tp{1,\infty}$, with $\ep_3\le\min\tcb{\ep_1,\ep_2}$, where $F^R_\ep$ is defined in~\eqref{theses sets prevent illegal curry} and the pair of functions $\ep_1$ and $\ep_2$ are from Propositions~\ref{prop on the mapping propertieso of the residual forcing} and~\ref{prop on mapping properties of the residual driving terms} (respectively), such that the following are satisfied.
    \begin{enumerate}
        \item For any $\tp{\lambda,\eta,u}\in F^R_\ep$ upon setting
        \begin{multline}\label{this is the definition of the R XI operators}
            \widehat{u} = \mathfrak{R}^{\Xi}_1\tp{R,\ep;\lambda,\eta,u},\quad \widehat{p} = \mathfrak{R}_2^\Xi\tp{R,\ep;\lambda,\eta,u},\quad h = \pmb{\zeta}\tp{\ep,\lambda,\eta},\\
            \varphi^1 = -\tp{\mathfrak{f}_1\tp{R,\ep;\lambda,\eta,u} + \mathfrak{d}^\Xi_1\tp{R,\ep;\lambda,\eta,u}, \mathfrak{f}_2\tp{R,\ep;\lambda,\eta,u} + \mathfrak{d}_2^\Xi\tp{R,\ep;\lambda,\eta,u}}, \text{ and }
            \\\psi^1 = -\tp{\mathfrak{f}_3\tp{R,\ep;\lambda,\eta} + \mathfrak{d}_3^\Xi\tp{R,\ep;\lambda,\eta},\mathfrak{f}_4\tp{R,\ep;\lambda,\eta} + \mathfrak{d}_4^\Xi\tp{R,\ep;\lambda,\eta}}
        \end{multline}
        we have that $L^\ep_h\tp{\widehat{u},\widehat{p}} = \tp{\varphi^1,0,\psi^1,0}$, where $L^\ep_h$ is the operator from Proposition~\ref{prop on theory of strong solutions} and $\pmb{\zeta}$, $\mathfrak{f}_1$, $\mathfrak{f}_2$, $\mathfrak{f}_3$, $\mathfrak{f}_4$, $\mathfrak{d}_1^\Xi$, $\mathfrak{d}_2^\Xi$, $\mathfrak{d}_3^\Xi$, and $\mathfrak{d}_4^\Xi$ are from Definitions~\ref{defn big steppah} and~\ref{defn underground methods}.
        \item We have the uniform range estimates:
        \begin{equation}\label{R1 R2 bound}
            \sup_{\substack{1<R<\infty\\0<\ep<\ep_3\tp{R}}}\sup_{1<\Xi<\infty}\sup_{\tp{\lambda,\eta,u}\in F^R_\ep}\tp{1 + R/\Xi}^{-1}\tnorm{\tp{\mathfrak{R}^\Xi_1,\mathfrak{R}^\Xi_2}\tp{R,\ep;\lambda,\eta,u}}_{\bf{X}^1_\ep}<\infty,
        \end{equation}
        and the uniform Lipschitz estimates:
        \begin{equation}\label{R1 R2 Lip}
            \sup_{\substack{1<R<\infty\\0<\ep<\ep_3\tp{R}}}\sup_{1<\Xi<\infty}\sup_{\tp{\lambda,\eta,u}\neq\tp{\tilde{\lambda},\tilde{\eta},\tilde{u}}}\f{\tnorm{\tp{\mathfrak{R}^\Xi_1,\mathfrak{R}^\Xi_2}\tp{R,\ep;\lambda,\eta,u} - \tp{\mathfrak{R}^\Xi_1,\mathfrak{R}^\Xi_2}\tp{R,\ep;\tilde{\lambda},\tilde{\eta},\tilde{u}}}_{\bf{X}^1_\ep}}{\tnorm{\lambda - \tilde{\lambda}}_{\Lambda} + \tp{C_3\tp{R}\ep + 1/\Xi}\tp{\tnorm{\eta - \tilde{\eta}}^{\tp{\ep}}_{H^{5/2}\tp{\Sigma_\ep}} + \tnorm{u - \tilde{u}}_{H^2\tp{\Omega_\ep}}}}<\infty,
        \end{equation}
        where $\tnorm{\cdot}_{\Lambda}$ is as in~\eqref{the complete metric space}.
    \end{enumerate}
\end{propC}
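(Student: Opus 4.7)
The plan is to define $\mathfrak{R}_1^\Xi$ and $\mathfrak{R}_2^\Xi$ by directly applying the inverse of the thin-domain Stokes operator $L^\ep_h$ from Theorem~\ref{thm on synthesis of linear analysis} to the source data specified in~\eqref{this is the definition of the R XI operators}. Concretely, for $(\lambda,\eta,u)\in F^R_\ep$ I would set
\begin{equation*}
\tp{\mathfrak{R}_1^\Xi\tp{R,\ep;\lambda,\eta,u},\mathfrak{R}_2^\Xi\tp{R,\ep;\lambda,\eta,u}} := \tp{L^\ep_h}^{-1}\tp{\varphi^1,0,\psi^1,0},
\end{equation*}
which satisfies the first item by construction. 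For this to make sense, the first key step is to verify that $h=H(\lambda)+\ep^2\eta$ belongs to $Z_\del$ for some $\del$ \emph{independent} of $R$, after restricting to $\ep<\ep_3(R)$. I would place $H(\lambda)$ into the $W^{2,\infty}$ summand of the decomposition defining $\bf{Z}$ (bounded by $2M$ via Corollary~\ref{coro on bore map III}) and $\ep^2\eta$ into the $W^{2,3}$ summand, using the one-dimensional Sobolev embedding $H^{5/2}(\R)\emb W^{2,3}(\R)$ (which reduces to $H^{1/2}(\R)\emb L^3(\R)$). This gives $\tnorm{\ep^2\eta}_{W^{2,3}}\lesssim\ep\cdot\ep\tnorm{\eta}_{H^{5/2}}\le\ep\tnorm{\eta}^{\tp{\ep}}_{H^{5/2}}\le\ep R$, and likewise $\tnorm{\ep^2\eta}_{L^\infty}\lesssim\ep R$. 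Choosing $\ep_3(R)\le\min\{\ep_1(R),\ep_2(R)\}$ small enough then enforces $\tnorm{h}_{\bf{Z}}\le 3M$ and $\m{H}_-/2\le h\le 3M$, so that $h\in Z_\del$ for $\del=\min\{\m{H}_-/2,1/(3M)\}$, independent of $R$.

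With Stokes invertibility in hand, the range bound~\eqref{R1 R2 bound} follows by combining the left-hand estimate of~\eqref{estimates on the inverse stokes operator} with the range estimates~\eqref{residual forcing bounds} and~\eqref{driver bounds}. The forcing contributions supply a uniformly bounded piece while the driving contributions supply an $O(1+R/\Xi)$ piece; together they deliver the $(1+R/\Xi)^{-1}$-weighted supremum stated in~\eqref{R1 R2 bound}.

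For the Lipschitz estimate~\eqref{R1 R2 Lip}, I would use the standard telescoping decomposition
\begin{equation*}
\tp{L^\ep_h}^{-1}\upsilon-\tp{L^\ep_{\tilde h}}^{-1}\tilde\upsilon = \tp{L^\ep_h}^{-1}\tp{\upsilon-\tilde\upsilon} + \sb{\tp{L^\ep_h}^{-1}-\tp{L^\ep_{\tilde h}}^{-1}}\tilde\upsilon.
\end{equation*}
The first summand is handled directly by applying the Stokes operator norm bound from~\eqref{estimates on the inverse stokes operator} together with the Lipschitz estimates~\eqref{lipschitz 0__}--\eqref{lipscthiz 12__} and~\eqref{driver lippy 12}--\eqref{driver lippy 34}, producing exactly the $\tnorm{\lambda-\tilde\lambda}_{\Lambda}+\tp{C\ep+1/\Xi}\tp{\cdots}$ structure. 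For the second summand I would use the Lipschitz-in-$h$ bound from~\eqref{estimates on the inverse stokes operator}, with $\tnorm{h-\tilde h}_{\bf{Z}}$ controlled by Corollary~\ref{coro on bore map III} and the embedding $H^{5/2}\emb W^{2,3}$, yielding $\tnorm{h-\tilde h}_{\bf{Z}}\lesssim\tnorm{\lambda-\tilde\lambda}_{\Lambda}+\ep\tnorm{\eta-\tilde\eta}_{H^{5/2}}^{\tp{\ep}}$ (the $\ep$ factor arising from trading one power of $\ep$ in $\ep^2\tnorm{\eta}_{H^{5/2}}$ for the adapted norm).

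The hard part will be the careful bookkeeping needed to reconcile the second summand with the stated Lipschitz denominator. Indeed, after multiplying by $\tnorm{\tilde\upsilon}_{\bf{Y}^1_\ep}$, one inevitably picks up a factor from the range bound on $\tilde\upsilon$, so the choice of $C_3(R)$ must be made large enough to absorb these $R$-dependent constants while preserving the stated $\ep$- and $\Xi$-structure of the perturbation pieces. Once this dependence is tracked, the estimate~\eqref{R1 R2 Lip} follows, and finally setting $\ep_3(R)$ so that all the above smallness conditions hold (positivity of $h$, membership in $Z_\del$, and $\ep_3\le\min\{\ep_1,\ep_2\}$) completes the construction.
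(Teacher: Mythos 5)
Your proposal is correct and follows essentially the same route as the paper: you define $\mathfrak{R}_1^\Xi,\mathfrak{R}_2^\Xi$ by inverting the thin-domain Stokes operator $L^\ep_h$ from Theorem~\ref{thm on synthesis of linear analysis}, verify $h=\pmb{\zeta}(\ep,\lambda,\eta)\in Z_{\del_0}$ for a $\del_0$ independent of $R$ (the paper just invokes Corollary~\ref{coro on bore map III} and the definition of $\ep_0$ here, while you spell out the $W^{2,\infty}+W^{2,3}$ splitting via $H^{5/2}(\R)\emb W^{2,3}(\R)$, which is the correct implicit detail), and then get~\eqref{R1 R2 bound} from the left estimate of~\eqref{estimates on the inverse stokes operator} plus~\eqref{residual forcing bounds},~\eqref{driver bounds}, and~\eqref{R1 R2 Lip} from the telescoping $(L^\ep_h)^{-1}\upsilon-(L^\ep_{\tilde h})^{-1}\tilde\upsilon=(L^\ep_h)^{-1}(\upsilon-\tilde\upsilon)+[(L^\ep_h)^{-1}-(L^\ep_{\tilde h})^{-1}]\tilde\upsilon$ together with the right estimate of~\eqref{estimates on the inverse stokes operator} and the Lipschitz estimates~\eqref{lipschitz 034__},~\eqref{lipscthiz 12__},~\eqref{driver lippy 12},~\eqref{driver lippy 34}, which is precisely the argument the paper abbreviates.
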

\begin{proof}
    
    Equation~\eqref{this is the definition of the R XI operators} tells us how to define $\mathfrak{R}^\Xi_1$ and $\mathfrak{R}^\Xi_2$, provided we can invert $L^\ep_h$, namely $\tp{\widehat{u},\widehat{p}} = \tp{L^\ep_h}^{-1}\tp{\varphi^1,0,\psi^1,0}$.  To check invertibility we note that Corollary~\ref{coro on bore map III}, paired with the definition of $\ep_0:\tp{1,\infty}\to\tp{0,1}$, provide a constant $\del_0>0$, depending only on $\m{H}_-$ and the number $M$ from the aforementioned corollary, such that for all $0<\ep<\ep_0\tp{R}$, we have $h = \pmb{\zeta}\tp{\ep,\lambda,\eta}\in Z_{\del_0}$, where the latter set is defined in~\eqref{in this set we have Korn}; Theorem~\ref{thm on synthesis of linear analysis} thus applies, allowing us to  work in conjunction with Definitions~\ref{defn big steppah} and~\ref{defn underground methods} to define the maps in this way.

    The operator bound~\eqref{R1 R2 bound} is then a consequence of the left hand estimate in~\eqref{estimates on the inverse stokes operator} coupled to the bounds~\eqref{residual forcing bounds} and~\eqref{driver bounds} of Propositions~\ref{prop on the mapping propertieso of the residual forcing} and~\ref{prop on mapping properties of the residual driving terms}.

    The Lipschitz continuity bound~\eqref{R1 R2 Lip} is a result of the right hand estimate in~\eqref{estimates on the inverse stokes operator} paired with the difference estimates~\eqref{lipschitz 034__}, \eqref{lipscthiz 12__}, \eqref{driver lippy 12}, and~\eqref{driver lippy 34} coming from the aforementioned propositions.
\end{proof}

Our second result now studies the final equation in system~\eqref{first form of the residual PDEs}. By using the first term, we may solve for the residual free surface in terms of the velocity up to some remainder.  In what follows we implement this idea and prove some important estimates.

\begin{propC}[Free surface fixed point reformulation map]\label{prop on free surface fixed point reformulation}
    Assume the hypotheses of Definition~\ref{defn big steppah}. For $R,\Xi\in\tp{1,\infty}$ and $0<\ep<\ep_0\tp{R}$ there exists functions $\mathfrak{R}_0\tp{R,\ep;\cdot}:F^R_\ep\to H^{5/2}\tp{\Sigma_\ep}$, $\ep_4:\tp{1,\infty}\to\tp{0,1}$, and $C_4:\tp{1,\infty}\to\tp{1,\infty}$, with $\ep_4\le\ep_1$, where $F^R_\ep$ is defined in~\eqref{theses sets prevent illegal curry} and $\ep_1$ is from Proposition~\ref{prop on the mapping propertieso of the residual forcing}, such that the following are satisfied.
    \begin{enumerate}
        \item For any $\tp{\lambda,\eta,u}\in F^R_\ep$, upon setting $\widehat{\eta} = \mathfrak{R}_0\tp{R,\ep;\lambda,\eta,u}$ we have the identity 
        \begin{equation}\label{the expression for the free surface fixed point reformulation}
            \tp{1 - \pd_1^2}\tp{\tp{4 + \ep^2\Bar{\upgamma} - U\tp{\lambda}}\widehat{\eta}} = \f{\pmb{\zeta}\tp{\ep,\lambda,\eta}}{\ep}\int_0^\ep u_1\tp{\cdot,y}\;\m{d}y + \f{1}{\ep}\pd_1\tp{u\cdot\mathcal{N}_{\ep\pmb{\zeta}\tp{\ep,\lambda,\eta}}} - \mathfrak{f}_0\tp{R,\ep;\lambda,\eta}
        \end{equation}
        on $\Sigma_\ep$, where $\mathfrak{f}_0$ is from~\eqref{really bro needs a label no cap}.
        \item We have the uniform range estimates:
        \begin{equation}\label{the r0 bound}
            \sup_{\substack{1<R<\infty\\0<\ep<\ep_4\tp{R}}}\sup_{\tp{\lambda,\eta,u}\in F^R_\ep}\tp{1 + \tnorm{u}_{H^2\tp{\Omega_\ep}}}^{-1}\tnorm{\mathfrak{R}_0\tp{R,\ep;\lambda,\eta,u}}_{H^{5/2}\tp{\Sigma_\ep}}^{\tp{\ep}}<\infty
        \end{equation}
        and the uniform Lipschitz estimates:
        \begin{equation}\label{the r0 Lip}
            \sup_{\substack{1<R<\infty\\0<\ep<\ep_4\tp{R}}}\sup_{\tp{\lambda,\eta,u}\neq\tp{\tilde{\lambda},\tilde{\eta},\tilde{u}}}\f{\tnorm{\mathfrak{R}_0\tp{R,\ep;\lambda,\eta,u} - \mathfrak{R}_0\tp{R,\ep;\tilde{\lambda},\tilde{\eta},\tilde{u}}}_{H^{5/2}\tp{\Sigma_\ep}}^{\tp{\ep}}}{\tnorm{\lambda - \tilde{\lambda}}_{\Lambda} + C_4\tp{R}\ep\tnorm{\eta - \tilde{\eta}}^{\tp{\ep}}_{H^{5/2}\tp{\Sigma_\ep}} + \tnorm{u - \tilde{u}}_{H^2\tp{\Omega_\ep}}}<\infty,
            \end{equation}
        where $\tnorm{\cdot}_{\Lambda}$ is as in~\eqref{the complete metric space}.
    \end{enumerate}
\end{propC}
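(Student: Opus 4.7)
The plan is to solve the elliptic equation~\eqref{the expression for the free surface fixed point reformulation} by inverting the operator $(1 - \pd_1^2)\tp{\mathscr{V}(\lambda)\cdot}$ where $\mathscr{V}(\lambda) = 4 + \ep^2\Bar{\upgamma} - U(\lambda)$, and then to estimate the right-hand side in a way carefully calibrated against the $\ep$-weighted target norm. By the fourth item of Proposition~\ref{prop on the bore map} we have $U(\lambda) \le 4 - 1/M$, so provided $\ep_4\tp{R}$ is chosen small enough that $\ep^2\tabs{\Bar{\upgamma}} \le 1/(2M)$, the function $\mathscr{V}\tp{\lambda}$ belongs to $W^{7,\infty}\tp{\R}$ and satisfies $1/(2M) \le \mathscr{V}\tp{\lambda}$, with $W^{7,\infty}$-norm controlled by $M$ and $\tabs{\Bar{\upgamma}}$. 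Hence multiplication by $1/\mathscr{V}\tp{\lambda}$ is bounded on $H^k\tp{\R}$ for $0 \le k \le 7$, and combining this with the fact that $(1 - \pd_1^2)^{-1} : H^s\tp{\R} \to H^{s+2}\tp{\R}$ is an isometric isomorphism allows me to define
\begin{equation}
    \mathfrak{R}_0\tp{R,\ep;\lambda,\eta,u} = \f{1}{\mathscr{V}\tp{\lambda}}\tp{1 - \pd_1^2}^{-1}\widehat{F},
\end{equation}
where $\widehat{F}$ denotes the right-hand side of~\eqref{the expression for the free surface fixed point reformulation}, and to obtain the reduction
\begin{equation}\label{core_red}
    \tnorm{\mathfrak{R}_0\tp{R,\ep;\lambda,\eta,u}}^{\tp{\ep}}_{H^{5/2}\tp{\Sigma_\ep}} \lesssim \ep^{1/2}\tnorm{\widehat{F}}_{L^2\tp{\Sigma_\ep}} + \ep\tnorm{\widehat{F}}_{H^{1/2}\tp{\Sigma_\ep}}.
\end{equation}

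Next I would estimate each of the three terms in $\widehat{F}$.  Taking $\ep_4 \le \ep_1$ makes the $\mathfrak{f}_0$ contribution uniformly bounded in $H^{1/2}\tp{\Sigma_\ep}$ by Proposition~\ref{prop on the mapping propertieso of the residual forcing}, so it contributes at most $\ep$ to~\eqref{core_red}.  For the vertical-average term, differentiation under the integral together with Cauchy-Schwarz yields $\tnorm{\ep^{-1}\int_0^\ep u_1\tp{\cdot,y}\;\m{d}y}_{H^k\tp{\R}}\lesssim \ep^{-1/2}\tnorm{u_1}_{H^k\tp{\Omega_\ep}}$ for $k \in \tcb{0,1,2}$, and either complex interpolation or the trivial embedding $H^1 \emb H^{1/2}$ gives the intermediate bound; multiplication by $\pmb{\zeta}\tp{\ep,\lambda,\eta} \in W^{2,\infty}$ is harmless, and the total contribution to~\eqref{core_red} is $\lesssim \tnorm{u}_{H^2}$.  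The final and most delicate term requires an essential cancellation.  Writing $\mathcal{N}_{\ep\pmb{\zeta}} = \tp{-\ep\pmb{\zeta}',1}$, I split
\begin{equation}
    \f{1}{\ep}\pd_1\tp{u\cdot\mathcal{N}_{\ep\pmb{\zeta}\tp{\ep,\lambda,\eta}}} = -\pd_1\tp{\pmb{\zeta}\tp{\ep,\lambda,\eta}'\;u_1\res_{\Sigma_\ep}} + \f{1}{\ep}\pd_1u_2\res_{\Sigma_\ep};
\end{equation}
the first summand is bounded in $H^{1/2}\tp{\Sigma_\ep}$ by standard trace theory, while the second exploits the crucial cancellation that $u \in H^2_{\m{tan}}\tp{\Omega_\ep;\R^2}$ forces $\pd_1u_2\res_{\Sigma_0} = 0$. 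The first item of Lemma~\ref{lem on traces in thin domains} applied to $\pd_1u_2$ then gives $\tnorm{\pd_1u_2\res_{\Sigma_\ep}}^{\tp{\ep,+}}_{H^{1/2}\tp{\Sigma_\ep}} \lesssim \tnorm{u}_{H^2}$, i.e. $\tnorm{\pd_1u_2\res_{\Sigma_\ep}}_{L^2} \lesssim \ep^{1/2}\tnorm{u}_{H^2}$ and $\tnorm{\pd_1u_2\res_{\Sigma_\ep}}_{H^{1/2}} \lesssim \tnorm{u}_{H^2}$, so that $\ep^{1/2}\tnorm{\ep^{-1}\pd_1u_2\res_{\Sigma_\ep}}_{L^2} + \ep\tnorm{\ep^{-1}\pd_1u_2\res_{\Sigma_\ep}}_{H^{1/2}} \lesssim \tnorm{u}_{H^2}$. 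This is precisely the cancellation the weights $\ep^{1/2}$ and $\ep$ in $\tnorm{\cdot}^{\tp{\ep}}_{H^{5/2}\tp{\Sigma_\ep}}$ are designed to accommodate, and gives~\eqref{the r0 bound}.

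The Lipschitz bound~\eqref{the r0 Lip} would then follow by applying the same manipulations to the differences, using the Lipschitz estimate~\eqref{generic lipschitz bound} from Corollary~\ref{coro on bore map III} on the bore-map constituents and~\eqref{lipschitz 0__} on $\mathfrak{f}_0$.  The $\ep^2$-factor on $\tnorm{\eta - \tilde{\eta}}_{H^{5/2}\tp{\Sigma_\ep}}^{\tp{\ep}}$ in the denominator of~\eqref{the r0 Lip} appears because the $\eta$-dependence of $\pmb{\zeta}\tp{\ep,\lambda,\eta}$ carries an explicit $\ep^2$, while the absence of any $\ep$-factor on $\tnorm{u-\tilde{u}}_{H^2}$ reflects the linearity of the $u$-dependence.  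The main obstacle here is the normal-velocity cancellation discussed above; once that is in hand the remainder of the argument is essentially bookkeeping, tracking how each inverse power of $\ep$ arising in $\widehat{F}$ is precisely absorbed by either the weights of the target norm or by the vanishing-trace gain on $\Sigma_0$.
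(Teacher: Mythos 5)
Your proposal is correct and follows essentially the same route as the paper: the same explicit formula inverting $(1-\pd_1^2)\bigl(\mathscr{V}(\lambda)\,\cdot\,\bigr)$ via the fourth item of Proposition~\ref{prop on the bore map}, the same reduction to estimating $\widehat{F}$ in $\ep^{1/2}\tnorm{\cdot}_{L^2}+\ep\tnorm{\cdot}_{H^{1/2}}$, the same Cauchy--Schwarz bound on the vertical average, and the same crucial use of $\m{Tr}_{\Sigma_0}\pd_1 u_2 = 0$ with the first item of Lemma~\ref{lem on traces in thin domains} to absorb the singular $\ep^{-1}\pd_1 u_2$ term. There are two harmless bookkeeping slips (the $\mathfrak{f}_0$ term contributes $\lesssim\ep^{1/2}$ rather than $\ep$, and the coefficient on $\tnorm{\eta-\tilde\eta}^{(\ep)}_{H^{5/2}}$ in~\eqref{the r0 Lip} is $\ep$ rather than $\ep^2$), but neither affects the validity of the argument.
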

\begin{proof}
    We begin with the observation that the number $\Bar{\upgamma}$ defined in~\eqref{the parameter tuning identities 440 hz} is strictly positive, so the fourth item of Proposition~\ref{prop on the bore map} provides a constant $1<M<\infty$ for which
    \begin{equation}\label{is this too many inf}
        \inf_{0<\ep<1}\inf_{\lambda\in\Bar{B(0,\be)}}\inf_{\R}\tp{4 + \ep^2\Bar{\upgamma} - U(\lambda)}\ge 1/M.
    \end{equation}
    With this bound in mind, the expression~\eqref{the expression for the free surface fixed point reformulation} suggests how we must define the map $\mathfrak{R}_0$; namely, for $1<R<\infty$, $0<\ep<\ep_0\tp{R}$, and $\tp{\lambda,\eta,u}\in F^R_\ep$ we set
    \begin{equation}\label{this is how we plan on defining this operator, just so you know}
        \mathfrak{R}_0\tp{R,\ep;\lambda,\eta,u} = \f{1}{4 + \ep^2\Bar{\upgamma} - U(\lambda)}\tp{1 - \pd_1^2}^{-1}\bp{\f{\pmb{\zeta}\tp{\ep,\lambda,\eta}}{\ep}\int_0^\ep u_1\tp{\cdot,y}\;\m{d}y + \f{1}{\ep}\pd_1\tp{u\cdot\mathcal{N}_{\ep\pmb{\zeta}\tp{\ep,\lambda,\eta}}} - \mathfrak{f}_0\tp{\ep,\lambda,\eta}}.
    \end{equation}

    Now the proof of the estimates~\eqref{the r0 bound} and~\eqref{the r0 Lip}  follows from routine computations, as soon as we point out the key observations.  Firstly, we recall that the norm in question, $\tnorm{\cdot}^{\tp{\ep}}_{H^{5/2}\tp{\Sigma_\ep}}$, is defined in~\eqref{epsilon dependent norms for the free surface residual}. It asks that we take the sum of $\ep^{1/2}$ multiplying the $H^2\tp{\Sigma_\ep}$-norm and $\ep$ multiplying the $H^{5/2}\tp{\Sigma_\ep}$ norm. In either case (thanks in part to~\eqref{is this too many inf}), we can handle the $\tp{4 + \ep^2\Bar{\upgamma} - U(\lambda)}^{-1}$ as a multiplier and take it in $W^{3,\infty}\tp{\Sigma_\ep}$ while implementing Corollary~\ref{coro on bore map III}.
    
    In this way we can reduce to studying the argument of $\tp{1 - \pd_1^2}^{-1}$ in~\eqref{this is how we plan on defining this operator, just so you know} in the norm $\ep^{1/2}\tnorm{\cdot}_{L^2\tp{\Sigma_\ep}} + \ep\tnorm{\cdot}_{H^{1/2}\tp{\Sigma_\ep}}$. For the $\mathfrak{f}_0$ term we can crudely go up with the $H^{1/2}\tp{\Sigma_\ep}$-norm and employ estimates~\eqref{residual forcing bounds} and~\eqref{lipschitz 0__} from Proposition~\ref{prop on the mapping propertieso of the residual forcing}. Therefore, we can reduce to studying the $u$-dependent terms of~\eqref{this is how we plan on defining this operator, just so you know}. For the vertical average term, we only need to note the bound
    \begin{equation}\label{_KIBBLE_}
        \ep^{1/2}\bnorm{\f{1}{\ep}\int_0^\ep u_1\tp{\cdot,y}\;\m{d}y}_{L^2\tp{\R}} + \ep\bnorm{\f{1}{\ep}\int_0^\ep u_1\tp{\cdot,y}\;\m{d}y}_{H^{1/2}\tp{\R}}\lesssim\tnorm{u}_{H^1\tp{\Omega_\ep}},
    \end{equation}
    which follows directly from Cauchy-Schwarz. For the remaining term we note that it expands as
    \begin{equation}\label{almost done proof reading}
        \pd_1\tp{u\cdot\mathcal{N}_{\ep\pmb{\zeta}\tp{\ep,\lambda,\eta}}}/\ep = \tp{\pd_1 u_2}/\ep - \pd_1\tp{\pmb{\zeta}\tp{\ep,\lambda,\eta}u_1}.
    \end{equation}
    The second term in this decomposition is straightforward to estimate using the second item of Lemma~\ref{lem on traces in thin domains}. For the first term on the right hand side of~\eqref{almost done proof reading}, we use the estimates
    \begin{equation}\label{yet another nash moser implicit function theorem_}
        \ep^{1/2}\bnorm{\f1\ep\pd_1u_2}_{L^2\tp{\Sigma_\ep}} + \ep\bnorm{\f1\ep\pd_1u_2}_{H^{1/2}\tp{\Sigma_\ep}}\lesssim\tnorm{u}_{H^2\tp{\Omega_\ep}},
    \end{equation}
    which are a consequence of the first item of Lemma~\ref{lem on traces in thin domains} and $\m{Tr}_{\Sigma_0}\pd_1u_2 = 0$. The implicit constants in~\eqref{_KIBBLE_} and~\eqref{yet another nash moser implicit function theorem_} are independent of $u$ and $\ep$.
\end{proof}

We are, at long last, ready to produce solution to the coupled ODE - PDE system from the fifth and sixth items of Theorem~\ref{thm on the shallow water and residual equations} via a contraction mapping argument. The Banach space in which we will run the argument is given by
\begin{equation}\label{the space in which we run the contraction mapping argument}
    \bf{W}_\ep = H^{5/2}\tp{\Sigma_\ep}\times H^{2}_{\m{tan}}\tp{\Omega_\ep;\R^2} \text{ with norm }
    \tnorm{\eta,u}_{\bf{W}_\ep} = \max\tcb{\tnorm{\eta}^{\tp{\ep}}_{H^{5/2}\tp{\Sigma_\ep}}, \tnorm{u}_{H^2\tp{\Omega_\ep}}}.
\end{equation}
Also, recall the definition of the mollification operators $\uppi_\Xi$ given before Proposition~\ref{prop on the specification of the coupling}. Notice that in the following result we are taking the smoothing rate $\Xi = \tabs{\log\ep}$. We are also finally linking the parameter $\lambda\in\Bar{B(0,\be)}\subset\tsb{\tp{C^{10}\cap W^{10,\infty}\cap H^{10}}\tp{\R}}^2$ back to the residual in the form promised by Proposition~\ref{prop on the specification of the coupling}.
\begin{propC}[Existence of a fixed point]\label{prop on the contraction}
    There exists $R_\star\in\tp{1,\infty}$ and $0<\epsilon_\star<\min\tcb{1/3,\ep_0\tp{R_\star}}$ for which the following hold for all $\ep\in\tp{0,\epsilon_\star}$.
    \begin{enumerate}
        \item The following maps are well-defined: the coupling function $\pmb{\uplambda}\tp{\ep;\cdot}:\Bar{B_{\bf{W}_\ep}\tp{0,R_{\star}}}\to\Bar{B(0,\be)}$ defined via
        \begin{equation}\label{lambda update map}
            \pmb{\uplambda}\tp{\ep;\eta,u} = \ep^2\bp{\uppi_{\tabs{\log\ep}}\eta,\uppi_{\tabs{\log\ep}}\f{1}{\ep}\int_0^\ep u_1\tp{\cdot,y}\;\m{d}y},
        \end{equation}
        the velocity update function $\pmb{\mathsf{u}}\tp{\ep;\cdot}:\Bar{B_{\bf{W}_\ep}\tp{0,R_\star}}\to\tcb{u\in H^2_{\m{tan}}\tp{\Omega_\ep;\R^2}\;:\;\tnorm{u}_{H^2\tp{\Omega_\ep}}\le\sqrt{R_\star/2}}$ defined via
        \begin{equation}\label{u update map}
            \pmb{\mathsf{u}}\tp{\ep;\eta,u} = \mathfrak{R}^{\tabs{\log\ep}}_1\tp{R_\star,\ep;\pmb{\uplambda}\tp{\ep;\eta,u},\eta,u},
        \end{equation}
        where $\mathfrak{R}_1$ is defined in Propositions~\ref{prop on velocity - pressure fixed point reformulation};     
        and the free surface update function $\pmb{\upeta}\tp{\ep;\cdot}:\Bar{B_{\bf{W}_\ep}\tp{0,R_\star}}\to\tcb{\eta\in H^{5/2}\tp{\Sigma_\ep}\;:\;\tnorm{\eta}^{\tp{\ep}}_{H^{5/2}\tp{\Sigma_\ep}}\le R_\star/2}$ defined via
        \begin{equation}\label{eta update map}
            \pmb{\upeta}\tp{\ep;\eta,u}=\mathfrak{R}_0\tp{R_\star,\ep;\pmb{\uplambda}\tp{\ep;\eta,u},\eta,\pmb{\mathsf{u}}\tp{\ep;\eta,u}},
        \end{equation}
        where $\mathfrak{R}_0$ is defined in Proposition~\ref{prop on free surface fixed point reformulation}. In particular, the update map $\pmb{\mathsf{F}}\tp{\ep;\cdot}:\Bar{B_{\bf{W}_\ep}(0,R_\star)}\to\Bar{B_{\bf{W}_\ep}\tp{0,R_\star/2}}$ defined via
        \begin{equation}\label{the update map}
            \pmb{\mathsf{F}}\tp{\ep;\eta,u} = \tp{\pmb{\upeta}\tp{\ep;\eta,u},\pmb{\mathsf{u}}\tp{\ep;\eta,u}}
        \end{equation}
        is well-defined. 
        
        \item There exists a unique $\tp{\eta,u}\in\Bar{B_{\bf{W}_\ep}(0,R_\star)}$ obeying the fixed point identity $\tp{\eta,u} = \pmb{\mathsf{F}}\tp{\ep;\eta,u}$.
        \item  Using the fixed point $\tp{\eta,u}\in\Bar{B_{\bf{W}_\ep}(0,R_\star)}$ from the previous item, define $\lambda = \pmb{\uplambda}\tp{\ep;\eta,u}\in\Bar{B(0,\be)}$ and $p = \mathfrak{R}_2^{\tabs{\log\ep}}\tp{R_\star,\ep;\lambda,\eta,u}\in H^1\tp{\Omega_\ep}$, with $\mathfrak{R}_2$ as in Proposition~\ref{prop on velocity - pressure fixed point reformulation}. Then $(\eta,u,p)$ along with the following evaluations of the bore map constituents~\eqref{ansatz, but now precise}
        \begin{multline}\label{cotton candy grapes}
            H = H(\lambda),\;U = U(\lambda),\;U_1 = U_1(\lambda),\;U_2 = U_2(\lambda),\;P = P(\lambda),\;P_1 = P_1\tp{\lambda},\;P_2 = P_2\tp{\lambda},\\\mathfrak{r}_1 = \bp{\tp{U - 4}\uppi_{\tabs{\log\ep}}\f{1}{\ep}\int_0^\ep u_1\tp{\cdot,y}\;\m{d}y}',\; \mathfrak{r}_3 = \ep\mathfrak{L}\uppi_{\tabs{\log\ep}}\eta,\;\mathfrak{r}_4 = \m{g}\uppi_{\tabs{\log\ep}}\eta
        \end{multline}
        satisfy the first and third through sixth items of Theorem~\ref{thm on the shallow water and residual equations};  in particular, these are a solution to the system of equations given by~\eqref{_SWE_0}, \eqref{_SWE_1}, \eqref{_SWE_2}, and~\eqref{first form of the residual PDEs} with $\inf\pmb{\zeta}>0$ (where $\pmb{\zeta} = H + \ep^2\eta$).
    \end{enumerate}
\end{propC}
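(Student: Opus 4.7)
The plan is to apply the Banach fixed point theorem to the composite update map $\pmb{\mathsf{F}}\tp{\ep;\cdot}$ on the closed ball $\Bar{B_{\bf{W}_\ep}\tp{0,R_\star}}$, for a suitably large $R_\star\in\tp{1,\infty}$ and all sufficiently small $\ep$. Once self-action and contractivity are established, the first two items follow immediately. For the third item, set $\lambda=\pmb{\uplambda}\tp{\ep;\eta,u}$ and $p=\mathfrak{R}_2^{\tabs{\log\ep}}\tp{R_\star,\ep;\lambda,\eta,u}$; then the identity $u=\pmb{\mathsf{u}}\tp{\ep;\eta,u}$ combined with the first item of Proposition~\ref{prop on velocity - pressure fixed point reformulation} yields the bulk momentum, incompressibility, stress boundary, and Navier slip equations of~\eqref{first form of the residual PDEs}, while $\eta=\pmb{\upeta}\tp{\ep;\eta,u}$ together with~\eqref{the expression for the free surface fixed point reformulation} produces the remaining conservation of flux equation. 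The ODE system~\eqref{_SWE_0}--\eqref{_SWE_2} for the bore map constituents~\eqref{cotton candy grapes} follows from the fifth item of Proposition~\ref{prop on the bore map} as soon as we verify $\lambda\in\Bar{B\tp{0,\be}}$, and the positivity $\inf\pmb{\zeta}>0$ is inherited from~\eqref{this bound is here for today then its gone}.

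The well-definedness of the update maps hinges on Bernstein-type bounds for the mollifier: for $s,r\ge 0$, $\tnorm{\uppi_\Xi g}_{W^{s+r,\infty}\cap H^{s+r}}\lesssim \Xi^r\tnorm{g}_{H^s}$. For $\tp{\eta,u}\in\Bar{B_{\bf{W}_\ep}\tp{0,R_\star}}$, the definition~\eqref{epsilon dependent norms for the free surface residual} gives $\tnorm{\eta}_{H^{5/2}\tp{\Sigma_\ep}}\le R_\star/\ep$, while Cauchy--Schwarz gives $\tnorm{\f{1}{\ep}\int_0^\ep u_1\tp{\cdot,y}\;\m{d}y}_{L^2}\lesssim \ep^{-1/2}\tnorm{u}_{L^2\tp{\Omega_\ep}}$. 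Taking $\Xi=\tabs{\log\ep}$, the Bernstein bounds then yield $\tnorm{\pmb{\uplambda}\tp{\ep;\eta,u}}_{\Lambda}\lesssim\tp{\ep + \ep^{3/2}}\tabs{\log\ep}^{15/2}R_\star$, which is $\le\be$ for $\ep$ small. Estimate~\eqref{R1 R2 bound} with the same choices then gives $\tnorm{\pmb{\mathsf{u}}\tp{\ep;\eta,u}}_{H^2}\le K_1\tp{1 + R_\star/\tabs{\log\ep}}\le 2K_1$ for $\ep$ small, with $K_1$ independent of $\ep$ and $R_\star$; choosing $R_\star\ge 8K_1^2$ forces $2K_1\le\sqrt{R_\star/2}$. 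Feeding this into~\eqref{the r0 bound} produces $\tnorm{\pmb{\upeta}\tp{\ep;\eta,u}}^{\tp{\ep}}_{H^{5/2}}\le K_2\tp{1 + 2K_1}$, and enlarging $R_\star$ a final time makes this $\le R_\star/2$. Thus $\pmb{\mathsf{F}}$ self-acts as claimed.

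The contraction estimate then follows by tracking Lipschitz constants through the composition. The same Bernstein bounds show that $\pmb{\uplambda}$ is $O\tp{\ep\tabs{\log\ep}^{15/2}}$-Lipschitz from $\bf{W}_\ep$ to $\Lambda$. Then~\eqref{R1 R2 Lip} with $\Xi=\tabs{\log\ep}$ gives
\begin{equation*}
    \tnorm{\pmb{\mathsf{u}}\tp{\ep;\eta,u} - \pmb{\mathsf{u}}\tp{\ep;\tilde\eta,\tilde u}}_{H^2}\le C\bsb{\ep\tabs{\log\ep}^{15/2} + C_3\tp{R_\star}\ep + 1/\tabs{\log\ep}}\tnorm{\tp{\eta,u} - \tp{\tilde\eta,\tilde u}}_{\bf{W}_\ep},
\end{equation*}
whose bracketed coefficient is $o\tp{1}$ as $\ep\to 0^+$, and~\eqref{the r0 Lip} produces an analogous $o\tp{1}$-Lipschitz bound on $\pmb{\upeta}$ (crucially, although the $u$-slot in~\eqref{the r0 Lip} enters with coefficient $1$, we are evaluating $\mathfrak{R}_0$ at $\pmb{\mathsf{u}}\tp{\ep;\eta,u}$, whose Lipschitz constant is already $o\tp{1}$). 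Thus $\pmb{\mathsf{F}}$ is a $1/2$-contraction for $\ep$ small enough, and the Banach fixed point theorem applies. The main technical challenge is the balancing encoded by the choice $\Xi=\tabs{\log\ep}$: we need $\Xi$ large enough to suppress the $R_\star/\Xi$ and $1/\Xi$ terms in~\eqref{R1 R2 bound}, \eqref{R1 R2 Lip}, \eqref{driver bounds}, and~\eqref{driver lippy 34} (the mechanism by which the adversarial linear operator is pacified via the coupling of Proposition~\ref{prop on the specification of the coupling}), yet small enough relative to $\ep^{-2}$ that $\ep^2\uppi_\Xi\eta$ remains controlled in the high-regularity space $\tp{C^{10}\cap W^{10,\infty}\cap H^{10}}\tp{\R}$ required for input to the bore map~\eqref{ansatz, but now precise}; the logarithmic rate is sub-polynomial in $\ep^{-1}$ but tends to infinity, accommodating both constraints simultaneously.
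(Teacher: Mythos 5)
Your proof is correct and takes essentially the same approach as the paper's: a Bernstein estimate for the mollified coupling parameter, uniform self-action bounds from the $\mathfrak{R}_1$ and $\mathfrak{R}_0$ estimates (with the choice $R_\star$ determined by the resulting constants), and contractivity harvested from the Lipschitz estimates once the $\ep$-small and $1/\Xi$-small coefficients are both forced to vanish. Your stated Bernstein bound $\tnorm{\uppi_\Xi g}_{W^{s+r,\infty}}\lesssim\Xi^r\tnorm{g}_{H^s}$ omits the extra $\Xi^{1/2}$ from the one-dimensional $L^\infty$ embedding of band-limited functions, so the exponent $15/2$ is not quite right; but this is inessential since any power of $\tabs{\log\ep}$ is absorbed by any positive power of $\ep$ (the paper itself notes its exponent $9$ is suboptimal), and your identification of the key balance between $\Xi$ large enough to kill the $1/\Xi$-terms and small enough that $\ep^2\uppi_\Xi$ maps into $\bar{B}(0,\beta)\subset[(C^{10}\cap W^{10,\infty}\cap H^{10})(\R)]^2$ is exactly right.
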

\begin{proof}
    We begin with the construction of the maps~\eqref{lambda update map}, \eqref{u update map}, and~\eqref{eta update map}. First, we note that the following Bernstein-type estimate holds:
    \begin{equation}\label{crazy bernstein estimate}
        \sup_{0<\ep<1/3}\sup_{\tp{\eta,u}\in\bf{W}_\ep\setminus\tcb{0}}\f{\snorm{\ep^2\sp{\uppi_{\tabs{\log\ep}}\eta,\uppi_{\tabs{\log\ep}}\f{1}{\ep}\int_0^\ep u_1\tp{\cdot,y}\;\m{d}y}}_{\Lambda}}{\ep^{3/2}\tabs{\log\ep}^9\tnorm{\eta,u}_{\bf{W}_\ep}}<\infty.
    \end{equation}
    The exponent $9$ here is perhaps suboptimal, but sufficient to deduce the existence of a function $\ep_5:\tp{1,\infty}\to\tp{0,1/3}$ with the property that 
    \begin{equation}\label{application of the crude bernstein estimate}
        \sup_{\substack{1<R<\infty\\0<\ep<\ep_5\tp{R}}}\sup_{\substack{\tp{\eta,u}\in\bf{W}_\ep\\\tnorm{\eta,u}_{\bf{W}_\ep}\le R}}\bnorm{\ep^2\bp{\uppi_{\tabs{\log\ep}}\eta,\uppi_{\tabs{\log\ep}}\f{1}{\ep}\int_0^\ep u_1\tp{\cdot,y}\;\m{d}y}}_{\Lambda}\le\f{\be}{2},
    \end{equation}
     where $\beta >0$ is as in Definition~\ref{defn big steppah}.  Estimate~\eqref{application of the crude bernstein estimate} then permits us to define the coupling map $\pmb{\uplambda}\tp{\ep;\cdot}:\Bar{B_{\bf{W}_\ep}\tp{0,R}}\to\Bar{B(0,\be)}$ via the formula~\eqref{lambda update map} for any $1<R<\infty$ and any $0<\ep<\ep_5\tp{R}$.

    The next step is to unpack estimates~\eqref{R1 R2 bound} and~\eqref{the r0 bound} from Propositions~\ref{prop on velocity - pressure fixed point reformulation} and~\ref{prop on free surface fixed point reformulation}. If we let $\ep_6:\tp{1,\infty}\to\tp{0,1/3}$ be defined via $\ep_6 = \min\tcb{\ep_3,\ep_4,\ep_5}$, then the aforementioned estimates provide a constant $N\in\tp{1,\infty}$ with the property that for all $1<R<\infty$,  $0<\ep<\ep_6\tp{R}$, and $\tp{\lambda,\eta,u}\in F^R_\ep$ (where the latter set is defined in~\eqref{theses sets prevent illegal curry}) we have the estimates
    \begin{equation}\label{the unpacking of the estimates}
        \tnorm{\mathfrak{R}_1^{\tabs{\log\ep}}\tp{R,\ep;\lambda,\eta,u}}_{H^2\tp{\Omega_\ep}}\le N\tp{1 + R/\tabs{\log\ep}}\text{ and }\tnorm{\mathfrak{R}_0\tp{R,\ep;\lambda,\eta,u}}_{H^{5/2}\tp{\Sigma_\ep}}^{\tp{\ep}}\le N\tp{1 + \tnorm{u}_{H^2\tp{\Omega_\ep}}}.
    \end{equation}
    Estimate~\eqref{the unpacking of the estimates} suggests that we define $R_{\star} = 8N^2$ and $\tilde{\ep}_\star\in\tp{0,1/3}$ via $\tilde{\ep}_\star = \min\tcb{\ep_6\tp{R_\star},e^{-R_{\star}}}$. Then for all $0<\ep<\tilde{\ep}_\star$ and all $\tp{\lambda,\eta,u}\in F^{R_\star}_\ep$ we deduce the bounds
    \begin{equation}\label{bounds to mappa ball ina ball brutha}
        \tnorm{\mathfrak{R}_1^{\tabs{\log\ep}}\tp{R_\star,\ep;\lambda,\eta,u}}_{H^2\tp{\Omega_\ep}}\le\sqrt{R_\star/2} \text{ and }
        \tnorm{\mathfrak{R}_0\tp{R_\star,\ep;\lambda,\eta,\mathfrak{R}^{\tabs{\log\ep}}_1\tp{R_\star,\ep;\lambda,u,\eta}}}^{\tp{\ep}}_{H^{5/2}\tp{\Sigma_\ep}}\le R_\star/2.
    \end{equation}
    Through the combination of estimates~\eqref{application of the crude bernstein estimate} and~\eqref{bounds to mappa ball ina ball brutha} we deduce that~\eqref{u update map}, \eqref{eta update map}, and~\eqref{the update map} are well-defined functions mapping between the stated spaces for $0<\ep<\tilde{\ep}_\star$. This completes the proof of the first item.

    We next prove the second item. By unpacking estimates~\eqref{R1 R2 Lip} and~\eqref{crazy bernstein estimate} and using the linearity of $\pmb{\uplambda}(\ep;\cdot)$, we find a constant $M_1\in\tp{1,\infty}$ with the property that for all $0<\ep<\tilde{\ep}_\star$ and all $\tp{\eta,u},\tp{\tilde{\eta},\tilde{u}}\in\Bar{B_{\bf{W}_\ep}(0,R_\star)}$ we have the velocity update map Lipschitz estimate
    \begin{equation}\label{u update lippy}
        \tnorm{\pmb{\mathsf{u}}\tp{\ep;\eta,u} - \pmb{\mathsf{u}}\tp{\ep;\tilde{\eta},\tilde{u}}}_{H^2\tp{\Omega_\ep}}\le M_1\tp{\ep^{3/2}\tabs{\log\ep}^9 + 1/\tabs{\log\ep}}\tnorm{\eta - \tilde{\eta},u - \tilde{u}}_{\bf{W}_\ep}.
    \end{equation}
    We then combine the above considerations, estimate~\eqref{u update lippy}, and an unpacking of estimate~\eqref{the r0 Lip} in order to be  granted the existence of another constant $M_0\in\tp{1,\infty}$ with the property that for all $0<\ep<\tilde{\ep}_\star$ and all $\tp{\eta,u},\tp{\tilde{\eta},\tilde{u}}\in\Bar{B_{\bf{W}_\ep}\tp{0,R_{\star}}}$ we have the free surface update map Lipschitz estimate
    \begin{equation}\label{free surface update lippy}
        \tnorm{\pmb{\upeta}\tp{\ep;\eta,u} - \pmb{\upeta}\tp{\ep;\tilde{\eta},\tilde{u}}}^{\tp{\ep}}_{H^{5/2}\tp{\Sigma_\ep}}\le M_0\tp{\ep^{3/2}\tabs{\log\ep}^9 + 1/\tabs{\log\ep}}\tnorm{\eta - \tilde{\eta},u - \tilde{\eta}}_{\bf{W}_\ep}.
    \end{equation}

    Upon synthesizing estimates~\eqref{u update lippy} and~\eqref{free surface update lippy}, we find the existence of $0<\epsilon_\star<\tilde{\ep}_\star$ such that for all $0<\ep<\epsilon_\star$ the update map~\eqref{the update map} is a contraction on the complete metric space $\Bar{B_{\bf{W}_\ep}(0,R_\star)}$. The second item now follows from Banach's fixed point theorem.

    To prove the third item we must unpack the meaning of the fixed point found in the second item.  Examining the second component of~\eqref{the update map}, we see that the velocity part $u$ of the fixed point satisfies (by~\eqref{u update map})
    \begin{equation}\label{u fixed}
        u = \pmb{\mathsf{u}}\tp{\ep;\eta,u} = \mathfrak{R}^{\tabs{\log\ep}}_1\tp{R_\star,\ep;\pmb{\uplambda}\tp{\ep;\eta,u},\eta,u}.
    \end{equation}
    Extracting the first component of~\eqref{the update map} (given by~\eqref{eta update map}) and substituting~\eqref{u fixed}, we then find
    \begin{equation}\label{eta fixed}
        \eta = \pmb{\upeta}\tp{\ep;\eta,u} = \mathfrak{R}_0\tp{R_\star,\ep;\pmb{\uplambda}\tp{\ep;\eta,u},\eta,u}.
    \end{equation}
    We then define $\lambda = \pmb{\uplambda}\tp{\ep;\eta,u}\in\Bar{B(0,\be)}$, $p = \mathfrak{R}^{\tabs{\log\ep}}_2\tp{R_\star,\ep;\lambda,\eta,u}\in H^1\tp{\Omega_\ep}$, and $H, U, U_1, U_2, P, P_1, P_2\in W^{\infty,\infty}\tp{\R}$ via the evaluation of the bore maps~\eqref{ansatz, but now precise} at $\lambda$ (here we are implicitly using the smoothness assertion of the third item of Proposition~\ref{prop on the bore map}). Also set $\mathfrak{r}_1,\mathfrak{r}_3,\mathfrak{r}_4\in H^\infty\tp{\R}$ as in~\eqref{cotton candy grapes}, where $\mathfrak{L}$ is the linear operator defined in~\eqref{adversarial linear operator}, and $\pmb{\zeta} = H + \ep^2\eta$. Note that by Corollary~\ref{coro on bore map III}, it follows that $\inf_{\R}\pmb{\zeta}\ge\f12\m{H}_->0$.

    The first item of Proposition~\ref{prop on velocity - pressure fixed point reformulation} tells us that $L^\ep_{\pmb{\zeta}}\tp{u,p} = \tp{\varphi^1,0,\psi^1,0}$ where $\varphi^1$ and $\psi^1$ are defined as in~\eqref{this is the definition of the R XI operators}, $\lambda$ is as above, and $\Xi = \tabs{\log\ep}$. By using the expressions~\eqref{first driver full expression}, \eqref{d2 expression}, \eqref{d3 expression}, and~\eqref{d4 expression}, we then compute
    \begin{equation}
        \mathfrak{d}_1^\Xi\tp{\ep,\lambda,\eta,u} = \tp{X - \tp{4 + \ep^2\Bar{\upgamma}}e_1}\cdot\grad^{\mathcal{A}_{\pmb{\zeta}}}u_1 + u\cdot\grad^{\mathcal{A}_{\pmb{\zeta}}}X_1 - \mathfrak{r}_1,
    \end{equation}
    \begin{equation}
        \mathfrak{d}_2\tp{\ep,\lambda,\eta,u} = \tp{X - \tp{4 + \ep^2\Bar{\upgamma}}e_1}\cdot\grad^{\mathcal{A}_{\pmb{\zeta}}}u_2 + u\cdot\grad^{\mathcal{A}_{\pmb{\zeta}}}X_2,
    \end{equation}
    \begin{equation}
        \mathfrak{d}^\Xi_3\tp{\ep,\lambda,\eta} = \ep\mathfrak{L}\eta - \mathfrak{r}_3,
    \end{equation}
    and
    \begin{equation}
        \mathfrak{d}^\Xi_4\tp{\lambda} = \m{g}\eta - \mathfrak{r}_4,
    \end{equation}
    where $X = V + \ep^2 W$ and $V$, $W$, $Q$ are defined in terms of $\pmb{\zeta}$, $U$, $U_1$, $U_2$, $P$, $P_1$, $P_2$ and $\lambda$ as in~\eqref{any suggestions for how to make this stuff nicer}. Hence, from $L^\ep_{\pmb{\zeta}}\tp{u,p} = \tp{\varphi^1,0,\psi^1,0}$ (and the definition~\eqref{definition of the linear differential operator}) we deduce that $(u,p)$ solve all but the last equation in system~\eqref{first form of the residual PDEs} where $\mathfrak{f}_i$, for $i\in\tcb{1,2,3,4}$, are the appropriate evaluations of the maps~\eqref{I am sorry for how ugly this is}, \eqref{ugly expression of f2}, \eqref{country roads take me home}, and~\eqref{please sir may I have a label}, which coincide with the residual forcing terms~\eqref{residual forcing no 1}, \eqref{residual forcing no 2}, \eqref{residual forcing no 3}, and~\eqref{residual forcing no 4} considered in the fourth item of Theorem~\ref{thm on the shallow water and residual equations}.

    To see that $\eta$ solves the final equation in system~\eqref{first form of the residual PDEs}, we turn to the first item of Proposition~\ref{prop on free surface fixed point reformulation}, which provides the desired identity thanks to~\eqref{eta fixed}. Hence, we conclude that $\tp{\eta,u,p}\in H^{5/2}\tp{\Sigma_\ep}\times H^2\tp{\Omega_\ep;\R^2}\times H^1\tp{\Omega_\ep}$ is a solution to the residual PDEs~\eqref{first form of the residual PDEs}. On the other hand, the ODEs~\eqref{_SWE_0}, \eqref{_SWE_1}, and~\eqref{_SWE_2} are satisfied thanks to the fifth item of Proposition~\ref{prop on the bore map}.
\end{proof}

We can recast the previous result as granting us the existence of bore wave solutions to the free boundary Navier-Stokes system~\eqref{FBINSE, param. tuned and funny flux}. See Figure~\ref{the 4 bores plots} for a graphical representation of the free surface component of some of these traveling these bore waves.
\begin{thmC}[Existence of bore waves]\label{thm on existence of bore waves}
    Let $\upmu,\m{a}>0$, $\iota\in\tcb{-1,1}$, $\tp{\m{g},\m{A}}\in\mathfrak{C}_\iota$, $\upsigma\ge0$. There exists $\mathsf{H}\in W^{\infty,\infty}\tp{\R}$, $C,\del\in\R^+$, and $\epsilon_\star\in\tp{0,1/3}$ with the property that for all $\ep\in\tp{0,\epsilon_\star}$, there exists $\tp{\Bar{\eta},\Bar{u},\Bar{p}}\in H^{5/2}\tp{\Sigma_\ep}\times H^2\tp{\Omega_\ep;\R^2}\times H^1\tp{\Omega_\ep}$ such that the following hold.
    \begin{enumerate}
        \item We have $\tnorm{\Bar{\eta}}_{H^{5/2}\tp{\Sigma_\ep}} + \tnorm{\Bar{u}}_{\tp{L^\infty\cap H^2}\tp{\Omega_\ep}} + \tnorm{\Bar{p}}_{H^1\tp{\Omega_\ep}}\le C\ep$.
        \item The function $\mathsf{H}$ is a solution to the one-dimensional shallow water equation~\eqref{shallow water just H version} and we have $\inf\mathsf{H} = \m{H}_->0$, $\inf\tp{\mathsf{H} + \Bar{\eta}}\ge\f{1}{2}\m{H}_->0$, and $\lim_{x\to\pm\infty}\mathsf{H}\tp{\iota x} = \lim_{x\to\pm\infty}\tp{\mathsf{H} + \Bar{\eta}}\tp{\iota x} = \m{H}_{\pm}$.
        \item Upon  defining $\pmb{\zeta}$, $\pmb{u}$, and $\pmb{p}$ as in~\eqref{isolation of just the distinguished part of the solution},
        we have that $\tp{\pmb{\zeta},\pmb{u},\pmb{p}}\in H^{5/2}_{\loc}\tp{\Sigma_\ep}\times H^2_\loc\tp{\Omega_\ep;\R^2}\times H^1_\loc\tp{\Omega_\ep}$ is a strong solution to the free boundary Navier-Stokes system~\eqref{FBINSE, param. tuned and funny flux}, moreover $\inf\tp{4 - \pmb{u}_1}\ge \del$.
    \end{enumerate}
\end{thmC}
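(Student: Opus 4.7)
The plan is to combine Proposition~\ref{prop on the contraction}, which produces a fixed point of the coupled ODE--PDE residual system, with the sufficient direction of Theorem~\ref{thm on the shallow water and residual equations}, which converts such a solution back into a solution of the free boundary Navier--Stokes system~\eqref{FBINSE, param. tuned and funny flux}. The entire remaining task is then to (i) write the resulting solution in the distinguished form~\eqref{isolation of just the distinguished part of the solution}, (ii) derive the $O(\ep)$ quantitative bounds on the residuals, and (iii) verify positivity and far-field behavior.

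Concretely, I would first fix the parameters $\upmu,\m{a},\iota,(\m{g},\m{A}),\upsigma$ as in the statement and instantiate the machinery of Definitions~\ref{defn big steppah} and~\ref{defn underground methods}. Proposition~\ref{prop on the contraction} then yields $R_\star\in(1,\infty)$ and $\epsilon_\star\in(0,1/3)$ such that for each $\ep\in(0,\epsilon_\star)$ the ball $\Bar{B_{\bf{W}_\ep}(0,R_\star)}$ contains a fixed point $(\eta,u)$, and the proposition's third item furnishes the coupling $\lambda=\pmb{\uplambda}(\ep;\eta,u)\in\Bar{B(0,\be)}$, the pressure $p=\mathfrak{R}_2^{|\log\ep|}(R_\star,\ep;\lambda,\eta,u)$, and all bore-map-driven ODE constituents $H(\lambda),U(\lambda),\ldots,P_2(\lambda)$. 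By the sufficient direction of Theorem~\ref{thm on the shallow water and residual equations}, the ansatz triple $(\pmb{\zeta},\pmb{u},\pmb{p})$ assembled via~\eqref{ansatz_1}--\eqref{ansatz_3} is a strong solution to~\eqref{FBINSE, param. tuned and funny flux} on $\Omega_\ep$, with $\inf\pmb{\zeta}\ge \m{H}_-/2$ by~\eqref{this bound is here for today then its gone}. I then choose $\mathsf{H}$ to be the $\ep$-independent distinguished heteroclinic orbit produced by Proposition~\ref{prop on the bore map}, so that item~(2) of the theorem follows immediately from that proposition's first and second items and the fact that $H(\lambda)=\mathsf{H}+\mathfrak{h}(\lambda)$; define $\bar\eta=\pmb{\zeta}-\mathsf{H}=\mathfrak{h}(\lambda)+\ep^2\eta$ and analogously $\bar u,\bar p$ by subtracting the prescribed leading terms of~\eqref{isolation of just the distinguished part of the solution} from $\pmb{u},\pmb{p}$, expanding the differences using Corollary~\ref{coro on bore map III} and the $\mathsf{U}=4-\m{A}/\mathsf{H}$, $\mathsf{P}=\m{g}\mathsf{H}-2\upmu\mathsf{U}'$ identities from~\eqref{hello there distinguished gentlemen}.

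The size bounds constitute the bulk of the remaining work but are entirely quantitative. The Bernstein-type estimate~\eqref{crazy bernstein estimate} underlying Proposition~\ref{prop on the contraction} gives $\tnorm{\lambda}_\Lambda\lesssim \ep^{3/2}|\log\ep|^9$, and the Lipschitz continuity of the bore-map constituents (Corollary~\ref{coro on bore map III}), combined with $\mathfrak{h}(0)=\mathfrak{u}(0)=\cdots=0$, then yields $\mathfrak{h}(\lambda),\mathfrak{u}(\lambda),\ldots=O(\ep^{3/2}|\log\ep|^9)$ in $W^{7,\infty}\cap H^7(\R)$. Coupled with $\tnorm{\eta}_{H^{5/2}(\Sigma_\ep)}^{(\ep)}\le R_\star$ and $\tnorm{u}_{H^2(\Omega_\ep)}\le R_\star$, the definition~\eqref{epsilon dependent norms for the free surface residual}, the thin-domain Sobolev embedding from Lemma~\ref{lem on poincare and sobolev in thin domains}, and a direct expansion of $\bar u$ and $\bar p$ in powers of $y$, $\ep$, and the bore-map differences, an elementary computation furnishes $\tnorm{\bar\eta}_{H^{5/2}(\Sigma_\ep)}+\tnorm{\bar u}_{(L^\infty\cap H^2)(\Omega_\ep)}+\tnorm{\bar p}_{H^1(\Omega_\ep)}\le C\ep$ for an $\ep$-independent constant $C$, with the pressure bound additionally using the $\mathfrak{R}_2$-estimate~\eqref{R1 R2 bound} of Proposition~\ref{prop on velocity - pressure fixed point reformulation}. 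Shrinking $\epsilon_\star$ if necessary so that $\tnorm{\bar\eta}_{L^\infty}<\m{H}_-/2$ ensures $\mathsf{H}+\bar\eta\ge \m{H}_-/2$, while the decay of $\bar\eta\in H^{5/2}(\R)$ at infinity together with the limits~\eqref{useful_limit_2_Dos} yields $\lim_{x\to\pm\infty}(\mathsf{H}+\bar\eta)(\iota x)=\m{H}_\pm$. Finally, the constraint $\inf(4-\pmb{u}_1)\ge\del$ follows by writing $\pmb{u}_1=U+\ep y\pmb{\zeta}U_1+(y\pmb{\zeta})^2U_2/2+\ep^2 u_1$ and invoking the fourth item of Proposition~\ref{prop on the bore map} ($U\le 4-1/M$), with the remaining terms controlled in $L^\infty$ by $O(\ep)$ after possibly further shrinking $\epsilon_\star$; the choice $\del=1/(2M)$ then works. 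The main technical obstacle here is not really an obstacle but a bookkeeping task: ensuring that the $L^\infty$ estimate on $\bar u$ genuinely scales like $\ep$ requires careful use of the thin-domain embedding $\ep^{1/2}\tnorm{\cdot}_{L^\infty(\Omega_\ep)}\lesssim\tnorm{\cdot}_{H^2(\Omega_\ep)}$ paired with the fact that the $H^2$ norm of the dominant pieces of $\bar u$ carries extra powers of $\ep$ or $y$.
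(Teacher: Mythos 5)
Your proposal follows essentially the same route as the paper's proof: apply Proposition~\ref{prop on the contraction} to produce the fixed point, invoke the sufficient direction of Theorem~\ref{thm on the shallow water and residual equations} to assemble a solution of~\eqref{FBINSE, param. tuned and funny flux}, subtract the distinguished shallow water part, and then use the smallness $\tnorm{\lambda}_\Lambda\lesssim\ep^{3/2}\tabs{\log\ep}^9$ from~\eqref{crazy bernstein estimate}, the Lipschitz bounds on the bore map constituents, and the thin-domain embeddings to close the $O(\ep)$ estimates and the positivity conditions. The one small improvement worth noting is that you explicitly account for the $\ep^2 p$ contribution to $\bar p$ via the $\mathfrak{R}_2$ estimate~\eqref{R1 R2 bound}; the paper's displayed decomposition of $\bar p$ silently drops this term, though it is clearly needed and harmless.
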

\begin{proof}
    Let $\epsilon_\star\in\tp{0,1/3}$ be the number granted by Proposition~\ref{prop on the contraction}, $0<\ep<\epsilon_\star$, $\tp{\eta,u,p}\in H^{5/2}\tp{\Sigma_\ep}\times H^2\tp{\Omega_\ep;\R^2}\times H^1\tp{\Omega_\ep}$ be as in the third item of Proposition~\ref{prop on the contraction}, $\lambda = \pmb{\uplambda}\tp{\ep;\eta,u}$ (for the map as in~\eqref{lambda update map}), and $H,U,U_1,U_2,P,P_1,P_2\in W^{\infty,\infty}\tp{\R}$ as in~\eqref{cotton candy grapes}.

    We combine the conclusion of the third item of Proposition~\ref{prop on the contraction} with Theorem~\ref{thm on the shallow water and residual equations} to deduce that the functions $\pmb{\zeta}$, $\pmb{u}$, and $\pmb{p}$ defined as in~\eqref{ansatz_1}, \eqref{ansatz_2}, and~\eqref{ansatz_3} are a solution to the free boundary Navier-Stokes system~\eqref{FBINSE, param. tuned and funny flux}. We then define $\Bar{\eta}$, $\Bar{u}$, and $\Bar{p}$ (which appear in~\eqref{isolation of just the distinguished part of the solution}) to be the difference between $\pmb{\zeta}$, $\pmb{u}$, and $\pmb{p}$ and their respective distinguished parts. In other words, $\Bar{\eta}$, $\Bar{u}$, and $\Bar{p}$ are defined such that~\eqref{isolation of just the distinguished part of the solution} is true.

    According to Proposition~\ref{prop on the bore map}, specifically~\eqref{you wouldnt really want to type this mess more than once_Dos}, we have decompositions of the type $Z = \mathsf{Z} + \mathfrak{Z}\tp{\lambda}$
    for the quantities $Z \in \{H,U,U_1,U_2,P,P_1,P_2\}$,  where $\mathsf{Z}$ is part of the distinguished bore solution; for instance, $Z = H$, $\mathsf{Z} = \mathsf{H}$, and $\mathfrak{Z}\tp{\lambda} = \mathfrak{h}\tp{\lambda}$.  Because of the choice of $\lambda$ in~\eqref{lambda update map}, the bounds~\eqref{crazy bernstein estimate}, and the Lipschitz bounds~\eqref{rocky mountain spotted fever_Dos} of the maps involved,  the non-distinguished part in each decomposition is small in the sense that we have the estimate 
    \begin{equation}\label{wean without power}
    \tnorm{\mathfrak{Z}\tp{\lambda}}_{W^{7,\infty}\cap H^7}\le C\ep.          
    \end{equation}
    
    Next, we aim to show that the bounds~\eqref{wean without power} imply the estimate claimed in the first item.  The free surface term is simplest: $\Bar{\eta} = \pmb{\zeta} - \mathsf{H}$, and hence
    \begin{equation}
        \tnorm{\bar{\eta}}_{H^{5/2}\tp{\Sigma_\ep}} = \tnorm{\mathfrak{h}\tp{\lambda} + \ep^2\eta}_{H^{5/2}\tp{\Sigma_\ep}}\le C\ep + \ep\tnorm{\eta}^{\tp{\ep}}_{H^{5/2}\tp{\Sigma_\ep}}\le C\ep.
    \end{equation}
    Next we handle the vertical velocity remainder, $\Bar{u}_2$, which has the expression
    \begin{equation}\label{expression for bar u2}
        \Bar{u}_2 = \pmb{u}_2 =-\bp{\tp{y\pmb{\zeta}}U' + \ep\f12\tp{y\pmb{\zeta}}^2U_1'+\f16\tp{y\pmb{\zeta}}^3U_2'} +\ep^2u_2.
    \end{equation}
    Corollary~\ref{coro on bore map III} and the third item of Lemma~\ref{lem on poincare and sobolev in thin domains} permit us to take the norm of the above expression in $\tp{L^\infty\cap H^2}\tp{\Omega_\ep}$ and verify that $\tnorm{\Bar{u}_2}_{L^2\tp{\Omega_\ep}}\le C\tp{\ep + \ep^3 + \ep^2\tnorm{u_2}_{\tp{L^\infty\cap H^2}\tp{\Omega_\ep}}}\le C\ep$.

    The horizontal velocity remainder, $\Bar{u}_1$, is somewhat more involved.  By rewriting the expression for $\pmb{u}_1$, as defined in the first item of Theorem~\ref{thm on the shallow water and residual equations}, via
    \begin{equation}\label{expression for bold u1}
        \pmb{u}_1  = \mathsf{U} + \ep \tp{y\mathsf{H}}\mathsf{U}_1 + \f12\tp{y\mathsf{H}}^2\mathsf{U}_2+\ep\tsb{\tp{y\pmb{\zeta}} - \tp{y\mathsf{H}}}\mathsf{u}_1 + \f12\tsb{\tp{y\pmb{\zeta}}^2 - \tp{y\mathsf{H}}^2}\mathsf{U}_2 +  \mathfrak{u}\tp{\lambda} + \ep\tp{y\pmb{\zeta}}\mathfrak{u}_1\tp{\lambda} + \f12\tp{y\pmb{\zeta}}^2\mathfrak{u}_2\tp{\lambda} + \ep^2u,
    \end{equation}
    we learn that
    \begin{equation}
        \bnorm{\pmb{u}_1 - \bp{\mathsf{U} + \ep\tp{y\mathsf{H}}\mathsf{U}_1 + \f12\tp{y\mathsf{H}}^2\mathsf{U}_2}}_{\tp{L^\infty\cap H^2}\tp{\Omega_\ep}}\le C\ep.
    \end{equation}
    On the other hand, by using identities~\eqref{hello there distinguished gentlemen}, we may further expand
    \begin{equation}\label{favorite va}
        \mathsf{U} + \ep\tp{y\mathsf{H}}\mathsf{U}_1 + \f12\tp{y\mathsf{H}}^2\mathsf{U}_2 = 4 - \f{\m{A}}{\m{H}} + \f{4\m{a}}{\upmu}\mathsf{H}^2\bp{\ep y - \f12y^2}  + \bsb{\f{\m{a}}{\upmu}\tp{4\mathsf{H} - \m{A} - 4\mathsf{H}^2}\bp{\ep y - \f{1}{2}y^2} + \f12\tp{y\mathsf{H}}^2\bp{\mathsf{U}'' + 4\f{\mathsf{H}'\mathsf{U}'}{\mathsf{H}}}}.
    \end{equation}
    The final term in square brackets in~\eqref{favorite va} has size at most $C\ep$ when measured in the norm $\tnorm{\cdot}_{\tp{L^\infty\cap H^2}\tp{\Omega_\ep}}$, thanks again to Corollary~\ref{coro on bore map III} and the bounds
    \begin{equation}\label{you get what you need}
        \tnorm{\ep y\tp{4\mathsf{H} = \m{A} - 4\mathsf{H}^2}}_{\tp{L^\infty\cap H^2}\tp{\Omega_\ep}} + \tnorm{y^2\tp{4\mathsf{H} - \m{A} - 4\mathsf{H}^2}}_{\tp{L^\infty\cap H^2}\tp{\Omega_\ep}}\le C\ep,
    \end{equation}
    which follow from~\eqref{sweq} and the first and second items of Proposition~\ref{prop on the bore map}. Synthesizing the above observations yields the bound
    \begin{equation}
        \tnorm{\Bar{u}_1}_{\tp{L^\infty\cap H^2}\tp{\Omega_\ep}} = \bnorm{\pmb{u}_1 - \bp{4 - \f{\m{A}}{\m{H}} + \f{4\m{a}}{\upmu}\mathsf{H}^2\bp{\ep y - \f12y^2}}}_{\tp{L^\infty\cap H^2}\tp{\Omega_\ep}}\le C\ep.
    \end{equation}

    We now estimate the pressure remainder $\Bar{p}$.  We use~\eqref{hello there distinguished gentlemen} to write
    \begin{equation}
        \Bar{p} = \pmb{p} - \bp{\m{g}\mathsf{H} - 2\upmu\m{A}\f{\mathsf{H}'}{\mathsf{H}^2}} = \mathfrak{p}\tp{\lambda} + \ep\tp{y\pmb{\zeta}}\tp{\mathsf{P}_1 + \mathsf{p}_1\tp{\lambda}} + \f12\tp{y\pmb{\zeta}}^2\tp{\mathsf{P}_2 + \mathfrak{p}_2\tp{\lambda}}.
    \end{equation}
    The estimate $\tnorm{\Bar{p}}_{H^1\tp{\Omega_\ep}}\le C\ep$ now follows by Proposition~\ref{prop on the bore map}.  This completes the proof of the first item.

    We now turn to the proof the second item.  These claims are a direct consequences of the first item of Proposition~\ref{prop on the bore map} and the calculation done at the beginning of Section~\ref{subsection on distinguished shallow water bore solutions}, which takes us from the system of ODEs~\eqref{The shallow water ODEs} to the ODE~\eqref{shallow water just H version}.

    Finally, we prove the third item.  We have already seen that $\tp{\pmb{\zeta},\pmb{u},\pmb{p}}$ belongs to the stated space and solves the desired equations.  In addition, since $\sup\tp{4 - \m{A}/\mathsf{H}}<4$ we may let $0<\del=\tp{1/2}\tp{\m{A}/\max\mathsf{H}}$ and enforce the bound $\inf\tp{4 - \pmb{u}_1}\ge\del$, by making $\epsilon_\star$ smaller if necessary.
\end{proof}
\subsection{Regularity}\label{subsection on regularity}

We now take the solutions to the free boundary Navier-Stokes system produced by Theorem~\ref{thm on existence of bore waves} and establish that they are, in fact, smooth. It is temping to think that because of the similarities between equations~\eqref{FBINSE, param. tuned and funny flux} and the elliptic system of Stokes equations, we should get  elliptic regularity for free and hence smoothness of solutions. However, while this is true in spirit, in practice some care is needed since the system of equations is quasilinear and the free surface unknown is not defined in the bulk.  Nevertheless, our presentation here follows the familiar themes of elliptic regularity and, as such, is abbreviated as much as possible.

The following definition introduces the ingredients for this regularity promotion. Note that in what follows there are some similarities with the objects of Theorem~\ref{thm on the shallow water and residual equations} and Definition~\ref{defn big steppah}.

\begin{defnC}[Objects for regularity promotion theory]\label{defn of objects for regularity promotion theory}
    Assume the hypotheses of Definition~\ref{defn big steppah} and let $\mathsf{H}\in W^{\infty,\infty}\tp{\R}$ and $\del,\epsilon_\star\in\tp{0,1/3}$ be the objects produced by Theorem~\ref{thm on existence of bore waves}. Define $\mathsf{U}, \mathsf{U}_1, \mathsf{U}_2, \mathsf{P}$, $\mathsf{P}_1, \mathsf{P}_2\in W^{\infty,\infty}\tp{\R}$ in terms of $\mathsf{H}$ as in~\eqref{hello there distinguished gentlemen}. Given any $n\in\N$ and $\ep\in\tp{0,\epsilon_\star}$ we define the Banach spaces of functions $\bf{U}^n_\ep$ and $\bf{V}^n_\ep$ via
    \begin{multline}\label{domain and codomain for regularity promotion}
        \bf{U}^n_\ep = H^{n + 5/2}\tp{\Sigma_\ep} \times H^{n+2}_{\m{tan}}\tp{\Omega_\ep;\R^2}\times H^{n+1}\tp{\Omega_\ep}\text{ and}\\
        \bf{V}^n_\ep = H^{n + 1/2}\tp{\Sigma_\ep}\times H^n\tp{\Omega_\ep;\R^2}\times H^{n+1}\tp{\Omega_\ep}\times H^{n + 1/2}\tp{\Sigma_\ep;\R^2}\times H^{n + 1/2}\tp{\Sigma_0}.
    \end{multline}
    The spaces of~\eqref{domain and codomain for regularity promotion} are endowed with the norms
    \begin{multline}\label{domain and codomain norm for the regularity promotion}
        \tnorm{\eta,u,p}_{\bf{U}^n_\ep} = \tnorm{\eta}^{\tp{\ep}}_{H^{5/2}\tp{\Sigma_\ep}} + \tnorm{u,p}_{\bf{X}^1_\ep} + n\tp{\tnorm{\eta}_{H^{n+5/2}\tp{\Sigma_\ep}} + \tnorm{u}_{H^{n+2}\tp{\Omega_\ep}} + \tnorm{p}_{H^{n+1}\tp{\Omega_\ep}}}\text{ and}\\
        \tnorm{\psi^0,\varphi^1,\varphi^2,\psi^1,\psi^2}_{\bf{V}^n_\ep} = \ep^{1/2}\tnorm{\psi^0}_{L^2\tp{\Sigma_\ep}} + \ep\tnorm{\psi^0}_{H^{1/2}\tp{\Sigma_\ep}} + \tnorm{\varphi^1,\varphi^2,\psi^1,\psi^2}_{\bf{Y}^1_\ep} \\+ n\tp{\tnorm{\psi^0}_{H^{n+1/2}\tp{\Sigma_\ep}} + \tnorm{\varphi^1}_{H^n\tp{\Omega_\ep}} + \tnorm{\varphi^2}_{H^{n + 1}\tp{\Omega_\ep}} + \tnorm{\psi^1}_{H^{n + 1/2}\tp{\Sigma_\ep}} + \tnorm{\psi^2}_{H^{n + 1/2}\tp{\Sigma_0}}},
    \end{multline}
    for all $\tp{\eta,u,p}\in\bf{U}^n_\ep$ and all $\tp{\psi^0,\varphi^1,\varphi^2,\psi^1,\psi^2}\in\bf{V}^n_\ep$.  We also define the closed subsets of $\mathscr{U}^n_\ep\subset\bf{U}^n_\ep$ given by
    \begin{equation}\label{some subsets for there}
        \mathscr{U}^n_\ep=\tcb{\tp{\eta,u,p}\in\bf{U}^n_\ep\;:\;\mathsf{H} + \ep^2\eta\ge\m{H}_-/4,\; 4 + \ep^2u_1 - \mathsf{U}\ge\del/2,\;\tnorm{\eta,u,p}_{\bf{U}^0_\ep}\le\tabs{\log\ep}^{10}}.
    \end{equation}

    Next,  we define a family of bounded linear operators acting between these function spaces and indexed by $\bf{u} = \tp{\tilde{\eta},\tilde{u},\tilde{p}}\in\mathscr{U}^n_\ep$.  Indeed, we set $G^\ep_{\bf{u}}:\bf{U}^n_\ep\to\bf{V}^n_\ep$ via $G^\ep_{\bf{u}}\tp{\eta,u,p} = \tp{\psi^0,\varphi^1,\varphi^2,\psi^1,\psi^2}$ where  
    \begin{multline}\label{linear operator for regularity promotion}
            \tp{\varphi^1,\varphi^2,\psi^1,\psi^2} = L^\ep_{\mathsf{H} + \ep^2\tilde{\eta}}\tp{u,p} - \tp{0,0, \upsigma\ep^2\tp{1 + \ep^2\tabs{\mathsf{H} + \ep^2\tilde{\eta}}^2}^{-3/2}\eta''\mathcal{N}_{\ep\tp{\mathsf{H} + \ep^2\tilde{\eta}}},0}, \text{ and }\\
            \psi^0 = \tp{4 + \ep^2\tp{\Bar{\upgamma} + \tilde{u}_1} - \mathsf{U}}\tp{1 - \pd_1^2}\eta - \f{1}{\ep}\pd_1u\cdot\mathcal{N}_{\ep\tp{\mathsf{H} + \ep^2\tilde{\eta}}}.
    \end{multline}
    where the action of $L$ is defined in~\eqref{definition of the linear differential operator}.

    Finally, we introduce the lower order term conglomerate map $F^\ep:\mathscr{U}^n_\ep\to\bf{V}^{n+1}_\ep$, whose components $F^\ep=\tp{F^\ep_0,\tp{F^\ep_1,F^\ep_2},0,\tp{F^\ep_3,F^\ep_4},0}$ are given by
    \begin{equation}\label{F0 component}
        F^\ep_0\tp{\eta,u,p} = \tp{1 - \pd_1^2}\tp{\Bar{\upgamma}\mathsf{H} - \tp{\mathsf{H} + \ep^2\eta}^2\mathsf{U}_1/2 - \tp{\mathsf{H} + \ep^3\eta}\mathsf{U}_2/6 - \Bar{\m{A}}} - \f{\mathsf{H} + \ep^2\eta}{\ep}\int_0^\ep u_1\tp{\cdot,y}\;\m{d}y  -\mathsf{H}''u_1 + 2\mathsf{U}'\eta' + \mathsf{U}''\eta,
    \end{equation}
    \begin{multline}\label{F1 component}
        F^\ep_1\tp{\eta,u,p} = \tsb{\tp{\mathsf{X}\tp{\mathsf{H} + \ep^2\eta} - \tp{4 + \ep^2\Bar{\upgamma}}e_1}\cdot\grad^{\mathcal{A}_{\mathsf{H} + \ep^2\eta}}u + u\cdot\grad^{\mathcal{A}_{\mathsf{H} + \ep^2\eta}}\mathsf{X}\tp{\mathsf{H} + \ep^2\eta}}\cdot e_1 - \Bar{\upgamma}\mathsf{U}'\\
        +\tp{\mathsf{V}\tp{\mathsf{H} + \ep^2\eta} - \tp{4 + \ep^2\Bar{\upgamma}}e_1}\cdot\grad^{\mathcal{A}_{\mathsf{H} + \ep^2\eta}}\mathsf{W}_1\tp{\mathsf{H} + \ep^2\eta} + \mathsf{W}\tp{\mathsf{H} + \ep^2\eta}\cdot\grad^{\mathcal{A}_{\mathsf{H} + \ep^2\eta}}\mathsf{X}_1\tp{\mathsf{H} +\ep^2\eta}\\+\ep^2u\cdot\grad^{\mathcal{A}_{\mathsf{H} + \ep^2\eta}}u_1 + \pd_1^{\mathcal{A}_{\mathsf{H} + \ep^2\eta}}\mathsf{Q}\tp{\mathsf{H} + \ep^2\eta} - \upmu\tp{\pd_1^{\mathcal{A}_{\mathsf{H} + \ep^2\eta}}}^2\mathsf{W}_1\tp{\mathsf{H} + \ep^2\eta},
    \end{multline}
    \begin{multline}\label{F2 component}
        F^\ep_2\tp{\eta,u,p} = \tsb{\tp{\mathsf{X}\tp{\mathsf{H} + \ep^2\eta} - \tp{4 + \ep^2\Bar{\upgamma}}e_1}\cdot\grad^{\mathcal{A}_{\mathsf{H} + \ep^2\eta}}u + u\cdot\grad^{\mathcal{A}_{\mathsf{H} + \ep^2\eta}}\mathsf{X}\tp{\mathsf{H} + \ep^2\eta}}\cdot e_2 + \Bar{\upgamma}y\tp{\mathsf{H} + \ep^2\eta}\mathsf{U}''\\+\tp{\mathsf{V}\tp{\mathsf{H} + \ep^2\eta} - \tp{4 + \ep^2\Bar{\upgamma}}e_1}\cdot\grad^{\mathcal{A}_{\mathsf{H} + \ep^2\eta}}\mathsf{W}_2\tp{\mathsf{H} + \ep^2\eta} + \mathsf{W}\tp{\mathsf{H} + \ep^2\eta}\cdot\grad^{\mathcal{A}_{\mathsf{H} + \ep^2\eta}}\mathsf{X}_2\tp{\mathsf{H} +\ep^2\eta}\\+\ep^2u\cdot\grad^{\mathcal{A}_{\mathsf{H} + \ep^2\eta}}u_2 - \upmu\tp{\pd_1^{\mathcal{A}_{\mathsf{H} + \ep^2\eta}}}^2\mathsf{W}_2\tp{\mathsf{H} + \ep^2\eta},
    \end{multline}
    \begin{multline}\label{F3 component}
        F^\ep_3\tp{\eta,u,p} = \ep\upsigma\tp{\mathsf{H} + \ep^2\eta}'\mathsf{H}''\tp{1 + \ep^2\tabs{\tp{\mathsf{H} + \ep^2\eta}'}^2}^{-3/2} + \ep\tp{\mathsf{P} - \m{g}\mathsf{H} -2\upmu\mathsf{U}'}\eta' + \ep\upmu\tp{\mathsf{U}_2 - \mathsf{U}''}\eta - \ep\m{g}\mathsf{H}'\eta\\
        +\ep\mathsf{Q}\tp{\mathsf{H} + \ep^2\eta}\tp{\mathsf{H} + \ep^2\eta}' - 2\ep\upmu\pd_1^{\mathcal{A}_{\mathsf{H} + \ep^2\eta}}\mathsf{W}_1\tp{\mathsf{H} + \ep^2\eta}\tp{\mathsf{H} + \ep^2\eta}' + \upmu\pd_1^{\mathcal{A}_{\mathsf{H} + \ep^2\eta}}\mathsf{W}_2\tp{\mathsf{H} + \ep^2\eta} - \ep^3\m{g}\eta\eta',
    \end{multline}
    and
    \begin{multline}\label{F4 component}
        F^\ep_4\tp{\eta,u,p} = -\upsigma\mathsf{H}''\tp{1 + \ep^2\tabs{\tp{\mathsf{H} + \ep^2\eta}'}^2}^{-3/2} + \m{g}\eta \\- \mathsf{Q}\tp{\mathsf{H} + \ep^2\eta} + \upmu\tp{\mathsf{H} + \ep^2\eta}\tp{\mathsf{H} + \ep^2\eta}'\mathsf{U}'' + \upmu\mathbb{D}^{\mathcal{A}_{\mathsf{H} + \ep^2\eta}}\mathsf{W}\tp{\mathsf{H} + \ep^2\eta}\mathcal{N}_{\ep\tp{\mathsf{H} + \ep^2\eta}}\cdot e_2,
    \end{multline}
    where the auxiliary functions $\mathsf{V}$, $\mathsf{W}$, $\mathsf{X}$, and $\mathsf{Q}$ are given by
    \begin{multline}
        \mathsf{V}\tp{\zeta} = \mathsf{U}e_1-y\zeta\mathsf{U}'e_2,\quad\mathsf{W}\tp{\zeta} = \bp{\f{y\zeta}{\ep}\mathsf{U}_1 = \f12\bp{\f{y\zeta}{\ep}}^2\mathsf{U}_2}e_1 - \ep\bp{\f{1}{2}\bp{\f{y\zeta}{\ep}}^2\mathsf{U}_1' + \f16\bp{\f{y\zeta}{\ep}}^3\mathsf{U}_2'}e_2,  \\
        \mathsf{X}\tp{\zeta} = \mathsf{V}\tp{\zeta} + \ep^2\mathsf{W}\tp{\zeta},
        \text{ and } \mathsf{Q}\tp{\zeta} = \f{y\zeta}{\ep}\mathsf{P}_1 + \f{1}{2}\bp{\f{y\zeta}{\ep}}^2\mathsf{P}_2
    \end{multline}
    for $\zeta = \mathsf{H} + \ep^2\eta$.
\end{defnC}

We remark that the precise choice of the norms~\eqref{domain and codomain norm for the regularity promotion} (and how they depend on $\ep$) is only really relevant in the case $n=0$. For $n\ge1$, any choice of equivalent norms will be fine for the proofs that follow.

Definition~\ref{defn of objects for regularity promotion theory} takes for granted that the  map $F^\ep :\mathscr{U}^n_\ep \to \bf{V}^{n+1}_\ep$ is well-defined;  this can be verified by elementary but tedious calculations, which we omit here for the sake of brevity.  We note, though, that Section~\ref{subsection on properties of residual forcing and nonlinearities} establishes a related and much sharper version of the $n=0$ case of the claimed mapping properties, and the same considerations  will work for general $n$.

The following lemma reveals how the solutions produced by Theorem~\ref{thm on existence of bore waves} interact with the objects of Definition~\ref{defn of objects for regularity promotion theory}.

\begin{lemC}[Reformulation for regularity promotion]\label{lem on reformulation for regularity promotion}
    Assume the hypotheses of Definition~\ref{defn of objects for regularity promotion theory}. For $0<\ep<\epsilon_\star$ let $\tp{\eta,u,p}\in\bf{U}^0_\ep$ and $\lambda\in\Bar{B(0,\be)}$ denote the fixed point granted by the second and third items of Proposition~\ref{prop on the contraction}. Define $\tp{h,v,q}\in\bf{U}^0_\ep$ via
    \begin{equation}\label{definition of the distinguished residuals}
        h = \eta + \ep^{-2}\mathfrak{h}\tp{\lambda},\; v = u + \ep^{-2}\tp{X\tp{\ep,\lambda,\eta} - \mathsf{X}\tp{\mathsf{H} + \ep^2 h}},\text{ and } q = p + Q\tp{\ep,\lambda,\eta} - \mathsf{Q}\tp{\mathsf{H} + \ep^2h} + \ep^{-2}\tp{\mathsf{P} - P\tp{\lambda}},
    \end{equation}
    where $\mathfrak{h}\tp{\lambda}$ is defined in Proposition~\ref{prop on the bore map}; the functions $\mathsf{X}$, $\mathsf{H}$, $\mathsf{Q}$, and $\mathsf{P}$ are from Definition~\ref{defn of objects for regularity promotion theory}; and $X$, $Q$, and $P$ are from Definition~\ref{defn big steppah}. 
    
    There exists $\epsilon_{\bigstar}\in\tp{0,\epsilon_\star}$ such that the following hold for all $0<\ep<\epsilon_\bigstar$.
    \begin{enumerate}
        \item We have the inclusion $\tp{h,v,q}\in\m{int}\mathscr{U}^0_\ep$, where $\m{int}$ denotes the topological interior.
        \item If $\tp{\pmb{\zeta},\pmb{u},\pmb{p}}\in H^{5/2}_\loc\tp{\Sigma_\ep}\times H^2_\loc\tp{\Omega_\ep;\R^2}\times H^{1}_\loc\tp{\Omega_\ep}$ denotes the solution to~\eqref{FBINSE, param. tuned and funny flux} produced by Theorem~\ref{thm on existence of bore waves}, then we have the identities
        \begin{equation}\label{expansion of the solution in terms of the distinguished residuals}
            \pmb{\zeta} = \mathsf{H} + \ep^2h,\;\pmb{u} = \mathsf{X}\tp{\mathsf{H} + \ep^2h} + \ep^2v,\text{ and }\pmb{p} = \mathsf{P} + \ep^2\mathsf{Q}\tp{\mathsf{H} + \ep^2 h} + \ep^2 q.
        \end{equation}
        In particular, if we know that $\tp{h,v,q}\in\bf{U}^n_\ep$ for some $n\in\N^+$, then we have the higher regularity inclusions $\pmb{\zeta}\in\tp{W^{\infty,\infty} + H^{n+5/2}}\tp{\Sigma_\ep}$, $\pmb{u}\in\tp{W^{\infty,\infty} + H^{n+2}}\tp{\Omega_\ep;\R^2}$, and $\pmb{p}\in\tp{W^{\infty,\infty} + H^{n+1}}\tp{\Omega_\ep}$.
        \item The triple $\tp{h,v,q}$ obeys the identity
        \begin{equation}\label{the fundamental identity for the regularity promotion}
            G^\ep_{\tp{h,v,q}}\tp{h,v,q} = - F^\ep\tp{h,v,q},
        \end{equation}
        where $G^\ep$ and $F^\ep$ are the family of linear operators and lower order terms, respectively, from Definition~\ref{defn of objects for regularity promotion theory}.
    \end{enumerate}
\end{lemC}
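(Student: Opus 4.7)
The strategy is to handle the three items in order: first the algebraic identities of item 2 by direct substitution, then item 1 by a size analysis exploiting the $O(\ep^2)$-smallness of the bore-map deviations, and finally item 3 by invoking Theorem~\ref{thm on the shallow water and residual equations} with the distinguished shallow water bore as the background and then sorting terms into the linear $G^\ep$ and lower-order $F^\ep$ pieces.

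For item 2, I would combine Proposition~\ref{prop on the bore map}'s decomposition $H(\lambda) = \mathsf{H} + \mathfrak{h}(\lambda)$ (and analogues for $U, U_1, U_2, P, P_1, P_2$) with the bound $\tnorm{\lambda}_{\Lambda} = O(\ep^2)$ built into~\eqref{lambda update map} via~\eqref{crazy bernstein estimate}; the Lipschitz estimates~\eqref{rocky mountain spotted fever_Dos} then force $\mathfrak{h}(\lambda), \mathfrak{u}(\lambda), \ldots = O(\ep^2)$ in $W^{7,\infty} \cap H^7$. Substituting into~\eqref{definition of the distinguished residuals} gives $\pmb{\zeta} = H(\lambda) + \ep^2\eta = \mathsf{H} + \ep^2 h$ at once, and the remaining identities in~\eqref{expansion of the solution in terms of the distinguished residuals} follow by rearranging the definitions of $v$ and $q$ against the ansatz $\pmb{u} = X(\ep,\lambda,\eta) + \ep^2 u$, $\pmb{p} = P(\lambda) + \ep^2 Q(\ep,\lambda,\eta) + \ep^2 p$ from~\eqref{ansatz_1}--\eqref{ansatz_3}. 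The higher-regularity inclusion then follows from $\mathsf{H} \in W^{\infty,\infty}$, $\ep^2 h \in H^{n+5/2}$, the polynomial-in-$\pmb{\zeta}$ structure of $\mathsf{X}$ and $\mathsf{Q}$ (with bounded powers of $y/\ep$), and the Sobolev algebra embedding $H^{n+5/2} \cdot W^{\infty,\infty} \subset H^{n+5/2} \subset H^{n+2}$.

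For item 1, the $O(\ep^2)$-smallness of the bore map corrections converts into $\tnorm{(h,v,q) - (\eta,u,p)}_{\bf{U}^0_\ep} = O(\ep^{1/2})$ after the thin-domain scalings of~\eqref{domain and codomain norm for the regularity promotion}; together with the fixed-point bound $\tnorm{(\eta,u,p)}_{\bf{U}^0_\ep} = O(R_\star)$ of Proposition~\ref{prop on the contraction}, this yields $\tnorm{(h,v,q)}_{\bf{U}^0_\ep} \le C \ll |\log\ep|^{10}$ for $\ep$ small. The positivity $\inf(\mathsf{H} + \ep^2 h) \ge \m{H}_-/2 > \m{H}_-/4$ is Theorem~\ref{thm on existence of bore waves}'s bound $\inf\pmb{\zeta} \ge \m{H}_-/2$; for $4 + \ep^2 v_1 - \mathsf{U} \ge \delta/2$ I would write $\ep^2 v_1 = \pmb{u}_1 - \mathsf{X}_1(\pmb{\zeta}) = (\pmb{u}_1 - \mathsf{U}) - \ep^2 \mathsf{W}_1(\pmb{\zeta})$ and exploit both the explicit factor of $y \in (0,\ep)$ in $\mathsf{W}_1(\pmb{\zeta})$ and the $C\ep$-smallness of $\Bar{u}$ from Theorem~\ref{thm on existence of bore waves}, promoted to $L^\infty(\Omega_\ep)$ by the thin-domain Sobolev embedding of Lemma~\ref{lem on poincare and sobolev in thin domains}, to conclude $\tnorm{\pmb{u}_1 - \mathsf{U}}_{L^\infty(\Omega_\ep)} = O(\ep^{1/2})$; hence $4 + \ep^2 v_1 - \mathsf{U} = (4 - \pmb{u}_1) + O(\ep^{1/2}) \ge \delta - O(\ep^{1/2}) > \delta/2$ for small $\ep$. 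With all three defining inequalities holding strictly, $(h,v,q) \in \m{int}\mathscr{U}^0_\ep$.

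For item 3, the key observation is that the expansion~\eqref{expansion of the solution in terms of the distinguished residuals} coincides with the ansatz of Theorem~\ref{thm on the shallow water and residual equations} built from the background data $(\mathsf{H}, \mathsf{U}, \mathsf{U}_1, \mathsf{U}_2, \mathsf{P}, \mathsf{P}_1, \mathsf{P}_2)$ and residuals $(h,v,q)$. I first verify that this background satisfies the ODE system~\eqref{_SWE_0}--\eqref{_SWE_2} with $\mathfrak{r}_1 = \mathfrak{r}_3 = \mathfrak{r}_4 = 0$: most of the relations are immediate from~\eqref{hello there distinguished gentlemen}, and the second identity of~\eqref{_SWE_0} collapses, after substituting $\mathsf{P} - \m{g}\mathsf{H} - 2\upmu\mathsf{U}' = -4\upmu\mathsf{U}'$ and the explicit formula for $\mathsf{U}_2$, to $\mathsf{H}$ times the left-hand side of the one-dimensional shallow water equation~\eqref{shallow water just H version}, which vanishes by Theorem~\ref{thm on existence of bore waves} item 2. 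Since $(\pmb{\zeta}, \pmb{u}, \pmb{p})$ solves~\eqref{FBINSE, param. tuned and funny flux}, the sixth item of Theorem~\ref{thm on the shallow water and residual equations} yields that $(h,v,q)$ solves the residual PDE system~\eqref{first form of the residual PDEs} with the distinguished coefficients and vanishing $\mathfrak{r}$'s. The identity $G^\ep_{(h,v,q)}(h,v,q) = -F^\ep(h,v,q)$ then follows by term-sorting: the bulk viscous and pressure gradients, the divergence, the surface traction, and the Navier slip assemble into $L^\ep_{\pmb{\zeta}}(v,q)$, augmented by the piece $-\upsigma\ep^2(1 + \ep^2|\pmb{\zeta}'|^2)^{-3/2}h''\mathcal{N}_{\ep\pmb{\zeta}}$ in $G^\ep$ which extracts exactly the $h''$-contribution to $\mathcal{H}(\ep\pmb{\zeta})$; in the flux equation the product-rule expansion $(1 - \pd_1^2)((4 + \ep^2\Bar{\upgamma} - \mathsf{U})h) = (4 + \ep^2\Bar{\upgamma} - \mathsf{U})(1 - \pd_1^2)h + 2\mathsf{U}'h' + \mathsf{U}''h$ isolates the linear operator of $G^\ep$ (the $\ep^2 \tilde u_1$ coupling in its coefficient supplied by $\tilde u_1 = v_1$), with everything else---the adversarial operator contributions from~\eqref{adversarial linear operator}, the $\mathsf{H}''$-pieces of the capillary term, and the $\mathfrak{f}_j$-type residual forcings evaluated at the distinguished background---assembling componentwise into the formulas~\eqref{F0 component}--\eqref{F4 component}. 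The main obstacle is this final term-matching exercise: although conceptually only a reorganization, the operators $G^\ep$ and $F^\ep$ are defined by lengthy explicit formulas, and the capillary restructuring together with the flux-equation coupling between $G^\ep$'s coefficient $4 + \ep^2(\Bar{\upgamma} + \tilde u_1) - \mathsf{U}$ and the $F^\ep_0$ remainders demand careful bookkeeping with product rules and the explicit relations~\eqref{hello there distinguished gentlemen}.
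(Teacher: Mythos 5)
Your proposal is correct and follows essentially the same route as the paper: item 2 by direct substitution of the ansatz, item 1 by checking the three defining inequalities using the $\ep$-smallness from Theorem~\ref{thm on existence of bore waves} and the Bernstein bound~\eqref{crazy bernstein estimate}, and item 3 by feeding the background $(\mathsf{H},\mathsf{U},\ldots)$ with $\mathfrak{r}_1 = \mathfrak{r}_3 = \mathfrak{r}_4 = 0$ through the converse direction of Theorem~\ref{thm on the shallow water and residual equations} and then term-sorting into $G^\ep$ and $F^\ep$.

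One small quantitative slip in item 1: you assert the bore-map corrections are ``$O(\ep^2)$'' and deduce $\tnorm{(h,v,q)-(\eta,u,p)}_{\bf{U}^0_\ep} = O(\ep^{1/2})$, but the input $\lambda = \ep^2(\uppi_{|\log\ep|}\eta,\ldots)$ is only $O(\ep^{3/2}\tabs{\log\ep}^9)$ in $\Lambda$ by~\eqref{crazy bernstein estimate}, so the worst piece $\ep^{-3/2}\tnorm{\mathfrak{h}(\lambda)}_{H^2}$ coming from the $\ep^{1/2}$-weighted $H^2$ part of the norm~\eqref{epsilon dependent norms for the free surface residual} is in fact only $O(\tabs{\log\ep}^9)$, not vanishing. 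This does not damage the argument, since $\tabs{\log\ep}^9 \ll \tabs{\log\ep}^{10}$ and the set $\mathscr{U}^0_\ep$ in~\eqref{some subsets for there} is deliberately cut with a $\tabs{\log\ep}^{10}$ bound precisely to absorb such logarithmic losses, but the intermediate claim should be stated with the correct exponent.
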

\begin{proof}
    We begin by establishing the second item, as it actually holds over the full range of $0<\ep<\epsilon_\star$. Thanks to the proof of Theorem~\ref{thm on existence of bore waves} we know that $\pmb{\zeta}$, $\pmb{u}$, and $\pmb{p}$ are defined in terms of $\eta$, $u$, $p$, and $\lambda$ via identities~\eqref{ansatz_1}, \eqref{ansatz_2}, and~\eqref{ansatz_3}. It is then immediate that $h$, $v$, and $q$ as defined in~\eqref{definition of the distinguished residuals} yield the identities in~\eqref{expansion of the solution in terms of the distinguished residuals}. The remainder of the second item follows readily.

    Next, we establish the first item.  Since $\pmb{\zeta} = \mathsf{H} + \ep^2h$ we know by Theorem~\ref{thm on existence of bore waves} that $\pmb{\zeta}\ge\m{H}_-/2>\m{H}_-/4$. By the proof of the same result, we know that $\inf\tp{4 - \mathsf{U}}\ge\del$ and so one can check that $4 + \ep^2 v_1 - \mathsf{U}>\del/2$ for $0<\ep<\epsilon_\bigstar$ small enough by direct computation from the definition~\eqref{definition of the distinguished residuals}.  The claimed upper bound $\tnorm{h,v,q}_{\bf{U}^0_\ep}<\tabs{\log\ep}^{10}$ also follows from direct computation from~\eqref{definition of the distinguished residuals}, using the fact that $\tnorm{\eta,u}_{\bf{W}_\ep}\le R_\star$,  the estimates on $\lambda$ provided by~\eqref{crazy bernstein estimate}, the estimates on $p$ provided by Proposition~\ref{prop on velocity - pressure fixed point reformulation}, and taking $\epsilon_\bigstar>0$ sufficiently small.

   We now turn our attention to the third item, which amounts to a computation and packing the resulting terms into the expressions $G^\ep_{\tp{h,v,q}}$ and $F^\ep$ appearing in~\eqref{the fundamental identity for the regularity promotion}.  The first step is to note that Definition~\ref{defn of objects for regularity promotion theory} requires that $\tp{\pmb{\zeta},\pmb{u},\pmb{p}}$ is a solution to~\eqref{FBINSE, param. tuned and funny flux} and that the tuple  $\tp{\mathsf{H},\mathsf{U},\mathsf{U}_1,\mathsf{U}_2,\mathsf{P},\mathsf{P}_1,\mathsf{P}_2}$ satisfies the ODEs from~\eqref{hello there distinguished gentlemen}.  Consequently, the first through  fifth items of Theorem~\ref{thm on the shallow water and residual equations} are satisfied under the replacements $\tp{\eta,u,p}\mapsto\tp{h,v,q}$ and $\tp{H,U,U_1,U_2,P,P_1,P_2}\mapsto\tp{\mathsf{H},\mathsf{U},\mathsf{U}_1,\mathsf{U}_2,\mathsf{P},\mathsf{P}_1,\mathsf{P}_2}$ with $\mathfrak{r}_1=\mathfrak{r}_3=\mathfrak{r}_4 = 0$.  This allows us to invoke its converse statement  to learn that the triple $\tp{h,v,q}$ is a solution to system~\eqref{first form of the residual PDEs} under the aforementioned replacements. To arrive at~\eqref{the fundamental identity for the regularity promotion}, we take the lower order terms on the left hand side of these equations (such as the residual advective derivative) and place them into $F^\ep$, and we also take the higher order capillarity terms hiding in $\mathfrak{f}_3$ and $\mathfrak{f}_4$ and place them into $G^\ep$.
\end{proof}

The second item of the previous lemma reduces establishing smoothness of $\tp{\pmb{\zeta},\pmb{u},\pmb{p}}$ to establishing that of $\tp{h,v,q}$. To take the next step in achieving the latter, we embark on a further study of the left hand side of the operator equation~\eqref{the fundamental identity for the regularity promotion} from the third item of the lemma.

\begin{propC}[Main engine for regularity promotion]\label{prop on properties of the highest order terms}
    Assume the hypotheses of Lemma~\ref{lem on reformulation for regularity promotion} and let the parameter $\epsilon_\bigstar\in\tp{0,1/3}$ be as in the lemma. There exist $C\in\R^+$ and $\epsilon_\bigstar^1\in\tp{0,\epsilon_\bigstar}$ such that the following hold for all $0<\ep<\epsilon^1_\bigstar$ and all $n\in\N$.
    \begin{enumerate}
        \item The map $E^\ep:\m{int}\mathscr{U}^n_\ep\to\bf{V}^n_\ep$, defined via $E^\ep\tp{\bf{u}} = G^\ep_{\bf{u}}\bf{u}$ for all $\bf{u} = \tp{\eta,u,p}\in\mathscr{U}^n_\ep$, is continuously differentiable.
        \item There exists a function $R^\ep:\m{int}\mathscr{U}^{n}_\ep\to\bf{V}^{n}_\ep$ such that for all $\bf{u}\in\m{int}\mathscr{U}^1_\ep$ we have the identity
        \begin{equation}\label{equation of the tangentially differentiated equation}
            \pd_1\tsb{E^\ep\tp{\bf{u}}} = DE^\ep\tp{\bf{u}}[\pd_1\bf{u}] + R^\ep\tp{\bf{u}}.
        \end{equation}
        \item If $\bf{u}\in\m{int}\mathscr{U}^n_\ep$ is such that $\pd_1\bf{u}\in\bf{U}^n_\ep$ and $E^\ep\tp{\bf{u}}\in\bf{V}^{n+1}_\ep$, then $\bf{u}\in\m{int}\mathscr{U}^{n+1}_\ep$.
        \item For all $\tilde{\bf{u}}\in\m{int}\mathscr{U}^n_\ep$ the derivative $DE^\ep\tp{\tilde{\bf{u}}}:\bf{U}^n_\ep\to\bf{V}^n_\ep$ is a linear isomorphism.
        \item If $\bf{u}\in\m{int}\mathscr{U}^0_\ep$ is such that $E^\ep\tp{\bf{u}}\in\bf{V}^{n}_\ep$, then $\bf{u}\in\m{int}\mathscr{U}^{n}_\ep$.
    \end{enumerate}
\end{propC}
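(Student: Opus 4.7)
The plan is to unpack the structure of $E^\ep$ and $G^\ep_{\bf{u}}$ and exploit the well-posedness theory from Section~\ref{section on pdes in thin domains}. The five items are not logically independent: items 1 and 2 are structural, item 4 is the heart of the argument, item 3 then follows from items 1, 2, 4 by solving for normal derivatives, and item 5 is an induction on $n$ using items 2, 3, 4.

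For items 1 and 2, I would observe that $E^\ep\tp{\bf{u}} = G^\ep_{\bf{u}}\bf{u}$ is a quasilinear operator built from: the Stokes operator $L^\ep_{\mathsf{H} + \ep^2\eta}$ applied to $(u,p)$ with coefficients depending on $\mathsf{H} + \ep^2\eta$; a capillary block linear in $\eta''$ with coefficients depending on $\mathsf{H} + \ep^2\eta$; and a free-surface block combining $(1-\pd_1^2)\eta$, the vertical-average $\int_0^\ep u_1$, and the normal component $u\cdot\mathcal{N}_{\ep\pmb{\zeta}}$, with coefficients involving $\mathsf{U}$ and $\tilde{u}_1$. Each block is a smooth composition of Nemytskii and bilinear multiplier maps between the relevant Sobolev spaces; continuous Fr\'echet differentiability follows from the chain rule combined with the Sobolev product estimates already in play in Proposition~\ref{prop on theory of strong solutions} and the thin-domain toolbox of Section~\ref{subsection on thin domain toolbox}. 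For item 2, tangential $\pd_1$ commutes with the differential operators inside $G^\ep$, so the chain rule produces $DE^\ep(\bf{u})[\pd_1 \bf{u}]$ plus terms where $\pd_1$ falls on the explicit $x$-dependence through $\mathsf{H}, \mathsf{U}, \mathsf{U}_1, \mathsf{U}_2, \mathsf{P}, \mathsf{P}_1, \mathsf{P}_2 \in W^{\infty,\infty}\tp{\R}$; packaging these into $R^\ep\tp{\bf{u}}$ gives~\eqref{equation of the tangentially differentiated equation} with the stated mapping property (one derivative gained because $\mathsf{H}$ and friends are $C^\infty$).

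Item 4 is the main obstacle. Writing $DE^\ep(\tilde{\bf{u}}) = G^\ep_{\tilde{\bf{u}}} + K^\ep\tp{\tilde{\bf{u}}}$, where $K^\ep\tp{\tilde{\bf{u}}}$ collects linearization terms arising from the $\bf{u}$-dependence of the coefficients (all of which carry an explicit $\ep^2$ factor), the task reduces to inverting $G^\ep_{\tilde{\bf{u}}}$ and treating $K^\ep$ as a small perturbation. The $(u,p)$-part of $G^\ep_{\tilde{\bf{u}}}$ is precisely $L^\ep_{\mathsf{H}+\ep^2\tilde{\eta}}$ plus the capillary contribution $\upsigma\ep^2(\cdots)\eta''\mathcal{N}_{\ep\pmb{\zeta}}$. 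Since $\mathsf{H}+\ep^2\tilde{\eta} \in Z_\delta$ for a fixed $\delta$ (by the pointwise bounds built into $\mathscr{U}^n_\ep$), Theorem~\ref{thm on synthesis of linear analysis} supplies uniform invertibility of the Stokes part. The additional $\eta$-equation $(4+\ep^2\tilde{u}_1-\mathsf{U})(1-\pd_1^2)\eta - \f{1}{\ep}\pd_1 u\cdot\mathcal{N}_{\ep\pmb{\zeta}} = \psi^0$ inverts via $(1-\pd_1^2)^{-1}$ thanks to the uniform lower bound $4+\ep^2\tilde{u}_1-\mathsf{U}\ge\del/2$. The coupling between $\eta$ and $(u,p)$ --- capillarity into the stress, and the normal velocity flux into the $\eta$ equation --- carries explicit factors of $\ep$ and $\ep^2$, so for sufficiently small $\ep$ (absorbing the $\tabs{\log\ep}^{10}$ size constraint in $\mathscr{U}^n_\ep$) one invokes the theorem's Lipschitz estimate together with a standard method-of-continuity or Neumann series argument to close. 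Higher-regularity isomorphism for $n\ge 1$ follows by applying the same reasoning tangentially through items 2 and 3, which amounts to the analogue of the regularity promotion in Step 3 of the proof of Proposition~\ref{prop on theory of strong solutions}.

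Finally, items 3 and 5 are a bootstrap. For item 3, once $\pd_1 \bf{u}\in\bf{U}^n_\ep$ and $E^\ep\tp{\bf{u}}\in\bf{V}^{n+1}_\ep$ are known, the missing normal-direction derivatives of $(v,q)$ are recovered by solving for $\pd_2^2 u$ and $\pd_2 p$ algebraically from the momentum and divergence equations (exactly as in Step 3 of the proof of Proposition~\ref{prop on theory of strong solutions}), and the missing normal regularity for $\eta$ is trivial since $\eta$ lives on $\Sigma_\ep$. For item 5 I would induct on $n$: at step $n$, given $\bf{u}\in\m{int}\mathscr{U}^n_\ep$ with $E^\ep\tp{\bf{u}}\in\bf{V}^{n+1}_\ep$, item 2 yields $DE^\ep\tp{\bf{u}}[\pd_1\bf{u}] = \pd_1 E^\ep\tp{\bf{u}} - R^\ep\tp{\bf{u}}\in\bf{V}^n_\ep$, item 4 places $\pd_1\bf{u}\in\bf{U}^n_\ep$, and item 3 promotes $\bf{u}\in\m{int}\mathscr{U}^{n+1}_\ep$. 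The base case $n=0\to 1$ uses the input hypothesis $\bf{u}\in\m{int}\mathscr{U}^0_\ep$ directly. Lemma~\ref{lem on reformulation for regularity promotion} identifies $\tp{h,v,q}$ as a solution to $E^\ep\tp{h,v,q}=-F^\ep\tp{h,v,q}$, and since $F^\ep$ gains a derivative, iterating item 5 yields $\tp{h,v,q}\in\bigcap_n\bf{U}^n_\ep = H^\infty$ and thus smoothness of the bore.
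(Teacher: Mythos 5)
Your overall architecture mirrors the paper's — items 1 and 2 are structural, item 4 is the engine, item 3 is tangential-to-normal promotion, and item 5 is bootstrap — and your item 4 decomposition, while slightly different from the paper's, is viable. The paper splits $DE^\ep(\tilde{\bf{u}}) = M^\ep + \ep^2 N^\ep_{\tilde{\bf{u}}}$ with $M^\ep$ having coefficients frozen at $\mathsf{H}$ (so $L^\ep_{\mathsf{H}}$, $\mathcal{N}_{\ep\mathsf{H}}$, $4-\mathsf{U}$ appear, and all $\tilde{\bf{u}}$-dependence, including the Lipschitz difference $L^\ep_{\mathsf{H}+\ep^2\tilde{\eta}} - L^\ep_{\mathsf{H}}$, is shoved into $N^\ep_{\tilde{\bf{u}}}$ and dominated via the Lipschitz bound of Theorem~\ref{thm on synthesis of linear analysis} and the size constraint $\tnorm{\tilde{\bf{u}}}_{\bf{U}^0_\ep}\le\tabs{\log\ep}^{10}$), whereas you keep $L^\ep_{\mathsf{H}+\ep^2\tilde{\eta}}$ in the main block and invert it directly via $\mathsf{H}+\ep^2\tilde{\eta}\in Z_\del$. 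Both are fine; the paper's choice is a bit cleaner because it makes $M^\ep$ genuinely block-triangular.

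There is, however, a genuine gap in your bootstrap for items 4 and 5 at the base level. You assert that the $n=0\to 1$ step of item 5 ``uses the input hypothesis $\bf{u}\in\m{int}\mathscr{U}^0_\ep$ directly.'' This does not work. Item 2's identity~\eqref{equation of the tangentially differentiated equation} is stated for $\bf{u}\in\m{int}\mathscr{U}^1_\ep$, and that restriction is essential: if $\bf{u}$ is only known to lie in $\bf{U}^0_\ep$, then $\pd_1\bf{u}$ has $\pd_1\eta\in H^{3/2}$, $\pd_1 u\in H^1$, $\pd_1 p\in L^2$, which is not an element of $\bf{U}^0_\ep$, so neither side of~\eqref{equation of the tangentially differentiated equation} is well-defined and you cannot feed $\pd_1\bf{u}$ to $DE^\ep(\bf{u})$. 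The same issue recurs in item 4's higher-regularity claim: at stage $n=1$ you obtain the solution $\bf{u}\in\bf{U}^0_\ep$ from the base case, and you cannot simply apply $\pd_1$ to the equation. The fix, which is what the paper does, is to replace $\pd_1$ by a forward difference quotient $\updelta_t$ of step $t$, which is a bounded operator on any of the $\bf{U}^n_\ep$; one then shows ${_0}L^t_{\bf{u}}\to DE^\ep(\bf{u})$ in operator norm as $t\to0$ (so the translated operator is eventually invertible), uniformly bounds the commutator-type remainder $R^\ep_t(\bf{u})$ and $\updelta_t\bf{v}$ in $\bf{V}^0_\ep$ over $t\in(0,1)$, solves for $\updelta_t\bf{u}$, and takes $\limsup_{t\to0}$ to conclude $\pd_1\bf{u}\in\bf{U}^0_\ep$. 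Only after this base case are items 2 and 3 applicable, and then the induction for $n\ge2$ proceeds as you describe. Without the difference-quotient step your induction is circular at the base.
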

\begin{proof}
    The first item follows from standard computations and estimates for products and compositions in Sobolev spaces, which we omit for brevity.

    For the second item we take $\bf{u} = \tp{\eta,u,p}\in\m{int}\mathscr{U}^1_\ep$ and define $R^\ep\tp{\bf{u}} = \pd_1\tsb{E^\ep\tp{\bf{u}}} - DE^\ep\tp{\bf{u}}\tsb{\pd_1\bf{u}}$. A straightforward computation, which we again mostly omit, will show the mapping property $R^\ep:\m{int}\mathscr{U}^n_\ep\to\bf{V}^n_\ep$ holds. To illustrate the flavor of the terms in $R^\ep$, consider just the $e_2$-momentum pressure contribution:
    \begin{equation}
        \pd_1\tsb{\pd_2^{\mathcal{A}_{\mathsf{H}+\ep^2\eta}}p} - \bp{\pd_2^{\mathcal{A}_{\mathsf{H} + \ep^2\eta}}\tsb{\pd_1p} - \ep^2\f{\tsb{\pd_1\eta}}{\tp{\mathsf{H} + \ep^2\eta}^2}\pd_2p} = -\f{\pd_1\mathsf{H}}{\tp{\mathsf{H} + \ep^2\eta}^2}\pd_2p.
    \end{equation}
    As in the right side above, generically the summands in $R^\ep$ occur when the derivative $\pd_1$ falls onto the smooth background terms.

    We now prove the third item. Let $\bf{u} = \tp{\eta,u,p}\in\mathscr{U}^n_\ep$ satisfy $\pd_1\bf{u}\in\bf{U}^n_\ep$ and $E^\ep\tp{\bf{u}}\in\bf{V}^{n+1}_\ep$. The former two of these inclusions directly imply higher regularity of the free surface:  $\eta\in H^{n+7/2}\tp{\Sigma_\ep}$. In turn, we find  from the definition~\eqref{linear operator for regularity promotion} that
    \begin{equation}\label{bulk improved regularity}
        \varphi^1=\grad^{\mathcal{A}_{\mathsf{H} + \ep^2\eta}}p - \upmu\grad^{\mathcal{A}_{\mathsf{H}+\ep^2\eta}}\cdot\mathbb{D}^{\mathcal{A}_{\mathsf{H}+\ep^2\eta}}u\in H^{n+1}\tp{\Omega_\ep;\R^2} 
        \text{ and }
        \varphi^2=\grad^{\mathcal{A}_{\mathsf{H} + \ep^2\eta}}\cdot u\in H^{n+2}\tp{\Omega_\ep}.
    \end{equation}
    
    Now it remains to show that $\pd_2 p\in H^{n+1}\tp{\Omega_\ep}$ and $\pd_2^2 u\in H^{n+1}\tp{\Omega_\ep;\R^2}$. For this we argue as in the later part of the third step of the proof of Proposition~\ref{prop on theory of strong solutions} and use identities~\eqref{bulk improved regularity} to establish tangential to normal regularity promotion. In fact, with $h = \mathsf{H} + \ep^2\eta$, we compute that identities~\eqref{solving for some other weirdo} and~\eqref{believe it or not, this equation does it referenced} remain true. The desired inclusions and the third item follow.

    Next,  consider the fourth item. We begin by establishing that when $0<\ep<\epsilon^1_\bigstar$ is sufficiently small the derivative $DE^\ep\tp{\tilde{\bf{u}}}:\bf{U}^0_\ep\to\bf{V}^0_\ep$ is an isomorphism whenever $\tilde{\bf{u}}\in\m{int}\mathscr{U}^0_\ep$. A direct computation shows that this derivative has the form 
    \begin{equation}\label{splitting of the derivative}
        DE^\ep\tp{\tilde{\bf{u}}}\bf{u} = M^\ep\bf{u} + \ep^2 N^\ep_{\tilde{\bf{u}}}\bf{u}
    \end{equation}
    where for $\tp{\eta,u,p} = \bf{u}$ we have
    \begin{equation}\label{split 2}
        M^\ep\bf{u} = \tp{\tp{4 - \mathsf{U}}\tp{1 - \pd_1^2}\eta - \tp{\pd_1u\cdot\mathcal{N}_{\ep\mathsf{H}}}/\ep,L^\ep_{\mathsf{H}}\tp{u,p}}\text{ and }N^\ep_{\tilde{\bf{u}}}\bf{u} = \ep^{-2}\tp{DE^\ep\tp{\tilde{\bf{u}}}\bf{u} - M^\ep\bf{u}}.
    \end{equation}
    The main part $M^\ep:\bf{U}^0_\ep\to\bf{V}^0_\ep$ is invertible due to Theorem~\ref{thm on synthesis of linear analysis}, its block structure, the lower bound $\inf\tp{4 - \mathsf{U}}>0$, and the invertibility of $\tp{1 - \pd_1^2}$. In fact, we deduce from the definition of the norm~\eqref{domain and codomain norm for the regularity promotion} (when $n=0$) and estimate~\eqref{estimates on the inverse stokes operator} that the corresponding operator norms satisfy
    \begin{equation}\label{estimate 1_}
        \sup_{0<\ep<\epsilon_\bigstar}\max\tcb{\tnorm{M^\ep}_{\mathcal{L}\tp{\bf{U}^0_\ep;\bf{V}^0_\ep}},\tnorm{\tp{M^\ep}^{-1}}_{\mathcal{L}\tp{\bf{V}^0_\ep;\bf{U}^0_\ep}}}<\infty.
    \end{equation}
    By using the inequality $\tnorm{\tilde{\bf{u}}}_{\bf{U}^0_\ep}\le\tabs{\log\ep}^{10}$ built into the definition~\ref{some subsets for there}, the remaining piece $N^\ep_{\tilde{\bf{u}}}$ can be tamed, under a conservative argument, with only a factor of $\ep^{3/2}$. More precisely, we compute (in the style of Section~\ref{subsection on properties of residual forcing and nonlinearities}) that we have
    \begin{equation}\label{estimate 2_}
        \sup_{0<\ep<\epsilon_\bigstar}\sup_{\tilde{\bf{u}}\in\m{int}\mathscr{U}^0_\ep}\ep^{3/2}\tnorm{N^\ep_{\tilde{\bf{u}}}}_{\mathcal{L}\tp{\bf{U}^0_\ep;\bf{V}^0_\ep}}<\infty.
    \end{equation}
    Combining the estimates~\eqref{estimate 1_} and~\eqref{estimate 2_} with the decomposition~\ref{splitting of the derivative} and recalling that the set of invertible operators is open, we deduce the existence of $0<\epsilon_\bigstar^1<\epsilon_\bigstar$ such that whenever $0<\ep<\epsilon^1_\bigstar$ and $\tilde{\bf{u}}\in\m{int}\mathscr{U}^0_\ep$ the operator $DE^\ep\tp{\tilde{\bf{u}}}:\bf{U}^0_\ep\to\bf{V}^0_\ep$ is an isomorphism.

    The `base case' regularity $n=0$ of the fourth item is now established. Notice that we immediately get that the maps $DE^\ep\tp{\tilde{\bf{u}}}:\bf{U}^n_\ep\to\bf{V}^n_\ep$ are injective for all $n$. Therefore, we only need to establish surjectivity in the higher regularity cases, which can be viewed as regularity promotion.    The key to this is the linear problem's analog of the tangential to normal regularity promotion mechanism, which we discuss next.  
    
    We claim that for any $n\in\N$, if $\tilde{\bf{u}}\in\m{int}\mathscr{U}^{n+1}_\ep$ and  $\bf{u},\pd_1\bf{u}\in\bf{U}^n_\ep$ with $DE^\ep\tp{\tilde{\bf{u}}}\bf{u}\in\bf{V}^{n+1}_\ep$, then we have the inclusion $\bf{u}\in\bf{U}^{n+1}_\ep$. If we write $\bf{u} = \tp{\eta,u,p}$ then the assumed tangential regularity inclusion $\pd_1\bf{u}\in\bf{U}^n_\ep$ forces $\eta\in H^{7/2 + n}\tp{\Sigma_\ep}$. We then return to the bulk (i.e. momentum and divergence equations) components of the decomposition~\ref{splitting of the derivative} and see that the $N^\ep_{\tilde{\bf{u}}}$ contribution depends only on $\eta$, and hence it can be thrown on the higher regularity right hand side of the equation. Doing this leaves us with an inclusion of the form~\eqref{bulk improved regularity}, but replacing the argument of the geometry matrices, namely $\mathsf{H} + \ep^2\eta$, with the term $\mathsf{H} + \ep^2\tilde{\eta}$ where $\tilde{\bf{u}} = \tp{\tilde{\eta},\tilde{u},\tilde{p}}$. We can then argue exactly as before to deduce that $\pd_2p\in H^{n+1}\tp{\Omega_\ep}$ and $\pd^2_2u\in H^{n+1}\tp{\Omega_\ep;\R^2}$, and hence $\tp{\eta,u,p} = \bf{u}\in\bf{U}^{n+1}_\ep$.  This completes the proof of the claim.

    The above claim reduces the higher regularity promotion (and thus surjectivity) question to  proving only tangential regularity promotion.  We aim to do this with an inductive argument; however, we need first deal with the second base case, $n=1$, before we can properly differentiate the equation. Suppose that $\bf{v}\in\bf{V}^1_\ep$, $\tilde{\bf{u}}\in\m{int}\mathscr{U}^1_\ep$ , and $\bf{u}\in\bf{U}^0_\ep$ are related by the equation $DE^\ep\tp{\tilde{\bf{u}}}\bf{u} = \bf{v}$. Instead of differentiating the equation in the $e_1$-direction, we use finite forward difference quotients of step-size $t\in\tp{0,1}$, denoted by $\updelta_t$. We schematically write
    \begin{equation}\label{the difference quotient boy}
        \updelta_t\tsb{DE^\ep\tp{\tilde{\bf{u}}}\bf{u}} = {_0}L^t_{\tilde{\bf{u}}}\updelta_t\bf{u} + {_1}L_{\tilde{\bf{u}}}^t\bf{u} = \updelta_t\bf{v}
    \end{equation}
    where ${_0}L^t_{\bf{\tilde{u}}}\approx DE^\ep\tp{\tilde{\bf{u}}}$ (in fact ${_0}L_{\tilde{\bf{u}}}^t\to DE^\ep\tp{\tilde{\bf{u}}}$ in the operator norm as $t\to0$) and ${_1}L_{\tilde{\bf{u}}}^t$ and $\updelta_t\bf{v}$ have the correct order in the sense that $\sup_{0<t<1}\tnorm{{_1}L_{\tilde{\bf{u}}}^t\bf{u}}_{\bf{V}^0_\ep}<\infty$ and $\sup_{0<t<1}\tnorm{\updelta_t\bf{v}}_{\bf{V}^0_\ep}<\infty$. The same argument that established the invertibility of $DE^{\ep}\tp{\tilde{\bf{u}}}:\bf{U}^0_\ep\to\bf{V}^0_\ep$ applies for the `translated' operator ${_0}L^t_{\tilde{\bf{u}}}$ for small enough $t>0$, and so from~\eqref{the difference quotient boy} we deduce that
    \begin{equation}\label{the other difference quotient boy}
        \updelta_t\bf{u} = \tp{{_0}L^t_{\tilde{\bf{u}}}}^{-1}\tp{\updelta_t\bf{v} - {_1}L_{\tilde{\bf{u}}}^t\bf{u}}\text{ and hence }\limsup_{t\to0}\tnorm{\updelta_t\bf{u}}_{\bf{U}^0_\ep}<\infty.
    \end{equation}
    The above implies that $\pd_1\bf{u}\in\bf{U}^0_\ep$. Feeding this information into the linear problem's tangential to normal regularity promotion mechanism yields then $\bf{u}\in\bf{U}^1_\ep$. Thus, the $n=1$ case of the fourth item is now established.

    The cases $n\ge 2$ can now be established inductively, as we have enough regularity to differentiate the equation. Indeed, if we suppose that $\bf{v}\in\bf{V}^{n}_\ep$, $\tilde{\bf{u}}\in\m{int}\mathscr{U}^{n}_\ep$, and $\bf{u}\in\bf{U}^{n-1}_\ep$ satisfy $DE^\ep\tp{\tilde{\bf{u}}}\bf{u} = \bf{v}$, then an application of $\pd_1$ to this identity produces the more favorable form of~\eqref{the difference quotient boy}:
    \begin{equation}
        \pd_1\tsb{DE^\ep\tp{\tilde{\bf{u}}}\bf{u}} = DE^\ep\tp{\tilde{\bf{u}}}\pd_1\bf{u} + {_1}L_{\tilde{\bf{u}}}^0\bf{u} = \pd_1\bf{v}.
    \end{equation}
    The induction hypothesis is that $DE^\ep\tp{\tilde{\bf{u}}}:\bf{U}^{n-1}_\ep\to\bf{V}^{n-1}_\ep$ is an isomorphism, and we have that ${_1}L^0_{\tilde{\bf{u}}}\bf{u},\pd_1\bf{v}\in\bf{V}^{n-1}_\ep$, so by arguing as in~\eqref{the other difference quotient boy}, we find that $\pd_1\bf{u}\in\bf{U}^{n-1}_\ep$. The tangential-to-normal engine kicks in again and we deduce further that $\bf{u}\in\bf{U}^n_\ep$. This completes the induction argument and the proof of the  fourth item as well.

    Finally, we prove the fifth item. The case $n=0$ is trivial, while the case $n=1$ suffers from the same `base case' differentiability woes as above.  Suppose that $\bf{u}\in\m{int}\mathscr{U}^0_\ep$, $\bf{v}\in\bf{V}^1_\ep$ are related by the equation $E^\ep\tp{\bf{u}} = \bf{v}$.   We apply, for $t\in\tp{0,1}$, the difference quotient operator $\updelta_t$ to this identity and derive the following equation, which is similar to~\eqref{equation of the tangentially differentiated equation}:
    \begin{equation}\label{identity identity where is my identity help i cant find it}
        \updelta_t\tsb{E^\ep\tp{\bf{u}}} = {_0}L^t_{\bf{u}}\updelta_t\bf{u} + R^\ep_t\tp{\bf{u}} = \updelta_t\bf{v},
    \end{equation}
    where again ${_0}L^t_{\bf{u}}\to DE^\ep\tp{\bf{u}}$ as $t\to 0$ in $\mathcal{L}\tp{\bf{U}^0_\ep;\bf{V}^0_\ep}$ and we have the estimates $\sup_{0<t<1}\tnorm{R^\ep_t\tp{\bf{u}}}_{\bf{V}^0_\ep}<\infty$ and $\sup_{0<t<1}\tnorm{\updelta_t\bf{v}}_{\bf{V}^0_\ep}<\infty$. Therefore, we may apply $\tp{{_0}L^t_{\bf{u}}}^{-1}$ to identity~\eqref{identity identity where is my identity help i cant find it} for $t$ sufficiently small and deduce that $\limsup_{t\to0}\tnorm{\updelta_t\bf{u}}_{\bf{U}^0_\ep}<\infty$. This implies that $\pd_1\bf{u}\in\bf{U}^0_\ep$, and feeding this into the third item then yields $\bf{u}\in\m{int}\mathscr{U}^{1}_\ep$.  The base case $n=1$ of the fifth item is now proved.

    The remaining cases of $n\ge 2$ for the fifth item now follow from a simple induction argument, as we have enough regularity to differentiate the equation with $\pd_1$ and use the second and third items. The strategy is very similar to that used in the proof of the fourth item, so we omit the remaining details.  This completes the proof of the fifth item.
\end{proof}

The results of this subsection are combined to give the following theorem.
\begin{thmC}[Smoothness of bore waves]\label{thm on smoothness of bore waves} Assume the hypotheses of Theorem~\ref{thm on existence of bore waves} and Definition~\ref{defn of objects for regularity promotion theory} while letting $0<\ep<\epsilon^1_\bigstar$ (where the latter smallness threshold is from Proposition~\ref{prop on properties of the highest order terms}). The remainders and solutions produced by the aforementioned theorem for such $\ep$ obey the higher regularity inclusions
\begin{equation}\label{smoothness of bore waves inclusions}
    \tp{\Bar{\eta},\Bar{u},\Bar{p}}\in H^\infty\tp{\Sigma_\ep}\times H^\infty\tp{\Omega_\ep;\R^2}\times H^\infty\tp{\Omega_\ep}\text{ and }\tp{\pmb{\zeta},\pmb{u},\pmb{p}}\in W^{\infty,\infty}\tp{\Sigma_\ep}\times W^{\infty,\infty}\tp{\Omega_\ep;\R^2}\times W^{\infty,\infty}\tp{\Omega_\ep}.
\end{equation}
\end{thmC}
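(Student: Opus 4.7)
The plan is to leverage the nonlinear reformulation established in Lemma~\ref{lem on reformulation for regularity promotion} together with the bootstrap mechanism of Proposition~\ref{prop on properties of the highest order terms}. That lemma replaces the question of smoothness of $(\pmb{\zeta},\pmb{u},\pmb{p})$ with one about the equivalent residual triple $(h,v,q)\in\m{int}\mathscr{U}^0_\ep$, via the decomposition~\eqref{expansion of the solution in terms of the distinguished residuals}, and records the key identity
\begin{equation}
E^\ep(h,v,q) = -F^\ep(h,v,q).
\end{equation}
Since $F^\ep$ gains one full order of regularity at every level, sending $\m{int}\mathscr{U}^n_\ep$ into $\bf{V}^{n+1}_\ep$, the whole matter reduces to upgrading $(h,v,q)$ to live in every $\m{int}\mathscr{U}^n_\ep$.

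This upgrade would be carried out by a direct induction on $n$ based on the fifth item of Proposition~\ref{prop on properties of the highest order terms}, which accepts solutions with $\m{int}\mathscr{U}^0_\ep$ base regularity and promotes them to $\m{int}\mathscr{U}^n_\ep$ as soon as $E^\ep$ of the solution belongs to $\bf{V}^n_\ep$. The base case is Lemma~\ref{lem on reformulation for regularity promotion} itself. For the inductive step, if $(h,v,q)\in\m{int}\mathscr{U}^n_\ep$ then the $F^\ep$-mapping property yields $E^\ep(h,v,q)=-F^\ep(h,v,q)\in\bf{V}^{n+1}_\ep$, and item five then gives $(h,v,q)\in\m{int}\mathscr{U}^{n+1}_\ep$, closing the loop. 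Iterating produces $h\in H^\infty(\Sigma_\ep)$, $v\in H^\infty(\Omega_\ep;\R^2)$, and $q\in H^\infty(\Omega_\ep)$. The genuinely delicate technical work backing this induction---the $\ep$-uniform isomorphism property of $DE^\ep(\bf{u})$ and the finite-difference arguments needed at the low regularity base cases to extract tangential derivatives---has already been absorbed into Proposition~\ref{prop on properties of the highest order terms}; that is where the principal obstacle lives, and the present theorem is its payoff.

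With $(h,v,q)\in H^\infty$ in hand, I would read off the smoothness claims of the theorem by unpacking Lemma~\ref{lem on reformulation for regularity promotion}. Its second item writes each of $\pmb{\zeta},\pmb{u},\pmb{p}$ as a sum of a $W^{\infty,\infty}$ shallow-water background---built from the smoothness assertions $\mathsf{H},\mathsf{U},\dots\in W^{\infty,\infty}(\R)$ and $\mathsf{P}_1,\mathsf{P}_2\in H^\infty(\R)$ guaranteed by Proposition~\ref{prop on the bore map}---and an $H^\infty$ residual contribution, which the thin-domain embedding from Lemma~\ref{lem on poincare and sobolev in thin domains} upgrades to $W^{\infty,\infty}$, giving the second batch of inclusions in~\eqref{smoothness of bore waves inclusions}. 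For the remainders $(\bar\eta,\bar{u},\bar{p})$ in~\eqref{isolation of just the distinguished part of the solution}, the identity $\bar\eta=\ep^2 h$ is immediate, while subtracting the shear-flow expansion~\eqref{isolation of just the distinguished part of the solution} from the ansatz decomposition~\eqref{expansion of the solution in terms of the distinguished residuals} exhibits $\bar{u}$ and $\bar{p}$ as sums of $\ep^2 v$, $\ep^2 q$, and polynomials in $y$ whose coefficients are shallow-water expressions such as $\mathsf{H}(\mathsf{U}-4\mathsf{H})$ and $\mathsf{H}^2(\mathsf{U}_2 + 4\mathsf{a}/\upmu)$. The parameter-tuning identities~\eqref{the parameter tuning identities 440 hz} ensure that these coefficients vanish at $\pm\infty$, and the exponential-decay estimates~\eqref{useful_limit_3_Dos} from Proposition~\ref{prop on the bore map} then place them in $H^\infty(\R)$, which forces $\bar{u},\bar{p}\in H^\infty$ and completes the proof.
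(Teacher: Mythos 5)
Your proposal is correct and follows the paper's own strategy: reduce to $(h,v,q)$ via Lemma~\ref{lem on reformulation for regularity promotion}, exploit $F^\ep : \m{int}\mathscr{U}^n_\ep \to \bf{V}^{n+1}_\ep$ together with the fifth item of Proposition~\ref{prop on properties of the highest order terms} to bootstrap inductively to $H^\infty$, and then unpack~\eqref{expansion of the solution in terms of the distinguished residuals} against~\eqref{isolation of just the distinguished part of the solution}. The only difference is cosmetic: you spell out the parameter-tuning cancellation (the vanishing of $4\mathsf H - 4\mathsf H^2 - \m A$ at $\pm\infty$) that puts the shallow-water pieces of $\Bar{u},\Bar{p}$ in $H^\infty$, whereas the paper leaves this implicit in the phrase ``solving for $\Bar{\eta}$, $\Bar{u}$, and $\Bar{p}$'' since the same computation already appears in the proof of Theorem~\ref{thm on existence of bore waves}.
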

\begin{proof}
    Let $\tp{h,v,q}\in\m{int}\mathscr{U}^0_\ep$ be the variables instantiated by Lemma~\ref{lem on reformulation for regularity promotion}. Due to the third item of this result, we know that $E^\ep\tp{h,v,q} = -F^\ep\tp{h,v,q}$ where $F^\ep:\m{int}\mathscr{U}_\ep^n\to\bf{V}^{n+1}_\ep$ (for every $n\in\N$) is the function from Definition~\ref{defn of objects for regularity promotion theory}. The operator $E^\ep$ obeys the regularity promotion property of the fifth item of Proposition~\ref{prop on properties of the highest order terms}. Therefore, an induction argument applies and we find that $\tp{h,v,q}\in H^\infty\tp{\Sigma_\ep}\times H^\infty\tp{\Omega_\ep;\R^2}\times H^\infty\tp{\Omega_\ep}$. In turn, thanks to the identity~\eqref{expansion of the solution in terms of the distinguished residuals}, we find that the right inclusion of~\eqref{smoothness of bore waves inclusions} must hold.

    The validity of the left inclusion in~\eqref{smoothness of bore waves inclusions} follows by writing
    \begin{multline}
        \mathsf{H} + \Bar{\eta} = \pmb{\zeta} = \mathsf{H} + \ep^2h,\quad\bp{4 - \f{\m{A}}{\mathsf{H}} + \f{4\m{a}}{\upmu}\mathsf{H}^2\bp{\ep y - \f12y^2}}e_1 + \Bar{u} = \pmb{u} = \mathsf{X}\tp{\mathsf{H} + \ep^2 h} + \ep^2v,\\
        \text{ and } 
        \m{g}\mathsf{H} - 2\upmu\m{A}\f{\mathsf{H}'}{\mathsf{H}^2} + \Bar{p} = \pmb{p} = \mathsf{P} + \ep^2\mathsf{Q}\tp{\mathsf{H} + \ep^2 h} + \ep^2q,
    \end{multline}
    and solving for $\Bar{\eta}$, $\Bar{u}$, and $\Bar{p}$.
\end{proof}

\subsection{Conclusions}\label{subsection on further properties}
This subsection is devoted to the proofs of the main theorem and corollaries of Section~\ref{subsection on statement of the main theorems and discussion}.

\begin{proof}[Proof of Theorem~\ref{first main theorem}]
    This result nearly follows by combining Theorems~\ref{thm on existence of bore waves} and~\ref{thm on smoothness of bore waves} and taking $\epsilon_\star = \epsilon^1_\bigstar$. We only need to check that the solutions produced by these results satisfy the requirements to be traveling bore waves in the sense of Definition~\ref{defn of bore waves}. The first item of this definition is manifestly satisfied by Theorem~\ref{thm on existence of bore waves} and the equivalence between systems~\eqref{flattened free boundary Navier-Stokes system} and~\eqref{FBINSE, param. tuned and funny flux}. The second item is satisfied by $\hat{\m{A}} = \m{A} = \ep^2\Bar{\m{A}}$ and the definition of $\Bar{\upgamma}$, $\Bar{\m{A}}$ in the parameter tuning identities~\eqref{the parameter tuning identities 440 hz}. For the third item, we can use the identity~\eqref{useful_limit_2_Dos} to check that the limits~\eqref{end states of the bore solutions} hold. Indeed, we compute initially that
    \begin{equation}\label{_THE_ABOVE_}
        \lim_{x\to\pm\infty}\pmb{u}_1\tp{\iota x,y} = \bp{4 - \f{\m{A}}{\m{H}_{\pm}}} + \f{4\m{a}}{\upmu}\m{H}^2_{\pm}\bp{\ep y - \f12 y^2}.
    \end{equation}
    Now we substitute the identity $4 - \m{A}/\m{H}_\pm = 4\m{H}_{\pm}$, which is a consequence of~\eqref{sweq}, into~\eqref{_THE_ABOVE_} to find that the limit claimed for $\pmb{u}_1$ in~\eqref{end states of the bore solutions} holds. The remaining limits are clear.
\end{proof}

\begin{proof}[Proof of Corollary~\ref{coro on the existence of large bores}]
Fix $\al\in(0,1)$ and $\iota\in\tcb{-1,1}$. Thanks to~\eqref{sweq}, we know that $\lim_{\m{A}\to 1}\tp{\m{H}_+(\m{A}) - \m{H}_-\tp{\m{A}}}=1$, so for some $\m{A}\in\tp{0,1}$ we have $\tp{\m{H}_+(\m{A}) - \m{H}_-\tp{\m{A}}}>1-\al$. The existence of a $\m{g}$ such that $\tp{\m{g},\m{A}}\in\mathfrak{C}_\iota$ is assured by~\eqref{non-degenerate I} from Lemma~\ref{lem  on sufficient conditions for a sign}. We are now in a position to invoke the third item of  Theorem~\ref{first main theorem}  to obtain $\pmb{\zeta}$, $\pmb{u}$, and $\pmb{p}$ that are a smooth bore wave solution to system~\eqref{flattened free boundary Navier-Stokes system} in the sense of Definition~\ref{defn of bore waves}. The formulas for the solution~\eqref{isolation of just the distinguished part of the solution}, paired with the end state identities~\eqref{the following limits are satisfied} and~\eqref{sweq}, permit us to readily compute the jumps~\eqref{the dimensionless hydraulic jumps}.
\end{proof}

\begin{proof}[Proof of Corollary~\ref{coro on justification of the shallow water limit}]
    The computation done at the beginning of Section~\ref{subsection on distinguished shallow water bore solutions} shows that the equations~\eqref{the one dimensional shallow water equations UH form} are satisfied when~\eqref{shallow water just H version} holds with $\tp{4 - \mathsf{U}}\mathsf{H} = \m{A}$, $\mathsf{H}>0$, and $\mathsf{U}<4$. Thus, the result follows by Theorem~\ref{first main theorem} and using the estimate from the first item with the expressions~\eqref{isolation of just the distinguished part of the solution} to bound the differences~\eqref{justification of the shallow water limit}.
\end{proof}

\begin{proof}[Proof of Corollary~\ref{coro on dimensional and eulerian bores}]
    We only prove the case $\iota = -1$, as the case $\iota = 1$ follows by similar arguments. Since $0<\gam<2ga/\kappa$ we have that $\m{g} = 16ga/\kappa\gamma>8$. Then the third item of Lemma~\ref{lem  on sufficient conditions for a sign} gives the existence of $\m{A}\in\tp{0,1}$ such that $\tp{\m{g},\m{A}}\in\mathfrak{C}_{-1}$. Once we define the nondimensional parameters as in~\eqref{nondimensional parameters}, Theorem~\ref{first main theorem} applies and, for sufficiently small $\ep>0$, we obtain a nondimensional smooth solution $\tp{\pmb{\zeta},\pmb{u},\pmb{p}}$ to system~\eqref{FBINSE, param. tuned and funny flux}.
    
    Recall that the flattening diffeomorphism $\mathfrak{F}_{\pmb{\zeta}}:\Bar{\Omega_\ep}\to\Bar{\Omega_{\ep\pmb{\zeta}}}$ is defined in Section~\ref{subsection on flattening, etc}. Now define $\pmb{v} = \pmb{u}\circ\mathfrak{F}_{\pmb{\zeta}}^{-1}$ and $\pmb{q} = \pmb{p}\circ\mathfrak{F}_{\pmb{\zeta}}^{-1}$ with $\mathfrak{F}_{\pmb{\zeta}}^{-1}\tp{x,z} = \tp{x,z/\pmb{\zeta}\tp{x}}$ being the inverse flattening diffeomorphism $\Bar{\Omega_{\ep\pmb{\zeta}}}\to\Bar{\Omega_\ep}$. Then $\pmb{v}\in W^{\infty,\infty}\tp{\Omega_{\ep\pmb{\zeta}};\R^2}$ and $\pmb{q}\in W^{\infty,\infty}\tp{\Omega_{\pmb{\zeta}}}$, and the triple $\tp{\pmb{\zeta},\pmb{v},\pmb{q}}$ is a solution to system~\eqref{parent Navier-Stokes system nondimensionalized}.
    
    As in Section~\ref{subsection on flattening, etc}, we define the length scale $\m{L} = a\gam/\kappa$ and the velocity scale $\m{U} = \gamma/4$ and then determine the dimensional variables $\zeta$, $v$, and $q$ as in~\eqref{nondimensional unknowns}. These are the sought after solution to the Eulerian formulation, system~\eqref{parent free boundary navier-stokes system}. The limits~\eqref{the dimensional limits} and~\eqref{the dimensional limits 2} are a direct consequence of~\eqref{the following limits are satisfied}, \eqref{isolation of just the distinguished part of the solution}, and dimensional unpacking. Finally, the expression for the relative velocity flux $\Phi$ given by~\eqref{full relative velocity flux of the bore wave solutions} can by proved by directly inserting either the left or right equilibrium height into equation~\eqref{intro_bore_equation} and using the parameter tuning identities~\eqref{the parameter tuning identities 440 hz} to simplify.
\end{proof}

\bibliographystyle{abbrv}
\bibliography{bib.bib}
\end{document}